\documentclass[14pt]{amsart}

\usepackage{bm}
 \newcommand{\tv}[1]{\bm {\mathcal{#1}}}

 \usepackage{upgreek}

\usepackage{stmaryrd} 
\usepackage[T1]{fontenc}
\usepackage{textcomp}

\usepackage{amsopn, amsthm, amsgen, amscd,amsmath,amssymb}
\usepackage[bbgreekl]{mathbbol}
\usepackage[mathcal]{euscript}
\DeclareMathAlphabet{\mathbbm}{U}{bbm}{m}{n}% from bbm.sty
\usepackage{color}
\usepackage{tikz}
  \usepackage{graphicx}
\usepackage{epstopdf}
\DeclareGraphicsRule{.tif}{png}{.png}{`convert #1 `basename #1 .tif`.png}
\usepackage[small,nohug,heads = LaTeX]{diagrams} \diagramstyle[labelstyle=\scriptstyle]

\definecolor{CadetBlue}{cmyk}{0.62, 0.57, 0.23, 0 }
\definecolor{black}{cmyk}{1, 0.5, 0, 0 }
\definecolor{RedViolet}{cmyk}{0.07, 0.9, 0, 0.34 }
\definecolor{SeaGreen}{cmyk}{0.69, 0, 0.5, 0}
%\definecolor{black}{cmyk}{0, 0.89, 0.94, 0.28}

\DeclareMathAlphabet{\mathpzc}{OT1}{pzc}{m}{it}

\newcommand{\R}{\mathbb R}
\newcommand{\C}{\mathbb C}

\newcommand{\K}{\mathbb K}
\newcommand{\N}{\mathbb N}
\newcommand{\PR}{\mathbb P}
\newcommand{\Q}{\mathbb Q}
\newcommand{\Z}{\mathbb Z}

\newcommand{\SI}{\mathbb S}

\newcommand*\circled[1]{\tikz[baseline=(char.base)]{
            \node[shape=circle,draw,inner sep=.5pt] (char) {#1};}}

\newtheorem{theo}{Theorem}
\newtheorem{lemm}{Lemma}
\newtheorem{prop}{Proposition}
\newtheorem{coro}{Corollary}

\newtheorem*{conj}{Conjecture}
\newtheorem*{ques}{Question}

\theoremstyle{definition}

\theoremstyle{remark}

\newtheorem{exam}{Example}

\newtheorem{note}{Note}
\newtheorem*{aside}{Aside}

\newtheorem{obse}{Observation }

\newcommand{\bast}{{}^{\ast}}
\newcommand{\bbull}{{}^{\bullet}}

\newcommand{\bstar}{{}^{\star}}

\title[Arithmetic of Diophantine Approximation Groups I]{The Arithmetic of Diophantine Approximation Groups I: \\ Linear Theory}
\author{T.M. Gendron}
\address{Instituto de Matem\'{a}ticas -- Unidad Cuernavaca, Universidad
Nacional Autonoma de M\'{e}xico, Av. Universidad S/N, C.P. 62210
Cuernavaca, Morelos, M\'{E}XICO}
\email{tim@matcuer.unam.mx}
\date{28 June 2014}
\subjclass[2000]{Primary, 11J99, 11U10, 11R99}
\keywords{Diophantine approximation groups, approximate ideal arithmetic, Mahler classification, resultant arithmetic }
\begin{document}
\vspace{2cm}
\begin{abstract} 
A paradigm for a global algebraic number theory of the reals
 is formulated with the purpose of providing a unified setting for 
algebraic and transcendental number theory.  This is achieved through the
study of subgroups of nonstandard models of Dedekind domains called {\it diophantine approximation groups}.
The arithmetic of diophantine approximation groups 
is defined
in a way which extends the ideal-theoretic arithmetic of algebraic number theory, using the structure of an {\it approximate ideal}: a bifiltration by subgroups
 along which partial products may be performed.  
 \end{abstract}
 \maketitle
\tableofcontents
\section*{Introduction}

This is the first paper in a series of two introducing a paradigm within which a {\it global algebraic number theory}
for $\R$ may be formulated, in such a way as to make possible the synthesis of
algebraic and transcendental number theory into a coherent whole.   
This synthesis is made possible by passing to nonstandard models of well known
arithmetic objects, and while no deep model theory is brought to bear, it indicates the utility of model theoretic constructions in the advancement of certain
mathematical ideas.

Algebraic number theory is based upon the arithmetic of ideals in Dedekind domains; 
we incorporate transcendental number theory into this theory by introducing a generalized notion of ideal which we call a {\it diophantine approximation group}.
Diophantine approximation groups occur as subgroups of nonstandard models of classical Dedekind domains and their relatives.
 In particular, to  $\uptheta\in \R$ we may associate various Diophantine approximation groups 
depending on how one approximates $\uptheta$ -- by rational integers, by algebraic integers, by polynomials.    In this paper we will consider diophantine approximation groups
of the first two varieties, which together make up the {\it linear theory}.

Diophantine approximation groups come with natural filtrations -- called {\it approximate ideal structures} -- along which one can partially define products: the study of which gives rise to an arithmetic extending 
the usual arithmetic of ideals.   The heart of this paper then
consists of an extensive investigation as to how the arithmetic of diophantine approximation groups 
\begin{itemize}
\item reflects the class of the real number $\uptheta$ with respect to the linear classification:
rational, badly approximable, (very) well approximable and Liouville, 
\item introduces invariants which make possible finer distinctions amongst real numbers and
\item allows one to merge transcendental number theory within the theoretical framework of algebraic number theory.  
\end{itemize}  In the sequel \cite{GeArith2} we consider diophantine approximation groups consisting of polynomials and study
their arithmetic according to the nonlinear (Mahler) classification.

We now give a more detailed accounting of what is to be found here.  Fix $\mathfrak{u}\subset {\sf 2}^{\N}$ a nonprincipal ultrafilter on $\N$ and denote the ultrapower 
$ \bast\Z:=\Z^{\N}/\mathfrak{u}$.  By definition, $\bast\Z$ consists of equivalence classes of sequences in $\Z$, where sequences are identified
if they agree on subsequences indexed by some $X\in \mathfrak{u}$.  The ring $\bast\Z$ is a model of $\Z$ in the sense that its first order theory agrees with that
of $\Z$, see \S 1.

Given $\uptheta\in\R$,  the {\bf {\em diophantine approximation group}}  \[ \bast\Z (\uptheta)\subset \bast\Z\]
is the subgroup of $\bast n\in\bast\Z$ for which there exists $\bast n^{\perp}\in\bast\Z$ such that
\[ \bast n\uptheta -\bast n^{\perp} \simeq 0,\]
where $\simeq$ is the relation of being asymptotic to $0$ (infinitesimal) in the field $\bast\R := \R^{\N}/\mathfrak{u}$.  
The dual element $\bast n^{\perp}$ 
is uniquely determined by $\bast n$ and we refer to $(\bast n^{\perp},\bast n)$ as a ``numerator denominator pair'',  denoting it here using a suggestive pseudo fractional notation
\[ 
 \begin{array}{l}
 \bast n^{\perp} \\
 \widetilde{\bast n\;\;}
 \end{array}. 
       \]
       When $\uptheta = a/b\in\Q$ then 
  \begin{itemize}
\item[-]  $\bast\Z (\uptheta)=\bast(b)$ = the ultrapower of the ideal $(b)$.
\item[-] For every $\bast n\in \bast(b)$, $\bast n^{\perp}/\bast n=a/b$. 
\end{itemize}
Otherwise, if $\uptheta\not\in\Q$, $\bast\Z (\uptheta)$ is only a group and $\bast\Z (\uptheta )\cap\Z=0$:
  that is, to ``observe'' $\uptheta$ by way of $\Z$ it is essential that we leave the standard model.   
  
  The group $\bast\Z (\uptheta)$ was first introduced in \cite{Ge1} where it
appears as a generalized fundamental group for the Kronecker foliation of slope $\uptheta$; in this manifestation, it plays a central role in the definition of the quantum modular invariant \cite{CaGe}.
In \cite{Ge4},
variants of $\bast\Z (\uptheta)$ are considered, in which $\Z$ is replaced by the ring of integers $\mathcal{O}$ of a finite extension $K/\Q$ or by the polynomial ring $\Z[X]$,
or $\uptheta$ is replaced by a real matrix $\Uptheta$.  The focus of
that study is the relationship between diophantine approximation groups, Kronecker foliations and linear/algebraic independence.
%: revisiting
%classical theorems and conjectures of transcendental number theory from the perspective of a new algebraic language.
In this paper we turn to the issue of arithmetic, motivated by a desire to answer the following 
\begin{ques}  Let $\uptheta,\upeta\in \R$ and
\[ \begin{array}{l}
 \bast m^{\perp} \\
 \widetilde{\bast m\;\;}
 \end{array},\quad
  \begin{array}{l}
 \bast n^{\perp} \\
 \widetilde{\bast n\;\;} 
  \end{array}\]
be numerator denominator pairs associated to
$\bast m\in \bast\Z (\uptheta)$,
$\bast n\in \bast\Z (\upeta)$.
%[Fractional Arithmetic and Diophantine Approximation] 
Under what conditions  can they be manipulated as ring
elements via {\bf  fractional arithmetic}: that is, when do  
\[    \begin{array}{l}
 \bast m^{\perp} \\
 \widetilde{\bast m\;\;}
 \end{array} 
 \cdot 
  \begin{array}{l}
 \bast n^{\perp} \\
 \widetilde{\bast n\;\;} 
  \end{array}:=  \begin{array}{l}
\bast m^{\perp}\cdot \bast n^{\perp} \\
 \widetilde{\bast m\cdot\bast n\;\;\;\;}
 \end{array},\quad 
  \begin{array}{l}
 \bast m^{\perp} \\
 \widetilde{\bast m\;\;}
 \end{array} 
 \pm
  \begin{array}{l}
 \bast n^{\perp} \\
 \widetilde{\bast n\;\;} 
  \end{array}:=
   \begin{array}{l}
\left(\bast m\bast n^{\perp}\pm\bast m^{\perp}\bast n\right) \\
 \widetilde{{}\quad\quad\bast m\cdot\bast n\;\;\;\;\quad{}}
 \end{array}\]
define numerator denominator pairs corresponding to diophantine approximations of \[ \uptheta \upeta,\quad \uptheta\pm\upeta?\]
\end{ques}

%\begin{ques}[2]  Under what conditions and with what meaning can we define articulate a product of diophantine approximation groups compatibly with
%the the product of ideal classes?
%\end{ques}

As it turns out, our response to this question is closely related to the problem of determining conditions under which we may form a partial 
product of diophantine approximation groups in a way which generalizes the
product of ideals
in algebraic number theory.  
%It is our hope that the approach developed here will provide a setting for 
%a unified theory of algebraic and transcendental numbers.

There are two quantitative measures of a diophantine approximation $\bast n\in\bast\Z (\uptheta )$ that have defined the field of Diophantine Approximation since
the time of Dirichlet and Liouville: 
\begin{itemize}
\item[1.] The growth of the denominator $\bast n$. 
\item[2.] The decay of the error term \[ \upvarepsilon (\bast n):=\uptheta \bast n-\bast n^{\perp}\in \bast\R_{\upvarepsilon}  \] 
where  $\bast\R_{\upvarepsilon}$ is the subgroup
of infinitesimals in $\bast\R$.
\end{itemize}
We measure these in the following way. Let 
\[ \langle\cdot\rangle:\bast\R\longrightarrow \bstar\PR\R:=\bast\R/\bast\R_{\rm fin}^{\times}\] 
be the Krull valuation on $\bast\R$ associated to the local subring $\bast\R_{\rm fin}\subset\bast\R$ of bounded nonstandard reals.  The ordered valuation group $\bstar\PR\R$  is a tropical semi ring with respect to operations $\cdot,+$ induced from 
their counterparts on $\bast\R$, the {\bf {\em growth-decay semi ring}},  
%An important feature of $\bstar\PR\R$ is the existence of a ``flow'' by tropical automorphisms: 
%$\upmu\mapsto\upmu^{\bast r}$, $\bast r\in \bast\R_{\rm fin}^{\times}$, the {\it Frobenius flow}.  The quotient by the Frobenius flow
%gives rise to a second tropical semi ring $\bstar\overline{\PR\R}$ called the {\it Frobenius growth-decay semi-ring}.
 \S \ref{tropical}.

For $\bast n\in\bast\Z (\uptheta )$ we define its {\bf {\em growth}} to be 
\[ \upmu(\bast n):=  \langle\bast n^{-1}\rangle\] and its {\bf {\em decay}} to be 
\[ \upnu (\bast n):=  \langle \upvarepsilon (\bast n)\rangle.\]
Then for each pair $\upmu,\upnu\in\bstar\PR\R_{\upvarepsilon} =$ the infinitesimal part of $\bstar\PR\R$,  
\[ \bast\Z^{\upmu}_{\upnu}(\uptheta ) = \left\{\bast n\in\bast\Z (\uptheta )\left|\;\; \upmu <\upmu(\bast n), \; \upnu (\bast n)\leq \upnu\right.\right\}\]
is a subgroup of $\bast\Z (\uptheta )$.  The
bi-filtered group \[ \bast\Z (\uptheta )=\{\bast\Z^{\upmu}_{\upnu}(\uptheta )\}\] is referred to as an {\bf {\em approximate ideal}}. 
%it is natural with respect to ${\rm PGL}_{2}(\Z)$-equivalence of real numbers.  See \S \ref{gdfiltsection}.

The concept of an approximate ideal generalizes naturally that of ideal as follows.  If we consider just the growth filtration
$\bast\Z = \{\bast \Z^{\upnu}\}$ where $\bast\Z^{\upnu}=\{\bast n|\; \upnu<\upmu(\bast n)\}$ then for each $\upmu,\upnu\in \bstar\PR\R_{\upvarepsilon}$, 
\[ \bast  \Z^{\upnu}\cdot \bast\Z^{\upmu}_{\upnu}(\uptheta )\subset \bast\Z^{\upmu\cdot\upnu}(\uptheta ).\] 
See Proposition \ref{ideological}, \S \ref{gdarith}.  By forgetting the indices one recovers  the usual definition of an ideal.  
%The notion of a partially defined product of algebraic structures deployed along a filtration of substructures,
%while unorthodox, seems nevertheless natural in this context.

Determining when the subgroup $\bast\Z^{\upmu}_{\upnu}(\uptheta )$ is non trivial is the first problem which must be addressed. 
The {\bf {\em nonvanishing spectrum}} of $\uptheta$ is 
\[ {\rm Spec}(\uptheta)=\{ (\upmu,\upnu)|\; \bast\Z^{\upmu}_{\upnu}(\uptheta )\not=0\}\]
a ${\rm PGL}_{2}(\Z )$ invariant of $\uptheta$.
%For $\uptheta\not\in\Q$, the nonvanishing spectrum is delimited by growth-decay pairs corresponding to {\it best denominator classes}: diophantine approximations $\bast \widehat{q}$
%built from the sequence of denominators of principal convergents of $\uptheta$.
In \S \ref{nonvanspec}, we characterize the linear classification of the reals -- rational, badly approximable, (very) well approximable and Liouville -- in terms of their
nonvanishing spectra, see Figure 1 of \S \ref{nonvanspec}.
% as well as the density of the set of {\it best growths}: the set of growths of best denominator classes.
%Among the irrational numbers, the badly approximable numbers have the smallest nonvanishing spectrum (lying above the line $\upmu=\upnu$) and the largest best growth set ($=\bstar\PR\R_{\upvarepsilon}$); the %Liouville numbers have
%the largest nonvanishing spectra and the sparsest best growth set.   
%This is discussed in \S \ref{nonvanspec}.
The intersection ${\rm Spec}_{\rm flat}(\uptheta )$
of ${\rm Spec}(\uptheta)$ with the line $\upmu=\upnu$ represents a critical divide called the {\bf {\em flat spectrum}}, whose study is taken up in \S \ref{flatsection}.  The flat
spectrum reflects properties of the partial fraction decomposition of $\uptheta$ rather than its exponent.
%we study the flat spectrum.
%, proving that for $\uptheta\in\R-\Q$ it is a proper subset of $\bstar\PR\R_{\upvarepsilon}$ consisting of a union of intervals bounded by the growth and decay of best denominators having infinite partial quotient.

The Question posed above is answered in \S \ref{gdarith} using the approximate ideal structure: there is a bilinear map 
\begin{align}\label{gdprodintro} \bast\Z^{\upmu}_{\upnu}(\uptheta )\times\bast\Z^{\upnu}_{\upmu}(\upeta )\longrightarrow \bast\Z^{\upmu\cdot\upnu}(\uptheta \upeta)\cap\bast\Z^{\upmu\cdot\upnu}(\uptheta +\upeta) \cap\bast\Z^{\upmu\cdot\upnu}(\uptheta -\upeta)
\end{align}
defined by the ordinary product in $\bast\Z$.  
This means that whenever $\bast m\in  \bast\Z^{\upmu}_{\upnu}(\uptheta )$ and $\bast n\in \bast\Z^{\upnu}_{\upmu}(\upeta )$, then their numerator denominator pairs may be multiplied and 
added/subtracted exactly as formulated in the Question.  
When $\uptheta =a/b$, $\upeta=c/d\in\Q$, (\ref{gdprodintro}) reduces to the product map
 $\bast (b)\times \bast (d)\rightarrow \bast (bd)$ of the principal ideals generated by the denominators.
%That the image is contained in the groups associated to the product as well the sum and difference of $\uptheta$ and $\upeta$
%is essential to understanding the relationship between {\it ideological arithmetic} and {\it classical ideal theoretic arithmetic}, see
%\S \ref{ideologicalclasssection}.   

For $\upmu\geq\upnu$ we define the  composability relation 
\[\uptheta {}_{\upmu}\!\!\owedge_{\upnu}\upeta\]
whenever the groups appearing in the product (\ref{gdprodintro}) are nontrivial i.e.\ for $(\upmu,\upnu)\in {\rm Spec}(\uptheta)$, $(\upnu,\upmu)\in {\rm Spec}(\upeta)$.
The remainder of \S \ref{gdarith} is devoted to analyzing this relation with respect to the linear classification of real numbers.   Roughly speaking, composability increases as one progresses
from the badly approximable numbers to the Liouville numbers.

In this connection a new phenomenon emerges: the existence of {\bf {\em  antiprimes}} -- classes of numbers for which the relation ${}_{\upmu}\!\owedge_{\upnu}$ is empty
for all  possible growth-decay parameters.  The unique maximal antiprime set is the set $\mathfrak{B}$ of badly approximable numbers.  There is a ``splitting'' theory for antiprimality
not unlike that for primes when one passes to an algebraic extension, which is described further below.
% are {\it antiprimes}: the relation ${}_{\upmu}\!\owedge_{\upnu}$ is empty
%on $\mathfrak{B}^{2}$ for all pairs $(\upmu,\upnu)$.  On other hand, the very well approximable numbers $\mathfrak{W}_{>1}$ are {\it omnidivisors}:
%for any $\uptheta\in \mathfrak{W}_{>1}$ and $\upeta\in\R$, there exist $\upmu\geq\upnu$ such that $\uptheta {}_{\upmu}\!\!\owedge_{\upnu}\upeta$.

Approximate ideal arithmetic in the case of the flat product (which amounts to the consideration of the flat relation ${}_{\upmu}\!\owedge_{\upmu}$)
does not parse along the linear classification 
%, since the exponent of $\uptheta\in\mathfrak{W}$ (= the well approximable numbers) tells us very little about its prospects for forming flat products.  
%Flat products can only be performed within the set $\mathfrak{W}$ of well-approximable numbers since ${\rm Spec}_{\rm flat}(\uptheta )=\emptyset$ for any $\uptheta\in\mathfrak{B}$.  
and properties relating to the combinatorics of the partial quotient representation $\uptheta=[a_{1}a_{2}...]$
must be used to study  composability.  The classification is transverse to the linear classification e.g. there exist Liouville numbers which are not flat composable
with any other number, see
% For example, when $\uptheta$ is {\it resolute} i.e.\ $\lim a_{i}=\infty$, then for any $\upeta\in\mathfrak{W}$ and $\upmu\in{\rm Spec}_{\rm flat}(\upeta )$,
%$\uptheta {}_{\upmu}\!\owedge_{\upmu}\upeta$.  More generally, we show that if $\uptheta\in\mathfrak{W}$ is {\it abyssless} -- having a maximally connected flat spectrum -- then ${\rm Spec}_{\rm flat}(\uptheta )\supset{\rm Spec}_{\rm flat}(\upeta )$ for all $\upeta\in \mathfrak{W}$, and hence
%$\uptheta {}_{\upmu}\!\owedge_{\upmu}\upeta$ for all $\upmu\in{\rm Spec}_{\rm flat}(\upeta )$.
%On the other hand, there exist $\uptheta,\upeta\in \mathfrak{W}$ which are {\it abyssal} (not abyssless) and of any exponent (even Liouville), for which
%the flat product is never defined.  
 \S \ref{flatharith}.

In the field of Diophantine Approximation, one frequently restricts attention to diophantine approximations
with error dominated by some function $\uppsi:\bast\Z\rightarrow \bast\R$ i.e. in our language this means studying the set
\[ \bast\Z (\uptheta|\uppsi )=\{ 0\not=\bast n\in\bast\Z (\uptheta )|\; |\upvarepsilon(\bast n)|<|\uppsi(\bast n)|\}\cup \{ 0\}. \] 
%for some decay function $\uppsi:\bast\Z\rightarrow \bast\R$.  
When $\uppsi (x)=x^{-1}$, $\bast\Z (\uptheta |x^{-1} )$ is the set of elements of bounded {\bf {\em $\boldsymbol\uptheta$-norm}} 
\[ |\bast n|_{\uptheta}:=(|\bast n|\cdot|\upvarepsilon (\bast n)|)^{1/2}\mod \bast\R_{\upvarepsilon}.\]  In \S \ref{metrical} we show that $\bast\Z (\uptheta|x^{-1} )$ has the structure of an {\bf {\em approximate group}}: with respect to the growth-decay grading
$\bast\Z (\uptheta|x^{-1} )= \{\bast\Z^{\upmu}_{\upnu} (\uptheta|x^{-1} )\}$ there is a sum
\begin{align}\label{growthdecaysumintro}\bast\Z^{\upmu}_{\upnu} (\uptheta|x^{-1} ) + \bast\Z^{\upnu}_{\upmu} (\uptheta|x^{-1} )\subset \bast\Z^{\upmu-\upnu} (\uptheta|x^{-1} )
\end{align}
where $\upmu-\upnu=\min (\upmu,\upnu)$, see Theorem \ref{symmgroupology} of \S \ref{metrical}.

%The $\uptheta$-norm satisfies the triangle inequality with respect to all sums defined by (\ref{growthdecaysumintro}), but only in a trivial %sense since $|\bast n|_{\uptheta}\equiv 0$ on $\bast\Z (\uptheta|x^{-1} )^{\upmu}_{\upnu}$ for $\upmu\geq\upnu$.  
There is an important further refinement of the above approximate group defined by the set
of {\bf {\em symmetric diophantine approximations}} 
\[ \bast\Z^{\rm sym} (\uptheta)= \{ \bast n\in\bast\Z (\uptheta )|\;  0<|\bast n|_{\uptheta}<\infty\}\cup \{ 0\}\subset \bast\Z (\uptheta|x^{-1} ) .  \]
We show that $\bast\Z^{\rm sym} (\uptheta )$ is non trivial, 
%containing the {\it intermediate best classes} (sequence classes made from the denominators of intermediate convergents) as well as many other elements, 
and has the structure of a uni-indexed approximate group, see Theorem \ref {symmgroupologystruct}
of \S \ref{metrical}. 
%Symmetric groupologies provide a weak version of ideological arithmetic that is non-trivial even on the set $\mathfrak{B}$ of badly approximable numbers.  %See Theorem \ref{symgda} of \S \ref{metrical}.   
In the case of $\uptheta=\upvarphi=$ the golden mean, we give an explicit description of the elements of $\bast\Z^{\rm sym} (\upvarphi )$
using the Zeckendorf representations of natural numbers.  The latter may be useful in the consideration of the
 {\it Littlewood conjecture}: 
\[  \lim\inf_{n} n\|n\uptheta\|\|n\upeta\|=0, \quad \uptheta,\upeta\in\mathfrak{B},  \quad \|\cdot \|= \text{ distance to the nearest integer}\]
which 
is implied by the statement 
\[ \bast\Z^{\rm sym} (\uptheta)\cap\bast\Z (\upeta)\not=\emptyset \quad \text{or}\quad \bast\Z^{\rm sym} (\upeta)\cap\bast\Z (\uptheta)\not=\emptyset, 
\quad \uptheta,\upeta\in\mathfrak{B}.
\]
%This gives rise in the case of the golden mean to a generalized Chowla conjecture involving the elements of  $\bast\Z^{\rm sym} (\upvarphi )$.

The restriction of $|\cdot |_{\uptheta}$ to $\bast\Z^{\rm sym} (\uptheta )$ is not subadditive: rather, it satisfies the reverse triangle inequality, due to the fact
that it most naturally arises from a Lorentzian bilinear pairing of signature $(1,1)$ on $\bast\Z^{\rm sym} (\uptheta )$.   
Thus, if we view diophantine approximations as ``material particles departing from $\uptheta$'' then $|\bast n|_{\uptheta}$ is nothing more than the initial speed; for badly approximable numbers, we have Heisenberg's uncertainty principle 
\[ |\bast n|_{\uptheta}>C_{\uptheta}\]
where $C_{\uptheta}$ is the corresponding element of the Lagrange spectrum.  See \S \ref{Lorentzian}.
%More generally,
%we consider a ${\rm PGL}_{2}(\Z )$-natural family of ``Lorentzian norms'' $|\cdot|_{\uptheta,\bast\upkappa}$ for each nonstandard exponent $\bast\upkappa >1$
%and we obtain a collection of groupologies $\{ \bast\Z (\uptheta |x^{-\bast\upkappa} )\}$ 
%called a {\it Frechet-Lorentzian groupology}.  

%Given $\Uptheta$ a real $r\times s$ matrix (or in the classical language: a family of $r$ linear forms in $s$ variables), the matrix approximate ideals 
%\[ \bast \Z^{s}(\Uptheta)=\{ (\bast \Z^{s})^{\upmu}_{\upnu}(\Uptheta)\}\] are studied in \S \ref{matideoarith}. The approximate ideal product here derives from a fractional arithmetic on the set of all real matrices 
%$\tilde{\mathcal{M}}(\R)$ based on the Kronecker product.
%The classes of badly approximable, (very) well approximable and Liouville matrices are characterized (or rather defined) by the shape of their associated nonvanishing spectra.  

The remaining sections concern the integration of the above theory with classical algebraic number theory.  
Before embarking on this road, we will need the analogue of diophantine approximation groups for matrices.
Given $\Uptheta$ a real $r\times s$ matrix (or in the classical language: a family of $r$ linear forms in $s$ variables), the matrix approximate ideal 
\[ \bast \Z^{s}(\Uptheta)=\{ (\bast \Z^{s})^{\upmu}_{\upnu}(\Uptheta)\}\] is the subject of \S \ref{matideoarith}.  The approximate ideal product derives from a fractional arithmetic on the set of all real matrices $\tilde{\mathcal{M}}(\R)$ based on the Kronecker product, as well as an arithmetic based on the Kronecker sum of matrices on the subset $\mathcal{M}(\R)\subset \tilde{\mathcal{M}}(\R)$ of square matrices.  
The classes of badly approximable, (very) well approximable and Liouville matrices are characterized (or rather defined) by the shape of their associated nonvanishing spectra.  
In the special case of a single form the dual groups give rise to anarithmetic of {\bf {\em nonprincipal approximate ideals}}.

Let
$K/\Q$ be a finite extension, $\mathcal{O}$ the ring of $K$-integers and $\K\cong \R^{d}$ the Minkowski space of $K$. In \S \ref{KIdeologicalArith} we consider
the diophantine approximation group of $\boldsymbol z\in\K$, which has the structure of an
approximate ideal 
\[ \bast\mathcal{O}(\boldsymbol z)=\{ \bast \mathcal{O}^{\boldsymbol\upmu}_{\boldsymbol\upnu}(\boldsymbol z)\}. \] These $K$-approximate ideals
may be multiplied according to an obvious 
analogue of (\ref{gdprodintro}).  If $K/\Q$ is Galois, then the action of ${\rm Gal}(K/\Q)$ on $\K$ extends to an action
on growth-decay indices so that the growth-decay product becomes Galois natural, c.f.\ Theorem \ref{galrespects}.
For $K/\Q$ finite degree and $\uptheta\in\R\subset\K$, the associated trace map ${\rm Tr}_{K}:\bast\mathcal{O}(\uptheta)\rightarrow\bast\Z(\uptheta)$ respects growth-decay structure, see Proposition \ref{TraceKDA}.  Not surprisingly,
the situation with norm maps is more complicated; however when $K/\Q$ is quadratic the norm map is defined and respects growth-decay structure, see Proposition \ref{normgrowthdecay}.
  The $K$-nonvanishing spectrum ${\rm Spec}_{K}(\boldsymbol z)$ may be used to define the nontrivial classes of
$K$-badly approximable, $K$-(very) well approximable and $K$-Liouville elements of $\K$.
One observes the phenomenon of {\bf {\em antiprime splitting}}, where a $\Q$-badly approximable number $\uptheta$ loses
its antiprime status upon diagonal inclusion in $\K$: this happens for quadratic Pisot-Vijayaraghavan numbers, see Theorem \ref{PVnumber}. 
%There are trace and norm maps that
%map $\bast\mathcal{O}$-diophantine approximations to $\bast\Z$-diophantine approximations, however their validity with respect to the ideological %structure is restricted, owing to the fact that the growth is multiplicative whereas decay is additive.

The last section, \S \ref{ideologicalclasssection}, is devoted to the approximate ideal generalization of ideal class group.  The approximate ideal class of $\bast\mathcal{O}(\boldsymbol z)$ 
is defined
%the sum/difference/product range of the $K$-ideological product
 %($K/\Q$ a finite degree extension)
%is explained in terms of the product of (nonprincipal) ideals.  
by the  {\bf {\em decoupled approximate ideal}}
\[ \bast[\mathcal{O}](\boldsymbol z ) :=  \bast \mathcal{O}(\boldsymbol z)+\bast \mathcal{O}(\boldsymbol z)^{\perp},\quad  \bast \mathcal{O}(\boldsymbol z)^{\perp} := \{ \bast \upalpha^{\perp}|\;
\bast\upalpha\in   \bast \mathcal{O}(\boldsymbol z)\}.\]
The set of decoupled approximate ideals $\mathcal{C}l (\K)$ extends the usual ideal class group $\mathcal{C}l (K)$ of $K/\Q$: if 
\[ \mathfrak{a} = (\upalpha,\upbeta ),\quad \mathfrak{a}' = (\upalpha',\upbeta' )\subset\mathcal{O}\]  are classical ideals and $\upgamma = \upalpha/\upbeta,\upgamma' = \upalpha'/\upbeta'$ then 
\[ \bast [\mathcal{O}](\upgamma)=\bast [\mathcal{O}](\upgamma') \Longleftrightarrow [\mathfrak{a}]=[\mathfrak{a}'] \;\; \text{(equality of ideal classes)}.\]
There is a canonical surjective map  \[ {\rm PGL}_{2}(\mathcal{O})\backslash\K \longrightarrow \mathcal{C}l (\K)\]
which extends the bijection ${\rm PGL}_{2}(\mathcal{O})\backslash K\leftrightarrow   \mathcal{C}l (K)$ and which is conjecturally a bijection as well.
 When $K=\Q$,  ${\rm PGL}_{2}(\Z)\backslash\R$ is the moduli space of quantum tori.
 
 While the product of decoupled approximate ideals extends the usual product of ideal classes,
 the result may not belong to $\mathcal{C}l (\K)$: indeed, there are nilpotent decoupled approximate ideals e.g. $\bast[\Z](\uptheta )^{2} =0$
 for $\uptheta$ badly approximable.  To retrieve these lost products, we introduce for each finite set $\{\boldsymbol z_{1},\dots ,\boldsymbol z_{k} \}\subset\K$ the {\bf {\em correlator
 decoupled approximate ideal}} 
 \[ \bast[\mathcal{O}](\boldsymbol z_{1}|\cdots |\boldsymbol z_{k} ),\]
 by definition the group generated by any approximate ideal admissible product of the $\bast[\mathcal{O}](\boldsymbol z_{i} )$, $i=1,\dots, k$.  The set of
 all such correlator  decoupled approximate ideals forms a monoid with nullity $\mathcal{C}l_{\infty} (\K)\supset \mathcal{C}l (\K)$.  
 In the case $K=\Q$ we conjecture that for $\uptheta$ well-approximable of exponent $\upkappa$, the decoupled approximate ideal
 $\bast[\Z](\uptheta )$ is $\lfloor\upkappa +2\rfloor$-step nilpotent, and that if $\uptheta$ is Liouville, we conjecture that $\bast[\Z](\uptheta )$ is neither
 nilpotent nor of finite order.
%Its determination will be postponed to \cite{Ge5}.

%In \S ?? we organize ideological arithmetic in the form of a sheaf of ideologies over $\bbull\K=\bast\K/\simeq$.  The
%latter is a foliated space and we show that the norms $|\cdot |_{\boldsymbol z}$ define inner-products along the leaves and along
%the transversales $\bast K$ of $\bbull\K$.

\vspace{3mm}

\noindent {\bf Acknowledgements.} 
%I would like to thank Gregor Weingart for suggesting the proof of Theorem \ref{resulininteral} and Jos\'{e} Antonio de la Pe\~{n}a for pointing out the reference \cite{Gl}.   
This paper was supported in part by the CONACyT grant 058537 as well as the PAPIIT grant IN103708.

\section{Nonstandard Structures}

This brief section contains all the reader will need to know about nonstandard structures \cite{ChKei}, \cite{Go}.

Let $I$ be a set.  A {\bf filter} on $I$ is a subset $\mathfrak{f}\subset {\sf 2}^{I}$ satisfying
\begin{enumerate}
\item[-] If $X, Y\in \mathfrak{f}$ then $X\cap Y\in \mathfrak{f}$.
\item[-] If $X\in \mathfrak{f}$ and $X\subset Y$ then $Y\in \mathfrak{f}$.
\item[-] $\emptyset\not\in \mathfrak{f}$.
\end{enumerate}
Any set $\mathcal{F}\subset {\sf 2}^{I}$ satisfying the finite intersection property generates a filter, denoted $\langle \mathcal{F}\rangle$.
A maximal filter $\mathfrak{u}$ is called an {\bf ultrafilter}.  Equivalently, a filter $\mathfrak{u}$ is an ultrafilter $\Leftrightarrow$ for all $X\in {\sf 2}^{I}$, $X\in \mathfrak{u}$ or $I-X\in \mathfrak{u}$.
An ultrafilter $\mathfrak{u}$ is {\bf principal} if it contains a finite set $F$: equivalently $\mathfrak{u}=\langle F\rangle$.  Otherwise it is {\bf nonprincipal}.  By Zorn's lemma, every filter is contained in an ultrafilter.

Now let $\{G_{i}\}_{i\in I}$ be a family of algebraic structures of a fixed type: for our purposes, they will be groups, rings, fields. Let $\mathfrak{u}$ be an ultrafilter on $I$. The quotient
\[  \prod_{i\in I} G_{i}/\sim_{\mathfrak{u}},\quad (g_{i})\sim_{\mathfrak{u}} (g_{i}') \Longleftrightarrow \{ i|\; g_{i}=g_{i}'\}  \in\mathfrak{u} \]
is called the {\bf ultraproduct} of the $G_{i}$ w.r.t.\ $\mathfrak{u}$.  By the Fundamental Theorem of Ultraproducts (\L o\'{s}'s Theorem) \cite{ChKei}, the ultraproduct is also a group/ring/field according to the case.  If $G_{i}=G$ for all $i$ the ultraproduct is called an {\bf ultrapower} and is denoted 
\[ \bast G=\bast G_{\mathfrak{u}}.\]
Elements of $\bast G$ will be denoted 
\[ \bast g = \bast \{ g_{i}\}.\]
The canonical inclusion $G\hookrightarrow\bast G$ given by constants $g\mapsto(g=g_{i})$ is a monomorphism.  If $\mathfrak{u}$ is nonprincipal, this map is not onto and 
again by \L o\'{s}, exhibits $\bast G$ as a {\bf nonstandard model} of $G$: that is, the set of sentences in first order logic satisfied by $\bast G$ coincides with that of $G$.

If $I=\N$ and $\mathfrak{u}$ is a nonprincipal ultrafilter on $\N$ we denote by \[ \bast\Z\subset\bast\Q\subset\bast\R\subset\bast\C\] corresponding ultrapowers of $\Z\subset \Q\subset \R\subset \C$.  The field $\bast \R$ is totally ordered and the absolute value $|\cdot |$ extends to a map $|\cdot|:\bast \R\rightarrow \bast \R_{+}\cup \{0\}$.
We define the local subring of bounded elements
\[ \bast\R_{\rm fin} := \{\bast r\in\bast\R|\; \exists M\in\R_{+}\text { such that } |\bast r|< M\} \]
whose maximal ideal is the ideal of {\bf infinitesimals}
\[ \bast\R_{\upvarepsilon}  :=\{ \bast r\in\bast\R_{\rm fin}|\; \forall M\in \R_{+},\; |\bast r|< M \}.\]
Then $\bast\R$ is the field of fractions of $\bast\R_{\rm fin}$ and the residue class field is 
\[ \bast\R_{\rm fin}/ \bast\R_{\upvarepsilon}\cong \R.\]

\section{Tropical Growth-Decay Semi-Ring}\label{tropical}

 Let
\[ (\bast\R_{\rm fin})_{+}^{\times} = \text{ the group of positive units in the ring $\bast\R_{\rm fin}$}. \] Thus $(\bast\R_{\rm fin})_{+}^{\times}$ is
the multiplicative
subgroup of noninfinitesimal, noninfinite elements in $\bast\R_{+}$.
Consider the multiplicative quotient group 
\[  \bstar\PR\R := 
%\bast\R\bast/\bast\R_{\rm fin}^{\times}=
\bast\R_{+}/(\bast\R_{\rm fin})_{+}^{\times},\] 
whose elements will be written 
\[ \upmu = \bast x\cdot(\bast\R_{\rm fin})_{+}^{\times}.\]  We denote the product
in $\bstar\PR\R$ by ``$\cdot$''.

\begin{prop}\label{integerrepre} Every element $\upmu\in \bstar\PR\R$ may be written in the form 
\[ \bast n^{\upvarepsilon}\cdot(\bast\R_{\rm fin})_{+}^{\times}\]
where $\bast n\in\bast \Z_{+}-\Z_{+}$ or $\bast n =1$, and $\upvarepsilon=\pm 1$.
\end{prop}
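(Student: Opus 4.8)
The plan is to combine the order trichotomy in $\bast\R$ with the transfer of the floor function. First I would recall that every $\bast x\in\bast\R_{+}$ is of exactly one of three types: infinitesimal (i.e.\ $\bast x\in\bast\R_{\upvarepsilon}$), a positive unit of $\bast\R_{\rm fin}$ (bounded and non-infinitesimal), or infinite. In the middle case $\bast x\in(\bast\R_{\rm fin})_{+}^{\times}$, so the coset $\upmu=\bast x\cdot(\bast\R_{\rm fin})_{+}^{\times}$ is the identity of $\bstar\PR\R$ and is represented by $\bast n=1$ (with either sign $\upvarepsilon=\pm1$, since $1^{\pm1}=1$). Thus it suffices to handle the infinite case, and then obtain the infinitesimal case by inverting.

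Next I would treat $\bast x$ positive infinite. Set $\bast n:=\lfloor\bast x\rfloor$, the value at $\bast x$ of the nonstandard extension of the floor map $\R\to\Z$ (equivalently, the $\mathfrak u$-class of the sequence of integer parts of a representative of $\bast x$). By transfer of the defining inequalities $n\le x<n+1$ one gets $\bast n\in\bast\Z$ with $\bast n\le\bast x<\bast n+1$; since $\bast x$ is positive infinite, $\bast n\geq1$ and $\bast n$ is itself infinite, so $\bast n\in\bast\Z_{+}-\Z_{+}$. From $1\le\bast x/\bast n<1+\bast n^{-1}$ and the fact that $\bast n^{-1}$ is infinitesimal, it follows that $\bast x/\bast n$ is bounded and non-infinitesimal, i.e.\ lies in $(\bast\R_{\rm fin})_{+}^{\times}$. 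Hence $\upmu=\bast x\cdot(\bast\R_{\rm fin})_{+}^{\times}=\bast n\cdot(\bast\R_{\rm fin})_{+}^{\times}$, which is the desired form with $\upvarepsilon=+1$.

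Finally, if $\bast x$ is infinitesimal then $\bast x^{-1}$ is positive infinite, so by the previous step there is $\bast n\in\bast\Z_{+}-\Z_{+}$ with $\bast x^{-1}\cdot(\bast\R_{\rm fin})_{+}^{\times}=\bast n\cdot(\bast\R_{\rm fin})_{+}^{\times}$; taking inverses in the group $\bstar\PR\R$ gives $\upmu=\bast x\cdot(\bast\R_{\rm fin})_{+}^{\times}=\bast n^{-1}\cdot(\bast\R_{\rm fin})_{+}^{\times}$, i.e.\ the form with $\upvarepsilon=-1$. This exhausts all cases. I do not expect a genuine obstacle here: the only point that must be stated with some care is the transfer step, where \L{}o\'{s}'s theorem is needed to guarantee that $\lfloor\bast x\rfloor$ really is a nonstandard integer (and not a standard one, which is automatic once it is infinite) and that the inequalities $\bast n\le\bast x<\bast n+1$ persist in $\bast\R$; everything after that is elementary arithmetic of infinitesimals.
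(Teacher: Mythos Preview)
Your proof is correct and follows essentially the same approach as the paper: the paper also uses the trichotomy (unit, infinite, infinitesimal), writes an infinite representative as $\bast n+\bast\bar r$ with $\bast\bar r\in[0,1)$ (i.e.\ the floor decomposition), observes that $(\bast n+\bast\bar r)/\bast n=1+\bast\bar r/\bast n\in(\bast\R_{\rm fin})_{+}^{\times}$, and then remarks that the infinitesimal case follows by the analogous argument. Your treatment is slightly more explicit about the transfer of the floor map and about obtaining the infinitesimal case by inversion, but the content is identical.
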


 \begin{proof} Every element of $\bstar\PR\R$ is the class of 1, the class of an infinite element or the class of an infinitesimal element. 
 %In fact, every infinite element is the class of some infinite integer $\bast n\in\bast\Z$
%and every infinitesimal element is the class of some inverse infinite integer $\bast m^{-1}$.
If $\upmu$ is the class of $\bast r$ infinite, then there exists $\bast \bar{r}\in [0,1)=\{\bast x|\; 0\leq \bast x <1\}$ and $\bast n\in\bast\Z_{+}$
for which $\bast r= \bast n+\bast  \bar{r} = \bast n \cdot ((\bast n+\bast  \bar{r})/\bast n)$.  But  
$(\bast n+\bast  \bar{r})/\bast n = 1+\bast\bar{r}/\bast n\in(\bast\R_{\rm fin})_{+}^{\times}$, so  $\upmu = \bast n\cdot (\bast\R_{\rm fin})_{+}^{\times}$.  A similar argument may be used to show that when $\upmu$ represents an infinitesimal class, $\upmu =\bast n^{-1}\cdot(\bast\R_{\rm fin})_{+}^{\times}$ for some $\bast n\in\bast \Z_{+}-\Z_{+}$.
\end{proof}

\begin{prop}\label{denselinord}  $ \bstar\PR\R$ is a densely ordered group.
\end{prop}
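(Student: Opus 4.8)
The plan is to first record why the quotient $\bstar\PR\R$ carries a total order at all, and then to establish density by a square-root argument; no step is genuinely hard, but the order-theoretic setup of the first step is where one must be careful.

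\emph{Step 1: the order.} The group $\bast\R_+$ is linearly ordered (written multiplicatively) by the order inherited from $\bast\R$, and I would note that $(\bast\R_{\rm fin})_+^{\times}$ is a \emph{convex} subgroup: if $\bast a\leq \bast c\leq \bast b$ with $\bast a,\bast b\in(\bast\R_{\rm fin})_+^{\times}$, then $\bast c\leq \bast b$ forces $\bast c$ to be noninfinite while $\bast c\geq \bast a$ forces $\bast c$ to be noninfinitesimal, so $\bast c\in(\bast\R_{\rm fin})_+^{\times}$. A quotient of a linearly ordered abelian group by a convex subgroup is again linearly ordered, via $\upmu\leq\upnu$ iff $\bast x\leq \bast y\cdot \bast u$ for some $\bast u\in(\bast\R_{\rm fin})_+^{\times}$ (where $\upmu=\bast x\cdot(\bast\R_{\rm fin})_+^{\times}$, $\upnu=\bast y\cdot(\bast\R_{\rm fin})_+^{\times}$); convexity is exactly what makes this well defined and total. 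Unwinding the definition gives the description I will use: $\upmu>1$ iff every representative of $\upmu$ is infinite, and $\upmu<1$ iff every representative is infinitesimal.

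\emph{Step 2: reduction to a neighbourhood of the identity.} Since $\bstar\PR\R$ is now an ordered group, it suffices to show that for every $\upmu>1$ there is $\upeta$ with $1<\upeta<\upmu$. Indeed, given arbitrary $\upmu<\upnu$, one applies this to $\upnu\cdot\upmu^{-1}>1$ to obtain $\upeta$ with $1<\upeta<\upnu\cdot\upmu^{-1}$, and then $\upmu<\upmu\cdot\upeta<\upnu$; equivalently, an ordered abelian group with no least strictly positive element is densely ordered.

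\emph{Step 3: the square root.} Given $\upmu>1$, choose (by Step 1, or concretely by Proposition \ref{integerrepre}) an infinite positive representative $\bast r$ of $\upmu$, and set $\bast s=\sqrt{\bast r}$ and $\upeta = \bast s\cdot(\bast\R_{\rm fin})_+^{\times}$. Then $\bast s$ is infinite (it exceeds every standard integer, since $\bast r$ does), so $\upeta>1$ by Step 1; and $\bast s\bast r^{-1}=\bast r^{-1/2}$ is a positive infinitesimal, so $\upeta\cdot\upmu^{-1}<1$, i.e.\ $\upeta<\upmu$. This produces the required intermediate element. The only real content is the convexity observation of Step 1 together with the elementary fact that $\bast\R$ is closed under square roots, which forbids a minimal positive element; everything else is bookkeeping.
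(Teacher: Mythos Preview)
Your proof is correct and follows essentially the same route as the paper, though you supply considerably more detail. The paper simply asserts that the quotient order is ``evidently a dense order without endpoints'' and then checks compatibility with multiplication directly; your convex-subgroup framework packages well-definedness and translation-invariance in one step, and your square-root argument makes explicit the density that the paper leaves to the reader.
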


\begin{proof}  The order is defined
by declaring that $\upmu<\upmu'$ in  $\bstar\PR\R$ if for any pair of representatives
$\bast x\in\upmu, \bast x'\in\upmu'$ we have $ \bast x<\bast x'$, evidently a dense order without endpoints. The left-multiplication action 
of $\bast \R_{+}$ on $ \bstar\PR\R$ preserves this order, therefore so does the product: if $\upmu <\upnu$ then for all $\xi\in \bstar\PR\R$, $\xi\cdot \upmu <\xi\cdot \upnu$.  
\end{proof}

We introduce the maximum of a pair of elements in $ \bstar\PR\R$ 
as a formal binary operation:
\[ \upmu+ \upnu:= \max (\upmu, \upnu).\]
The operation $+$
is clearly commutative and associative.
The following Proposition says that $+$ is the quotient of the operation $+$ of $\bast\R^{\times}_{+}$. 
%which is the quotient of addition in $\bast\R_{+}$:

\begin{prop} Let $\upmu = \bast x\cdot  (\bast\R_{\rm fin})_{+}^{\times}$, $\upmu'=\bast x'\cdot  (\bast\R_{\rm fin})_{+}^{\times}$.
Then \[ (\bast x+\bast x')\cdot (\bast\R_{\rm fin})_{+}^{\times}=
\upmu+ \upmu'.\]
\end{prop}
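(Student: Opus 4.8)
The plan is to reduce everything to the definition $\upmu+\upmu':=\max(\upmu,\upmu')$ together with the order-of-magnitude description of the quotient $\bstar\PR\R=\bast\R_{+}/(\bast\R_{\rm fin})_{+}^{\times}$. By Proposition \ref{denselinord} this group is linearly ordered, so I may assume without loss of generality that $\upmu\leq\upmu'$; then $\upmu+\upmu'=\upmu'$, and the assertion to be proved becomes $(\bast x+\bast x')\cdot(\bast\R_{\rm fin})_{+}^{\times}=\bast x'\cdot(\bast\R_{\rm fin})_{+}^{\times}$, i.e.\ that $(\bast x+\bast x')/\bast x'=1+\bast x/\bast x'$ lies in $(\bast\R_{\rm fin})_{+}^{\times}$. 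Since $\bast x,\bast x'>0$ this quantity is positive and at least $1$, hence non-infinitesimal; thus the entire proof reduces to showing it is non-infinite, equivalently that $\bast x/\bast x'\in\bast\R_{\rm fin}$.

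For that I would spell out the elementary dictionary attached to the quotient: two positive nonstandard reals represent the same class of $\bstar\PR\R$ exactly when their ratio lies in $(\bast\R_{\rm fin})_{+}^{\times}$ (finite and non-infinitesimal), and by the definition of the order in Proposition \ref{denselinord} one has $\upmu<\upmu'$ exactly when $\bast x/\bast x'$ is infinitesimal. Under the hypothesis $\upmu\leq\upmu'$ there are then two cases. If $\upmu=\upmu'$, then $\bast x/\bast x'\in(\bast\R_{\rm fin})_{+}^{\times}\subset\bast\R_{\rm fin}$ outright. If $\upmu<\upmu'$, then $\bast x/\bast x'$ is infinitesimal, hence lies in $\bast\R_{\rm fin}$ as well. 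In both cases $1+\bast x/\bast x'$ is a positive, finite, non-infinitesimal element, i.e.\ an element of $(\bast\R_{\rm fin})_{+}^{\times}$, and the proof is complete.

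The one point I would be careful to justify is the trichotomy underlying the dictionary above: for positive $\bast x,\bast x'$ the ratio $\bast x/\bast x'$ is either infinitesimal, or finite and non-infinitesimal, or infinite, and these three alternatives correspond respectively to $\upmu<\upmu'$, $\upmu=\upmu'$ and $\upmu>\upmu'$ (the last via $\bast x'/\bast x$ being infinitesimal). This is what simultaneously makes the order on $\bstar\PR\R$ total — legitimizing the reduction to $\upmu\leq\upmu'$ — and drives the case split. Since it is immediate from the structure of the ultrapower $\bast\R$ (where every positive element is infinitesimal, a positive unit of $\bast\R_{\rm fin}$, or infinite), I anticipate no real obstacle; the remainder is the routine observation that $1$ plus a positive finite quantity is again a positive finite non-infinitesimal.
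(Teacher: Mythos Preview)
Your proof is correct and follows essentially the same approach as the paper's: both reduce to the case $\upmu\leq\upmu'$, split into $\upmu<\upmu'$ versus $\upmu=\upmu'$, and verify that $1+\bast x/\bast x'$ is a positive finite non-infinitesimal (the paper phrases this as factoring $\bast x+\bast x'=\bast x'(1+\bast\upvarepsilon)$ or $\bast x(1+\bast r)$). Your version is a bit more explicit about why $1+\bast x/\bast x'\geq 1$ forces non-infinitesimality, but the underlying argument is identical.
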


\begin{proof} Note that $\bast x+\bast x'\in\bast\R_{+}$ and
$ \bast x+\bast x'\in \max (\upmu, \upmu')$.   Indeed, suppose first that $\upmu\not=\upmu'$, say $\upmu<\upmu'$.  Then there exists $\bast \upvarepsilon$ infinitesimal
for which $\bast x = \bast \upvarepsilon\bast x'$, and we have   
$   \bast x' + \bast x = \bast x' (1+\bast \upvarepsilon)\in \upmu'$.       
If $\upmu=\upmu'$ then  $\bast x' = \bast r\bast x$ for $\bast r\in \bast\R_{+}^{\times}$ and 
$ (\bast x+ \bast x')\cdot  (\bast\R_{\rm fin})_{+}^{\times} = \bast x (1+\bast r)\cdot  (\bast\R_{\rm fin})_{+}^{\times} = \upmu = \upmu +\upmu$.
%and $\bast x$, $\bast x'$ have the same sign.  In general, we have
%$ (\bast x+\bast x')\cdot \bast\R_{\rm fin}^{\times}  \leq \upmu = \max (\upmu,\upmu )$. 
\end{proof}

\begin{prop}\label{troprop}  Let $\bast r,\bast s\in\bast\R_{+}$ and $\upmu,\upnu,\upnu'\in \bstar\PR\R$. Then
\begin{enumerate}
\item[1.]  $\upmu \cdot (\upnu+ \upnu') =  (\upmu \cdot \upnu )+ (\upmu \cdot \upnu' )$.
\item[2.]  $\bast r \cdot (\upnu+ \upnu')= (\bast r\cdot\upnu )+ (\bast r\cdot\upnu' )$.
\item[3.] $(\bast r+\bast s)\cdot\upmu = (\bast r\cdot\upmu)+ (\bast s\cdot\upmu)$.
\end{enumerate}
\end{prop}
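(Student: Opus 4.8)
The plan is to realize all three identities as images, under the canonical projection $q\colon\bast\R_{+}\to\bstar\PR\R$, $q(\bast x)=\bast x\cdot(\bast\R_{\rm fin})_{+}^{\times}$, of the ordinary distributive law $\bast a(\bast b+\bast c)=\bast a\bast b+\bast a\bast c$ of $\bast\R_{+}$. By construction $q$ is multiplicative, $q(\bast a\bast b)=q(\bast a)\cdot q(\bast b)$; the preceding Proposition says precisely that $q$ is also additive, $q(\bast a+\bast b)=q(\bast a)+q(\bast b)$; and the external action of $\bast r\in\bast\R_{+}$ on $\bstar\PR\R$ is, by definition, $\bast r\cdot q(\bast y)=q(\bast r\bast y)$. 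Thus $q$ is a semi-ring epimorphism that intertwines multiplication in $\bast\R_{+}$ with the scalar action, and (1)--(3) will follow by lifting to $\bast\R_{+}$, applying distributivity there, and pushing back down.

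Carrying this out for (1): choose representatives $\bast x\in\upmu$, $\bast y\in\upnu$, $\bast y'\in\upnu'$ and compute
\[ \upmu\cdot(\upnu+\upnu')=q\big(\bast x(\bast y+\bast y')\big)=q(\bast x\bast y+\bast x\bast y')=q(\bast x\bast y)+q(\bast x\bast y')=(\upmu\cdot\upnu)+(\upmu\cdot\upnu'), \]
where the middle step is distributivity in $\bast\R_{+}$ and the last is additivity of $q$. Identities (2) and (3) are obtained identically, lifting $\bast r\cdot(\upnu+\upnu')$ to $q\big(\bast r(\bast y+\bast y')\big)$ and $(\bast r+\bast s)\cdot\upmu$ to $q\big((\bast r+\bast s)\bast x\big)$ respectively, then invoking distributivity in $\bast\R_{+}$ together with the preceding Proposition.

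It is worth recording, as this is the form in which the statement tends to get used, that (1) and (2) also have a purely order-theoretic proof: $\bstar\PR\R$ is totally ordered by Proposition \ref{denselinord}, so $\upnu+\upnu'=\max(\upnu,\upnu')$; left multiplication by $\upmu$, and more generally the action of a fixed $\bast r$, is an order-preserving bijection of $\bstar\PR\R$ (the order-preservation being part of Proposition \ref{denselinord} and its proof), hence an order automorphism, and an order automorphism of a totally ordered set commutes with $\max$. Only (3) genuinely needs the additive structure of $\bast\R_{+}$.

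I do not expect any serious obstacle here. The one point meriting a little care is the bookkeeping of representatives and, with it, the observation that the $\bast\R_{+}$-action on $\bstar\PR\R$ really is ``$q$ of multiplication'' — this is what lets a single lifting argument dispatch all of (1)--(3) at once; and one should resist the urge to split into the cases $\upmu=1$, $\upmu$ infinite, $\upmu$ infinitesimal, since the uniform argument makes that trichotomy irrelevant.
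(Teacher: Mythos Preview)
Your proof is correct. Interestingly, you and the paper swap primary and secondary arguments: the paper's proof of (1) and (2) is precisely the order-theoretic one you record at the end (multiplication by $\upmu$, resp.\ the $\bast r$-action, is order-preserving, hence commutes with $\max$), while your main argument---lifting to $\bast\R_{+}$, distributing there, and pushing down through the additive/multiplicative homomorphism $q$---is not the route the paper takes. The paper then dismisses (3) as ``trivial'', which unpacks to exactly your lifting computation $q((\bast r+\bast s)\bast x)=q(\bast r\bast x)+q(\bast s\bast x)$ via the preceding Proposition.

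Your approach has the virtue of uniformity: one mechanism disposes of all three items, and it makes transparent that the content of the Proposition is simply that $q$ is a semiring map intertwining the $\bast\R_{+}$-action. The paper's order-theoretic argument, on the other hand, foregrounds the reason the identities are used later---multiplication respects $\max$---without invoking representatives at all. Both are short; neither is deeper.
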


\begin{proof}  1.  It is enough to check the equality in the case $\upnu'>\upnu$.  Then $\upmu \cdot (\upnu+ \upnu')=\upmu\cdot\upnu'$.
But the latter is equal to $ (\upmu \cdot \upnu )+ (\upmu \cdot \upnu' )$ since the product preserves
the order.  The proof of 2. is identical, where we use the fact that the multiplicative action by
$\bast\R_{+}$ preserves the order.  Item 3. is trivial.
%If $\bast r$, $\bast s$ define distinct elements of $\bstar\PR\R$ then 
%$(\bast r+\bast s)\cdot\upmu =(\bast r\cdot\upmu)+ (\bast s\cdot\upmu)$.  Otherwise, as indicated
 %in the paragraph preceding the Proposition, we may get a strict inequality.  This proves 3.
\end{proof}

It will be convenient to add the class $-\infty$ of the element $0\in\bast\R$
to the space $\bstar\PR\R$: in other words, we will
reconsider $\bstar\PR\R$ as the quotient 
$ (\bast\R_{+}\cup\{0\})/(\bast\R_{\rm fin})_{+}^{\times}$. 
%We will not use a distinguished notation for this quotient and will denote 
%it as well by $\bstar\PR\R$. 
Note that
we have for all $\upmu\in\bstar\PR\R$
\[  -\infty+ \upmu = \upmu, \quad  -\infty\cdot \upmu =-\infty .\]
In particular, $-\infty$ is the neutral element for the operation $+$.
Thus, by Proposition \ref{troprop}:

\begin{theo} $\bstar\PR\R$
is an abstract (multiplicative)
 tropical semi-ring: that is, a max-times semi ring.
\end{theo}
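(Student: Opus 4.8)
The plan is to assemble the claim directly from the propositions already established in this section, since the theorem is essentially a bookkeeping statement: everything needed has been proved, and what remains is to verify that the axioms of a "max-times semi-ring" are exactly the items already in hand. First I would recall the definition being invoked: a max-times (tropical) semi-ring is a set equipped with two commutative, associative operations $\cdot$ and $+ = \max$, with a neutral element for $+$, such that $\cdot$ distributes over $+$. So the proof is a checklist.

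The key steps, in order, are: (1) $\bstar\PR\R$ (now enlarged by the class $-\infty$ of $0$) is a commutative monoid under $\cdot$ — this is immediate since it is a quotient group of $\bast\R_+$ by a subgroup, hence in particular a commutative monoid, and adjoining $-\infty$ with the convention $-\infty\cdot\upmu=-\infty$ preserves associativity and commutativity (an absorbing element always does). (2) $+=\max$ is commutative and associative — noted right after its definition, and it extends to $-\infty$ by the stated conventions. (3) $-\infty$ is the neutral element for $+$ — this is exactly the displayed identity $-\infty+\upmu=\upmu$. (4) Distributivity of $\cdot$ over $+$: this is Proposition \ref{troprop}(1), $\upmu\cdot(\upnu+\upnu')=(\upmu\cdot\upnu)+(\upmu\cdot\upnu')$, for $\upmu,\upnu,\upnu'\in\bstar\PR\R$; and one checks the degenerate cases where one of the arguments is $-\infty$ separately, using $-\infty+\upmu=\upmu$ and $-\infty\cdot\upmu=-\infty$, which makes both sides collapse to the same value. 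Stitching these four together yields precisely the assertion that $\bstar\PR\R$ is an abstract tropical semi-ring.

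I would also remark that the additional structural facts recorded in the section — that the order is dense (Proposition \ref{denselinord}), that $+$ is the image of the ordinary addition of $\bast\R_+$ under the quotient map, and that $\cdot$ is likewise induced from multiplication on $\bast\R_+$ — are not needed for the bare semi-ring axioms but explain why the two operations interact correctly; in particular Proposition \ref{troprop}(2)--(3) guarantee that the $\bast\R_+$-scaling action is by semi-ring endomorphisms, which is the germ of the Frobenius flow alluded to in the introduction. For the theorem as stated, though, only items (1)--(4) above are logically required.

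The main obstacle, such as it is, is purely cosmetic: making sure the adjunction of $-\infty$ does not break associativity or distributivity in the mixed cases (one or two arguments equal to $-\infty$). This is a finite case check and is handled uniformly by the observations that $-\infty$ is absorbing for $\cdot$ and neutral for $+$; an absorbing element for one operation together with a neutral element for the other, when they coincide, automatically satisfies the distributive law in the degenerate cases, so no real work is involved. Hence the proof is short: cite Proposition \ref{troprop}, invoke the commutativity/associativity of $\max$ and the neutrality of $-\infty$ already noted, dispatch the $-\infty$ cases in one line, and conclude.
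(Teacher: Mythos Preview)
Your proposal is correct and follows exactly the paper's approach: the theorem is stated immediately after the displayed identities for $-\infty$ with the one-line justification ``Thus, by Proposition \ref{troprop}'', so the proof in the paper is precisely the checklist you describe (commutativity and associativity of $+$, neutrality of $-\infty$, and distributivity from Proposition \ref{troprop}(1)). Your write-up is more explicit than the paper's terse citation, but the content and route are identical.
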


We will refer to $\bstar\PR\R$ as the {\bf growth-decay semi-ring}.  Let $\bstar \PR\R_{\upvarepsilon} \subset\bstar\PR\R$ be the image of the $(\bast\R_{\rm fin})_{+}^{\times}$-invariant multiplicatively closed set $(\bast\R_{\upvarepsilon})_{+}$.  With the operations $\cdot, +$, $\bstar \PR\R_{\upvarepsilon}$ is a sub tropical semi-ring: the {\bf decay semi-ring}.

 If we forget the tropical addition, considering $ \bstar\PR\R$ as a linearly ordered multiplicative group, then
the map 
\[ \langle \cdot \rangle :\bast\R\rightarrow \bstar\PR\R, \quad \langle\bast x\rangle = |\bast x|\cdot(\bast\R_{\rm fin})_{+}^{\times},\] is the Krull valuation associated 
to the local ring $\bast\R_{\rm fin}$
(see for example \cite{ZS}).  The restriction of $\langle \cdot \rangle$ to $\R$ is just the trivial valuation, so
that $\langle \cdot \rangle $ cannot be equivalent to the usual valuation $|\cdot |$ on $\bast\R$ induced from the euclidean norm.
Note also that $\langle \cdot \rangle$
is nonarchimedean.  We refer to $\langle\cdot\rangle$ as the {\bf growth-decay valuation}.

\section{Growth-Decay Filtration}\label{gdfiltsection}

As in the previous section, $\bstar \PR\R_{\upvarepsilon} \subset\bstar\PR\R$ denotes the decay semi-ring.
We will measure growth of an infinite element of $\bast\Z$ in terms of the decay of its reciprocal: this has the advantage
of allowing us to make the vital comparison of denominator growth with error decay of a diophantine approximation in a single, unambiguous setting.
 
 For each $0\not=\bast n\in\bast\Z$ define its {\bf growth} by 
 \[ \upmu (\bast n):=\langle |\bast n^{-1}|\rangle\in\bstar \PR\R_{\upvarepsilon}\cup \{ 1\};\]
 note that $\upmu (\bast n)=1$ $\Leftrightarrow$ $\bast n=n\in\Z$.  For each
  $\upmu\in \bstar \PR\R_{\upvarepsilon} $  denote by
\[  \bast\Z^{\upmu }= \left\{0\not= \bast n \in \bast \Z\middle|\; \upmu (\bast n) >\upmu  \right\}\cup \{ 0\}=\left\{ \bast n \in \bast \Z\middle|\; |\bast n|\cdot\upmu \in \bstar\PR\R_{\upvarepsilon } \right\}
.  \]
Note that $ \bast\Z^{\upmu }$ is a well-defined 
subgroup of $\bast\Z$.  If $\upmu< \upmu'$ then
\begin{align}\label{growthincl}
  \bast\Z^{\upmu}\supset   \bast\Z^{\upmu '}.
  \end{align}
The collection $\{ \bast\Z^{\upmu}\}$ forms an order-reversing filtration of $\bast\Z$ by subgroups,
called the {\bf growth filtration}.  Notice that  
\[ \bast \Z^{\upmu}\cdot \Z^{\upmu' }\subset \Z^{\upmu\cdot\upmu'}\]
so that $\bast\Z$ has the structure of a filtered ring with respect to the growth filtration.

It will be useful to introduce the following subordinate filtration to the growth filtration.  Fix $\upmu\in \bstar\PR\R_{\upvarepsilon } $ and
for each $\upiota\in \bstar\PR\R_{\upvarepsilon } $ define
\[ \bast\Z^{\upmu[\upiota ]}:=\left\{ \bast n\middle|\; |\bast n|\cdot \upmu < \upiota  \right\} . \]
Then $\bast\Z^{\upmu[\upiota ]}$ is a group 
since by Proposition \ref{troprop}, item 3., 
\[|\bast m +\bast n|\cdot\upmu  \leq |\bast m| \cdot\upmu + |\bast n|\cdot \upmu <\upiota .
\]
Note that if $\upiota < \uplambda$ then $\bast\Z^{\upmu[\upiota ]}\subset \bast\Z^{\upmu[ \uplambda]}$.
We call this the {\bf fine growth bi-filtration}. The fine growth bi-filtration makes of $\bast\Z$ a bi-filtered ring:
\[ \bast \Z^{\upmu[\upiota ]}\cdot \Z^{\upmu' [\upiota']}\subset \Z^{\upmu\cdot\upmu'[\upiota\cdot\upiota']}.\]

For $\uptheta\in\R$, recall (see \S 2 of \cite{Ge4}) that by a {\bf diophantine approximation} 
we mean an element $\bast n\in \bast\Z$ such that
\[   \upvarepsilon (\bast n):= \bast n\uptheta - \bast n^{\perp} \in\bast\R_{\upvarepsilon}\]
for some 
\[ \bast n^{\perp} = \bast n^{\perp_{\uptheta}}\in\bast\Z, \]
called the {\small $\boldsymbol\uptheta${\bf -dual}} or simply the {\bf dual} of $\bast n$ if $\uptheta$ is understood. 
The {\bf diophantine approximation group} is then
\begin{align}\label{dagdefrecall}
\bast\Z (\uptheta ) & =\left\{\bast n\in\bast\Z\middle|\;\bast n\text{ is a diophantine approximation of }\uptheta \right\}\subset\bast\Z.
\end{align} 
Write
$  \bast\Z^{\upmu}(\uptheta ) =  \bast\Z^{\upmu} \cap  \bast\Z(\uptheta )$ and $\bast\Z^{\upmu[\upiota ]}(\uptheta )=\bast\Z^{\upmu[\upiota ]}\cap
\bast\Z(\uptheta )$.
% \quad\text{and}\quad  \bast\Z^{\upmu }
%(\uptheta )^{\perp} =  \bast\Z^{\upmu}\cap  \bast\Z(\uptheta )^{\perp}     \]
%which defines the order reversing growth filtration of $\bast\Z (\uptheta )$.
%This filtration is in turn filtered by its intersection with the fine growth
%filtration and we obtain $\bast\Z^{\upmu[\upiota ]}(\uptheta )$. 
%and  $\bast\Z^{\upmu[\upiota] }(\uptheta )^{\perp}$.

We now introduce a second filtration which is only available for the groups $\bast\Z (\uptheta )$.  Let $\upnu\in \bstar\PR\R_{\upvarepsilon }$.  For
each $\bast n\in\bast\Z (\uptheta )$ write 
\[ \upnu (\bast n):=\langle |\upvarepsilon (\bast n)|\rangle \in\bstar\PR\R_{\upvarepsilon }, \] 
%for the  $(\bast\R_{\rm fin})^{\times}_{+}$-coset determined by the corresponding error term $\upvarepsilon (\bast n)$.  
%The infinitesimal class $ \upnu (\bast n)$ is called 
the {\bf decay} of $\bast n$. 
We define
\[  \bast\Z_{\upnu}(\uptheta) = \left\{ \bast n \in \bast\Z (\uptheta ) \middle| \; \upnu (\bast n)   \leq \upnu \right\} \]
which is a subgroup of $\bast\Z (\uptheta )$:
for $\bast n,\bast n'\in \bast\Z_{\upnu}(\uptheta)$,
$|\upvarepsilon (\bast n +\bast n')|\leq |\upvarepsilon (\bast n)|+|\upvarepsilon (\bast n)|$ and therefore
$\upnu (\bast n+\bast n')\leq\upnu $.
 Note that if $\upnu< \upnu'$,
\begin{align}\label{decayincl}
\bast\Z_{\upnu}(\uptheta)  \subset  \bast\Z_{\upnu'}(\uptheta)  
\end{align}
which produces an order-preserving filtration of $ \bast\Z (\uptheta )$ called
the {\bf decay filtration}.
Finally we denote the intersection
\[  \bast\Z^{\upmu}_{\upnu}(\uptheta ) = \bast\Z^{\upmu}(\uptheta )\cap \bast\Z_{\upnu}(\uptheta ) , \]
which we refer to as the {\bf growth-decay bi-filtration}
of $\bast\Z (\uptheta )$.  In addition we have the 
  {\bf fine growth-decay tri-filtration}
 $\bast\Z^{\upmu [\upiota]}_{\upnu}(\uptheta )$.
 
\begin{aside}  The reader may wonder why we have chosen to use a {\it strict} inequality to define the growth filtration and yet
a {\it non strict} inequality to define the decay filtration.   The strict inequality in the growth filtration is required in the formulation of the approximate ideal product (see Theorem \ref{productformula}).  The non strict inequality in the decay filtration is used in order to take into account the fact that the strict inequality present in Dirichlet's Theorem may become non strict upon passage to the growth-decay semi ring $\bstar\PR\R$.
\end{aside}

\begin{prop} For all $\upmu, \upnu,\upiota\in \bstar\PR\R_{\upvarepsilon }$, $\upnu\not=-\infty$, $\bast\Z^{\upmu[\upiota]}(\uptheta )$ and  $\bast\Z_{\upnu}(\uptheta )$ are nontrivial and uncountable.  For $\upnu=-\infty$,  $\Z_{-\infty}(\uptheta )$ is non trivial $\Leftrightarrow$
$\uptheta\in\Q$.
\end{prop}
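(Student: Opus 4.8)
The plan is to treat the three clauses separately; the workhorse for the two uncountability assertions is the sequence of principal convergents of $\uptheta$ (for $\uptheta\notin\Q$) evaluated at a suitable \emph{nonstandard} index. For $\upnu=-\infty$: membership $\bast n\in\bast\Z_{-\infty}(\uptheta)$ requires $\upnu(\bast n)\leq-\infty$, i.e.\ $\langle|\upvarepsilon(\bast n)|\rangle=-\infty$, i.e.\ $\upvarepsilon(\bast n)=\bast n\uptheta-\bast n^{\perp}=0$ exactly, i.e.\ $\bast n\uptheta\in\bast\Z$. If $\uptheta=a/b\in\Q$ then $b$ lies in this set (with dual $a$), so it is nontrivial; conversely, if $0\neq\bast n=\bast\{n_{i}\}$ with $\bast n\uptheta\in\bast\Z$ then on $\{i:n_{i}\neq 0\}\in\mathfrak{u}$ one gets $\uptheta=(n_{i}\uptheta)/n_{i}\in\Q$. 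Hence $\bast\Z_{-\infty}(\uptheta)\neq 0\Leftrightarrow\uptheta\in\Q$, and in the rational case $\bast\Z_{-\infty}(\uptheta)=b\bast\Z$.

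Next the rational case $\uptheta=a/b$ for $\upnu\neq-\infty$: here $\bast\Z(\uptheta)=b\bast\Z$ and $\upvarepsilon$ vanishes on it, so $\bast\Z_{\upnu}(\uptheta)=b\bast\Z$ for every $\upnu$ and $\bast\Z^{\upmu[\upiota]}(\uptheta)=b\cdot\bast\Z^{\upmu[\upiota]}$. Since $\langle|\bast n|\rangle\geq 1$ for $\bast n\neq 0$, the group $\bast\Z^{\upmu[\upiota]}=\{\bast n:\langle|\bast n|\rangle<\upiota\cdot\upmu^{-1}\}$ is nonzero exactly when $\upmu<\upiota$ — the range at which the statement is aimed, to which I restrict — and then it contains $\{\bast k\in\bast\Z:0\leq\bast k\leq\bast M\}$ for any infinite $\bast M$ with $\langle\bast M\rangle<\upiota\cdot\upmu^{-1}$. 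Everything thus rests on the standard fact that such a bounded nonstandard interval has cardinality $2^{\aleph_{0}}$: one exhibits that many pairwise $\mathfrak{u}$-inequivalent classes by sending a binary string $x\in\{0,1\}^{\N}$ to the sequence whose $i$-th term reads off the first $h(i)$ bits of $x$, where $h(i)\to\infty$ slowly enough to keep that term $\leq M_{i}$.

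For $\uptheta\in\R-\Q$, take the convergents $p_{k}/q_{k}$: the $q_{k}$ strictly increase to $\infty$ and $|q_{k}\uptheta-p_{k}|<1/q_{k+1}<1/q_{k}$. For $\bast k=\bast\{k_{i}\}\in\bast\N$ set $q_{\bast k}:=\bast\{q_{k_{i}}\}$, $p_{\bast k}:=\bast\{p_{k_{i}}\}$; then $\upvarepsilon(q_{\bast k})=\bast\{q_{k_{i}}\uptheta-p_{k_{i}}\}$ is infinitesimal precisely when $\bast k$ is infinite, and then $q_{\bast k}\neq 0$ is a diophantine approximation of $\uptheta$ with dual $p_{\bast k}$. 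As $\bast k\mapsto q_{\bast k}$ is injective (distinct indices yield distinct denominators for $\mathfrak{u}$-almost all $i$), it suffices to produce an uncountable family of admissible $\bast k$ for each group. For $\bast\Z_{\upnu}(\uptheta)$: write $\upnu=\langle\bast\delta\rangle$ with $\bast\delta=\bast\{\delta_{i}\}>0$ infinitesimal and $\delta_{i}\to 0$, and put $m_{i}=\min\{k:1/q_{k}<\delta_{i}\}$, so $m_{i}\to\infty$ and $\bast m:=\bast\{m_{i}\}$ is infinite; every $\bast k\geq\bast m$ has $|q_{k_{i}}\uptheta-p_{k_{i}}|<1/q_{k_{i}}\leq 1/q_{m_{i}}<\delta_{i}$ for $\mathfrak{u}$-almost all $i$, hence $\upnu(q_{\bast k})\leq\upnu$; and $\{\bast k\in\bast\N:\bast k\geq\bast m\}=\bast m+\bast\N$ has cardinality $2^{\aleph_{0}}$. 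For $\bast\Z^{\upmu[\upiota]}(\uptheta)$ with $\upmu<\upiota$: choose positive infinitesimal representatives $\upmu=\langle\bast\mu_{0}\rangle$, $\upiota=\langle\bast\iota_{0}\rangle$, so $L_{i}:=\iota_{0,i}/\mu_{0,i}\to\infty$, and put $b_{i}=\max\{k:q_{k}\leq\sqrt{L_{i}}\}$, so $b_{i}\to\infty$; every infinite $\bast k$ with $\bast k\leq\bast b:=\bast\{b_{i}\}$ has $q_{k_{i}}\mu_{0,i}/\iota_{0,i}\leq 1/\sqrt{L_{i}}\to 0$, so $q_{\bast k}\cdot\upmu<\upiota$ and $q_{\bast k}\in\bast\Z^{\upmu[\upiota]}(\uptheta)$; the admissible $\bast k$ include $\{\bast k\in\bast\N:\bast k\leq\bast b\}\setminus\N$, again of cardinality $2^{\aleph_{0}}$.

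I expect the real difficulty to be the uncountability bookkeeping, not anything analytic: it rests on the standard but non-obvious fact that a bounded nonstandard interval and a cofinal translate $\bast m+\bast\N$ each have cardinality $2^{\aleph_{0}}$, and one must keep the index $\bast k$ \emph{infinite} (so that $q_{\bast k}$ genuinely approximates $\uptheta$) while simultaneously forcing its growth below $\upiota\cdot\upmu^{-1}$, respectively its decay below $\upnu$. The remaining ingredients — the elementary estimates on convergents and their passage through the growth-decay valuation $\langle\cdot\rangle$ — are routine.
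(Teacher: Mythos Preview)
Your proof is correct and follows the same strategy as the paper: exhibit a diophantine approximation meeting the required growth (resp.\ decay) constraint, then leverage it to produce continuum-many distinct classes. The paper's argument is much terser---it simply asserts that one may choose $\{n_{k}\}$ with $|n_{k}r_{k}|<i_{k}$ (resp.\ $\{m_{k}\}$ with $|m_{k}\uptheta-m_{k}^{\perp}|\leq s_{k}$) and that each such sequence ``may then be used to construct uncountably many elements''---whereas you make both steps explicit via convergents and bounded/cofinal nonstandard index sets, and you are also more careful in flagging the implicit restriction $\upmu<\upiota$.
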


\begin{proof}  Let $\upmu$, $\upnu$  and $\upiota$ be represented by sequences of positive real numbers $\{r_{k}\}$, $\{s_{k}\}$ and $\{ i_{k} \}$  converging to $0$.  Let $\bast n\in\bast\Z (\uptheta )$ be represented by the
sequence of integers $\{ n_{k} \}$.  We may choose $\bast n$ so that $n_{k}r_{k}\rightarrow 0$;
in fact, so that $|n_{k}r_{k}|<i_{k}$.  The sequence $\{n_{k}\}$ may then be used to construct uncountably many elements
of $\bast\Z^{\upmu[\upiota]}(\uptheta )$.  Similarly, we may find a class $\bast m\in\bast\Z (\uptheta )$
represented by $\{ m_{k} \}$ so that $|m_{k}\uptheta - m_{k}^{\perp}|\leq s_{k}$, and the sequence $\{m_{k}\}$ may then be used to construct uncountably many elements
of $\bast\Z_{\upnu}(\uptheta )$.  The last claim in the statement of the Proposition follows from the fact that $\uptheta\in\R$
admits $0\not=\bast n\in\bast\Z (\uptheta )$ with $\upvarepsilon (\bast n)=0$ $\Leftrightarrow$ $\uptheta\in\Q$.

\end{proof}

The duality map
$\bast n\mapsto \bast n^{\perp}$ defines an isomorphism 
\begin{align}\label{dualityiso} \perp : \bast\Z (\uptheta )\longrightarrow \bast \Z(\uptheta^{-1} )
\end{align}
for all $\uptheta\not=0$.

\begin{prop} Let $\uptheta\not=0$.  Then the duality isomorphism (\ref{dualityiso}) respects the fine growth-decay tri-filtration: 
\[   \bast\Z^{\upmu[\upiota]}_{\upnu} (\uptheta )^{\perp} = \bast\Z^{\upmu[\upiota]}_{\upnu} (\uptheta^{-1})   \]
\end{prop}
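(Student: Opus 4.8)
The plan is to reduce the statement to two numerical identities, namely that the duality map $\perp$ of (\ref{dualityiso}) preserves the growth class $\upmu(\bast n)$ and the decay class $\upnu(\bast n)$. First I would record the mechanics of $\perp$. For $0\neq\bast n\in\bast\Z(\uptheta)$ with $\uptheta$-dual $\bast n^{\perp}$ we have $\bast n^{\perp}=\bast n\uptheta-\upvarepsilon(\bast n)$ with $\upvarepsilon(\bast n)\in\bast\R_{\upvarepsilon}$; multiplying by the nonzero standard real $\uptheta^{-1}$ gives $\bast n^{\perp}\uptheta^{-1}-\bast n=-\uptheta^{-1}\upvarepsilon(\bast n)\in\bast\R_{\upvarepsilon}$, so $\bast n^{\perp}\in\bast\Z(\uptheta^{-1})$, its $\uptheta^{-1}$-dual is $\bast n$ (duals being unique), and $\upvarepsilon_{\uptheta^{-1}}(\bast n^{\perp})=-\uptheta^{-1}\upvarepsilon(\bast n)$. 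In particular the composite of $\perp\colon\bast\Z(\uptheta)\to\bast\Z(\uptheta^{-1})$ with $\perp\colon\bast\Z(\uptheta^{-1})\to\bast\Z(\uptheta)$ is the identity, and the case $\bast n=0$ is trivial since $0^{\perp}=0$.

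The crux is the identity $\langle\bast n\rangle=\langle\bast n^{\perp}\rangle$ in $\bstar\PR\R$ for every $0\neq\bast n\in\bast\Z(\uptheta)$. Since $\bast n$ is a nonzero nonstandard integer, $|\bast n|\geq 1$, so $\langle\bast n\rangle\geq 1$; since $\uptheta$ is a nonzero standard real, $\langle\uptheta\rangle=1$ (the growth-decay valuation is trivial on $\R$), so $\langle\bast n\uptheta\rangle=\langle\bast n\rangle\geq 1$; and $\upvarepsilon(\bast n)$ is infinitesimal, so $\langle\upvarepsilon(\bast n)\rangle<1\leq\langle\bast n\uptheta\rangle$. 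Thus the two summands of $\bast n^{\perp}=\bast n\uptheta-\upvarepsilon(\bast n)$ have distinct classes in $\bstar\PR\R$, and since $\langle\cdot\rangle$ is nonarchimedean the class of the sum is the larger of the two: $\langle\bast n^{\perp}\rangle=\langle\bast n\uptheta\rangle=\langle\bast n\rangle$. As $\langle\cdot\rangle$ is multiplicative this gives at once $\upmu(\bast n^{\perp})=\langle|\bast n^{\perp}|^{-1}\rangle=\langle|\bast n|^{-1}\rangle=\upmu(\bast n)$ and $|\bast n^{\perp}|\cdot\upmu=|\bast n|\cdot\upmu$ in $\bstar\PR\R$. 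The decay identity is then immediate from the formula above: $\upnu(\bast n^{\perp})=\langle|\uptheta^{-1}\upvarepsilon(\bast n)|\rangle=\langle|\uptheta^{-1}|\rangle\cdot\langle|\upvarepsilon(\bast n)|\rangle=\langle|\upvarepsilon(\bast n)|\rangle=\upnu(\bast n)$, again because $\langle\cdot\rangle$ is trivial on $\R$.

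With these identities in hand all three filtration conditions transport verbatim under $\perp$: $\upmu(\bast n)>\upmu$ is equivalent to $\upmu(\bast n^{\perp})>\upmu$ (growth index), $|\bast n|\cdot\upmu<\upiota$ is equivalent to $|\bast n^{\perp}|\cdot\upmu<\upiota$ (fine index $[\upiota]$), and $\upnu(\bast n)\leq\upnu$ is equivalent to $\upnu(\bast n^{\perp})\leq\upnu$ (decay index). Hence $\bast\Z^{\upmu[\upiota]}_{\upnu}(\uptheta)^{\perp}\subseteq\bast\Z^{\upmu[\upiota]}_{\upnu}(\uptheta^{-1})$; applying the same inclusion with $\uptheta^{-1}$ in place of $\uptheta$ and using that $\perp$ is involutive yields the reverse inclusion, whence equality. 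I do not anticipate a genuine obstacle here; the only point requiring care is the passage from the nonarchimedean inequality to an equality in the computation of $\langle\bast n^{\perp}\rangle$, which is precisely where the hypotheses $\bast n\neq 0$ (forcing $|\bast n|\geq 1$) and $\uptheta\neq 0$ (forcing $\langle\uptheta\rangle=1$) are used.
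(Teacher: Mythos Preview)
Your proof is correct and follows the same approach as the paper's: the paper simply records the equivalence $\bast n\cdot\upmu<\upiota\Leftrightarrow\bast n^{\perp}\cdot\upmu<\upiota$ (your identity $\langle\bast n\rangle=\langle\bast n^{\perp}\rangle$) and the formula $\upvarepsilon(\bast n^{\perp})=-\uptheta^{-1}\upvarepsilon(\bast n)$, from which preservation of the fine growth and decay filtrations is immediate. Your version supplies the justification the paper leaves implicit, namely the nonarchimedean argument showing $\langle\bast n^{\perp}\rangle=\langle\bast n\uptheta\rangle=\langle\bast n\rangle$, which is exactly where the hypotheses $\bast n\neq 0$ and $\uptheta\neq 0$ enter.
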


\begin{proof}  Note that 
\begin{equation}\label{dualordmag} \bast n\cdot\upmu <\upiota \Leftrightarrow \bast n^{\perp}\cdot\upmu <\upiota
\end{equation}
 which implies that duality respects the fine growth bi-filtration.  On the other hand, $\upvarepsilon (\bast n^{\perp}) = -\uptheta^{-1}\upvarepsilon (\bast n)$
so the decay filtration is preserved as well.
\end{proof}

Recall \cite{La1} that $\uptheta$ is {\bf projective linear equivalent} to $\upeta$ if there exists $A\in {\rm PGL}_{2}(\Z )$ such that $A(\uptheta )=\upeta$.   The relation of projective linear equivalence is denoted in this paper by:
\[  \uptheta\Bumpeq\upeta .\]

\begin{theo}\label{triisomorphism} If $\uptheta\Bumpeq\upeta$ by $A\in {\rm PGL}_{2}(\Z )$, then $A$ induces an isomorphism
of $\bast\Z (\uptheta )$ with $\bast\Z (\upeta )$ preserving the fine growth-decay tri-filtration.
\end{theo}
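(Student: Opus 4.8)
The plan is to reduce the statement to an explicit computation with the action of $\mathrm{GL}_2(\Z)$ on numerator--denominator pairs, and then verify that both the growth and the decay functions transform in a controlled (in fact, invariant) way. First I would fix a lift $\begin{pmatrix} a & b \\ c & d\end{pmatrix}\in \mathrm{GL}_2(\Z)$ of $A$, so that $\upeta = A(\uptheta) = (a\uptheta+b)/(c\uptheta+d)$, and define the candidate isomorphism on the level of pairs: if $(\bast n^{\perp},\bast n)$ is a numerator--denominator pair for $\uptheta$, send it to $(a\bast n^{\perp}+b\bast n,\; c\bast n^{\perp}+d\bast n)$. One checks this respects the group structure of $\bast\Z(\uptheta)$ (it is the restriction of a $\Z$-linear map on $\bast\Z\times\bast\Z$), that the resulting second coordinate $\bast m := c\bast n^{\perp}+d\bast n$ lies in $\bast\Z(\upeta)$ with dual $\bast m^{\perp} = a\bast n^{\perp}+b\bast n$, and that using $A^{-1}$ in place of $A$ gives a two-sided inverse, so that we indeed have an isomorphism $\bast\Z(\uptheta)\cong\bast\Z(\upeta)$. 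This part is essentially the content of the earlier paper \cite{Ge1} (or \cite{Ge4}), so I would cite it if available and otherwise spell out the identity $\upvarepsilon_{\upeta}(\bast m) = (c\uptheta+d)^{-1}\,\upvarepsilon_{\uptheta}(\bast n)$ by direct substitution.

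The heart of the proof is showing the tri-filtration is preserved, i.e. $\bast\Z^{\upmu[\upiota]}_{\upnu}(\uptheta)$ maps onto $\bast\Z^{\upmu[\upiota]}_{\upnu}(\upeta)$. The key observation is that $c\uptheta+d$ and $a\uptheta+b$ are fixed real numbers (nonzero real numbers, since $A$ is invertible and $\uptheta$ is not a pole), hence lie in $\R_+^{\times}$ modulo sign, and therefore are \emph{units} in $\bast\R_{\mathrm{fin}}$; consequently multiplication by them is trivial on $\bstar\PR\R$, i.e. $\langle (c\uptheta+d)\bast x\rangle = \langle \bast x\rangle$ for every $\bast x\in\bast\R$. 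Applying this twice: for the decay, $\upnu(\bast m) = \langle|\upvarepsilon_\upeta(\bast m)|\rangle = \langle|(c\uptheta+d)^{-1}\upvarepsilon_\uptheta(\bast n)|\rangle = \langle|\upvarepsilon_\uptheta(\bast n)|\rangle = \upnu(\bast n)$, so the decay filtration index is literally unchanged. For the growth and fine-growth indices I would write $\bast m = c\bast n^{\perp}+d\bast n = \bast n\big(c(\bast n^{\perp}/\bast n)+d\big)$ and note that $\bast n^{\perp}/\bast n \to \uptheta$ (since $\upvarepsilon(\bast n)\simeq 0$ forces $\bast n^{\perp}/\bast n - \uptheta \simeq \upvarepsilon(\bast n)/\bast n$, which is infinitesimal when $\bast n$ is infinite, and is exactly $\uptheta$ when $\bast n=n\in\Z$ with $\uptheta$ rational in that convergent sense); hence $c(\bast n^{\perp}/\bast n)+d \simeq c\uptheta+d\neq 0$, so it is a noninfinitesimal, noninfinite element of $\bast\R_+$ (up to sign), i.e.\ a unit in $\bast\R_{\mathrm{fin}}$. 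Therefore $|\bast m| = |\bast n|\cdot(\text{unit})$, giving $\upmu(\bast m) = \langle|\bast m^{-1}|\rangle = \langle|\bast n^{-1}|\rangle = \upmu(\bast n)$ and likewise $|\bast m|\cdot\upmu = |\bast n|\cdot\upmu$ in $\bstar\PR\R$, which is exactly what the conditions $\upmu<\upmu(\bast n)$ and $|\bast n|\cdot\upmu<\upiota$ defining $\bast\Z^{\upmu[\upiota]}_{\upnu}(\uptheta)$ refer to. By symmetry (using $A^{-1}$) the map sends $\bast\Z^{\upmu[\upiota]}_{\upnu}(\uptheta)$ \emph{onto} $\bast\Z^{\upmu[\upiota]}_{\upnu}(\upeta)$, completing the proof.

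The one point requiring care --- and the main obstacle --- is the case $c\bast n^{\perp}+d\bast n = 0$, or more generally handling elements where $\bast n$ or $\bast m$ is a \emph{finite} (standard) integer; there the ratio $\bast n^{\perp}/\bast n$ need not be infinitesimally close to $\uptheta$ in the naive sense, and one must fall back on the fact that such elements occur only when $\uptheta$ (resp.\ $\upeta$) is rational, in which case $\upeta$ is rational too (rationality is $\mathrm{PGL}_2(\Z)$-invariant) and a direct check on the finitely-generated subgroup of standard integers suffices --- or one simply observes that $0\in\Z$ maps to $0\in\Z$ and these degenerate elements contribute only the index $1\in\bstar\PR\R$, which is handled uniformly. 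I would isolate this as a short preliminary remark: \emph{if $\bast n\in\bast\Z(\uptheta)$ is infinite then $\bast m=c\bast n^\perp+d\bast n$ is infinite and $|\bast m|/|\bast n|$ is a noninfinitesimal noninfinite positive class; if $\bast n$ is finite then $\uptheta\in\Q$, $\upmu(\bast n)=1$, and the claim is trivial}. Modulo this case distinction, everything else is the routine substitution recorded above, relying only on Proposition~\ref{denselinord}, Proposition~\ref{troprop}, and the definition of the growth-decay valuation $\langle\cdot\rangle$ as being trivial on units of $\bast\R_{\mathrm{fin}}$.
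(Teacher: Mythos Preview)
Your proof is correct and follows the same overall strategy as the paper: define the map by the linear action of a lift of $A$ on numerator--denominator pairs, compute the error identity $\upvarepsilon_{\upeta}(\bast m)=(c\uptheta+d)^{-1}\upvarepsilon_{\uptheta}(\bast n)$ to handle decay, and then check growth separately. The one place you diverge is in the growth argument: you write $\bast m=\bast n\big(c(\bast n^{\perp}/\bast n)+d\big)$ and argue the bracketed factor is a unit in $\bast\R_{\mathrm{fin}}$, which forces you into a case distinction on finite versus infinite $\bast n$. The paper instead invokes the observation (equation (\ref{dualordmag}) just above) that $\bast n\in\bast\Z^{\upmu[\upiota]}\Leftrightarrow\bast n^{\perp}\in\bast\Z^{\upmu[\upiota]}$; since $\bast\Z^{\upmu[\upiota]}$ is a group, any $\Z$-linear combination $c\bast n^{\perp}+d\bast n$ lies in it automatically, and the edge cases dissolve. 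Your route is perfectly valid, just slightly more laborious.
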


\begin{proof}  The isomorphism is induced by the matrix action of a linear representative $A=\left(\begin{array}{cc}
a & b \\
c& d
\end{array}      \right)$ on pairs $(\bast n^{\perp},\bast n)$ where $\bast n\in\bast\Z (\uptheta )$ and $\uptheta\cdot\bast n\simeq\bast n^{\perp}$.
That is, $A(\bast n) = c\bast n^{\perp}+d\bast n$ and $A(\bast n^{\perp}) = a\bast n^{\perp}+b\bast n$.
By (\ref{dualordmag}), $\bast n\in\bast\Z^{\upmu[\upiota]}$ $\Leftrightarrow$ $\bast n^{\perp}\in\bast\Z^{\upmu[\upiota]}$.  It follows then
that $\bast n\in\bast\Z^{\upmu[\upiota]}$ $\Leftrightarrow$ $A(\bast n) \in\bast\Z^{\upmu[\upiota]}$.  On the other hand, 
\begin{align*}
\upeta\cdot A(\bast n)-A(\bast n^{\perp}) & = 
\frac{1}{c\uptheta+d}\bigg\{(a\uptheta+b)\big(c\bast n^{\perp}+d\bast n\big)-(c\uptheta+d)\big(a\bast n^{\perp}+b\bast n \big)  \bigg\} \\
& = \frac{1}{c\uptheta+d} \big(\uptheta\bast n-\bast n^{\perp}\big) \\
& = \frac{\upvarepsilon (\bast n)}{c\uptheta+d} .
\end{align*}
Therefore: $\bast n\in\bast\Z_{\upnu} (\uptheta )$ $\Leftrightarrow$ $A(\bast n )\in\bast\Z_{\upnu} (\upeta )$.
\end{proof}

\section{Nonvanishing Spectra}\label{nonvanspec}

The nontriviality of the group $\bast\Z^{\upmu}_{\upnu}(\uptheta ) $ 
%$\bast\Z^{\upmu[\upiota ]}_{\upnu}(\uptheta ) $ 
for specific indices $\upmu,\upnu\in\bstar\PR\R_{\upvarepsilon}$ depends intimately on the type of $\uptheta$. 
We define the {\bf nonvanishing spectrum}
to be the subset
\[ {\rm Spec}(\uptheta ) = \left\{ (\upmu ,\upnu )\middle|\;  \bast\Z^{\upmu}_{\upnu}(\uptheta )\not=0 \right\}\subset 
\bstar\PR\R_{\upvarepsilon}^{2}.   \]
In this section, we will characterize the spectra of a real
number according to its ``linear classification'' (rational, badly approximable, well approximable, etc.).  We begin with some very
general results.

\begin{prop}\label{isospectral}  If $\uptheta\Bumpeq\upeta$ then ${\rm Spec}(\uptheta )={\rm Spec}(\upeta )$.
\end{prop}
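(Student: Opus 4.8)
The plan is to deduce this immediately from Theorem~\ref{triisomorphism}. That theorem says that if $\uptheta\Bumpeq\upeta$ via $A\in{\rm PGL}_2(\Z)$, then $A$ induces an isomorphism $\bast\Z(\uptheta)\to\bast\Z(\upeta)$ which preserves the fine growth-decay tri-filtration, hence in particular the (coarser) growth-decay bi-filtration: it restricts to isomorphisms $\bast\Z^{\upmu}_{\upnu}(\uptheta)\xrightarrow{\ \sim\ }\bast\Z^{\upmu}_{\upnu}(\upeta)$ for every pair $(\upmu,\upnu)\in\bstar\PR\R_{\upvarepsilon}^2$. Since a group isomorphism sends the trivial group to the trivial group and nontrivial to nontrivial, we get $\bast\Z^{\upmu}_{\upnu}(\uptheta)\neq 0 \iff \bast\Z^{\upmu}_{\upnu}(\upeta)\neq 0$. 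By the definition of the nonvanishing spectrum, this says precisely $(\upmu,\upnu)\in{\rm Spec}(\uptheta)\iff(\upmu,\upnu)\in{\rm Spec}(\upeta)$, i.e.\ ${\rm Spec}(\uptheta)={\rm Spec}(\upeta)$.

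The only point requiring a word of care is that $\bast\Z^{\upmu}_{\upnu}$ is defined as an intersection $\bast\Z^{\upmu}(\uptheta)\cap\bast\Z_{\upnu}(\uptheta)$ inside $\bast\Z(\uptheta)$, so one should note that the tri-filtration-preserving isomorphism from Theorem~\ref{triisomorphism} carries $\bast\Z^{\upmu[\upiota]}_{\upnu}(\uptheta)$ to $\bast\Z^{\upmu[\upiota]}_{\upnu}(\upeta)$ for all $\upiota$, and taking the union over $\upiota\in\bstar\PR\R_{\upvarepsilon}$ recovers $\bast\Z^{\upmu}(\uptheta)$ (since $\bast n\in\bast\Z^{\upmu}$ iff $|\bast n|\cdot\upmu$ is infinitesimal, iff $|\bast n|\cdot\upmu<\upiota$ for some infinitesimal $\upiota$); intersecting with the decay condition $\upnu(\bast n)\le\upnu$, which is also preserved, gives the claim. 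Alternatively one simply invokes directly the statement in the proof of Theorem~\ref{triisomorphism} that $A$ satisfies $\bast n\in\bast\Z^{\upmu}(\uptheta)\iff A(\bast n)\in\bast\Z^{\upmu}(\upeta)$ and $\bast n\in\bast\Z_{\upnu}(\uptheta)\iff A(\bast n)\in\bast\Z_{\upnu}(\upeta)$, so $A$ restricts to an isomorphism on the intersections.

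There is no real obstacle here: the statement is a formal corollary, and the substantive work was done in establishing Theorem~\ref{triisomorphism}. The mild bookkeeping concern is only to confirm that ``preserves the bi-filtration'' follows from ``preserves the tri-filtration'', which is immediate from the preceding two observations.
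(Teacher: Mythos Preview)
Your proposal is correct and takes essentially the same approach as the paper, which simply says the result follows immediately from Theorem~\ref{triisomorphism}. Your additional remarks about passing from the tri-filtration to the bi-filtration are accurate elaborations of what the paper leaves implicit.
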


\begin{proof}  This follows immediately from Theorem \ref{triisomorphism}.
\end{proof}

\begin{theo}\label{gennonvan}  For all $\uptheta\in\R$ and $\upmu <\upnu$, $\bast\Z^{\upmu}_{\upnu}(\uptheta )\not=0$.
\end{theo}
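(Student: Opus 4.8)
The plan is to reduce the statement to the classical Dirichlet theorem for $\uptheta$ and then argue that the growth-decay valuation allows us to pick infinitely many Dirichlet denominators into a single nonstandard integer whose growth strictly dominates $\upmu$ and whose decay is bounded by $\upnu$. Concretely: fix representatives $\{r_k\},\{s_k\}$ of $\upmu,\upnu\in\bstar\PR\R_\upvarepsilon$ by positive real sequences tending to $0$, with $r_k < s_k$ for all $k$ (possible since $\upmu<\upnu$, by Proposition \ref{denselinord} and the definition of the order on $\bstar\PR\R$). I want to produce a sequence of integers $\{n_k\}$, $n_k\to\infty$, together with integers $\{n_k^\perp\}$, such that
\[
 |n_k|^{-1} < r_k \qquad\text{and}\qquad |n_k\uptheta - n_k^\perp| \le s_k
\]
for all $k$; the class $\bast n$ of $\{n_k\}$ then lies in $\bast\Z(\uptheta)$, has $\upmu(\bast n)=\langle |\bast n^{-1}|\rangle > \upmu$ (because $|n_k|^{-1}<r_k$ forces $|\bast n|^{-1}<\upmu$ in $\bstar\PR\R_\upvarepsilon$, i.e.\ $\upmu(\bast n)>\upmu$ strictly, using $r_k<s_k$ as slack if one needs the inequality to be genuinely strict after passing to the quotient), and has $\upnu(\bast n)=\langle |\upvarepsilon(\bast n)|\rangle \le \upnu$. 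Hence $0\neq\bast n\in\bast\Z^\upmu_\upnu(\uptheta)$, as desired.

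The first step is therefore the termwise construction. If $\uptheta\notin\Q$, Dirichlet's theorem gives infinitely many $q\in\N$ with $|q\uptheta - p| < 1/q$ for the nearest integer $p$; for a given index $k$ I choose such a $q=q_k$ large enough that $q_k^{-1} < \min(r_k,s_k)$, which is possible since $r_k,s_k>0$, and set $n_k=q_k$, $n_k^\perp = p_k$. Then $|n_k|^{-1}<r_k$ and $|n_k\uptheta-n_k^\perp| < q_k^{-1} < s_k$, so both inequalities hold. If $\uptheta = a/b\in\Q$ in lowest terms, I simply take $n_k$ to be a large multiple of $b$ with $n_k^{-1}<r_k$ and $n_k^\perp = (a/b)n_k\in\Z$, so that $\upvarepsilon(\bast n)=0$ and the decay condition holds trivially for every $\upnu$ (including the argument is unaffected). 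The freedom in choosing the $q_k$ (or the multiples of $b$) along a subsequence produces, as in the proof of the preceding Proposition, uncountably many such classes, but for nontriviality one suffices.

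The one point requiring a little care — and the only real obstacle — is matching the \emph{strict} growth inequality $\upmu<\upmu(\bast n)$ against the \emph{non-strict} decay inequality $\upnu(\bast n)\le\upnu$ using the hypothesis $\upmu<\upnu$, since a priori the best Dirichlet denominator only gives $|\upvarepsilon(\bast n)|$ comparable to $|\bast n|^{-1}$, which lands on the diagonal $\upmu=\upnu$, not strictly below $\upnu$. The resolution is exactly the separation $r_k<s_k$: because $\upmu<\upnu$ in $\bstar\PR\R_\upvarepsilon$ we may choose representatives with $r_k/s_k\to 0$ (using that the order is determined representative-wise, Proposition \ref{denselinord}), so imposing $q_k^{-1}<r_k$ automatically yields $|\upvarepsilon(\bast n)| \le q_k^{-1} < r_k \ll s_k$, placing the decay strictly below $\upnu$ — in particular $\le\upnu$ — while the growth $|\bast n|^{-1}\le q_k^{-1}<r_k$ places it strictly below $\upmu$, i.e.\ $\upmu(\bast n)>\upmu$. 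Thus the gap between $\upmu$ and $\upnu$ is precisely what absorbs the discrepancy between the two filtration conventions flagged in the Aside, and the construction goes through for every $\uptheta\in\R$.
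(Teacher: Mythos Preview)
Your proof has a direction error in the growth condition. You claim that $|n_k|^{-1}<r_k$ forces $\upmu(\bast n)>\upmu$, but this is backwards: by definition $\upmu(\bast n)=\langle|\bast n|^{-1}\rangle$, so $|n_k|^{-1}<r_k$ gives $\upmu(\bast n)\leq\upmu$, the opposite inequality. Equivalently, recall that $\bast\Z^\upmu=\{\bast n: |\bast n|\cdot\upmu\in\bstar\PR\R_\upvarepsilon\}$ consists of nonstandard integers that are \emph{small} relative to $\upmu^{-1}$. Your construction picks $q_k$ with $q_k^{-1}<r_k$, i.e.\ $q_k>r_k^{-1}$, so $q_k r_k>1$ and $|\bast n|\cdot\upmu$ is not infinitesimal; hence $\bast n\notin\bast\Z^\upmu$. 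The same confusion recurs in your final paragraph, where ``strictly below $\upmu$'' is incorrectly equated with ``$\upmu(\bast n)>\upmu$''.

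The paper's argument repairs exactly this point by invoking the \emph{Uniform} Dirichlet Theorem rather than the plain one: choose $\uprho$ with $\upmu<\uprho<\upnu$ and write $\uprho=\langle\bast N^{-1}\rangle$ for some infinite $\bast N\in\bast\Z_+$; then uniform Dirichlet produces $\bast n$ with $\bast n<\bast N$ and $|\upvarepsilon(\bast n)|<\bast N^{-1}$. The bound $\bast n<\bast N$ gives $\bast n\cdot\upmu\leq\bast N\cdot\upmu\in\bstar\PR\R_\upvarepsilon$ (since $\upmu<\uprho=\langle\bast N^{-1}\rangle$), placing $\bast n$ in $\bast\Z^\upmu$; and the error bound gives $\upnu(\bast n)\leq\uprho<\upnu$. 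So the gap $\upmu<\upnu$ is indeed essential, but it is used to insert an intermediate scale $\bast N$ against which to run Dirichlet and thereby bound $\bast n$ from \emph{above}, not to absorb slack on a denominator chosen arbitrarily large.
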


\begin{proof}  By Proposition \ref{denselinord},
we may find $\uprho$ with $\upmu <\uprho <\upnu$; and by Proposition \ref{integerrepre},
$\uprho=\langle\bast N^{-1}\rangle$ for some
$\bast N\in\bast\Z_{+} - \Z_{+}$.  By the Uniform Dirichlet Theorem\footnote{For any real number $N>1$, there exist $p,q\in\Z$ with $1\leq q<N$
such that $|q\uptheta - p |<1/N$. See \cite{Wa}.} 
 there is $\bast n\in\bast\Z (\uptheta )$
such that 
$  | \upvarepsilon (\bast n )|<\bast N^{-1}  $
where $\bast n <\bast N$.  Therefore, 
$  | \upnu (\bast n)| \leq \upnu $.  
On the other hand
$  \bast n\cdot \upmu \leq\bast N\cdot \upmu \in \bstar\PR\R_{\upvarepsilon} $
since $\upmu <\uprho$, so $\bast n\in\bast\Z^{\upmu} (\uptheta )$
\end{proof}

The set $\{(\upmu,\upnu )|\;\upmu<\upnu\}\subset {\rm Spec}(\uptheta )$ is called the {\bf slow component}.

For $\uptheta\in\R$, denote by $\{ a_{i}=a_{i}(\uptheta )\}$, $i=0,1,\dots$, the sequence of its partial quotients \cite{La1}: an infinite sequence
 $\Leftrightarrow$ $\uptheta\not\in\Q$.  As is the custom, we write 
 \[ \uptheta= [a_{0}a_{1}\dots ].\]
The sequence $\{ q_{i}\}$ of {\bf best denominators} of $\uptheta$ is defined recursively by
the formula 
\[ q_{i+1}=a_{i+1}q_{i}+q_{i-1},\quad q_{0}=1, \; q_{1}=a_{1} .\]   Similarly, the sequence 
$\{ p_{i}\}$
of {\bf best numerators} is defined 
\[ p_{i+1}=a_{i+1}p_{i}+p_{i-1}, \quad p_{0}=a_{0},\; p_{1}=a_{1}a_{0}+1.\]  
We have (e.g. see Theorem 5 of \cite{La1})
\begin{align}\label{bestineq} q_{i}|q_{i}\uptheta -p_{i}|<q_{i}^{-1}.\end{align}
The sequence of quotients $\{p_{i}/q_{i}\}$ is called the sequence of
{\bf best approximations} (or {\bf principal convergents}) of $\uptheta$: by (\ref{bestineq}) they satisfy 
$p_{i}/q_{i}\rightarrow \uptheta$. See \cite{Ca}, \cite{La1}, \cite{Sch}.

Consider now a sequence 
 $\{ q_{n_{i}}\}$ in which $q_{n_{i}}$ is the $n_{i}$th best denominator of $\uptheta$, where $n_{i}\leq n_{i+1}$ for all $i$ and
 $n_{i}\rightarrow\infty$. 
 By (\ref{bestineq}) the associated sequence class defines an element
 \[ \bast \widehat{q}:=\bast \{ q_{n_{i}}\}\in \bast\Z (\uptheta )\] called
a {\bf best denominator class}, and 
%(see {\em Example} \ref{bestapprox} of \S \ref{ratapprox}) 
the classes 
\[  \widehat{\upmu}:=\upmu (\bast  \widehat{q}),\quad \text{resp.}\quad
%the class of $\bast \widehat{q}^{-1}$ and 
  \widehat{\upnu} := \upnu (\bast  \widehat{q})\] will be referred to
as the associated {\bf best growth} resp.\  {\bf best decay} of $\bast  \widehat{q}$.  We will denote by
$\bast \widehat{q}^{+}$ resp.\ $\bast \widehat{q}^{-}$ the classes of the successor and predecessor sequences $\bast\{q_{n_{i}+1}\}$ resp.\ $\bast\{q_{n_{i}-1}\}$,
with a similar notation employed for the associated best growth and best decay classes e.g.\ $\widehat{\upmu}^{+}$ = the growth
class of $\bast \widehat{q}^{+}$.  Note that $\widehat{\upmu}^{+}\leq \upmu$
and $\widehat{\upnu}^{+}\leq \upnu$.  The above terminology applies without change to the corresponding sequence
of best numerators $\{ p_{n_{i}}\}$, yielding the associated best numerator class $\bast \widehat{p}$ and its best growth.

\begin{prop}  Let $ \bast \widehat{q}$ be a best denominator class, $\bast \widehat{p}$ the corresponding
best numerator class. 
Then $ \bast \widehat{q}^{\perp}=\bast \widehat{p}\in\bast\Z (\uptheta^{-1})$.  In particular, the best growth $\widehat{\upmu}$
of $\bast \widehat{q}$ is also the best growth of $\bast \widehat{p}$.  
\end{prop}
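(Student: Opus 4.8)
Both assertions will follow from the single elementary fact that the principal convergents of $\uptheta$ satisfy $p_{n_i}/q_{n_i}\to\uptheta$, which is immediate from (\ref{bestineq}). For the identity $\bast \wideparen{q}^{\perp}=\bast \wideparen{p}$ I would just unwind the definition of the dual: $\bast \wideparen{q}^{\perp}$ is the element of $\bast\Z$ with $\bast \wideparen{q}\uptheta-\bast \wideparen{q}^{\perp}\in\bast\R_{\upvarepsilon}$, and it is unique, since two candidates would differ by an infinitesimal element of $\bast\Z$, hence by $0$. So it suffices to check $\bast \wideparen{q}\uptheta-\bast \wideparen{p}\in\bast\R_{\upvarepsilon}$, i.e. that $|q_{n_i}\uptheta-p_{n_i}|\to 0$; dividing (\ref{bestineq}) by $q_{n_i}$ yields $|q_{n_i}\uptheta-p_{n_i}|<q_{n_i}^{-2}$, and $n_i\to\infty$ forces $q_{n_i}\to\infty$ (this infinitesimality was in fact already used when $\bast \wideparen{q}$ was declared to lie in $\bast\Z(\uptheta)$). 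The membership $\bast \wideparen{p}\in\bast\Z(\uptheta^{-1})$ is then immediate from the duality isomorphism (\ref{dualityiso}), which applies because $\uptheta$ is irrational, hence nonzero.

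For the equality of best growths, the quickest route is a direct computation in $\bstar\PR\R$. Since $\uptheta\not=0$ we have $p_{n_i}\not=0$ for $i$ large, so after adjusting finitely many terms of a representing sequence (which alters no class) $\upmu(\bast \wideparen{p})=\langle|\bast \wideparen{p}^{-1}|\rangle$ is defined. Using that $\langle\cdot\rangle$ is a multiplicative homomorphism onto the ordered group $\bstar\PR\R$, I would compute $\wideparen{\upmu}\cdot\upmu(\bast \wideparen{p})^{-1}=\langle|\bast \wideparen{q}^{-1}|\rangle\cdot\langle|\bast \wideparen{p}|\rangle=\langle|\bast \wideparen{p}/\bast \wideparen{q}|\rangle$, the growth-decay valuation of the class of $\{|p_{n_i}/q_{n_i}|\}$. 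Since $p_{n_i}/q_{n_i}\to\uptheta$, this class is infinitesimally close to $|\uptheta|$, which is a noninfinitesimal noninfinite positive real; hence the class lies in $(\bast\R_{\rm fin})_{+}^{\times}$ and its valuation is the identity $1$. Therefore $\wideparen{\upmu}=\upmu(\bast \wideparen{p})$.

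I do not anticipate a genuine obstacle; the only points needing a word are the uniqueness of the dual (so that $\bast \wideparen{q}^{\perp}$ is literally $\bast \wideparen{p}$, not merely equal to it up to an infinitesimal) and the disposal of the finitely many indices with $p_{n_i}=0$ when passing to reciprocals. An alternative to the second paragraph, perhaps closer in spirit to the ``in particular'', is to invoke (\ref{dualordmag}) from the proof of the previous proposition: it shows that $\bast n$ and $\bast n^{\perp}$ belong to exactly the same fine growth groups $\bast\Z^{\upmu[\upiota]}$, whence, by density of the order on $\bstar\PR\R$, $\upmu(\bast n)=\upmu(\bast n^{\perp})$; one then specializes to $\bast n=\bast \wideparen{q}$.
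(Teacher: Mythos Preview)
Your proof is correct and follows essentially the same approach as the paper. The paper's proof is just the one-line version: it cites (\ref{bestineq}) for $\bast\wideparen{q}^{\perp}=\bast\wideparen{p}$, and for the growth equality simply notes that $\bast\wideparen{q}\uptheta-\bast\wideparen{p}=\upvarepsilon(\bast\wideparen{q})$, which is exactly your ratio computation $\bast\wideparen{p}/\bast\wideparen{q}\simeq\uptheta\in(\bast\R_{\rm fin})_{+}^{\times}$ written additively. Your extra care about uniqueness of the dual and the finitely many indices with $p_{n_i}=0$ is warranted but routine, and your alternative via (\ref{dualordmag}) is precisely the mechanism the paper uses in the surrounding propositions.
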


\begin{proof}  That $ \bast \widehat{q}^{\perp}=\bast \widehat{p}$ follows from (\ref{bestineq}).  Since $ \bast \widehat{q}\uptheta-\bast \widehat{p}=\upvarepsilon (\bast \widehat{q})$, the best growth  class of $\bast \widehat{p}$ coincides with that of $ \bast \widehat{q}$.
\end{proof}

\begin{note} When $\uptheta\in\Q$, the sequence of best approximations is finite, so every best approximation class $\bast \widehat{q}$ is standard and equal to one
of the $q_{i}$.  In this case, every best growth is $  \widehat{\upmu}=1$ and every best decay is $  \widehat{\upnu}=-\infty$. 
%or $1$. 
\end{note}

For $\uptheta\in\R-\Q$, we denote by:
\begin{itemize}
\item $\bast\Z_{\rm b}(\uptheta )$ 
%($\bast\Z_{\rm b}(\uptheta )^{\perp}$) 
the set of best denominator 
%(best numerator) 
classes.
\item $ \bstar\PR\R_{\upvarepsilon}^{\rm bg}(\uptheta) $ ($ \bstar\PR\R_{\upvarepsilon}^{\rm bd}(\uptheta) $) the set of best growths (best decays) of best denominator classes.
\end{itemize}   

\begin{prop} For $\uptheta\in\R-\Q$, $\bstar\PR\R_{\upvarepsilon}^{\rm bg}(\uptheta)$ is closed in the order topology.
\end{prop}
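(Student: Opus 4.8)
The plan is to show that the complement of $\bstar\PR\R_{\upvarepsilon}^{\rm bg}(\uptheta)$ in $\bstar\PR\R_{\upvarepsilon}$ is open: for each $\upxi$ which is not a best growth I will produce an open interval about $\upxi$ disjoint from the whole set of best growths. Throughout, $\{q_{n}\}$ denotes the sequence of best denominators of $\uptheta$, so that a best growth is precisely a class $\langle\bast\{q_{n_{i}}^{-1}\}\rangle$ with $\{n_{i}\}$ non-decreasing and $n_{i}\to\infty$. First I locate $\upxi$ between consecutive best denominators. Pick a positive-infinitesimal representative $\bast x=\bast\{x_{i}\}$ of $\upxi$; since $x_{i}^{-1}\to\infty$ along $\mathfrak{u}$, on the $\mathfrak{u}$-large set where $x_{i}^{-1}\geq q_{0}=1$ put $k(i):=\max\{k:q_{k}\leq x_{i}^{-1}\}$, so that $q_{k(i)}\leq x_{i}^{-1}<q_{k(i)+1}$, i.e.\ $q_{k(i)+1}^{-1}<x_{i}\leq q_{k(i)}^{-1}$. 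Because $q_{k(i)+1}>x_{i}^{-1}\to\infty$ along $\mathfrak{u}$, one has $k(i)\to\infty$ along $\mathfrak{u}$ (otherwise $q_{k(i)+1}$ would be bounded on a $\mathfrak{u}$-set), so for each fixed $j\in\Z$ the sequence $\bast\{q_{k(i)+j}^{-1}\}$ is a positive infinitesimal and $\upmu_{j}:=\langle\bast\{q_{k(i)+j}^{-1}\}\rangle$ is a well-defined element of $\bstar\PR\R_{\upvarepsilon}$ (the $\mathfrak{u}$-null set where $k(i)+j<0$ being irrelevant). Since $n\mapsto q_{n}$ is strictly increasing, $\cdots\leq\upmu_{2}\leq\upmu_{1}\leq\upmu_{0}\leq\upmu_{-1}\leq\cdots$, and $q_{k(i)+1}^{-1}<x_{i}\leq q_{k(i)}^{-1}$ gives $\upmu_{1}\leq\upxi\leq\upmu_{0}$.

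The heart of the argument is the \emph{separation lemma}: for every $j$, no best growth lies in the open interval $(\upmu_{j+1},\upmu_{j})$. Indeed, suppose $\upnu=\langle\bast\{q_{n_{i}}^{-1}\}\rangle$ is a best growth with $\upmu_{j+1}<\upnu<\upmu_{j}$. Comparing the exhibited representatives in $\bast\R_{+}$ as in Proposition \ref{denselinord}, and using that $n\mapsto q_{n}$ is strictly increasing, the inequality $\upnu<\upmu_{j}$ forces $\{i:n_{i}>k(i)+j\}\in\mathfrak{u}$, while $\upnu>\upmu_{j+1}$ forces $\{i:n_{i}\leq k(i)+j\}\in\mathfrak{u}$; these two sets are complementary, a contradiction. (This is exactly the place where the strict inequality built into the growth filtration is needed, cf.\ the Aside of \S\ref{gdfiltsection}.)

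Now assemble the neighbourhood. If $\upmu_{1}<\upxi<\upmu_{0}$, then $(\upmu_{1},\upmu_{0})$ already works. Otherwise $\upxi\in\{\upmu_{0},\upmu_{1}\}$, and replacing $\bast x$ by $\bast\{q_{k(i)+1}^{-1}\}$ if necessary we may assume $\upxi=\upmu_{0}$. If the chain $\{\upmu_{j}\}$ is strictly below $\upmu_{0}$ somewhere to the right (some $\upmu_{J}<\upmu_{0}$, $J\geq 1$) \emph{and} strictly above $\upmu_{0}$ somewhere to the left (some $\upmu_{-J'}>\upmu_{0}$, $J'\geq 1$), then applying the separation lemma to the gaps filling $(\upmu_{J},\upmu_{0})$ and $(\upmu_{0},\upmu_{-J'})$ shows that $(\upmu_{J},\upmu_{-J'})$ meets $\bstar\PR\R_{\upvarepsilon}^{\rm bg}(\uptheta)$ at most in $\{\upmu_{0}\}=\{\upxi\}$, hence not at all since $\upxi$ was excluded, and this is the required neighbourhood. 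The residual case is the one in which the chain $\{\upmu_{j}\}$ collapses to $\upxi$ on one or both sides — equivalently, the partial quotients $a_{k(i)+j}$ of $\uptheta$ are bounded along $\mathfrak{u}$ for every fixed $j$ of the relevant sign — and there one must argue outright that $\upxi$ is already a best growth (so that this case is vacuous among the $\upxi$ we are considering): since $q_{k(i)+j}$ then stays within a bounded ratio of $q_{k(i)}$ along $\mathfrak{u}$ for each fixed $j$, it suffices to replace $i\mapsto k(i)$ by a non-decreasing sequence $n_{i}\to\infty$ with $n_{i}-k(i)$ bounded along $\mathfrak{u}$, which yields $\langle\bast\{q_{n_{i}}^{-1}\}\rangle=\upxi$ with $\bast\{q_{n_{i}}\}$ a best denominator class.

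I expect this last step — producing the monotone reindexing of the selection $\{k(i)\}$ in the collapsed case, which is precisely where the combinatorics of the ultrafilter $\mathfrak{u}$ genuinely enter — to be the main obstacle; everything else is the routine ``consecutive best denominator'' bookkeeping together with the complementary-set trick of the separation lemma. One may, by Proposition \ref{isospectral}, normalise $\uptheta$ modulo $\Bumpeq$ at the outset, but this does not sidestep the degenerate case.
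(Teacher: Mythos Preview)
Your approach and the paper's are essentially the same: locate $\upxi$ between the consecutive best-denominator classes $\bast\wideparen{q}$ and $\bast\wideparen{q}^{+}$ and argue that the resulting gap contains no best growths via an ultrafilter dichotomy. Your separation lemma is precisely the step the paper leaves implicit when it asserts $[\upmu,\upmu']\cap\bstar\PR\R_{\upvarepsilon}^{\rm bg}(\uptheta)=\emptyset$ without further comment. The main structural difference is that the paper begins by choosing a \emph{non-decreasing} representative $\{x_{i}\}$ of the class $\upmu^{-1}$; once that is done your $k(i)$ is automatically non-decreasing, so $\upmu_{0}$ and $\upmu_{1}$ are best growths by definition, $\upxi$ must lie strictly between them (otherwise $\upxi$ would itself be a best growth), and your entire chain $\{\upmu_{j}\}_{j\in\Z}$, case analysis, and residual case become unnecessary. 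The paper then picks intermediate points $\upmu'',\upmu'$ with $\wideparen{\upmu}^{+}<\upmu''<\upxi<\upmu'<\wideparen{\upmu}$ using the square-root trick on the infinite ratio $\bast r=\bast x/\bast\wideparen{q}$, but your separation lemma shows one could equally well take the full interval $(\wideparen{\upmu}^{+},\wideparen{\upmu})$.

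Your acknowledged ``main obstacle'' --- producing a non-decreasing $n_{i}$ with $n_{i}-k(i)$ bounded along $\mathfrak{u}$ --- is not a separate difficulty but is exactly the monotone-representative step the paper takes for granted at the outset (and again in \S\ref{flatsection}). Since $q_{m+2}/q_{m}\geq 2$, having $q_{n_{i}}/q_{k(i)}$ bounded forces $n_{i}-k(i)$ to be standard, so your residual-case reduction and the paper's assumption are equivalent. In short: make the paper's monotonicity choice first, keep your separation lemma as the clean justification of the key vanishing claim, and drop the rest.
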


\begin{proof} If $\bstar\PR\R_{\upvarepsilon}^{\rm bg}(\uptheta)=\bstar\PR\R_{\upvarepsilon}$ we are done, so suppose
otherwise.  Given $\upmu\in \bstar\PR\R_{\upvarepsilon}-\bstar\PR\R_{\upvarepsilon}^{\rm bg}(\uptheta)$, we will construct an interval $(\upmu',\upmu'')\ni\upmu$ containing
no elements of $ \bstar\PR\R_{\upvarepsilon}^{\rm bg}(\uptheta)$.
Let $\bast x\in \upmu^{-1}$.  Then there exists a largest
$\bast \widehat{q}$ for which $\bast x>\bast \widehat{q}$: indeed,
if we choose $\{ x_{i}\}\in\bast x$ non-decreasing and let $q_{n_{i}}$
be the largest member of $\{ q_{i}\}$ which is less than $x_{i}$,
then $\bast \widehat{q} = \bast \{ q_{n_{i}}\}$ works.
Since $\upmu\not\in  \bstar\PR\R_{\upvarepsilon}^{\rm bg}(\uptheta)$, there exists $\bast r$ infinite with $\bast r\cdot \bast \widehat{q} = \bast x$.  Now let
$\bast s\in\bast\R_{+}$ be such that both $\bast s $ and $\bast r/\bast s$ are infinite, and let $\bast y = (\bast r/\bast s)\cdot\bast \widehat{q}$.
If we denote by $\upmu'$ the class of $\bast y^{-1}$ then $\widehat{\upmu}>\upmu'>\upmu$ and $[\upmu,\upmu']\cap  \bstar\PR\R_{\upvarepsilon}^{\rm bg}(\uptheta)=\emptyset$.  In the same way, we may produce 
$\upmu''<\upmu$ with $[\upmu'',\upmu]\cap \bstar\PR\R_{\upvarepsilon}^{\rm bg}(\uptheta)=\emptyset$.  Thus
$(\upmu'',\upmu')$ is the sought after interval.
%This proves discreteness.  
 %On the other hand, for any $\upmu\in \bstar\PR_{\upvarepsilon}$,
%there exist $\widehat{\upmu}$ and $\widehat{\upmu}'$ with 
%$\widehat{\upmu}<\upmu <\widehat{\upmu}'$, so $ \bstar\PR\R_{\upvarepsilon}^{\rm bg}(\uptheta)$ is
%without endpoints.
\end{proof}
%\marginpar{This shows that the complement has interior, but we could still have $ \bstar\PR\R_{\upvarepsilon}^{\rm bg}(\uptheta)$
%with accumulation. }

The following result is our first vanishing theorem: a straightforward reinterpretation of the quality of being a
best denominator class in terms of the growth-decay bi-filtration.
%It states that there is no diophantine approximation $\bast n$
%tending to infinity no more rapidly than a best denominator class $\bast \widehat{q}$ and having, at the same time, a smaller error term.

\begin{theo}\label{bestfilt}  Let $\uptheta\in\R-\Q$ and let $\bast \widehat{q}$ be
any best denominator class with associated growth and decay $\widehat{\upmu},\widehat{\upnu}$.  Then for all $\upmu \geq  \widehat{\upmu} $ and $\upnu < \widehat{\upnu}$, 
$ \bast\Z^{\upmu}_{\upnu}(\uptheta ) = 0 $.  
\end{theo}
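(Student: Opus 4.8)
The plan is to show that the existence of a nonzero element in $\bast\Z^{\upmu}_{\upnu}(\uptheta)$ with $\upmu\geq\wideparen{\upmu}$ and $\upnu<\wideparen{\upnu}$ would contradict the classical Legendre/best-approximation property of continued fractions: namely, that among all integers $q$ with $1\leq q< q_{i+1}$, the best denominator $q_i$ minimizes $|q\uptheta - p|$ over $p\in\Z$, and more precisely $\|q\uptheta\|\geq \|q_i\uptheta\|$ for all such $q$. First I would unpack the hypotheses at the level of representing sequences. Write $\bast\wideparen{q}=\bast\{q_{n_i}\}$, so $\wideparen{\upmu}=\langle|\bast\wideparen{q}|^{-1}\rangle$ and $\wideparen{\upnu}=\langle|\upvarepsilon(\bast\wideparen{q})|\rangle=\langle\|\bast\wideparen{q}\uptheta\|\rangle$ (up to passage to the semiring, the dual is the nearest integer, so the error term is essentially $\|q_{n_i}\uptheta\|$). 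Suppose for contradiction $0\neq\bast n\in\bast\Z^{\upmu}_{\upnu}(\uptheta)$; then $\upmu(\bast n)>\upmu\geq\wideparen{\upmu}$, which by the definition of the order on $\bstar\PR\R$ means $\langle|\bast n|\rangle > \langle|\bast\wideparen{q}|\rangle$ — i.e.\ for a co-$\mathfrak{u}$-large set of indices, $|n_k|$ is of strictly larger order of magnitude than $q_{n_k}$, in particular eventually $|n_k|\geq q_{n_k+1}$ is not forced, but $|n_k|$ exceeds any bounded multiple of $q_{n_k}$. Meanwhile $\upnu(\bast n)\leq\upnu<\wideparen{\upnu}$ gives $\langle\|\bast n\uptheta\|\rangle<\langle\|\bast\wideparen{q}\uptheta\|\rangle$, so on a $\mathfrak{u}$-large set $\|n_k\uptheta\|$ is of strictly smaller order of magnitude than $\|q_{n_k}\uptheta\|$.

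The key step is then the following index-wise argument, valid for $\mathfrak{u}$-almost all $k$. Given $|n_k|$, choose $j=j(k)$ maximal with $q_j\leq |n_k|$; since $|n_k|$ dominates $q_{n_k}$ in order of magnitude, once $k$ is large this index satisfies $j(k)\geq n_k$ (here is where $\upmu\geq\wideparen\upmu$ is used: if $\upmu>\wideparen\upmu$ this is automatic, and the boundary case $\upmu=\wideparen\upmu$ combined with the strict inequality $\upmu(\bast n)>\upmu$ still forces $|n_k|>q_{n_k}$ eventually, hence $j(k)\geq n_k$). By the best-approximation property, $\|n_k\uptheta\|\geq\|q_{j(k)}\uptheta\|\geq\|q_{n_k}\uptheta\|$, the last inequality because $\|q_i\uptheta\|$ is non-increasing in $i$ and $j(k)\geq n_k$. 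Taking $\langle\cdot\rangle$ and passing to the ultralimit gives $\langle\|\bast n\uptheta\|\rangle\geq\langle\|\bast\wideparen{q}\uptheta\|\rangle=\wideparen{\upnu}$, contradicting $\langle\|\bast n\uptheta\|\rangle<\wideparen{\upnu}$. Hence no such $\bast n$ exists and $\bast\Z^{\upmu}_{\upnu}(\uptheta)=0$.

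I expect the main obstacle to be the careful handling of the boundary case $\upmu=\wideparen{\upmu}$ together with the fact that $\upnu(\bast n)$ is defined with a \emph{non-strict} inequality ($\upnu(\bast n)\leq\upnu$) while $\upmu(\bast n)$ uses a \emph{strict} one ($\upmu(\bast n)>\upmu$) — this asymmetry (flagged in the Aside) is exactly what makes the statement come out clean, and one must verify that the strict inequality $\upmu(\bast n)>\wideparen{\upmu}$ is genuinely available even when $\upmu=\wideparen{\upmu}$, so that $|n_k|>q_{n_k}$ holds on a $\mathfrak u$-large set and the choice $j(k)\geq n_k$ is legitimate. A secondary technical point is the identification of $\bast n^{\perp}$ (modulo the finite units) with the nearest-integer sequence so that $|\upvarepsilon(\bast n)|$ and $\|\bast n\uptheta\|$ have the same growth-decay valuation; this is routine but should be stated, since a priori $\bast n^{\perp}$ is only \emph{some} dual, and one uses that any two choices differ by an element whose contribution to $\langle\upvarepsilon(\bast n)\rangle$ is either the same or larger, which suffices for the direction of the inequality we need. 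Everything else is a direct translation between the order on $\bstar\PR\R_{\upvarepsilon}$ and orders of magnitude of representing sequences, together with the standard monotonicity $\|q_{i+1}\uptheta\|<\|q_i\uptheta\|$ and the minimality of best denominators among all smaller-or-equal integers.
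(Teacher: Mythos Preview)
Your overall strategy---contradiction via the best-approximation property of continued fractions---is exactly the paper's, but you have the direction of the growth comparison reversed, and this propagates through the whole argument. By definition $\upmu(\bast n)=\langle|\bast n|^{-1}\rangle$, so $\upmu(\bast n)>\upmu\geq\wideparen{\upmu}=\langle|\bast\wideparen{q}|^{-1}\rangle$ says that $|\bast n|^{-1}$ is of \emph{strictly larger} order of magnitude than $|\bast\wideparen{q}|^{-1}$, i.e.\ $|\bast n|$ is of strictly \emph{smaller} order than $|\bast\wideparen{q}|$. Equivalently, $|\bast n|\cdot\wideparen{\upmu}\leq|\bast n|\cdot\upmu\in\bstar\PR\R_{\upvarepsilon}$ forces $|\bast n|<|\bast\wideparen{q}|$. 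Your claim that $\langle|\bast n|\rangle>\langle|\bast\wideparen{q}|\rangle$ is the opposite.

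Once the direction is corrected, the detour through $j(k)$ is unnecessary: from $|n_k|<q_{n_k}$ on a $\mathfrak{u}$-large set, the best-approximation property gives directly $\|n_k\uptheta\|\geq\|q_{n_k}\uptheta\|$ (since any $q$ with $0<q<q_{n_k}$ satisfies $\|q\uptheta\|\geq\|q_{n_k-1}\uptheta\|>\|q_{n_k}\uptheta\|$). Passing to $\bstar\PR\R$ yields $\upnu(\bast n)\geq\wideparen{\upnu}>\upnu$, contradicting $\bast n\in\bast\Z_{\upnu}(\uptheta)$. This is precisely the paper's two-line argument. Note also that your chain $\|q_{j(k)}\uptheta\|\geq\|q_{n_k}\uptheta\|$ ``because $\|q_i\uptheta\|$ is non-increasing and $j(k)\geq n_k$'' contains a second sign slip: non-increasing plus $j(k)\geq n_k$ gives $\leq$, not $\geq$, so even on its own terms that step fails. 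The worries you raise about the boundary case $\upmu=\wideparen{\upmu}$ and about identifying $|\upvarepsilon(\bast n)|$ with $\|\bast n\uptheta\|$ are non-issues once the inequality is oriented correctly: the strict inequality $\upmu(\bast n)>\upmu$ is built into the definition of $\bast\Z^{\upmu}$, and for irrational $\uptheta$ the dual $\bast n^{\perp}$ is unique, so $|\upvarepsilon(\bast n)|=\|\bast n\uptheta\|$ exactly.
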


\begin{proof}  For $\upmu \geq  \widehat{\upmu} $ and $\upnu < \widehat{\upnu}$, suppose there exists a non-zero $\bast n\in \bast\Z^{\upmu}_{\upnu}(\uptheta )$,
%since $-\bast r\in \bast\Z^{\upmu}_{\upnu}(\uptheta )$ as well, we may assume 
which we may assume is positive.
Then $\bast n\cdot \widehat{\upmu}\leq \bast n\cdot \upmu\in \bstar\PR\R_{\upvarepsilon}$
implies that $\bast n <\bast \widehat{q}$.  In turn, the latter implies, since $\bast \widehat{q}$ is the class
of a non decreasing sequence of best denominators of $\uptheta$, that
\[ |\upvarepsilon (\bast n)| =  |\uptheta \bast n -\bast n^{\perp}| \geq  |\uptheta \bast \widehat{q}-\bast \widehat{q}^{\perp}| = |\upvarepsilon (\bast \widehat{q})|. \]
From this we derive $\upnu (\bast n )\geq \widehat{\upnu} > \upnu$, contradiction.
\end{proof}

In the $(\upmu ,\upnu )$-plane the coordinates belonging to the right-infinite horizontal strip
\[ \widehat{R} =\{(\upmu ,\upnu)|\; \upmu \geq  \widehat{\upmu}, \;\upnu < \widehat{\upnu}\}\]
 give parameters where the groups
$ \bast\Z^{\upmu}_{\upnu}(\uptheta )$ vanish.  We call $\widehat{R}$ a {\bf vanishing strip}.
See the graph labeled  ``generic irrational'' in Figure \ref{portraits}.

\begin{figure}[htbp]\label{Spectralportraits}
\centering
\includegraphics[width=5in]{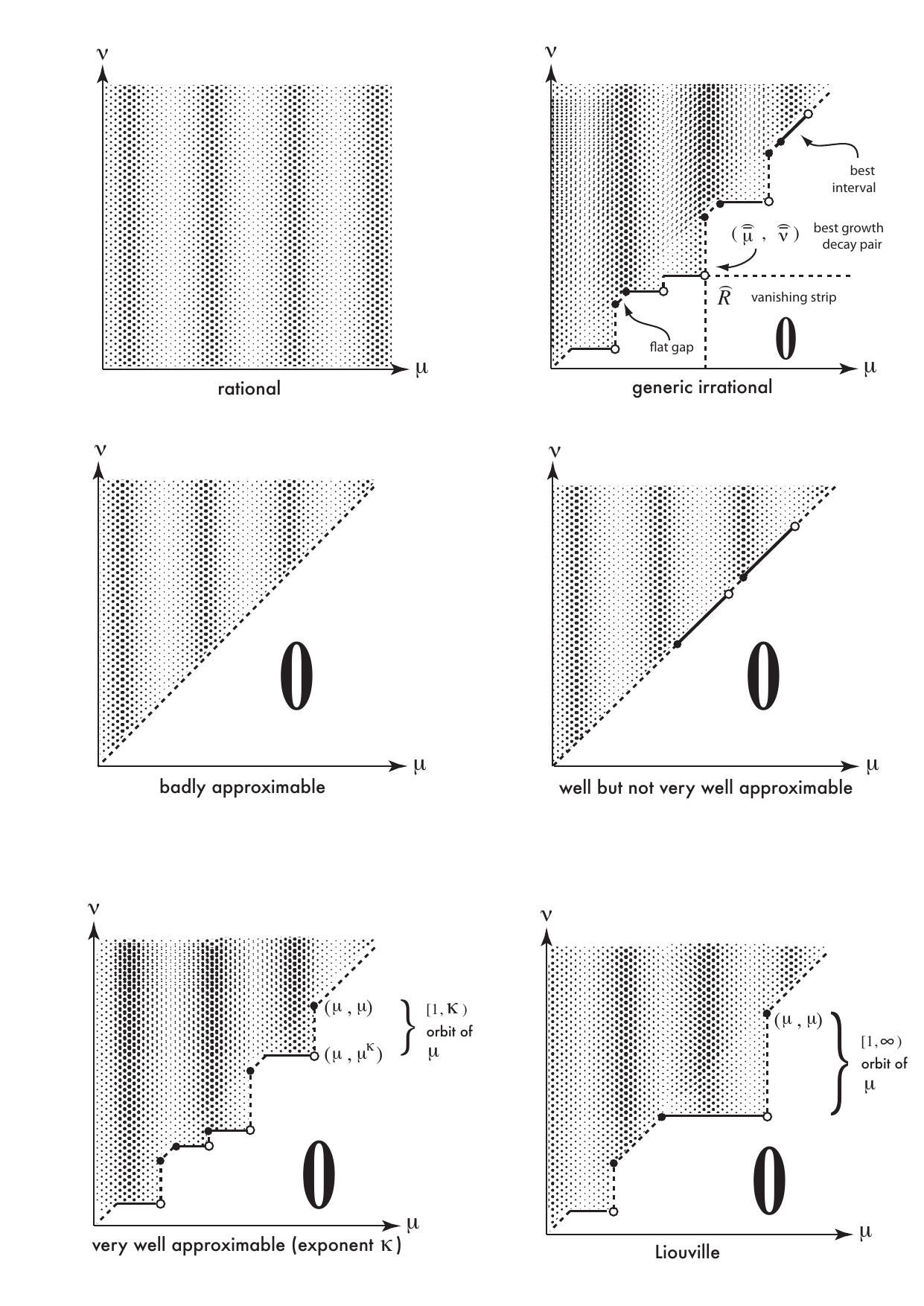}
\caption{Portraits of Spectra.  Shaded regions and heavy lines represent nonvanishing.}\label{portraits}
\end{figure}

We now give a spectral characterization of the linear classification of real numbers.

\begin{prop}\label{rationalspec}  $\uptheta\in\Q$ $\Leftrightarrow$ ${\rm Spec}(\uptheta ) = \bstar\PR\R_{\upvarepsilon}^{2}$.
%In particular, $\uptheta\in\Q$ $\Leftrightarrow$ 
%$\bast\Z^{\upmu}_{\upnu}(\uptheta ) \not=0$
%for all $\upmu, \upnu\in\bstar\PR\R_{\upvarepsilon}$.
\end{prop}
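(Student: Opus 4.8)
The plan is to prove the two implications separately, the forward one being straightforward and the reverse one being the substantive direction. For $(\Rightarrow)$, suppose $\uptheta = a/b\in\Q$ with $b>0$. Then the constant sequence $\bast b$ lies in $\bast\Z(\uptheta)$ with dual $\bast a$ and $\upvarepsilon(\bast b) = 0$, so $\upnu(\bast b) = -\infty$. More to the point, for \emph{any} pair $(\upmu,\upnu)\in\bstar\PR\R_{\upvarepsilon}^{2}$ I would exhibit a nonzero element of $\bast\Z^{\upmu}_{\upnu}(\uptheta)$: pick (via Proposition~\ref{integerrepre}) an infinite $\bast N\in\bast\Z_{+}-\Z_{+}$ with $\langle\bast N^{-1}\rangle > \upmu$, and consider the element $\bast N b\in\bast\Z(\uptheta)$ (with dual $\bast N a$), whose error $\upvarepsilon(\bast N b) = 0$ again gives decay $-\infty \le \upnu$, and whose growth is $\langle(\bast N b)^{-1}\rangle \ge \langle\bast N^{-1}\rangle\cdot\langle b^{-1}\rangle = \langle\bast N^{-1}\rangle > \upmu$ since $\langle b^{-1}\rangle = 1$ (the valuation is trivial on $\R$). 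Hence $\bast N b \in \bast\Z^{\upmu}_{\upnu}(\uptheta)$, so $(\upmu,\upnu)\in{\rm Spec}(\uptheta)$ for all such pairs, giving ${\rm Spec}(\uptheta) = \bstar\PR\R_{\upvarepsilon}^{2}$.

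For $(\Leftarrow)$ I would argue by contraposition: assume $\uptheta\in\R-\Q$ and produce a pair $(\upmu,\upnu)$ with $\bast\Z^{\upmu}_{\upnu}(\uptheta) = 0$. This is exactly where Theorem~\ref{bestfilt} does the work: since $\uptheta$ is irrational it has an infinite sequence of best denominators $\{q_i\}$, and choosing a best denominator class $\bast\wideparen{q}$ gives associated growth $\wideparen{\upmu}$ and decay $\wideparen{\upnu}$ with $\bast\Z^{\upmu}_{\upnu}(\uptheta) = 0$ for every $\upmu\ge\wideparen{\upmu}$ and $\upnu<\wideparen{\upnu}$. To invoke this I need the vanishing strip $\wideparen{R}$ to be nonempty as a subset of $\bstar\PR\R_{\upvarepsilon}^{2}$, i.e. I need both $\wideparen{\upmu}$ and $\wideparen{\upnu}$ to be genuine infinitesimal classes (not $1$ or $-\infty$) and I need to be able to choose $\upnu<\wideparen{\upnu}$ inside $\bstar\PR\R_{\upvarepsilon}$. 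Since $\uptheta\notin\Q$, $\bast\wideparen{q}$ is built from a genuinely infinite sequence of integers, so $\wideparen{\upmu} = \langle|\bast\wideparen{q}^{-1}|\rangle \in\bstar\PR\R_{\upvarepsilon}$; and because $p_{n_i}/q_{n_i}\to\uptheta$ with $\uptheta$ irrational, $\upvarepsilon(\bast\wideparen{q})\ne 0$ is a genuine infinitesimal, so $\wideparen{\upnu}\in\bstar\PR\R_{\upvarepsilon}$ as well. Density of the order (Proposition~\ref{denselinord}) lets me pick $\upnu$ with $-\infty < \upnu < \wideparen{\upnu}$, and then the pair $(\wideparen{\upmu},\upnu)$ lies in $\bstar\PR\R_{\upvarepsilon}^{2}$ but not in ${\rm Spec}(\uptheta)$.

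The one point requiring a little care — and what I expect to be the main (minor) obstacle — is confirming that $\wideparen{\upnu} \ne -\infty$, i.e. that the error $\upvarepsilon(\bast\wideparen{q})$ is genuinely nonzero as a nonstandard real; this is precisely the assertion that $\uptheta\notin\Q$ forces every diophantine approximation built from best denominators to have nonvanishing error, which follows from $q_i\uptheta - p_i \ne 0$ for all $i$ (as $\uptheta$ irrational) and the fact that a nonprincipal ultrafilter cannot make a sequence of nonzero reals collapse to $0$ in $\bast\R$. With that in hand the contrapositive is complete, and combining the two implications yields the Proposition.
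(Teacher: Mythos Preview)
Your proof is correct and follows essentially the same route as the paper: the forward direction uses that rational $\uptheta$ makes every error vanish (so the decay filtration collapses and only the nontriviality of $\bast\Z^{\upmu}(\uptheta)$ matters), and the reverse direction is exactly the contrapositive via Theorem~\ref{bestfilt}. You are simply more explicit than the paper in constructing the witness $\bast N b$ and in verifying that the vanishing strip $\wideparen{R}$ actually meets $\bstar\PR\R_{\upvarepsilon}^{2}$.
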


\begin{proof}  If $\uptheta\in\Q$ then for all $\upnu$, $\bast\Z_{\upnu}(\uptheta )=\bast\Z_{-\infty}(\uptheta )
=\bast\Z(\uptheta )$,
so $\bast\Z^{\upmu}_{\upnu}(\uptheta ) =\bast\Z^{\upmu}(\uptheta ) \not=0$
for all $\upmu, \upnu$.  On the other hand, if $\uptheta\in\R-\Q$ then by Theorem \ref{bestfilt}, 
${\rm Spec}(\uptheta ) \subsetneq \bstar\PR\R_{\upvarepsilon}^{2}$.
\end{proof}

Recall that $\uptheta\in\R-\Q$ is {\bf badly approximable} if 
\[   \lim_{n\rightarrow\infty}\inf \;n\| n\uptheta\| >0 ,\] 
where $\|\cdot\|$ is the distance-to-the-nearest-integer function.  Or equivalently, if there exists a real number $C>0$ such that for all $0\not=\bast n\in \bast\Z(\uptheta)$, 
\[   |\bast n|\cdot |\upvarepsilon (\bast n)| \geq C .\] 
The set $\mathfrak{B}$ of badly approximable numbers has cardinality the continuum \cite{Sch}.

\begin{theo}\label{badapproxchar} The following statements are equivalent:
\begin{enumerate}
\item[i.] $\uptheta\in\mathfrak{B}$.
\item[ii.] $\bast\Z^{\upmu}_{\upnu}(\uptheta )=0$
for all $\upmu\geq \upnu$. 
\item[iii.] $\bstar\PR\R_{\upvarepsilon}^{\rm bg}(\uptheta)=\bstar\PR\R_{\upvarepsilon}$.  In particular, for all $\widehat{\upmu}\in 
\bstar\PR\R_{\upvarepsilon}^{\rm bg}(\uptheta)$, $\widehat{\upmu}^{+}=\widehat{\upmu}$.
\item[iv.] $\widehat{\upmu} = \widehat{\upnu}$ for every best growth decay pair: that is $\bstar\PR\R_{\upvarepsilon}^{\rm bg}(\uptheta)=\bstar\PR\R_{\upvarepsilon}^{\rm bd}(\uptheta)$.
\end{enumerate}
\end{theo}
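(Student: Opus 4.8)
The plan is to establish the chain i.$\Rightarrow$iii.$\Rightarrow$iv.$\Rightarrow$ii.$\Rightarrow$i., translating in each implication between the classical statement ``$\uptheta\in\mathfrak{B}$ means $|\bast n|\cdot|\upvarepsilon(\bast n)|\geq C$ for all $0\neq\bast n\in\bast\Z(\uptheta)$'' and the growth-decay bookkeeping. The unifying observation is that $|\bast n|\cdot|\upvarepsilon(\bast n)|\in(\bast\R_{\mathrm{fin}})^{\times}_{+}$ translates, under $\langle\cdot\rangle$, to $\upmu(\bast n)^{-1}\cdot\upnu(\bast n)=1$, i.e.\ to $\upnu(\bast n)=\upmu(\bast n)$ in $\bstar\PR\R_{\upvarepsilon}$; and $|\bast n|\cdot|\upvarepsilon(\bast n)|\geq C$ is equivalent to $|\bast n|\cdot|\upvarepsilon(\bast n)|$ being non-infinitesimal, hence to $\upnu(\bast n)\leq\upmu(\bast n)$. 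So badly approximable says precisely that \emph{every} nonzero diophantine approximation of $\uptheta$ satisfies $\upnu(\bast n)\leq\upmu(\bast n)$.

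\medskip
\emph{i.$\Rightarrow$ii.} If $\bast n\in\bast\Z^{\upmu}_{\upnu}(\uptheta)$ with $\upmu\geq\upnu$, then by definition $\upmu<\upmu(\bast n)$ and $\upnu(\bast n)\leq\upnu\leq\upmu<\upmu(\bast n)$, so $|\bast n|\cdot|\upvarepsilon(\bast n)|$ is infinitesimal, contradicting $\uptheta\in\mathfrak{B}$; hence $\bast n=0$.

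\medskip
\emph{ii.$\Rightarrow$i.} If $\uptheta\notin\mathfrak{B}$, then there is a sequence of denominators along which $|\bast n|\cdot|\upvarepsilon(\bast n)|\to 0$; passing to the ultrapower, one obtains $0\neq\bast n\in\bast\Z(\uptheta)$ with $|\bast n|\cdot|\upvarepsilon(\bast n)|$ infinitesimal, i.e.\ $\upnu(\bast n)<\upmu(\bast n)$ strictly. Choosing $\upmu,\upnu$ with $\upnu(\bast n)\leq\upnu$ and $\upmu<\upmu(\bast n)$ but still $\upmu\geq\upnu$ (possible by density, Proposition \ref{denselinord}, since there is room strictly between $\upnu(\bast n)$ and $\upmu(\bast n)$) places $\bast n$ in a nonvanishing $\bast\Z^{\upmu}_{\upnu}(\uptheta)$ with $\upmu\geq\upnu$, contradicting ii.

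\medskip
\emph{i.$\Rightarrow$iii. and iv.} For $\uptheta\in\mathfrak{B}$ the partial quotients $a_i$ are bounded, say by $M$; then $q_{i+1}=a_{i+1}q_i+q_{i-1}\leq (M+1)q_i$, so consecutive best denominators are comparable up to a bounded factor, giving $\wideparen{\upmu}^{+}=\wideparen{\upmu}$ for every best denominator class, and more generally showing that between any two infinite $\bast x,\bast x'$ representing elements of $\bstar\PR\R_{\upvarepsilon}$ there lies some $\bast\wideparen{q}$ comparable to them; this yields $\bstar\PR\R_{\upvarepsilon}^{\mathrm{bg}}(\uptheta)=\bstar\PR\R_{\upvarepsilon}$, i.e.\ iii. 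For iv., bounded partial quotients give $|\upvarepsilon(\bast\wideparen{q})|\asymp q_{i}^{-1}$ (not merely $<q_i^{-1}$: the lower bound $|q_i\uptheta-p_i|>1/(q_{i+1}+q_i)\geq 1/((M+2)q_i)$ holds), so $\wideparen{\upnu}=\wideparen{\upmu}$ for every best growth-decay pair; conversely $\wideparen{\upnu}=\wideparen{\upmu}$ for \emph{all} best pairs forces $|q_i\uptheta-p_i|\asymp q_i^{-1}$ uniformly, hence bounded partial quotients, hence $\uptheta\in\mathfrak{B}$. I would organize iii.$\Rightarrow$iv. and iv.$\Rightarrow$i. along these lines, using that non-best denominators have strictly larger error than the preceding best denominator (as in the proof of Theorem \ref{bestfilt}).

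\medskip
The main obstacle I anticipate is the bookkeeping in iii.$\Leftrightarrow$iv.$\Leftrightarrow$i.: one must be careful that ``$\bstar\PR\R_{\upvarepsilon}^{\mathrm{bg}}(\uptheta)=\bstar\PR\R_{\upvarepsilon}$'' (no ``gaps'' in the growth set) is genuinely equivalent to bounded partial quotients, and separately that ``$\wideparen{\upmu}=\wideparen{\upnu}$ always'' is equivalent to the two-sided estimate $|\upvarepsilon(\bast\wideparen{q})|\asymp\wideparen{q}^{-1}$, which in the ultrapower is the statement $\upnu(\bast\wideparen{q})=\upmu(\bast\wideparen{q})$. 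The classical inequalities $1/(q_{i+1}+q_i)<|q_i\uptheta-p_i|<1/q_{i+1}$ are the key inputs here, together with the monotonicity of $|\upvarepsilon|$ on non-best denominators; the rest is a matter of transferring these to $\bstar\PR\R_{\upvarepsilon}$ via $\langle\cdot\rangle$ and invoking density. The two purely growth-decay implications i.$\Leftrightarrow$ii.\ are essentially immediate from the dictionary above.
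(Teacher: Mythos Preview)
Your proposal is essentially correct and follows the same approach as the paper: the equivalence i.$\Leftrightarrow$ii.\ via the dictionary $\upnu(\bast n)<\upmu(\bast n)\Leftrightarrow|\bast n|\cdot|\upvarepsilon(\bast n)|$ infinitesimal, and the equivalences with iii.\ and iv.\ via bounded partial quotients and the classical two-sided estimate $1/(q_{i+1}+q_i)<|q_i\uptheta-p_i|<1/q_{i+1}$, are exactly what the paper does.

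The only point to tighten is that your announced chain i.$\Rightarrow$iii.$\Rightarrow$iv.$\Rightarrow$ii.$\Rightarrow$i.\ is not what you actually carry out: you prove i.$\Leftrightarrow$ii., i.$\Rightarrow$iii., and i.$\Leftrightarrow$iv., but the return direction iii.$\Rightarrow$i.\ (or iii.$\Rightarrow$iv.) is only named as an ``obstacle'' and not argued. The paper handles iii.$\Rightarrow$i.\ by contrapositive in one line: if $\uptheta\notin\mathfrak{B}$, pick a best denominator class $\bast\wideparen{q}$ along which $q_{n_i+1}/q_{n_i}\to\infty$; then $\wideparen{\upmu}^{+}<\wideparen{\upmu}$ strictly, so the open interval $(\wideparen{\upmu}^{+},\wideparen{\upmu})$ contains no best growths and $\bstar\PR\R_{\upvarepsilon}^{\rm bg}(\uptheta)\neq\bstar\PR\R_{\upvarepsilon}$. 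Adding this sentence closes the loop.
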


\begin{proof} i.\ $\Rightarrow$ ii. If $\uptheta\in\mathfrak{B} $ then for all non-zero $\bast n\in \bast\Z (\uptheta )$ we have
$|\bast n|\cdot |\upvarepsilon (\bast n )|\geq C$.  If there exists $\upmu\geq\upnu$ with 
$0\not=\bast n\in \bast\Z^{\upmu}_{\upnu}(\uptheta )$ then in $\bstar\PR\R_{\upvarepsilon}$,
\[ |\bast n|\cdot \upmu\geq | \bast n|\cdot \upnu \geq |\bast n|\cdot \upnu (\bast n )\geq 1 = \text{ the $\bstar\PR\R$-class of $C$},\]  
implying that $|\bast n|\cdot \upmu\not\in \bstar\PR\R_{\upvarepsilon}$ and $\bast n\not\in \bast\Z^{\upmu}$.  ii.\ $\Rightarrow$ i.  If $\bast\Z^{\upmu}_{\upnu}(\uptheta )=0$
for all $\upmu\geq \upnu,$ then for each $\bast n\in \bast\Z (\uptheta )$,
$|\bast n|\cdot|\upvarepsilon (\bast n)| \geq \updelta >0$ where $\updelta\in\R$. 
%(take $\upmu=\upnu=\upnu (\bast n)$).
We can choose delta uniformly: if not, then by a diagonal sequence argument
we could produce an element $\bast\Z (\uptheta )$ for which $|\bast n|\cdot\upvarepsilon (\bast n)$ is infinitesimal (i.e.\ we could produce a non trivial element of
$\bast\Z^{\upnu(\bast n)}_{\upnu(\bast n)}(\uptheta )$)
violating the hypothesis.     i.\ $\Rightarrow$ iii.  $\uptheta\in\mathfrak{B}$  $\Leftrightarrow$ the partial quotients $a_{i}$
are uniformly bounded $\Leftrightarrow$ the successive
ratios of best denominators $q_{i+1}/q_{i}$ are uniformly bounded.  
Now given $\upmu\in \bstar\PR\R_{\upvarepsilon}$ let $\{n_{i}\}\subset\N_{+}$ represent
$\upmu^{-1}$.  For each $i$ let $q_{k_{i}}$ be the largest best denominator with $q_{k_{i}}\leq n_{i}$ so that $n_{i}<q_{k_{i}+1}$.
By hypothesis there exists a constant $B>1$ so that $q_{k_{i}+1}<Bq_{k_{i}}$.  It follows that $n_{i}= b_{i}q_{k_{i}}$ with $1\leq b_{i}<B$.
Then the growth of the class $\bast \widehat{q}$ is equal to $\upmu$.
iii.\ $\Rightarrow$ i. If 
$\uptheta\not\in\mathfrak{B}$, choose $\widehat{\upmu}$ so that if $\{ q_{n_{i}}\}$ represents $\widehat{\upmu}^{-1}$ then $q_{n_{i}+1}/q_{n_{i}}$
is monotone and unbounded.  Then the successor sequence
$\{q_{n_{i}}^{+}=q_{n_{i}+1}\}$ defines a distinct element $\widehat{\upmu}^{+}\in\bstar\PR\R_{\upvarepsilon}^{\rm bg}(\uptheta)$ with $\widehat{\upmu}^{+}<\widehat{\upmu}$.  It follows that 
$\bstar\PR\R_{\upvarepsilon}^{\rm bg}(\uptheta)\not= \bstar\PR\R_{\upvarepsilon}$.  i.\ $\Leftrightarrow$ iv.
From Dirichlet's Theorem and the definition of $\mathfrak{B}$, $\widehat{\upmu}=\widehat{\upnu}$  for every best growth decay pair $\Leftrightarrow$ 
$1>|\bast \widehat{q}|\cdot \upvarepsilon (\bast \widehat{q}) \geq C $  for every best denominator class
for some uniform $C>0$ $\Leftrightarrow$ $\uptheta\in\mathfrak{B}$.
\end{proof}

 Recall that $\uptheta\in\R-\Q$ which is not badly approximable is called {\bf well approximable}: that is,
$ \lim_{n\rightarrow\infty}\inf \;n\| n\uptheta\| =0$.  We denote the set
of well approximable numbers by 
\[ \mathfrak{W}=\R-(\Q\cup\mathfrak{B}).\]

\begin{theo}\label{wellapprox} 
Let $\uptheta\in\R-\Q$.  The following statements are equivalent:
\begin{enumerate}
\item[i.] $\uptheta\in\mathfrak{W}$.
\item[ii.] There exists
$\upmu\in\bstar\PR\R_{\upvarepsilon}$ such that $\bast\Z^{\upmu}_{\upmu}(\uptheta  )\not =0$.
\item[iii.] There exists $\widehat{\upmu}\in 
\bstar\PR\R_{\upvarepsilon}^{\rm bg}(\uptheta)$ such that $\widehat{\upmu}^{+}<\widehat{\upmu}$.
In particular, $\bstar\PR\R_{\upvarepsilon}^{\rm bg}(\uptheta)$ is not a dense order.   
\item[iv.] $\widehat{\upmu}>\widehat{\upnu}$ for some best growth best decay pair.
\end{enumerate}
\end{theo}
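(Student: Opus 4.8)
The plan is to derive the theorem from the badly approximable characterization of Theorem~\ref{badapproxchar} by negation: since $\mathfrak{W}=\R-(\Q\cup\mathfrak{B})$ and $\uptheta$ is assumed irrational, statement i is equivalent to $\uptheta\notin\mathfrak{B}$, so it suffices to match ii, iii, iv with the negations of the corresponding items of Theorem~\ref{badapproxchar}. First I would record the identity
\[\wideparen{\upnu}=\wideparen{\upmu}^{+}\]
valid for every best denominator class $\bast\wideparen{q}=\bast\{q_{n_{i}}\}$: indeed $\upvarepsilon(\bast\wideparen{q})=\bast\wideparen{q}\uptheta-\bast\wideparen{q}^{\perp}=\bast\{q_{n_{i}}\uptheta-p_{n_{i}}\}$ (using $\bast\wideparen{q}^{\perp}=\bast\wideparen{p}$), and the standard two-sided estimate $\tfrac{1}{q_{n_{i}+1}+q_{n_{i}}}<|q_{n_{i}}\uptheta-p_{n_{i}}|<\tfrac{1}{q_{n_{i}+1}}$ (see e.g.\ \cite{La1}) shows that $|q_{n_{i}}\uptheta-p_{n_{i}}|$ and $q_{n_{i}+1}^{-1}$ differ by a factor lying in $(\tfrac12,1)$, hence by a unit of $\bast\R_{\rm fin}$; thus $\langle|\upvarepsilon(\bast\wideparen{q})|\rangle=\langle\bast\{q_{n_{i}+1}^{-1}\}\rangle=\wideparen{\upmu}^{+}$. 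Since also $\wideparen{\upmu}^{+}\leq\wideparen{\upmu}$, conditions iii and iv say exactly the same thing ($\wideparen{\upmu}>\wideparen{\upnu}\Leftrightarrow\wideparen{\upmu}^{+}<\wideparen{\upmu}$), so it remains to prove i$\Leftrightarrow$ii and i$\Leftrightarrow$iii.

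For i$\Leftrightarrow$ii: Theorem~\ref{badapproxchar}(i$\Leftrightarrow$ii) says $\uptheta\in\mathfrak{B}$ iff $\bast\Z^{\upmu}_{\upnu}(\uptheta)=0$ for all $\upmu\geq\upnu$, so (negating, and using irrationality) $\uptheta\in\mathfrak{W}$ iff $\bast\Z^{\upmu}_{\upnu}(\uptheta)\neq0$ for some $\upmu\geq\upnu$. Our ii trivially implies this. Conversely, given a nonzero $\bast n\in\bast\Z^{\upmu}_{\upnu}(\uptheta)$ with $\upmu\geq\upnu$, put $\upmu':=\upnu(\bast n)$; since $\uptheta\notin\Q$ and $\bast n\neq0$ the error $\upvarepsilon(\bast n)$ is a nonzero infinitesimal, so $\upmu'\in\bstar\PR\R_{\upvarepsilon}$ with $\upmu'\neq-\infty$. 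Then $\upnu(\bast n)=\upmu'$ gives $\bast n\in\bast\Z_{\upmu'}(\uptheta)$, and $\upmu(\bast n)>\upmu\geq\upnu\geq\upnu(\bast n)=\upmu'$ gives $\bast n\in\bast\Z^{\upmu'}(\uptheta)$, so $0\neq\bast n\in\bast\Z^{\upmu'}_{\upmu'}(\uptheta)$, which is ii.

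For i$\Leftrightarrow$iii: if iii holds, choose a best class $\bast\wideparen{q}=\bast\{q_{n_{i}}\}$ with $\wideparen{\upmu}^{+}<\wideparen{\upmu}$; this forces $q_{n_{i}+1}/q_{n_{i}}$ to be unbounded on $\mathfrak{u}$-sets, so the partial quotients $a_{n_{i}+1}\geq q_{n_{i}+1}/q_{n_{i}}-1$ are unbounded, whence $\uptheta\notin\mathfrak{B}$ and, being irrational, $\uptheta\in\mathfrak{W}$. Conversely, if $\uptheta\in\mathfrak{W}$ its partial quotients are unbounded; picking an increasing index sequence $\{n_{i}\}$ with $a_{n_{i}+1}\to\infty$ and setting $\bast\wideparen{q}=\bast\{q_{n_{i}}\}$ gives $q_{n_{i}}/q_{n_{i}+1}\to0$, i.e.\ $\wideparen{\upmu}^{+}<\wideparen{\upmu}$, the first clause of iii. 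For the ``not a dense order'' clause, observe that $\wideparen{\upmu}$ and $\wideparen{\upmu}^{+}$ both lie in $\bstar\PR\R_{\upvarepsilon}^{\rm bg}(\uptheta)$ (the latter because $\bast\{q_{n_{i}+1}\}$ is itself a best class) and I would show that no element of $\bstar\PR\R_{\upvarepsilon}^{\rm bg}(\uptheta)$ lies strictly between them: for any best class $\bast\{q_{m_{i}}\}$, using that $\{q_{j}\}$ is eventually strictly increasing (so $q_{m_{i}}<q_{n_{i}+1}\Leftrightarrow m_{i}\leq n_{i}$ for large $i$), the condition $\upmu(\bast\{q_{m_{i}}\})>\wideparen{\upmu}^{+}$ forces $\{i:m_{i}\leq n_{i}\}\in\mathfrak{u}$ while $\upmu(\bast\{q_{m_{i}}\})<\wideparen{\upmu}$ forces $\{i:m_{i}>n_{i}\}\in\mathfrak{u}$, a contradiction. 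Hence $\bstar\PR\R_{\upvarepsilon}^{\rm bg}(\uptheta)$ has a gap, so it is not dense and in particular not all of $\bstar\PR\R_{\upvarepsilon}$ (which is a dense order by Proposition~\ref{denselinord}); conversely ``not dense'' already forces $\uptheta\notin\mathfrak{B}$ by Theorem~\ref{badapproxchar}(iii), so the two clauses of iii are equivalent.

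There is no conceptual obstacle here — the whole statement is the negation of Theorem~\ref{badapproxchar}. The work is all careful bookkeeping: the strict-versus-non-strict inequalities in the growth and decay filtrations, the $-\infty$ (and $1$) edge cases of the valuation $\langle\cdot\rangle$ (which is why irrationality is invoked in the i$\Leftrightarrow$ii argument, to guarantee $\upnu(\bast n)\neq-\infty$), and the ultrafilter/index-alignment accounting in the gap argument. The fussiest step, and the one I would write out most carefully, is that gap argument: one must rule out best denominator classes built from arbitrary non-decreasing index sequences from landing strictly inside $(\wideparen{\upmu}^{+},\wideparen{\upmu})$.
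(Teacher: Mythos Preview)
Your proof is correct and follows essentially the same strategy as the paper's: deduce everything by negating Theorem~\ref{badapproxchar}. Your argument for i$\Leftrightarrow$ii uses $\upmu'=\upnu(\bast n)$ as the diagonal index where the paper instead uses $\upnu$ directly via the order-reversing inclusion $\bast\Z^{\upmu}_{\upnu}(\uptheta)\subset\bast\Z^{\upnu}_{\upnu}(\uptheta)$; both are the same monotonicity observation. Your treatment is in fact more complete: you prove the identity $\wideparen{\upnu}=\wideparen{\upmu}^{+}$ explicitly (the paper uses it implicitly here and proves it later in the flat-spectrum section), and you supply a full ultrafilter gap argument for the ``not a dense order'' clause of iii, which the paper's proof does not spell out.
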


\begin{proof} 
%In view of Theorem \ref{badapproxchar},  the only equivalences which require proof
%are ii.\ $\Leftrightarrow$ i.\ $\Leftrightarrow$ iv.
%By Theorem \ref{badapproxchar},
i.\ $\Leftrightarrow$ ii. By Theorem \ref{badapproxchar},
$\uptheta\in\mathfrak{B}$ $\Rightarrow $ $\bast\Z^{\upmu}_{\upmu}(\uptheta )=0$ for all $\upmu$.
On the other hand if $\uptheta\in\mathfrak{W}$ $\Rightarrow $ there exists $\upmu\geq\upnu$ with 
$\bast\Z^{\upmu}_{\upnu}(\uptheta )\not=0$.  By the order-reversing property of the growth filtration, the latter
implies that $\bast\Z^{\upnu}_{\upnu}(\uptheta )\not=0$. i.\ $\Leftrightarrow$ iii.  If $\uptheta\in\mathfrak{W}$
then there exists a best denominator sequence $\{q_{n_{i}}\}$ for which the sucessor $\{q^{+}_{n_{i}}=q_{n_{i}+1}\}$
satisfies $q_{n_{i}}^{+}/q_{n_{i}}\rightarrow\infty$.  The other direction follows from Theorem \ref{badapproxchar}.
iii.\ $\Leftrightarrow$ iv.  Immediate from Theorem \ref{badapproxchar}.
%$\uptheta\in\mathfrak{W}$ $\Leftrightarrow $
%for every best denominator $\bast \widehat{q}$, $\bast \widehat{q}\cdot \upvarepsilon (\bast \widehat{q})\in\bast\R_{\upvarepsilon} $.
\end{proof}

Let $\upkappa\geq1$.  Recall \cite{BD} that $\uptheta$ is {\bf  $\boldsymbol\upkappa$-approximable} 
%(of exponent $\boldsymbol\upkappa$)}}
if the set of $n\in\N$ for which
$\|n\uptheta \| <n^{-\upkappa} $
has infinite cardinality i.e.
\begin{align}\label{verywellaproxexpk} \lim_{n\rightarrow\infty}\inf \;n^{\upkappa}\| n\uptheta\| <1.
\end{align}
%for all $1\leq\uplambda<\upkappa$.  
The set of $\upkappa$-approximable numbers is denoted $\mathfrak{W}_{\upkappa}$.
By Dirichlet's Theorem $\mathfrak{W}_{1}=\R-\Q$.
 If $\uptheta$ is $\upkappa$-approximable for $\upkappa >1$ then
we say that $\uptheta$ is {\bf very well approximable}; the set of such numbers is denoted
$\mathfrak{W}_{>1}=\bigcup_{\upkappa>1}\mathfrak{W}_{\upkappa}$.  The inclusion $\mathfrak{W}_{>1}\subset\mathfrak{W}$ is proper
and we write $\mathfrak{W}_{1^{+}} = \mathfrak{W}-\mathfrak{W}_{>1}=$ the set of {\bf well but not very well approximable numbers}.
For $\uptheta\in\mathfrak{W}_{>1}$
its {\bf exponent}\footnote{Equal to the {\it asymptotic irrationality exponent} defined in \cite{Wa}.} is 
\[ \upkappa (\uptheta ) := \sup_{\uptheta\in \mathfrak{W}_{\upkappa}} \upkappa \in (1,\infty ] .\]
%Note that if $\upkappa (\uptheta)<\infty$ then there exists a constant $D(\uptheta)\geq 1$ such that
%\begin{align} \lim_{n\rightarrow\infty}\inf \;n^{\upkappa(\uptheta)}\| n\uptheta\| <D(\uptheta).
%\end{align}
It is not necessarily the case that $\uptheta$ is $\upkappa (\uptheta )$-approximable.
% $\Leftrightarrow$ we may choose $D(\uptheta )\leq 1$.
%In the statement which follows, for $\upmu\in\bstar\PR\R_{\upvarepsilon}$ and $\upkappa>1$, by $\upmu^{\upkappa}$
%we mean the class $\bast x^{\upkappa}\cdot\bast\R_{\rm fin}^{\times}$, where $\bast x\in\upmu$ is a positive
%representative.   The class $\upmu^{\upkappa}$ is clearly well defined and we have $\upmu^{\upkappa}<\upmu$.  
We say that the exponent $\upkappa = \upkappa (\uptheta )$ of 
$\uptheta\in\mathfrak{W}_{>1}$ is 
%has $ is $\upkappa$-extremely well approximable
%$\uptheta$ is $\upkappa$-bad if 
%$\upkappa (\uptheta ) =\upkappa$ and $0< \lim_{n\rightarrow\infty}\inf \;n^{\upkappa}\| n\uptheta\| $.  
 {\bf excellent} if  $ \lim_{n\rightarrow\infty}\inf \;n^{\upkappa}\| n\uptheta\| =0$ 
 %otherwise we say that it is {\bf {\small poor}}.  
 In particular, if the exponent $\upkappa$ of $\uptheta$ is excellent then $\uptheta\in\mathfrak{W}_{\upkappa}$. 
 If the exponent is not excellent, we will say that it is {\bf  bad} and say that $\uptheta$ is {\bf  $\boldsymbol\upkappa$-bad}.
% if it is poor then $0< \lim_{n\rightarrow\infty}\inf \;n^{\upkappa}\| n\uptheta\| <\infty$ and it may or may not be the case that
 %$\uptheta\in\mathfrak{W}_{\upkappa}$.
%In this case condition ii. a. refines to $\bast\Z^{\upmu}_{\upmu^{\upkappa}}(\uptheta )\not=0$ and iii. a. to 
%\item {\bf {\small very fine}} if  $ \lim_{n\rightarrow\infty}\inf \;n^{\upkappa}\| n\uptheta\| <1$ i.e.\ if $\uptheta\in\mathfrak{W}_{\upkappa}$.
%\item[-] {\bf {\small poor}} if  $ \lim_{n\rightarrow\infty}\inf \;n^{\upkappa}\| n\uptheta\| =\infty$.

\begin{theo}\label{verywellapprox}  Let $\uptheta\in\mathfrak{W}_{>1}$.  The following statements are equivalent:
\begin{enumerate}
\item[i.] $\uptheta$ has (excellent) exponent $\upkappa>1$. 
\item[ii.] %$\bast\Z^{\upmu}_{\upmu^{\upkappa}}(\uptheta  ) =0$ and  
The following conditions hold: 
\begin{itemize}
\item[a.]  For all $\upkappa'>\upkappa$, $\bast\Z^{\upmu}_{\upmu^{\upkappa'}}(\uptheta  ) =0$.
\item[b.] 
$ \bigcap_{\uplambda\in [1,\upkappa )}\bast\Z^{\upmu}_{\upmu^{\uplambda}}(\uptheta  )\not =0$ ($\bast\Z^{\upmu}_{\upmu^{\upkappa}}(\uptheta )\not=0$).  In particular, $\bast\Z^{\upnu}_{\upnu}(\uptheta  )\supset\bast\Z^{\upmu}_{\upnu}(\uptheta  )\not =0$ for all $\upmu\geq \upnu>\upmu^{\upkappa}$.
\end{itemize} 
 %for all $\uplambda <\upkappa$ and
%There exists $\bast\upkappa\in\bast\R_{\rm fin}$ with $\bast\upkappa\simeq \upkappa$ and $\upmu\in\bstar\PR\R_{\upvarepsilon}$ 
%such that $\bast\Z^{\upmu}_{\upmu^{\bast\upkappa}}(\uptheta  )=0$ and and
%\[ \bigcap_{ \uplambda\in [1, \upkappa )\subset\bast\R}\bast\Z^{\upmu}_{\upmu^{\uplambda}}(\uptheta  )\not =0.\] 
%In particular, $\bast\Z^{\upnu}_{\upnu}(\uptheta  )\not =0$ for all $\upnu\in [\upmu^{ \uplambda}, \upmu]$ and $\uplambda <\upkappa$.
%In particular, $\bstar\PR\R_{\upvarepsilon}^{\rm bg}(\uptheta)$ is not a dense order.   
\item[iii.] The following conditions hold: 
\begin{itemize}
\item[a.] For all $\upkappa'>\upkappa$ and for every growth decay pair $(\widehat{\upmu},\widehat{\upnu})$, $\widehat{\upmu}^{\upkappa' }\leq\widehat{\upnu}$.
\item[b.]  There exists a growth decay pair $(\widehat{\upmu},\widehat{\upnu})$ such that $\widehat{\upmu}^{\uplambda }>\widehat{\upnu}$
for all $1\leq\uplambda<\upkappa$ (such that $\widehat{\upmu}^{\upkappa }>\widehat{\upnu}$).
\end{itemize}
\end{enumerate}
\end{theo}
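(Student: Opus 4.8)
The plan is to prove Theorem~\ref{verywellapprox} by translating the three conditions into statements about the behavior of best denominators and best numerators, using the dictionary established in \S\ref{nonvanspec}. The key observation is that for $\uptheta\in\mathfrak{W}_{>1}$, the exponent $\upkappa(\uptheta)$ is the supremum of exponents $\upkappa'$ for which infinitely many best denominators $q_i$ satisfy $\|q_i\uptheta\| < q_i^{-\upkappa'}$, which by (\ref{bestineq}) and the theory of continued fractions is governed by the ratios $q_{i+1}/q_i$: indeed $\|q_i\uptheta\|$ is comparable to $q_{i+1}^{-1}$, so $\|q_i\uptheta\| < q_i^{-\upkappa'}$ holds iff $q_{i+1} > q_i^{\upkappa'}$ (up to bounded factors, which vanish in $\bstar\PR\R$). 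Translating to growth-decay classes: if $\bast\wideparen{q}$ is a best denominator class with growth $\wideparen{\upmu}$ and decay $\wideparen{\upnu}$, then $\wideparen{\upnu}$ is the growth class of $\bast\wideparen{q}^{+}$ (by (\ref{bestineq}) and the proof of Theorem~\ref{bestfilt}), so $\wideparen{\upmu}^{\uplambda} > \wideparen{\upnu}$ is precisely the ideological rendering of $q_{n_i+1} > q_{n_i}^{\uplambda}$ along the subsequence.

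First I would prove i.\ $\Leftrightarrow$ iii. For the forward direction: if $\upkappa$ is the excellent exponent, then there is a subsequence $n_i$ with $q_{n_i+1}/q_{n_i}^{\upkappa}\to\infty$, whence the best denominator class $\bast\wideparen{q}=\bast\{q_{n_i}\}$ satisfies $\wideparen{\upmu}^{\upkappa}>\wideparen{\upnu}$, giving iii.b; and the definition of $\upkappa$ as a supremum gives that for $\upkappa'>\upkappa$ and \emph{every} best denominator class the reverse inequality $q_{n_i+1}\leq q_{n_i}^{\upkappa'}$ holds eventually, i.e.\ $\wideparen{\upmu}^{\upkappa'}\leq\wideparen{\upnu}$, which is iii.a. (Care is needed here because of the passage from ``eventually'' to ``for all $i$'': one uses that the tail-equivalence-class construction and the cofinal-subsequence freedom in the definition of best denominator classes allow us to discard finitely many terms; this is the kind of routine nonstandard bookkeeping already used throughout \S\ref{nonvanspec}.) The converse is the same equivalences read backwards. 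Then ii.\ $\Leftrightarrow$ iii.: condition ii.a says $\bast\Z^{\upmu}_{\upmu^{\upkappa'}}(\uptheta)=0$ for all $\upkappa'>\upkappa$ and all $\upmu$, which is exactly Theorem~\ref{bestfilt} applied to every best denominator class together with iii.a (the vanishing strips cover the relevant region precisely when $\wideparen{\upmu}^{\upkappa'}\leq\wideparen{\upnu}$ always); and ii.b, the existence of a nonzero element of $\bigcap_{\uplambda\in[1,\upkappa)}\bast\Z^{\upmu}_{\upmu^{\uplambda}}(\uptheta)$, is witnessed by the best denominator class produced in iii.b, since that class lies in $\bast\Z^{\wideparen{\upmu}}_{\wideparen{\upnu}}(\uptheta)$ with $\wideparen{\upmu}^{\uplambda}>\wideparen{\upnu}$ for all $\uplambda<\upkappa$ (use $\wideparen{\upmu}^{+}\leq\wideparen{\upmu}$ and Dirichlet to get the strict growth inequality $\wideparen{\upmu}(\bast\wideparen q)>\wideparen{\upmu}$, placing it in $\bast\Z^{\upmu}$ for the relevant $\upmu$). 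The parenthetical variants (replacing the intersection over $[1,\upkappa)$ by the single index $\upkappa$, and $\wideparen{\upmu}^{\uplambda}$ by $\wideparen{\upmu}^{\upkappa}$) correspond to the excellent exponent actually being attained, i.e.\ $\uptheta\in\mathfrak{W}_{\upkappa}$, and are handled by the same argument with the sharper inequality $q_{n_i+1}/q_{n_i}^{\upkappa}\to\infty$. The ``in particular'' clause of ii.b follows from the order-reversing property (\ref{growthincl}) of the growth filtration and the order-preserving property (\ref{decayincl}) of the decay filtration exactly as in the proof of Theorem~\ref{wellapprox} i.\ $\Leftrightarrow$ ii.

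The main obstacle I anticipate is the careful handling of the distinction between the exponent being \emph{excellent} versus merely being the supremum, and correspondingly between the intersection $\bigcap_{\uplambda\in[1,\upkappa)}$ and the single-index conditions in the parentheses; getting the quantifiers right so that iii.a (a statement about \emph{all} best growth-decay pairs and \emph{all} $\upkappa'>\upkappa$) genuinely matches ii.a (a statement about \emph{all} $\upmu$) requires knowing that \emph{every} growth class $\upmu\in\bstar\PR\R_\upvarepsilon$ is dominated by the growth of some best denominator class built from a cofinal subsequence — which is a variant of the computation in the proof of Theorem~\ref{badapproxchar} iii.\ $\Rightarrow$ i., adapted to the unbounded-partial-quotient setting. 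The secondary difficulty is verifying that $\langle|\upvarepsilon(\bast\wideparen q)|\rangle = \upmu(\bast\wideparen q^{+})$, i.e.\ that best decay equals best-successor growth; this follows from (\ref{bestineq}) (which pins $|\upvarepsilon(q_i)|$ between $q_{i+1}^{-1}$ and a bounded multiple thereof via $1/(q_{i+1}(q_{i+1}+q_i)) \le \|q_i\uptheta\| \le 1/q_{i+1}$) together with the fact that bounded factors are killed in $\bstar\PR\R$ — so it is a one-line invocation of standard continued-fraction inequalities once stated correctly. Everything else is an assembly of Theorem~\ref{bestfilt}, Dirichlet's Theorem, and the monotonicity of the two filtrations.
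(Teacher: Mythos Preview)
Your approach is correct; the route differs from the paper's mainly in organization. The paper introduces an intermediate pair of conditions a$'$, b$'$ about \emph{arbitrary} $\bast m\in\bast\Z(\uptheta)$ --- namely $\upnu(\bast m)>\upmu(\bast m)^{\upkappa'}$ for all $\upkappa'>\upkappa$, and existence of $\bast n$ with $\upnu(\bast n)<\upmu(\bast n)^{\uplambda}$ for all $\uplambda<\upkappa$ --- then proves i $\Leftrightarrow$ (a$'$,b$'$) $\Leftrightarrow$ ii directly from the definitions and a dense-linear-order choice of $\upmu$, and (a$'$,b$'$) $\Leftrightarrow$ iii by passing from $\bast m$ to the largest best class $\bast\wideparen q\leq|\bast m|$ (which gives $\wideparen{\upmu}\geq\upmu(\bast m)$ and $\wideparen{\upnu}\leq\upnu(\bast m)$). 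Your i $\Leftrightarrow$ iii instead goes through the continued-fraction identity $\wideparen{\upnu}=\wideparen{\upmu}^{+}$ and the characterization $\upkappa=\limsup_i \log q_{i+1}/\log q_i$; this is more concrete and avoids the pivot through a$'$, b$'$, at the cost of the extra one-line lemma you flag. Your ii $\Leftrightarrow$ iii then amounts to the paper's two equivalences fused, using the same ``largest best class below'' reduction. Two small fixes: the parenthetical ``$\upmu(\bast\wideparen q)>\wideparen{\upmu}$'' is confused since these are equal by definition --- what iii.b $\Rightarrow$ ii.b actually needs is to choose $\upmu$ strictly below $\wideparen{\upmu}$ by a sub-power factor (e.g.\ $\upmu=\langle\bast\wideparen q\cdot\log\bast\wideparen q\rangle^{-1}$), so that $\upmu^{\uplambda}\geq\wideparen{\upnu}$ persists for every real $\uplambda<\upkappa$; and the direction iii.a $\Rightarrow$ ii.a is not Theorem~\ref{bestfilt} as stated but its underlying minimality argument (for nonzero $\bast n\in\bast\Z^{\upmu}_{\upmu^{\upkappa'}}(\uptheta)$, the largest $\bast\wideparen q\leq|\bast n|$ yields $\wideparen{\upnu}\leq\upnu(\bast n)<\upmu(\bast n)^{\upkappa'}\leq\wideparen{\upmu}^{\upkappa'}$, contradicting iii.a), which is precisely the reduction you anticipate in your obstacle paragraph but misname.
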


\begin{proof} i.\ $\Rightarrow$ ii. $\uptheta$ has exponent $\upkappa>1$ $\Leftrightarrow$ 
%$\|\bast n\uptheta \| < \bast n^{-\upkappa}$
\begin{itemize} 
\item[a'.]  For all $\bast m\in\bast\Z (\uptheta )$ and all $\upkappa'>\upkappa$,
$\upnu (\bast m )> \upmu(\bast m)^{\upkappa'}$.
\item[b'.] there exists
$\bast n\in\bast\Z (\uptheta )$ such that $\upnu (\bast n )< \upmu(\bast n)^{\uplambda}$ for all $1\leq\uplambda<\upkappa$.
\end{itemize}
%$\Leftrightarrow$
%there exists $\bast\upkappa\simeq\upkappa$ and $\bast n\in\bast\Z (\uptheta )$ such that
%$\upnu (\bast n )= \upmu(\bast n)^{\bast\upkappa}$.
%Let $\upnu_{0} = \|\bast n\uptheta \| $.  
Properties a'., b'. together are equivalent to properties a., b. of ii.  Indeed, the vanishing $\bast\Z^{\upmu}_{\upmu^{\upkappa'}}(\uptheta  ) =0$ for all $\upkappa'>\upkappa$
is equivalent to a'. .  Assuming b'.:
since $\bstar\PR\R_{\upvarepsilon}$ is a dense linear order, we may find $\upmu\in \bstar\PR\R_{\upvarepsilon}$ such that 
$\upnu (\bast n )< \upmu^{ \uplambda} < \upmu(\bast n)^{\uplambda}$
for all $1\leq\uplambda<\upkappa$.
Then $\upmu < \upmu(\bast n)$ and therefore 
\begin{align}\label{verywellintersection} \bast n\in \bast\Z^{\upmu}_{\upnu (\bast n )}(\uptheta )
\subset  \bigcap_{\uplambda\in [1,\upkappa )} \bast\Z^{\upmu}_{\upmu^{\uplambda}}(\uptheta ).\end{align}
On the other hand, the existence of $\bast n$ satisfying (\ref{verywellintersection}) implies b'..

%ii.\ $\Rightarrow$ i.
%Let $\bast n\in \bigcap_{\bast\uplambda\in [1,\bast\upkappa )}\bast\Z^{\upmu}_{\upmu^{\bast\uplambda}}(\uptheta  )$
%where $\bast\upkappa\leq\upkappa$ and $\bast\upkappa\simeq \upkappa$.  Then $\bast n\cdot\upmu\in\bstar\PR\R_{\upvarepsilon}$ and $%\upnu (\bast n )\leq\upmu^{\bast\uplambda}$ for all $1\leq\bast\uplambda<\bast\upkappa$. It follows from Lemma\label{FrobImageOnto} that
%\[ \bast n^{\upkappa} \cdot \upnu (\bast n ) \leq 1
%( \bast n\cdot\upmu)^{\uplambda}\in \bstar\PR\R_{\upvarepsilon}\]
%which implies (\ref{verywellaproxexpk}). 
i.\ $\Rightarrow$ iii. We show that a'., b'. $\Leftrightarrow$ a., b. of iii.  Assuming a'., b'.  Let $\bast m$ be as in a'. and let $\bast \widehat{q}$
be the largest best denominator class $\leq\bast m$.  If  $\bast m$ is itself a best denominator class we are done, so assume otherwise.
Then 
\[ \widehat{\upmu }^{\uplambda} \geq \upmu(\bast m)^{\uplambda} >\upnu(\bast m)\geq \widehat{\upnu}. \]
We leave the excellent versions to the reader.
% $\uptheta\in\mathfrak{W}_{\upkappa}$, $\upkappa>1$ 
%$\Leftrightarrow$ (\ref{verywellaproxexpk}) holds for all $1\leq\uplambda<\upkappa$ $\Leftrightarrow$ for some best denominator
%class $\bast\widehat{q}$, (\ref{verywellaproxexpk}) holds for all $1\leq\uplambda<\upkappa$.
\end{proof}

The {\bf Liouville numbers} are those which are very well approximable for every exponent $\upkappa >1$; they are
denoted $\mathfrak{W}_{\infty}$.  For any $\upmu\in\bstar\PR\R_{\upvarepsilon}$, write $(\upmu^{\infty},\upmu]=\bigcup_{\upkappa>1}
(\upmu^{\upkappa},\upmu]$.
The next result follows immediately from Theorem \ref{verywellapprox}: we recall from \S \ref{tropical} that $\bar{\upmu}$ is the orbit
of $\upmu$ with respect to the Frobenius action of $(\bast\R_{\rm fin})_{+}^{\times}$.  For $\widehat{\upmu},\widehat{\upnu}$ a best
growth-decay pair, the corresponding Frobenius orbits are denoted $\widehat{\bar{\upmu}},\widehat{\bar{\upnu}}$.

\begin{coro}\label{LiouvilleSpec}
Let $\uptheta\in\R-\Q$.  The following statements are equivalent: 
\begin{enumerate}
\item[i.] $\uptheta\in\mathfrak{W}_{\infty}$. 
\item[ii.] There exists $\upmu\in\bstar\PR\R_{\upvarepsilon}$ 
such that 
\[ \bigcap_{\uplambda\in [1,\infty )}\bast\Z^{\upmu}_{\upmu^{\uplambda}}(\uptheta  )\not =0.\] 
In particular, $\bast\Z^{\upnu}_{\upnu}(\uptheta  )\not =0$ for all $\upnu\in (\upmu^{\infty}, \upmu]$.
\item[iii.] $\widehat{\bar{\upmu}}>\widehat{\bar{\upnu}}$ for some best growth decay pair.
\end{enumerate}
\end{coro}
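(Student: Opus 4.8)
The plan is to deduce Corollary \ref{LiouvilleSpec} directly from Theorem \ref{verywellapprox} by passing to the limit $\upkappa\to\infty$. Recall that $\uptheta\in\mathfrak{W}_{\infty}$ means precisely that $\uptheta\in\mathfrak{W}_{\upkappa}$ for every $\upkappa>1$, equivalently that $\upkappa(\uptheta)=\infty$. First I would observe that the equivalence i.\ $\Leftrightarrow$ ii.\ is obtained by intersecting the statements of Theorem \ref{verywellapprox}.ii over all finite exponents. Explicitly: $\uptheta\in\mathfrak{W}_{\infty}$ iff for every $\upkappa>1$ condition b.\ of Theorem \ref{verywellapprox}.ii holds (the condition $\bigcap_{\uplambda\in[1,\upkappa)}\bast\Z^{\upmu}_{\upmu^{\uplambda}}(\uptheta)\not=0$), while condition a.\ of that theorem imposes no upper obstruction since there is no finite exponent beyond which vanishing is forced. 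The subtlety is the quantifier on $\upmu$: Theorem \ref{verywellapprox}.ii produces, for each finite $\upkappa$, some $\upmu_{\upkappa}$ witnessing the nonvanishing, but for $\mathfrak{W}_{\infty}$ we want a \emph{single} $\upmu$ whose whole ray $(\upmu^{\infty},\upmu]$ is nonvanishing.

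To handle this I would go back to the partial quotient picture underlying Theorem \ref{verywellapprox} rather than its black-box statement. If $\uptheta\in\mathfrak{W}_{\infty}$, then for each $\upkappa$ there are infinitely many best denominators $q_{i}$ with $q_{i}|q_{i}\uptheta-p_{i}|<q_{i}^{-\upkappa}$; a diagonal extraction over $\upkappa=2,3,\dots$ yields a single increasing subsequence $\{q_{n_{i}}\}$ with $q_{n_{i}}|q_{n_{i}}\uptheta-p_{n_{i}}|<q_{n_{i}}^{-i}$, say. The associated best denominator class $\bast\wideparen{q}$ then has growth $\wideparen{\upmu}=\upmu(\bast\wideparen{q})$ and decay $\wideparen{\upnu}=\upnu(\bast\wideparen{q})$ satisfying $\wideparen{\upnu}<\wideparen{\upmu}^{\uplambda}$ for \emph{every} finite $\uplambda\geq 1$, which is exactly the statement that $\wideparen{\upmu}$ and $\wideparen{\upnu}$ lie in distinct Frobenius orbits with $\wideparen{\bar{\upmu}}>\wideparen{\bar{\upnu}}$ (using Proposition \ref{FrobDLO} and the description of the Frobenius action in \S \ref{tropical}). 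This gives iii.; taking $\upmu$ slightly below $\wideparen{\upmu}$, as in the proof of Theorem \ref{verywellapprox}, and using the order-reversing inclusion \eqref{growthincl}, places $\bast\wideparen{q}$ in $\bigcap_{\uplambda\in[1,\infty)}\bast\Z^{\upmu}_{\upmu^{\uplambda}}(\uptheta)$, giving ii. The ``in particular'' clause then follows because the growth filtration is order-reversing: for $\upnu\in(\upmu^{\infty},\upmu]$ we have $\bast\Z^{\upmu}_{\upnu}(\uptheta)\subset\bast\Z^{\upnu}_{\upnu}(\uptheta)$, and the left side contains $\bast\wideparen{q}$ for $\upnu$ above $\wideparen{\upnu}$, which by definition of $(\upmu^{\infty},\upmu]$ covers the whole ray.

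For the converse directions, ii.\ $\Rightarrow$ i.\ and iii.\ $\Rightarrow$ i., I would contrapose: if $\uptheta\notin\mathfrak{W}_{\infty}$ then $\upkappa(\uptheta)<\infty$, so by condition a.\ of Theorem \ref{verywellapprox}.ii (or a.\ of iii.), choosing any $\upkappa'$ with $\upkappa(\uptheta)<\upkappa'<\infty$ forces $\bast\Z^{\upmu}_{\upmu^{\upkappa'}}(\uptheta)=0$ for all $\upmu$, hence the infinite intersection in ii.\ is already killed at the finite stage $\uplambda=\upkappa'$, and correspondingly every best growth-decay pair satisfies $\wideparen{\upmu}^{\upkappa'}\leq\wideparen{\upnu}$, i.e.\ $\wideparen{\bar{\upmu}}=\wideparen{\bar{\upnu}}$, contradicting iii. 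The case $\uptheta\in\mathfrak{B}$ is immediate from Theorem \ref{badapproxchar}.ii, and $\uptheta\in\Q$ is excluded by hypothesis. The main obstacle I anticipate is the diagonalization step producing a single best denominator class whose decay beats \emph{all} finite powers of its growth simultaneously — one must check that the extracted subsequence still represents a genuine element of $\bast\Z(\uptheta)$ (which it does, by \eqref{bestineq}) and that the strict inequalities $\wideparen{\upnu}<\wideparen{\upmu}^{\uplambda}$ survive passage to the growth-decay semi-ring for every $\uplambda$, i.e.\ that no collapse $\wideparen{\bar{\upmu}}=\wideparen{\bar{\upnu}}$ is forced; this is precisely where the freedom to diagonalize (rather than fix one $\upkappa$) is essential, and where the density of $\bstar\overline{\PR\R}$ from Proposition \ref{FrobDLO} is used to separate the two Frobenius orbits.
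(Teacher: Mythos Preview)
Your approach is correct and matches the paper's: the paper simply asserts that the corollary ``follows immediately from Theorem~\ref{verywellapprox}'', and your argument is precisely the natural way to make this passage to $\upkappa\to\infty$ explicit. The diagonalization you describe to produce a \emph{single} best denominator class $\bast\wideparen{q}$ whose decay beats every finite power of its growth is the one substantive point that ``follows immediately'' elides, and you handle it correctly; the rest (separation of Frobenius orbits via Proposition~\ref{FrobDLO}, the contrapositive using condition~a.\ of Theorem~\ref{verywellapprox}) is exactly as intended.
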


\section{Flat Spectra}\label{flatsection}

The line $\upmu=\upnu$ represents a critical divide whose intersection with ${\rm Spec}(\uptheta )$ gives a new invariant
of $\uptheta$
which is strongly influenced by patterns found in the sequence of partial quotients; as opposed to the full spectrum
which is essentially determined by the exponent.

%tension between denominator growth and error decay
%is plainly registered.  
We define the {\bf flat spectrum} of $\uptheta$ to be the set
\[ {\rm Spec}_{\rm flat}(\uptheta ) =\left\{\upmu\in\bstar\PR\R_{\upvarepsilon}\, |\;  \bast\Z^{\upmu}_{\upmu}(\uptheta )\not=0 \right\}.\]
By Proposition \ref{rationalspec}, ${\rm Spec}_{\rm flat}(\uptheta )=\bstar\PR\R_{\upvarepsilon}$ for all $\uptheta\in\Q$, and by Theorem \ref{badapproxchar}, ${\rm Spec}_{\rm flat}(\uptheta )=\emptyset$ for all $\uptheta\in\mathfrak{B}$.  Therefore we will restrict attention in this section to $\uptheta\in\mathfrak{W}$ = the set of well approximable numbers.  
%In this connection, we will refer to a best denominator $\bast\widehat{q}$ as {\bf {\small well best}} if
%$\widehat{\upmu}>\widehat{\upnu}$; by Theorem \ref{wellapprox}, any $\uptheta\in\mathfrak{W}$ has a well best denominator.

Suppose that $\bast m\in\bast\Z (\uptheta )$
can be factored 
\[ \bast m=\bast x\cdot\bast n\]  for $\bast x\in\bast\Z$ and $\bast n \in \bast\Z (\uptheta )$.  If $\upnu=\upnu (\bast n )$ and 
$\bast x\in \bast\Z^{\upnu}$ then it follows that 
\[ \upvarepsilon(\bast m) = \bast x\cdot\upvarepsilon (\bast n),\quad \text{i.e.} \quad \upnu (\bast m ) =|\bast x|\cdot\upnu.\]  
In this case we refer to $\bast m=\bast x\cdot\bast n$ as an {\bf approximate ideal factorization} and speak of $\bast m$ as being an {\bf approximate multiple}
of $\bast n$.  Note that in this case $\bast m^{\perp}=\bast x\cdot\bast n^{\perp}$. Conversely, the action 
\[ \bast\Z^{\upnu}\times\bast\Z_{\upnu}(\uptheta )\rightarrow \bast\Z (\uptheta ), \quad (\bast x,\bast n)\mapsto \bast x\cdot\bast n, \]
 has image consisting of approximate multiples.  There may be
factorizations of $\bast m$ which do not consist of approximate factors.

Call $\bast m\in\bast\Z (\uptheta )$ a {\bf multibest denominator} if there is an approximate factorization
\[\bast m = \bast x\cdot\bast\widehat{q}\] for some
best denominator $\bast\widehat{q}$.  
%If $\bast\widehat{q}$ is well best, we refer to $\bast m$ as being {\bf {\small well multibest}}.  
We have 
\begin{align}\label{multibestineq}
\widehat{\upnu}\leq\upnu (\bast m ) = \bast x\cdot\widehat{\upnu} ,\quad \upmu(\bast m) =\bast x^{-1}\cdot\widehat{\upmu}\leq\widehat{\upmu}.
 \end{align}
Recall that if $\bast\widehat{q}$ is the class of $\{ q_{n_{i}}\}$, we denote by $\bast\widehat{q}^{+}$ the successor class 
  $\bast\{ q_{n_{i}+1}\}$ and by $\bast\widehat{q}^{-}$ the predecessor class 
  $\bast\{ q_{n_{i}-1}\}$.  Then if $\bast a$ denotes the class of $\{a_{n_{i}+1}\}$ (the corresponding sequence of partial quotients), we have
 \[\bast\widehat{q}^{+}=\bast a\cdot \bast \widehat{q}+\bast \widehat{q}^{-}.\]  We say that $\bast\widehat{q}$
 has {\bf infinite partial quotient} if $\bast a$ is infinite.  Note that $\bast\widehat{q}$ has infinite partial quotient $\Leftrightarrow$
 $\widehat{\upmu}^{+}<\widehat{\upmu}$.
 We apply the same terminology to
 a multibest class $\bast m=\bast x\cdot\bast\widehat{q}$
 if $\bast\widehat{q}$ has infinite partial quotient.  An element $\uptheta\in\mathfrak{W}$ will
possess best classes $\bast\widehat{q}$ having {\em finite} partial quotient precisely
 when the sequence $\{ a_{i}\}$ of partial quotients has an infinite bounded subsequence e.g.\ when  
 $\uptheta =e=[2,1,2,1,1,4,1,1,6,\dots ]$.
%Clearly $n\cdot\bast\widehat{q}$ is multibest for any $n\in\Z$, however more general multibest classes can be found.

\begin{theo}\label{flatmultibest} Let $\uptheta\in\mathfrak{W}$.  Then 
\begin{align}\label{flatcharacterization} \bast\Z^{\upmu}_{\upmu}(\uptheta ) & =\bigg\{ \bast m\text{ {\rm multibest with infinite partial quotient} }\bigg|\; 
\upnu (\bast m ) \leq \upmu<
\upmu (\bast m )\bigg\}.
\end{align}
\end{theo}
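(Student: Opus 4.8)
The strategy is to prove the two inclusions of (\ref{flatcharacterization}) separately. For the inclusion $\supseteq$, suppose $\bast m = \bast x\cdot\bast\wideparen q$ is multibest with infinite partial quotient and $\upnu(\bast m)\leq\upmu<\upmu(\bast m)$. Then $\bast m\in\bast\Z^{\upmu}(\uptheta)$ by the condition $\upmu<\upmu(\bast m)$, and $\bast m\in\bast\Z_{\upmu}(\uptheta)$ by $\upnu(\bast m)\leq\upmu$, so $\bast m\in\bast\Z^{\upmu}_{\upmu}(\uptheta)$; it remains to check such $\bast m$ exists, i.e.\ that the interval $[\,\upnu(\bast m),\upmu(\bast m)\,)$ is nonempty, which is exactly the content of the "infinite partial quotient" hypothesis: by (\ref{multibestineq}), $\upmu(\bast m)=\bast x^{-1}\cdot\wideparen\upmu$ and $\upnu(\bast m)=\bast x\cdot\wideparen\upnu$, and since $\bast\wideparen q$ has infinite partial quotient, $\wideparen\upmu^{+}<\wideparen\upmu$, hence (using the best-denominator recursion $q_{n_i+1}=a_{n_i+1}q_{n_i}+q_{n_i-1}$ and the bound $q_{n_i}|q_{n_i}\uptheta-p_{n_i}|<q_{n_i}^{-1}$, i.e.\ $\wideparen\upnu\leq\wideparen\upmu\cdot\wideparen\upmu^{+}$) one gets $\wideparen\upnu<\wideparen\upmu$, hence strictly $\bast x\cdot\wideparen\upnu<\bast x^{-1}\cdot\wideparen\upmu$ can be arranged for appropriate $\bast x$; and the freedom in choosing $\bast x$ along the Frobenius-dense group $\bstar\PR\R_\upvarepsilon$ gives uncountably many such $\bast m$.

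For the harder inclusion $\subseteq$, let $0\neq\bast m\in\bast\Z^{\upmu}_{\upmu}(\uptheta)$, which we may take positive. I would first locate $\bast m$ between consecutive best denominators: let $\bast\wideparen q$ be the largest best denominator class with $\bast\wideparen q\leq\bast m$ (constructed exactly as in the proof of Theorem \ref{bestfilt}, by taking a nondecreasing representative of $\bast m$ and choosing the largest $q_k$ below each term). Write $\bast m=\bast x\cdot\bast\wideparen q+\bast s$ with $0\leq\bast s<\bast\wideparen q$ — but the real goal is to show the decomposition is \emph{exactly} an ideological multiple, $\bast s=0$ and $\bast x\in\bast\Z^{\wideparen\upnu}$. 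The key quantitative fact to invoke is the standard estimate that between $q_k$ and $q_{k+1}$, every integer $n$ with $q_k\le n<q_{k+1}$ satisfies $\|n\uptheta\|\geq\|q_k\uptheta\|$ (the best-approximation property), so $|\upvarepsilon(\bast m)|\geq|\upvarepsilon(\bast\wideparen q)|$, giving $\upnu(\bast m)\geq\wideparen\upnu$. Combined with $\bast m\in\bast\Z_\upmu$, i.e.\ $\upnu(\bast m)\leq\upmu$, and $\bast m\in\bast\Z^\upmu$, i.e.\ $|\bast m|\cdot\upmu\in\bstar\PR\R_\upvarepsilon$ so $\upmu<\upmu(\bast m)=\langle|\bast m|^{-1}\rangle$, we get the sandwich $\wideparen\upnu\leq\upnu(\bast m)\leq\upmu<\upmu(\bast m)\leq\wideparen\upmu$; in particular $\upmu(\bast m)\leq\wideparen\upmu$ forces $\bast m\geq\bast\wideparen q$, consistent with our choice, and the inequality $\upnu(\bast m)\leq\upmu<\upmu(\bast m)$ is precisely the defining condition on the right-hand side of (\ref{flatcharacterization}). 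The delicate part is extracting the \emph{ideological factorization} $\bast m=\bast x\cdot\bast\wideparen q$: here I would use the three-distance / continued-fraction structure to argue that an $n$ strictly between two best denominators which is "too good" an approximation — good enough that $\upmu(\bast m)\leq\wideparen\upmu$ with equality-or-better and $\upnu(\bast m)\leq\upmu<\upmu(\bast m)$ — must in fact be an integer multiple $x\cdot q_k$ of a best denominator (an intermediate convergent denominator would have decay too large, i.e.\ $\upnu>\upmu$, contradicting $\bast m\in\bast\Z_\upmu^\upmu$), and that then $\upvarepsilon(\bast x\cdot\bast\wideparen q)=\bast x\cdot\upvarepsilon(\bast\wideparen q)$ gives $\upnu(\bast m)=|\bast x|\cdot\wideparen\upnu$, forcing $\bast x\in\bast\Z^{\wideparen\upnu}$ via the sandwich (so the factorization is ideological). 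Finally, that $\bast\wideparen q$ has infinite partial quotient follows because otherwise $\wideparen\upnu=\wideparen\upmu$ (the badly-approximable-type estimate $|\bast\wideparen q|\cdot|\upvarepsilon(\bast\wideparen q)|\asymp1$ locally), whence $|\bast x|\cdot\wideparen\upmu=\upnu(\bast m)\leq\upmu<\upmu(\bast m)=|\bast x|^{-1}\cdot\wideparen\upmu$, impossible since $|\bast x|\geq1$.

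\textbf{Main obstacle.} The crux is the structural step: showing every element of $\bast\Z^{\upmu}_{\upmu}(\uptheta)$ is \emph{ideologically} a multiple of a best denominator with infinite partial quotient, rather than merely sandwiched between best denominators. This requires the classical fact that the only denominators $n$ in the range $[q_k,q_{k+1})$ achieving $\|n\uptheta\|$ comparable to $\|q_k\uptheta\|$ are the multiples $q_k, 2q_k,\dots,a_{k+1}q_k$ and the intermediate convergents $jq_k+q_{k-1}$, and that among these only the pure multiples $x q_k$ have the favorable growth/decay balance required by the flat condition $\upmu(\bast m)>\upnu(\bast m)$ while also having $\bast m^\perp=\bast x\cdot\bast\wideparen q^\perp$. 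Making this precise at the nonstandard level — i.e.\ that the tail-equivalence class $\bast m$ decomposes as $\bast x\cdot\bast\wideparen q$ for a single coherently-chosen $\bast\wideparen q$, using Łoś's theorem to transfer the finite-level three-distance statement — is where the care is needed; everything else is bookkeeping with the valuation $\langle\cdot\rangle$ and the inequalities (\ref{multibestineq}).
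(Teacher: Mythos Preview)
Your outline for $\supseteq$ is fine (and matches the paper's one-line treatment of that direction). For $\subseteq$ you have correctly identified the crux: one must show that any $\bast m\in\bast\Z^{\upmu}_{\upmu}(\uptheta)$ is an ideological multiple of a best denominator with infinite partial quotient, and that intermediate-convergent denominators must be excluded. However, the mechanism you propose --- ``three-distance / continued-fraction structure'' together with the claim that the only $n\in[q_k,q_{k+1})$ with small $\|n\uptheta\|$ are pure multiples of $q_k$ or intermediate convergents --- is not the tool the paper uses, and in the form you state it, it is not a standard theorem you can simply cite; the three-distance theorem concerns gap lengths in $\{n\uptheta\bmod 1\}$, not a classification of which $n$ achieve near-best error.

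The paper proceeds differently. From $\bast m\in\bast\Z^{\upmu}_{\upmu}(\uptheta)$ one first extracts (as you do) the inequality $\|m_i\uptheta\|<m_i^{-1}$, i.e.\ $|\uptheta-m_i^\perp/m_i|<m_i^{-2}$. At this point the paper invokes \emph{Grace's Theorem} (Theorem~10 of \cite{La1}): any rational $p/q$ in lowest terms with $|\uptheta-p/q|<q^{-2}$ is a principal or intermediate convergent $p_{n,r}/q_{n,r}$ with $r\in\{0,1,a_{n+2}-1\}$. This yields directly $\bast m=\bast x\cdot\bast\wideparen q_{\bast r}$ for one of three values of $\bast r$, and the factorization is checked to be ideological. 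The cases $\bast r=1$ and $\bast r=\bast a^{+}-1$ are then ruled out by an explicit computation using the closed formula (Lemma of \S I.4 in \cite{La1})
\[
|\upvarepsilon(\bast\wideparen q_{\bast r})|=\frac{\bast\uptheta-\bast r}{\bast\uptheta\,\bast\wideparen q^{+}+\bast\wideparen q},
\]
from which one reads off that in those two cases $\upnu(\bast\wideparen q_{\bast r})=\upmu(\bast\wideparen q_{\bast r})$ exactly, so that after multiplying by $\bast x$ one cannot have the strict inequality $\upnu(\bast m)<\upmu(\bast m)$ required for membership in the flat group. Only $\bast r=0$ survives, giving a genuine multibest; the strict inequality $\wideparen\upnu<\wideparen\upmu$ then forces infinite partial quotient (your argument for this last step is correct). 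Your proposal is missing both Grace's Theorem as the structural input and the explicit error formula for the case analysis; without these, the ``delicate part'' you flag remains a genuine gap.
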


\begin{proof}  We may assume without loss of generality that $\uptheta>0$.  Clearly the set on the right hand side of (\ref{flatcharacterization}) is contained in $\bast\Z^{\upmu}_{\upmu}(\uptheta )$.  
Suppose now that $0\not=\bast m\in \bast\Z^{\upmu}_{\upmu}(\uptheta )$.
Then we may find monotone representatives $\{ s_{i}\} $ of $\upmu$ and $\{ m_{i}\}$ of $\bast m$ for which
$m_{i}s_{i}\rightarrow 0$ and $\| m_{i}\uptheta\|\leq s_{i}$.  From the first fact we get $s_{i}<m_{i}^{-1}$ so that we may write
\begin{align}\label{upbound} 
\| m_{i}\uptheta\|\leq s_{i}<m_{i}^{-1}.
\end{align}  In particular, if $\{m_{i}^{\perp}\}$ is a representative of the dual $\bast m^{\perp}$ then $|\uptheta-(m_{i}^{\perp}/m_{i})|<m_{i}^{-2}$.  By Grace's Theorem (Theorem 10 of \cite{La1}), the $m_{i}^{\perp}/m_{i}$ are (intermediate) convergents of $\uptheta$: that is, for each $i$ there exists $n=n_{i}$, $r=r_{i}$ nonnegative integers with
$m_{i}/m_{i}^{\perp}= p_{n,r}/q_{n,r}$, 
where
\[   p_{n,r} := rp_{n+1} + p_{n}, \quad q_{n,r} := rq_{n+1} + q_{n},\]
$\{ p_{i}\}$, $\{ q_{i}\}$ are the sequences of numerators and denominators of the principal convergents (best approximations) of $\uptheta$ and $0\leq r\leq a_{n+2}-1$.   Moreover, Grace's Theorem further affirms that the possible values of $r$ in this case are $r=0,1$ or $a_{n+2}-1$.   
If we denote by $\bast\widehat{q}$ the class of $\{q_{n_{i}}\}$, $\bast\widehat{q}^{+}$ the class of $\{q_{n_{i}+1}\}$ and by
$\bast r$ the class of $\{ r_{i}\}$ then we may write
\[ \bast m=\bast x\cdot\bast\widehat{q}_{\bast r}:=\bast x\cdot (\bast r\bast\widehat{q}^{+}+\bast\widehat{q}),\quad
\bast m^{\perp}=
\bast x\cdot\bast\widehat{p}_{\bast r}:= \bast x\cdot (\bast r\bast\widehat{p}^{+}+\bast\widehat{p}).
\] 
 Note that the factorization $\bast m= \bast x\cdot\bast\widehat{q}_{\bast r}$
is approximatel: for
\[ \upvarepsilon(\bast m)=\bast m\uptheta-\bast m^{\perp}=\bast x\cdot(\bast\widehat{q}_{\bast r}\uptheta -\bast\widehat{p}_{\bast r}) =\bast x\cdot
\upvarepsilon(\bast\widehat{q}_{\bast r}) \]
implying (since $\upvarepsilon(\bast m)\in\bast\R_{\upvarepsilon}$)  that $\bast x\in\bast\Z^{\upnu}$ for $\upnu=\upnu (\bast\widehat{q}_{\bast r})$.

We now show that $\bast m$ is multibest having infinite partial quotient i.e.\ that $\bast r=0$ and $\bast\widehat{q}_{\bast r}=\bast\widehat{q}$ has
infinite partial quotient.  To do this we will
make use of a closed expression for the error term $\upvarepsilon(\bast\widehat{q}_{\bast r})$ of the convergent $\bast\widehat{q}_{\bast r}$.  First, let $\bast a^{+} $ be the sequence class of $\{ a_{n_{i}+2}\}$; thus the possibilities afforded by Grace's Theorem are $\bast r=0,1,\bast a^{+}-1$.  Now for any $n$ we define $\uptheta_{n}$ by the formula $\uptheta =[a_{1}\dots a_{n-1}\uptheta_{n}]$.  In particular, $\uptheta_{n}=[a_{n}a_{n+1}...]=a_{n}+\uptheta_{n+1}^{-1}$.  Let $\bast\uptheta$ be the class of $\{ \uptheta_{n_{i}+2}\}$ and note that $\bast\uptheta=\bast a^{+}+(\bast\uptheta^{+})^{-1}$
where $\bast\uptheta^{+}$ is the class of $\{ \uptheta_{n_{i}+3}\}$.
Then the Lemma of \S I.4 of \cite{La1} yields
\[  |\upvarepsilon(\bast\widehat{q}_{\bast r})|= \frac{\bast\uptheta-\bast r}{\bast\uptheta\bast\widehat{q}^{+}+\bast\widehat{q}}.\]
and therefore $|\upvarepsilon(\bast m)|=\bast x \cdot (\bast\uptheta-\bast r)/(\bast\uptheta\bast\widehat{q}^{+}+\bast\widehat{q})$.

\vspace{3mm}

\noindent \fbox{\small {\em Case} 1} $\bast r=0$.  Since $\upnu(\bast m)\leq \upmu<\upmu (\bast m)$, the multibest inequalities (\ref{multibestineq}) imply that 
$\widehat{\upnu}\leq \upmu<\widehat{\upmu }$ and hence $\widehat{\upnu}<\widehat{\upmu }$ .  Since $\bast\uptheta \geq 1$, we have 
\[ \widehat{\upnu} =   \left\langle \bast \widehat{q}^{+}+ (\bast\widehat{q}/\bast\uptheta)\right\rangle^{-1}= \left\langle \bast \widehat{q}^{+}\right\rangle^{-1} = \widehat{\upmu}^{+}. \]
%for otherwise, if $\widehat{\upnu} = \left\langle \bast \widehat{q}\right\rangle^{-1}$ we would obtain $\widehat{\upnu}=\widehat{\upmu}$.  
In particular, $\widehat{\upmu}^{+}<\widehat{\upmu}$ and therefore $\bast\widehat{q}$ has infinite partial quotient.
%Then 
%\begin{align}\label{multidecayeq} \upnu (\bast m )=\bast x\cdot \widehat{\upmu}^{+}.
%\upmu(\bast\widehat{q}^{+} ) 
%\end{align}
%where the third equality follows from the fact that $\bast \widehat{q}^{+}>\bast\widehat{q}$ and $\bast\uptheta \geq 1$.
%In addition
%$\upmu  (\bast m )= \bast x\cdot \langle\bast\widehat{q}^{-1}\rangle = \bast x\cdot \widehat{\upmu} $.  
%Since we are assuming $\bast m\in\bast\Z^{\upmu}_{\upmu}(\uptheta )$, 
%\begin{align}\label{flatineq} \upnu(\bast m)\leq \upmu<\upmu(\bast m)
%\end{align} which implies that $\widehat{\upnu}<\widehat{\upmu}$.
%Moreover, (\ref{multidecayeq}) and (\ref{flatineq}) together give
%\[\widehat{\upmu}^{+} \leq \bast x^{-1}\upmu <\widehat{\upmu}. \]
 %Therefore this case occurs provided that $\bast a$ is infinite: in order to ensure $\widehat{\upmu}^{+}<
%\widehat{\upmu}$ and that $\bast m$ satisfies the multibest inequality (\ref{multibestineq}).  
In other words, this case
comprises precisely the multibest denominators $\bast m$ having infinite partial quotient, for which $\upnu (\bast m ) \leq \upmu<
\upmu (\bast m ).$

\vspace{3mm}

\noindent \fbox{\small {\em Case} 2} $\bast r=1$.  Then $\upnu (\bast\widehat{q}_{\bast r})=\upmu (\bast\widehat{q}_{\bast r})=\widehat{\upmu}^{+}$, so it is impossible that $\upnu (\bast m)<\upmu(\bast m)$, in view of the approximate factorization
$\bast m= \bast x\cdot\bast\widehat{q}_{\bast r}$.  Indeed, $\upnu (\bast m)<\upmu(\bast m)$ implies that $\bast x\cdot \widehat{\upmu}^{+}<\bast x^{-1}\cdot \widehat{\upmu}^{+}$ or $\langle \bast x^{2}\rangle <1$, impossible since $\bast x $, being a sequence class of integers, is not infinitesimal.

\vspace{3mm}

\noindent \fbox{\small {\em Case} 3} $\bast r=\bast a^{+}-1$. Then $\bast\uptheta-\bast r=1+(\bast\uptheta^{+})^{-1}>1$ and is bounded
from above.  Thus
$\upnu (\bast\widehat{q}_{\bast r})=\upmu (\bast\widehat{q}_{\bast r})=\langle\bast a^{+}\widehat{q}^{+}\rangle^{-1}=(\bast a^{+})^{-1}\cdot
\widehat{\upmu}^{+}$,
so as in \fbox{\small {\em Case} 2} we cannot have $\upnu (\bast m)<\upmu(\bast m)$.
%For the convergents, we have the following lower bound (see Theorem 11 of \cite{La1})
%\begin{align}\label{lowbound}    \| q_{n,r}\uptheta\| > \frac{1}{q_{n,r} + q_{n,r+1}}  .\end{align}
%Using this we can find a lower bound for $s_{i}/s_{i+1}$ as follows.  Suppose that $m_{i} =  q_{n,r} = rq_{n+1} + q_{n}$
%for some $n$ and $r=0,1$ or $a_{n+2}-1$. 
\end{proof}

%We can use the above Corollary to show that ${\rm Spec}_{\rm flat}(\uptheta )$ has interior.  
%A quasibest approximation
%class $\bast m_{\uptheta}$ is called {\bf {\small strict}} if $\tilde{\upmu}_{\uptheta}>\tilde{\upnu}_{\uptheta}$.  

%Then we have

Let $\bast\Z^{\infty}_{\rm b} (\uptheta )$ be the set of best denominators having infinite partial quotient and let  $\bast\Z^{\infty}_{\rm mb} (\uptheta )$ $\supset \bast\Z^{\infty}_{\rm b} (\uptheta )$ be the set
of multibest denominators having infinite partial quotient.

\begin{coro}\label{quasibestunion} Let $\uptheta\in\mathfrak{W}$.  Then 
\begin{align}\label{bestintdecomp} {\rm Spec}_{\rm flat}(\uptheta) = \bigcup_{\bast m\in\bast\Z^{\infty}_{\rm mb} (\uptheta )}  \;\big[\upnu (\bast m),\upmu (\bast m) \big) = 
 \bigcup_{\bast \widehat{q}\in\bast\Z^{\infty}_{\rm b} (\uptheta )} \;\big[\widehat{\upnu},\widehat{\upmu} \big).
 \end{align}
 In particular, ${\rm Spec}_{\rm flat}(\uptheta)$ has interior.\end{coro}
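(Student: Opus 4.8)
The plan is to read off both identities from Theorem \ref{flatmultibest} and to deduce the nonemptiness of the interior from Theorem \ref{wellapprox} together with density of the order. First I would unwind the definition: $\upmu\in{\rm Spec}_{\rm flat}(\uptheta)$ means exactly $\bast\Z^{\upmu}_{\upmu}(\uptheta)\neq 0$. Theorem \ref{flatmultibest} identifies $\bast\Z^{\upmu}_{\upmu}(\uptheta)$ with the set of multibest denominators $\bast m$ of infinite partial quotient for which $\upnu(\bast m)\leq\upmu<\upmu(\bast m)$; hence this group is nonzero precisely when $\upmu\in[\upnu(\bast m),\upmu(\bast m))$ for some $\bast m\in\bast\Z^{\infty}_{\rm mb}(\uptheta)$. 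This is the first equality in (\ref{bestintdecomp}).

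For the second equality, the inclusion $\supseteq$ is immediate since $\bast\Z^{\infty}_{\rm b}(\uptheta)\subseteq\bast\Z^{\infty}_{\rm mb}(\uptheta)$ and a best class $\bast\wideparen{q}$ contributes exactly the interval $[\wideparen{\upnu},\wideparen{\upmu})$. For $\subseteq$, I would take $\bast m\in\bast\Z^{\infty}_{\rm mb}(\uptheta)$ with its defining ideological factorization $\bast m=\bast x\cdot\bast\wideparen{q}$, $\bast\wideparen{q}$ of infinite partial quotient, and simply invoke the multibest inequalities (\ref{multibestineq}), which already read $\wideparen{\upnu}\leq\upnu(\bast m)$ and $\upmu(\bast m)\leq\wideparen{\upmu}$; consequently $[\upnu(\bast m),\upmu(\bast m))\subseteq[\wideparen{\upnu},\wideparen{\upmu})$, and taking unions over $\bast\Z^{\infty}_{\rm mb}(\uptheta)$ gives the claimed equality.

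Finally, to see that ${\rm Spec}_{\rm flat}(\uptheta)$ has interior: since $\uptheta\in\mathfrak{W}$, part (iii) of Theorem \ref{wellapprox} produces a best growth $\wideparen{\upmu}\in\bstar\PR\R_{\upvarepsilon}^{\rm bg}(\uptheta)$ with $\wideparen{\upmu}^{+}<\wideparen{\upmu}$; by the remark preceding Theorem \ref{flatmultibest} the associated class $\bast\wideparen{q}$ then has infinite partial quotient, and since $\wideparen{\upnu}=\wideparen{\upmu}^{+}$ (the identity noted in the proof of Theorem \ref{flatmultibest}) we obtain $\wideparen{\upnu}<\wideparen{\upmu}$. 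Thus $[\wideparen{\upnu},\wideparen{\upmu})$ is a nonempty half-open interval contained in ${\rm Spec}_{\rm flat}(\uptheta)$, and as $\bstar\PR\R_{\upvarepsilon}$ is densely ordered (Proposition \ref{denselinord}) it contains a nonempty open interval. I do not anticipate a genuine obstacle here: the substance is entirely in Theorems \ref{flatmultibest} and \ref{wellapprox}, and the only points needing attention are the definitional unwinding in the first step and the observation — needed for the interior — that for a \emph{pure} best class $\wideparen{\upnu}$ coincides with $\wideparen{\upmu}^{+}$, so that "infinite partial quotient" and "$\wideparen{\upnu}<\wideparen{\upmu}$" express the same condition.
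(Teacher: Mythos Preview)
Your proof is correct and follows essentially the same route as the paper: the first equality is read off directly from Theorem~\ref{flatmultibest}, and the second from the interval inclusion $[\upnu(\bast m),\upmu(\bast m))\subset[\wideparen{\upnu},\wideparen{\upmu})$ coming from the multibest inequalities~(\ref{multibestineq}). Your treatment of the ``has interior'' clause is more explicit than the paper's (which leaves it implicit in the decomposition), and your appeal to Theorem~\ref{wellapprox}(iii) together with the identification $\wideparen{\upnu}=\wideparen{\upmu}^{+}$ is exactly the right way to make it precise.
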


\begin{proof}  The first equality follows immediately from Theorem \ref{flatmultibest}.  If $\bast m = \bast x\cdot\bast\widehat{q}$
then \[ \big[\upnu (\bast m),\upmu (\bast m) \big)\subset \big[\widehat{\upnu},\widehat{\upmu} \big)\] giving the second equality.
\end{proof}

%It will be useful to have the following characterization of the strict quasibest classes.

%\begin{prop} $\bast m_{\uptheta}$ is strict quasibest $\Leftrightarrow$ $\bast a$ is infinite and $\bast r\not=1$.  
%In particular, $ \bast m_{\uptheta} = \bast q_{\uptheta}$ or 
%$(\bast a-1)\bast q_{\uptheta}^{+} + \bast q_{\uptheta}$.
%\end{prop}

%\begin{proof}  Suppose $\bast m_{\uptheta}$ is strict.  By Theorem 11 of \cite{La},
%\[(2\bast m_{\uptheta}+\bast q_{\uptheta}^{+})^{-1}<\|\bast m_{\uptheta}\uptheta\| <\bast m_{\uptheta}^{-1}.\]
 %But $\langle2\bast m_{\uptheta}+\bast q_{\uptheta}^{+}\rangle = \langle\bast m_{\uptheta}^{-1}\rangle=\tilde{\upmu}_{\uptheta}$
% when $\bast r=1$ or when $\bast a$ is finite, which would imply that $\bast m_{\uptheta}$ is not strict.
%\end{proof}

%\begin{coro}  Any best class $\bast \widehat{q}$ with $\bast a$ infinite is strict quasibest.
%\end{coro}

An interval $\big[\widehat{\upnu},\widehat{\upmu} \big)$ appearing in (\ref{bestintdecomp}) is called a {\bf best interval}.
These have been indicated in the portrait of ``generic irrational'' found in Figure \ref{portraits}.

\begin{note} As specified in Theorem \ref{verywellapprox} and Corollary \ref{LiouvilleSpec}, for $\uptheta\in \mathfrak{W}_{>1}$, ${\rm Spec}_{\rm flat}(\uptheta )$ contains power intervals of the form $\big[\widehat{\upmu}^{\uplambda},\widehat{\upmu} \big)$, $\uplambda>1$, and for $\uptheta\in \mathfrak{W}_{\infty}$,
${\rm Spec}_{\rm flat}(\uptheta )$
contains Frobenius rays $\Upphi_{(1,\infty)}(\widehat{\upmu})$.  If $\uptheta\in \mathfrak{W}_{1^{+}}$, then Corollary \ref{quasibestunion}
says that although ${\rm Spec}_{\rm flat}(\uptheta )$ contains no intervals of the form $\big[\widehat{\upmu}^{\uplambda},\widehat{\upmu} \big)$,
it nevertheless has interior.  We can summarize this trichotomy by saying that the flat spectrum of elements of $\uptheta$ in $ \mathfrak{W}_{1^{+}}$, 
$\mathfrak{W}_{>1}$ or $ \mathfrak{W}_{\infty}$ contains components having {\it subpolynomial}, {\it polynomial} or {\it exponential connectivity}, respectively.
\end{note}

We will show that
for $\uptheta\in\R-\Q$, ${\rm Spec}_{\rm flat}(\uptheta )$ is a proper subset of $\bstar\PR\R_{\upvarepsilon}$.
Let $\upmu\in\bstar\PR\R_{\upvarepsilon}$, which we assume can be represented by a sequence $\{ s_{i}\}$ of positive reals
monotonically tending to $0$.  We say that $\upmu$ is {\bf  shift invariant}
if we may choose $\{ s_{i}\}$ so that there exists $M>0$ with $s_{i}/s_{i+1}<M$
for all $i$ i.e.\ $s_{i+1}<s_{i}<Ms_{i+1}$ for all $i$.  If $\{ s_{i}'\}$ is another monotone representative sequence representing $\upmu$ then there exists
a constant $C>0$, $C\in\R$, for which $\bast s-C\bast s'\simeq 0$.  Thus any two monotone representative sequences will have this
property, and therefore being shift invariant is independent of the selected representative sequence.  If $\upmu$ is shift invariant and $\{ s_{i}\}$ is a monotone representative
then $ \langle \bast \{ s_{i+1}\}\rangle = \upmu$, hence the terminology.
%\footnote{N.B. The sequence shift map does notdescend to a well-defined map of $\bast\R$.}.  
Let $\bstar\PR\R_{\upvarepsilon}^\mathrm{sh}$
be the set of shift invariant elements of $\bstar\PR\R_{\upvarepsilon}$.  In what follows $X^{\text{\tt C}}$ denotes the complement of a set $X\subset Y$.

\begin{prop} $\bstar\PR\R_{\upvarepsilon}^\mathrm{sh}$ is clopen in $\bstar\PR\R_{\upvarepsilon}$.
\end{prop}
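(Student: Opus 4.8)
The goal is to show $\bstar\PR\R_{\upvarepsilon}^\mathrm{sh}$ is both open and closed in the order topology on $\bstar\PR\R_{\upvarepsilon}$. I will argue both by producing, around any point, an interval that lies entirely inside the set (openness) or entirely inside its complement (closedness). The key technical fact to establish first is a clean characterization: $\upmu\in\bstar\PR\R_{\upvarepsilon}^\mathrm{sh}$ if and only if there is a representative $\bast x\in\upmu^{-1}$ (so $\bast x$ infinite) of the form $\bast x=\bast\{x_i\}$ with $1<x_{i+1}/x_i<M$ for some standard $M$, equivalently: $\upmu$ and its ``shifted'' class coincide, and the shift ratio is bounded. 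The complement consists of those $\upmu$ for which every monotone representative of $\upmu^{-1}$ has unbounded successive ratios along a subsequence fixed by $\mathfrak{u}$.

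\emph{Openness.} Suppose $\upmu\in\bstar\PR\R_{\upvarepsilon}^\mathrm{sh}$, with monotone representative $\{s_i\}$ of $\upmu$ satisfying $s_{i+1}<s_i<Ms_{i+1}$. I claim the entire Frobenius orbit $\bar{\upmu}$ — or at least a neighborhood of $\upmu$ — consists of shift invariant elements. More directly: if $\upnu$ lies in the interval $(\upmu',\upmu'')$ where $\upmu',\upmu''$ are chosen as classes of $\{s_i^{1+\updelta}\}$, $\{s_i^{1-\updelta}\}$ for a suitable infinitesimal $\bast\updelta$, then any $\bast y\in\upnu^{-1}$ satisfies $x_i^{1-\updelta_i}<y_i<x_i^{1+\updelta_i}$ for representatives (where $\bast x=\upmu^{-1}$), and the ratio bound $y_{i+1}/y_i$ is controlled: it lies between $(x_{i+1}/x_i)^{1-\updelta}\cdot(\text{correction})$ and $(x_{i+1}/x_i)^{1+\updelta}\cdot(\text{correction})$, still bounded by a standard constant since $x_{i+1}/x_i<M$ and the $\updelta$-corrections are infinitesimal perturbations of bounded quantities. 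So a full order-interval around $\upmu$ lies in $\bstar\PR\R_{\upvarepsilon}^\mathrm{sh}$. This uses Proposition \ref{FrobConHomeo} to realize nearby classes as $\upmu^{\bast t}$ for $\bast t$ near $1$.

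\emph{Closedness.} Now suppose $\upmu\notin\bstar\PR\R_{\upvarepsilon}^\mathrm{sh}$, so every monotone representative $\{s_i\}$ of $\upmu$ has the property that $s_i/s_{i+1}$ is $\mathfrak{u}$-unbounded; equivalently, writing $\bast x=\upmu^{-1}=\bast\{x_i\}$ with $x_i$ increasing, the ratio $\bast\{x_{i+1}/x_i\}$ is infinite. I want an interval $(\upmu'',\upmu')\ni\upmu$ missing $\bstar\PR\R_{\upvarepsilon}^\mathrm{sh}$. Pick $\bast y=\bast\{y_i\}$ with $y_i^2=x_i$ (so $\bast y$ is infinite, being ``half'' of $\bast x$ in the Frobenius sense), and let $\upmu'$ be the class of $\bast\{y_i^{-1}\}=\upmu^{1/2}$; similarly $\upmu''=\upmu^{2}$. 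For any $\upnu\in(\upmu'',\upmu')$, any monotone representative of $\upnu^{-1}$ lies strictly between $\bast\{x_i^{1/2}\}$ and $\bast\{x_i^{2}\}$ termwise (after adjusting by bounded factors), and one checks its successive ratios still have an infinite value: the point is that $z_{i+1}/z_i$ for such a $\bast z$ is squeezed between $(x_{i+1}/x_i)^{1/2}$ and $(x_{i+1}/x_i)^{2}$ up to bounded corrections, both of which are infinite since $x_{i+1}/x_i$ is infinite. Hence $\upnu\notin\bstar\PR\R_{\upvarepsilon}^\mathrm{sh}$, and the complement is open.

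\emph{Main obstacle.} The delicate point is handling the interaction between the additive (successor-shift) structure and the multiplicative (Frobenius) structure: ``shift invariant'' is phrased in terms of an additive shift $i\mapsto i+1$ on representing sequences, while the neighborhoods in the order topology are naturally described multiplicatively via Proposition \ref{FrobConHomeo}. The estimates that ``a bounded ratio stays bounded under an infinitesimal-power perturbation'' and ``an infinite ratio stays infinite'' are the crux, and one must be careful that the perturbing factors (the $x_i^{\pm\updelta_i}$ corrections and the bounded-constant adjustments between different monotone representatives, guaranteed by the argument already used in the text just before this proposition) genuinely remain bounded after one applies the shift $i\mapsto i+1$ — i.e.\ that boundedness is preserved under the shift, which is exactly the definition, applied to the comparison sequences rather than to $\upmu$ itself. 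Making this bookkeeping precise, while avoiding any appeal to a property of $\upmu$ that we are trying to prove, is where the care is required.
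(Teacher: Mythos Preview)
Your overall architecture matches the paper's: produce an order-interval around a given point that lies entirely inside $\bstar\PR\R_{\upvarepsilon}^\mathrm{sh}$ (for openness) or entirely inside its complement (for closedness). The divergence is in how you manufacture the endpoints. The paper does \emph{not} use Frobenius powers; it scales $\upmu$ multiplicatively by a carefully chosen infinitesimal $\bast\upvarepsilon$. For closedness it picks $\bast\upvarepsilon$ so that $\upvarepsilon_i\,(s_i/s_{i+1})\to\infty$ still holds; for openness it picks $\bast\upvarepsilon$ with $\upvarepsilon_i/\upvarepsilon_{i+1}$ itself bounded. The payoff is that for any $\upnu$ in the resulting interval one can arrange a monotone representative $\{x_i\}$ with termwise bounds $r_i<x_i<s_i$, and then the single-line estimate $x_i/x_{i+1}>r_i/s_{i+1}$ (closedness) or $x_i/x_{i+1}<(x_i/r_i)\cdot(r_i/r_{i+1})$ (openness) does the work directly.

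Your Frobenius route has a real gap, and you have correctly located it in your ``main obstacle'' paragraph without resolving it. Concretely: knowing $\upnu\in(\upmu^{2},\upmu^{1/2})$ and invoking Proposition~\ref{FrobConHomeo} gives $\upnu=\upmu^{\bast t}$ with $\bast t$ bounded between $1/2$ and $2$, hence a representative $z_i=s_i^{t_i}$. But then
\[
\frac{z_i}{z_{i+1}}=\Bigl(\frac{s_i}{s_{i+1}}\Bigr)^{t_i}\cdot s_{i+1}^{\,t_i-t_{i+1}},
\]
and the second factor is \emph{not} bounded by any standard constant: $s_{i+1}\to 0$ while $t_i-t_{i+1}$ can have either sign with magnitude up to $3/2$, so $s_{i+1}^{\,t_i-t_{i+1}}$ can be arbitrarily large or arbitrarily small. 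Thus your asserted squeeze ``$z_{i+1}/z_i$ between $(x_{i+1}/x_i)^{1/2}$ and $(x_{i+1}/x_i)^{2}$ up to bounded corrections'' fails. The same extra factor spoils the openness argument: even when $s_i/s_{i+1}<M$, the term $s_{i+1}^{\,t_i-t_{i+1}}$ need not be bounded unless you impose a further condition on $\bast t$ that does not follow merely from $\bast t\simeq 1$. The paper's multiplicative (rather than exponential) choice of endpoint sidesteps this cross-term entirely, which is exactly why it works.
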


\begin{proof}  Suppose that $\upmu\not\in \bstar\PR\R_{\upvarepsilon}^\mathrm{sh}$, represent it by $\bast s=\bast\{ s_{i}\}$ with $s_{i}/s_{i+1}\rightarrow\infty$.  Let $0<\bast\upvarepsilon\in\bast\R_{\upvarepsilon}$ be such that $\upvarepsilon_{i}s_{i}/s_{i+1}
\rightarrow\infty$.  Define $\bast r=\bast \upvarepsilon\bast s$ and $\upmu'=\langle\bast r\rangle$.  Notice that $\upmu'\not\in  \bstar\PR\R_{\upvarepsilon}^\mathrm{sh}$ and $\upmu'<\upmu$.  Let $\upnu\in (\upmu',\upmu)$ be represented by $\{ x_{i}\}$ monotone and tending to $0$ in which
 $r_{i}<x_{i}<s_{i}$ for all $i$.  Then $x_{i}/x_{i+1}>r_{i}/s_{i+1}=\upvarepsilon_{i}s_{i}/s_{i+1}
\rightarrow\infty$.  Thus $ (\upmu',\upmu)\subset(\bstar\PR\R_{\upvarepsilon}^\mathrm{sh})^{\text{\tt C}}$.  A similar argument shows that there exists
$\upmu''>\upmu$ with $ (\upmu,\upmu'')\subset(\bstar\PR\R_{\upvarepsilon}^\mathrm{sh})^{\text{\tt C}}$, thus $\bstar\PR\R_{\upvarepsilon}^\mathrm{sh}$
is closed.  On the other hand, let $\upmu\in\bstar\PR\R_{\upvarepsilon}^\mathrm{sh}$ be represented by $\{ s_{i}\}$ with $s_{i}/s_{i+1}<M$
for all $i$.
% and thus $(\upmu',\upmu'')$ is an order neighborhood
%of $\upmu$ in the complement $\PR\R_{\upvarepsilon}^\mathrm{sh}$.
Choose $0<\bast\upvarepsilon\in\bast\R_{\upvarepsilon}$ such that $\upvarepsilon_{i}/\upvarepsilon_{i+1}<N$ for all $i$,
let $\bast r=\bast\upvarepsilon\bast s$ and $\upmu'=\langle\bast r\rangle$.  Let $\upnu\in (\upmu',\upmu)$ be represented by $\{ x_{i}\}$ monotone and tending to $0$.  Then we may write $x_{i}=M_{i}r_{i}$ with $M_{i}\rightarrow\infty$; note then that (trivially) $M_{i}/M_{i+1}<M'$ for some $M'$.
Then $x_{i}/x_{i+1}<M'MN$ for all $i$ and thus $(\upmu',\upmu)\subset \bstar\PR\R_{\upvarepsilon}^\mathrm{sh}$.  By the symmetry of the argument, we have $\upmu\in (\upmu',\upmu'')\subset  \bstar\PR\R_{\upvarepsilon}^\mathrm{sh}$ for some $\upmu''>\upmu$, and so $\bstar\PR\R_{\upvarepsilon}^\mathrm{sh}$ is open as well.
\end{proof}

\begin{theo}\label{flatspectra} Let $\uptheta\in\R-\Q$.  Then $\bstar\PR\R_{\upvarepsilon}^\mathrm{sh}\subset {\rm Spec}_{\rm flat}(\uptheta )^{\text{\tt C}}$.
% be shift invariant.  Then $\bast\Z^{\upmu}_{\upmu}(\uptheta ) =0$.  
In particular, \[ {\rm Spec}_{\rm flat}(\uptheta )\subsetneqq\bstar\PR\R_{\upvarepsilon}.\]
\end{theo}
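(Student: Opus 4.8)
The plan is to show that any shift-invariant growth class $\upmu$ forces $\bast\Z^{\upmu}_{\upmu}(\uptheta)=0$, which gives the inclusion $\bstar\PR\R_{\upvarepsilon}^{\mathrm{sh}}\subset{\rm Spec}_{\rm flat}(\uptheta)^{\text{\tt C}}$; the strict inclusion ${\rm Spec}_{\rm flat}(\uptheta)\subsetneqq\bstar\PR\R_{\upvarepsilon}$ then follows because $\bstar\PR\R_{\upvarepsilon}^{\mathrm{sh}}$ is nonempty (it is clopen and certainly contains, e.g., the class of any geometric sequence $\{2^{-i}\}$). The contrapositive formulation is the natural one: assume $0\neq\bast m\in\bast\Z^{\upmu}_{\upmu}(\uptheta)$ with $\upmu$ shift invariant and derive a contradiction.

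First I would invoke Theorem \ref{flatmultibest}: since $\uptheta\in\mathfrak W$ (the cases $\uptheta\in\Q$ and $\uptheta\in\mathfrak B$ being trivial or already handled, with $\mathfrak B$ giving empty flat spectrum), any nonzero $\bast m\in\bast\Z^{\upmu}_{\upmu}(\uptheta)$ is a multibest denominator with infinite partial quotient, $\bast m=\bast x\cdot\bast\wideparen q$, and satisfies $\upnu(\bast m)\leq\upmu<\upmu(\bast m)$. Because $\bast\wideparen q$ has infinite partial quotient we have, writing $\bast\wideparen q^{+}=\bast a\cdot\bast\wideparen q+\bast\wideparen q^{-}$ with $\bast a$ infinite, the relation $\wideparen\upmu^{+}<\wideparen\upmu$, and moreover (as computed in Case 1 of the proof of Theorem \ref{flatmultibest}) $\wideparen\upnu=\wideparen\upmu^{+}$. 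The key point is that $\bast\wideparen q^{+}$ is the successor of $\bast\wideparen q$ in the best-denominator sequence, and an infinite partial quotient means there is an ``abyss'': no best denominator of $\uptheta$ lies between $\bast\wideparen q$ and $\bast\wideparen q^{+}$, and their ratio $\bast\wideparen q^{+}/\bast\wideparen q$ is infinite.

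Now I would bring in shift invariance to close the gap. Choose a monotone representative $\{s_i\}$ of $\upmu$ with $s_i/s_{i+1}<M$ for all $i$; equivalently, consecutive terms of $\upmu^{-1}$ grow by a bounded factor. The class $\upmu^{-1}$ then lies between $\bast\wideparen q$ and $\bast\wideparen q^{+}$ (from $\upnu(\bast m)=|\bast x|\cdot\wideparen\upnu\leq\upmu<\upmu(\bast m)=|\bast x|^{-1}\cdot\wideparen\upmu\leq\wideparen\upmu$, which places $\upmu$, hence $|\bast x|\cdot\wideparen q$, and by monotone bookkeeping $\upmu^{-1}$ itself, inside the abyss $(\bast\wideparen q,\bast\wideparen q^{+})$). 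But if $\{n_i\}$ represents $\upmu^{-1}$ monotonically with $n_{i+1}/n_i$ bounded, then for each $i$ the largest best denominator $q_{k_i}\leq n_i$ satisfies $q_{k_i+1}>n_i$, and the bounded-ratio property of $\{n_i\}$, combined with the impossibility of $q_{k_i+1}/q_{k_i}$ being bounded along this subsequence (that is exactly the infinite partial quotient statement), leads to the contradiction: one shows that $\upmu^{-1}$ cannot simultaneously be squeezed between $\bast\wideparen q$ and its successor and have bounded consecutive ratios when the successor ratio is infinite. Concretely, taking $\bast y=\bast x\cdot\bast\wideparen q$, shift invariance of $\langle\bast y\rangle=\upmu$ would give a genuinely smaller best-growth-type class contradicting maximality of the chosen $\bast\wideparen q\leq\bast m$, or more directly, the factor $\bast a$ being infinite is incompatible with $\upmu$ being reachable from $\wideparen\upmu$ by a bounded-ratio shift.

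The main obstacle I anticipate is the bookkeeping in the last step: carefully matching the ``shift by a bounded factor'' on the side of $\upmu$ against the ``infinite jump'' $\bast\wideparen q\to\bast\wideparen q^{+}$, keeping track of the ideological factor $\bast x$ and the fact that one works with sequence classes modulo $\mathfrak u$ rather than honest sequences. The cleanest route is probably to argue entirely within Case 1 of Theorem \ref{flatmultibest}: there one has $\wideparen\upnu=\wideparen\upmu^{+}$ and $|\bast x|\cdot\wideparen\upmu^{+}=\upnu(\bast m)\leq\upmu<\upmu(\bast m)=|\bast x|^{-1}\cdot\wideparen\upmu$, forcing $\upmu\in[\,|\bast x|\cdot\wideparen\upmu^{+},\,|\bast x|^{-1}\cdot\wideparen\upmu\,)$; shift invariance of $\upmu$ then propagates to force $\wideparen\upmu^{+}$ and $\wideparen\upmu$ to differ by only a bounded-ratio shift, i.e. $\bast a$ finite, contradicting infinite partial quotient. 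I would finish by noting $\bstar\PR\R_{\upvarepsilon}^{\mathrm{sh}}\neq\emptyset$, so the inclusion is proper.
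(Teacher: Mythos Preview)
Your overall strategy is correct and matches the paper's: invoke Theorem~\ref{flatmultibest} to write $\bast m=\bast x\cdot\bast\wideparen q$ with $\bast\wideparen q$ having infinite partial quotient, then derive a contradiction with shift invariance of $\upmu$. The gap is in your last step. In your ``cleanest route'' you claim that shift invariance of $\upmu$ ``propagates'' to force $\wideparen\upmu^{+}$ and $\wideparen\upmu$ to differ by only a bounded-ratio shift. But shift invariance is a statement about consecutive terms of a single monotone representative sequence of $\upmu$; knowing merely that $\upmu\in[\,|\bast x|\cdot\wideparen\upmu^{+},\,|\bast x|^{-1}\cdot\wideparen\upmu\,)$ tells you nothing about the length of that interval, because a single point carries no information about its neighbours in $\bstar\PR\R_{\upvarepsilon}$. (There is also a slip: for $\bast y=\bast x\cdot\bast\wideparen q=\bast m$ one has $\langle\bast y\rangle=\upmu(\bast m)^{-1}$, an infinite class, not $\upmu$.) Your first attempt, with $\{n_i\}$ representing $\upmu^{-1}$, is on the right track but stops short of an argument: you still need to explain concretely why bounded consecutive ratios of $\{n_i\}$ are incompatible with the infinite jump $\bast\wideparen q\to\bast\wideparen q^{+}$.

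The paper closes this by descending entirely to the sequence level rather than reasoning with classes. One chooses monotone representatives $\{s_i\}$ of $\upmu$ (with $s_i/s_{i+1}<M$) and $\{m_i\}$ of $\bast m$ (with $m_is_i\to0$), writes $s_i=(R_im_i)^{-1}$ for some sequence $R_i\to\infty$, and then uses the ideological factorization $m_i=x_iq_{n_i}$ coming out of Theorem~\ref{flatmultibest} to compute
\[
\frac{s_i}{s_{i+1}}=\frac{R_{i+1}\,x_{i+1}\,q_{n_{i+1}}}{R_i\,x_i\,q_{n_i}}.
\]
Infinite partial quotient gives $q_{n_{i+1}}/q_{n_i}\to\infty$, and since $|x_i|\ge1$ this forces $s_i/s_{i+1}\to\infty$, contradicting $s_i/s_{i+1}<M$. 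The moral is that you must link the bounded-ratio condition on $\{s_i\}$ directly to the best-denominator ratios through the explicit factorization, rather than argue abstractly at the level of growth classes.
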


\begin{proof}  If $\uptheta$ is badly approximable the result follows trivially from Theorem \ref{badapproxchar}.  So we assume
$\uptheta$ is well approximable.  
%Then the sequence of partial quotients $\{a_{i}\}$ in
%the continued fraction approximation to $\uptheta$ 
%are unbounded (e.g. see \cite{Sch} Theorem 5F).  
Now suppose that $0\not=\bast m\in \bast\Z^{\upmu}_{\upmu}(\uptheta )$ where $\upmu\in \bstar\PR\R_{\upvarepsilon}^\mathrm{sh}$.
Then we may find monotone representatives $\{ s_{i}\} $ of $\upmu$ and $\{ m_{i}\}$ of $\bast m$ for which
$m_{i}s_{i}\rightarrow 0$ and for which there exists $M$ such that $s_{i}/s_{i+1}<M$ for all $i$.  
In particular, the first fact says that there exists an infinite sequence $\{R_{i}\}$ such that $s_{i} = (R_{i}m_{i})^{-1}$.
%From the first fact we get $s_{i}<m_{i}^{-1}$ so that we may write
%\begin{align}\label{upbound} 
%\| m_{i}\uptheta\|\leq s_{i}<m_{i}^{-1}.
%\end{align}  By Grace's Theorem (Theorem 10 of \cite{La1}), the $m_{i}$ are the denominators
%of (intermediate) convergents of $\uptheta$: that is, each 
%may be written in the form 
%\[   q_{n,r} = rq_{n+1} + q_{n}\]
%where $\{ q_{i}\}$ is the sequence of denominators of the principal convergents (best approximations) of $\uptheta$, 
%$q_{i} = a_{i}q_{i-1} + q_{i-2}$ and $0\leq r\leq a_{n+2}-1$.   Moreover, Grace's Theorem further afirms that the possible values of $r$ in this %case are $r=0,1$ or $a_{n+2}-1$. For convergents, we have the following lower bound (see Theorem 11 of \cite{La1})
%\begin{align}\label{lowbound}    \| q_{n,r}\uptheta\| > \frac{1}{q_{n,r} + q_{n,r+1}}  .\end{align}
%Using this we can find a lower bound for $s_{i}/s_{i+1}$ as follows.  Suppose that $m_{i} =  q_{n,r} = rq_{n+1} + q_{n}$
%for some $n$ and $r=0,1$ or $a_{n+2}-1$.  We consider the possible lower bounds
%for $s_{i}/s_{i+1}$ in the case in which $m_{i+1}$ is
%the convergent immediately following $m_{i}$ and satisfying the conditions imposed by Grace's Theorem.
On the other hand, by Theorem \ref{flatmultibest}, $\bast m\in\bast\Z^{\infty}_{\rm mb} (\uptheta )$.  Thus if $\bast m = \bast x\cdot\bast\widehat{q}$
and we represent $\widehat{q}= \{ q_{n_{i}}\}$ then $q_{n_{i+1}}/q_{n_{i}}\rightarrow\infty$.  It follows that for any
representative $\{ x_{i}\}$ of $\bast x$
\[ \frac{s_{i}}{s_{i+1}}   =  \frac{R_{i+1}x_{i+1}q_{n_{i+1}}}{R_{i}x_{i}q_{n_{i}} }\longrightarrow \infty
\]
(since $|x_{i}|\geq 1$ as $\bast x\in\bast\Z$) contradicting the shift invariance of $\upmu$.

%\vspace{2mm}

%\noindent {\it Case 1:} $r=0$.  Thus $m_{i} = q_{n}$ and $m_{i+1}= q_{n+1}+q_{n} $. 
%Applying 
%(\ref{upbound}) 
%(\ref{lowbound}) to $m_{i}$ and taking into account that $s_{i+1} <m_{i+1}^{-1}$, we obtain
%\[    \frac{s_{i}}{s_{i+1}} >\frac{q_{n+1}+q_{n}}{q_{n+1}+2q_{n}}>\frac{1}{3}.  \]
%Since the partial quotients are unbounded, $q_{n-1}/q_{n}\rightarrow 0$ as $n\rightarrow\infty$ and therefore the bounds
%in this case tend to $1$.

%\vspace{2mm}

%\noindent {\it Case 2:} $r=1$.  Thus $m_{i} = q_{n+1} + q_{n}$ and $m_{i+1}=(a_{n+2}-1)q_{n+1} + q_{n}=q_{n+2}-q_{n+1}$.
%Then
%\[    \frac{s_{i}}{s_{i+1}} >\frac{q_{n+2}-q_{n+1}}{3q_{n+1}+2q_{n}}>\frac{1}{5}\left(\frac{q_{n+2}}{q_{n+1}}-1\right) >\frac{1}{5}(a_{n+2}-1).
%\longrightarrow\infty 
%\]
%as $n\rightarrow\infty$.
%Note that $\limsup a_{n+2}=\infty$ by hypothesis.

%\vspace{2mm}

%\noindent {\it Case 3:} $r=a_{n+2}-1$.  Thus $m_{i} = q_{n+2} - q_{n+1}$, $m_{i+1}=q_{n+2} $ and
%\[     \frac{s_{i}}{s_{i+1}}  >\frac{q_{n+2}}{2q_{n+2}-q_{n+1}}> \frac{1}{2}.  \]

%Now for $\{s_{i}\}$ associated as above to a general non decreasing $\{ m_{i}\}$ whose elements run through the sequence $\{q_{n,r} \}$
%of convergents (possibly with gaps and repetitions), the successive
%quotients $s_{i}/s_{i+1}$ are bounded from below by the bounds described in {\it Cases} 1--3. Moreover, 
%since the $m_{i}\rightarrow\infty$, {\it all} of
%the {\it Case 2} bounds apply eventually for all $n$.  It follows then that $\upmu$ is not shift invariant contradicting our hypothesis.
\end{proof}

%A sequence class $\bast m_{\uptheta}\in\bast\Z (\uptheta )$ corresponding to an increasing sequence $\{q_{n_{i},r_{n_{i}}}\}$ of denominators %of convergents $q_{n,r} = rq_{n+1} + q_{n}$ satisfying
%the conditions of Grace's Theorem ($r=0,1$ or $a_{n+2}-1$) will be called a {\bf {\small quasibest approximation class}}.  
%We may write 
%\[ \bast m_{\uptheta} =\bast r\bast q_{\uptheta}^{+} + \bast q_{\uptheta},\] where $\bast q_{\uptheta}$ is the best class corresponding
%to the sequence $\{ q_{n_{i}}\}$, $\bast q_{\uptheta}^{+}$ is the successor best class and $\bast r$ is the class of $\{ r_{n_{i}}\}$.
%Note then that $\bast r=0,1$ or $\bast a-1$ where $\bast a$ is the class of $\{ a_{n_{i}+2}\}$.
%We denote
%by $\tilde{\upmu}_{\uptheta}:=\langle \bast m_{\uptheta}^{-1}\rangle$ and $\tilde{\upnu}_{\uptheta}=\upnu(\bast m_{\uptheta}):=\langle\|\bast %m_{\uptheta}\uptheta\|\rangle$
%the growth and decay classes of $\bast m_{\uptheta}$ quasibest.
%We have as an immediate consequence of the proof of Theorem \ref{flatspectra} 
%the following 

\begin{note}  The shift invariant set $\bstar\PR\R_{\upvarepsilon}^\mathrm{sh}$ is greater (in the order $<$) than its complement
in $\bstar\PR\R_{\upvarepsilon}$; its elements may be characterized as the classes of ``slow'' infinitesimals.   Theorem \ref{flatspectra} 
says that the flat spectrum of $\uptheta$ irrational contains no slow indices.
\end{note}

\section{The Arithmetic of Approximate Ideals}\label{gdarith}

In this section we use the growth-decay filtration to provide the diophantine approximation groups with a partially defined multiplicative structure subject to matching conditions along the growth-decay indices.  We begin with a result which describes the sense in which diophantine approximation groups generalize ideals.

\begin{prop}\label{ideological} Let $\uptheta\in\R$.  For all $\upmu,\upnu,\upiota,\uplambda\in\bstar\PR\R_{\upvarepsilon}$ there exists an action
\[ \bast\Z^{\upnu[\upiota]}\times \bast\Z^{\upmu[\uplambda]}_{\upnu} (\uptheta )\longrightarrow\bast\Z^{\upmu\cdot\upnu[\upiota\cdot\uplambda]}_{\upiota} (\uptheta ),\quad (\bast a,\bast n )\mapsto \bast a\bast n\]
in which $(\bast a\bast n)^{\perp} =\bast a\bast n^{\perp}$.
\end{prop}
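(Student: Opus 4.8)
The plan is straightforward valuative bookkeeping: I would \emph{define} the dual of the product by $(\bast a\bast n)^{\perp} := \bast a\bast n^{\perp}$ and then check, one condition at a time, that $\bast a\bast n$ meets each of the three requirements defining $\bast\Z^{\upmu\cdot\upnu[\upiota\cdot\uplambda]}_{\upiota}(\uptheta)$. The map $(\bast a,\bast n)\mapsto\bast a\bast n$ is the restriction of multiplication in the ring $\bast\Z$, so its biadditivity in each variable --- essentially the only structural content of the word ``action'' here --- is automatic; what must be verified is that the image lies in the asserted subgroup. I would first dispense with the degenerate case: if $\bast a = 0$ or $\bast n = 0$ then $\bast a\bast n = 0$ (recall that $\bast\Z$ is an integral domain, being a subring of the field $\bast\R$), which lies in every group in sight and has dual $\bast a\bast n^{\perp} = 0$. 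So from now on assume $\bast a\neq 0\neq\bast n$, whence $\bast a\bast n\neq 0$ and all the valuation classes occurring below are genuine elements of the linearly ordered group $\bstar\PR\R$ rather than $-\infty$.

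For the diophantine and decay conditions, the key computation is
\[ \upvarepsilon(\bast a\bast n) = (\bast a\bast n)\uptheta - \bast a\bast n^{\perp} = \bast a\big(\bast n\uptheta - \bast n^{\perp}\big) = \bast a\cdot\upvarepsilon(\bast n).\]
Applying the growth-decay valuation $\langle\cdot\rangle$ and using that it is multiplicative (it is a Krull valuation) gives $\langle\upvarepsilon(\bast a\bast n)\rangle = |\bast a|\cdot\upnu(\bast n)$. Since $\upnu(\bast n)\leq\upnu$ and multiplication preserves the order in $\bstar\PR\R$ (Proposition \ref{denselinord}), we obtain $|\bast a|\cdot\upnu(\bast n)\leq|\bast a|\cdot\upnu < \upiota$, the last strict inequality being precisely the hypothesis $\bast a\in\bast\Z^{\upnu[\upiota]}$. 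As $\upiota\in\bstar\PR\R_{\upvarepsilon}$, anything strictly below $\upiota$ is either an infinitesimal class or $-\infty$, so $\upvarepsilon(\bast a\bast n)$ is infinitesimal; hence $\bast a\bast n\in\bast\Z(\uptheta)$ with $\uptheta$-dual $\bast a\bast n^{\perp}$. Moreover $\upnu(\bast a\bast n) = |\bast a|\cdot\upnu(\bast n)\leq\upiota$, so $\bast a\bast n\in\bast\Z_{\upiota}(\uptheta)$.

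For the fine growth condition I would use $|\bast a\bast n| = |\bast a|\cdot|\bast n|$ and rearrange
\[ |\bast a\bast n|\cdot(\upmu\cdot\upnu) = \big(|\bast a|\cdot\upnu\big)\cdot\big(|\bast n|\cdot\upmu\big) < \upiota\cdot\uplambda,\]
where the inequality is obtained by multiplying $|\bast a|\cdot\upnu < \upiota$ (the hypothesis on $\bast a$) with $|\bast n|\cdot\upmu < \uplambda$ (the hypothesis $\bast n\in\bast\Z^{\upmu[\uplambda]}$); in a linearly ordered group strict inequalities multiply, since $xy < x'y < x'y'$. Thus $\bast a\bast n\in\bast\Z^{\upmu\cdot\upnu[\upiota\cdot\uplambda]}$, and combined with the previous paragraph $\bast a\bast n\in\bast\Z^{\upmu\cdot\upnu[\upiota\cdot\uplambda]}_{\upiota}(\uptheta)$, as claimed.

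I do not expect a real obstacle; this proposition is essentially the bookkeeping underpinning the subsequent ideological product. The one point demanding a little care is the interaction between the \emph{strict} inequality convention of the fine growth bi-filtration and the \emph{non-strict} one of the decay filtration: one must notice that the non-strict bound $\upnu(\bast n)\leq\upnu$ carried by the approximation $\bast n$ still suffices, because the bound $|\bast a|\cdot\upnu < \upiota$ carried by $\bast a$ is strict and absorbs the slack. A secondary bookkeeping point is simply to record that $\langle\cdot\rangle$ is multiplicative and that $-\infty$ enters only through the already-handled case $\bast a\bast n = 0$.
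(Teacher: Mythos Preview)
Your proof is correct and follows the same approach as the paper's: compute $\upvarepsilon(\bast a\bast n)=\bast a\cdot\upvarepsilon(\bast n)$ and use $|\bast a|\cdot\upnu<\upiota$ to see the error is infinitesimal. The paper's proof is a two-line sketch that stops there, leaving the fine-growth and decay filtration checks implicit; you have simply filled in those routine verifications (and the degenerate case) explicitly.
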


 \begin{proof}  For $\bast a\in\Z^{\upnu}$, $\bast a\bast n\uptheta= \bast a\bast n^{\perp}+\bast a\upvarepsilon(\bast n )$.
 Since $\bast a\cdot\upnu<\upiota\in \bstar\PR\R_{\upvarepsilon}$, $\bast a\upvarepsilon(\bast n )\in\bast\R_{\upvarepsilon}$.
 \end{proof}
 
 A 
{\bf (fine) approximate ideal} of $\bast\Z$ is a subgroup $\mathfrak{a}\subset\bast\Z$ equipped with a filtration by subgroups $\mathfrak{a}=\{\mathfrak{a}_{\upnu}\}$ 
indexed by $\upnu\in\bstar\PR\R_{\upvarepsilon}$
for which 
\[ \bast \Z^{\upnu}\cdot \mathfrak{a}^{\upmu}_{\upnu}\subset \mathfrak{a}^{\upmu\cdot\upnu}\quad
(\bast \Z^{\upnu[\upiota]}\cdot \mathfrak{a}^{\upmu[\uplambda]}_{\upnu} \subset \mathfrak{a}^{\upmu\cdot\upnu[\upiota\cdot\uplambda]}_{\upiota})\] for all $\upmu,\upnu\in\bstar\PR\R_{\upvarepsilon}$ (for all $\upmu,\upnu,\upiota,\uplambda\in\bstar\PR\R_{\upvarepsilon}$), where 
$\mathfrak{a}^{\upmu}_{\upnu}=\mathfrak{a}_{\upnu}\cap\bast \Z^{\upmu}$ (where $\mathfrak{a}^{\upmu[\uplambda]}_{\upnu}=\mathfrak{a}_{\upnu}\cap\bast \Z^{\upmu[\uplambda]}$).
If one forgets the filtrations,
what is left is the usual notion of ideal.  By Proposition \ref{ideological}, diophantine approximation groups are fine approximate ideals. 

More generally, one can define a {\bf (fine) approximate module} (over $\bast \Z$) as a bi-filtered abelian group $M=\{M_{\upnu}^{\upmu}\}$ 
(a tri-filtered abelian group $M=\{M_{\upnu}^{\upmu[\uplambda]}\}$) in which 
there is an action $\bast \Z^{\upnu}\cdot M^{\upmu}_{\upnu}\subset M^{\upmu\cdot\upnu}$ ($\bast \Z^{\upnu[\upiota]}\cdot M^{\upmu[\uplambda]}_{\upnu}\subset M^{\upmu\cdot\upnu[\upiota\cdot\uplambda]}_{\upiota}$).  A  homomorphism $f:M\rightarrow N$ between
approximate modules  is called a {\bf (fine) approximate module homomorphism} if 
\begin{itemize}
\item[MH1] it is filtered: $f(M_{\upnu}^{\upmu})\subset N_{\upnu}^{\upmu}$ 
for all $\upmu,\upnu\in \bstar\PR\R_{\upvarepsilon}$ ($f(M_{\upnu}^{\upmu[\uplambda]})\subset N_{\upnu}^{\upmu[\uplambda]}$ 
for all $\upmu,\upnu,\uplambda \in \bstar\PR\R_{\upvarepsilon}$) . 
\item[MH2] it respects the $\bast\Z$ action: for all $\upmu,\upnu\in \bstar\PR\R_{\upvarepsilon}$, $\bast m\in\bast\Z^{\upnu}$ and $x\in M^{\upmu}_{\upnu}$
\[ f(\bast m\cdot x)=\bast m\cdot f(x). \]
\end{itemize}
If we set $\bast\Z_{\upnu}:=\bast\Z^{\upnu}$ then $\bast \Z$ is an approximate module over itself, and any approximate ideal is an approximate module.

%\begin{exam}  Consider the set of tensors of type 
%\end{exam}

\begin{prop} Let $\uptheta,\upeta\in \R$.  If $\uptheta\Bumpeq \upeta$ by $A\in{\rm PGL}_{2}(\Z)$ then the induced isomorphism $A:\bast\Z (\uptheta )\cong \bast\Z (\upeta )$ is a fine approximate module isomorphism.
\end{prop}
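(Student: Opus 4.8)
The plan is to unwind the definition of a fine modulogical homomorphism, which imposes two requirements on the group isomorphism $A\colon\bast\Z(\uptheta)\to\bast\Z(\upeta)$: the (fine) filtration property MH1, and the $\bast\Z$-equivariance property MH2. For MH1 there is nothing new to do: it is exactly the content of Theorem \ref{triisomorphism}, which says that $A$ carries $\bast\Z^{\upmu[\uplambda]}_{\upnu}(\uptheta)$ isomorphically onto $\bast\Z^{\upmu[\uplambda]}_{\upnu}(\upeta)$ for all $\upmu,\upnu,\uplambda\in\bstar\PR\R_{\upvarepsilon}$, and in particular $A\bigl(\bast\Z^{\upmu}_{\upnu}(\uptheta)\bigr)=\bast\Z^{\upmu}_{\upnu}(\upeta)$. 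So I would simply cite that theorem for MH1. All the real content is in MH2.

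For MH2 I would argue by a direct computation with a linear representative. Recall from the proof of Theorem \ref{triisomorphism} that, writing $A=\left(\begin{smallmatrix}a&b\\c&d\end{smallmatrix}\right)$, the induced map acts on numerator-denominator pairs by $A(\bast n)=c\bast n^{\perp}+d\bast n$ and $A(\bast n^{\perp})=a\bast n^{\perp}+b\bast n$, and that $A(\bast n^{\perp})$ is the $\upeta$-dual of $A(\bast n)$, i.e.\ $A(\bast n)^{\perp}=A(\bast n^{\perp})$. Now fix $\upmu,\upnu\in\bstar\PR\R_{\upvarepsilon}$, take $\bast m\in\bast\Z^{\upnu}$ and $\bast n\in\bast\Z^{\upmu}_{\upnu}(\uptheta)$; by Proposition \ref{ideological} the product $\bast m\bast n$ again lies in $\bast\Z(\uptheta)$ and satisfies $(\bast m\bast n)^{\perp}=\bast m\bast n^{\perp}$. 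Then
\[
A(\bast m\bast n)=c(\bast m\bast n)^{\perp}+d(\bast m\bast n)=c\bast m\bast n^{\perp}+d\bast m\bast n=\bast m\bigl(c\bast n^{\perp}+d\bast n\bigr)=\bast m\cdot A(\bast n),
\]
which is precisely MH2; since the $[\upiota],[\uplambda]$ indices play no role in this identity, the same computation gives the statement for the fine $\bast\Z$-action $\bast\Z^{\upnu[\upiota]}\cdot M^{\upmu[\uplambda]}_{\upnu}$ as well. Combining this with the fact that $A^{-1}\in{\rm PGL}_{2}(\Z)$ induces the inverse map, one concludes that $A$ is a fine modulogical isomorphism.

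I do not expect a genuine obstacle. The one point that makes MH2 non-trivial, and that I would want to flag explicitly, is that the modulogy $\bast\Z$-action is \emph{ordinary multiplication}, whereas $A$ acts through the fractional-linear matrix formula; the two are reconciled entirely by the identity $(\bast m\bast n)^{\perp}=\bast m\bast n^{\perp}$ supplied by Proposition \ref{ideological}, which is exactly what lets the integer scalar $\bast m$ pass through the linear action of $A$. The only thing to confirm is that this identity is available in the needed generality, namely for arbitrary $\bast m\in\bast\Z^{\upnu}$ rather than merely $\bast m\in\bast\Z(\uptheta)$; Proposition \ref{ideological} is stated in exactly that generality, so the argument goes through without further work.
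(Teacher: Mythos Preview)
Your proof is correct and follows essentially the same approach as the paper: invoke Theorem \ref{triisomorphism} for MH1, then verify MH2 via the direct computation $A(\bast m\bast n)=c(\bast m\bast n)^{\perp}+d(\bast m\bast n)=c\bast m\bast n^{\perp}+d\bast m\bast n=\bast m\cdot A(\bast n)$. Your explicit citation of Proposition \ref{ideological} for the identity $(\bast m\bast n)^{\perp}=\bast m\bast n^{\perp}$ is a nice clarification that the paper leaves implicit.
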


\begin{proof}  By Theorem \ref{triisomorphism} we already know that $\uptheta\Bumpeq \upeta$ by $A\in{\rm PGL}_{2}(\Z)$ induces a tri-filtered
isomorphism.  If $\bast m \in\bast\Z^{\upnu}$, $\bast n\in\bast\Z^{\upmu}_{\upnu}$ then MH2 follows from:
\[  A(\bast m\cdot \bast n) = c(\bast m\cdot \bast n)^{\perp} + d(\bast m\cdot \bast n) = c\bast m\cdot \bast n^{\perp} + d\bast m\cdot \bast n = \bast m
\cdot A(\bast n).
\]
%$\Leftarrow$  Let $f: \bast\Z (\uptheta )\rightarrow \bast\Z (\upeta )$ be a modulogical isomorphism.  Then if $\bast \widehat{q}$ is a best %denominator
%for $\uptheta$, $f(\bast \widehat{q})$ is a best denominator for $\upeta$.  Now consider $\bast \widehat{q}^{+} -\bast \widehat{q}^{-}= %\bast a\cdot \bast \widehat{q}$.  The product  $ \bast a\cdot \bast \widehat{q}$ is an ideological factorization.  Therefore, we have
%\[ f(\bast \widehat{q}^{+}) -f(\bast \widehat{q}^{-})= \bast a\cdot f(\bast \widehat{q}). \]
%It follows that $\bast a$ is a sequence made from the sequence of partial quotients of $\upeta$. This means that the partial quotients $\{a_{1}, %a_{2},\dots \}$
%and $\{a_{1}', a_{2}',\dots \}$ of $\uptheta$, $\upeta$ produce the same set of sequence classes $\bast a =\{ a_{n_{i}}\}$ where $n_{i}$ is non %decreasing
%and tends to $\infty$.  But this can happen if and only if $\{a_{1}, a_{2},\dots \}$
%and $\{a_{1}', a_{2}',\dots \}$ have an equal tail.  By Serret's Theorem, $\uptheta\Bumpeq\upeta$.
\end{proof}

%In the case of the polynomial groups, we also
%get a Dirichlet ideal structure.
The following result forms the basis of approximate ideal arithmetic.
%The product $\bast m\bast n$, viewed
%as the product of the denominators of numerator denominator pairs $(\bast m^{\perp},\bast m)$, $(\bast n^{\perp},\bast n)$,
%is the denominator of both the fractional sum and the fractional product of the pairs, and defines a simultaneous diophantine approximation
%of $\uptheta+\upeta$ and $\uptheta\upeta$.

\begin{theo}[Approximate Ideal Arithmetic]\label{productformula} Let $\uptheta ,\upeta\in\R$.  Then
\begin{align}\label{grdecprod1}
  \bast\Z^{\upmu[\upiota ]}_{\upnu}(\uptheta )\cdot  \bast\Z^{\upnu[ \uplambda ]}_{\upmu}(\upeta ) & \;\;\subset  \;\;
\bast\Z^{\upmu\cdot\upnu [\upiota\cdot \uplambda]}_{\upiota+ \uplambda}(\uptheta\upeta )\;\cap\; \bast\Z^{\upmu\cdot\upnu [\upiota\cdot \uplambda]}_{\upiota+ \uplambda}(\uptheta+\upeta )\;\cap\;\bast\Z^{\upmu\cdot\upnu [\upiota\cdot \uplambda]}_{\upiota+ \uplambda}(\uptheta-\upeta ).
\end{align}
In particular,
 $ \bast\Z^{\upmu}_{\upnu}(\uptheta )\cdot  \bast\Z^{\upnu}_{\upmu}(\upeta ) \subset  
\bast\Z^{\upmu\cdot\upnu}(\uptheta\upeta ) \cap \bast\Z^{\upmu\cdot\upnu}(\uptheta+\upeta )\cap \bast\Z^{\upmu\cdot\upnu}(\uptheta-\upeta )$.  Moreover, for
$\bast m\in  \bast\Z^{\upmu}_{\upnu}(\uptheta )$ and $\bast n\in  \bast\Z^{\upnu}_{\upmu}(\upeta )$
\begin{align}\label{addmultdual}
 (\bast m\cdot \bast n)^{\perp_{\uptheta\upeta}} = \bast m^{\perp_{\uptheta}} \cdot \bast n^{\perp_{\upeta}}
 \quad
\text{and}\quad
(\bast m\cdot \bast n)^{\perp_{\uptheta\pm\upeta}} = \bast m^{\perp_{\uptheta}} \cdot \bast n\pm\bast m \cdot \bast n^{\perp_{\upeta}}  \end{align}
\end{theo}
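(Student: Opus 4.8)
\emph{Plan of proof.}

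The plan is to argue element-wise: fix nonzero $\bast m\in\bast\Z^{\upmu[\upiota ]}_{\upnu}(\uptheta )$ and $\bast n\in\bast\Z^{\upnu[\uplambda ]}_{\upmu}(\upeta )$ (if a factor vanishes the product is $0$, which lies in every target group), and check the three memberships by exhibiting explicit $\uptheta\upeta$-, $(\uptheta+\upeta)$- and $(\uptheta-\upeta)$-duals and estimating the resulting error terms in the growth-decay valuation $\langle\cdot\rangle$. The growth estimate is common to all three targets: from $\langle\bast m\rangle\cdot\upmu<\upiota$ and $\langle\bast n\rangle\cdot\upnu<\uplambda$, and because the product of $\bstar\PR\R$ preserves the order, $\langle\bast m\bast n\rangle\cdot(\upmu\cdot\upnu)<\upiota\cdot\uplambda$, so $\bast m\bast n\in\bast\Z^{\upmu\cdot\upnu[\upiota\cdot\uplambda]}\subset\bast\Z^{\upmu\cdot\upnu}$; only the decay condition is target-dependent. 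One also notes here that $\upnu\leq\langle\bast n\rangle\cdot\upnu<\uplambda$ and $\upmu\leq\langle\bast m\rangle\cdot\upmu<\upiota$, since $\langle\bast m\rangle,\langle\bast n\rangle\geq1$.

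For the product, expand $\uptheta\upeta\,\bast m\bast n=(\bast m^{\perp}+\upvarepsilon(\bast m))(\bast n^{\perp}+\upvarepsilon(\bast n))$; this exhibits the dual $(\bast m\bast n)^{\perp_{\uptheta\upeta}}=\bast m^{\perp}\bast n^{\perp}\in\bast\Z$ and the error $\upvarepsilon_{\uptheta\upeta}(\bast m\bast n)=\bast m^{\perp}\upvarepsilon(\bast n)+\bast n^{\perp}\upvarepsilon(\bast m)+\upvarepsilon(\bast m)\upvarepsilon(\bast n)$, which already yields the first identity of (\ref{addmultdual}). Here I would use the auxiliary bound $\langle\bast m^{\perp}\rangle\leq\langle\bast m\rangle$, valid because $\bast m^{\perp}=\uptheta\bast m-\upvarepsilon(\bast m)$, $\langle\cdot\rangle$ is nonarchimedean, $\langle\uptheta\rangle\leq1$, and $\upnu(\bast m)\leq\upnu<1\leq\langle\bast m\rangle$ (and symmetrically for $\bast n$). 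Then, invoking $\upnu(\bast n)\leq\upmu$ and $\upnu(\bast m)\leq\upnu$, the three summands have magnitudes $\langle\bast m^{\perp}\upvarepsilon(\bast n)\rangle\leq\langle\bast m\rangle\cdot\upmu<\upiota$, $\langle\bast n^{\perp}\upvarepsilon(\bast m)\rangle\leq\langle\bast n\rangle\cdot\upnu<\uplambda$, and $\langle\upvarepsilon(\bast m)\upvarepsilon(\bast n)\rangle\leq\upmu\cdot\upnu\leq\upnu<\uplambda$; by the nonarchimedean inequality $\upnu_{\uptheta\upeta}(\bast m\bast n)\leq\max(\upiota,\uplambda)=\upiota+\uplambda$, and all three terms are infinitesimal, so $\bast m\bast n\in\bast\Z^{\upmu\cdot\upnu[\upiota\cdot\uplambda]}_{\upiota+\uplambda}(\uptheta\upeta )$.

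For $\uptheta\pm\upeta$, expand $(\uptheta\pm\upeta)\bast m\bast n=\bast n(\bast m^{\perp}+\upvarepsilon(\bast m))\pm\bast m(\bast n^{\perp}+\upvarepsilon(\bast n))$, which gives the dual $\bast m^{\perp}\bast n\pm\bast m\bast n^{\perp}$ — the second identity of (\ref{addmultdual}) — and the error $\bast n\,\upvarepsilon(\bast m)\pm\bast m\,\upvarepsilon(\bast n)$, whose two terms have magnitudes $\leq\langle\bast n\rangle\cdot\upnu<\uplambda$ and $\leq\langle\bast m\rangle\cdot\upmu<\upiota$; hence $\upnu_{\uptheta\pm\upeta}(\bast m\bast n)\leq\upiota+\uplambda$ and $\bast m\bast n\in\bast\Z^{\upmu\cdot\upnu[\upiota\cdot\uplambda]}_{\upiota+\uplambda}(\uptheta\pm\upeta )$. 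The coarse ``in particular'' statement then follows by writing $\bast\Z^{\upmu}=\bigcup\bast\Z^{\upmu[\upiota]}$ and $\bast\Z^{\upnu}=\bigcup\bast\Z^{\upnu[\uplambda]}$ (unions over infinitesimal $\upiota,\uplambda$) and dropping the decay index on the target. I expect the only points genuinely needing care to be the auxiliary bound $\langle\bast m^{\perp}\rangle\leq\langle\bast m\rangle$ and the control of the mixed term $\upvarepsilon(\bast m)\upvarepsilon(\bast n)$, together with the tropical bookkeeping — in particular that $\upiota+\uplambda$ denotes $\max(\upiota,\uplambda)$ throughout — the remaining steps being routine order computations in $\bstar\PR\R$.
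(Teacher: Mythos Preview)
Your proof is correct and follows essentially the same route as the paper's: both arguments expand $\uptheta\upeta(\bast m\bast n)$ and $(\uptheta\pm\upeta)(\bast m\bast n)$, identify the duals, and bound the cross terms via the growth constraints, with the paper invoking the equivalence $\bast n\cdot\upmu<\upiota\Leftrightarrow\bast n^{\perp}\cdot\upmu<\upiota$ where you prove the one-sided bound $\langle\bast m^{\perp}\rangle\leq\langle\bast m\rangle$ directly from the nonarchimedean property. Your additive error term $\bast n\,\upvarepsilon(\bast m)\pm\bast m\,\upvarepsilon(\bast n)$ is in fact the correct one (the paper's displayed expansion writes $\bast n^{\perp},\bast m^{\perp}$ in the cross terms, which appears to be a typo), though the resulting bounds are the same in either version.
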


\begin{proof}  We prove first that the left hand side of (\ref{grdecprod1}) is contained in $\bast\Z^{\upmu\cdot\upnu [\upiota\cdot \uplambda]}_{\upiota+ \uplambda}(\uptheta\upeta )$.  Given $\bast m\in  \bast\Z^{\upmu[\upiota ]}_{\upnu}(\uptheta )$ and $\bast n\in   \bast\Z^{\upnu[ \uplambda ]}_{\upmu}(\upeta ) $,
 %$\uptheta\bast m = \bast m^{\perp} + \upvarepsilon (\bast m)$ and $\upeta\bast n = \bast n^{\perp} + \upvarepsilon (\bast n)$.
%Then
\begin{align}\label{productofdas}  \uptheta\upeta( \bast m\bast n ) = \bast m^{\perp}\bast n^{\perp} +  \upvarepsilon (\bast m) \upvarepsilon (\bast n) +  \upvarepsilon (\bast m)\bast n^{\perp}
+  \upvarepsilon (\bast n)\bast m^{\perp} .  
\end{align}
By hypothesis, the cross terms on the right hand side of (\ref{productofdas}), $ \upvarepsilon (\bast m)\bast n^{\perp}$
and $ \upvarepsilon (\bast n)\bast m^{\perp}$, are infinitesimals: here we are using
(\ref{dualordmag}).  Thus $\uptheta\upeta(\bast m\bast n)$
 is infinitesimal to $ \bast m^{\perp}\bast n^{\perp} $
and  $\bast m\bast n\in \bast\Z( \uptheta\upeta)$.  In particular, $(\bast m\bast n)^{\perp} = \bast m^{\perp}\bast n^{\perp} $,
giving the product duality in (\ref{addmultdual}).
Moreover, 
$ (\bast m\bast n)\cdot (\upmu\cdot\upnu ) =
(\bast m\cdot \upmu)\cdot ( \bast n\cdot \upnu)<\upiota\cdot \uplambda $ so that $\bast m\bast n\in 
\bast\Z^{\upmu\cdot\upnu [\upiota\cdot \uplambda ]}(\uptheta\upeta )$.  By (\ref{dualordmag}) again,
$   \bast m^{\perp}\cdot\upnu (\bast n ) \leq\bast m^{\perp}\cdot \upmu <\upiota$, $ \bast n^{\perp}\cdot\upnu (\bast m ) \leq\bast n^{\perp}\cdot \upnu < \uplambda $.
Thus $\upnu (\bast m\bast n)$ satisfies the bound
$  \upnu (\bast m\bast n) \leq (\upmu\cdot\upnu)+ \upiota +  \uplambda$ . 
Since $\bast n$ is infinite, $\bast n\cdot\upnu < \uplambda$ implies that $\upnu< \uplambda$ and therefore 
$\upmu\cdot\upnu<\upmu\cdot \uplambda <  \uplambda$.  Hence $(\upmu\cdot\upnu)+ \upiota +  \uplambda =\upiota +  \uplambda$, and $\bast m\bast n\in \bast\Z_{\upiota +  \uplambda}(\uptheta\upeta)$ as claimed.  As for the inclusion into $\bast\Z^{\upmu\cdot\upnu [\upiota\cdot \uplambda]}_{\upiota+ \uplambda}(\uptheta+\upeta )$, this follows from the additive analog of (\ref{productofdas}),
\[  
 (\uptheta+\upeta)( \bast m\bast n ) = \bast m^{\perp}\bast n +  \bast m\bast n^{\perp} +  \upvarepsilon (\bast m)\bast n^{\perp}
+  \upvarepsilon (\bast n)\bast m^{\perp} .
\]
The proof of the inclusion into $\bast\Z^{\upmu\cdot\upnu [\upiota\cdot \uplambda]}_{\upiota+ \uplambda}(\uptheta-\upeta )$ is identical.
\end{proof}

The product defined by (\ref{grdecprod1}) will be referred to as the {\bf growth-decay } or {\bf approximate ideal product}.
While the approximate ideal product has image in $\bast\Z(\uptheta\upeta )$, it is not the case that it 
has image in $\bast\Z(\uptheta\upeta^{-1} )$: in contrast with the additive situation, in which the image of (\ref{grdecprod1}) is contained in both
$\bast\Z(\uptheta+\upeta )$ and $\bast\Z(\uptheta-\upeta )$.  
That the approximate ideal product yields diophantine approximations of the product, sum and difference has to do with the fact
that it is essentially the precursor of the product of associated ``two generator'' approximate ideals (generated by ``decoupled'' numerators and denominators).
This will be taken up in \S \ref{ideologicalclasssection}.  

\begin{note}\label{slowimage} By definition of the growth-decay trifiltration, $\upmu<\upiota$, $\upnu< \uplambda$ so that $\upmu\cdot\upnu<\upiota+ \uplambda$.  Thus the image of the product (\ref{grdecprod1}) is contained in the groups indexed by the slow components of ${\rm Spec}(\uptheta\upeta)$,
${\rm Spec}(\uptheta\pm\upeta)$.
\end{note}

\begin{note}\label{genlprodnote}  As the argument in the proof of Theorem \ref{productformula} clearly shows, for any pair of growth-decay indices
$(\upmu_{1}[\upiota_{1}], \upnu_{1})$, $(\upmu_{2}[\upiota_{2}], \upnu_{2})$ for which
\[  \upnu_{1}\leq \upmu_{2},\quad \upnu_{2}\leq \upmu_{1} \]
we have the product 
\begin{align}\label{grdecprodgenl}
  \bast\Z^{\upmu_{1}[\upiota_{1} ]}_{\upnu_{1}}(\uptheta )\cdot  \bast\Z^{\upmu_{2}[ \upiota_{2} ]}_{\upnu_{2}}(\upeta ) 
  & \;\;\subset  \;\;
\bast\Z^{\upmu_{1}\cdot\upmu_{2} [\upiota_{1}\cdot \upiota_{2}]}_{\upiota_{1}+ \upiota_{2}}(\uptheta\upeta )\;\cap\; 
\bast\Z^{\upmu_{1}\cdot\upmu_{2} [\upiota_{1}\cdot \upiota_{2}]}_{\upiota_{1}+ \upiota_{2}}(\uptheta+\upeta )\;\cap\;
\bast\Z^{\upmu_{1}\cdot\upmu_{2} [\upiota_{1}\cdot \upiota_{2}]}_{\upiota_{1}+ \upiota_{2}}(\uptheta-\upeta ).
\end{align}
However for such indices with say $\upmu_{1}\geq \upnu_{1}$ we have
\[  \bast\Z^{\upmu_{2}[ \upiota_{2} ]}_{\upnu_{2}}(\upeta ) \subset \bast\Z^{\upnu_{1}[ \upiota_{2} ]}_{\upmu_{1}}(\upeta )  \]
so the product (\ref{grdecprodgenl}) is subsumed by that of (\ref{grdecprod1}).
\end{note}

There is no harm in stating the obvious: the formulas (\ref{addmultdual}) for the additive and multiplicative duals 
are just the formulas for the numerators of fractional sum and product:
\[ \frac{\bast m^{\perp_{\uptheta}}}{\bast m}\cdot \frac{\bast n^{\perp_{\upeta}}}{\bast n} = \frac{(\bast m\bast n)^{\perp_{\uptheta\upeta}}}{\bast m\bast n},\quad
 \frac{\bast m^{\perp_{\uptheta}}}{\bast m}\pm \frac{\bast n^{\perp_{\upeta}}}{\bast n} = \frac{\bast m^{\perp_{\uptheta}}\bast n\pm\bast m\bast n^{\perp_{\upeta}}}{\bast m\bast n}
=\frac{(\bast m\bast n)^{\perp_{\uptheta\pm\upeta}}}{\bast m\bast n}
 ,\]
formulas which are compatible with their standard parts: the product and sum/difference of $\uptheta$ and $\upeta$.   When $\uptheta,\upeta\in\Q$ the growth decay product is just
the product on the fine growth filtration:
\[  \bast\Z^{\upmu[\upiota ]}(\uptheta )\cdot  \bast\Z^{\upnu[ \uplambda ]}(\upeta)  \subset  
\bast\Z^{\upmu\cdot\upnu [\upiota\cdot \uplambda]}(\uptheta\upeta )\cap \bast\Z^{\upmu\cdot\upnu [\upiota\cdot \uplambda]}(\uptheta\pm\upeta )  \]
which reduces upon restriction to standard parts to $ \Z(\uptheta )\cdot  \Z(\upeta) = 
\Z (\uptheta\upeta)\cap \Z (\uptheta\pm\upeta )$.  This is in keeping with the fact that the diophantine approximation group of an element of $a/b\in\Q$
 is the ideal $\bast (b)$ generated by its denominator.
 
Let us compare approximate ideal arithmetic with ideal arithmetic in $\Z$.  For $a\in\Z$, let $\bast (a )$ =  the ideal generated by $a$ in $\bast\Z$.
Then given
 $a,b\in\Z$, clearly $\bast\Z(a)^{\perp} =\bast (a )$,
$\bast\Z(b)^{\perp}=\bast (b )$, $ \bast\Z(a+b)^{\perp}=\bast (a+b)$, $ \bast\Z(ab)^{\perp}=\bast (ab)$ .  The  map (\ref{grdecprod1})
induces on the level of dual groups a pair of bilinear maps corresponding to the sum and the product.  The map corresponding to the product is
\[ \bast\Z(a)^{\perp}\times  \bast\Z(b)^{\perp}\longrightarrow \bast\Z(ab)^{\perp}, \quad 
(\bast m\cdot a,\bast n\cdot b)\mapsto (\bast m\cdot a,\bast n\cdot b)^{\perp_{ab}}=\bast m\bast n\cdot ab,
\]
which corresponds exactly to the usual product of ideals.  

On the other hand, the map of dual groups corresponding to the sum is 
\[ \bast\Z(a)^{\perp}\times  \bast\Z(b)^{\perp}\longrightarrow \bast\Z(a+b)^{\perp}\subsetneqq  \bast (a)+\bast (b)\]
since it is given by
\[ (\bast m\cdot a,\bast n\cdot b)\mapsto (\bast m\cdot a,\bast n\cdot b)^{\perp_{a+b}}=\bast m\bast n\cdot (a+b) . \]
In particular, the image of the duality map is not all of $\bast (a)+\bast (b)$, since the elements belonging to the latter
are {\it independent} linear combinations of $a,b$.  This situation will be addressed in \S \ref{ideologicalclasssection} by ``decoupling''
numerators and denominators.
%Thus, ideological arithmetic is in closer consonance with elemental sum than ideal arithmetic: the ideological sum of the ideals
%generated by $a,b$ is the ideal generated by $(a+b)$.

%There is also a partial product available for the groups $\R(\bast n)$.  Let $\R_{\upnu}(\bast n )$
%be the subgroup of those $\uptheta\in \R(\bast n )$ for which $\bast n\in\bast\Z_{\upnu}(\uptheta )$.  Given $\bast m\in\bast\Z^{\upmu[i\upiota]}$,
 %$\bast n\in\bast\Z^{\upnu[ \uplambda]}$
%there is a product
%\[   \R_{\upnu}(\bast m)\times \R_{\upmu}(\bast n )\rightarrow \R _{\upiota+ \uplambda}(\bast m\bast n)  . \]

Let $\upmu\geq \upnu$.  We will write 
\begin{align}\label{prodnotation}  \uptheta {}_{\upmu}\!\!\owedge_{\upnu}\upeta,\quad \left( \uptheta {}_{\upmu[\upiota]}\!\!\owedge_{\upnu[ \uplambda]}\upeta \right) \end{align} to indicate that
both of $ \bast\Z^{\upmu}_{\upnu}(\uptheta )$, $ \bast\Z^{\upnu}_{\upmu}(\upeta )$ (both of $ \bast\Z^{\upmu[\upiota]}_{\upnu}(\uptheta )$, $ \bast\Z^{\upnu[ \uplambda]}_{\upmu}(\upeta )$) are non-zero, so that the growth-decay product  is non-trivial.   When $\upmu> \upnu$ strictly, then $\uptheta {}_{\upmu}\!\!\owedge_{\upnu}\upeta $ exactly when $\bast\Z^{\upmu}_{\upnu}(\uptheta )$
is non-trivial, by Theorem \ref{gennonvan}. In particular, for $\upmu> \upnu$,  $\uptheta {}_{\upmu}\!\!\owedge_{\upnu}\upeta$ implies that $\uptheta {}_{\upmu'}\!\!\owedge_{\upnu'}\upeta$
whenever $\upmu\geq\upmu'> \upnu'\geq\upnu$.

The
relation ${}_{\upmu}\!\owedge_{\upnu}$ is not symmetric (commutative) i.e.\ (\ref{prodnotation}) does not imply that $\upeta {}_{\upmu}\!\!\owedge_{\upnu}\uptheta$; in fact, as we will see below, (\ref{prodnotation}) does not even imply that
$\upeta{}_{\upmu'}\!\owedge_{\upnu'}\uptheta$ for {\it some} pair $(\upmu', \upnu')\in\bstar\PR\R_{\upvarepsilon}$ with $\upmu'\geq \upnu'$.

 In view of Theorem \ref{productformula}, 
 only the ``tri-filtered'' relation $ {}_{\upmu[\upiota]}\owedge_{\upnu[ \uplambda]}$ can be iterated, and 
 in view of Note \ref{slowimage}, 
 iterated compositions move {\it to the left}.  More precisely the iterated relation 
\[\upxi {}_{\upmu'[\upiota']}\!\!\owedge_{\upnu'[ \uplambda']}\big( \uptheta {}_{\upmu[\upiota]}\!\!\owedge_{\upnu[ \uplambda]}\upeta\big)\]
is {\it defined}
provided $\upiota+ \uplambda\leq \upmu'$, $\upnu'\leq \upmu\cdot\upnu$ and $\lambda'\geq \upiota\cdot \uplambda$.  In particular, the issue
of associativity is moot since only the right-to-left composition has the possibility of being defined.
Intuitively, as one continues to iterate, the nonvanishing spectrum of the real number appearing on the left should become progressively larger.

 The approximate ideal product is natural with respect to multiplication and the duality/reciprocal maps:  
$ ( \bast m\cdot\bast n )^{\perp_{\uptheta\upeta}} = \bast m^{\perp_{\uptheta}}\cdot\bast n^{\perp_{\upeta}} $ for
$\bast m \in \bast\Z^{\upmu}_{\upnu}(\uptheta )$ and $\bast n\in \bast\Z^{\upnu}_{\upmu}(\upeta )$.
That is, in terms of the growth-decay bi-filtration,
\[ 
\begin{CD}
\bast\Z^{\upmu}_{\upnu}(\uptheta )\times \bast\Z^{\upnu}_{\upmu}(\upeta )     @>\cdot >>   \bast\Z^{\upmu\cdot \upnu}(\uptheta\upeta )\\
@V\perp\times\perp V\cong V                @V\cong V\perp V\\
\bast\Z^{\upmu}_{\upnu}(\uptheta^{-1} )\times  \bast\Z^{\upnu}_{\upmu}(\upeta^{-1} )    @>\cdot >>   \bast\Z^{\upmu\cdot\upnu}((\uptheta\upeta)^{-1} )
\end{CD}
\]
with a similar diagram for the fine growth-decay tri-filtration.
 If we fix $\bast m\in \bast\Z^{\upmu}_{\upnu}(\uptheta )$ then multiplication by $\bast m$,
$\bast n\mapsto \bast m\cdot\bast n$,
  defines a linear map from
$ \bast\Z^{\upnu}_{\upmu}(\upeta )$ onto its image in $\bast\Z^{\upmu\cdot \upnu}(\uptheta\upeta )$.

For the remainder of this section we will regard the growth-decay product as defining a bilinear map to the product approximation group
 \[ \bast\Z^{\upmu[\upiota ]}_{\upnu}(\uptheta )\cdot  \bast\Z^{\upnu[ \uplambda ]}_{\upmu}(\upeta ) \longrightarrow
 \bast\Z^{\upmu\cdot\upnu [\upiota\cdot \uplambda]}_{\upiota+ \uplambda}(\uptheta\upeta ).\]
 All statements which follow will have a corresponding additive counterpart, obtained
 by replacing the word ``divisor'' by ``summand''.
 
Let $\upomega=\uptheta\upeta$ and suppose that $\uptheta {}_{\upmu}\!\!\owedge_{\upnu}\upeta$ for $\upmu >\upnu$.
We say that $\uptheta$ is a   {\bf $\upmu/\upnu $-fast divisor} of $\upomega$ ($\upeta$ is a   {\bf $\upnu/\upmu $-slow divisor} of $\upomega$) and we write
\[   \uptheta\,{}_{\upmu}\!\!\Uparrow_{\upnu} \upomega \quad (\upeta\,{}_{\upmu}\!\!\Downarrow_{\upnu} \upomega).  \]
In addition, we write $ \uptheta \Uparrow\upomega$ ($ \upeta \Downarrow\upomega $) to mean that $\uptheta\,{}_{\upmu}\!\!\Uparrow_{\upnu}\upomega$ ($\upeta\,{}_{\upmu}\!\!\Downarrow_{\upnu}\, \upomega$) for some $\upmu>\upnu$.
%Similarly,
%$\upeta$ is a {\bf {\small  $\upnu/\upmu $-slow divisor}} of $\upomega$, and we write
%\[\upeta\,{}_{\upmu}\!\!\Downarrow_{\upnu}\, \upomega ;   \]
%$ \upeta \Downarrow\upomega $ means that $\upeta\,{}_{\upmu}\!\!\Downarrow_{\upnu}\, \upomega$ for some $\upmu>\upnu$.
Fast divisors (slow divisors) have error terms
which tend to zero more rapidly (slowly) than their denominators  tend to infinity.  These designations are not symmetric, and we will see below that the badly approximable numbers are never fast divisors.  

\begin{prop}  $ \uptheta\,{}_{\upmu}\!\!\Uparrow_{\upnu}\, \upomega$ for all $\upmu>\upnu$ $\Leftrightarrow$ $\uptheta\in\Q$.
\end{prop}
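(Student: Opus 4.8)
The plan is to prove both implications, with the "$\Leftarrow$" direction being essentially immediate from earlier results. If $\uptheta = a/b \in \Q$, then $\bast\Z(\uptheta) = \bast(b)$ and every nonzero $\bast n \in \bast\Z(\uptheta)$ has $\upvarepsilon(\bast n) = 0$, so $\upnu(\bast n) = -\infty$. Thus for any $\upmu > \upnu$ with $\upmu,\upnu \in \bstar\PR\R_\upvarepsilon$, and any target $\upeta$, the group $\bast\Z^\upmu_\upnu(\uptheta) = \bast\Z^\upmu(\uptheta)$ is nonzero (by Proposition \ref{rationalspec}, or directly), while $\bast\Z^\upnu_\upmu(\upeta)$ is nonzero by Theorem \ref{gennonvan} since $\upnu < \upmu$. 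Hence $\uptheta\,{}_\upmu\!\owedge_\upnu\upeta$ holds, and since $\uptheta$ plays the "fast" (growth index $\upmu$, decay index $\upnu$, with $\upmu > \upnu$) slot, we get $\uptheta\,{}_\upmu\!\Uparrow_\upnu\upomega$ for all $\upmu > \upnu$, where $\upomega = \uptheta\upeta$. One subtlety worth a sentence: the definition of fast divisor is stated for a fixed target product $\upomega = \uptheta\upeta$, so I would phrase the conclusion as: for every $\upeta$ and every $\upmu > \upnu$, $\uptheta$ is a $\upmu/\upnu$-fast divisor of $\uptheta\upeta$; taking $\upeta = 1$ or absorbing $\upeta$ into $\upomega$ gives the statement as written.

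For the "$\Rightarrow$" direction I would argue the contrapositive: if $\uptheta \in \R - \Q$, I must exhibit some pair $\upmu > \upnu$ for which $\bast\Z^\upmu_\upnu(\uptheta) = 0$, so that $\uptheta\,{}_\upmu\!\Uparrow_\upnu\upomega$ fails. The natural candidate comes from Theorem \ref{bestfilt}: for any best denominator class $\bast\wideparen q$ with growth-decay pair $(\wideparen\upmu, \wideparen\upnu)$, the vanishing strip $\wideparen R = \{(\upmu,\upnu) : \upmu \geq \wideparen\upmu,\ \upnu < \wideparen\upnu\}$ lies outside $\mathrm{Spec}(\uptheta)$. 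So it suffices to find a single best denominator class with $\wideparen\upmu < \wideparen\upnu$ — equivalently, by the multibest inequalities and the discussion around Theorem \ref{badapproxchar}/\ref{wellapprox}, a best growth-decay pair that is not "flat." But wait: this fails exactly when $\uptheta \in \mathfrak B$, where every best pair satisfies $\wideparen\upmu = \wideparen\upnu$ (Theorem \ref{badapproxchar}(iv)), and the vanishing strips degenerate.

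So I would split the irrational case. If $\uptheta \in \mathfrak W$, pick a best denominator class $\bast\wideparen q$ with infinite partial quotient (these exist by Theorem \ref{wellapprox}(iii)); for such a class $\wideparen\upnu < \wideparen\upmu$, wait — I need $\wideparen\upmu < \wideparen\upnu$, let me reconsider the orientation. Since $\upmu$ measures $\langle\bast n^{-1}\rangle$ (reciprocal of the denominator) and decay measures $\langle|\upvarepsilon(\bast n)|\rangle$, a best class has $|\bast\wideparen q| \cdot |\upvarepsilon(\bast\wideparen q)| < 1$ from \eqref{bestineq}, i.e. $\wideparen\upnu \leq \wideparen\upmu$ always, with the successor giving $\wideparen\upmu^+ < \wideparen\upmu$ for infinite partial quotient. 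The vanishing strip $\wideparen R$ requires $\upmu \geq \wideparen\upmu$ and $\upnu < \wideparen\upnu \leq \wideparen\upmu \leq \upmu$, so indeed $\upnu < \upmu$ there — good, this is a legitimate fast pair that vanishes. So for $\uptheta \in \mathfrak W$ I take any best class whatsoever (with $\wideparen\upnu < \wideparen\upmu$, which happens when the partial quotient is large enough, guaranteed by well-approximability) and any $(\upmu,\upnu) \in \wideparen R$. For $\uptheta \in \mathfrak B$: here I use Theorem \ref{badapproxchar}(ii) directly, which says $\bast\Z^\upmu_\upnu(\uptheta) = 0$ for \emph{all} $\upmu \geq \upnu$ — in particular for all $\upmu > \upnu$ — so $\uptheta\,{}_\upmu\!\Uparrow_\upnu\upomega$ fails for every such pair. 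Combining the two subcases, $\uptheta \notin \Q \Rightarrow \uptheta$ is not a $\upmu/\upnu$-fast divisor of $\upomega$ for some $\upmu > \upnu$, completing the contrapositive.

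The main obstacle I anticipate is purely bookkeeping: getting the direction of the inequalities between growth and decay indices right (growth is "reciprocal of denominator," so larger denominators mean \emph{smaller} $\upmu$), and making sure the pair $(\upmu,\upnu)$ extracted from the vanishing strip genuinely satisfies $\upmu > \upnu$ so that it is a valid "fast" configuration rather than a slow one. There is no deep content beyond assembling Theorems \ref{gennonvan}, \ref{bestfilt}, \ref{badapproxchar}, \ref{wellapprox} and Proposition \ref{rationalspec}; the proof should be a short paragraph once the orientation conventions are pinned down.
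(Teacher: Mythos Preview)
Your approach is correct and matches the paper's one-line proof, which simply observes that for $\upmu>\upnu$ the relation $\uptheta\,{}_{\upmu}\!\Uparrow_{\upnu}\upomega$ reduces to $\bast\Z^{\upmu}_{\upnu}(\uptheta)\neq 0$ (the $\upeta$-factor is automatic by Theorem~\ref{gennonvan}), and this holds for all $\upmu>\upnu$ iff $\uptheta\in\Q$. Your case split into $\mathfrak{W}$ versus $\mathfrak{B}$ is unnecessary: even when $\wideparen{\upmu}=\wideparen{\upnu}$ (the badly approximable case), the vanishing strip $\{\upmu\geq\wideparen{\upmu},\ \upnu<\wideparen{\upnu}\}$ of Theorem~\ref{bestfilt} still contains pairs with $\upmu>\upnu$, so that single theorem handles all irrationals uniformly.
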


\begin{proof}  $\bast\Z^{\upmu}_{\upnu}(\uptheta )$ is nontrivial for all $\upmu>\upnu$ $\Leftrightarrow$ $\uptheta\in\Q$.
\end{proof}

If $\uptheta {}_{\upmu}\!\!\owedge_{\upmu}\upeta$,
we say that both $\uptheta$ and $\upeta$ are {\bf  $\upmu$-flat divisors}
of $\upomega$ and write 
\[ \uptheta \talloblong\upomega= \uptheta\talloblong_{\upmu}\upomega\ ,\quad 
\upeta\talloblong\upomega=\upeta \talloblong_{\upmu}\upomega.\]  
Thus $\uptheta \talloblong\upomega$ $\Leftrightarrow$ ${\rm Spec}_{\rm flat}(\uptheta )\cap {\rm Spec}_{\rm flat}(\upeta )\not=\emptyset$.
%If this is true for all $\upmu$ we simply write
%$\uptheta \left[\upomega\right.$, 
%$\upeta \left[\upomega\right.$ and say that $\uptheta, \upeta$ are 
%\begin{color}{black} {\bf {\small  flat divisors}}\end{color} of $\upomega$.
If $\uptheta \Downarrow\upomega$, $\uptheta \Uparrow\upomega$ we will
write $ \uptheta \Updownarrow\upomega$ and say that $\uptheta$ is an {\bf elastic divisor}; if $\uptheta$ is elastic and $\uptheta \talloblong\upomega$
as well then we will
write $ \uptheta \bar{\Updownarrow}\upomega$
say that $\uptheta $ is a  {\bf strong elastic divisor} of $\upomega$.  Note that if $ \uptheta \bar{\Updownarrow}\upomega$
and $\upeta=\upomega/\uptheta$ then $\upeta \bar{\Updownarrow}\upomega$ as well.
%The following result says that if $\uptheta \bar{\Updownarrow}\upomega$
 %for all $\upmu,\upnu$ then $\uptheta$ and $\upomega$ are rational:

\begin{prop} $\uptheta {}_{\,\upmu}\!\Uparrow_{\upnu}\upomega$, $\uptheta {}_{\,\upmu}\Downarrow_{\upnu}\upomega$ and $\uptheta \Updownarrow_{\upmu}\upomega$
for all $\upmu\geq \upnu$  $\Leftrightarrow$ $\uptheta,\upomega\in\Q$.
\end{prop}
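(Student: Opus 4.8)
The plan is to handle the two implications separately; the content of the proposition is really just that the three hypotheses, quantified over all $\upmu\geq\upnu$, are strong enough to pin down the full nonvanishing spectra of both $\uptheta$ and $\upeta:=\upomega/\uptheta$ (the divisor notation presupposes $\uptheta,\upomega\neq0$).

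For the easy direction ($\Leftarrow$), I would assume $\uptheta,\upomega\in\Q$, so that $\upeta=\upomega/\uptheta\in\Q$ as well, and invoke Proposition \ref{rationalspec} to get ${\rm Spec}(\uptheta)={\rm Spec}(\upeta)=\bstar\PR\R_{\upvarepsilon}^{2}$. Then $\bast\Z^{\upmu}_{\upnu}(\uptheta)\neq0$ and $\bast\Z^{\upnu}_{\upmu}(\upeta)\neq0$ for every pair $(\upmu,\upnu)$, and likewise with the roles of $\uptheta$ and $\upeta$ exchanged; unwinding the definitions of ${}_{\upmu}\!\owedge_{\upnu}$, of $\Uparrow$/$\Downarrow$, and of the flat relation, this is exactly $\uptheta\,{}_{\upmu}\!\Uparrow_{\upnu}\upomega$, $\uptheta\,{}_{\upmu}\!\Downarrow_{\upnu}\upomega$ and $\uptheta\,\Updownarrow_{\upmu}\upomega$ for all $\upmu\geq\upnu$.

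For the forward direction ($\Rightarrow$), set $\upeta:=\upomega/\uptheta$. First I would record what each hypothesis says: $\uptheta\,{}_{\upmu}\!\Uparrow_{\upnu}\upomega$ forces $\bast\Z^{\upmu}_{\upnu}(\uptheta)\neq0$; $\uptheta\,{}_{\upmu}\!\Downarrow_{\upnu}\upomega$ (i.e.\ $\uptheta$ slow, $\upeta$ fast) forces $\bast\Z^{\upmu}_{\upnu}(\upeta)\neq0$; and $\uptheta\,\Updownarrow_{\upmu}\upomega$ — equivalently $\uptheta\,{}_{\upmu}\!\owedge_{\upmu}\upeta$ — forces both $\bast\Z^{\upmu}_{\upmu}(\uptheta)\neq0$ and $\bast\Z^{\upmu}_{\upmu}(\upeta)\neq0$. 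Running these over all admissible indices gives $\bast\Z^{\upmu}_{\upnu}(\uptheta)\neq0$ whenever $\upmu>\upnu$ or $\upmu=\upnu$; combining with Theorem \ref{gennonvan}, which supplies the slow half $\upmu<\upnu$ for free, I get ${\rm Spec}(\uptheta)=\bstar\PR\R_{\upvarepsilon}^{2}$, hence $\uptheta\in\Q$ by Proposition \ref{rationalspec}. The identical argument applied to $\upeta$ (using the $\Downarrow$-hypothesis and the second half of the flat hypothesis) gives ${\rm Spec}(\upeta)=\bstar\PR\R_{\upvarepsilon}^{2}$, hence $\upeta\in\Q$; therefore $\upomega=\uptheta\upeta\in\Q$.

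I do not expect a serious obstacle here — the heavy lifting is done by Proposition \ref{rationalspec} and Theorem \ref{gennonvan}. The one point that must not be overlooked is that the $\Uparrow$/$\Downarrow$ relations live entirely off the diagonal $\upmu=\upnu$ (and, by Theorem \ref{gennonvan}, carry nontrivial information only in the fast half $\upmu>\upnu$), so it is precisely the flat hypothesis that is needed to reach the diagonal and thereby conclude that the \emph{whole} spectrum, not merely its off-diagonal part, equals $\bstar\PR\R_{\upvarepsilon}^{2}$. In fact one could shortcut the converse by applying Theorem \ref{flatspectra} directly: $\uptheta\,\Updownarrow_{\upmu}\upomega$ for all $\upmu$ forces ${\rm Spec}_{\rm flat}(\uptheta)={\rm Spec}_{\rm flat}(\upeta)=\bstar\PR\R_{\upvarepsilon}$, which by that theorem happens only when $\uptheta$ and $\upeta$ are rational. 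The only real bookkeeping care is keeping track of which factor — $\uptheta$ or $\upeta=\upomega/\uptheta$ — each of $\Uparrow$ and $\Downarrow$ constrains.
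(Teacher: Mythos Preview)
Your proposal is correct and is essentially the same approach as the paper's, which records the proof simply as ``Trivial.'' You have merely written out what the author leaves implicit: unwind the divisor relations to conclude that both ${\rm Spec}(\uptheta)$ and ${\rm Spec}(\upeta)$ are all of $\bstar\PR\R_{\upvarepsilon}^{2}$, then invoke Proposition~\ref{rationalspec} (your shortcut via Theorem~\ref{flatspectra} is also valid and arguably cleaner).
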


\begin{proof} Trivial.
\end{proof}

If $\upomega=\uptheta\upeta$ but 
$\uptheta\not\Uparrow\upomega$,
$\uptheta \not\Downarrow \upomega$ and $\uptheta \not\talloblong\upomega$ we say that $\uptheta$ is an {\bf  antidivisor}
and write 
\[ \uptheta \lightning\upomega.\]  Note that if $\uptheta \lightning\upomega$ and $\upeta=\upomega/\uptheta$ it is not necessarily the
case that $\upeta\lightning\upomega$ (c.f.\ Theorem \ref{classesdivision}, c. below).
%We also say that $\uptheta$ and $\upomega$ are {\bf {\small  relatively antiprime}}. 
If on the other hand for any $\upomega$, $\uptheta$ is a divisor (of some speed: fast, slow or flat) 
we say that $\uptheta$ is an {\bf  omnidivisor}.

We now examine the notions of divisibility described above with regard to the classes $\mathfrak{B}$,  $\mathfrak{W}_{1^{+}}$,
$\mathfrak{W}_{>1}$ and $\mathfrak{W}_{\infty}$.

\begin{theo}\label{classesdivision}  Let $\upomega\in\R$. 
\begin{enumerate}
\item[a.] For all $\uptheta\in\mathfrak{B}$, $\uptheta\not\Uparrow\upomega$, $\uptheta\not\talloblong\upomega$.
\item[b.] For all $\uptheta\in\mathfrak{W}_{1^{+}}$, $\uptheta\not\Uparrow\upomega$. 
\item[c.] If $\upomega=\uptheta\upeta$, $\uptheta\in\mathfrak{B}$ and $\upeta\in \mathfrak{B}\cup \mathfrak{W}_{1^{+}}$ then
$\uptheta\lightning\upomega$.  
\item[d.] If $\upomega=\uptheta\upeta$, $\uptheta,\upeta\in\mathfrak{W}_{>1}$ then
$\uptheta,\upeta\Updownarrow\upomega$.  If moreover $\uptheta$ and $\upeta$ are equivalent then 
$\uptheta,\upeta\bar{\Updownarrow} \upomega$.
\end{enumerate}
\end{theo}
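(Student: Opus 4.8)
The plan is to convert each divisibility symbol into a statement about nonvanishing spectra and then read off the conclusions from \S\ref{nonvanspec} and \S\ref{flatsection}. Put $\upeta=\upomega/\uptheta$, so that $\upomega=\uptheta\upeta$ holds automatically. Since Theorem \ref{gennonvan} already makes $\bast\Z^{\upnu}_{\upmu}(\upeta)\neq 0$ as soon as $\upnu<\upmu$, the relation $\uptheta\,{}_{\upmu}\!\owedge_{\upnu}\upeta$ with $\upmu>\upnu$ reduces to $(\upmu,\upnu)\in{\rm Spec}(\uptheta)$. Thus $\uptheta\Uparrow\upomega$ is controlled by whether ${\rm Spec}(\uptheta)$ meets the fast region $\{\upmu>\upnu\}$; $\uptheta\Downarrow\upomega$ is the same statement for ${\rm Spec}(\upeta)$, hence is equivalent to $\upeta\Uparrow\upomega$; $\uptheta\talloblong\upomega$ is ${\rm Spec}_{\rm flat}(\uptheta)\cap{\rm Spec}_{\rm flat}(\upeta)\neq\emptyset$; and $\uptheta\lightning\upomega$ is the simultaneous failure of all three. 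With this dictionary, (a) and (b) are purely statements about ${\rm Spec}(\uptheta)$, while (c) and (d) also involve ${\rm Spec}(\upeta)$.

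For (a), Theorem \ref{badapproxchar} (i.\ $\Leftrightarrow$ ii.) gives, for $\uptheta\in\mathfrak{B}$, that $\bast\Z^{\upmu}_{\upnu}(\uptheta)=0$ for \emph{every} $\upmu\geq\upnu$; so ${\rm Spec}(\uptheta)$ meets neither the fast region nor the diagonal, whence $\uptheta\not\Uparrow\upomega$ and, since ${\rm Spec}_{\rm flat}(\uptheta)=\emptyset$, $\uptheta\not\talloblong\upomega$. For (b) one must rule out a fast-divisor relation for $\uptheta\in\mathfrak{W}_{1^{+}}$; the plan is to suppose $0\neq\bast n\in\bast\Z^{\upmu}_{\upnu}(\uptheta)$ underlies $\uptheta\Uparrow\upomega$, pass to the largest best denominator class $\bast\wideparen{q}\leq\bast n$, and use the multibest inequalities (\ref{multibestineq}) --- as in the proof of Theorem \ref{verywellapprox} --- to extract a best growth--decay pair exhibiting a genuine power gap $\wideparen{\upmu}^{\uplambda}>\wideparen{\upnu}$ with $\uplambda>1$ real; Theorem \ref{verywellapprox}, iii., then forces $\uptheta$ to have exponent $>1$, i.e.\ $\uptheta\in\mathfrak{W}_{>1}$, a contradiction. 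The delicate point, which I expect to be the main obstacle, is exactly here: one must use that $\Uparrow$ detects a genuine (polynomial) separation of growth from decay rather than a merely formal $\upmu>\upnu$ --- this is precisely the hair-thin line dividing $\mathfrak{W}_{1^{+}}$ from $\mathfrak{W}_{>1}$.

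For (c), combine the first two parts. If $\upomega=\uptheta\upeta$ with $\uptheta\in\mathfrak{B}$, then (a) gives $\uptheta\not\Uparrow\upomega$ and $\uptheta\not\talloblong\upomega$; and since $\upeta\in\mathfrak{B}\cup\mathfrak{W}_{1^{+}}$, part (a) (if $\upeta\in\mathfrak{B}$) or part (b) (if $\upeta\in\mathfrak{W}_{1^{+}}$) gives $\upeta\not\Uparrow\upomega$, which by the first paragraph is exactly $\uptheta\not\Downarrow\upomega$. All three relations fail, so $\uptheta\lightning\upomega$.

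For (d), $\uptheta\in\mathfrak{W}_{>1}$ lies in $\mathfrak{W}_{\upkappa}$ for some real $\upkappa>1$, so (\ref{verywellaproxexpk}) supplies a nonzero $\bast n\in\bast\Z(\uptheta)$ with $\upnu(\bast n)\leq\upmu(\bast n)^{\upkappa}<\upmu(\bast n)$; then $\bast n\in\bast\Z^{\upmu}_{\upnu}(\uptheta)$ for any $\upmu,\upnu$ with $\upnu(\bast n)\leq\upnu<\upmu<\upmu(\bast n)$, so ${\rm Spec}(\uptheta)$ meets the fast region and $\uptheta\Uparrow\upomega$. The identical computation with $\upeta$ gives $\upeta\Uparrow\upomega$, hence $\uptheta\Downarrow\upomega$; so $\uptheta\Updownarrow\upomega$, and by symmetry $\upeta\Updownarrow\upomega$. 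If moreover $\uptheta\Bumpeq\upeta$, then Proposition \ref{isospectral} gives ${\rm Spec}(\uptheta)={\rm Spec}(\upeta)$, hence ${\rm Spec}_{\rm flat}(\uptheta)={\rm Spec}_{\rm flat}(\upeta)$, which is nonempty (indeed has interior) by Corollary \ref{quasibestunion} since $\uptheta\in\mathfrak{W}$; therefore ${\rm Spec}_{\rm flat}(\uptheta)\cap{\rm Spec}_{\rm flat}(\upeta)\neq\emptyset$, i.e.\ $\uptheta\talloblong\upomega$, and likewise $\upeta\talloblong\upomega$. Combining, $\uptheta,\upeta\,\bar{\Updownarrow}\,\upomega$.
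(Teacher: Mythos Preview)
Your treatment of (a), (c), and (d) is essentially the paper's argument, fleshed out with explicit references. The paper dispatches (c) with a one-line ``follows trivially from the definitions'' and argues (d) exactly as you do (existence of a fast pair in ${\rm Spec}(\uptheta)$ for $\uptheta\in\mathfrak{W}_{>1}$, then symmetry, then Proposition~\ref{isospectral} for the flat part).

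The real issue is (b), and you are right to flag it. The paper's own proof of (b) is the bare assertion that the spectrum of any $\uptheta\in\mathfrak{W}_{1^{+}}$ ``consists of pairs which satisfy $\upmu\leq\upnu$''. But that assertion is not correct: by Theorem~\ref{wellapprox}(iv), every $\uptheta\in\mathfrak{W}$ --- in particular every $\uptheta\in\mathfrak{W}_{1^{+}}$ --- admits a best growth--decay pair with $\wideparen{\upmu}>\wideparen{\upnu}$ strictly, and then any $(\upmu,\upnu)$ with $\wideparen{\upnu}\leq\upnu<\upmu<\wideparen{\upmu}$ lies in ${\rm Spec}(\uptheta)$ (density of the order, Proposition~\ref{denselinord}). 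So fast pairs always exist for well-approximable $\uptheta$. Your proposed rescue --- passing to a best class and extracting a \emph{power} gap $\wideparen{\upmu}^{\uplambda}>\wideparen{\upnu}$ with real $\uplambda>1$ --- is precisely the step that fails for $\uptheta\in\mathfrak{W}_{1^{+}}$, since by definition no such power gap exists; the gap $\wideparen{\upmu}>\wideparen{\upnu}$ is sub-polynomial (compare the Note after Corollary~\ref{quasibestunion}). The ``main obstacle'' you anticipate is therefore fatal: part (b) as stated is false, and the $\upeta\in\mathfrak{W}_{1^{+}}$ case of (c) falls with it. This is a defect in the paper's statement, not in your reasoning.
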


\begin{proof} a.\ \& b. The spectrum of any element of $\mathfrak{B}$ (of $\mathfrak{W}_{1^{+}}$) 
consists exactly of pairs $\upmu<\upnu$ (consists of pairs which satisfy $\upmu\leq \upnu$), so
flat or fast composition (fast composition) with $\uptheta$ is not possible.  c. This follows trivially from the definitions. d.
For any $\uptheta\in\mathfrak{W}_{>1}$ there exists pair $\upmu >\upnu$ in ${\rm Spec}(\uptheta )$ so
$ \uptheta {}_{\upmu}\!\!\owedge_{\upnu}\upeta$.  Switching the roles of $\uptheta$ and $\upeta$, for appropriate $\upmu >\upnu$,
$ \upeta {}_{\upmu}\!\!\owedge_{\upnu}\uptheta$ as well.  Thus $\uptheta,\upeta\Updownarrow\upomega$.
If $\uptheta$ and $\upeta$ are equivalent then ${\rm Spec}_{\rm flat}(\uptheta )={\rm Spec}_{\rm flat}(\upeta )$ implying $\uptheta,\upeta\bar{\Updownarrow} \upomega$.
\end{proof}

\begin{coro} The set of omnidivisors is
precisely $\mathfrak{W}_{>1}$. 
%For any omnidivisor $\uptheta\in\mathfrak{W}_{>1}$,
%\[  \left\{ \upomega |\; \uptheta\|\upomega \right\} =\uptheta\cdot \mathfrak{W}_{>1}. \]
\end{coro}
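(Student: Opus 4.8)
The plan is to prove the two inclusions of the asserted equality separately, working throughout with $\uptheta\in\R\setminus\Q$ (the standing convention of this section, under which $\mathfrak{B}$, $\mathfrak{W}_{1^{+}}$, $\mathfrak{W}_{>1}$ partition $\R\setminus\Q$; a rational $a/b$ is trivially an omnidivisor and is ordinarily excluded, since ${\rm Spec}(a/b)=\bstar\PR\R_{\upvarepsilon}^{2}$ by Proposition \ref{rationalspec} already contains pairs with $\upmu>\upnu$, while $\bast\Z^{\upnu}_{\upmu}(\upeta)\neq 0$ for every $\upeta$ by Theorem \ref{gennonvan}, so $a/b\,{}_{\upmu}\!\!\Uparrow_{\upnu}\upomega$ for every $\upomega$).

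\emph{Every $\uptheta\in\mathfrak{W}_{>1}$ is an omnidivisor.} By Theorem \ref{verywellapprox}, since $\uptheta$ has exponent $\upkappa(\uptheta)>1$, there is $\bast n\in\bast\Z(\uptheta)$ with $\upnu(\bast n)<\upmu(\bast n)^{\uplambda}$ for every $\uplambda\in[1,\upkappa(\uptheta))$; in particular, taking $\uplambda=1$, $\upnu(\bast n)<\upmu(\bast n)$. By density of $\bstar\PR\R_{\upvarepsilon}$ (Proposition \ref{denselinord}) pick $\upmu$ with $\upnu(\bast n)<\upmu<\upmu(\bast n)$ and set $\upnu:=\upnu(\bast n)$; then $\bast n\in\bast\Z^{\upmu}_{\upnu}(\uptheta)$ with $\upmu>\upnu$, so ${\rm Spec}(\uptheta)$ contains a ``fast'' pair. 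Now let $\upomega\in\R$ be arbitrary and put $\upeta:=\upomega/\uptheta$. Since $\upnu<\upmu$, Theorem \ref{gennonvan} gives $\bast\Z^{\upnu}_{\upmu}(\upeta)\neq 0$, hence $\uptheta\,{}_{\upmu}\!\!\owedge_{\upnu}\upeta$ with $\upmu>\upnu$, i.e.\ $\uptheta\,{}_{\upmu}\!\!\Uparrow_{\upnu}\upomega$. Thus $\uptheta$ is a (fast) divisor of $\upomega$; as $\upomega$ was arbitrary, $\uptheta$ is an omnidivisor. (This is precisely the ``fast'' part of the proof of Theorem \ref{classesdivision} d., where one sees that the hypothesis $\upeta\in\mathfrak{W}_{>1}$ made there is in fact unnecessary for this conclusion.)

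\emph{Every omnidivisor lies in $\mathfrak{W}_{>1}$.} I prove the contrapositive: if $\uptheta\in\mathfrak{B}\cup\mathfrak{W}_{1^{+}}$, I exhibit an $\upomega$ with $\uptheta\,\lightning\,\upomega$. Choose any $\upeta\in\mathfrak{B}$ (possible, as $\mathfrak{B}$ has the cardinality of the continuum) and set $\upomega:=\uptheta\upeta$, so that $\upomega/\uptheta=\upeta$. Then $\uptheta$ divides $\upomega$ at no speed: (i) $\uptheta\not\Uparrow\upomega$, by Theorem \ref{classesdivision} a.\ if $\uptheta\in\mathfrak{B}$ and by Theorem \ref{classesdivision} b.\ if $\uptheta\in\mathfrak{W}_{1^{+}}$; (ii) $\uptheta\not\talloblong\upomega$, since a flat factorization would require ${\rm Spec}_{\rm flat}(\uptheta)\cap{\rm Spec}_{\rm flat}(\upeta)\neq\emptyset$, whereas ${\rm Spec}_{\rm flat}(\upeta)=\emptyset$ for $\upeta\in\mathfrak{B}$ (Theorem \ref{badapproxchar}); (iii) $\uptheta\not\Downarrow\upomega$, since $\uptheta\,{}_{\upmu}\!\!\Downarrow_{\upnu}\upomega$ would require a pair $\upmu>\upnu$ with $\bast\Z^{\upmu}_{\upnu}(\upomega/\uptheta)=\bast\Z^{\upmu}_{\upnu}(\upeta)\neq 0$, impossible by Theorem \ref{badapproxchar} ii.\ as $\upeta\in\mathfrak{B}$. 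Hence $\uptheta\,\lightning\,\upomega$ and $\uptheta$ is not an omnidivisor. Combining the two halves gives the Corollary.

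The one step requiring care is the very first, namely extracting from ``exponent $>1$'' a genuinely ``fast'' direction in ${\rm Spec}(\uptheta)$ — an element $\bast n\in\bast\Z(\uptheta)$ with decay $\upnu(\bast n)$ \emph{strictly} below its growth $\upmu(\bast n)$. This is the place where the excellent/bad dichotomy of \S\ref{nonvanspec} might look relevant, but the inequality $\upnu(\bast n)<\upmu(\bast n)^{\uplambda}$ appearing in the proof of Theorem \ref{verywellapprox} already holds strictly for every $\uplambda\in[1,\upkappa(\uptheta))$, in particular at $\uplambda=1$, so no hypothesis that the exponent be excellent is needed. Everything else is routine bookkeeping with the relations $\owedge$, $\Uparrow$, $\Downarrow$, $\talloblong$, $\lightning$ together with the vanishing/nonvanishing inputs (Theorems \ref{gennonvan}, \ref{badapproxchar}, \ref{verywellapprox} and \ref{classesdivision}).
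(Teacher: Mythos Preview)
Your proof is correct and follows essentially the same approach as the paper's: the paper simply cites Theorem \ref{classesdivision} parts a., b.\ for one inclusion and part d.\ for the other. Your version is more explicit in two places the paper leaves implicit—namely, that the ``fast'' half of the proof of d.\ does not actually require $\upeta\in\mathfrak{W}_{>1}$, and that the appeal to a., b.\ for the reverse inclusion tacitly requires choosing a witness $\upomega=\uptheta\upeta$ with $\upeta\in\mathfrak{B}$ to rule out slow and flat divisibility—so your write-up is if anything more complete.
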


\begin{proof}  By Theorem \ref{classesdivision}, parts a., b., the set of omnidivisors is contained in $\mathfrak{W}_{>1}$;
by part d., every element of $\uptheta\in\mathfrak{W}_{>1}$ is an omnidivisor.
\end{proof}

A subset $X\subset \R$ is called an  {\bf  antiprime set} if for all $\uptheta_{1},\uptheta_{2}\in X$, 
$\uptheta_{1},\uptheta_{2} \lightning\uptheta_{1}\uptheta_{2}$.  

\begin{prop}  $\mathfrak{B}$ is the unique maximal antiprime set in $\R$.
\end{prop}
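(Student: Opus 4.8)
The plan is to prove the sharper statement that $\mathfrak{B}$ is the \emph{greatest} antiprime set: every antiprime set is contained in $\mathfrak{B}$, and $\mathfrak{B}$ is itself antiprime. Uniqueness of the maximal antiprime set then follows at once, since a greatest element of an inclusion order is the unique maximal one.

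For the inclusion, the key reduction is that if $X$ is antiprime and $\uptheta\in X$, then applying the defining property to the pair $(\uptheta,\uptheta)$ forces $\uptheta\lightning\uptheta^{2}$; so it suffices to show that every $\uptheta\in\R-\mathfrak{B}$ is a divisor of $\uptheta^{2}$ of some speed (hence not an antidivisor). I would split along $\R-\mathfrak{B}=\Q\sqcup\mathfrak{W}_{1^{+}}\sqcup\mathfrak{W}_{>1}$. If $\uptheta\in\Q$, then ${\rm Spec}(\uptheta)=\bstar\PR\R_{\upvarepsilon}^{2}$ by Proposition \ref{rationalspec}, so $\bast\Z^{\upmu}_{\upnu}(\uptheta)\neq0$ for some $\upmu>\upnu$; together with $\bast\Z^{\upnu}_{\upmu}(\uptheta)\neq0$ (automatic from Theorem \ref{gennonvan}, since $\upnu<\upmu$) this yields $\uptheta\,{}_{\upmu}\!\owedge_{\upnu}\uptheta$, hence $\uptheta\,{}_{\upmu}\!\Uparrow_{\upnu}\uptheta^{2}$. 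If $\uptheta\in\mathfrak{W}_{>1}$, Theorem \ref{classesdivision}(d) with $\upeta=\uptheta$ gives $\uptheta\Updownarrow\uptheta^{2}$, in particular $\uptheta\Uparrow\uptheta^{2}$. If $\uptheta\in\mathfrak{W}_{1^{+}}\subset\mathfrak{W}$, then Corollary \ref{quasibestunion} shows ${\rm Spec}_{\rm flat}(\uptheta)$ has interior, hence is nonempty, and since $\uptheta\talloblong\uptheta^{2}$ holds precisely when ${\rm Spec}_{\rm flat}(\uptheta)\cap{\rm Spec}_{\rm flat}(\uptheta)\neq\emptyset$ we conclude $\uptheta\talloblong\uptheta^{2}$. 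In all three cases $\uptheta\not\lightning\uptheta^{2}$, contradicting $\uptheta\in X$; thus $X\subseteq\mathfrak{B}$.

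For the reverse — that $\mathfrak{B}$ is antiprime — let $\uptheta_{1},\uptheta_{2}\in\mathfrak{B}$ and $\upomega=\uptheta_{1}\uptheta_{2}$. Theorem \ref{classesdivision}(a) immediately gives $\uptheta_{1}\not\Uparrow\upomega$, $\uptheta_{1}\not\talloblong\upomega$, and likewise for $\uptheta_{2}$, so only slow divisibility must be excluded. Here the point is that in the relation $\uptheta\,{}_{\upmu}\!\owedge_{\upnu}\upeta$ with $\upmu>\upnu$ the hypothesis constrains $\bast\Z^{\upmu}_{\upnu}(\uptheta)$, whereas $\bast\Z^{\upnu}_{\upmu}(\upeta)$ lies in the slow component and is nonzero by Theorem \ref{gennonvan}; unwinding the definitions, $\uptheta_{1}\Downarrow\upomega$ is then equivalent to $\uptheta_{2}\Uparrow\upomega$, and $\uptheta_{2}\Downarrow\upomega$ to $\uptheta_{1}\Uparrow\upomega$. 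Since $\uptheta_{2}\in\mathfrak{B}$ satisfies $\bast\Z^{\upmu}_{\upnu}(\uptheta_{2})=0$ for all $\upmu\geq\upnu$ by Theorem \ref{badapproxchar} (equivalently, $\uptheta_{2}\not\Uparrow\upomega$ by Theorem \ref{classesdivision}(a)), we obtain $\uptheta_{1}\not\Downarrow\upomega$, and symmetrically $\uptheta_{2}\not\Downarrow\upomega$. Hence $\uptheta_{1},\uptheta_{2}\lightning\upomega$, so $\mathfrak{B}$ is antiprime.

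Combining the two directions, $\mathfrak{B}$ is the largest antiprime set and therefore the unique maximal one. The only place demanding care is the bookkeeping in the third paragraph: correctly identifying which of the two groups in $\uptheta\,{}_{\upmu}\!\owedge_{\upnu}\upeta$ carries the hypothesis and recognizing the other as automatically nontrivial via the Uniform Dirichlet bound underlying Theorem \ref{gennonvan}, together with checking that the degenerate pair $(\uptheta,\uptheta)$ is genuinely covered by the definition of an antiprime set (so that the singletons $\{\uptheta\}$, $\uptheta\in\mathfrak{B}$, are indeed antiprime, as they must be).
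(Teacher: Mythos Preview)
Your argument is correct and follows the same overall strategy as the paper: show that $\mathfrak{B}$ is antiprime and that every antiprime set must lie inside $\mathfrak{B}$ because any $\uptheta\notin\mathfrak{B}$ is self-composable. Two small simplifications are available. First, your three-way split on $\R-\mathfrak{B}$ is unnecessary: for every $\upeta\notin\mathfrak{B}$ one has ${\rm Spec}_{\rm flat}(\upeta)\neq\emptyset$ (by Proposition~\ref{rationalspec} for $\upeta\in\Q$ and Theorem~\ref{wellapprox} for $\upeta\in\mathfrak{W}$), hence $\upeta\talloblong\upeta^{2}$ uniformly, which is exactly what the paper uses. Second, your verification that $\mathfrak{B}$ is antiprime re-derives Theorem~\ref{classesdivision}(c); you could simply cite it, since for $\uptheta_{1},\uptheta_{2}\in\mathfrak{B}$ that result gives $\uptheta_{1}\lightning\uptheta_{1}\uptheta_{2}$ and $\uptheta_{2}\lightning\uptheta_{1}\uptheta_{2}$ directly.
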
 

\begin{proof}  Note that $\mathfrak{B}$ is an anti-prime set: $\uptheta_{1},\uptheta_{2} \lightning\uptheta_{1}\uptheta_{2}$
for all $\uptheta_{1},\uptheta_{2} \in\mathfrak{B}$.  Moreover, if we add another element $\upeta\not\in\mathfrak{B}$ 
%to it is composable with elements of $\mathfrak{B}$.
we lose the defining property since for such $\upeta$, $\upeta\talloblong \upeta^{2}$.  On the other hand, any $\uptheta\in\R-\mathfrak{B}$
is composable with itself, so there are no antiprime sets containing elements not in $\mathfrak{B}$.
\end{proof}

Thus we shall refer to $\mathfrak{B}$ as the {\bf antiprimes} of approximate ideal arithmetic.  We say that
$\upomega$ has an {\bf antiprime decomposition} if $\upomega=\uptheta\upeta$ for $\uptheta, \upeta\in\mathfrak{B}$.
For example, every 
$q\in \Q$ which is not a square has the antiprime decomposition $q=\sqrt{q}\sqrt{q}$.  
Antiprime decompositions are outside the realm of the version of approximate ideal arithmetic presented here, to analyze them 
requires the finer arithmetic of {\it symmetric diophantine approximations}, the subject of \S \ref{metrical}.  

%Recall that every real number is a sum and product of elements of $\mathfrak{W}_{\infty}$ \cite{Er}.

\begin{theo}  Every non zero real number admits an antiprime decomposition.
\end{theo}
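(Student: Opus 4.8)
The plan is to reduce the theorem to finding, for a given $\upomega\in\R-\{0\}$, a badly approximable $\uptheta$ for which $\upomega\uptheta^{-1}$ is again badly approximable; then $\uptheta$ and $\upeta:=\upomega\uptheta^{-1}$ lie in $\mathfrak{B}$ with $\uptheta\upeta=\upomega$, which is an antiprime decomposition, since any two elements of $\mathfrak{B}$ are mutual antidivisors (as noted in the proof that $\mathfrak{B}$ is the maximal antiprime set). First I would dispose of signs: because $x\mapsto -x$ lies in ${\rm PGL}_2(\Z)$ and, by Proposition \ref{isospectral} together with Theorem \ref{badapproxchar}, $\mathfrak{B}$ is a union of $\Bumpeq$-classes, the set $\mathfrak{B}$ is stable under negation; so it suffices to treat $\upomega>0$, passing from a decomposition $|\upomega|=\uptheta\upeta$ to $\upomega=(-\uptheta)\upeta$ when $\upomega<0$.

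Next, fix any bounded open interval $I=(a,b)$ with $0<a<b<\infty$ and consider the map $h\colon[a,b]\to[\upomega/b,\upomega/a]$, $h(x)=\upomega/x$; since $\upomega>0$ this is a bi-Lipschitz homeomorphism between two closed intervals contained in $(0,\infty)$. The decisive ingredient is Schmidt's theorem that the set $\mathfrak{B}$ of badly approximable numbers is a winning set for Schmidt's $(\alpha,\beta)$-game \cite{Sch}. The winning property is inherited when the game is restricted to a closed subinterval, it is preserved by bi-Lipschitz homeomorphisms, and — crucially — it is stable under countable intersections. Hence
\[ \bigl\{\,x\in I\ \big|\ x\in\mathfrak{B}\ \text{and}\ \upomega/x\in\mathfrak{B}\,\bigr\}\;=\;\bigl(\mathfrak{B}\cap I\bigr)\,\cap\,h^{-1}\bigl(\mathfrak{B}\cap h(I)\bigr) \]
is a winning subset of $I$, in particular nonempty. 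Choosing $\uptheta$ in this set and setting $\upeta=\upomega/\uptheta$ completes the argument, as $\uptheta,\upeta\in\mathfrak{B}$ and $\uptheta\upeta=\upomega$.

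The one genuine obstacle is the appeal to the winning-set formalism: that $\mathfrak{B}$ is winning and that winning is bi-Lipschitz invariant and countably intersective. If a self-contained argument is preferred, I would substitute the Hall--Astels circle of ideas: there exist Cantor sets $K\subset(1,2)$ consisting of numbers with uniformly bounded partial quotients whose Newhouse thickness is as large as desired; since $\log$ has bounded distortion on $[1,2]$, the set $\log K$ still has thickness exceeding $1$, so by the thickness criterion $\log K+\log K=\log(K\cdot K)$ contains an interval $J\subset(0,\infty)$; finally, using that multiplication by a nonzero rational preserves bad approximability, for arbitrary $\upomega>0$ one picks $r\in\Q^{\times}_{>0}$ with $\upomega/r\in J$, writes $\upomega/r=k_1k_2$ with $k_1,k_2\in K\subset\mathfrak{B}$, and takes $\uptheta=rk_1\in\mathfrak{B}$, $\upeta=k_2\in\mathfrak{B}$, whence $\uptheta\upeta=\upomega$. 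Either route yields the theorem.
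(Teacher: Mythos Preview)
Your argument is correct, but it follows a different path from the paper's.  The paper simply invokes Hall's 1947 theorem \cite{Ha}: every real $r>1$ is a product of two elements of $F(4)=\{\uptheta\in\mathfrak{B}\mid a_i(\uptheta)\le 4\ \forall i\}$, and then uses that $\mathfrak{B}$ is closed under inversion to handle $0<|r|\le 1$ and negatives.  Your primary route instead exploits Schmidt's game: $\mathfrak{B}$ is winning, winning is preserved under the relevant transformations and under (finite or countable) intersection, hence $\mathfrak{B}\cap(\upomega/\mathfrak{B})\ne\emptyset$.  The paper's proof is shorter and yields the sharper conclusion that both factors lie in $F(4)$; your Schmidt-game argument is more flexible (it would immediately give, e.g., simultaneous antiprime decompositions of countably many targets) but is nonconstructive.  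Your fallback via thickness is essentially a sketch of how one proves Hall's product theorem, so it converges to the paper's method.

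One small cleanup: rather than appealing to bi-Lipschitz invariance of the winning property --- which is a slightly delicate point in general --- you can avoid it entirely.  Since $\mathfrak{B}$ is stable under $x\mapsto 1/x$, one has $\upomega/\mathfrak{B}=\upomega\cdot\mathfrak{B}$, and the winning property is manifestly preserved under the affine map $x\mapsto\upomega x$.  Then $\mathfrak{B}\cap\upomega\cdot\mathfrak{B}$ is an intersection of two winning sets, hence winning, hence nonempty.
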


\begin{proof}  Let $F(n)=\{\uptheta\in\mathfrak{B}|\; 
\forall i\; a_{i}(\uptheta )\leq n
\}$.  By \cite{Ha}, every real number $r>1$ is a product $r=\uptheta\upeta$
of elements of $F(4)$.  Since $\mathfrak{B}$ is closed under inversion, the claim follows.
\end{proof}

%By an old result of Jarnik \cite{Ja}, $\mathfrak{B}$ has Hausdorff dimension 1 -- greater than that of 
%$\mathfrak{W}_{\infty}$ which is 0.  On the other hand, 
 %the Lebesgue measure of $\mathfrak{B}$ is 0,
%so the question of finding antiprime factorizations seems nontrivial.  Note that products of elements of $\mathfrak{B}$
%which are square roots, or which belong to the same quadratic extension of $\Q$, belong to $\mathfrak{B}$.  In addition, every 

A set $X$ is said to be {\bf (strongly) approximately generated} by
$X_{0}\subset X$ if for all $\upomega\in X$, there exist $\uptheta_{1},\uptheta_{2}\in X_{0}$ such that 
$\uptheta_{1}\uptheta_{2}=\upomega$ with $\uptheta_{1},\uptheta_{2}\Updownarrow\upomega$ 
($\uptheta_{1},\uptheta_{2}\bar{\Updownarrow}\upomega$).  

\begin{coro}  The set of Liouville numbers $\mathfrak{W}_{\infty}$ approximately generates $\R$.
\end{coro}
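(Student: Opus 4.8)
The plan is to reduce everything to the classical fact that every nonzero real is a product of two Liouville numbers, and then feed this into Theorem~\ref{classesdivision}. Indeed $\mathfrak{W}_\infty\subseteq\mathfrak{W}_{>1}$, so as soon as a nonzero $\upomega$ is written as $\upomega=\uptheta_1\uptheta_2$ with $\uptheta_1,\uptheta_2\in\mathfrak{W}_\infty$, part d.\ of Theorem~\ref{classesdivision} yields $\uptheta_1,\uptheta_2\Updownarrow\upomega$; this is precisely the condition needed for $\mathfrak{W}_\infty$ to ideologically generate $\upomega$. (For $\upomega=0$ the statement is vacuous, or one simply restricts to $\R\setminus\{0\}$, as is implicit throughout ideological arithmetic since the duality maps require nonzero arguments.)

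To obtain the factorization I would argue by Baire category. The set $\mathfrak{W}_\infty$ of Liouville numbers is a dense $G_\delta$ in $\R$: setting $U_n=\bigcup_{q\ge 2}\bigcup_{p\in\Z}\bigl(\tfrac{p}{q}-q^{-n},\tfrac{p}{q}+q^{-n}\bigr)$, each $U_n$ is open and dense and $\mathfrak{W}_\infty=\bigl(\bigcap_{n\ge 1}U_n\bigr)\setminus\Q$ is a countable intersection of open dense sets. Hence $\mathfrak{W}_\infty$ is comeager in $\R\setminus\{0\}$, which, being an open subset of a complete metric space, is a Baire space.

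Now fix a nonzero $\upomega$. The map $\ell\mapsto\upomega/\ell$ is a self-homeomorphism of $\R\setminus\{0\}$, so the preimage of $\mathfrak{W}_\infty$ under it, namely $\{\ell\mid\upomega/\ell\in\mathfrak{W}_\infty\}$, is again comeager. The intersection of two comeager sets in a Baire space is comeager, in particular nonempty, so I may choose $\ell$ with both $\ell\in\mathfrak{W}_\infty$ and $\upomega/\ell\in\mathfrak{W}_\infty$. Taking $\uptheta_1=\ell$, $\uptheta_2=\upomega/\ell$ completes the factorization, and then $\uptheta_1,\uptheta_2\Updownarrow\upomega$ by the reduction above, so $\upomega$ lies in the set ideologically generated by $\mathfrak{W}_\infty$. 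The only genuinely non-formal input is the identification of $\mathfrak{W}_\infty$ as a dense $G_\delta$ so that Baire applies; everything else is either standard point-set topology or already encapsulated in Theorem~\ref{classesdivision}(d), so I do not anticipate a real obstacle. One could equally well cite Erd\H{o}s's theorem that every real number is a product of two Liouville numbers and dispense with the category argument altogether.
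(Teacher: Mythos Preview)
Your proof is correct and follows essentially the same approach as the paper. The paper's proof simply cites Erd\H{o}s's theorem that every real number is a product of two Liouville numbers and then observes $\uptheta_1,\uptheta_2\Updownarrow\upomega$; you supply the Baire category argument behind Erd\H{o}s's result (which is indeed his method) and invoke Theorem~\ref{classesdivision}(d) explicitly where the paper leaves this implicit.
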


\begin{proof}  By \cite{Er} every real number $\upomega$ 
may be written as a product of $\uptheta_{1},\uptheta_{2}\in\mathfrak{W}_{\infty}$
and clearly $\uptheta_{1},\uptheta_{2}\Updownarrow\upomega$.
%Now we know there exist $\upmu_{1}< \upmu_{2}$ for which $\bast\Z^{\upmu'}_{\upmu'}(\uptheta_{1})\not=0$ 
%for all $\upmu'\in [\upmu_{1},\upmu_{2}]$.  It will therefore suffice to show that there exists $\upmu'\in [\upmu_{1},\upmu_{2}]$ 
%and $\upnu\leq \upmu'$ for which
%$\bast\Z^{\upmu'}_{\upmu'}(\uptheta_{2})\not=0$.
\end{proof}

\section{Flat Arithmetic}\label{flatharith}

In this section we consider the commutative relation $ {}_{\upmu}\!\owedge_{\upmu}$, which is best understood using
the sequence of partial quotients $\uptheta =[a_{0}a_{1}\dots ]$.  As alluded to at the beginning of \S \ref{flatsection}, this makes the determination of
the flat product
somewhat transverse to the linear classification: $\Q$, $\mathfrak{B}$,
 $\mathfrak{W}_{\upkappa}$, $\mathfrak{W}_{\infty}$.  Since elements of $\mathfrak{B}$ cannot be involved in flat products,
 all reals considered in this section are assumed not to belong to $\mathfrak{B}$.
%We note that flat multiplication, when defined, is commutative: 
%$\uptheta {}_{\upmu}\!\owedge_{\upmu}\upeta$ $\Leftrightarrow$ $\upeta {}_{\upmu}\!\owedge_{\upmu}\uptheta$.

Given $\uptheta\in\mathfrak{W}$, the basic problem is to determine the set of $\upeta\in\mathfrak{W}$
which have a flat product with $\uptheta$:
\[ \Upomega (\uptheta )=\left\{\upeta\in\mathfrak{W} |\; \uptheta {}_{\upmu}\!\owedge_{\upmu}\upeta\text{ for some }\upmu
\in  {\rm Spec}_{\rm flat}(\uptheta )\right\}.\]
%or better, to use it to determine the flat products that one might form with $\uptheta$.
It is clear that $\Upomega(\uptheta )$ is a projective linear invariant:
\begin{prop}  If $\uptheta \Bumpeq\upeta$ then 
%${\rm Spec}_{\rm flat}(\uptheta )={\rm Spec}_{\rm flat}(\upeta )$
%so that $\uptheta {}_{\upmu}\!\!\odot_{\upmu}\upeta$ for all $\upmu\in{\rm Spec}_{\rm flat}(\uptheta )$.   
$ \Upomega (\uptheta )= \Upomega (\upeta )$ and $ \Upomega (\uptheta )\supset \{  A(\uptheta ) |\; A\in {\rm PGL}_{2}(\Z ) \}$.
\end{prop}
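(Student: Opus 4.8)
The plan is to observe that $\Upomega(\uptheta)$ depends on $\uptheta$ only through its flat spectrum, and then to invoke the projective linear invariance of the flat spectrum.

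First I would rewrite $\Upomega(\uptheta)$ in a manifestly symmetric form. By the definition of the relation ${}_{\upmu}\!\owedge_{\upmu}$ (the flat specialization $\upnu=\upmu$ of (\ref{gdprodintro}) and the discussion following it), one has $\uptheta\,{}_{\upmu}\!\owedge_{\upmu}\upzeta$ exactly when $\bast\Z^{\upmu}_{\upmu}(\uptheta)\not=0$ and $\bast\Z^{\upmu}_{\upmu}(\upzeta)\not=0$, i.e.\ when $\upmu\in{\rm Spec}_{\rm flat}(\uptheta)\cap{\rm Spec}_{\rm flat}(\upzeta)$. In particular the clause ``$\upmu\in{\rm Spec}_{\rm flat}(\uptheta)$'' already present in the definition of $\Upomega(\uptheta)$ is redundant, and we obtain
\[ \Upomega(\uptheta)=\{\upzeta\in\mathfrak{W}\mid {\rm Spec}_{\rm flat}(\uptheta)\cap{\rm Spec}_{\rm flat}(\upzeta)\not=\emptyset\}, \]
which is nothing but the equivalence $\uptheta\talloblong\uptheta\upzeta\Leftrightarrow{\rm Spec}_{\rm flat}(\uptheta)\cap{\rm Spec}_{\rm flat}(\upzeta)\not=\emptyset$ recorded in \S\ref{gdarith}. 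This display exhibits $\Upomega(\uptheta)$ as a function of ${\rm Spec}_{\rm flat}(\uptheta)$ alone.

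Next, if $\uptheta\Bumpeq\upeta$ then Proposition \ref{isospectral} gives ${\rm Spec}(\uptheta)={\rm Spec}(\upeta)$; intersecting with the diagonal $\upmu=\upnu$ yields ${\rm Spec}_{\rm flat}(\uptheta)={\rm Spec}_{\rm flat}(\upeta)$. Feeding this into the displayed formula, together with the fact that membership in $\mathfrak{W}$ is preserved by $\Bumpeq$ (the classical projective linear invariance of the linear classification, or, internally, Proposition \ref{isospectral} combined with the spectral characterizations of $\Q$, $\mathfrak{B}$, $\mathfrak{W}$ in \S\ref{nonvanspec}), we get $\Upomega(\uptheta)=\Upomega(\upeta)$. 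For the inclusion $\Upomega(\uptheta)\supset\{A(\uptheta)\mid A\in{\rm PGL}_{2}(\Z)\}$, fix $A\in{\rm PGL}_{2}(\Z)$ and put $\upzeta=A(\uptheta)$, so that $\uptheta\Bumpeq\upzeta$. Then $\upzeta\in\mathfrak{W}$ and ${\rm Spec}_{\rm flat}(\upzeta)={\rm Spec}_{\rm flat}(\uptheta)$ by what was just shown; moreover ${\rm Spec}_{\rm flat}(\uptheta)\not=\emptyset$ because $\uptheta\in\mathfrak{W}$ (Theorem \ref{wellapprox}, i $\Leftrightarrow$ ii). Hence ${\rm Spec}_{\rm flat}(\uptheta)\cap{\rm Spec}_{\rm flat}(\upzeta)={\rm Spec}_{\rm flat}(\uptheta)\not=\emptyset$, so $\upzeta\in\Upomega(\uptheta)$, as required.

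I do not anticipate a genuine obstacle: every ingredient — spectral invariance under $\Bumpeq$, non-emptiness of the flat spectrum on $\mathfrak{W}$, and stability of $\mathfrak{W}$ under $\Bumpeq$ — is already in hand from \S\S\ref{nonvanspec}--\ref{flatsection}. The only step requiring care is the first one: verifying that the asymmetric-looking quantifier ``for some $\upmu\in{\rm Spec}_{\rm flat}(\uptheta)$'' in the definition of $\Upomega(\uptheta)$ collapses to the symmetric intersection condition, so that $\Upomega$ is visibly determined by ${\rm Spec}_{\rm flat}(\uptheta)$ and the argument runs verbatim for any $\Bumpeq$-representative.
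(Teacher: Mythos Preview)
Your proof is correct and follows the same route as the paper, which simply says ``Immediate from Proposition \ref{isospectral}.'' You have spelled out the details: reducing $\Upomega(\uptheta)$ to a condition on ${\rm Spec}_{\rm flat}(\uptheta)$ alone and then invoking spectral invariance under $\Bumpeq$, together with the non-emptiness of the flat spectrum on $\mathfrak{W}$ for the second clause.
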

 
 \begin{proof}  Immediate from Proposition \ref{isospectral}.
\end{proof}
 
The following Proposition gives a simple criterion in terms of best classes for when flat products are defined.

\begin{lemm}\label{firstflatcond} Let $\uptheta,\upeta\in\mathfrak{W}$.  Then there exists $\upmu\in\bstar\PR\R_{\upvarepsilon}$ such that
$\uptheta {}_{\upmu}\!\owedge_{\upmu}\upeta$ $\Leftrightarrow$ there exist
best classes $\bast \widehat{q}\in\bast \Z (\uptheta )$, $\bast \widehat{q}'\in\bast \Z (\upeta )$ having infinite partial quotient such that 
either 
\begin{align}\label{flatcondition}
\widehat{\upmu}^{+}\leq ( \widehat{\upmu}')^{+}<\widehat{\upmu} 
&\quad \text{ or  }\quad ( \widehat{\upmu}')^{+}\leq \widehat{\upmu}^{+}< \widehat{\upmu}'.
\end{align}
%$\widehat{\upnu}\in [\widehat{\upnu}',\widehat{\upmu}')$ or 
%$\widehat{\upnu}'\in [\widehat{\upnu},\widehat{\upmu})$.
\end{lemm}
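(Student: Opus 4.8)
The plan is to translate both sides of the equivalence into statements about the flat spectra $\mathrm{Spec}_{\mathrm{flat}}(\uptheta)$, $\mathrm{Spec}_{\mathrm{flat}}(\upeta)$ and then reduce to an elementary fact about overlapping half-open intervals in the dense linear order $\bstar\PR\R_{\upvarepsilon}$. Observe first that $\uptheta\,{}_{\upmu}\!\owedge_{\upmu}\upeta$ holds exactly when $\upmu\in\mathrm{Spec}_{\mathrm{flat}}(\uptheta)\cap\mathrm{Spec}_{\mathrm{flat}}(\upeta)$, so the left-hand assertion is equivalent to $\mathrm{Spec}_{\mathrm{flat}}(\uptheta)\cap\mathrm{Spec}_{\mathrm{flat}}(\upeta)\neq\emptyset$. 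Since $\uptheta,\upeta\in\mathfrak{W}$, Corollary \ref{quasibestunion} writes each flat spectrum as a union of best intervals $[\wideparen{\upnu},\wideparen{\upmu})$ indexed by best classes of infinite partial quotient. I would then invoke the computation performed in Case 1 of the proof of Theorem \ref{flatmultibest}: for every best class $\bast\wideparen{q}$ one has $\wideparen{\upnu}=\wideparen{\upmu}^{+}$ (using $|\upvarepsilon(\bast\wideparen{q})|=\bast\uptheta/(\bast\uptheta\bast\wideparen{q}^{+}+\bast\wideparen{q})$ together with $\bast\uptheta\geq 1$), so that each best interval has the form $[\wideparen{\upmu}^{+},\wideparen{\upmu})$, and this interval is nonempty precisely when $\bast\wideparen{q}$ has infinite partial quotient, i.e.\ when $\wideparen{\upmu}^{+}<\wideparen{\upmu}$.

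With this reformulation the lemma reads: $\bigcup[\wideparen{\upmu}^{+},\wideparen{\upmu})\cap\bigcup[(\wideparen{\upmu}')^{+},\wideparen{\upmu}')\neq\emptyset$ if and only if some single pair of intervals meets, namely $[\wideparen{\upmu}^{+},\wideparen{\upmu})\cap[(\wideparen{\upmu}')^{+},\wideparen{\upmu}')\neq\emptyset$ for suitable best classes $\bast\wideparen{q}\in\bast\Z(\uptheta)$, $\bast\wideparen{q}'\in\bast\Z(\upeta)$ of infinite partial quotient. For the forward implication I would pick $\upmu$ in the intersection of the two spectra, locate best classes with $\wideparen{\upmu}^{+}\leq\upmu<\wideparen{\upmu}$ and $(\wideparen{\upmu}')^{+}\leq\upmu<\wideparen{\upmu}'$, and then, since $\bstar\PR\R_{\upvarepsilon}$ is linearly ordered, split into the two cases $\wideparen{\upmu}^{+}\leq(\wideparen{\upmu}')^{+}$ or $(\wideparen{\upmu}')^{+}\leq\wideparen{\upmu}^{+}$; in the first, $\wideparen{\upmu}^{+}\leq(\wideparen{\upmu}')^{+}\leq\upmu<\wideparen{\upmu}$ yields the first alternative of (\ref{flatcondition}), and symmetrically in the second. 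For the converse, condition (\ref{flatcondition}) directly exhibits a common point: $(\wideparen{\upmu}')^{+}$ lies in both $[\wideparen{\upmu}^{+},\wideparen{\upmu})$ and $[(\wideparen{\upmu}')^{+},\wideparen{\upmu}')$ in the first case (the latter because $(\wideparen{\upmu}')^{+}<\wideparen{\upmu}'$), and $\wideparen{\upmu}^{+}$ works in the second case; either way this point lies in $\mathrm{Spec}_{\mathrm{flat}}(\uptheta)\cap\mathrm{Spec}_{\mathrm{flat}}(\upeta)$, giving the desired $\upmu$.

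I expect no serious obstacle here, since the substantive work has already been done in Theorem \ref{flatmultibest} and Corollary \ref{quasibestunion}. The only points demanding care are bookkeeping ones: confirming that the relevant intervals really are the specific half-open intervals $[\wideparen{\upmu}^{+},\wideparen{\upmu})$ rather than merely satisfying $\wideparen{\upnu}\leq\wideparen{\upmu}$, and tracking the endpoint strictness — the left endpoints are included while $\wideparen{\upmu}^{+}<\wideparen{\upmu}$ holds precisely by the infinite-partial-quotient hypothesis — so that the interval-overlap condition comes out as the two asymmetric inequalities in (\ref{flatcondition}) and not their closed-interval variants.
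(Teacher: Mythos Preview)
Your proposal is correct and follows essentially the same route as the paper: invoke Corollary \ref{quasibestunion} to reduce the existence of a common flat index to the overlap of two best intervals, use the computation from Case 1 of Theorem \ref{flatmultibest} to rewrite each best interval as $[\wideparen{\upmu}^{+},\wideparen{\upmu})$, and then observe that overlap of two such half-open intervals in the linear order $\bstar\PR\R_{\upvarepsilon}$ is exactly condition (\ref{flatcondition}). The paper leaves the final interval-overlap equivalence as a one-line remark, whereas you spell it out, but the argument is the same.
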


\begin{proof} 
%By Theorem \ref{badapproxchar}, for $\uptheta,\upeta\in\R-\Q$, 
%$\uptheta {}_{\upmu}\!\owedge_{\upmu}\upeta$ is defined only for $\uptheta,\upeta\in\mathfrak{W}$.  
By Corollary \ref{quasibestunion},
a flat composition is defined $\Leftrightarrow$ there exists best classes having infinite partial quotient $\bast \widehat{q} \in\bast\Z^{\infty}_{\rm b} (\uptheta )$, 
$\bast \widehat{q}'\in\bast\Z^{\infty}_{\rm b} (\upeta)$ for which  
%\[  \big[\upnu(\bast m), \upmu (\bast m)\big)\cap \big[\upnu(\bast m'), \upmu (\bast m')\big)\not=\emptyset.\]
%However $ [\upnu(\bast m), \upmu (\bast m))\subset [\widehat{\upnu},\widehat{\upmu})$, 
%$ [\upnu(\bast m'), \upmu (\bast m'))\subset [\widehat{\upnu}',\widehat{\upmu}')$, so 
\begin{align}\label{flatintersectionnotriv} [\widehat{\upnu},\widehat{\upmu})\cap
 [\widehat{\upnu}',\widehat{\upmu}')\not=\emptyset.\end{align}
Because $\bast \widehat{q}\in\bast \Z (\uptheta )$ has infinite partial quotient, 
$ \widehat{\upnu}= \widehat{\upmu}^{+}=\langle\bast \widehat{q}^{+}\rangle^{-1} <\widehat{\upmu}= \langle\bast \widehat{q}\rangle^{-1}$,
since 
\[ \widehat{\upnu}= \left\langle\frac{\bast\uptheta}{\bast\uptheta\bast \widehat{q}^{+} +\bast \widehat{q}}\right\rangle =
\langle\bast \widehat{q}^{+}\rangle^{-1} =  \widehat{\upmu}^{+}.\]  Thus (\ref{flatintersectionnotriv}) is equivalent to $ [\widehat{\upmu}^{+},\widehat{\upmu})\cap
 [(\widehat{\upmu}')^{+},\widehat{\upmu}')\not=\emptyset$ which in turn is equivalent to (\ref{flatcondition}). 
 %It follows that $\uptheta {}_{\upmu}\!\owedge_{\upmu}\upeta$ for some $\upmu$
%$\Leftrightarrow$ there exist best sequence classes $\bast \widehat{q}\in\bast \Z^{\upmu}_{\upmu} (\uptheta )$, 
%$\bast \widehat{q}'\in\bast \Z^{\upmu}_{\upmu} (\upeta )$ 
%satisfying either of the two conditions of (\ref{flatcondition}).
\end{proof}

We introduce now a class of numbers for which flat products are manifestly always defined: $\uptheta=[a_{0}a_{1}\dots ]\in\mathfrak{W}$ is called {\bf resolute} if $\lim a_{i}=\infty$ (i.e.\ there are no bounded subsequences). 
For example, $\uptheta = [1,2,3,\dots]$ is resolute whereas $e=[2;1,2,1,1,4,1,1,\dots ]$ is not.  It is fairly easy to produce
Liouville numbers which are resolute and Liouville numbers which are not:

\begin{exam}\label{resoluteexam} Let $\uptheta = [a_{0}a_{1}\dots ]$ be defined inductively by taking $a_{0}=a_{1}=1$ and $a_{n+1} = q_{n}^{n-1}$
for $n\geq 1$.  Then 
$\| q_{n}\uptheta\|<q_{n+1}^{-1}<q_{n}^{-n}$ so that $\uptheta$ is a resolute Liouville number.
If instead we define $a_{n} = q_{n-1}^{n}$ for $n$ even and $a_{n}=1$ for $n$ odd,
then for $n$ odd,  we have $\| q_{n}\uptheta\|<q_{n+1}^{-1}<q_{n}^{n+1}$ so that $\uptheta$ is Liouville but
 irresolute.  The Liouville number $L(m)=\sum_{j=0}^{\infty} m^{-(j+1)!}$ is irresolute for all $m>1$ since $1$ occurs infinitely
often in its sequence of partial quotients \cite{Sh}.  
%More generally, the Liouville numbers occurring in Erd\"{o}s' factorization are irresolute.
\end{exam}

%We will now show that resolute numbers can be flat composed with any real number which is not badly approximable.
Define the relation \[ \uptheta\rightslice_{\upmu}\upeta\] if $\uptheta {}_{\upmu}\!\owedge_{\upmu}\upeta$
and there exists $\bast \widehat{q}\in\bast \Z^{\upmu}_{\upmu} (\uptheta )$, $\bast \widehat{q}'\in\bast \Z^{\upmu}_{\upmu} (\upeta )$
with $ \widehat{\upmu}^{+}\leq (\widehat{\upmu}')^{+}< \widehat{\upmu} $.

\begin{theo}  If $\uptheta\in\mathfrak{W}$ is resolute then for all $\upeta\in\mathfrak{W}$ there exists $\upmu\in {\rm Spec}_{\rm flat}(\uptheta )$
with $\uptheta\rightslice_{\upmu}\upeta$.
\end{theo}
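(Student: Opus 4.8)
The plan is to pin $\upmu$ down as the left endpoint of one best interval of $\upeta$, and to manufacture, by hand, a best denominator class of $\uptheta$ whose best interval has that same left endpoint while extending strictly above it; resoluteness of $\uptheta$ is exactly what makes the hand construction always go through. Since $\upeta\in\mathfrak{W}$, Theorem \ref{wellapprox}(iii) supplies a best denominator class $\bast\wideparen{q}'\in\bast\Z(\upeta)$ of infinite partial quotient, say $\bast\wideparen{q}'=\bast\{q'_{m_i}\}$ with $m_i$ non-decreasing, $m_i\to\infty$, and $q'_{m_i+1}/q'_{m_i}\to\infty$. Put $N_i:=q'_{m_i+1}$ and $\upxi:=(\wideparen{\upmu}')^{+}=\langle\bast\{N_i^{-1}\}\rangle$; recall $(\wideparen{\upmu}')^{+}<\wideparen{\upmu}'$. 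I claim $\upmu:=\upxi$ works.

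For each $i$ let $q_{n_i}$ be the largest best denominator of $\uptheta$ with $q_{n_i}<N_i$, so that $q_{n_i+1}\ge N_i$, while $n_i$ is non-decreasing with $n_i\to\infty$ (because $N_i\to\infty$). Consider $\bast r:=\bast\{q_{n_i}/N_i\}\in\bast\R$; since $0<q_{n_i}/N_i<1$, it is either infinitesimal or a bounded positive non-infinitesimal. In the first case set $\bast\wideparen{q}:=\bast\{q_{n_i}\}$: then $\wideparen{\upmu}=\langle\bast\{q_{n_i}^{-1}\}\rangle=\upxi\cdot\langle\bast r\rangle^{-1}>\upxi$ because $\bast r^{-1}$ is infinite, while $q_{n_i+1}\ge N_i$ gives $\wideparen{\upmu}^{+}=\langle\bast\{q_{n_i+1}^{-1}\}\rangle\le\upxi$. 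In the second case $\langle\bast r\rangle=1$, so $q_{n_i}$ has the same growth class as $N_i$; here set $\bast\wideparen{q}:=\bast\{q_{n_i-1}\}$ and use that resoluteness forces $a_{n_i}\to\infty$, hence $q_{n_i-1}/q_{n_i}\le 1/a_{n_i}\to 0$, so $\bast\{q_{n_i-1}/N_i\}$ is infinitesimal and $\wideparen{\upmu}=\langle\bast\{q_{n_i-1}^{-1}\}\rangle>\upxi$, whereas $\bast\wideparen{q}^{+}=\bast\{q_{n_i}\}$ gives $\wideparen{\upmu}^{+}=\langle\bast\{q_{n_i}^{-1}\}\rangle=\upxi$. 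Either way $\bast\wideparen{q}$ is a best denominator class of $\uptheta$ of infinite partial quotient (automatic since $\uptheta$ is resolute) with $\wideparen{\upmu}^{+}\le\upxi<\wideparen{\upmu}$; only the bookkeeping of making the index sequences honestly non-decreasing and divergent needs the usual care and is handled by passing to a subsequence.

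To conclude, recall that for a best class of infinite partial quotient the decay equals the growth of the successor class (the computation in the proof of Lemma \ref{firstflatcond}, cf.\ Theorem \ref{flatmultibest}). Thus with $\upmu=\upxi=(\wideparen{\upmu}')^{+}$ we have $\upnu(\bast\wideparen{q})=\wideparen{\upmu}^{+}\le\upmu<\wideparen{\upmu}=\upmu(\bast\wideparen{q})$, so $\bast\wideparen{q}$ is a non-zero element of $\bast\Z^{\upmu}_{\upmu}(\uptheta)$, and $\upnu(\bast\wideparen{q}')=(\wideparen{\upmu}')^{+}=\upmu<\wideparen{\upmu}'=\upmu(\bast\wideparen{q}')$, so $\bast\wideparen{q}'$ is a non-zero element of $\bast\Z^{\upmu}_{\upmu}(\upeta)$. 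Hence both groups are non-zero, giving $\uptheta\,{}_{\upmu}\!\owedge_{\upmu}\,\upeta$ and $\upmu\in{\rm Spec}_{\rm flat}(\uptheta)$; together with the inequality $\wideparen{\upmu}^{+}\le(\wideparen{\upmu}')^{+}<\wideparen{\upmu}$ and the exhibited witnesses $\bast\wideparen{q},\bast\wideparen{q}'$, this is precisely $\uptheta\rightslice_{\upmu}\upeta$.

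The only real obstacle is the second case above: a priori the largest best denominator of $\uptheta$ below $N_i$ could sit at the very same scale as $N_i$, in which case $\bast\{q_{n_i}\}$ fails to have growth strictly above $(\wideparen{\upmu}')^{+}$ and the naive choice breaks; resoluteness of $\uptheta$ rescues the argument, since it forces consecutive best denominators of $\uptheta$ to have unbounded ratio, so dropping to the previous convergent automatically overshoots. Everything else is routine manipulation of the growth-decay valuation $\langle\cdot\rangle$ and the standard identities for best and multibest classes already recorded in this and the previous section.
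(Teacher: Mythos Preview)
Your proof is correct and follows essentially the same route as the paper: pick a best class $\bast\wideparen{q}'$ for $\upeta$ with infinite partial quotient, locate for each $i$ the largest $\uptheta$-best denominator $q_{n_i}$ below $(q'_{m_i})^{+}$, and then split into the two cases according to whether $q_{n_i}$ sits strictly below or at the same scale as $(q'_{m_i})^{+}$; in the latter case, drop to the predecessor $q_{n_i-1}$, which resoluteness guarantees lies at a strictly smaller scale. The paper is terser (it invokes Lemma~\ref{firstflatcond} rather than explicitly naming $\upmu=(\wideparen{\upmu}')^{+}$ and verifying group membership by hand), but the argument is the same.
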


\begin{proof}   For $\uptheta\in\mathfrak{W}$ resolute every best class $\bast \widehat{q}$ has infinite partial quotient.  
Now let $\bast \widehat{q}'$
be a best class for $\upeta$ with infinite partial quotient, represented by the sequence $\{ q'_{n_{i}}\}$.  For each $i$
choose $q_{n_{i}}$ so that $q_{n_{i}}\leq q_{n_{i}+1}'\leq q_{n_{i}+1}$.  Then denoting $\bast \widehat{q}=\bast\{q_{n_{i}}\}$,
we have $\bast \widehat{q}\leq (\bast \widehat{q}')^{+}\leq \widehat{q}^{+}$.  Since
$\bast \widehat{q}$ has infinite partial quotient we must have either $\langle \bast \widehat{q}\rangle
<\langle (\bast \widehat{q}')^{+}\rangle$ or $\langle (\bast \widehat{q}')^{+}\rangle<\langle \bast \widehat{q}^{+}\rangle$.
If the former is true, this implies the first inequality in (\ref{flatcondition}).  If not, $\langle \bast \widehat{q}\rangle=\langle (\bast \widehat{q}')^{+}\rangle$;
then replace $\bast \widehat{q}$ by $\bast \widehat{q}^{-}$ = $\bast\{ q_{n_{i}-1}\}$ to get the desired
inequality.  
\end{proof}

With Lemma \ref{firstflatcond} this gives:

\begin{coro}\label{rescor}  If $\uptheta\in\mathfrak{W}$ is resolute, $\Upomega (\uptheta )=\R-\mathfrak{B}$.  
\end{coro}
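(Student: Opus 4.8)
The plan is to prove the equality by establishing the two inclusions, with the substantive direction supplied wholesale by the Theorem immediately preceding the Corollary. I would begin by flagging a convention: although $\Upomega(\uptheta)$ as defined ranges over $\upeta\in\mathfrak{W}$, the assertion $\Upomega(\uptheta)=\R-\mathfrak{B}$ is to be read with $\Upomega(\uptheta)$ tacitly enlarged so as to permit rational second arguments, since a rational has full flat spectrum and hence forms a flat product with $\uptheta$ automatically once ${\rm Spec}_{\rm flat}(\uptheta)\neq\emptyset$.

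First I would dispose of the inclusion $\Upomega(\uptheta)\subseteq\R-\mathfrak{B}$. If $\upeta\in\mathfrak{B}$ then Theorem \ref{badapproxchar} gives ${\rm Spec}_{\rm flat}(\upeta)=\emptyset$, so no index $\upmu$ can lie in both ${\rm Spec}_{\rm flat}(\uptheta)$ and ${\rm Spec}_{\rm flat}(\upeta)$; hence the flat relation $\uptheta\,{}_{\upmu}\!\owedge_{\upmu}\,\upeta$ is never defined and $\upeta\notin\Upomega(\uptheta)$.

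Next I would prove $\R-\mathfrak{B}=\Q\cup\mathfrak{W}\subseteq\Upomega(\uptheta)$ in two cases. For $\upeta\in\mathfrak{W}$ I would invoke the Theorem: since $\uptheta$ is resolute it produces a $\upmu\in{\rm Spec}_{\rm flat}(\uptheta)$ with $\uptheta\rightslice_{\upmu}\upeta$, and by the very definition of $\rightslice_{\upmu}$ (equivalently, via Lemma \ref{firstflatcond}) this entails $\uptheta\,{}_{\upmu}\!\owedge_{\upmu}\,\upeta$, so $\upeta\in\Upomega(\uptheta)$. For $\upeta\in\Q$ I would use Proposition \ref{rationalspec} to get ${\rm Spec}(\upeta)=\bstar\PR\R_{\upvarepsilon}^{2}$, whence ${\rm Spec}_{\rm flat}(\upeta)=\bstar\PR\R_{\upvarepsilon}$; since a resolute number lies in $\mathfrak{W}$, Corollary \ref{quasibestunion} shows ${\rm Spec}_{\rm flat}(\uptheta)$ has interior and in particular is nonempty, so any $\upmu\in{\rm Spec}_{\rm flat}(\uptheta)$ satisfies $\uptheta\,{}_{\upmu}\!\owedge_{\upmu}\,\upeta$, i.e.\ $\upeta\in\Upomega(\uptheta)$. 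Combining the two cases with the previous paragraph yields $\Upomega(\uptheta)=\R-\mathfrak{B}$.

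I do not anticipate a genuine obstacle: this is a direct corollary of the Theorem, and the only care needed is the bookkeeping around the definition of $\Upomega(\uptheta)$ and the rational case, both of which I would spell out explicitly as above.
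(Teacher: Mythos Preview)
Your proposal is correct and follows the paper's approach: the paper derives the Corollary directly from the preceding Theorem (together with Lemma~\ref{firstflatcond}), which is exactly what you do via the definition of $\rightslice_{\upmu}$. You are more careful than the paper in two respects: you make the easy inclusion $\Upomega(\uptheta)\subseteq\R-\mathfrak{B}$ explicit, and you flag and resolve the definitional wrinkle that $\Upomega(\uptheta)$ is literally defined as a subset of $\mathfrak{W}$ while the stated equality includes $\Q$; the paper simply glosses over this point.
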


For any class $\mathfrak{C}$ of real numbers, we denote by $\mathfrak{C}^{\rm res}$ the resolute members.  For example,
$\mathfrak{B}^{\rm res}=\emptyset$ and $\mathfrak{W}_{\infty}^{\rm res}\subsetneqq\mathfrak{W}_{\infty}$.

\begin{coro}  If $\uptheta,\upeta\in\mathfrak{W}_{>1}^{\rm res}$ and $\upomega=\uptheta\upeta$ then
 $\uptheta,\upeta\bar{\Updownarrow}\upomega$.   In particular, $\uptheta,\upeta\in\mathfrak{W}_{>1}^{\rm res}$
 strongly approximately generates $\mathfrak{W}_{>1}^{\rm res}\cdot\mathfrak{W}_{>1}^{\rm res}$.
\end{coro}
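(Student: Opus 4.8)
The plan is to assemble the two constituents of strong elastic divisibility, namely $\Updownarrow$ (elastic) and $\talloblong$ (flat), from results already in hand. First I would note that $\mathfrak{W}_{>1}^{\rm res}\subset\mathfrak{W}_{>1}$, so Theorem \ref{classesdivision}.d applies directly to $\upomega=\uptheta\upeta$ and delivers $\uptheta,\upeta\Updownarrow\upomega$. Thus the only thing still to be produced is the flat divisibility $\uptheta\talloblong\upomega$ — and, since $\uptheta\talloblong\upomega\Leftrightarrow{\rm Spec}_{\rm flat}(\uptheta)\cap{\rm Spec}_{\rm flat}(\upeta)\neq\emptyset$ is symmetric in its two arguments, establishing it for $\uptheta$ gives $\upeta\talloblong\upomega$ simultaneously.

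For the flat part I would invoke Corollary \ref{rescor}: $\uptheta$ is resolute, hence $\Upomega(\uptheta)=\R-\mathfrak{B}$; since $\upeta\in\mathfrak{W}_{>1}^{\rm res}\subset\mathfrak{W}\subset\R-\mathfrak{B}$, we have $\upeta\in\Upomega(\uptheta)$, i.e.\ there is some $\upmu\in{\rm Spec}_{\rm flat}(\uptheta)$ with $\uptheta\,{}_{\upmu}\!\owedge_{\upmu}\upeta$. Unwinding the notation ${}_{\upmu}\!\owedge_{\upmu}$, this means precisely that $\bast\Z^{\upmu}_{\upmu}(\uptheta)\neq 0$ and $\bast\Z^{\upmu}_{\upmu}(\upeta)\neq 0$, so $\upmu\in{\rm Spec}_{\rm flat}(\uptheta)\cap{\rm Spec}_{\rm flat}(\upeta)$, whence $\uptheta\talloblong_{\upmu}\upomega$ and $\upeta\talloblong_{\upmu}\upomega$. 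Combining with $\uptheta,\upeta\Updownarrow\upomega$ from the previous step yields $\uptheta,\upeta\bar{\Updownarrow}\upomega$, which is the first assertion. For the ``in particular'', take any $\upomega\in\mathfrak{W}_{>1}^{\rm res}\cdot\mathfrak{W}_{>1}^{\rm res}$ and pick a factorization $\upomega=\uptheta\upeta$ with $\uptheta,\upeta\in\mathfrak{W}_{>1}^{\rm res}$; the first assertion supplies $\uptheta,\upeta\bar{\Updownarrow}\upomega$, which is exactly the condition in the definition of being strongly ideologically generated by $\mathfrak{W}_{>1}^{\rm res}$.

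I do not expect a genuine obstacle here: the substantive work lives in Theorem \ref{classesdivision}.d and Corollary \ref{rescor}, and this corollary is a packaging of them. The only points that require a little care are the elementary inclusion chain $\mathfrak{W}_{>1}^{\rm res}\subset\mathfrak{W}\subset\R-\mathfrak{B}$ needed to apply Corollary \ref{rescor} to $\upeta$, and the observation that $\talloblong$ is a symmetric relation so that a single application handles both $\uptheta\talloblong\upomega$ and $\upeta\talloblong\upomega$; resoluteness of $\upeta$ is not even needed for the flat conclusion (only $\upeta\in\R-\mathfrak{B}$ and $\uptheta$ resolute), though it is of course present.
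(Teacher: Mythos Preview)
Your proposal is correct and matches the paper's intended argument: the corollary is stated without proof in the paper precisely because it is an immediate consequence of Theorem~\ref{classesdivision}.d (for $\Updownarrow$) together with Corollary~\ref{rescor} (for $\talloblong$), exactly as you combine them. Your observation that only resoluteness of one factor is needed for the flat part is a nice sharpening but not required for the statement as given.
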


We will now consider the other extreme:  pairs of real numbers which may not be flat composed at all.  Constructing such
pairs depends on finding real numbers which have large gaps in their flat spectra: that is, having increasingly long
blocks of partial quotients all of whose members are uniformly bounded.  

More precisely we say that $\uptheta$ has an
{\bf abyss} if there exist a sequence of blocks of consecutive partial quotients 
\[ G=\{ B_{n}\}, \quad B_{n}=\{ a_{i_{n}},a_{i_{n}+1}\dots ,
a_{i_{n}+k_{n}}\}\] which are pairwise disjoint, each element of which is bounded by a constant $M$ independent of $n$, such that the block length $k_{n}\rightarrow\infty$.  In this event we
say that $\uptheta$ is {\bf abyssal}; if $\uptheta$ is not abyssal, it is called {\bf abyssless}. For example:
any resolute number is abyssless;  $e$ is abyssless yet irresolute, as is
the Liouville number $L(m)$ \cite{Sh}.  Thus the set
of abyssal real numbers is a strict subset of the irresolute real numbers.  On the other hand, one can easily create Liouville numbers
which are abyssal following a procedure similar to that used in {\it Note} \ref{resoluteexam}.

\begin{prop}  $\uptheta\in\mathfrak{W}$ is abyssal $\Leftrightarrow$ ${\rm Spec}_{\rm flat}(\uptheta )$ is disconnected.
\end{prop}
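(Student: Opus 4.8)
The plan is to relate the combinatorial notion of abyss to the order-topological structure of the flat spectrum via Corollary \ref{quasibestunion}, which expresses ${\rm Spec}_{\rm flat}(\uptheta)$ as the union of best intervals $[\wideparen{\upnu},\wideparen{\upmu})=[\wideparen{\upmu}^{+},\wideparen{\upmu})$ ranging over best denominator classes $\bast\wideparen{q}$ with infinite partial quotient. The key observation is that such a union fails to be connected precisely when there is a ``gap'': a nonempty open interval $(\upalpha,\upbeta)\subset\bstar\PR\R_{\upvarepsilon}$ lying between two parts of ${\rm Spec}_{\rm flat}(\uptheta)$ and meeting none of the best intervals. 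I would translate the existence of such a gap into the statement that the sequence of partial quotients of $\uptheta$ contains arbitrarily long blocks of uniformly bounded entries, which is the definition of abyssal.

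First I would prove the direction ``abyssal $\Rightarrow$ disconnected''. Suppose $\uptheta$ has an abyss $G=\{B_n\}$ with block entries bounded by $M$ and block lengths $k_n\to\infty$. If $B_n=\{a_{i_n},\dots,a_{i_n+k_n}\}$, then within the stretch of best denominators $q_{i_n-1},q_{i_n},\dots,q_{i_n+k_n}$ every successive ratio $q_{j+1}/q_j=a_{j+1}+q_{j-1}/q_j$ is bounded by $M+1$; hence the total ratio $q_{i_n+k_n}/q_{i_n-1}$ grows without bound as $n\to\infty$ (at least geometrically in $k_n$ once some $a_j\geq 2$, and in the trivial all-ones case one uses that $q_{i_n+k_n}/q_{i_n-1}$ is a Fibonacci-type quotient, still $\to\infty$). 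Concretely: pick a sequence class $\bast x$ built from integers $x_n$ chosen with $q_{i_n-1}<x_n<q_{i_n+k_n}$ and with both $x_n/q_{i_n-1}\to\infty$ and $q_{i_n+k_n}/x_n\to\infty$. Setting $\upalpha=\langle\bast x^{-1}\rangle$, I claim no best interval with infinite partial quotient contains $\upalpha$: any best denominator $q_{m_n}$ with $q_{m_n}\le x_n<q_{m_n+1}$ has its index $m_n$ inside the block $B_n$ for large $n$, so $a_{m_n+1}\le M$ and the corresponding class $\bast\wideparen{q}$ does \emph{not} have infinite partial quotient. Hence $\upalpha\notin{\rm Spec}_{\rm flat}(\uptheta)$, and a small perturbation on either side produces an open gap, so ${\rm Spec}_{\rm flat}(\uptheta)$ is disconnected (it is nonempty since $\uptheta\in\mathfrak{W}$, by Corollary \ref{quasibestunion}).

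For the converse, ``disconnected $\Rightarrow$ abyssal'', I would argue contrapositively: assume $\uptheta$ is abyssless. Then every sufficiently long block of consecutive partial quotients contains an entry exceeding any prescribed bound; equivalently, for every $M$ there is $L$ such that among any $L$ consecutive $a_i$ at least one exceeds $M$. I would show this forces ${\rm Spec}_{\rm flat}(\uptheta)$ to be an interval (hence connected, being a union of overlapping half-open intervals with no gap). Given any $\upgamma\in\bstar\PR\R_{\upvarepsilon}$ with $\wideparen{\upnu}_0\le\upgamma<\wideparen{\upmu}_0$ for the smallest best interval and $\upgamma$ above or inside some best interval, I would locate, for a representative $\{y_i\}\in\upgamma^{-1}$, the best denominators bracketing $y_i$ and use the abyssless hypothesis to find, within a bounded index distance, a best denominator $q_{m_i}$ with $a_{m_i+1}$ unbounded along $i$ — thereby exhibiting a best class $\bast\wideparen{q}$ with infinite partial quotient whose interval $[\wideparen{\upmu}^+,\wideparen{\upmu})$ contains $\upgamma$. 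Since consecutive best intervals of this kind overlap (abysslessness prevents a bounded gap of ratios), their union is an interval.

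\textbf{Main obstacle.} The delicate point is the converse direction: abysslessness is a statement about \emph{long} bounded blocks, while connectedness of ${\rm Spec}_{\rm flat}$ requires controlling the \emph{size} of potential gaps uniformly, i.e.\ ruling out that the best intervals $[\wideparen{\upmu}^+,\wideparen{\upmu})$ fail to overlap at the level of the growth-decay semi-ring. I expect the real work to be in showing that a gap in ${\rm Spec}_{\rm flat}(\uptheta)$ of positive ``size'' (a genuine interval in the order topology of $\bstar\PR\R_{\upvarepsilon}$, which by the shift-invariance discussion of \S\ref{flatsection} forces ratios $\to\infty$) can only arise from an unbounded stretch of partial quotients all of which are bounded — and conversely that an abyss of unbounded length does produce such a gap rather than merely a ``thin'' missed point. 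Matching the growth rate of $k_n$ (block length) against the order-topological width of the resulting gap, via the homeomorphism of Proposition \ref{FrobConHomeo}, is where I would concentrate the estimates.
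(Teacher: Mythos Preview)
Your plan is essentially the paper's: reduce both directions to Corollary~\ref{quasibestunion} and argue about how the best intervals $[\wideparen{\upmu}^{+},\wideparen{\upmu})$ from best classes with infinite partial quotient fit together. For the forward direction you and the paper do the same thing---the paper picks a best class centred in the abyss and observes that all its finite-index successors and predecessors share its growth class, while you pick a nearby point and make the same observation. That direction is sound in both versions.

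The converse is where you correctly sense trouble, and the paper does not do better. Its proof of $\Leftarrow$ simply \emph{asserts} that under abysslessness, any best class with bounded partial quotient is finitely many successors and predecessors away from one with infinite partial quotient---precisely the ``bounded index distance'' step you flag as the main obstacle. But abysslessness only gives, for each $M$, a bound $L_{M}$ on the distance to a partial quotient exceeding $M$; to produce an \emph{infinite} partial quotient one must send $M\to\infty$, and $L_{M}$ may then grow without bound. Concretely, take $a_{j}=1+\min_{k}|j-2^{k}|$: maximal blocks with entries $\le M$ have length $2M-1$, so $\uptheta$ is abyssless, yet for the best class $\{q_{2^{i}}\}$ the $k$-th successor or predecessor has partial quotient $\approx |k|+1$, bounded for each fixed $k$. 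One then checks that $\langle q_{2^{i}}^{-1}\rangle$ lies in no best interval (any covering interval $[\wideparen{\upmu}'^{+},\wideparen{\upmu}')$ would force its base index within a bounded distance of $2^{i}$, where all partial quotients are bounded), while best intervals do exist on both sides of this point. So the obstacle you isolate is not merely a technicality to be estimated away: it is exactly the step the paper skips, and this example indicates that the step cannot be completed as written.
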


\begin{proof}  $\Rightarrow$ Suppose $\uptheta$ is abyssal.  Then we may find a best class $\bast\widehat{q}$ for which each element of
the bi-infinite sequence of predecessors and successors 
\[ \cdots \bast\widehat{q}^{--}<\bast\widehat{q}^{-}<\bast\widehat{q}<\bast\widehat{q}^{+}
<\bast\widehat{q}^{++}<\cdots\] defines the same element of $\bstar\PR\R$.  From this it follows that $\widehat{\upmu} =\widehat{\upnu}=\langle\bast\widehat{q}\rangle^{-1}\not\in {\rm Spec}_{\rm flat}(\uptheta )$.  
On the other hand, since $\uptheta\in\mathfrak{W}$, there exist best classes $\bast \widehat{q}_{1},
\bast \widehat{q}_{2}$ with infinite partial quotient for which $\widehat{\upnu}_{1}<\widehat{\upnu}<\widehat{\upnu}_{2}$.
$\Leftarrow$ If $\uptheta$ is abyssless then any best class $\bast \widehat{q}$ which has bounded partial quotient is finitely many successors
as well as finitely many predecessors away from a best class with infinite bounded quotient.  This implies that the common growth and decay
class of $\bast \widehat{q}$,
$\widehat{\upmu}=\widehat{\upnu}$, belongs to ${\rm Spec}_{\rm flat}(\uptheta )$ since it occurs as the right and left endpoint of a pair of adjacent
 best intervals
$[\widehat{\upnu}',\widehat{\upmu}')$, $[\widehat{\upnu}'',\widehat{\upmu}'')$ corresponding
to neighboring best classes with infinite bounded quotient.  Thus ${\rm Spec}_{\rm flat}(\uptheta )$ is connected.
\end{proof}

%The nature of the gaps in ${\rm Spec}_{\rm flat}(\uptheta)$ corresponding to a bound abyss depends
%on the extensibility of the defining block sequence $G=\{B_{n}\}$.  We say that
%the bound abyss $G=\{ B_{n}\}$ is {\bf {\small eternel}} if there exists a sequence of bound abysses 
%$G_{0}=G\subset G_{1}=\{B_{n}(1)\}\subset G_{2}=\{B_{n}(2)\}\subset\cdots $
%in which, for each $j$, there exists $N_{j}$ so that $B_{n}(j )\subsetneq B_{n+1}(j +1)$ for $n\geq N_{j}$. 

%\begin{prop} If $\uptheta$ possesses an eternel bound abyss, the corresponding gap in ${\rm Spec}_{\rm flat}(\uptheta )$ will be
%in $\bstar\PR\R_{\upvarepsilon}$ will be clopen.  Otherwise, it may include its endpoints.
%\end{prop}

%\begin{proof}
%\end{proof}

We have the following strengthening of Corollary \ref{rescor}.

\begin{theo}\label{gapless} If $\uptheta$ is abyssless then $\Omega(\uptheta )=\R-\mathfrak{B}$.  For any $\upeta\in \R-\mathfrak{B}$, 
${\rm Spec}_{\rm flat}(\upeta)\subset {\rm Spec}_{\rm flat}(\uptheta )$.  
\end{theo}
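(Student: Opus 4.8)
The plan is to note first that the two assertions collapse to one: granting ${\rm Spec}_{\rm flat}(\upeta)\subseteq{\rm Spec}_{\rm flat}(\uptheta)$ for every $\upeta\in\mathfrak W$, the first assertion follows at once, since for such $\upeta$ the set ${\rm Spec}_{\rm flat}(\upeta)$ is nonempty (it has interior, by Corollary~\ref{quasibestunion}), so any $\upmu$ in it lies in ${\rm Spec}_{\rm flat}(\uptheta)\cap{\rm Spec}_{\rm flat}(\upeta)$, whence $\uptheta\,{}_{\upmu}\!\owedge_{\upmu}\upeta$ and $\upeta\in\Upomega(\uptheta)$; a rational $\upeta$ is composable trivially, as in Corollary~\ref{rescor}. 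So I would concentrate entirely on the inclusion of flat spectra, and prove it by showing that an abyssless $\uptheta$ realizes the \emph{largest conceivable} flat spectrum, namely the whole non--shift--invariant part of $\bstar\PR\R_{\upvarepsilon}$,
\[ {\rm Spec}_{\rm flat}(\uptheta)\;=\;\big(\bstar\PR\R_{\upvarepsilon}^{\mathrm{sh}}\big)^{\text{\tt C}}. \]
Since Theorem~\ref{flatspectra} gives ${\rm Spec}_{\rm flat}(\upeta)\subseteq(\bstar\PR\R_{\upvarepsilon}^{\mathrm{sh}})^{\text{\tt C}}$ for every irrational $\upeta$, this identity immediately delivers ${\rm Spec}_{\rm flat}(\upeta)\subseteq{\rm Spec}_{\rm flat}(\uptheta)$ for all $\upeta\in\mathfrak W$; the inclusion $\subseteq$ in the display is itself Theorem~\ref{flatspectra}, so the content is the reverse inclusion.

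For the reverse inclusion the structural inputs are the Proposition just above --- which says ${\rm Spec}_{\rm flat}(\uptheta)$ is connected, hence convex in the linear order $\bstar\PR\R_{\upvarepsilon}$, precisely because $\uptheta$ is abyssless --- together with Corollary~\ref{quasibestunion}, which exhibits both flat spectra as unions of half--open best intervals $[\wideparen{\upnu},\wideparen{\upmu})=[\wideparen{\upmu}^{+},\wideparen{\upmu})$ indexed by best denominator classes of infinite partial quotient (the identification $\wideparen{\upnu}=\wideparen{\upmu}^{+}$ being read off from the proof of Lemma~\ref{firstflatcond}). The heart of the matter is a sliding argument: given a non--shift--invariant $\upmu$ and a monotone representative $\{x_i\}$ of $\upmu^{-1}$, one locates $\{x_i\}$ on the ladder $q_0<q_1<\cdots$ of best denominators of $\uptheta$ and uses abysslessness --- in the form ``for each $M$ the blocks of partial quotients all $\le M$ have length at most some finite $L(M)$'' --- to slide from the best denominator nearest $x_i$ to a neighbouring one $q_{n_i}$ whose following partial quotient $a_{n_i+1}$ tends to infinity and for which the interval $[q_{n_i},q_{n_i+1})$ straddles $x_i$ in the sense that $q_{n_i}/x_i\to0$ while $x_i/q_{n_i+1}$ stays bounded. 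Then $\bast\wideparen q=\bast\{q_{n_i}\}$ is a best denominator class of infinite partial quotient with $\wideparen{\upmu}^{+}\le\upmu<\wideparen{\upmu}$, so $\upmu\in[\wideparen{\upmu}^{+},\wideparen{\upmu})\subseteq{\rm Spec}_{\rm flat}(\uptheta)$ by Corollary~\ref{quasibestunion}.

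I expect the quantitative form of this slide to be the main obstacle. The delicate point --- and where non--shift--invariance of $\upmu$ is genuinely used --- is the straddling: one must slide so that the \emph{left} endpoint $\wideparen{\upmu}^{+}$ of the new best interval does not overshoot $\upmu$; a shift--invariant (``slow'') $\upmu$ would sit too close to a best denominator with only bounded following partial quotients for such a slide to exist, in accordance with Theorem~\ref{flatspectra}. One must also verify that the index sequence $\{n_i\}$ can be taken nondecreasing, so that $\bast\wideparen q$ is a genuine best denominator class in the sense used in Corollary~\ref{quasibestunion} and not merely a diophantine approximation. Everything else is bookkeeping with the order on $\bstar\PR\R_{\upvarepsilon}$ and with convexity of ${\rm Spec}_{\rm flat}(\uptheta)$.
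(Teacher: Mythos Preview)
Your reduction of the first assertion to the inclusion ${\rm Spec}_{\rm flat}(\upeta)\subset{\rm Spec}_{\rm flat}(\uptheta)$ is correct and is exactly how the paper proceeds. Where you diverge is in the route to that inclusion: you set out to prove the strict identity ${\rm Spec}_{\rm flat}(\uptheta)=(\bstar\PR\R_{\upvarepsilon}^{\mathrm{sh}})^{\text{\tt C}}$ and then invoke Theorem~\ref{flatspectra}. That identity is strictly stronger than what is required, and the paper is careful \emph{not} to claim it: in the remark following Corollary~\ref{flatintcor} the complement of the flat interval is said only to ``include'' the shift--invariant elements, not to equal them. Your sliding argument for the reverse inclusion is plausible but, as you yourself flag, the quantitative straddling is the crux; in particular, to obtain a best class with \emph{infinite} partial quotient you need $a_{n_i+1}\to\infty$, so the threshold $M$ must itself tend to infinity along the sequence, and then abysslessness bounds the sliding distance only by $L(M_i)$, which need not stay bounded. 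Whether non--shift--invariance of $\upmu$ alone supplies enough room to absorb this is not evident from what you have written.

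The paper's argument is much shorter and uses connectivity (the Proposition immediately preceding the theorem) in an essential way rather than as a side remark. One does not attempt to characterise ${\rm Spec}_{\rm flat}(\uptheta)$ at all. Instead, for each best interval $[\wideparen{\upnu}',\wideparen{\upmu}')$ of $\upeta$ one finds best classes $\bast\wideparen q_{1}<\bast\wideparen q'<(\bast\wideparen q')^{+}<\bast\wideparen q_{2}$ for $\uptheta$, both with infinite partial quotient---this requires only knowing that such classes exist on either side of a prescribed magnitude, a far cruder consequence of abysslessness than your precise straddling. The interval $[\wideparen{\upnu}',\wideparen{\upmu}')$ is thus sandwiched between two best intervals of $\uptheta$, and connectivity of ${\rm Spec}_{\rm flat}(\uptheta)$ forces everything in between into ${\rm Spec}_{\rm flat}(\uptheta)$. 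Since the best intervals of $\upeta$ cover ${\rm Spec}_{\rm flat}(\upeta)$ by Corollary~\ref{quasibestunion}, the inclusion follows. Your proposed identity, if true, would upgrade Theorem~\ref{flatspectra} to an equivalence and is interesting in its own right, but it is not needed for the present statement.
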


\begin{proof}  Let $\upeta\in \R-\mathfrak{B}$ and let $\bast\widehat{q}'\in\bast\Z (\upeta )$ be a best class with infinite partial quotient
for $\upeta$.  There exist best classes $\bast\widehat{q}_{1}, \bast\widehat{q}_{2}$ with infinite partial quotient
for $\uptheta$ such that $\bast\widehat{q}_{1} <\bast\widehat{q}' <(\bast\widehat{q}')^{+}<\bast\widehat{q}_{2}$.  By the connectivity
of ${\rm Spec}_{\rm flat}(\uptheta )$, $[\widehat{\upnu}',\widehat{\upmu}')\subset {\rm Spec}_{\rm flat}(\uptheta )$.
\end{proof}

\begin{coro} If $\uptheta\in\R-\mathfrak{B}$ ($\uptheta\in \mathfrak{W}_{>1}\cup\Q$) is abyssless, $\uptheta\talloblong\upomega$
($\uptheta\bar{\Updownarrow}\upomega$)
for all
$\upomega\in\R$ with $\uptheta\upomega^{-1}\not\in\mathfrak{B}$. \end{coro}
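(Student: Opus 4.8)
The plan is to read off both assertions directly from Theorem \ref{gapless}, after translating the divisibility notation of \S\ref{gdarith}. Throughout write $\upeta := \upomega\uptheta^{-1}$, so that $\upomega = \uptheta\upeta$ and the hypothesis $\uptheta\upomega^{-1}\not\in\mathfrak{B}$ reads $\upeta^{-1}\not\in\mathfrak{B}$; since $\mathfrak{B}$ is closed under inversion this is the same as $\upeta\in\R-\mathfrak{B}$. Note also that abysslessness of $\uptheta$ already forces $\uptheta\in\R-\mathfrak{B}$: a badly approximable number has uniformly bounded partial quotients and therefore admits pairwise disjoint blocks of consecutive partial quotients of unbounded length with uniformly bounded entries, i.e.\ is abyssal. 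So in every case $\uptheta,\upeta\in\Q\cup\mathfrak{W}$.

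For the first (unparenthesized) statement I would argue as follows. Applying Theorem \ref{gapless} to the abyssless $\uptheta$ and to $\upeta\in\R-\mathfrak{B}$ gives ${\rm Spec}_{\rm flat}(\upeta)\subset{\rm Spec}_{\rm flat}(\uptheta)$. Moreover ${\rm Spec}_{\rm flat}(\upeta)\neq\emptyset$: it equals $\bstar\PR\R_{\upvarepsilon}$ if $\upeta\in\Q$ by Proposition \ref{rationalspec}, and it has nonempty interior if $\upeta\in\mathfrak{W}$ by Corollary \ref{quasibestunion}. Hence ${\rm Spec}_{\rm flat}(\uptheta)\cap{\rm Spec}_{\rm flat}(\upeta) = {\rm Spec}_{\rm flat}(\upeta)\neq\emptyset$, which by the equivalence recorded just after the definition of the flat product is exactly $\uptheta\talloblong\upomega$.

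For the parenthesized statement one takes $\uptheta\in\mathfrak{W}_{>1}\cup\Q$ and reads the hypothesis in its parenthesized form $\uptheta\upomega^{-1}\in\mathfrak{W}_{>1}\cup\Q$, i.e.\ $\upeta\in\mathfrak{W}_{>1}\cup\Q$; this strengthening is forced, since by Theorem \ref{classesdivision}(b) no element of $\mathfrak{W}_{1^{+}}$ is a fast divisor, so $\uptheta\Downarrow\upomega$ (equivalently: the cofactor $\upeta$ is a fast divisor of $\upomega$) cannot hold unless $\upeta\in\mathfrak{W}_{>1}\cup\Q$. From the previous paragraph we already have $\uptheta\talloblong\upomega$ (only $\upeta\in\R-\mathfrak{B}$ was used there). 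For $\uptheta\Uparrow\upomega$: since $\uptheta\in\mathfrak{W}_{>1}\cup\Q$, ${\rm Spec}(\uptheta)$ contains a pair $\upmu>\upnu$ by Theorem \ref{verywellapprox} (by Proposition \ref{rationalspec} when $\uptheta\in\Q$), so $\bast\Z^{\upmu}_{\upnu}(\uptheta)\neq 0$; and $\bast\Z^{\upnu}_{\upmu}(\upeta)\neq 0$ by Theorem \ref{gennonvan}, the index $(\upnu,\upmu)$ lying in the slow component $\upnu<\upmu$; hence $\uptheta\,{}_{\upmu}\!\owedge_{\upnu}\,\upeta$ with $\upmu>\upnu$, i.e.\ $\uptheta\,{}_{\upmu}\!\Uparrow_{\upnu}\upomega$. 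For $\uptheta\Downarrow\upomega$, which by definition is $\upeta\,{}_{\upmu}\!\Uparrow_{\upnu}\upomega$ for some $\upmu>\upnu$, the identical argument with the roles of $\uptheta$ and $\upeta$ exchanged works, using now that $\upeta\in\mathfrak{W}_{>1}\cup\Q$ supplies a spectral pair $\upmu>\upnu$. Thus $\uptheta$ is elastic, and combined with $\uptheta\talloblong\upomega$ this gives $\uptheta\bar{\Updownarrow}\upomega$.

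There is no substantive obstacle: the entire weight of the corollary rests on Theorem \ref{gapless}, and the remaining work is the routine bookkeeping of checking that each auxiliary group $\bast\Z^{\upnu}_{\upmu}(\cdot)$ invoked above has its indices in the slow component $\upnu<\upmu$ (so that Theorem \ref{gennonvan} applies) and that the cofactor $\upeta$ inherits the appropriate non-badness — membership in $\mathfrak{W}_{>1}\cup\Q$ for the parenthesized claim — from the hypothesis on $\uptheta\upomega^{-1}$. If anything is delicate it is purely notational: keeping straight which of the two factors plays the ``fast'' and which the ``slow'' role in each instance of ${}_{\upmu}\!\owedge_{\upnu}$.
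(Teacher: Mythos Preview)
The paper states this corollary without proof, so your argument is exactly the intended one: read the flat relation directly off Theorem~\ref{gapless} (indeed $\Upomega(\uptheta)=\R-\mathfrak{B}$ alone already gives $\uptheta\talloblong\upomega$), and for the parenthesized claim supply the fast and slow pairs from the spectra of $\uptheta$ and of the cofactor $\upeta$, with the companion group nonzero by Theorem~\ref{gennonvan}. Your derivation is correct.

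Your remark on the parenthesized hypothesis is also on point. As literally printed, the condition $\uptheta\upomega^{-1}\not\in\mathfrak{B}$ allows $\upeta\in\mathfrak{W}_{1^{+}}$, and then by Theorem~\ref{classesdivision}(b) one has $\upeta\not\Uparrow\upomega$, hence $\uptheta\not\Downarrow\upomega$, so $\uptheta\bar{\Updownarrow}\upomega$ fails. Reading the cofactor condition in its parenthesized form $\upeta\in\mathfrak{W}_{>1}\cup\Q$ is the evident correction, and your proof of that corrected statement is sound.
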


\begin{coro}\label{flatintcor}  If $\uptheta, \upeta$ are abyssless, then 
${\rm Spec}_{\rm flat}(\upeta)={\rm Spec}_{\rm flat}(\uptheta )$.
\end{coro}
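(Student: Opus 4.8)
The plan is to deduce Corollary \ref{flatintcor} directly from Theorem \ref{gapless} by a symmetry argument. Theorem \ref{gapless} states that if $\uptheta$ is abyssless then for \emph{every} $\upeta\in\R-\mathfrak{B}$ one has ${\rm Spec}_{\rm flat}(\upeta)\subset{\rm Spec}_{\rm flat}(\uptheta)$. Now suppose both $\uptheta$ and $\upeta$ are abyssless. Since an abyssless number cannot be badly approximable (any $\uptheta\in\mathfrak{B}$ has all partial quotients uniformly bounded, hence trivially is abyssal), both $\uptheta$ and $\upeta$ lie in $\R-\mathfrak{B}$. Applying Theorem \ref{gapless} with $\uptheta$ abyssless and $\upeta\in\R-\mathfrak{B}$ gives ${\rm Spec}_{\rm flat}(\upeta)\subset{\rm Spec}_{\rm flat}(\uptheta)$. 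Applying it again with the roles reversed — $\upeta$ abyssless and $\uptheta\in\R-\mathfrak{B}$ — gives ${\rm Spec}_{\rm flat}(\uptheta)\subset{\rm Spec}_{\rm flat}(\upeta)$. The two inclusions together yield the claimed equality.

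Concretely, I would write: \emph{Proof.} First note that if $\uptheta$ is abyssless then $\uptheta\notin\mathfrak{B}$, since every $\uptheta\in\mathfrak{B}$ has uniformly bounded partial quotients and hence is abyssal (take any sequence of disjoint blocks of growing length). Thus if $\uptheta,\upeta$ are both abyssless, both belong to $\R-\mathfrak{B}$. By Theorem \ref{gapless} applied to the abyssless number $\uptheta$ and to $\upeta\in\R-\mathfrak{B}$, we have ${\rm Spec}_{\rm flat}(\upeta)\subset{\rm Spec}_{\rm flat}(\uptheta)$. Symmetrically, applying Theorem \ref{gapless} to the abyssless number $\upeta$ and to $\uptheta\in\R-\mathfrak{B}$ gives ${\rm Spec}_{\rm flat}(\uptheta)\subset{\rm Spec}_{\rm flat}(\upeta)$. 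Combining, ${\rm Spec}_{\rm flat}(\uptheta)={\rm Spec}_{\rm flat}(\upeta)$. $\square$

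There is essentially no obstacle here: the corollary is a formal consequence of Theorem \ref{gapless} together with the trivial observation that abyssless implies not badly approximable. The only point requiring the slightest care is confirming that $\mathfrak{B}$ is disjoint from the abyssless numbers, which is immediate from the definitions: a badly approximable $\uptheta$ has $a_i(\uptheta)\le M$ for all $i$ and some fixed $M$, so the blocks $B_n=\{a_n,a_{n+1},\dots,a_{n+n}\}$ are pairwise disjoint (after passing to a suitably spaced subsequence of starting indices), each bounded by $M$, with block length tending to infinity — hence $\uptheta$ is abyssal. Everything else is pure logic, so the whole argument is a few lines.
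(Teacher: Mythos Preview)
Your argument is correct and matches the paper's intended reasoning: the corollary is stated without proof precisely because it follows from Theorem~\ref{gapless} by the symmetry you describe. The extra verification that abyssless implies $\uptheta\notin\mathfrak{B}$ is sound but in fact unnecessary in context, since the paper's standing convention at the start of \S\ref{flatharith} already restricts attention to $\uptheta\in\mathfrak{W}=\R-(\Q\cup\mathfrak{B})$.
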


We call this common connected set of Corollary \ref{flatintcor} the 
{\bf flat interval} $\bstar\PR\R_{\rm flat}$.  In particular, $\bstar\PR\R_{\upvarepsilon}-\bstar\PR\R_{\rm flat}$ 
is disjoint from all flat spectra and includes the shift invariant elements. Note also that it is clear that there exist abyssless $\uptheta$, $\upeta$ which are not equivalent
yet their flat spectra coincide: so ${\rm Spec}_{\rm flat}(\uptheta )$ is not a complete invariant of $\uptheta$.  An abyssless number $\uptheta$ is an {\bf  omniflatdivisor}: an element $\uptheta\in\mathfrak{W}$ in which $\uptheta\talloblong\upomega$ for all
$\upomega\in\R$ with $\uptheta\upomega^{-1}\not\in\mathfrak{B}$.

%The question now arises: can we find a pair of numbers $\uptheta,\upeta$ which are not flat composable?  By the above discussion,
%$\uptheta,\upeta$ must be gapped if this is the case.

The next result shows that the set $\Upomega (\uptheta)$ need not be equal to $\R-\mathfrak{B}$.

\begin{theo}\label{flatundefined}  There exist abyssal $\uptheta,\upeta\in\mathfrak{W}$ such that $\uptheta {}_{\upmu}\!\owedge_{\upmu}\upeta $
is undefined for all $\upmu$.  In fact, one can find such a pair in which $\uptheta\in\mathfrak{W}_{\upkappa}$, $\upeta\in\mathfrak{W}_{\upkappa'}$ for any 
 $\upkappa,\upkappa'\in[1,\infty]$.
\end{theo}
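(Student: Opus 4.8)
The plan is to reduce the statement to a separation property of flat spectra and then to engineer two continued fractions that realise it. First I would invoke Corollary \ref{quasibestunion} together with Lemma \ref{firstflatcond}: they say that $\uptheta\,{}_{\upmu}\!\owedge_{\upmu}\upeta$ is defined for \emph{some} $\upmu$ precisely when ${\rm Spec}_{\rm flat}(\uptheta)\cap{\rm Spec}_{\rm flat}(\upeta)\neq\emptyset$. So it suffices to produce, for each prescribed pair $\upkappa,\upkappa'\in[1,\infty]$, abyssal numbers $\uptheta\in\mathfrak W_{\upkappa}$ and $\upeta\in\mathfrak W_{\upkappa'}$ with disjoint flat spectra.

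For the construction I would fix a very fast increasing sequence of scales $2<N_{1}<N_{2}<\cdots$ with $N_{k+1}>2^{N_{k}}$ and build $\uptheta=[a_{0}a_{1}\dots]$ by making every partial quotient $1$ except at a sparse set of positions $p_{1}<p_{2}<\cdots$, where $p_{j}$ is the first index at which the best denominator exceeds $N_{2j+1}$ (so $q_{p_{j}}\asymp N_{2j+1}$) and there $a_{p_{j}+1}$ is a large partial quotient chosen according to $\upkappa$: $a_{p_{j}+1}=j$ if $\upkappa=1$, $a_{p_{j}+1}\asymp q_{p_{j}}^{\upkappa-1}$ if $1<\upkappa<\infty$, and $a_{p_{j}+1}=q_{p_{j}}^{\,j}$ if $\upkappa=\infty$. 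Since between special positions all partial quotients are $1$, the number of continued-fraction steps from $q_{p_{j}+1}$ to $q_{p_{j+1}}$ is of order $\log(N_{2j+3}/q_{p_{j}+1})\to\infty$, so $\uptheta$ is abyssal; and since the only good approximations of $\uptheta$ come from the $q_{p_{j}}$, a routine estimate gives $\upkappa(\uptheta)=\upkappa$ and $\uptheta\in\mathfrak W$ (in particular $\uptheta\in\mathfrak W_{1^{+}}\subset\mathfrak W$, not in $\mathfrak B$, when $\upkappa=1$). I would define $\upeta$ by the identical recipe using $\upkappa'$ and the \emph{even} scales $N_{2j}$. The key geometric feature is the family of multiplicative gaps $U_{\uptheta}=\bigcup_{j}(q_{p_{j}},q_{p_{j}+1})$ and $U_{\upeta}$ likewise: by construction the $j$-th gap of either set lies inside $(N_{m},2^{N_{m}})$ for the appropriate scale $N_{m}$ (even when the exponent is $\infty$, since $q_{p_{j}}^{\,j}\asymp N_{2j+1}^{\,j}<2^{N_{2j+1}}$), so every gap of $U_{\uptheta}$ and every gap of $U_{\upeta}$ are at multiplicative distance tending to infinity with the scale.

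Next I would prove the bridging lemma: if $\upmu\in{\rm Spec}_{\rm flat}(\uptheta)$ then $\upmu=\langle\bast z^{-1}\rangle$ for some $\bast z=\bast\{z_{i}\}$ with $z_{i}\in U_{\uptheta}$ for $\mathfrak u$-almost all $i$. By Theorem \ref{flatmultibest} such a $\upmu$ carries a multibest $\bast m=\bast x\cdot\bast\wideparen{q}$ of infinite partial quotient with $\upnu(\bast m)\le\upmu<\upmu(\bast m)$; since $a_{n+1}$ is unbounded only along $\{p_{j}\}$, the class $\bast\wideparen{q}$ must be a selection from $\{q_{p_{j}}\}$, and then the multibest inequalities (\ref{multibestineq}) (with $\wideparen{\upnu}=\wideparen{\upmu}^{+}$, as in the proof of Lemma \ref{firstflatcond}) pin any representative $\bast z$ of $\upmu^{-1}$ inside $(\,|x_{i}|\,q_{p_{j(i)}},\,C\,q_{p_{j(i)}+1}/|x_{i}|\,]\subseteq(q_{p_{j(i)}},\,C\,q_{p_{j(i)}+1}]$, hence inside $U_{\uptheta}$ after harmlessly absorbing the fixed constant $C$. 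With this lemma disjointness is immediate: if $\upmu$ lay in both flat spectra I would obtain representatives $\bast z,\bast w$ of $\upmu^{-1}$ with $z_{i}\in U_{\uptheta}$, $w_{i}\in U_{\upeta}$, and from $\langle\bast z^{-1}\rangle=\langle\bast w^{-1}\rangle$ a real $c>0$ with $c\le z_{i}/w_{i}\le c^{-1}$ for $\mathfrak u$-almost all $i$; but the gaps of $U_{\uptheta}$ and $U_{\upeta}$ are eventually at multiplicative distance $>c^{-1}$, so $z_{i}$ would be confined to the finitely many exceptional (bounded) gaps, contradicting that $\bast z$ is infinite. This yields ${\rm Spec}_{\rm flat}(\uptheta)\cap{\rm Spec}_{\rm flat}(\upeta)=\emptyset$, and the reduction in the first paragraph then gives that $\uptheta\,{}_{\upmu}\!\owedge_{\upmu}\upeta$ is never defined.

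I expect the main obstacle to be the bridging lemma, specifically the bookkeeping of strict versus non-strict inequalities and of the ``ideological factor'' $\bast x$ when passing through the Krull valuation and the ultrafilter: one has to check that multiplying a best class by $\bast x\in\bast\Z$ only \emph{shrinks} the admissible growth window (as (\ref{multibestineq}) indicates) and never lets the growth class escape the gap intervals, and that the passage from ``a growth class'' to ``a sequence landing in $U_{\uptheta}$'' is reversible up to the harmless fixed factor $C$. The exponent verifications ($\upkappa(\uptheta)=\upkappa$ in each of the three regimes) and the claim that the abyss blocks genuinely lengthen are routine consequences of the tower growth of the $N_{k}$ and would just need to be spelled out.
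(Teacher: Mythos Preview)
Your proposal is correct and follows essentially the same strategy as the paper: both construct $\uptheta,\upeta$ via continued fractions whose large partial quotients (``spikes'') are interleaved and separated by increasingly long blocks of $1$'s, then invoke the best-interval description of the flat spectrum (Corollary~\ref{quasibestunion}/Lemma~\ref{firstflatcond}) to conclude that the best intervals of $\uptheta$ and $\upeta$ never overlap. The only difference is cosmetic---you fix an absolute tower of scales $N_k$ in advance and place the spikes at alternating parities, whereas the paper builds the two expansions recursively (choosing each new block relative to the one just constructed); your ``bridging lemma'' and gap-separation argument simply rephrase the paper's direct verification that neither alternative in (\ref{flatcondition}) can hold.
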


\begin{proof}  We construct $\uptheta$, $\upeta$ by way of partial fractions. We begin by specifying the initial partial fractional segment 
$ [\text{\bf 1}_{1} a_{m_{1}}]$ of $\uptheta$,  where $\text{\bf 1}_{1}$ is a large block of ones of size
$m_{1}-1$.
%and $a_{n_{i}}\rightarrow\infty$ slowly, e.g.\ $a_{n_{i}}=i$.  
Let $q_{m_{1}}=a_{m_{1}}q_{m_{1}-1}+q_{m_{1}-2}$ be the best denominator corresponding to $a_{m_{1}}$.
 Now specify the initial segment for $\upeta$, $[\text{\bf 1}'_{1} b_{n_{1}}]$ so that for $N_{1}$ a fixed integers with $2N_{1}<n_{1}$, there are 
\begin{itemize}
\item[i.] $N_{1}$ best denominators associated to the $1$'s of $\text{\bf 1}'_{1}$ that are less than $q_{m_{1}}$ i.e.\ 
\[ q'_{1},\dots , q_{N_{1}}'<q_{m_{1}}.\]
If necessary we go back and choose $m_{1}$ larger so that this can be done.
\item[ii.]  $N_{1}$ best denominators associated to the 1's of $\text{\bf 1}'_{1}$ which are greater than $q_{m_{1}}$: 
\[   q'_{n_{1}-1},\dots , q_{n_{1}-1-N_{1}}'>q_{m_{1}}.\]
\end{itemize}
The next step is to select $M_{2}>N_{1}$ so that if $\text{\bf 1}_{2}$ is a block of $1's$ having at least $2M_{2}$ elements then at least $M_{2}$ of
the new best denominators associated to the augmented sequence $ [\text{\bf 1}_{1} a_{m_{1}+1} \text{\bf 1}_{2}]$ are less than $q_{n_{1}}'$ i.e.
\[q_{m_{1}+1},\dots , q_{m_{1}+M_{2}}<q'_{n_{1}}.\]
Then choose $m_{2}>2M_{2}$ so that if $\text{\bf 1}_{2}$ has $m_{2}$ elements than at least $M_{2}$ of the  
new best denominators associated to the augmented sequence $ [\text{\bf 1}_{1} a_{m_{1}} \text{\bf 1}_{2}]$ are greater than $q_{n_{1}}'$:
\[q_{m_{1}+m_{2}},\dots , q_{m_{1}+m_{2}-M_{2}}>q'_{n_{1}}.\]
Let $a_{m_{1}+m_{2}}>a_{m_{1}}$ and consider $ [\text{\bf 1}_{1} a_{m_{1}} \text{\bf 1}_{2}a_{m_{1}+m_{2}}]$, applying
the above procedure to specify $ [\text{\bf 1}'_{1} b_{n_{1}} \text{\bf 1}'_{2}b_{n_{1}+n_{2}}]$, where $ b_{n_{1}}<b_{n_{1}+n_{2}}$.
Inductively we
build the sequence of partial quotients of $\uptheta$ and $\upeta$ in this way, arranging that the non $1$ partial quotients
 \[ a_{m_{1}+\cdots +m_{k}}, b_{n_{1}+\cdots +n_{k}},\] as well as the $M_{k},N_{k}$, tend to $\infty$.  See Figure \ref{partialfracconst}.
 \begin{figure}[htbp]
\centering
\includegraphics[width=5in]{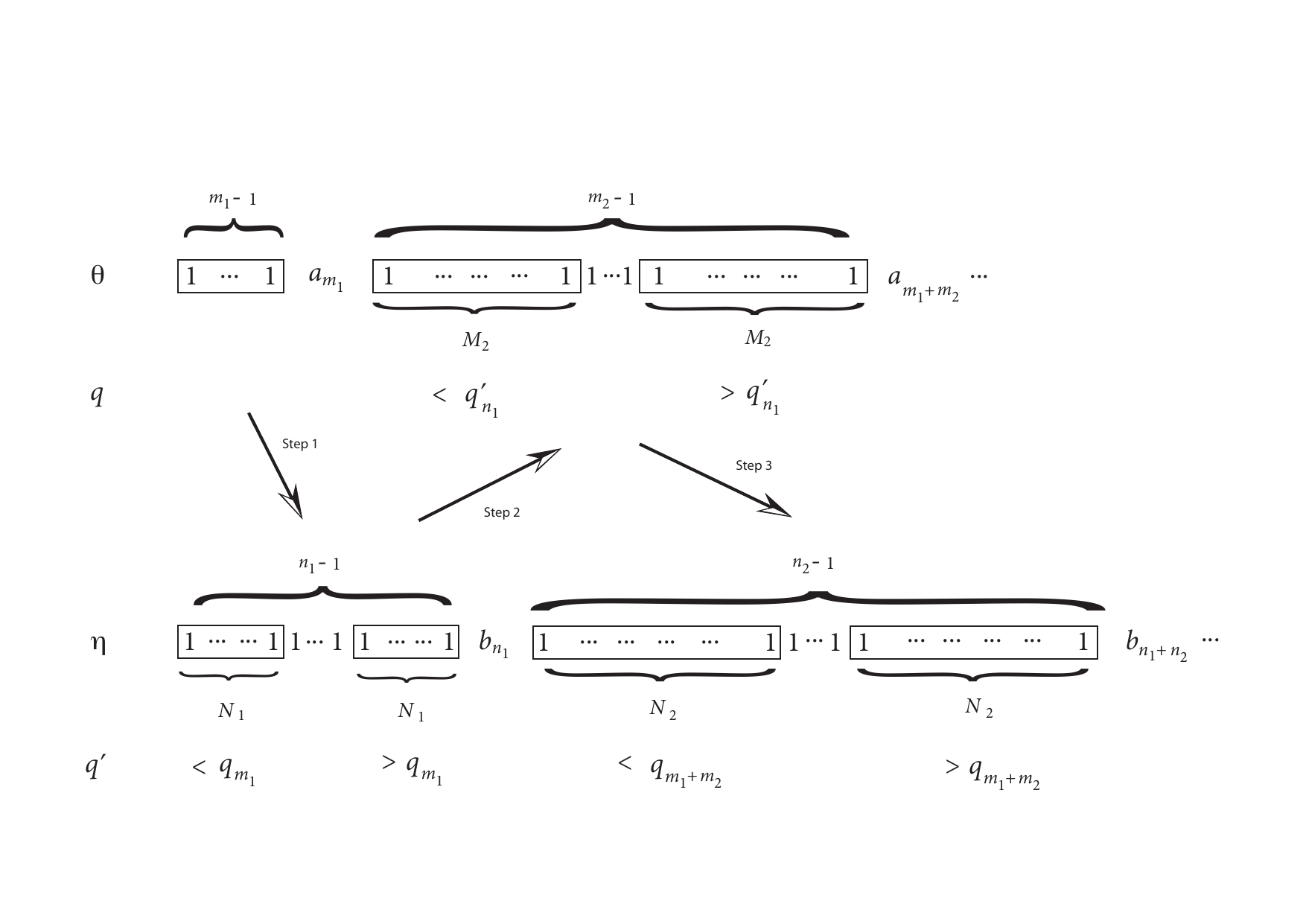}
\caption{Construction of $\uptheta$, $\upeta$.}\label{partialfracconst}
\end{figure} 
 
%we choose its
%partial quotients in such a way that the $\{ q_{n_{i}}\}$, as well as their successors, are sufficiently surrounded by best 
%approximations of $\upeta$ associated to partial quotients equal to 1.  
%Denote by $\bast \widehat{q}$, $\bast \widehat{q}'$ the sequence classes of 
Consider the sequences
\begin{align}\label{isolatedseq} \{q_{m_{1}+\cdots +m_{k}-1}\}_{k=1}^{\infty},\;\; \{q_{n_{1}+\cdots +n_{k}-1}'\}_{k=1}^{\infty}
\end{align}
and let $\bast \widehat{q}$, $\bast \widehat{q}'$ be best classes for $\uptheta$, $\upeta$ having infinite partial quotient.
By construction of the partial fraction sequences, $\bast \widehat{q}$, $\bast \widehat{q}'$ are classes of sequences formed from the
elements of
the corresponding sequences of (\ref{isolatedseq}).  Moreover, our choices of blocks of 1's in the partial quotients of $\uptheta$ and $\upeta$ ensure that 
\begin{itemize}
\item[-] $(\bast \widehat{q}')^{+}\not\in 
[\bast \widehat{q},\bast \widehat{q}^{+}]$ and that $(\bast \widehat{q}')^{+}, \bast \widehat{q},\bast \widehat{q}^{+}$ define
distinct classes in $\bstar\PR\R$.
\item[-]  $\bast \widehat{q}^{+}\not\in 
[\bast \widehat{q}',(\bast \widehat{q}')^{+}]$ and that $\bast \widehat{q}^{+}, \bast \widehat{q}',(\bast \widehat{q}')^{+}$ define
distinct classes in $\bstar\PR\R$.
\end{itemize}
  This implies
that
the growth decay interval 
$[\widehat{\upmu}^{+},\widehat{\upmu})$ corresponding to $\bast \widehat{q}$ cannot contain the decay $(\widehat{\upmu}')^{+}$ 
of $\bast \widehat{q}'$, and {\it vice verca}.
%is contained in 
%${\rm Spec}_{\rm flat}(\upeta)^{\text{\tt C}}$, and likewise,  the growth decay interval 
%$[(\widehat{\upmu}')^{+},\widehat{\upmu}')$ corresponding to $\bast \widehat{q}'$ is contained in ${\rm Spec}_{\rm flat}(\uptheta)^{\text{\tt C}}$.  By %construction, all the best classes of $\uptheta,\upeta$ having infinite bounded quotient are built using the elements of (\ref{isolatedseq})
%and therefore their growth-decay intervals will lie in ${\rm Spec}_{\rm flat}(\upeta)^{\text{\tt C}}$, ${\rm Spec}_{\rm flat}(\uptheta)^{\text{\tt C}}$.  It follows %that
%${\rm Spec}_{\rm flat}(\uptheta)\cap{\rm Spec}_{\rm flat}(\upeta)=\emptyset$ 
By Lemma \ref{firstflatcond}, $\uptheta {}_{\upmu}\!\owedge_{\upmu}\upeta $
is undefined for all $\upmu$.  By choosing the non 1 partial fractions appropriately we can ensure that  $\uptheta\in\mathfrak{W}_{\upkappa}$, $\upeta\in\mathfrak{W}_{\upkappa'}$ for any fixed
 $\upkappa,\upkappa'\in[1,\infty]$.
\end{proof}

It follows that the problems of determining 
$ \Upomega (\uptheta )$ and flat composability are non trivial.  
Nevertheless, it seems plausible that the techniques in \cite{Sh} can be extended to show that the Liouville numbers occurring in the sum and product representations \cite{Er} are abyssless.  

\begin{conj}  $\mathfrak{W}_{\infty}$ strongly approximately generates $\R$.
\end{conj}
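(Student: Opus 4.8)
The plan is to combine the known representation of reals as products of Liouville numbers with the structure theory of flat spectra. The ``elastic'' half is already in place: the Corollary following Theorem~\ref{classesdivision} records that $\mathfrak{W}_{\infty}$ ideologically generates $\R$, because by \cite{Er} every nonzero $\upomega$ factors as $\upomega=\uptheta_{1}\uptheta_{2}$ with $\uptheta_{1},\uptheta_{2}\in\mathfrak{W}_{\infty}$, and $\mathfrak{W}_{\infty}\subset\mathfrak{W}_{>1}$, so part d.\ of Theorem~\ref{classesdivision} gives $\uptheta_{1},\uptheta_{2}\Updownarrow\upomega$. Hence the entire content of the conjecture is the upgrade from $\Updownarrow$ to $\bar{\Updownarrow}$, i.e.\ arranging in addition the flat relation $\uptheta_{1},\uptheta_{2}\talloblong\upomega$, which by definition asks for ${\rm Spec}_{\rm flat}(\uptheta_{1})\cap{\rm Spec}_{\rm flat}(\uptheta_{2})\neq\emptyset$.

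First I would isolate a soft reduction: \emph{it suffices to produce, for each nonzero $\upomega$, a factorization $\upomega=\uptheta_{1}\uptheta_{2}$ into Liouville numbers at least one of which is abyssless.} Indeed, if $\uptheta_{1}$ is abyssless, then Theorem~\ref{gapless} gives ${\rm Spec}_{\rm flat}(\uptheta_{2})\subset{\rm Spec}_{\rm flat}(\uptheta_{1})$, so the intersection above equals ${\rm Spec}_{\rm flat}(\uptheta_{2})$, which is nonempty---indeed has interior---by Corollary~\ref{quasibestunion} since $\uptheta_{2}\in\mathfrak{W}$; and as the flat relation is symmetric in the two factors we obtain $\uptheta_{1},\uptheta_{2}\talloblong\upomega$, whence $\uptheta_{1},\uptheta_{2}\bar{\Updownarrow}\upomega$. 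This step uses only results already proved and should be recorded as a lemma; note it is crucial that abysslessness of a \emph{single} factor suffices, thanks to the uniform inclusion of flat spectra in Theorem~\ref{gapless}.

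It remains to establish the italicized diophantine assertion, and for this I would revisit the construction of \cite{Er}. There $\uptheta_{1}$ is built as the limit of an accelerating sequence of rational approximations $p_{k}/q_{k}$ with $q_{k+1}$ enormous relative to $q_{k}$, and $\uptheta_{2}:=\upomega\uptheta_{1}^{-1}$ is forced to be Liouville by choosing each $q_{k}$ to be simultaneously an excellent denominator for $\upomega\uptheta_{1}^{-1}$. The huge jumps $q_{k+1}/q_{k}$ create large partial quotients of $\uptheta_{1}$ at a sparse set of positions but leave the intervening ``filler'' partial quotients unconstrained, and a priori these could harbour ever-longer blocks of uniformly bounded entries, making ${\rm Spec}_{\rm flat}(\uptheta_{1})$ disconnected and $\uptheta_{1}$ abyssal. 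Following the hint in the text and the techniques of \cite{Sh}, the plan is to \emph{prescribe} the filler: between two consecutive forced large partial quotients insert a finite, strictly increasing block of partial quotients whose minimum grows with $k$, so that for every $M$ the runs of partial quotients $\le M$ have length bounded in terms of $M$ alone; by the abyssal/disconnected dichotomy proved just before Theorem~\ref{gapless} this makes ${\rm Spec}_{\rm flat}(\uptheta_{1})$ connected, so $\uptheta_{1}$ is abyssless (in fact resolute). One then checks that the insertion (i) preserves the Liouville property of $\uptheta_{1}$, which is automatic since the next forced partial quotient may be taken as large as desired relative to the now-controlled preceding block, and (ii) preserves the Liouville property of $\uptheta_{2}$, whose approximating denominator $q_{k}$ is thereby replaced only by a boundedly larger $q_{k}'$.

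The hard part is (ii). The constraint $\uptheta_{1}\uptheta_{2}=\upomega$ couples the continued fractions of the two factors, and multiplication and division interact very badly with continued-fraction expansions, so one needs a quantitative simultaneous-approximation estimate governing how an inserted bounded block in $\uptheta_{1}$ displaces the excellent rational approximations of $\uptheta_{2}$, together with an argument that this displacement can always be reabsorbed by enlarging the next jump. This is exactly the extension of the methods of \cite{Sh} to the adaptive setting of \cite{Er} that the text anticipates, and I expect it---rather than the soft reduction---to carry essentially all the weight of the proof.
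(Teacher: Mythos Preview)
The statement you are attempting is a \emph{Conjecture} in the paper, not a theorem: the paper gives no proof. What the paper offers is a one-sentence heuristic immediately preceding the conjecture---``it seems plausible that the techniques in \cite{Sh} can be extended to show that the Liouville numbers occurring in the sum and product representations \cite{Er} are abyssless''---and then records the statement as open.

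Your proposal is a fleshed-out version of exactly this heuristic. Your soft reduction (one abyssless factor suffices, via Theorem~\ref{gapless} and Corollary~\ref{quasibestunion}) is correct and is implicit in the paper's remark. Your identification of the hard step---controlling the continued fraction of $\uptheta_{2}=\upomega/\uptheta_{1}$ after the filler insertion in $\uptheta_{1}$---is also correct, and is precisely the obstacle the paper declines to address. So you and the paper are in agreement both on the strategy and on where the genuine difficulty lies; neither of you resolves it, and your proposal, while more detailed than the paper's hint, remains a plan rather than a proof. In particular your step (ii) is not an argument but a statement of what would need to be shown, and the coupling you flag between the two continued fractions under multiplication is exactly why this is left as a conjecture.
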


\section{Symmetric Diophantine Approximations}\label{metrical}

The study of special approximations in which the the error term is dominated
by a function
$ \uppsi:\Z-\{0\}\rightarrow\R  $
has held, from the very beginning, a distinguished position in the subject of Diophantine Approximation.
Classically, for a fixed $\uppsi$, one looks for conditions on $\uptheta$ which guarantee the existence of infinitely many
solutions to the inequality $|n\uptheta-m|<|\uppsi (n)|^{-1}$, or equivalently, in the language of this paper, a single solution to
\begin{align}\label{psiinequality} 
| \upvarepsilon (\bast n )| =  | \bast n\uptheta -\bast n^{\perp}  | & < |\uppsi( \bast n )| ,\quad  \bast n\in\bast\Z (\uptheta ). 
\end{align}
The Theorems of Dirichlet, Liouville and Roth all fall under this heading.  More generally, if one only specifies convergence
properties of the sum $\sum \uppsi (n)$, one seeks (e.g. Khintchine's Theorem) to measure the size of the set of real numbers
having solutions to (\ref{psiinequality}).

In this section, we will shift the emphasis from one of existence
to a qualitative study of the solution set:
\begin{align*}     \bast\Z (\uptheta |\uppsi) & := \left\{ 0\not= \bast n\in \bast\Z (\uptheta )| \; \text{there exists }C>0\text{ such that }   | \upvarepsilon (\bast n )|< 
C |\uppsi( \bast n )| \right\}\cup\{ 0\},
\end{align*} 
focusing on the extent to which the set $\bast\Z (\uptheta |\uppsi ) $ has interesting arithmetic structure.  
%We note that trivially $  \bast\Z (\uptheta |\uppsi ) $ is closed under the operation of taking additive inverses.

We begin by fixing the choice $\uppsi(x) =x^{-1}$: note then that 
\begin{align}\label{decayltoregrowth} 
 \bast\Z\big(\uptheta | x^{-1} \big)=\left\{  \bast n\in \bast\Z (\uptheta )| \upnu (\bast n)\leq \upmu (\bast n )\right\},
 \end{align}
a set which is closed under the operation of taking additive inverses.

It turns out that the same hypothesis used to describe the approximate ideal product
structure of the $\bast\Z (\uptheta )$ can be used to deduce an {\it additive}
structure in 
$ \bast\Z\big(\uptheta | x^{-1} \big)$.
To this end, denote
\[    \bast\Z^{\upmu}_{\upnu}\big(\uptheta |  x^{-1} \big) :=\bast\Z^{\upmu}_{\upnu}\big(\uptheta )\cap  \bast\Z (\uptheta | x^{-1} \big)  .\]
Note that the defining condition $\upnu (\bast n)\leq \upmu (\bast n )$ in (\ref{decayltoregrowth}) {\it does not} imply that 
$\bast\Z^{\upmu}_{\upnu}\big(\uptheta |  x^{-1} \big)=0$ for $\upmu<\upnu$.
Recall that tropical subtraction in $\bstar\PR\R$ is defined 
\[  \upmu- \upnu :=\min (\upmu, \upnu ).  \]

\begin{theo}\label{symmgroupology}  Let $\upmu,\upnu,\upiota,  \uplambda\in \bstar\PR\R_{\upvarepsilon}$.  
Then
\[  \bast\Z^{\upmu [\upiota]}_{\upnu}\big(\uptheta \big| x^{-1}\big)+ \bast\Z^{\upnu [ \uplambda ]}_{\upmu}\big(\uptheta \big| x^{-1} \big)
\subset   \bast\Z^{\upmu-\upnu [\upiota+ \uplambda]}\big(\uptheta \big|x^{-1} \big).  \]
%If $\bast m\not=-\bast n$ then
In particular, 
\[   \bast\Z^{\upmu}_{\upnu}\big(\uptheta \big| x^{-1} \big) + \bast\Z^{\upnu}_{\upmu}\big(\uptheta \big| x^{-1} \big)\subset
  \bast\Z^{\upmu-\upnu} \big(\uptheta \big|x^{-1} \big).
  \]
\end{theo}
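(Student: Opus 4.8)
The plan is to establish the stronger fine-indexed inclusion
\[
\bast\Z^{\upmu[\upiota]}_{\upnu}\big(\uptheta\big|x^{-1}\big)+\bast\Z^{\upnu[\uplambda]}_{\upmu}\big(\uptheta\big|x^{-1}\big)\subset\bast\Z^{\upmu-\upnu[\upiota+\uplambda]}\big(\uptheta\big|x^{-1}\big)
\]
directly, and then recover the displayed statement by taking unions over the fine indices, using $\bast\Z^{\upmu}=\bigcup_{\upiota}\bast\Z^{\upmu[\upiota]}$. So I would fix $\bast m\in\bast\Z^{\upmu[\upiota]}_{\upnu}(\uptheta|x^{-1})$ and $\bast n\in\bast\Z^{\upnu[\uplambda]}_{\upmu}(\uptheta|x^{-1})$. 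Since $\bast\Z(\uptheta)$ is a group, $\bast m+\bast n\in\bast\Z(\uptheta)$ with $(\bast m+\bast n)^{\perp}=\bast m^{\perp}+\bast n^{\perp}$ and $\upvarepsilon(\bast m+\bast n)=\upvarepsilon(\bast m)+\upvarepsilon(\bast n)$, so it remains to verify the two defining conditions of the target group: the fine growth bound $|\bast m+\bast n|\cdot(\upmu-\upnu)<\upiota+\uplambda$, and the $x^{-1}$-condition $\upnu(\bast m+\bast n)\leq\upmu(\bast m+\bast n)$.

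For the growth bound I would use nonarchimedeanity of $\langle\cdot\rangle$ together with distributivity of multiplication over tropical $+$ and its order-preservation (Propositions \ref{troprop} and \ref{denselinord}): $|\bast m+\bast n|\cdot(\upmu-\upnu)\leq(|\bast m|+|\bast n|)\cdot(\upmu-\upnu)=|\bast m|\cdot(\upmu-\upnu)+|\bast n|\cdot(\upmu-\upnu)$. Since $\upmu-\upnu=\min(\upmu,\upnu)\leq\upmu$ the first summand is $\leq|\bast m|\cdot\upmu<\upiota\leq\upiota+\uplambda$, and since $\upmu-\upnu\leq\upnu$ the second is $\leq|\bast n|\cdot\upnu<\uplambda\leq\upiota+\uplambda$; as tropical $+$ is $\max$ and each summand is strictly below $\upiota+\uplambda$, the bound follows. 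This is the one place where the choice $\upmu-\upnu=\min(\upmu,\upnu)$ is essential: it is exactly small enough to be dominated simultaneously by the growths of both $\bast m$ and $\bast n$.

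For the $x^{-1}$-condition it suffices to show $|\bast m+\bast n|\cdot|\upvarepsilon(\bast m+\bast n)|\in\bast\R_{\rm fin}$ (equivalently, has growth-decay valuation $\leq 1$). Expanding via the triangle inequality, $|\bast m+\bast n|\cdot|\upvarepsilon(\bast m+\bast n)|\leq(|\bast m|+|\bast n|)(|\upvarepsilon(\bast m)|+|\upvarepsilon(\bast n)|)$, a sum of four terms. The diagonal terms $|\bast m||\upvarepsilon(\bast m)|$ and $|\bast n||\upvarepsilon(\bast n)|$ are bounded because $\bast m,\bast n\in\bast\Z(\uptheta|x^{-1})$. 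For the cross term $|\bast m||\upvarepsilon(\bast n)|$: from $\bast m\in\bast\Z^{\upmu}$ one gets $\langle|\bast m|\rangle<\upmu^{-1}$ (strict, since the growth filtration uses strict inequality), and from $\bast n\in\bast\Z_{\upmu}(\uptheta)$ one gets $\langle|\upvarepsilon(\bast n)|\rangle\leq\upmu$; multiplicativity of $\langle\cdot\rangle$ and order-preservation then yield $\langle|\bast m||\upvarepsilon(\bast n)|\rangle\leq\langle|\bast m|\rangle\cdot\upmu<\upmu^{-1}\cdot\upmu=1$, so this cross term is infinitesimal, a fortiori bounded; the symmetric argument with $\bast m\leftrightarrow\bast n$, $\upmu\leftrightarrow\upnu$ handles $|\bast n||\upvarepsilon(\bast m)|$. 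Since $\bast\R_{\rm fin}$ is a ring, the sum of the four bounded terms is bounded, so $\bast m+\bast n\in\bast\Z(\uptheta|x^{-1})$, which completes the inclusion.

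I do not expect a serious obstacle here: the argument is bookkeeping, and the only care needed is keeping the strict/non-strict conventions aligned (growth filtration strict, decay filtration non-strict), which is exactly what makes the four-term expansion collapse — each cross term inherits one strict and one non-strict estimate and comes out strictly infinitesimal, while the diagonal terms are absorbed by the defining hypothesis on $\bast\Z(\uptheta|x^{-1})$.
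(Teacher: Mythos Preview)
Your proof is correct. The growth-bound step is verbatim the paper's argument. For the $x^{-1}$-condition you take a slightly more direct route than the paper: both proofs expand $(\bast m+\bast n)\,\upvarepsilon(\bast m+\bast n)$ into the two diagonal terms $\bast m\,\upvarepsilon(\bast m),\ \bast n\,\upvarepsilon(\bast n)$ (bounded by hypothesis) plus the cross terms, but you bound each cross term $|\bast m|\cdot|\upvarepsilon(\bast n)|$ and $|\bast n|\cdot|\upvarepsilon(\bast m)|$ separately and directly from the filtration hypotheses, whereas the paper first combines them into $2\bast m\bast n\uptheta-(\bast m\bast n^{\perp}+\bast m^{\perp}\bast n)$, rewrites this via the identity $\uptheta^{-1}\big(\upvarepsilon(\bast m)\upvarepsilon(\bast n)+(\bast m\bast n\uptheta^{2}-\bast m^{\perp}\bast n^{\perp})\big)$, and then invokes Theorem~\ref{productformula} to recognize $\bast m\bast n\uptheta^{2}-\bast m^{\perp}\bast n^{\perp}$ as an error term of a diophantine approximation of $\uptheta^{2}$. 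Your route avoids this detour through the ideological product and is self-contained; the paper's route makes the connection to the earlier product formula explicit but is not needed for the bare inclusion.
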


\begin{proof}  Let $\bast m\in  \bast\Z^{\upmu [\upiota]}_{\upnu}\big(\uptheta | x^{-1} \big)$, $\bast n\in  \bast\Z^{\upnu[  \uplambda]}_{\upmu}\big(\uptheta | x^{-1} \big)$.  Then
\[ \big| (\bast m + \bast n)^{2} \uptheta -(\bast m + \bast n)(\bast m^{\perp} + \bast n^{\perp})  \big|
\leq C + \big|2\bast m\bast n\uptheta -(\bast m \bast n^{\perp} +\bast m^{\perp} \bast n)   \big|\]
for some constant $C$.
But
\begin{align*}  \left|2\bast m\bast n\uptheta -(\bast m \bast n^{\perp} +\bast m^{\perp} \bast n)   \right| & = 
\uptheta^{-1} \left|( \bast m\uptheta -\bast m^{\perp})( \bast n\uptheta -\bast n^{\perp}) +(\bast m\bast n\uptheta^{2} - \bast m^{\perp}\bast n^{\perp})    \right|  \\
 & = \uptheta^{-1}\left|\upvarepsilon (\bast m )\cdot\upvarepsilon(\bast n) + \upvarepsilon (\bast m\bast n)\right|.
\end{align*}
which is infinitesimal (here we are using Theorem \ref{productformula} to conclude that
$\upvarepsilon (\bast m\bast n)=\bast m\bast n\uptheta^{2} - \bast m^{\perp}\bast n^{\perp}$ is 
the error term of a diophantine approximation, hence is infinitesimal).  Thus
$\bast m +\bast n\in \bast\Z\big(\uptheta \big| x^{-1} \big)$.  Finally we note that
\[   (\bast m+\bast n)\cdot (\upmu-\upnu) \leq \bast m\cdot (\upmu-\upnu) +
\bast n\cdot (\upmu-\upnu) <\upiota +  \uplambda. \]
Thus $\bast m+\bast n\in  \bast\Z^{\upmu-\upnu[ \upiota+ \uplambda]}\big(\uptheta \big|x^{-1} \big)$.
%Finally, we must show that the class $\upnu (\bast m+\bast n)$ defined by the error term
%$\upvarepsilon (\bast m+\bast n)$ is bounded above by $\upmu+\upnu$.  First observe that
%\[  |\upvarepsilon (\bast m+\bast n)| \leq  C|\bast m+\bast n|^{-1} + 
%\uptheta^{-1}|\bast m+\bast n|^{-1}\left|\upvarepsilon (\bast m )\cdot\upvarepsilon(\bast n) + \upvarepsilon (\bast m\bast n)\right| .
% \]
 %Since $(\bast m+\bast n)\cdot (\upmu-\upnu ) < \upiota+ \uplambda$ 
 %it follows that
 %\[  (\upmu-\upnu )   \]
% we have that the class
 %of $\bstar\PR\R_{\upvarepsilon}$ determined by $|\bast m+\bast n|^{-1}$
\end{proof}

%With this sum we can make $ \bast\Z(\uptheta | x^{-1} )$ into a groupoid as follows.

With its sum partially defined along the growth-decay filtration, we refer to 
$\bast\Z\big(\uptheta \big| x^{-1} \big)$ as an  {\bf  approximate group};
the sum is referred to as the {\bf  growth-decay sum} or {\bf  approximate group sum}.
%We may
%make out of $\bast\Z\big(\uptheta \big| x^{-1} \big)$ a groupoid whose objects are
%\[{\rm D}(\bast m) = \bigcup_{{\rm Spec}(\bast m )} \bast\Z^{\upnu}_{\upmu}\big(\uptheta \big| x^{-1} \big)  \]
%$\bast m\in\bast\Z \big(\uptheta \big| x^{-1} \big)$
%a groupoid
%by defining the domain we may de
We say that the approximate group $\bast\Z\big(\uptheta \big| x^{-1} \big)$ is {\bf  barren} if there are no
non-trivial approximate group sums.
%The badly approximable numbers are extreme in their paucity of (growth-decay) filtered groupology structure:

\begin{coro}\label{BAantiprime} Let $\uptheta\in\R-\Q$.  Then 
$\bast\Z\big(\uptheta \big| x^{-1} \big)$ is barren $\Leftrightarrow$
$\uptheta\in\mathfrak{B}$.
%Then the groupology $\bast\Z\big(\uptheta | x^{-1} \big)$ is trivial:  the growth-decay sum $\bast m+ \bast n$ is undefined
%for all non-zero $\bast m,\bast n\in \bast\Z\big(\uptheta | x^{-1} \big)$.  
\end{coro}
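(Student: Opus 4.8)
The plan is to derive the equivalence by reducing everything to the structure Theorems~\ref{badapproxchar} and~\ref{wellapprox}, together with the explicit description~(\ref{decayltoregrowth}) of $\bast\Z(\uptheta\mid x^{-1})$ as $\{\bast n\in\bast\Z(\uptheta)\mid\upnu(\bast n)\leq\upmu(\bast n)\}$. Throughout I read ``barren'' in the sense dual to ``antiprime'': the groupology $\bast\Z(\uptheta\mid x^{-1})$ is barren precisely when, for every pair $\upmu,\upnu\in\bstar\PR\R_{\upvarepsilon}$, at least one of the two summand groups $\bast\Z^{\upmu}_{\upnu}(\uptheta\mid x^{-1})$, $\bast\Z^{\upnu}_{\upmu}(\uptheta\mid x^{-1})$ vanishes, so that the growth--decay sum is never defined with both arguments nontrivial.

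For the implication $\uptheta\in\mathfrak{B}\Rightarrow$ barren, I would first invoke Theorem~\ref{badapproxchar} (i~$\Leftrightarrow$~ii): $\uptheta\in\mathfrak{B}$ forces $\bast\Z^{\upmu}_{\upnu}(\uptheta)=0$ for all $\upmu\geq\upnu$. Since $\bast\Z^{\upmu}_{\upnu}(\uptheta\mid x^{-1})\subset\bast\Z^{\upmu}_{\upnu}(\uptheta)$ by definition, the same vanishing holds for the $x^{-1}$-restricted groups. Hence $\bast\Z^{\upmu}_{\upnu}(\uptheta\mid x^{-1})\neq 0$ forces $\upmu<\upnu$, while $\bast\Z^{\upnu}_{\upmu}(\uptheta\mid x^{-1})\neq 0$ forces $\upnu<\upmu$; these are incompatible, so no nontrivial groupological sum exists. (As an aside, for $\uptheta\in\mathfrak{B}$ one even has $\langle|\bast n|\cdot|\upvarepsilon(\bast n)|\rangle=1$, i.e. $\upnu(\bast n)=\upmu(\bast n)$, for every nonzero $\bast n\in\bast\Z(\uptheta)$, so $\bast\Z(\uptheta\mid x^{-1})=\bast\Z(\uptheta)$ and the claim is even more transparent.)

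For the converse I argue by contraposition: assume $\uptheta\in\R-\Q$ with $\uptheta\notin\mathfrak{B}$, so $\uptheta\in\mathfrak{W}$. By Theorem~\ref{wellapprox} (i~$\Rightarrow$~ii) there is $\upmu\in\bstar\PR\R_{\upvarepsilon}$ with $\bast\Z^{\upmu}_{\upmu}(\uptheta)\neq 0$. The one point requiring care is the inclusion $\bast\Z^{\upmu}_{\upmu}(\uptheta)\subset\bast\Z(\uptheta\mid x^{-1})$: for nonzero $\bast n\in\bast\Z^{\upmu}_{\upmu}(\uptheta)$, membership in $\bast\Z^{\upmu}$ gives $\upmu(\bast n)>\upmu$ because the growth filtration uses a \emph{strict} inequality, while membership in $\bast\Z_{\upmu}(\uptheta)$ gives $\upnu(\bast n)\leq\upmu$; combining, $\upnu(\bast n)\leq\upmu<\upmu(\bast n)$, hence $\upnu(\bast n)\leq\upmu(\bast n)$ and $\bast n\in\bast\Z(\uptheta\mid x^{-1})$ by~(\ref{decayltoregrowth}). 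Thus $\bast\Z^{\upmu}_{\upmu}(\uptheta\mid x^{-1})=\bast\Z^{\upmu}_{\upmu}(\uptheta)\neq 0$, and taking $\upnu=\upmu$ the groupological sum $\bast\Z^{\upmu}_{\upmu}(\uptheta\mid x^{-1})+\bast\Z^{\upmu}_{\upmu}(\uptheta\mid x^{-1})\subset\bast\Z^{\upmu}(\uptheta\mid x^{-1})$ has both arguments nontrivial (and $2\bast n$ lies in its image), so $\bast\Z(\uptheta\mid x^{-1})$ is not barren. Since the strictness of the growth inequality is exactly the ingredient that turns a ``flat'' index pair $(\upmu,\upmu)$ into the non-strict bound defining $\bast\Z(\uptheta\mid x^{-1})$, this is really the crux of the whole statement; the rest is bookkeeping on top of the two cited theorems.
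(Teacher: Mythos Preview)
Your proof is correct and follows essentially the same route as the paper's: the $\Leftarrow$ direction is reduced to Theorem~\ref{badapproxchar}, and the $\Rightarrow$ direction (argued contrapositively) uses Theorem~\ref{wellapprox} to produce a $\upmu$ with $\bast\Z^{\upmu}_{\upmu}(\uptheta)\neq 0$, then exhibits the flat groupological sum $\bast\Z^{\upmu}_{\upmu}(\uptheta\mid x^{-1})+\bast\Z^{\upmu}_{\upmu}(\uptheta\mid x^{-1})$. Your explicit verification of the inclusion $\bast\Z^{\upmu}_{\upmu}(\uptheta)\subset\bast\Z(\uptheta\mid x^{-1})$ via the strict growth inequality is a useful detail the paper leaves implicit.
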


\begin{proof} $\Leftarrow$ Immediate from Theorem \ref{badapproxchar}.  $\Rightarrow$ If $\uptheta\in\mathfrak{W}$
there exists $\upmu$
for which $\bast\Z^{\upmu}_{\upmu}\big(\uptheta |x^{-1} \big)\not =0$.  In this case we have the non-trivial
growth-decay sum
\[\bast\Z^{\upmu }_{\upmu}\big(\uptheta | x^{-1} \big)+ \bast\Z^{\upmu}_{\upmu}\big(\uptheta | x^{-1} \big)
\subset   \bast\Z^{\upmu}\big(\uptheta |x^{-1} \big).\]

\end{proof}

%\begin{note}  Theorem \ref{wellapprox} implies that for $\uptheta\in\mathfrak{W}$, \end{note}

% PROOF FROM WELL APPROX CHAR USING IDEAS FROM THIS SECTION AND PREVIOUS SECTION
%Suppose that $\uptheta$ is well-approximable.  Then there exists
%$\bast n\in \bast\Z^{\upmu}_{\upmu}(\uptheta |1/x )$ such that 
%$|\bast n|_{\uptheta} =0$.  For such an element, if we let $\upmu = \upnu (\bast n)$ then we have in
% $\bstar\PR\R_{\upvarepsilon}$ that $ \bast n \cdot\upmu < 1  $, i.e.\  $ \bast n \cdot\upmu\in \bstar\PR\R_{\upvarepsilon}$.  
%It follows that $\bast n\in \bast\Z^{\upmu}_{\upmu}(\uptheta |1/x )$.  

Recall \cite{Ge4} the real vector space $\bbull\R := \bast\R/\bast\R_{\upvarepsilon}\supset\R$ of {\bf  extended reals} (not considered here as a topological vector space).
We define a function $|\cdot |_{\uptheta}:\bast\Z\rightarrow \bbull\R_{+} $ 
by
\[  |\bast n|_{\uptheta} := \big(|\bast n|\cdot \|\bast n\uptheta\|\big)^{1/2}\mod \bast\R_{\upvarepsilon}.  \]
Note that when $\bast n\in \bast\Z (\uptheta)$, $\|\bast n\uptheta\|= |\upvarepsilon( \bast n)|$.
By definition, $ |\bast n|_{\uptheta} \in\R_{+}$ for all $\bast n\in \bast\Z\big(\uptheta | x^{-1} \big)$.  
Somewhat abusively, we refer to $|\cdot|_{\uptheta}$ as the {\small $\boldsymbol\uptheta${\bf -norm}}; while it is technically not a norm,
it may be viewed as a generalized norm in a sense which will soon be made clear.
We have immediately:

\begin{prop}\label{trivialnorm} $|\cdot |_{\uptheta}\equiv 0$ on $\bast\Z^{\upmu}_{\upnu}\big(\uptheta | x^{-1} \big)$ for $\upmu\geq\upnu$.
\end{prop}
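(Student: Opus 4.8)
The plan is to reduce the statement to the single order inequality $\upnu(\bast n)<\upmu(\bast n)$, valid for every nonzero element of the group in question, and then to transport this inequality through the growth-decay valuation. First I would dispose of $\bast n=0$, for which $|\bast n|_{\uptheta}=0$ by definition, and fix $0\neq\bast n\in\bast\Z^{\upmu}_{\upnu}(\uptheta|x^{-1})\subset\bast\Z^{\upmu}_{\upnu}(\uptheta)$. Unwinding membership in $\bast\Z^{\upmu}_{\upnu}(\uptheta)$ gives $\upmu<\upmu(\bast n)$ together with $\upnu(\bast n)\leq\upnu$; combining these with the hypothesis $\upmu\geq\upnu$ produces the chain $\upnu(\bast n)\leq\upnu\leq\upmu<\upmu(\bast n)$, hence $\upnu(\bast n)<\upmu(\bast n)$. (Note that this uses only the growth-decay indexing, not the extra condition defining $\bast\Z(\uptheta|x^{-1})$.)

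Next I would exhibit a concrete representative of $|\bast n|_{\uptheta}$. Since $\bast n\in\bast\Z(\uptheta)$ we have $\|\bast n\uptheta\|=|\upvarepsilon(\bast n)|$, so $\bast t:=(|\bast n|\cdot|\upvarepsilon(\bast n)|)^{1/2}\in\bast\R_{+}$ represents $|\bast n|_{\uptheta}$ modulo $\bast\R_{\upvarepsilon}$. Applying the multiplicative growth-decay valuation $\langle\cdot\rangle$, and using $\langle|\bast n|\rangle=\langle|\bast n^{-1}|\rangle^{-1}=\upmu(\bast n)^{-1}$ and $\langle|\upvarepsilon(\bast n)|\rangle=\upnu(\bast n)$, I get
\[ \langle\bast t\rangle^{2}\;=\;\langle|\bast n|\rangle\cdot\langle|\upvarepsilon(\bast n)|\rangle\;=\;\upmu(\bast n)^{-1}\cdot\upnu(\bast n)\;<\;\upmu(\bast n)^{-1}\cdot\upmu(\bast n)\;=\;1, \]
where the strict inequality is $\upnu(\bast n)<\upmu(\bast n)$ multiplied by $\upmu(\bast n)^{-1}$, legitimate because $\bstar\PR\R$ is an ordered group (Proposition \ref{denselinord}). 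Since squaring preserves the order, $\langle\bast t\rangle^{2}<1$ forces $\langle\bast t\rangle<1$.

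Finally, $\langle\bast t\rangle<1$ says precisely that $\bast t$ is infinitesimal, i.e.\ $\bast t\in\bast\R_{\upvarepsilon}$; alternatively one argues directly that $\bast t^{2}=|\bast n|\cdot|\upvarepsilon(\bast n)|$ is infinitesimal and that a nonnegative square root of an infinitesimal is infinitesimal. Either way, $|\bast n|_{\uptheta}=\bast t\bmod\bast\R_{\upvarepsilon}=0$, which is the claim. I do not anticipate any genuine obstacle here: the only care needed is keeping the directions of the valuation and of the order consistent. The conceptual point is simply that on the region $\upmu\geq\upnu$ the growth of the denominator strictly dominates the decay of the error, so the geometric-mean quantity $|\bast n|_{\uptheta}$ collapses to $0$ in $\bbull\R$.
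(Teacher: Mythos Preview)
Your argument is correct and is exactly the unpacking of why the paper regards the proposition as immediate (the paper states it without proof, prefaced by ``We have immediately''). The only content is the chain $\upnu(\bast n)\leq\upnu\leq\upmu<\upmu(\bast n)$, which forces $|\bast n|\cdot|\upvarepsilon(\bast n)|$ to be infinitesimal; you have spelled this out cleanly.
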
 

%On the other hand if there exists $\upmu$ and $\bast n\not=0$ contained in  $\bast\Z^{\upmu}_{\upmu}(\uptheta )$ then
%\[  \bast n\cdot \upnu (\bast n) \leq \bast n\cdot\upmu \in \bstar\PR\R_{\upvarepsilon}.\]
%This implies that $|\bast n|_{\uptheta} =0$ i.e.\ $\uptheta\in\mathfrak{W}$ is well-approximable.
\begin{note} For each $\uptheta\in\mathfrak{B}$, let $C_{\uptheta}>0$ be the supremum of constants
$C$ for which $\|n\uptheta\|<Cx^{-1}$ has only finitely many solutions.  The set of such
$C_{\uptheta}$ as one ranges over $\uptheta\in\mathfrak{B}$ is called the Lagrange spectrum \cite{Sch}.
Note that if $\uptheta\in\mathfrak{B}$ and $C_{\uptheta}>0$ is
the associated element of the Lagrange spectrum then for all $0\not= \bast n\in \bast\Z \big(\uptheta |x^{-1}\big)$
\[  |\bast n|_{\uptheta} \geq C^{1/2}_{\uptheta}. \]
Thus for badly approximable numbers, the $\uptheta$-norm is always positive on non $0$ elements.  
\end{note}

For arbitrary $\uptheta\in\R-\Q$, do there exist any $\bast n\in \bast\Z\big(\uptheta | x^{-1} \big)$
for which  $|\bast n|_{\uptheta}\not=0$?  
If there exists such an $\bast n$ then
there are positive real constants $c<C$ such that some representative sequence $\{n_{i}\}$ satisfies the inequality
\begin{align}\label{symmdefinition}    \frac{c}{n_{i}}\leq |n_{i}\uptheta - n^{\perp}_{i}| \leq  \frac{C}{n_{i}} \end{align}
i.e. $\langle|\bast n|\cdot |\upvarepsilon(\bast n ) |\rangle = \langle|\bast n|\rangle\cdot \upnu(\bast n )=1$ , or equivalently,
\[\upmu(\bast n)=\upnu(\bast n).
\]
We call such a class $\bast n$ a {\bf  symmetric diophantine approximation}, the set of which union $0$ is denoted 
\[ \bast\Z^{\rm sym}(\uptheta ):=|\cdot|_{\uptheta}^{-1}(0,\infty )\cup\{ 0\}\subset \bast\Z \big(\uptheta | x^{-1} \big).\]
%, and note that $\bast\Z^{\rm sym}(\uptheta )\subset\bast\Z \big(\uptheta | x^{-1} \big)$. 
We have trivially that $\bast n\in \bast\Z^{\rm sym}(\uptheta )$ $\Leftrightarrow$ $N\cdot \bast n\in  \bast\Z^{\rm sym}(\uptheta )$ for $N\in\Z -\{ 0\}$.

We will show that $\bast\Z^{\rm sym}(\uptheta )\not=0$ for all $\uptheta\in\R-\Q$.  For each $\upnu\in\bstar\PR\R_{\upvarepsilon}$, write 
\[ \bast\Z^{\rm sym}_{\upnu}(\uptheta )= \{\bast n\in\bast\Z^{\rm sym} (\uptheta )|\;  \upnu(\bast n) = \upnu\},\]
so that $\bast\Z^{\rm sym}(\uptheta )=\bigcup_{\upnu} \bast\Z^{\rm sym}_{\upnu}(\uptheta)$.  The following
Proposition identifies $\bast\Z^{\rm sym}_{\upnu}(\uptheta )$ as the part of $\bast\Z_{\upnu}(\uptheta )$
inhabiting the narrow space between the intersection of the slow diophantine approximations of decay $\upnu$
and the flat diophantine approximations of decay $\upnu$.

\begin{prop}\label{diffandsym}  Let $\uptheta\in\R$.  Then
$ \bast\Z^{\rm sym}_{\upnu}(\uptheta) =
 \bigg( \bigcap_{\upmu<\upnu} \bast\Z^{\upmu}_{\upnu} (\uptheta )\bigg) - 
 \bast\Z^{\upnu}_{\upnu} (\uptheta) $.    %Either $ \bast\Z^{\rm sym}_{\upnu}(\uptheta) =0$ or is infinite cyclic.
 \end{prop}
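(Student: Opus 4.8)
The plan is to reduce the asserted set equality to a comparison of pointwise conditions on the growth $\upmu(\bast n)$ and the decay $\upnu(\bast n)$ of an element $\bast n\in\bast\Z(\uptheta)$; that is, to describe each side as a region in the $(\upmu(\bast n),\upnu(\bast n))$-picture and to check that the two regions coincide.

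For the left-hand side I would first note that, unwinding the definition of $|\cdot|_{\uptheta}$ and using multiplicativity of $\langle\cdot\rangle$, a nonzero $\bast n$ lies in $\bast\Z^{\rm sym}(\uptheta)$ exactly when $\langle|\bast n|\cdot|\upvarepsilon(\bast n)|\rangle=1$, i.e.\ $\upmu(\bast n)=\upnu(\bast n)$ — this is precisely the reformulation recorded at (\ref{symmdefinition}). Imposing in addition $\upnu(\bast n)=\upnu$ gives
\[ \bast\Z^{\rm sym}_{\upnu}(\uptheta)=\{0\}\cup\{\bast n\in\bast\Z(\uptheta)\mid \upmu(\bast n)=\upnu(\bast n)=\upnu\}. \]
For the right-hand side I would use that the order on $\bstar\PR\R$ is dense (Proposition \ref{denselinord}): an element belongs to $\bast\Z^{\upmu}=\{\bast n\mid\upmu(\bast n)>\upmu\}\cup\{0\}$ for \emph{every} $\upmu<\upnu$ if and only if $\upmu(\bast n)\geq\upnu$, so intersecting with the defining condition $\upnu(\bast n)\leq\upnu$ of $\bast\Z_{\upnu}(\uptheta)$ yields
\[ \bigcap_{\upmu<\upnu}\bast\Z^{\upmu}_{\upnu}(\uptheta)=\{0\}\cup\{\bast n\in\bast\Z(\uptheta)\mid \upmu(\bast n)\geq\upnu,\ \upnu(\bast n)\leq\upnu\}, \]
whereas $\bast\Z^{\upnu}_{\upnu}(\uptheta)=\{0\}\cup\{\bast n\in\bast\Z(\uptheta)\mid \upmu(\bast n)>\upnu,\ \upnu(\bast n)\leq\upnu\}$; by (\ref{growthincl}) the second set is contained in the first, so the difference appearing in the statement is a well-formed (pointed) difference and equals $\{0\}\cup\{\bast n\in\bast\Z(\uptheta)\mid \upmu(\bast n)=\upnu,\ \upnu(\bast n)\leq\upnu\}$.

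Granting these two descriptions, the inclusion $\bast\Z^{\rm sym}_{\upnu}(\uptheta)\subset\bigl(\bigcap_{\upmu<\upnu}\bast\Z^{\upmu}_{\upnu}(\uptheta)\bigr)-\bast\Z^{\upnu}_{\upnu}(\uptheta)$ is immediate. The reverse inclusion comes down to a single non-formal point, which is where I expect the real work to lie: one must show that an $\bast n\in\bast\Z(\uptheta)$ with $\upmu(\bast n)=\upnu$ and $\upnu(\bast n)\leq\upnu$ automatically has $\upnu(\bast n)=\upnu$, equivalently that its decay cannot lie strictly below its growth, i.e.\ $\upnu(\bast n)\geq\upmu(\bast n)$. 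For $\uptheta\in\mathfrak{B}$ this falls out at once from the uniform estimate $|\bast n|\cdot|\upvarepsilon(\bast n)|\geq C$ characterising badly approximable numbers (and the Note above), which forces $\upnu(\bast n)\geq\upmu(\bast n)$ for \emph{every} $\bast n\in\bast\Z(\uptheta)$; in the remaining cases this is the step requiring care — it is exactly the demand, built into the definition of $\bast\Z^{\rm sym}_{\upnu}(\uptheta)$, that the decay attain the value $\upnu$ on the nose — and I would argue it by inspecting monotone representative sequences of $\bast n$ and $\upnu$ along the lines of the proof of Theorem \ref{flatmultibest}, so as to excise from $\bigcap_{\upmu<\upnu}\bast\Z^{\upmu}_{\upnu}(\uptheta)$ precisely those approximations whose error decays faster than the reciprocal of the denominator.
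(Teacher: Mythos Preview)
Your reduction to pointwise conditions on $\upmu(\bast n)$ and $\upnu(\bast n)$ is exactly the paper's approach: its entire proof is the single line
\[
\bast n \in \bast\Z^{\rm sym}_{\upnu}(\uptheta)\ \Leftrightarrow\ \bast n\cdot \upnu= \bast n\cdot \upnu (\bast n ) =1\ \Leftrightarrow\ \bast n\not\in  \bast\Z^{\upnu}_{\upnu} (\uptheta)\ \text{and}\ \bast n\in  \bast\Z^{\upmu}_{\upnu} (\uptheta)\ \text{for all}\ \upmu<\upnu,
\]
which you have unpacked correctly. In particular, the paper does \emph{not} argue the point you flag as the ``real work''; it simply asserts the second biconditional.

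The gap you isolate is genuine, and your proposed patch cannot succeed for $\uptheta\in\mathfrak{W}$. Take any best denominator class $\bast\wideparen{q}$ with infinite partial quotient (these exist by Theorem~\ref{wellapprox}); then $\upmu(\bast\wideparen{q})=\wideparen{\upmu}>\wideparen{\upnu}=\upnu(\bast\wideparen{q})$. With $\upnu:=\wideparen{\upmu}$ one has $\upmu(\bast\wideparen{q})=\upnu$ and $\upnu(\bast\wideparen{q})<\upnu$, so $\bast\wideparen{q}$ lies in $\bigl(\bigcap_{\upmu<\upnu}\bast\Z^{\upmu}_{\upnu}(\uptheta)\bigr)-\bast\Z^{\upnu}_{\upnu}(\uptheta)$ by your own computation of that set, yet $\bast\wideparen{q}\notin\bast\Z^{\rm sym}_{\upnu}(\uptheta)$ since $|\bast\wideparen{q}|_{\uptheta}=0$. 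No appeal to monotone representatives or to Theorem~\ref{flatmultibest} can remedy this: the obstruction is the existence of approximations with $\upnu(\bast n)<\upmu(\bast n)$, which is precisely what well-approximability supplies.

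So what you have actually proved (and what the paper's one-line argument actually establishes) is the inclusion $\bast\Z^{\rm sym}_{\upnu}(\uptheta)\subset\bigl(\bigcap_{\upmu<\upnu}\bast\Z^{\upmu}_{\upnu}(\uptheta)\bigr)-\bast\Z^{\upnu}_{\upnu}(\uptheta)$, together with the observation that the difference on the right equals $\{\bast n:\upmu(\bast n)=\upnu,\ \upnu(\bast n)\leq\upnu\}$. Equality holds exactly when there is no $\bast n$ with $\upnu(\bast n)<\upmu(\bast n)=\upnu$; in particular it holds for $\uptheta\in\mathfrak{B}$ (your argument there is fine), which is all that is needed for the Corollary immediately following the Proposition.
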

 
 \begin{proof}  $\bast n \in \bast\Z^{\rm sym}_{\upnu}(\uptheta)$ $\Leftrightarrow$ 
 $\bast n\cdot \upnu= \bast n\cdot \upnu (\bast n ) =1$ in $\bstar\PR\R$ $\Leftrightarrow$ 
 $\bast n\not\in  \bast\Z^{\upnu}_{\upnu} (\uptheta) $ and $\bast n\in  \bast\Z^{\upmu}_{\upnu} (\uptheta)$
 for all $\upmu<\upnu$. 
 %On the other hand
 %$\bast n\in  \bast\Z^{\rm sym}_{\upnu}(\uptheta)\Leftrightarrow \bast n \cdot \upnu $.  The converse inclusion
  \end{proof}
  
  \begin{coro}   If $\uptheta\in\mathfrak{B}$ then \[  \bast\Z^{\rm sym}_{\upnu}(\uptheta) =
  \bigcap_{\upmu<\upnu} \bast\Z^{\upmu}_{\upnu} (\uptheta ).\]
  In particular, $\bast\Z^{\rm sym}_{\upnu}(\uptheta)$ is a subgroup of $\bast\Z_{\upnu}(\uptheta)$.
  \end{coro}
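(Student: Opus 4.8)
The plan is to read this off from the preceding Proposition together with the vanishing criterion for badly approximable numbers. The key input is Theorem \ref{badapproxchar}: the implication i.\ $\Rightarrow$ ii.\ there says that $\uptheta\in\mathfrak{B}$ forces $\bast\Z^{\upmu}_{\upnu}(\uptheta)=0$ whenever $\upmu\geq\upnu$; specializing to $\upmu=\upnu$ gives $\bast\Z^{\upnu}_{\upnu}(\uptheta)=\{0\}$. Substituting this into the identity $\bast\Z^{\rm sym}_{\upnu}(\uptheta)=\big(\bigcap_{\upmu<\upnu}\bast\Z^{\upmu}_{\upnu}(\uptheta)\big)-\bast\Z^{\upnu}_{\upnu}(\uptheta)$ of the preceding Proposition, the subtracted term becomes $\{0\}$, so $\bast\Z^{\rm sym}_{\upnu}(\uptheta)$ and $\bigcap_{\upmu<\upnu}\bast\Z^{\upmu}_{\upnu}(\uptheta)$ differ at most by the zero element.

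To finish I would verify that in fact there is no difference, i.e.\ that adjoining $0$ (which the definition of $\bast\Z^{\rm sym}$ already does) yields exactly the intersection. This comes down to checking that for $\uptheta\in\mathfrak{B}$ every nonzero $\bast n$ lying in $\bigcap_{\upmu<\upnu}\bast\Z^{\upmu}_{\upnu}(\uptheta)$ is genuinely symmetric of decay $\upnu$. Membership in the intersection gives $\upmu(\bast n)\geq\upnu$ and $\upnu(\bast n)\leq\upnu$. If $\upmu(\bast n)>\upnu$ then $\bast n$ would be a nonzero element of $\bast\Z^{\upnu}_{\upnu}(\uptheta)$, and if $\upnu(\bast n)<\upnu$ then, setting $\uplambda=\upnu(\bast n)$, $\bast n$ would be a nonzero element of $\bast\Z^{\uplambda}_{\uplambda}(\uptheta)$; either case contradicts $\bast\Z^{\upmu'}_{\upmu'}(\uptheta)=0$ from Theorem \ref{badapproxchar}. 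Hence $\upmu(\bast n)=\upnu(\bast n)=\upnu$, so $\bast n\in\bast\Z^{\rm sym}_{\upnu}(\uptheta)$, and the two sets coincide.

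The ``in particular'' is then immediate: each $\bast\Z^{\upmu}_{\upnu}(\uptheta)=\bast\Z^{\upmu}(\uptheta)\cap\bast\Z_{\upnu}(\uptheta)$ is a subgroup of $\bast\Z_{\upnu}(\uptheta)$, and an intersection of subgroups is a subgroup, so the just established equality $\bast\Z^{\rm sym}_{\upnu}(\uptheta)=\bigcap_{\upmu<\upnu}\bast\Z^{\upmu}_{\upnu}(\uptheta)$ exhibits $\bast\Z^{\rm sym}_{\upnu}(\uptheta)$ as a subgroup. I do not anticipate a real obstacle here; the only point that warrants a careful sentence is the handling of the zero element and the strict versus nonstrict inequalities, which is why I would spell out the two-case argument above rather than simply quoting the preceding Proposition verbatim.
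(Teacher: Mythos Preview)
Your proof is correct and follows exactly the approach the paper intends: the Corollary is stated without proof there, as an immediate consequence of the preceding Proposition together with the vanishing $\bast\Z^{\upnu}_{\upnu}(\uptheta)=0$ from Theorem~\ref{badapproxchar}. Your extra care with the zero element and the verification that $\upmu(\bast n)=\upnu(\bast n)=\upnu$ is more explicit than the paper, but not a different route---it simply spells out what the paper leaves implicit.
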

  
 % \begin{note} If the set $ \bast\Z^{\rm sym}_{\upnu}(\uptheta)\not=0$ it can, in principle, be large e.g.\ uncountable.  For example, if
   %there exists $\bast n \in \bast\Z^{\rm sym}_{\upnu}(\uptheta)$ which is divisible by infinitely many $N\in\Z$,
   %one can form uncountably many elements of $ \bast\Z^{\rm sym}_{\upnu}(\uptheta)$ of the form $\bast q\cdot\bast n$
   %where $\bast q\in\bast\Q_{\rm fin}$.  Indeed, for such $\bast q\cdot\bast n$,
   %we have $\upnu (\bast q\cdot\bast n)=\upnu (\bast n )$
   %and $\langle\bast q\cdot\bast n\rangle=\langle\bast n\rangle$.
  %\end{note}
  
  Recall (see \S \ref{nonvanspec}) the set of best growths $\bstar\PR\R_{\upvarepsilon}^{\rm bg}\subset\bstar\PR\R_{\upvarepsilon}$.
 
 \begin{theo}\label{symnotempty}  For all $\uptheta\in \R-\Q$ and all $\upnu\in\bstar\PR\R_{\upvarepsilon}^{\rm bg}$, $\bast\Z_{\upnu}^{\rm sym}(\uptheta)\not=0$.  In particular, if $\uptheta\in\mathfrak{B}$ then $\bast\Z^{\rm sym}_{\upnu}(\uptheta)\not=0$ for all $\upnu\in\bstar\PR\R_{\upvarepsilon}$.
 \end{theo}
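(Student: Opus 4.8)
The plan is to produce, for each best growth $\upnu = \wideparen{\upnu}\in\bstar\PR\R_{\upvarepsilon}^{\rm bg}$, an explicit nonzero symmetric diophantine approximation with decay exactly $\upnu$, by using the best denominator class whose growth realizes $\upnu$ together with the sharp two-sided estimate for the error term of a principal convergent. Recall from the discussion preceding Theorem~\ref{bestfilt} that for a best denominator $q_i$ one has the classical two-sided bound $\tfrac{1}{q_{i+1}+q_i}\le |q_i\uptheta - p_i| \le \tfrac{1}{q_{i+1}}$ (this is exactly the ``Lemma of \S I.4 of \cite{La1}'' invoked in the proof of Theorem~\ref{flatmultibest}, specialized to $r=0$), and more precisely $|\upvarepsilon(\bast\wideparen{q})| = \bast\uptheta/(\bast\uptheta\,\bast\wideparen{q}^{+}+\bast\wideparen{q})$ with $\bast\uptheta\in[1,\infty)$. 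The key point is that the predecessor/successor structure lets us rewrite $q_{i+1} = a_{i+1}q_i + q_{i-1}$, so $q_{i+1}/q_i = a_{i+1} + q_{i-1}/q_i \in [a_{i+1}, a_{i+1}+1]$.

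First I would fix $\upnu\in\bstar\PR\R_{\upvarepsilon}^{\rm bg}$ and choose a best denominator class $\bast\wideparen{q}$ with $\upnu(\bast\wideparen{q}) = \upnu$, say $\bast\wideparen{q} = \bast\{q_{n_i}\}$ along an increasing sequence of indices. I claim $\bast\wideparen{q}\in\bast\Z^{\rm sym}(\uptheta)$ precisely when the associated partial quotients stay bounded along the chosen subsequence, i.e.\ when $\bast a = \bast\{a_{n_i+1}\}$ is a \emph{finite} (noninfinitesimal, nonzero) element of $\bast\R$. In that case $\langle\bast a\rangle = 1$ in $\bstar\PR\R$, so from $|\upvarepsilon(\bast\wideparen{q})| = \bast\uptheta/(\bast\uptheta\,\bast\wideparen{q}^{+}+\bast\wideparen{q})$ and $\bast\wideparen{q}^{+} = \bast a\,\bast\wideparen{q} + \bast\wideparen{q}^{-}$ with $\bast\uptheta\in[1,\infty)$ finite, one gets $\upnu(\bast\wideparen{q}) = \langle\bast\wideparen{q}^{+}\rangle^{-1} = \langle\bast a\bast\wideparen{q}\rangle^{-1} = \langle\bast\wideparen{q}\rangle^{-1} = \wideparen{\upmu} = \upmu(\bast\wideparen{q})$, which is exactly the defining condition $\upmu(\bast n) = \upnu(\bast n)$ for membership in $\bast\Z^{\rm sym}(\uptheta)$. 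Concretely, this realizes the two-sided bound \eqref{symmdefinition} with $c$ and $C$ coming from the bounds on $\bast a$ and $\bast\uptheta$.

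The remaining issue is that the chosen $\bast\wideparen{q}$ might have \emph{infinite} partial quotient along the given subsequence (when $\uptheta\in\mathfrak{W}$), in which case $\upmu(\bast\wideparen{q}) > \upnu(\bast\wideparen{q})$ and $\bast\wideparen{q}$ itself is not symmetric — indeed then $\bast\wideparen{q}\in\bast\Z^{\wideparen{\upmu}}_{\wideparen{\upmu}}(\uptheta)$ is flat, and by Proposition~\ref{trivialnorm} its $\uptheta$-norm vanishes. To handle this I would perturb the subsequence: since $\upnu = \wideparen{\upnu}$ is itself a best decay, it is realized as $\langle\bast\wideparen{q}^{+}\rangle^{-1}$ for the original class, i.e.\ $\upnu$ equals the best \emph{growth} of the successor class $\bast\wideparen{q}^{+} = \bast\{q_{n_i+1}\}$. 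Replacing $\bast\wideparen{q}$ by a best denominator class $\bast\wideparen{r}$ whose growth is $\upnu$ and which is built so that the \emph{next} partial quotients $a_{m_i+1}$ stay bounded — always possible, because only finitely many partial quotients can be ``large'' in any window, so one can interpolate indices so as to land on small partial quotients — puts us back in the bounded case above. The honest version of this argument is: for each $i$, among the best denominators $q_k$ with $\langle q_k^{-1}\rangle$ near the $i$-th coordinate of $\upnu^{-1}$, pick one whose successor ratio $q_{k+1}/q_k$ is bounded by a fixed constant (such exists since the sequence $\{a_j\}$ cannot have all of $a_k, a_{k+1}$ large for $k$ in a whole block without forcing $\uptheta\in\mathfrak B$'s complement of best growths to miss $\upnu$, which it does not). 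The main obstacle is precisely this interpolation step — showing that inside the window of best denominators realizing a prescribed best growth one can always find one with bounded \emph{forward} partial quotient; I expect this to follow from the characterization of $\bstar\PR\R_{\upvarepsilon}^{\rm bg}$ being closed (the Proposition preceding Theorem~\ref{bestfilt}) together with the fact that $\upnu$ being a best growth means it is \emph{not} ``trapped'' strictly between a growth and its successor growth. Finally, the special case $\uptheta\in\mathfrak{B}$ is immediate: by Theorem~\ref{badapproxchar}(iii), $\bstar\PR\R_{\upvarepsilon}^{\rm bg}(\uptheta) = \bstar\PR\R_{\upvarepsilon}$, and every best denominator class has $\upmu = \upnu$ by Theorem~\ref{badapproxchar}(iv), so the construction of the first paragraph applies to \emph{every} $\upnu\in\bstar\PR\R_{\upvarepsilon}$, giving $\bast\Z^{\rm sym}_{\upnu}(\uptheta)\neq 0$ throughout.
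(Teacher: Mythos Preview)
Your approach has a genuine gap at the interpolation step, and the gap cannot be repaired while staying within the class of principal convergents. Consider a resolute number such as $\uptheta=[1,2,3,4,\dots]$, where $a_i\to\infty$. Then \emph{every} best denominator class $\bast\wideparen{q}=\bast\{q_{n_i}\}$ has infinite partial quotient, so by your own first paragraph no best class is symmetric. Your remedy is to replace $\bast\wideparen{q}$ by another best class $\bast\wideparen{r}=\bast\{q_{m_i}\}$ with the same growth $\upnu$ but with $a_{m_i+1}$ uniformly bounded. This is impossible here: since $q_{k+1}/q_k\ge a_{k+1}\to\infty$, two distinct indices $m_i\neq n_i$ force $q_{m_i}/q_{n_i}$ (or its reciprocal) to be unbounded, so $\langle q_{m_i}^{-1}\rangle=\upnu$ forces $m_i=n_i$ along the ultrafilter and hence $\bast\wideparen{r}=\bast\wideparen{q}$. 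Your heuristic that ``$\{a_j\}$ cannot have all of $a_k,a_{k+1}$ large for $k$ in a whole block'' is simply false for resolute numbers, and neither the closedness of $\bstar\PR\R_{\upvarepsilon}^{\rm bg}(\uptheta)$ nor the description of gaps between successive best growths rescues it.

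The paper's proof supplies the missing idea: for $\uptheta\notin\mathfrak{B}\cup\Q$ it abandons principal convergents and uses \emph{intermediate} convergents $q_{n,r}=rq_{n+1}+q_n$ with $r$ bounded and $r\neq 0,1,a_{n+2}-1$. Grace's Theorem gives the lower bound $|q_{n,r}|\cdot\|q_{n,r}\uptheta\|>1$, and a direct computation with $|q_{n,r}|\cdot\|q_{n,r}\uptheta\|=q_{n,r}(\uptheta_{n+2}-r)/(\uptheta_{n+2}q_{n+1}+q_n)$ gives the upper bound $r+1$. Since $q_{n,r}\asymp q_{n+1}$ for bounded $r\ge 2$, this produces, for any best growth $\upnu$, a symmetric class with $\upmu(\bast n)=\upnu(\bast n)=\upnu$. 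Your treatment of the $\mathfrak{B}$ case (via Theorem~\ref{badapproxchar}(iii),(iv)) is correct and matches the paper.
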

 
 \begin{proof}  For $\uptheta\in\mathfrak{B}$ the result is
 obviously true: any best class $\bast\widehat{q}\in \bast\Z^{\rm sym}(\uptheta)$, and moreover,
 we may realize any growth index $\upnu$ as a best growth by Theorem \ref{badapproxchar}. 
 Now assume that $\uptheta\not\in\mathfrak{B}\cup\Q$.  Let $\uptheta = [a_{1}a_{2}\dots ]$.  By Grace's Theorem \cite{La1},
 the intermediate best denominator (= denominator of the intermediate
 convergent) $q_{n,r}= rq_{n+1}+q_{n}$, where $0\leq r< a_{n+2}$, satisfies
 \[ 1< |q_{n,r}|\cdot\| q_{n,r}\uptheta\|\]
 for all $r\not= 0,1,a_{n+2}-1$.  For any infinite sequence of such intermediate convergents we take $c=1$ in (\ref{symmdefinition}).  On the other hand (Lemma
 I.4 of \cite{La1}), we have
\[    |q_{n,r}|\cdot\| q_{n,r}\uptheta\| = \frac{q_{n,r}(\uptheta_{n+2}-r)}{\uptheta_{n+2}q_{n+1}+q_{n}}= 
 \frac{(rq_{n+1}+q_{n})(\uptheta_{n+2}-r)}{\uptheta_{n+2}q_{n+1}+q_{n}}\] 
where (as in Theorem \ref{flatmultibest}) $\uptheta_{i}$ is defined by the equation $\uptheta = [a_{1}\dots a_{i-1}\uptheta_{i}]$.  We note that $r<a_{n+2}<\uptheta_{n+2}$.
Dividing out numerator and denominator by the dominant term $\uptheta_{n+2}q_{n+1}$ gives
\begin{align*}
 |q_{n,r}|\cdot\| q_{n,r}\uptheta\|  & = 
\frac{\bigg( \big(r/\uptheta_{n+2}\big)+ \big(q_{n}\big/(\uptheta_{n+2}q_{n+1})\big)\bigg)\big(\uptheta_{n+2}-r\big)}{1+ \big(q_{n}\big/(\uptheta_{n+2}q_{n+1})\big)} \\
& \\
& < r+ \frac{q_{n}}{q_{n+1}} -r\left(\frac{r}{\uptheta_{n+2}}+\frac{q_{n}}{\uptheta_{n+2}q_{n+1}}\right) \\
& \\
& < r+1.
\end{align*}
Then if we choose $r_{i}$ bounded and $\not= 0,1,a_{n_{i}+2}-1$, the class $\bast n =\bast \{q_{n_{i},r_{i}}\}$ will be symmetric.  It follows that
for any best class $\bast\widehat{q}$ there is a symmetric class $\bast n$ such that $\upmu (\bast\widehat{q}) = \upmu (\bast n )$.  
%Since the set of best growths is equal to all of $\bstar\PR\R_{\upvarepsilon}$ this shows that $\bast\Z^{\rm sym}_{\upnu}(\uptheta)\not=0$ for %all $\upnu$.
 \end{proof}
 
 \begin{coro}  If $\uptheta\in\mathfrak{B}$ then $\bast\Z^{\rm sym}(\uptheta)$ contains both the best denominator classes
 as well as the intermediate best denominator classes.
 \end{coro}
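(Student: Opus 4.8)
The plan is to prove the corollary by combining Theorem \ref{symnotempty} with the explicit structure already used in its proof. First I would recall that the corollary claims two inclusions for $\uptheta\in\mathfrak{B}$: that every best denominator class $\bast\wideparen{q}$ lies in $\bast\Z^{\rm sym}(\uptheta)$, and that every intermediate best denominator class lies in $\bast\Z^{\rm sym}(\uptheta)$. The first inclusion is essentially immediate: since $\uptheta\in\mathfrak{B}$ there is a uniform constant $C>0$ with $C\leq |q_i|\cdot\|q_i\uptheta\|<1$ for all principal convergent denominators $q_i$ (this is the definition of badly approximable, together with \eqref{bestineq}), so any best denominator class $\bast\wideparen{q}=\bast\{q_{n_i}\}$ satisfies $C\leq |\bast\wideparen{q}|\cdot|\upvarepsilon(\bast\wideparen{q})|<1$ in $\bast\R$, whence $\upmu(\bast\wideparen{q})=\upnu(\bast\wideparen{q})$, i.e.\ $\bast\wideparen{q}\in\bast\Z^{\rm sym}(\uptheta)$.

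For the second inclusion I would invoke the computation carried out inside the proof of Theorem \ref{symnotempty}. There it is shown, using Grace's Theorem and Lemma I.4 of \cite{La1}, that for the intermediate best denominator $q_{n,r}=rq_{n+1}+q_n$ with $r\neq 0,1,a_{n+2}-1$ one has $1<|q_{n,r}|\cdot\|q_{n,r}\uptheta\|<r+1$. The point particular to the badly approximable case is that for $\uptheta\in\mathfrak{B}$ the partial quotients $a_i$ are uniformly bounded, say by $M$; hence the admissible values $r$ (which satisfy $0\leq r<a_{n+2}\leq M$) are themselves uniformly bounded, and therefore $1<|q_{n,r}|\cdot\|q_{n,r}\uptheta\|<M+1$ uniformly. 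Consequently, for any intermediate best denominator class $\bast n=\bast\{q_{n_i,r_i}\}$ we get $1<|\bast n|\cdot|\upvarepsilon(\bast n)|<M+1$ in $\bast\R$, so $\upmu(\bast n)=\upnu(\bast n)$ and $\bast n\in\bast\Z^{\rm sym}(\uptheta)$. I would also note the boundary cases: for $r=0$ one recovers a principal convergent, already handled; the values $r=1$ and $r=a_{n+2}-1$ likewise give $|q_{n,r}|\cdot\|q_{n,r}\uptheta\|$ trapped between positive constants in the badly approximable case (since $\uptheta_{n+2}-r$ is then bounded below and above by constants depending only on $M$), so those intermediate classes are symmetric as well.

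The only mild subtlety — and the step I would expect to require the most care — is bookkeeping about which sequences of intermediate convergents one is allowed to take: an intermediate best denominator class is by definition $\bast\{q_{n_i,r_i}\}$ for some choice of indices $n_i\to\infty$ and admissible $r_i$, and I must make sure the uniform two-sided bound $c\leq |q_{n_i,r_i}|\cdot\|q_{n_i,r_i}\uptheta\|\leq C$ holds \emph{simultaneously} for all $i$, not merely along a subsequence. This is exactly where badly approximability is used decisively: the uniform bound $M$ on all partial quotients makes the estimate $1<|q_{n,r}|\cdot\|q_{n,r}\uptheta\|<r+1\leq M+1$ uniform in both $n$ and $r$, so passing to the ultrapower the bounds survive as genuine inequalities between non-infinitesimal, non-infinite nonstandard reals. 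With that observation in hand the corollary follows directly from the definition of $\bast\Z^{\rm sym}(\uptheta)$ as $|\cdot|_\uptheta^{-1}(0,\infty)\cup\{0\}$.
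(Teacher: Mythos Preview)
Your proof is correct and follows essentially the same approach as the paper: handle best denominator classes via the defining inequality of $\mathfrak{B}$, handle the generic intermediate case $r\neq 0,1,a_{n+2}-1$ by citing the bound $1<|q_{n,r}|\cdot\|q_{n,r}\uptheta\|<r+1$ from the proof of Theorem~\ref{symnotempty} together with the uniform bound on partial quotients, and treat the boundary values $r=1$ and $r=a_{n+2}-1$ separately. Your treatment of the boundary cases is in fact more explicit than the paper's, which simply asserts (without computation) that these cases are symmetric because $\uptheta\in\mathfrak{B}$.
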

 
 \begin{proof}  From the proof of Theorem \ref{symnotempty}, we know that the intermediate best denominators of the form
 $\bast n =\bast \{q_{n_{i},r_{i}}\}$, where $r_{i}\not= 0,1,a_{n_{i}+2}-1$ and is uniformly bounded, belong to $\bast\Z^{\rm sym}(\uptheta)$.  On the
 other hand, since $\uptheta\in\mathfrak{B}$, the best denominators (which occur for $r_{i}=0,1$) as well as the consecutive
 difference $\bast \widehat{q}^{+}-\bast \widehat{q}$ (which occurs for $r_{i}=a_{n_{i}+2}-1$) belong to $\bast\Z^{\rm sym}(\uptheta)$.
 \end{proof}
 
 \begin{note}  As the above paragraphs show, the function $|\cdot |_{\uptheta}$ is nontrivial for all $\uptheta\in\R-\Q$.  We take a moment to contrast it with its rational and $p$-adic analogs.    
  \begin{itemize}
\item[i.]   If $\uptheta=q=a/b\in\Q$ then $|\cdot |_{q} \equiv 0$ on $\bast\Z (\uptheta ) =  \bast\Z(\uptheta | x^{-1} )$.
For $\bast n\in \bast\Z$ arbitrary, $|\bast n |_{q}=c\cdot |\bast n|$ where $c=a'/b$ for some $a'<b$.  In fact, $|\cdot|_{q}$
induces a function \[ |\cdot|_{q}:\bast\Z/\bast\Z (\uptheta ) \cong \Z/b\Z\longrightarrow\bast\Q/\bast\Z\cong\Q/\Z.\]
\item[ii.] If $\upxi\in\Q_{p}$ = $p$-adic numbers and we use the $p$-adic absolute value
 to define the distance-to-the-nearest-integer function $\|\cdot \|$, then $\|\cdot \|\equiv 0$ on $\bast\Z (\upxi ) =  \bast\Z(\upxi | x^{-1} )$.  Therefore,
for $\bast n\in \bast\Z$ arbitrary, $|\bast n |_{\upxi}\leq |\upxi|_{p}$ = the $p$-adic absolute value; if $\upxi\in\hat{\Z}_{p}$ then $|\cdot |_{\upxi}\equiv 0$.
\end{itemize}
\end{note}

\vspace{3mm}
\begin{center}
$\maltese$
\end{center}
\vspace{3mm}
 
Recall \cite{Qu} that the {\em Littlewood conjecture} asserts that for any pair $\uptheta,\upeta\in\mathfrak{B}$, 
\[  \lim\inf |n|\|n\uptheta \|\|n\upeta\| =0 . \]

\begin{obse}\label{LCCoro}  Given $\uptheta,\upeta\in\mathfrak{B}$, suppose that $\exists \upnu\in\bstar\PR\R_{\upvarepsilon}$
such that
\[  \{ 0\}\subsetneqq\bast\Z^{\rm sym}_{\upnu}(\uptheta)\cap \bast\Z^{\upnu}(\upeta) \quad \text{ or }\quad
 \{ 0\}\subsetneqq \bast\Z^{\upnu}(\uptheta)\cap\bast\Z^{\rm sym}_{\upnu}(\upeta)
. \]
Then the Littlewood conjecture holds for the pair $\uptheta,\upeta$.
\end{obse}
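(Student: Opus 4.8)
The plan is to read the conclusion off directly, once the nonstandard hypotheses are translated into honest asymptotic inequalities along a representative sequence. Since $|n|\,\|n\uptheta\|\,\|n\upeta\|$ is symmetric in $\uptheta$ and $\upeta$, I would first reduce to the first of the two alternatives: fix $0\ne\bast n\in\bast\Z^{\rm sym}_{\upnu}(\uptheta)\cap\bast\Z^{\upnu}(\upeta)$ and choose a representative sequence $\{n_i\}$ of $\bast n$. The second membership gives in particular $\bast n\in\bast\Z^{\upnu}(\upeta)\subset\bast\Z(\upeta)$; since $\upeta\notin\Q$, a non-zero element of $\bast\Z(\upeta)$ cannot be standard, so $|n_i|\to\infty$ along $\mathfrak u$ and every $\mathfrak u$-large set of indices is infinite. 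This last point is the only input beyond bookkeeping, and it is what lets the final estimate be read as a statement about $\lim\inf$.

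Next I would unpack the two memberships separately. From $\bast n\in\bast\Z^{\rm sym}_{\upnu}(\uptheta)$ we have $|\bast n|_{\uptheta}\in(0,\infty)$, i.e.\ $|\bast n|\cdot|\upvarepsilon_{\uptheta}(\bast n)|$ is a positive unit of $\bast\R_{\rm fin}$; since $\upvarepsilon_{\uptheta}(\bast n)$ is infinitesimal, $\bast n^{\perp_{\uptheta}}$ is the nearest integer, so $|\upvarepsilon_{\uptheta}(\bast n)|=\|\bast n\uptheta\|$, and a positive unit of $\bast\R_{\rm fin}$ is bounded above by some standard $C>0$ on a $\mathfrak u$-large set of indices. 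Hence $|n_i|\cdot\|n_i\uptheta\|\le C$ for $\mathfrak u$-almost every $i$. From $\bast n\in\bast\Z(\upeta)$ the error $\upvarepsilon_{\upeta}(\bast n)$ is infinitesimal, i.e.\ $\|n_i\upeta\|\to 0$ for $\mathfrak u$-almost every $i$.

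Combining the two on the $\mathfrak u$-large, hence infinite, set of indices where both hold gives
\[
|n_i|\,\|n_i\uptheta\|\,\|n_i\upeta\|\ \le\ C\,\|n_i\upeta\|\ \longrightarrow\ 0,
\]
so $\lim\inf_{n}|n|\,\|n\uptheta\|\,\|n\upeta\|=0$, which is the Littlewood conjecture for the pair $\uptheta,\upeta$. The second alternative is handled by the identical argument with the roles of $\uptheta$ and $\upeta$ interchanged, the symmetry hypothesis now being with respect to $\upeta$ and the approximation condition with respect to $\uptheta$. I do not expect any genuine obstacle: the entire content is the passage between magnitude statements in $\bstar\PR\R$ and asymptotics of representative sequences, the only points meriting care being the identification $|\upvarepsilon_{\uptheta}(\bast n)|=\|\bast n\uptheta\|$ for infinitesimal error terms and the remark that a $\mathfrak u$-large index set is infinite.
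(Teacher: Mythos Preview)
The paper records this as an observation without proof; it is meant to be immediate from the definitions, and your argument supplies exactly the intended unpacking: the symmetric hypothesis bounds $|\bast n|\cdot\|\bast n\uptheta\|$ by a standard constant, membership in $\bast\Z(\upeta)$ makes $\|\bast n\upeta\|$ infinitesimal, and the product is therefore infinitesimal along an infinite set of indices with $|n_i|\to\infty$. Your care over why $\bast n$ must be infinite and why $|\upvarepsilon_{\uptheta}(\bast n)|=\|\bast n\uptheta\|$ is appropriate. Note incidentally that you only use $\bast n\in\bast\Z(\upeta)$, not the growth restriction encoded in the superscript $\upnu$; this matches the cleaner form of the hypothesis the paper quotes in its Introduction.
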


 In view of Observation \ref{LCCoro} it would be of interest to find an explicit description of $\bast\Z^{\rm sym}(\uptheta)$.
 When $\uptheta=\upvarphi = (\sqrt{5}+1)/2$ is the golden mean there are many symmetric diophantine approximations which are neither intermediate nor principle convergents; we characterize them now. Recall \cite{Ze} that every natural number has a unique {\em  Zeckendorf
 representation} 
 \begin{align}\label{ZeckRep}  N = F_{i_{1}} + \cdots + F_{i_{k}}
 \end{align}
 where $F_{k}=$ the $k$th Fibonacci number, and the sequence $i_{1}<\dots <i_{k}$ consists of non consecutive integers $\geq 2$.
Using Binet's formula \cite{NZM}
\[ F_{k}= \frac{\upvarphi^{k}-(-1)^{k}\upvarphi^{-k}}{\sqrt{5}} \]
one has the following 

\begin{lemm}\label{GMErrorLemma}  Let $N\in\N$ have the Zeckendorf representation (\ref{ZeckRep}).  Then $\| N\upvarphi\|<\upvarphi^{-n}$ $\Leftrightarrow$
\begin{itemize}
\item $i_{1}\geq n+1$.
\item $i_{1}=n$ and $i_{2}-i_{1}$ is odd and $\geq 3$.
\end{itemize}

\end{lemm}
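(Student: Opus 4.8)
The plan is to convert the statement into an elementary estimate on a lacunary geometric series, by means of the algebraic identity
\[ F_k\upvarphi \;=\; F_{k+1}-(-1)^{k}\upvarphi^{-k}, \qquad k\ge 1. \]
This follows from Binet's formula: since $\upvarphi$ and $-\upvarphi^{-1}$ both satisfy $x^{2}=x+1$ one has $x^{k}=F_kx+F_{k-1}$, hence $\upvarphi^{k}=F_k\upvarphi+F_{k-1}$ and $\upvarphi^{k}+(-\upvarphi^{-1})^{k}=F_k+2F_{k-1}=F_{k+1}+F_{k-1}$; eliminating $F_{k-1}$ gives $F_k\upvarphi=\upvarphi^{k}-F_{k-1}=F_{k+1}-(-\upvarphi^{-1})^{k}$. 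Write $e_m:=(-1)^{m}\upvarphi^{-m}$, so $|e_m|=\upvarphi^{-m}$ and $\operatorname{sign}e_m=(-1)^{m}$. Summing the identity over the Zeckendorf digits of $N=F_{i_1}+\cdots+F_{i_k}$ gives $N\upvarphi=M-E$ with $M=\sum_{j=1}^{k}F_{i_j+1}\in\Z$ and $E:=\sum_{j=1}^{k}e_{i_j}$, whence $\|N\upvarphi\|=\|E\|$; the task is thus to locate $\|E\|$ relative to the powers $\upvarphi^{-n}$.

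Next I would isolate the dominant term by writing $E=e_{i_1}T$ with $T:=1+\sum_{j=2}^{k}e_{\,i_j-i_1}$ (and $T=1$ if $k=1$). The Zeckendorf constraints $i_1\ge 2$ and $i_{j+1}\ge i_j+2$ make the exponents $d_j:=i_{j+1}-i_1$ ($1\le j\le k-1$) satisfy $d_1\ge 2$ and $d_{j+1}\ge d_j+2$, so summing $\sum_{l\ge0}\upvarphi^{-(d+2l)}=\upvarphi\,\upvarphi^{-d}$ (using $1-\upvarphi^{-2}=\upvarphi^{-1}$) yields three estimates: (a) $|T-1|\le\upvarphi^{-(d_1-1)}\le\upvarphi^{-1}<1$, so $T>0$ and $\operatorname{sign}E=(-1)^{i_1}$; (b) when $k\ge2$ the part of $T-1$ past its leading term $e_{d_1}$ has absolute value $\le\upvarphi^{-(d_1+1)}<|e_{d_1}|$, so $\operatorname{sign}(T-1)=(-1)^{d_1}=(-1)^{i_2-i_1}$; (c) the only negative terms of $\sum_j e_{d_j}$ occur at odd $d_j\ge 3$, so $\sum_j e_{d_j}\ge-\sum_{l\ge0}\upvarphi^{-(3+2l)}=-\upvarphi^{-2}$, giving $T\ge1-\upvarphi^{-2}=\upvarphi^{-1}$, with strict inequality for any genuine (finite) $N$. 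Combining the upper bound from (a) with the lower bound from (c) gives $\upvarphi^{-1}\le T\le\upvarphi$ (strict for finite $N$), hence $\upvarphi^{-(i_1+1)}<|E|=\upvarphi^{-i_1}T<\upvarphi^{-(i_1-1)}$.

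Finally I would compare $\|E\|$ with $\upvarphi^{-n}$ by cases on $i_1$; the one delicate point — which I expect to be the main obstacle — is that for $i_1=2$ the quantity $|E|$ may exceed $\tfrac12$, so $\|E\|=1-|E|$ rather than $|E|$, and estimate (c) is exactly what controls this. If $i_1\ge 3$, or if $i_1=2$ and $T<1$ (equivalently, by (b), $k\ge2$ and $i_2-i_1$ odd), then $|E|=\upvarphi^{-i_1}T<\upvarphi^{-2}<\tfrac12$, so $\|N\upvarphi\|=\upvarphi^{-i_1}T$; with $\upvarphi^{-1}<T<\upvarphi$ this is $<\upvarphi^{-n}$ for all $n\le i_1-1$, is $>\upvarphi^{-n}$ for all $n\ge i_1+1$, and for $n=i_1$ is $<\upvarphi^{-i_1}$ iff $T<1$, i.e. iff $k\ge2$ and $i_2-i_1$ is odd. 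If instead $i_1=2$ and $T\ge1$ (that is, $k=1$, or $i_2-i_1$ even), then $\upvarphi^{-2}\le|E|=\upvarphi^{-2}T<\upvarphi^{-1}$: when $|E|\le\tfrac12$ we get $\|N\upvarphi\|=|E|\ge\upvarphi^{-2}$, and when $|E|>\tfrac12$ we get $\|N\upvarphi\|=1-|E|>1-\upvarphi^{-1}=\upvarphi^{-2}$, so in both subcases $\upvarphi^{-2}\le\|N\upvarphi\|<\tfrac12<\upvarphi^{-1}$ and $\|N\upvarphi\|<\upvarphi^{-n}$ holds exactly for $n\le i_1-1$. Assembling the three cases — and noting that $i_{j+1}\ge i_j+2$ makes ``$i_2-i_1$ odd'' the same as ``$i_2-i_1$ odd and $\ge 3$'' — yields precisely the asserted equivalence: $\|N\upvarphi\|<\upvarphi^{-n}$ iff $i_1\ge n+1$, or $i_1=n$ with $i_2-i_1$ odd and $\ge 3$.
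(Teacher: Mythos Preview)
Your argument is correct and complete. The paper itself does not give a proof of this lemma at all: it simply cites the companion paper \cite{CaGe}. What you have written is a self-contained proof using only Binet's formula (which the paper quotes immediately before the lemma) and elementary estimates on the alternating lacunary series $E=\sum_j(-1)^{i_j}\upvarphi^{-i_j}$. The factorisation $E=e_{i_1}T$, the sign determination of $T-1$ via its leading term, and the two-sided bound $\upvarphi^{-1}<T<\upvarphi$ are exactly what is needed; your separate treatment of the borderline case $i_1=2$, $T\ge 1$ (where $|E|$ may exceed $\tfrac12$ so that $\|E\|=1-|E|$ rather than $|E|$) is the one genuinely delicate point, and you handle it correctly via the lower bound $T>\upvarphi^{-1}$, which forces $1-|E|>1-\upvarphi^{-1}=\upvarphi^{-2}$. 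In short, you have supplied a proof where the paper offers only a pointer.
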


\begin{proof}  See \cite{CaGe}.  
\end{proof}

Define the {\bf  Zeckendorf degree} of the representation (\ref{ZeckRep}) to be 
\[ \text{\rm Z-deg}(N):=i_{k}-i_{1}.\]  For $\bast n\in\bast\Z$ the Zeckendorf
degree $\text{\rm Z-deg}(\bast n)$  is an element of $\bast\N$.

\begin{theo}\label{goldensymmetric}  Let $\upvarphi$ be the golden mean.  Then  
\[  \bast\Z^{\rm sym}(\upvarphi)=\{ \bast n\in\bast\Z (\upvarphi)|\; \text{\rm Z-deg}(\bast n)<\infty \}.\] 
\end{theo}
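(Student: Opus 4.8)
The plan is to analyze when a natural number $N$ with Zeckendorf representation $N = F_{i_1}+\cdots+F_{i_k}$ is the denominator of a diophantine approximation of $\upvarphi$ whose growth and decay coincide, i.e.\ $\upmu(\bast n)=\upnu(\bast n)$. Recall from \S\ref{metrical} that $\bast n\in\bast\Z^{\rm sym}(\upvarphi)$ exactly when there are constants $0<c<C$ and a representative $\{n_i\}$ with $c/n_i\le \|n_i\upvarphi\|\le C/n_i$. The upper bound is automatic once $\bast n\in\bast\Z(\upvarphi\,|\,x^{-1})$, so the real content is the lower bound $\|n_i\upvarphi\|\ge c/n_i$: symmetry fails precisely when $\|N\upvarphi\|$ is \emph{too small} relative to $N^{-1}$, which by Lemma \ref{GMErrorLemma} is governed by the \emph{bottom} of the Zeckendorf support, and $\|N\upvarphi\|$ being \emph{too large} relative to $N^{-1}$ is impossible for $\upvarphi\in\mathfrak B$ (this is just the badly-approximable upper Lagrange bound). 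So the dichotomy will come down to controlling $\|N\upvarphi\|$ from above by $C/N$ (which is where $i_k$, hence $\text{Z-deg}(N)$, enters) while keeping it from below by $c/N$.

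First I would establish, using Binet's formula, the asymptotics $N \asymp \upvarphi^{\,i_k}$ and more precisely $N = \upvarphi^{\,i_k}(1+O(\upvarphi^{-(i_k-i_{k-1})})+\cdots)/\sqrt5$; so $\langle |\bast n|\rangle$ is determined (in $\bstar\PR\R$) by the class of $\{i_k\}$. Next, for $\|N\upvarphi\|$, write $N\upvarphi = \sum_j F_{i_j}\upvarphi$ and use $F_m\upvarphi = F_{m+1}+(-1)^m\upvarphi^{-m}$ (a consequence of Binet) to get $N\upvarphi = (\text{integer}) + \sum_j (-1)^{i_j}\upvarphi^{-i_j}$. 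Hence $\|N\upvarphi\| = \big|\sum_j (-1)^{i_j}\upvarphi^{-i_j}\big|$ up to the nearest integer, and since the $i_j$ are non-consecutive and increasing this alternating-in-magnitude sum is dominated by its first term $\upvarphi^{-i_1}$: one gets $c'\upvarphi^{-i_1}\le \|N\upvarphi\|\le C'\upvarphi^{-i_1}$ with absolute constants (Lemma \ref{GMErrorLemma} is exactly the fine version of this, distinguishing the borderline case $i_1=n$). Therefore $\langle\|\bast n\upvarphi\|\rangle = \langle\{\upvarphi^{-i_1}\}\rangle$, the class of $\{i_1\}$.

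Combining the two computations, $\bast n\in\bast\Z^{\rm sym}(\upvarphi)$ iff $\langle|\bast n|\rangle = \langle\|\bast n\upvarphi\|\rangle$, i.e.\ iff the class of $\{\upvarphi^{i_k}\}$ equals the class of $\{\upvarphi^{i_1}\}$ in $\bstar\PR\R$, which (since $\upvarphi^{i_k}/\upvarphi^{i_1}=\upvarphi^{\,i_k-i_1}$) holds iff $\{i_k-i_1\}=\{\text{Z-deg}(n_i)\}$ is a bounded sequence, i.e.\ $\text{Z-deg}(\bast n)<\infty$ in $\bast\N$. This gives both inclusions of the asserted equality. I would also remark that $\bast n$ built this way automatically lies in $\bast\Z(\upvarphi)$ with $\bast n^\perp$ the integer part computed above, so there is no extra membership condition to check.

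The main obstacle is the lower bound $\|N\upvarphi\|\ge c\,\upvarphi^{-i_1}$ uniformly: the alternating sum $\sum_j(-1)^{i_j}\upvarphi^{-i_j}$ could in principle suffer cancellation bringing it below $\upvarphi^{-i_1}$, and worse, one must confirm that rounding to the nearest integer does not interfere (i.e.\ that the fractional part is genuinely this small sum and not $1$ minus it). This is handled by the non-consecutiveness constraint $i_{j+1}\ge i_j+2$, which forces $\sum_{j\ge 2}\upvarphi^{-i_j}\le \upvarphi^{-i_2}\cdot\frac{1}{1-\upvarphi^{-2}} < \upvarphi^{-i_1}$ strictly, so the first term dominates and the total has absolute value in $(\upvarphi^{-i_1}(1-\upvarphi^{-2}/(1-\upvarphi^{-2})),\,\upvarphi^{-i_1})\subset(0,1/2)$, pinning down both the nearest integer and the two-sided bound. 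The borderline arithmetic-progression-parity case flagged in Lemma \ref{GMErrorLemma} ($i_2-i_1$ odd) does not affect the \emph{order of magnitude}, only the finer comparison with $\upvarphi^{-n}$, so it is irrelevant to the symmetry criterion, which only concerns $\bstar\PR\R$-classes; I would note this explicitly to reassure the reader that citing Lemma \ref{GMErrorLemma} in its full strength is not needed here.
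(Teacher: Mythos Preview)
Your approach is correct and essentially the same as the paper's: both reduce the question to the two-sided estimate $|\bast n|\cdot|\upvarepsilon(\bast n)|\asymp \upvarphi^{\,\text{Z-deg}(\bast n)}$, obtained from $N\asymp\upvarphi^{\,i_k}$ (Binet) and $\|N\upvarphi\|\asymp\upvarphi^{-i_1}$. The paper simply cites Lemma~\ref{GMErrorLemma} for the latter, whereas you reprove it from the identity $F_m\upvarphi=F_{m+1}+(-1)^{m+1}\upvarphi^{-m}$ (note the sign: your $(-1)^m$ is off by one, though this is immaterial for absolute values).

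One imprecision worth flagging: your claimed upper bound $|S|<\upvarphi^{-i_1}$ assumes sign alternation, which need not occur (e.g.\ all $i_j$ even gives same-sign terms). The correct triangle-inequality bound is $|S|\le\upvarphi^{-i_1}(1+\upvarphi^{-1})=\upvarphi^{\,1-i_1}$, and for $i_1=2$ this can exceed $1/2$ (take $N=F_2+F_4=4$, where $|S|\approx 0.528$). So your ``$\subset(0,1/2)$'' fails at the boundary. This does not damage the theorem, however: since $\bast n\in\bast\Z(\upvarphi)$ forces $i_1\to\infty$, the case $i_1=2$ occurs on at most a finite (hence $\mathfrak{u}$-negligible) set of indices, and for $i_1\ge 3$ one has $|S|\le\upvarphi^{-2}<1/2$ as required. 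Alternatively, the lower bound $|\bast n|_\upvarphi>0$ is automatic from $\upvarphi\in\mathfrak{B}$, so only the upper bound $|\bast n|\cdot|\upvarepsilon(\bast n)|\le C\upvarphi^{\,\text{Z-deg}}$ needs the Zeckendorf analysis---and for that direction the weaker estimate suffices.
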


\begin{proof}  Let $\bast n = \bast \{ n_{i}\}$ and let $M=\text{\rm Z-deg}(\bast n )$.   Then by Lemma \ref{GMErrorLemma} and Binet's formula
\[  |\bast n|\cdot |\upvarepsilon (\bast n )| \leq C\upvarphi^{M} \]
for $C>0$ a constant that depends only on $\upvarphi$.  On the other hand, if $\bast M=\text{\rm Z-deg}(\bast n )\in\bast \N-\N$
then  
\[ |\bast n|\cdot |\upvarepsilon (\bast n )|\geq  C\upvarphi^{\bast M}  . \]
\end{proof}

If we consider the Littlewood conjecture in the case of $\upvarphi$, we see that the following is true.  Let 
\[ \Z_{D}[\upvarphi ]\subset\Z [\upvarphi]\] be the set of 
$\Q(\sqrt{5})$-integers of the form
\[  \upvarphi^{I}:= \upvarphi^{i_{1}}+\cdots + \upvarphi^{i_{k}}  \]
with $I=(i_{1},\cdots ,i_{k})$ a sequence of increasing, non consecutive integers, $i_{1}\geq 2$ and $i_{k}-i_{1}< D$.
%natural numbers with $\text{\rm Z-deg}( n ) \leq D$.  

\begin{coro}\label{genlchowla} Let $\uptheta\in\mathfrak{B}$, $\uptheta\not\in \Q (\sqrt{5} )$.  If for some $D$ 
\[ \big\{ \| \upvarphi^{I}\uptheta \|\, \big|\;\upvarphi ^{I}\in \Z_{D}[\upvarphi ]\big\}  \]
is dense in $[0,1/2)$ then the Littlewood conjecture holds for the pair $(\upvarphi,\uptheta )$.
\end{coro}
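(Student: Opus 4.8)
The plan is to turn the hypothesis into a statement about ordinary natural numbers of bounded Zeckendorf degree and then invoke Observation~\ref{LCCoro} through Theorem~\ref{goldensymmetric}. Write $\psi:=(1-\sqrt5)/2=-1/\upvarphi$, so that $\upvarphi^{m}=L_{m}-\psi^{m}$ for the Lucas numbers $L_{m}=\upvarphi^{m}+\psi^{m}$. Given $\upvarphi^{I}=\upvarphi^{i_{1}}+\cdots+\upvarphi^{i_{k}}\in\Z_{D}[\upvarphi]$, set $n_{I}:=L_{i_{1}}+\cdots+L_{i_{k}}\in\N$; then $\upvarphi^{I}=n_{I}-\updelta_{I}$ with $\updelta_{I}=\sum_{j}\psi^{i_{j}}$ and, the $i_{j}$ being non-consecutive, $|\updelta_{I}|<\upvarphi^{-(i_{1}-1)}$, so $n_{I}$ is an integer lying infinitesimally close to $\upvarphi^{I}$. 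First I would record two elementary estimates: $n_{I}=\sum_{j}L_{i_{j}}\leq\lceil(D+1)/2\rceil\,\upvarphi^{i_{k}+1}=C_{D}\upvarphi^{i_{k}}$, and, using $n_{I}\upvarphi-\sum_{j}L_{i_{j}+1}=(\upvarphi-\psi)\sum_{j}\psi^{i_{j}}=\sqrt5\sum_{j}\psi^{i_{j}}$, the bound $\|n_{I}\upvarphi\|<\sqrt5\,\upvarphi^{-(i_{1}-1)}$. Multiplying these gives $|n_{I}|\cdot\|n_{I}\upvarphi\|<C_{D}'\,\upvarphi^{\,i_{k}-i_{1}}\leq C_{D}'\,\upvarphi^{D}$; equivalently $n_{I}$ has Zeckendorf degree bounded in terms of $D$ alone, so by Theorem~\ref{goldensymmetric} any nonstandard class assembled from such $n_{I}$ belongs to $\bast\Z^{\rm sym}(\upvarphi)$.

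Next I would carry out the selection. The family $\Z_{D}[\upvarphi]$ is infinite, whereas for each $M$ only finitely many of its members have bottom index $i_{1}\leq M$ (a bounded Zeckendorf degree together with a bounded bottom index leaves only finitely many admissible index patterns). Hence the image under $\upvarphi^{I}\mapsto\|\upvarphi^{I}\uptheta\|$ of the members with $i_{1}>M$ is still dense in $[0,1/2)$. Applying this with $M=l$ and using density near $0$, I can choose for each $l$ some $\upvarphi^{I_{l}}\in\Z_{D}[\upvarphi]$ with bottom index $i_{1}^{(l)}>l$ and $\|\upvarphi^{I_{l}}\uptheta\|<1/l$, and put $n_{l}:=n_{I_{l}}$. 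From $|n_{l}-\upvarphi^{I_{l}}|<\upvarphi^{-(l-1)}$ and $\bigl|\,\|x\|-\|y\|\,\bigr|\leq|x-y|$ I get $\|n_{l}\uptheta\|<1/l+|\uptheta|\,\upvarphi^{-(l-1)}\to 0$, while $n_{l}\geq L_{i_{1}^{(l)}}\to\infty$. Thus $\bast n:=\bast\{n_{l}\}$ is a diophantine approximation of $\uptheta$, and by the first paragraph a symmetric diophantine approximation of $\upvarphi$; writing $\upnu=\upmu(\bast n)$ we have $\{0\}\subsetneqq\bast\Z^{\rm sym}_{\upnu}(\upvarphi)\cap\bast\Z^{\upnu}(\uptheta)$ (in any case $\bast n\in\bast\Z^{\rm sym}(\upvarphi)\cap\bast\Z(\uptheta)$), so Observation~\ref{LCCoro} gives the Littlewood conjecture for $(\upvarphi,\uptheta)$. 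Concretely this is nothing but $n_{l}\|n_{l}\upvarphi\|\,\|n_{l}\uptheta\|<C_{D}'\upvarphi^{D}\cdot\|n_{l}\uptheta\|\to 0$ along $n_{l}\to\infty$.

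The assumption $\uptheta\notin\Q(\sqrt5)$ plays no role in the argument itself; it only guarantees that the conclusion is not already subsumed by the real-quadratic case of Littlewood. The step I expect to be the main obstacle is precisely the simultaneous requirement in the selection that $\|\upvarphi^{I_{l}}\uptheta\|$ be small \emph{and} $i_{1}^{(l)}\to\infty$ — the latter being what makes the Lucas correction $\updelta_{I_{l}}$ negligible — which is why the hypothesis must be density of the image of the full infinite family $\Z_{D}[\upvarphi]$, not of any finite truncation. A subsidiary technical point, needed only if one prefers to quote Theorem~\ref{goldensymmetric} verbatim rather than use the bare estimate $|n_{I}|\cdot\|n_{I}\upvarphi\|\le C_{D}'\upvarphi^{D}$, is that $n_{I}=\sum_{j}L_{i_{j}}$ has Zeckendorf degree controlled by $D$ alone: in reducing $\sum_{j}(F_{i_{j}-1}+F_{i_{j}+1})$ to Zeckendorf form the downward carry cascades are bounded in number by the number of summands, hence by $D$, so the smallest Zeckendorf index of $n_{I}$ stays within $O(D)$ of $i_{1}$.
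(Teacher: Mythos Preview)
Your argument is correct and is exactly the working-out that the paper leaves implicit (the Corollary is stated without proof, as an immediate consequence of Theorem~\ref{goldensymmetric} and Observation~\ref{LCCoro}). The passage from $\upvarphi^{I}$ to the Lucas integer $n_{I}$, the uniform bound $|n_{I}|\cdot\|n_{I}\upvarphi\|\le C'_{D}\upvarphi^{D}$, and the selection with $i_{1}^{(l)}\to\infty$ are all sound; the concluding line $n_{l}\|n_{l}\upvarphi\|\,\|n_{l}\uptheta\|\to 0$ is the whole content.

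One small slip: with $\upnu=\upmu(\bast n)$ you cannot have $\bast n\in\bast\Z^{\upnu}(\uptheta)$, since membership in $\bast\Z^{\upnu}$ requires the \emph{strict} inequality $\upmu(\bast n)>\upnu$. You already anticipated this by adding ``in any case $\bast n\in\bast\Z^{\rm sym}(\upvarphi)\cap\bast\Z(\uptheta)$'' and giving the direct estimate, which is precisely the form of the criterion the paper records in its Introduction and which suffices for Littlewood; so there is no gap, only a cosmetic mismatch with the filtration indices in Observation~\ref{LCCoro} as stated.
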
 

\begin{note}  The hypothesis of Corollary \ref{genlchowla} in the case of $D=1$ reduces to Chowla's
conjecture in the case of the golden mean \cite{Qu}.
\end{note}

\vspace{3mm}
\begin{center}
$\maltese$
\end{center}
\vspace{3mm}
 
 We turn to the matter of the general arithmetic structure of $\bast\Z^{\rm sym} (\uptheta )$, describing $\bast\Z^{\rm sym} (\uptheta )=\{ \bast\Z_{\upnu}^{\rm sym} (\uptheta )\}$
 as a subapproximate group of $ \bast\Z (\uptheta|x^{-1} )$.  For every sign pair $\upsigma\in \{ \pm\}^{2}$, let $\bast\Z (\uptheta )_{\upsigma}$ be the monoid consisting of $0$ and
 those diophantine approximations $\bast n\in \bast\Z (\uptheta )$ for which $({\rm sign}(\bast n), {\rm sign}(\upvarepsilon(\bast n)))=\upsigma$.
 Let $\bast\Z_{\upnu}^{\rm sym} (\uptheta )_{\upsigma} = \bast\Z_{\upnu}^{\rm sym} (\uptheta )\cap \bast\Z (\uptheta )_{\upsigma}$.
% We have the following ``subgroupology'' on $\bast\Z (\uptheta|x^{-1} )$:
 
 \begin{theo}\label{symmgroupologystruct}  Let $\uptheta\in\R$.
 \begin{itemize}
 \item[1.] $\bast\Z^{\rm sym} (\uptheta )=\{ \bast\Z_{\upnu}^{\rm sym} (\uptheta )\} $ satisfies
 \[ \bast\Z_{\upnu}^{\rm sym} (\uptheta )+\bast\Z_{\upnu}^{\rm sym} (\uptheta )\subset 
 \bast\Z \big(\uptheta | x^{-1} \big).\]  
 \item[2.] For $\upsigma=(+,+)$ or $(-,-)$ and $\upnu\in\bstar\PR\R_{\upvarepsilon}$, $\bast\Z_{\upnu}^{\rm sym} (\uptheta )_{\upsigma}$ is a monoid:
 \[ \bast\Z_{\upnu}^{\rm sym} (\uptheta )_{\upsigma}+\bast\Z_{\upnu}^{\rm sym} (\uptheta )_{\upsigma}\subset 
 \bast\Z_{\upnu}^{\rm sym} (\uptheta )_{\upsigma}.\] 
 \end{itemize}
 \end{theo}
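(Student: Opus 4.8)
The plan is to prove the two inclusions directly from the defining inequalities for symmetric diophantine approximations, using the fact (already in hand) that $\bast\Z^{\rm sym}(\uptheta)\subset\bast\Z(\uptheta|x^{-1})$ and that for any $\bast n\in\bast\Z^{\rm sym}(\uptheta)$ one has $\upmu(\bast n)=\upnu(\bast n)$, i.e.\ $\langle|\bast n|\rangle\cdot\upnu(\bast n)=1$ in $\bstar\PR\R$. For part 1, take $\bast m,\bast n\in\bast\Z_{\upnu}^{\rm sym}(\uptheta)$ of common decay $\upnu$. Then both lie in $\bast\Z^{\upmu}_{\upnu}(\uptheta|x^{-1})$ and in $\bast\Z^{\upnu}_{\upmu}(\uptheta|x^{-1})$ for suitable fine indices (since their growth equals their decay, we may freely adjust between the ``$\upmu$'' and ``$\upnu$'' slots by the density of the order and the fact that $\upmu(\bast m)=\upnu=\upnu(\bast m)$), so the earlier groupology theorem ($\bast\Z^{\upmu}_{\upnu}(\uptheta|x^{-1})+\bast\Z^{\upnu}_{\upmu}(\uptheta|x^{-1})\subset\bast\Z^{\upmu-\upnu}(\uptheta|x^{-1})$) applies and gives $\bast m+\bast n\in\bast\Z(\uptheta|x^{-1})$. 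The only genuinely new content in part 1 over what the groupology theorem already delivers is the bookkeeping showing that symmetric classes occupy the correct bi-filtered position; this is where I would be careful to invoke Theorem \ref{productformula} (as in the proof of the groupology theorem) to see that $\upvarepsilon(\bast m\bast n)=\bast m\bast n\uptheta^2-\bast m^\perp\bast n^\perp$ is the error of a genuine diophantine approximation, hence infinitesimal.

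For part 2, fix $\upsigma=(+,+)$ (the $(-,-)$ case being identical after negating). Take $\bast m,\bast n\in\bast\Z_{\upnu}^{\rm sym}(\uptheta)_{\upsigma}$, both positive with positive error terms, both of decay exactly $\upnu$. By part 1, $\bast m+\bast n\in\bast\Z(\uptheta|x^{-1})$, and clearly $\bast m+\bast n>0$ with $\upvarepsilon(\bast m+\bast n)=\upvarepsilon(\bast m)+\upvarepsilon(\bast n)>0$, so the sign pair is again $(+,+)$. It remains to show $\bast m+\bast n\in\bast\Z^{\rm sym}_{\upnu}(\uptheta)$, i.e.\ that $|\bast m+\bast n|_{\uptheta}$ is a positive noninfinite real, equivalently $\upmu(\bast m+\bast n)=\upnu(\bast m+\bast n)=\upnu$. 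The decay side is immediate from sign-positivity: since $\upvarepsilon(\bast m)$ and $\upvarepsilon(\bast n)$ are both positive infinitesimals with $\langle|\upvarepsilon(\bast m)|\rangle=\langle|\upvarepsilon(\bast n)|\rangle=\upnu$, there is no cancellation, so $\langle|\upvarepsilon(\bast m+\bast n)|\rangle=\langle\upvarepsilon(\bast m)+\upvarepsilon(\bast n)\rangle=\upnu$ as well (this uses the Proposition identifying $(\bast x+\bast x')\cdot(\bast\R_{\rm fin})_+^\times=\upmu+\upmu'$ together with $\upmu=\upmu'$ forcing the sum class to equal $\upmu$). The growth side is analogous: $\bast m,\bast n$ are both positive, so $\langle|\bast m+\bast n|\rangle=\langle\bast m+\bast n\rangle=\langle\bast m\rangle+\langle\bast n\rangle$, and since $\langle\bast m\rangle=\langle\bast n\rangle=\upnu^{-1}$ this common value is again $\upnu^{-1}$, giving $\upmu(\bast m+\bast n)=\upnu$. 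Hence $\upmu(\bast m+\bast n)=\upnu(\bast m+\bast n)$ and $\bast m+\bast n$ is symmetric of decay $\upnu$.

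The main obstacle, and the reason the sign restriction in part 2 is essential, is precisely this absence-of-cancellation argument: for a general sign pair, $\upvarepsilon(\bast m)+\upvarepsilon(\bast n)$ or $\bast m+\bast n$ could have strictly smaller magnitude class than the summands (indeed could vanish or become a ``faster'' infinitesimal), which would push the sum into $\bast\Z^{\upmu}_{\upnu}$ with $\upmu\geq\upnu$ where $|\cdot|_{\uptheta}\equiv 0$ by Proposition \ref{trivialnorm}, destroying symmetry. So the proof must carefully use that within a fixed sign sector both the denominators and the error terms add without interference in the growth-decay valuation, which is exactly the content of the Proposition computing $(\bast x+\bast x')\cdot(\bast\R_{\rm fin})_+^\times=\max(\upmu,\upmu')$ specialized to $\upmu=\upmu'$. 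Everything else is routine; I would present part 2 as a short corollary-style argument once part 1 and the sign analysis are laid out.

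\begin{proof}  We prove 1.\ and then deduce 2.

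1.\ Let $\bast m,\bast n\in\bast\Z_{\upnu}^{\rm sym}(\uptheta)$, so $\upmu(\bast m)=\upnu(\bast m)=\upnu$ and $\upmu(\bast n)=\upnu(\bast n)=\upnu$.  Pick $\upiota,\uplambda\in\bstar\PR\R_{\upvarepsilon}$ with $\upnu<\upiota$, $\upnu<\uplambda$, so that, using $\upmu(\bast m)=\upnu$, we have $\bast m\in\bast\Z^{\upnu[\upiota]}_{\upnu}(\uptheta|x^{-1})$ and, using $\upmu(\bast n)=\upnu$, we have $\bast n\in\bast\Z^{\upnu[\uplambda]}_{\upnu}(\uptheta|x^{-1})$ as well as $\bast n\in\bast\Z^{\upnu[\uplambda]}_{\upnu}(\uptheta|x^{-1})\subset \bast\Z^{\upnu[\uplambda]}_{\upmu'}(\uptheta|x^{-1})$ for any $\upmu'\geq\upnu$.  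In particular $\bast m\in\bast\Z^{\upnu[\upiota]}_{\upnu}(\uptheta|x^{-1})$ and $\bast n\in\bast\Z^{\upnu[\uplambda]}_{\upnu}(\uptheta|x^{-1})$, and by the groupology theorem (with $\upmu=\upnu$ there) $\bast m+\bast n\in\bast\Z^{\upnu-\upnu[\upiota+\uplambda]}(\uptheta|x^{-1})=\bast\Z^{\upnu[\upiota+\uplambda]}(\uptheta|x^{-1})\subset\bast\Z(\uptheta|x^{-1})$, as claimed; here one uses Theorem \ref{productformula} exactly as in the proof of that theorem to see that $\upvarepsilon(\bast m\bast n)=\bast m\bast n\uptheta^{2}-\bast m^{\perp}\bast n^{\perp}$ is an error term, hence infinitesimal.

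2.\ Let $\upsigma=(+,+)$ (the case $(-,-)$ follows by replacing $\bast m,\bast n$ with $-\bast m,-\bast n$) and let $\bast m,\bast n\in\bast\Z_{\upnu}^{\rm sym}(\uptheta)_{\upsigma}$; thus $\bast m,\bast n>0$, $\upvarepsilon(\bast m),\upvarepsilon(\bast n)>0$, and $\langle\bast m\rangle=\langle\bast n\rangle=\upnu^{-1}$, $\langle\upvarepsilon(\bast m)\rangle=\langle\upvarepsilon(\bast n)\rangle=\upnu$.  By part 1, $\bast m+\bast n\in\bast\Z(\uptheta|x^{-1})$, and $\bast m+\bast n>0$ with $\upvarepsilon(\bast m+\bast n)=\upvarepsilon(\bast m)+\upvarepsilon(\bast n)>0$, so $\bast m+\bast n\in\bast\Z(\uptheta)_{\upsigma}$.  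It remains to check $\bast m+\bast n\in\bast\Z_{\upnu}^{\rm sym}(\uptheta)$.  Since $\bast m,\bast n\in\bast\R_{+}$ with $\langle\bast m\rangle=\langle\bast n\rangle=\upnu^{-1}$, the Proposition computing $(\bast x+\bast x')\cdot(\bast\R_{\rm fin})_{+}^{\times}=\max(\upmu,\upmu')$ gives $\langle\bast m+\bast n\rangle=\upnu^{-1}+\upnu^{-1}=\upnu^{-1}$, i.e.\ $\upmu(\bast m+\bast n)=\upnu$.  Likewise $\upvarepsilon(\bast m),\upvarepsilon(\bast n)\in\bast\R_{+}$ with $\langle\upvarepsilon(\bast m)\rangle=\langle\upvarepsilon(\bast n)\rangle=\upnu$, so $\langle\upvarepsilon(\bast m)+\upvarepsilon(\bast n)\rangle=\upnu+\upnu=\upnu$, i.e.\ $\upnu(\bast m+\bast n)=\upnu$.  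Therefore $\upmu(\bast m+\bast n)=\upnu(\bast m+\bast n)=\upnu$, so $\bast m+\bast n\in\bast\Z_{\upnu}^{\rm sym}(\uptheta)_{\upsigma}$.
\end{proof}
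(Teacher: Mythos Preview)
Your part~2 is correct and essentially identical to the paper's argument: once positivity of both the denominators and the error terms prevents cancellation, the tropical-sum Proposition gives $\langle|\bast m+\bast n|\rangle=\upnu^{-1}+\upnu^{-1}=\upnu^{-1}$ and $\langle|\upvarepsilon(\bast m+\bast n)|\rangle=\upnu+\upnu=\upnu$, exactly as you wrote.

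Part~1, however, has a genuine gap. You try to place $\bast m,\bast n\in\bast\Z_{\upnu}^{\rm sym}(\uptheta)$ into $\bast\Z^{\upnu[\upiota]}_{\upnu}(\uptheta|x^{-1})$ so as to invoke the groupology theorem, but this is impossible. By definition $\bast\Z^{\upnu[\upiota]}$ consists of those $\bast n$ with $|\bast n|\cdot\upnu<\upiota$, where $\upiota\in\bstar\PR\R_{\upvarepsilon}$ and hence $\upiota<1$. For a symmetric element one has $\langle|\bast m|\rangle=\upnu^{-1}$, so $|\bast m|\cdot\upnu$ has class~$1$ in $\bstar\PR\R$, which is never $<\upiota$. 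More generally, no choice of indices works: to apply the groupology theorem one needs $\bast m\in\bast\Z^{\upmu}_{\upnu'}$ and $\bast n\in\bast\Z^{\upnu'}_{\upmu}$, forcing simultaneously $\upnu>\upmu$ (from $\upmu(\bast m)=\upnu>\upmu$) and $\upnu\leq\upmu$ (from $\upnu(\bast n)=\upnu\leq\upmu$). Symmetric elements sit exactly on the boundary that the strict inequality in the growth filtration excludes.

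Relatedly, your appeal to Theorem~\ref{productformula} to conclude that $\upvarepsilon(\bast m\bast n)=\bast m\bast n\uptheta^{2}-\bast m^{\perp}\bast n^{\perp}$ is infinitesimal is incorrect here: the hypotheses of that theorem are not met, and in fact for symmetric $\bast m,\bast n$ of common decay $\upnu$ one computes $\bast m\bast n\uptheta^{2}-\bast m^{\perp}\bast n^{\perp}=\bast m^{\perp}\upvarepsilon(\bast n)+\bast n^{\perp}\upvarepsilon(\bast m)+\upvarepsilon(\bast m)\upvarepsilon(\bast n)$, whose first two terms have class~$1$, so the expression is merely \emph{bounded}, not infinitesimal. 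The paper avoids this detour entirely and proves part~1 by the direct one-line expansion
\[
|\bast m+\bast n|\cdot|\upvarepsilon(\bast m+\bast n)|\;\leq\;|\bast m\upvarepsilon(\bast m)|+|\bast n\upvarepsilon(\bast n)|+|\bast m\upvarepsilon(\bast n)|+|\bast n\upvarepsilon(\bast m)|,
\]
each term of which is bounded (the first two by symmetry, the cross terms because $\langle|\bast m|\rangle\cdot\langle|\upvarepsilon(\bast n)|\rangle=\upnu^{-1}\cdot\upnu=1$). This is both simpler and actually valid; you should replace your part~1 with this computation.
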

 
 \begin{proof} The first assertion is a consequence of the inequality
 \begin{align*}
 |\bast m+\bast n|\cdot| \upvarepsilon(\bast m+\bast n)| & \leq
 M + |\bast n||\upvarepsilon(\bast m)| + |\bast m||\upvarepsilon(\bast n)|: 
 \end{align*}
 as the terms $|\bast n||\upvarepsilon(\bast m)|$, $|\bast m||\upvarepsilon(\bast n)|$ are bounded.  The second assertion follows
 by noting that for $\bast m,\bast n\in \bast\Z_{\upnu}^{\rm sym} (\uptheta )_{\upsigma}$, 
\[ \langle|\bast m+\bast n|\rangle=\langle|\bast m|\rangle+ \langle|\bast n|\rangle=\upnu^{-1},\quad \langle|\upvarepsilon (\bast m +\bast n)|\rangle =\langle|\upvarepsilon (\bast m )|\rangle+ \langle|\upvarepsilon (\bast n)|\rangle=\upnu.\]
 %$\bast m+\bast n\in \bast\Z_{\upnu}^{\rm sym} (\uptheta )_{\upsigma}$.
 \end{proof}
 
 \begin{coro}  If $\uptheta\in\mathfrak{B}$ then $ \bast\Z^{\rm sym} (\uptheta ) =\{ \bast\Z^{\rm sym}_{\upnu} (\uptheta )\}$ is a family of groups
 satisfying 
 \begin{align}\label{quasiideo} \bast\Z^{\upnu[\upiota]}\cdot \bast\Z^{\rm sym}_{\upnu} (\uptheta )\subset \bast\Z^{\upnu^{2}}_{\upiota}(\uptheta ).
 \end{align}
 \end{coro}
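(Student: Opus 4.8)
The plan is to prove the corollary as a direct consequence of Theorem \ref{symmgroupologystruct}(2) (which gives that $\bast\Z^{\rm sym}_{\upnu}(\uptheta)_{\upsigma}$ is a monoid for each sign pair) together with the fact that, for $\uptheta\in\mathfrak{B}$, being symmetric forces the error and denominator growths to be locked to a single common value $\upnu$, so that the sign decomposition reassembles into a genuine group. First I would observe that, by the Corollary following Proposition~(the one identifying $\bast\Z^{\rm sym}_{\upnu}(\uptheta)$ with $\bigcap_{\upmu<\upnu}\bast\Z^{\upmu}_{\upnu}(\uptheta)$ when $\uptheta\in\mathfrak{B}$), each $\bast\Z^{\rm sym}_{\upnu}(\uptheta)$ is already known to be a subgroup of $\bast\Z_{\upnu}(\uptheta)$; so the only new content is the inclusion (\ref{quasiideo}) describing how the growth filtration $\bast\Z^{\upnu[\upiota]}$ acts on it.

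The key step is the verification of (\ref{quasiideo}). Take $\bast a\in\bast\Z^{\upnu[\upiota]}$ and $\bast n\in\bast\Z^{\rm sym}_{\upnu}(\uptheta)$. By Proposition~\ref{ideological} (applied with the fine growth bi-filtration, $\bast n$ having growth index $\upnu$ since $\upmu(\bast n)=\upnu$, more precisely $\bast n\in\bast\Z^{\upmu'}_{\upnu}(\uptheta)$ for all $\upmu'<\upnu$), the product $\bast a\bast n$ lies in $\bast\Z(\uptheta)$ with $(\bast a\bast n)^{\perp}=\bast a\bast n^{\perp}$ and $\upvarepsilon(\bast a\bast n)=\bast a\,\upvarepsilon(\bast n)$. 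Consequently the decay of $\bast a\bast n$ is $\upnu(\bast a\bast n)=\langle|\bast a|\rangle\cdot\upnu(\bast n)=\langle|\bast a|\rangle\cdot\upnu$; since $\bast a\in\bast\Z^{\upnu[\upiota]}$ means $\langle|\bast a|\rangle\cdot\upnu<\upiota$, we get $\upnu(\bast a\bast n)<\upiota$, i.e.\ $\bast a\bast n\in\bast\Z_{\upiota}(\uptheta)$. For the growth index: $\upmu(\bast a\bast n)=\langle|\bast a\bast n|^{-1}\rangle=\langle|\bast a|^{-1}\rangle\cdot\upmu(\bast n)=\langle|\bast a|^{-1}\rangle\cdot\upnu$. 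Now $\bast a\in\bast\Z$ is not infinitesimal, so $\langle|\bast a|^{-1}\rangle\leq 1$, giving $\upmu(\bast a\bast n)\leq\upnu$; but we need the strict inequality $\upmu(\bast a\bast n)>\upnu^{2}$ to conclude $\bast a\bast n\in\bast\Z^{\upnu^{2}}(\uptheta)$. This follows because $\langle|\bast a|^{-1}\rangle\cdot\upnu>\upnu^{2}$ is equivalent to $\langle|\bast a|^{-1}\rangle>\upnu$, and this holds since $\bast a\in\bast\Z^{\upnu[\upiota]}$ in particular forces $|\bast a|\cdot\upnu<\upiota\in\bstar\PR\R_{\upvarepsilon}$, hence $\langle|\bast a|\rangle<\upnu^{-1}$, i.e.\ $\langle|\bast a|^{-1}\rangle>\upnu$ as required. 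Combining, $\bast a\bast n\in\bast\Z^{\upnu^{2}}_{\upiota}(\uptheta)$.

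Finally I would note the family-of-groups assertion: the fact that $\bast\Z^{\rm sym}_{\upnu}(\uptheta)$ is a group (not merely a union of monoids) for $\uptheta\in\mathfrak{B}$ was already recorded, and the present statement simply packages these as an indexed family $\{\bast\Z^{\rm sym}_{\upnu}(\uptheta)\}$ carrying the action (\ref{quasiideo}). I expect the main (and only real) obstacle to be pinning down the strict-versus-non-strict inequality in the growth index of the product $\bast a\bast n$ — making precise that $\bast a$ being a nonzero integer sequence class means $\langle|\bast a|^{-1}\rangle$ is bounded away from being infinitesimal in the right direction, and that the defining condition $\bast a\in\bast\Z^{\upnu[\upiota]}$ is exactly what upgrades $\upmu(\bast a\bast n)\leq\upnu$ to $\upmu(\bast a\bast n)>\upnu^{2}$; everything else is a routine transcription of Proposition~\ref{ideological} into the symmetric setting.
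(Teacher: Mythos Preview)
Your proof is correct and, for the inclusion (\ref{quasiideo}), essentially coincides with the paper's: the paper simply asserts that it ``follows immediately from the definitions'', and your second paragraph is exactly that verification spelled out (using $\upmu(\bast n)=\upnu(\bast n)=\upnu$ for symmetric $\bast n$, together with $\langle|\bast a|^{-1}\rangle>\upnu$ from $\bast a\in\bast\Z^{\upnu[\upiota]}\subset\bast\Z^{\upnu}$).

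The one divergence is in how the ``family of groups'' assertion is handled. You cite the earlier Corollary (the one identifying $\bast\Z^{\rm sym}_{\upnu}(\uptheta)$ with $\bigcap_{\upmu<\upnu}\bast\Z^{\upmu}_{\upnu}(\uptheta)$ for $\uptheta\in\mathfrak{B}$), which already recorded the subgroup property; this is clean and direct. The paper instead argues via Theorem~\ref{symmgroupologystruct}(1): for $\uptheta\in\mathfrak{B}$ one has $\bast\Z(\uptheta|x^{-1})=\bast\Z^{\rm sym}(\uptheta)$, so sums of symmetric elements stay symmetric, and then the index $\upnu$ is preserved. Both routes are valid. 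Note that your opening reference to Theorem~\ref{symmgroupologystruct}(2) (the sign-monoid statement) is a red herring --- you never actually use it, and the ``reassembling sign monoids into a group'' idea is not pursued; you should drop that sentence and lead directly with the earlier Corollary.
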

 
 \begin{proof} If $\uptheta\in\mathfrak{B}$ then $ \bast\Z \big(\uptheta | x^{-1} \big)= \bast\Z^{\rm sym} (\uptheta )$.  By 1. of
 Theorem \ref{symmgroupologystruct}, $\bast\Z^{\rm sym}_{\upnu} (\uptheta )$ is a group for all $\upnu\in\bstar\PR\R_{\upvarepsilon}$.
 The property (\ref{quasiideo}) follows immediately from the definitions.
 \end{proof}
 
 The symmetric set $\bast\Z_{\upnu}^{\rm sym} (\uptheta )$ has, in addition, a fractional addition/multiplication law
 which generalizes the approximate ideal product of Theorem \ref{productformula}, and which is nontrivial even for $\uptheta\in\mathfrak{B}$.  To formulate it
it is necessary to work with numerator denominator pairs rather than just denominators. 
 Thus let 
 \[ \bast\Z^{1,1} (\uptheta ) = \{ (\bast n^{\perp},\bast n)|\; \bast n\in\bast\Z (\uptheta )\}=
 \{ (\bast m,\bast n)\in\bast\Z^{2}|\; \langle |\bast n\uptheta -\bast m|\rangle< 1\}\subset\bast\Z^{2}\]
 be the associated group of numerator denominator pairs of diophantine approximations.  The canonical isomorphism
 $ \bast\Z^{1,1} (\uptheta )\cong\bast\Z (\uptheta)$ induces the growth-decay filtration
 $ \bast\Z^{1,1} (\uptheta ) =\{ (\bast\Z^{1,1})^{\upmu}_{\upnu} (\uptheta )\}$.  The set of symmetric numerator denominator pairs is denoted
 \[ (\bast\Z^{1,1})_{\upnu}^{\rm sym} (\uptheta )\subset (\bast\Z^{1,1})_{\upnu} (\uptheta ).\]  
 
 Now define
 \[   \bast\mathring{\Z}^{1,1} (\uptheta ) =\{ (\bast m,\bast n)\in\bast\Z^{2}|\; \langle |\bast n\uptheta -\bast m|\rangle\leq 1\}\supsetneqq  \bast\Z^{1,1} (\uptheta ) . \]
 Note that $\Z^{2}\subset \bast\mathring{\Z}^{1,1} (\uptheta )$, and if $(\bast m,\bast n)\in\bast\mathring{\Z}^{1,1} (\uptheta )-\Z^{2}$
 then both $\bast m$ and $\bast n$ are infinite.  In addition, for all infinite $(\bast m,\bast n)\in \bast\mathring{\Z}^{1,1} (\uptheta )$,
 $\bast m/\bast n\simeq \uptheta$ in $\bast\R$.  If we form the quotient group
 \[   \bast\overline{\Z}^{1,1} (\uptheta ) :=\bast\mathring{\Z}^{1,1} (\uptheta )/\Z^{2} \]
 then every class $ \bast \bar{n}\in\bast \overline{\Z}:=\bast\Z/\Z$ (= the group of universes in $\bast\Z$, a densely ordered group) determines a unique numerator $\bast  \bar{m}\in\bast \overline{\Z}$ for which $(\bast  \bar{m},\bast  \bar{n})\in \bast\overline{\Z}^{1,1} (\uptheta )$
 and we write $ \bast \bar{n}^{\perp}=\bast  \bar{m}$.  
 %If we write $\bast \overline{\Z} =\bast \Z/\Z$
 %(the group of universes in $\bast\Z$, a densely ordered group) then $\bast\Z^{1,1}/\Z^{1,1}\cong\bast \overline{\Z}^{1,1}$.
 Since elements of  $\bast\Z^{1,1} (\uptheta )$ are already uniquely determined by their denominator, there is an induced
 inclusion 
 \[  \bast\Z^{1,1} (\uptheta )\hookrightarrow  \bast\overline{\Z}^{1,1} (\uptheta ). \]

  For 
  $( \bast \bar{n}^{\perp}, \bast \bar{n})\in  \bast\overline{\Z}^{1,1} (\uptheta )$ write
  $\upnu ( \bast \bar{n})= \upnu (\bast n)$ if $\bast n\in\bast \bar{n}$ belongs to $\bast\Z (\uptheta )$; otherwise write
  $\upnu ( \bast \bar{n})=1$.
  % a quantity which is independent of the choice of classes $\bast n\in\bast \bar{n}$,
 %$\bast m\in\bast \bar{n}^{\perp}$ .  
In addition, write $\upmu (\bast\bar{n})=\langle\bast n^{-1}\rangle$ for any $\bast n\in\bast\bar{n}$, which is evidently independent
of the choice of representative. 

 Let $\bstar\PR\R_{\leq1}= \bstar\PR\R_{\upvarepsilon}\cup\{1\}$.
  Now for $\upmu\in\bstar\PR\R_{\upvarepsilon},\upnu\in \bstar\PR\R_{\leq1}$ define
 \[   (\bast\overline{\Z}^{1,1})^{\upmu}_{\upnu} (\uptheta )=\{ (\bast \bar{m},\bast \bar{n}) |\; \bast \bar{n}\cdot \upmu 
 \in \bstar\PR\R_{\upvarepsilon}, \upnu ( \bast \bar{n})\leq\upnu
 \} .\]
 Note that for all $\upnu<1$,  $(\bast\overline{\Z}^{1,1})^{\upmu}_{\upnu} (\uptheta )= (\bast\Z^{1,1})^{\upmu}_{\upnu} (\uptheta )\cong
 \bast\Z^{\upmu}_{\upnu} (\uptheta )$.  The proof of the following Theorem is left to the reader, who will note that it is available for elements of $\mathfrak{B}$, providing the latter with a weak form of approximate ideal arithmetic.
 
 \begin{theo}[Symmetric Approximate Ideal Arithmetic]\label{symgda}  For any $\uptheta,\upeta\in\R$ and all $\upnu\in\bstar\PR\R_{\upvarepsilon}$,
 there are maps
 \[\cdot, \pm\;: (\bast\Z^{1,1})_{\upnu}^{\rm sym} (\uptheta )\times(\bast\Z^{1,1})_{\upnu}^{\rm sym} (\upeta )\longrightarrow
% (\bast\overline{\Z}^{2})_{1}^{\upnu\cdot\upnu} (\uptheta+ \upeta) \cap 
(\bast\overline{\Z}^{1,1})_{1}^{\upnu^{2}} (\uptheta \upeta),\;
(\bast\overline{\Z}^{1,1})_{1}^{\upnu^{2}} (\uptheta \pm\upeta) 
 \]
 given by fractional product, sum and difference of pairs.
 \end{theo}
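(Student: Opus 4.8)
The plan is to mimic the proof of Theorem~\ref{productformula}, replacing the growth-decay bi-filtration computations by the ``symmetric'' bookkeeping which controls the $\uptheta$-norm, and to carry everything out at the level of numerator denominator pairs so that the target group $(\bast\overline{\Z}^{1,1})_{1}^{\upnu^{2}}(\uptheta\upeta)$ (respectively $(\uptheta\pm\upeta)$) absorbs the inevitable loss of exactness in the numerator. First I would fix $(\bast m^{\perp},\bast m)\in (\bast\Z^{1,1})_{\upnu}^{\rm sym}(\uptheta)$ and $(\bast n^{\perp},\bast n)\in (\bast\Z^{1,1})_{\upnu}^{\rm sym}(\upeta)$, so that by definition of the symmetric set $\upmu(\bast m)=\upnu(\bast m)=\upnu$ and $\upmu(\bast n)=\upnu(\bast n)=\upnu$, i.e.\ $\langle\bast m\rangle=\langle\bast n\rangle=\upnu^{-1}$ and $\langle\upvarepsilon(\bast m)\rangle=\langle\upvarepsilon(\bast n)\rangle=\upnu$. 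I would then define the candidate pair for the product to be $(\bast m^{\perp}\bast n^{\perp},\bast m\bast n)$ and for the sum/difference to be $(\bast m^{\perp}\bast n\pm\bast m\bast n^{\perp},\bast m\bast n)$, exactly the fractional arithmetic formulas of (\ref{addmultdual}).

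Next I would verify membership. For the product, expand as in (\ref{productofdas}): $\uptheta\upeta(\bast m\bast n)-\bast m^{\perp}\bast n^{\perp}=\upvarepsilon(\bast m)\upvarepsilon(\bast n)+\upvarepsilon(\bast m)\bast n^{\perp}+\upvarepsilon(\bast n)\bast m^{\perp}$. The cross terms are no longer infinitesimal here (that was the whole point of symmetric approximations), but since $\langle\bast m^{\perp}\rangle=\langle\bast m\rangle=\upnu^{-1}$ by (\ref{dualordmag}) and $\langle\upvarepsilon(\bast n)\rangle=\upnu$, each cross term has growth-decay class $\leq 1$, hence lies in $\bast\R_{\rm fin}$; likewise $\langle\upvarepsilon(\bast m)\upvarepsilon(\bast n)\rangle=\upnu^{2}\in\bstar\PR\R_{\upvarepsilon}$ is infinitesimal. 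Therefore $\langle\uptheta\upeta(\bast m\bast n)-\bast m^{\perp}\bast n^{\perp}\rangle\leq 1$, which is precisely the defining condition for $(\bast m^{\perp}\bast n^{\perp},\bast m\bast n)\in\bast\mathring{\Z}^{1,1}(\uptheta\upeta)$, so the class descends to $\bast\overline{\Z}^{1,1}(\uptheta\upeta)$. The same expansion with $\uptheta+\upeta$ in place of $\uptheta\upeta$ (i.e.\ $(\uptheta+\upeta)(\bast m\bast n)=\bast m^{\perp}\bast n+\bast m\bast n^{\perp}+\upvarepsilon(\bast m)\bast n+\upvarepsilon(\bast n)\bast m$, noting $\langle\bast n\rangle=\langle\bast n^{\perp}\rangle$) handles the sum, and the difference is identical. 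For the index bounds: $\langle(\bast m\bast n)^{-1}\rangle=\langle\bast m^{-1}\rangle\cdot\langle\bast n^{-1}\rangle=\upnu^{2}\in\bstar\PR\R_{\upvarepsilon}$, giving the superscript $\upnu^{2}$; and since $\upnu(\overline{\bast m\bast n})\leq 1$ trivially (it is either $1$, or the genuine decay of a diophantine approximation, which in the worst case has class $\leq 1$ by the cross-term estimate above), the pair lands in $(\bast\overline{\Z}^{1,1})_{1}^{\upnu^{2}}$ as claimed.

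Finally I would check bilinearity, which is routine: the maps are restrictions of the ordinary product (and of the bilinear combination $\bast m^{\perp}\bast n\pm\bast m\bast n^{\perp}$) on $\bast\Z^{2}$, and additivity in each variable within a fixed symmetric group $(\bast\Z^{1,1})_{\upnu}^{\rm sym}$ follows from distributivity in $\bast\Z$ together with part~2 of Theorem~\ref{symmgroupologystruct} guaranteeing that sums of same-sign symmetric pairs stay symmetric (one reduces to the sign-homogeneous pieces $\bast\Z(\uptheta)_{\upsigma}$, on which $\bast\Z_{\upnu}^{\rm sym}(\uptheta)_{\upsigma}$ is closed under addition). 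I expect the main obstacle to be not any single inequality but the bookkeeping of \emph{which} quotient group is the honest target: one must check that the cross terms, while finite, genuinely survive modulo $\Z^{2}$ in a well-defined way — i.e.\ that changing representatives of $\bast m,\bast n$ by standard integers only changes $\bast m^{\perp}\bast n^{\perp}$ (resp.\ $\bast m^{\perp}\bast n\pm\bast m\bast n^{\perp}$) by an element of $\Z^{2}$ — so that the product and sum are well-defined maps into $\bast\overline{\Z}^{1,1}$ rather than merely into $\bast\mathring{\Z}^{1,1}$. This is where the passage to universes $\bast\overline{\Z}=\bast\Z/\Z$ and the uniqueness of the associated numerator (established in the paragraph preceding Theorem~\ref{symgda}) does the real work, and it is the one point I would write out carefully rather than leave to the reader.
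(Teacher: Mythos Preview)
The paper leaves this proof to the reader, and your core argument is exactly what was intended: expand the error as in Theorem~\ref{productformula}, observe that the cross terms $\upvarepsilon(\bast m)\bast n^{\perp}$ and $\upvarepsilon(\bast n)\bast m^{\perp}$ now have growth--decay class $\leq 1$ (bounded rather than infinitesimal), so the resulting pair lies in $\bast\mathring{\Z}^{1,1}$ and hence defines a class in $\bast\overline{\Z}^{1,1}$.

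Two remarks on your final paragraph, which is largely unnecessary. The theorem asserts only the existence of \emph{maps}, not bilinear maps; since $(\bast\Z^{1,1})_{\upnu}^{\rm sym}(\uptheta)$ is not a group in general (only when $\uptheta\in\mathfrak{B}$), bilinearity in the usual sense is not even meaningful here, and your reduction to sign-homogeneous pieces via Theorem~\ref{symmgroupologystruct} does not assemble into additivity on the whole set. Nor is there a well-definedness issue to check: the domain consists of honest pairs in $\bast\Z^{2}$, not equivalence classes, so the map simply forms the fractional product (or sum/difference) and then projects to $\bast\overline{\Z}^{1,1}$; there is nothing representative-dependent in that. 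As a final technical aside, your growth computation gives $\upmu(\bast m\bast n)=\upnu^{2}$ with \emph{equality}, whereas the superscript $\upnu^{2}$ in the filtration formally requires the strict inequality $\bast m\bast n\cdot\upnu^{2}\in\bstar\PR\R_{\upvarepsilon}$; this is a harmless boundary imprecision in the statement rather than a defect in your argument.
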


 \section{Lorentzian Structure}\label{Lorentzian}
 
We now make precise the extent to which we may regard $|\cdot |_{\uptheta}$ as a generalized norm. 
The following result suggests that we may view $|\cdot |_{\uptheta}$ as a pseudo-norm on the approximate group
$\bast\Z\big(\uptheta | x^{-1} \big)$.

\begin{theo}\label{nonarchtriangle}  The restriction of the function $|\cdot |_{\uptheta}$ to $ \bast\Z(\uptheta | x^{-1} )$ 
obeys the non-archimedean triangle inequality for all defined (i.e.\ approximate group) sums.\end{theo}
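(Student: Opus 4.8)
The plan is to verify the inequality directly on a generic groupological sum. I would fix $\upmu,\upnu\in\bstar\PR\R_{\upvarepsilon}$ and $\bast m\in\bast\Z^{\upmu}_{\upnu}(\uptheta|x^{-1})$, $\bast n\in\bast\Z^{\upnu}_{\upmu}(\uptheta|x^{-1})$; the fine tri-filtered case is subsumed, since $\bast\Z^{\upmu[\upiota]}_{\upnu}(\uptheta|x^{-1})\subseteq\bast\Z^{\upmu}_{\upnu}(\uptheta|x^{-1})$ and the assertion involves only the norms, not the indices. By the groupology structure theorem $\bast m+\bast n\in\bast\Z^{\upmu-\upnu}(\uptheta|x^{-1})\subseteq\bast\Z(\uptheta|x^{-1})$, so $|\bast m+\bast n|_{\uptheta}$ is a genuine element of $\R_{+}$ and the goal is
\[
|\bast m+\bast n|_{\uptheta}\ \le\ \max\!\big(|\bast m|_{\uptheta},|\bast n|_{\uptheta}\big).
\]
The cases $\bast m=0$, $\bast n=0$, and $\uptheta\in\Q$ (where $|\cdot|_{\uptheta}\equiv 0$) are immediate, so I would assume none of these.

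The decisive step is the observation that at most one of $|\bast m|_{\uptheta},|\bast n|_{\uptheta}$ can be nonzero. Indeed, for $0\ne\bast k\in\bast\Z(\uptheta|x^{-1})$ one has $\upnu(\bast k)\le\upmu(\bast k)$, hence
\[
\big\langle\,|\bast k|\cdot|\upvarepsilon(\bast k)|\,\big\rangle=\upmu(\bast k)^{-1}\cdot\upnu(\bast k)\le 1,
\]
with equality --- equivalently $|\bast k|_{\uptheta}\ne 0$ --- exactly when $\bast k$ is symmetric, i.e.\ $\upmu(\bast k)=\upnu(\bast k)$. But if $\bast m$ were symmetric, then the membership conditions $\upmu(\bast m)>\upmu$ and $\upnu(\bast m)\le\upnu$ would give $\upmu<\upmu(\bast m)=\upnu(\bast m)\le\upnu$; symmetrically, $\bast n$ symmetric would force $\upnu<\upmu(\bast n)=\upnu(\bast n)\le\upmu$. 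Since $\bstar\PR\R_{\upvarepsilon}$ is linearly ordered (Proposition \ref{denselinord}), $\upmu<\upnu$ and $\upnu<\upmu$ cannot both hold, so one of $\bast m,\bast n$ is non-symmetric with vanishing $\uptheta$-norm. Interchanging $\bast m\leftrightarrow\bast n$ and $\upmu\leftrightarrow\upnu$ if necessary, I would assume $|\bast n|_{\uptheta}=0$; then $\max(|\bast m|_{\uptheta},|\bast n|_{\uptheta})=|\bast m|_{\uptheta}$.

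The remaining step is a routine infinitesimal estimate. Since $\upvarepsilon$ is additive and $\uptheta$-duals are unique, $(\bast m+\bast n)^{\perp}=\bast m^{\perp}+\bast n^{\perp}$ and $\upvarepsilon(\bast m+\bast n)=\upvarepsilon(\bast m)+\upvarepsilon(\bast n)$, so
\[
|\bast m+\bast n|\cdot|\upvarepsilon(\bast m+\bast n)|\le\big(|\bast m|+|\bast n|\big)\big(|\upvarepsilon(\bast m)|+|\upvarepsilon(\bast n)|\big)=|\bast m||\upvarepsilon(\bast m)|+|\bast n||\upvarepsilon(\bast n)|+|\bast m||\upvarepsilon(\bast n)|+|\bast n||\upvarepsilon(\bast m)|.
\]
The two cross terms are infinitesimal --- the same computation that underlies the groupological sum: $\langle|\bast m||\upvarepsilon(\bast n)|\rangle=\upmu(\bast m)^{-1}\cdot\upnu(\bast n)<\upmu^{-1}\cdot\upmu=1$ and $\langle|\bast n||\upvarepsilon(\bast m)|\rangle=\upmu(\bast n)^{-1}\cdot\upnu(\bast m)<\upnu^{-1}\cdot\upnu=1$. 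All the other quantities are bounded (their growth-decay classes are $\le 1$), so taking the class mod $\bast\R_{\upvarepsilon}$ of both sides --- equivalently, standard parts --- yields $|\bast m+\bast n|_{\uptheta}^{2}\le|\bast m|_{\uptheta}^{2}+|\bast n|_{\uptheta}^{2}=|\bast m|_{\uptheta}^{2}$, and taking square roots gives $|\bast m+\bast n|_{\uptheta}\le|\bast m|_{\uptheta}=\max(|\bast m|_{\uptheta},|\bast n|_{\uptheta})$.

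The only real obstacle is the second paragraph. A crude triangle estimate alone gives merely $|\bast m+\bast n|_{\uptheta}^{2}\le|\bast m|_{\uptheta}^{2}+|\bast n|_{\uptheta}^{2}$, hence the weaker $|\bast m+\bast n|_{\uptheta}\le\sqrt{2}\,\max(|\bast m|_{\uptheta},|\bast n|_{\uptheta})$; the genuine ultrametric inequality emerges only once one notices that the two opposite inequalities built into the definition of a groupological sum are incompatible on symmetric elements, forcing one summand's $\uptheta$-norm to vanish. Keeping that asymmetry between the growth index $\upmu$ and the decay index $\upnu$ straight is the step I expect to require the most care to state correctly; everything after it is the estimate already used for the groupology structure.
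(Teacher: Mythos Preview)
Your proof is correct and follows essentially the same approach as the paper. The paper expands $|\bast m+\bast n|_{\uptheta}^{2}$ exactly as you do, discards the cross terms as infinitesimals, and then invokes Proposition~\ref{trivialnorm} (which states $|\cdot|_{\uptheta}\equiv 0$ on $\bast\Z^{\upmu}_{\upnu}(\uptheta|x^{-1})$ for $\upmu\ge\upnu$) to kill one of the two remaining terms; your second paragraph is precisely the content of that proposition unpacked in terms of symmetric elements, so the arguments are the same with yours being slightly more self-contained.
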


\begin{proof}  Let $\bast n_{1}\in  \bast\Z^{\upmu}_{\upnu}\big(\uptheta | x^{-1} \big)$, $\bast n_{2}\in  \bast\Z^{\upnu}_{\upmu}\big(\uptheta | x^{-1} \big)$ so that the sum 
$\bast n_{1} +\bast n_{2}$ is defined.  Then
\[  |\bast n_{1} +\bast n_{2} |^{2}_{\uptheta} \leq ( |\bast n_{1}| + |\bast n_{2}|  )(|\upvarepsilon ( \bast n_{1})|  +  | \upvarepsilon (\bast n_{2})|    ) \mod \bast\R_{\upvarepsilon} . \]
By hypothesis we have that the elements 
$   |\bast n_{1}|\cdot | \upvarepsilon ( \bast n_{2})| $,   $|\bast n_{2}|\cdot |\upvarepsilon ( \bast n_{1})|$
are infinitesimal, so that
\[   |\bast n_{1} +\bast n_{2} |^{2}_{\uptheta} \leq   
|\bast n_{1} |^{2}_{\uptheta} + |\bast n_{2} |^{2}_{\uptheta}.\]
However by Proposition \ref{trivialnorm}, for $\upmu\geq\upnu$,  $|\bast n_{1} |^{2}_{\uptheta}=0$, implying
$ |\bast n_{1} +\bast n_{2} |_{\uptheta} \leq \max (|\bast n_{1}  |_{\uptheta},|\bast n_{2} |_{\uptheta}   )$.
  \end{proof}

Since at least one of the elements of any defined approximate group sum already has
$\uptheta$-norm $0$, Theorem \ref{nonarchtriangle} is somewhat unsatisfying.  A more interesting norm-theoretic interpretation of $|\cdot |_{\uptheta}$  may be obtained by restricting to $\bast\Z_{\upnu}^{\rm sym} (\uptheta )$.
As it turns out, it is more natural to contemplate a {\it Minkowskian} formulation of $|\cdot|_{\uptheta}$.  

Given
$\bast m,\bast n\in \bast\Z_{\upnu}^{\rm sym} (\uptheta )$, consider the following symmetric function in two-variables:
\[  \boldsymbol[ \bast m,  \bast n\boldsymbol]_{\uptheta} := \frac{1}{2}\left(\bast m\upvarepsilon (\bast n) + \bast n\upvarepsilon (\bast m)\right)\mod\bast\R_{\upvarepsilon}\in\R\]
and write $\boldsymbol[ \bast m\boldsymbol]_{\uptheta} := \boldsymbol[ \bast m,  \bast m\boldsymbol]_{\uptheta}^{ \frac{1}{2}}$ so that $|\bast m|_{\uptheta}=
|\boldsymbol[ \bast m\boldsymbol]_{\uptheta} |$.  In particular, $|\cdot|_{\uptheta}$
is the ``Minkowski norm'' associated to $ \boldsymbol[ \cdot,  \cdot\boldsymbol]_{\uptheta}$.   We say that $\bast m$ is {\bf  time-like} if $\boldsymbol[ \bast m\boldsymbol]^{2}_{\uptheta}>0$ and {\bf  space-like}
if $\boldsymbol[ \bast m\boldsymbol]_{\uptheta}^{2}<0$.  
The time-like elements correspond to the signs $\upsigma= (+,+), (-,-)$ and the space-like elements correspond to the signs $\upsigma= (+,-), (-,+)$.
We say that time-like elements point in the same direction if their sign $\upsigma$ is the same: the elements with sign $(+,+)$ are
interpreted as future pointing.  

The function 
$  \boldsymbol[ \cdot,  \cdot\boldsymbol]_{\uptheta}$ extends by the same formula to all of
$\bast\Z (\uptheta|x^{-1} )=\{ \bast\Z^{\upmu}_{\upnu} (\uptheta|x^{-1} )\}$.  We call an element $\bast m\in\bast\Z (\uptheta|x^{-1} )$
{\bf  light-like} if $\boldsymbol[ \bast m\boldsymbol]_{\uptheta} =0$ e.g.\ if $\bast m\in  \bast\Z^{\upmu}_{\upnu} (\uptheta|x^{-1} )$
for some $\upmu\geq\upnu$. The time-like and space-like elements of $\bast\Z (\uptheta|x^{-1} )$ are exactly the elements
of $\bast\Z^{\rm sym} (\uptheta )$.  If we reverse our clocks and view diophantine approximations as material particles ``departing from'' 
$\uptheta$ ($\bast n^{-1}=$ time and $\upvarepsilon (\bast n)=$
space) then $|\bast n|_{\uptheta}$ is nothing more than the initial speed.
When $\uptheta\in\mathfrak{B}$, we have the inequality 
\begin{align}\label{HUPforBA}
|\boldsymbol[ \bast m\boldsymbol]|^{2}_{\uptheta}\geq C_{\uptheta}
\end{align}
where $C_{\uptheta}$ is the corresponding element of the Lagrange spectrum.  We may interpret $C_{\uptheta}$ 
as being Planck's constant for the system defined by the symmetric diophantine approximations of $\uptheta$, where the inequality (\ref{HUPforBA})
plays the role of Heisenberg's uncertainty principle.

%Thus in the dictionary being set up here, $\uptheta\not\in\mathfrak{B}$ has infinite speed of light.

\begin{theo}  For all $\bast m_{1},\bast m_{2}\in\bast\Z_{\upnu}^{\rm sym} (\uptheta )$,
\[    \boldsymbol[ \bast m_{1}+\bast m_{2},  \bast n\boldsymbol]_{\uptheta} =\boldsymbol[ \bast m_{1},  \bast n\boldsymbol]_{\uptheta} 
+\boldsymbol[\bast m_{2},  \bast n\boldsymbol]_{\uptheta} .\]
If moreover $\bast m_{1},\bast m_{2}$ are time-like and point in the same direction, then they satisfy the {\rm reverse} triangle inequality:
\[  \boldsymbol[ \bast m_{1}+\bast m_{2}\boldsymbol]_{\uptheta}\geq \boldsymbol[ \bast m_{1}\boldsymbol]_{\uptheta} +\boldsymbol[ \bast m_{2}\boldsymbol]_{\uptheta}.   \]
\end{theo}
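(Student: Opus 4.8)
The plan is to compute the bilinear form $\boldsymbol[\cdot,\cdot\boldsymbol]_{\uptheta}$ on a sum directly, then read off the reverse triangle inequality from the Lorentzian signature. First I would establish bilinearity: since $\upvarepsilon$ is $\Z$-linear and additive on $\bast\Z(\uptheta)$ — this is just $\upvarepsilon(\bast m_1+\bast m_2)=\upvarepsilon(\bast m_1)+\upvarepsilon(\bast m_2)$, with $(\bast m_1+\bast m_2)^{\perp}=\bast m_1^{\perp}+\bast m_2^{\perp}$ — the expression
\[
\boldsymbol[\bast m_1+\bast m_2,\bast n\boldsymbol]_{\uptheta}=\tfrac12\big((\bast m_1+\bast m_2)\upvarepsilon(\bast n)+\bast n\,\upvarepsilon(\bast m_1+\bast m_2)\big)\bmod\bast\R_{\upvarepsilon}
\]
splits termwise into $\boldsymbol[\bast m_1,\bast n\boldsymbol]_{\uptheta}+\boldsymbol[\bast m_2,\bast n\boldsymbol]_{\uptheta}$; one must only check that each summand is individually finite mod $\bast\R_{\upvarepsilon}$, which holds because $\bast m_i,\bast n$ all lie in $\bast\Z_{\upnu}^{\rm sym}(\uptheta)$, so $\langle|\bast m_i|\rangle\cdot\langle|\upvarepsilon(\bast n)|\rangle=\upnu^{-1}\cdot\upnu=1$ and likewise for the cross term — i.e.\ these products are noninfinitesimal and noninfinite, hence define genuine real numbers after reduction. (Strictly, $\boldsymbol[\bast m_i,\bast n\boldsymbol]_{\uptheta}$ is defined because $\bast m_i,\bast n$ are symmetric of the \emph{same} decay $\upnu$, which is exactly the hypothesis.)

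Next I would record the polarization-type identity for the quadratic form: writing $Q(\bast m):=\boldsymbol[\bast m\boldsymbol]_{\uptheta}^2=\boldsymbol[\bast m,\bast m\boldsymbol]_{\uptheta}$, bilinearity gives
\[
Q(\bast m_1+\bast m_2)=Q(\bast m_1)+Q(\bast m_2)+2\,\boldsymbol[\bast m_1,\bast m_2\boldsymbol]_{\uptheta}.
\]
The content of the signature-$(1,1)$ claim is the reverse Cauchy--Schwarz inequality: for two future-pointing time-like vectors one has $\boldsymbol[\bast m_1,\bast m_2\boldsymbol]_{\uptheta}\ge \boldsymbol[\bast m_1\boldsymbol]_{\uptheta}\,\boldsymbol[\bast m_2\boldsymbol]_{\uptheta}\ge 0$. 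Granting this, $Q(\bast m_1+\bast m_2)\ge Q(\bast m_1)+2\boldsymbol[\bast m_1\boldsymbol]_{\uptheta}\boldsymbol[\bast m_2\boldsymbol]_{\uptheta}+Q(\bast m_2)=(\boldsymbol[\bast m_1\boldsymbol]_{\uptheta}+\boldsymbol[\bast m_2\boldsymbol]_{\uptheta})^2$, and since the right side is a square of a nonnegative real, taking square roots (and noting $\bast m_1+\bast m_2$ is again future-pointing time-like, so $\boldsymbol[\bast m_1+\bast m_2\boldsymbol]_{\uptheta}\ge 0$) yields the asserted inequality.

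So the main obstacle is proving the reverse Cauchy--Schwarz inequality, and for that I would exhibit the form explicitly as Lorentzian of signature $(1,1)$. Pass to real coordinates: for $\bast m$ symmetric of decay $\upnu$, set $t(\bast m)=\langle\bast m\rangle^{-1}$-scaled representative and $s(\bast m)=\upvarepsilon(\bast m)$-scaled representative — more precisely, after reduction mod $\bast\R_{\upvarepsilon}$ a symmetric class $\bast m$ with sign $\upsigma$ is encoded by the pair of positive reals $(|\bast m|,|\upvarepsilon(\bast m)|)\bmod\bast\R_{\upvarepsilon}$ together with $\upsigma$, living on the hyperbola-like locus where the product is a fixed nonzero real (here normalized to lie in the class $1$). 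Then $\boldsymbol[\bast m,\bast n\boldsymbol]_{\uptheta}=\tfrac12(\bast m\,\upvarepsilon(\bast n)+\bast n\,\upvarepsilon(\bast m))$ is, up to the sign data, the standard hyperbolic inner product $\langle(a,b),(c,d)\rangle=\tfrac12(ad+bc)$ on $\R^2_{>0}$, which has matrix $\tfrac12\begin{pmatrix}0&1\\ 1&0\end{pmatrix}$, i.e.\ signature $(1,1)$ after diagonalizing. For this form, two vectors in the same component of the ``time-like'' cone $\{ad+bc>0\}$ — here the vectors with sign $(+,+)$, equivalently all coordinates positive — satisfy the reverse Cauchy--Schwarz inequality by the usual split-signature argument: diagonalize to $\tfrac14\big((a+b)^2-(a-b)^2\big)$, note both vectors have $|a+b|$-component dominating, and apply ordinary Cauchy--Schwarz to the positive-definite part with a sign reversal on the negative part. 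The one subtlety to handle carefully is that everything happens modulo $\bast\R_{\upvarepsilon}$ and that ``same direction'' must be used to guarantee the mixed term $\bast m_1\upvarepsilon(\bast m_2)$ and $\bast m_2\upvarepsilon(\bast m_1)$ have the correct (positive) sign after reduction; this is where the hypothesis $\bast m_1,\bast m_2$ time-like and co-oriented is indispensable, ruling out the cancellation that would occur for space-like or oppositely oriented classes.
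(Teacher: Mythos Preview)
Your proposal is correct and follows the same overall architecture as the paper: verify bilinearity directly, expand $\boldsymbol[\bast m_1+\bast m_2\boldsymbol]_\uptheta^2$ via polarization, and reduce to a reverse Cauchy--Schwarz inequality $\boldsymbol[\bast m_1,\bast m_2\boldsymbol]_\uptheta\ge\boldsymbol[\bast m_1\boldsymbol]_\uptheta\boldsymbol[\bast m_2\boldsymbol]_\uptheta$.

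The one difference is in how reverse Cauchy--Schwarz itself is obtained. You propose to diagonalize the form $\tfrac12\begin{pmatrix}0&1\\1&0\end{pmatrix}$ and invoke the standard signature-$(1,1)$ argument. The paper instead writes down a single algebraic identity,
\[
\boldsymbol[\bast m_1,\bast m_2\boldsymbol]_\uptheta^{2}-\boldsymbol[\bast m_1\boldsymbol]_\uptheta^{2}\boldsymbol[\bast m_2\boldsymbol]_\uptheta^{2}=\tfrac14\big(\bast m_1\,\upvarepsilon(\bast m_2)-\bast m_2\,\upvarepsilon(\bast m_1)\big)^{2}\ge 0,
\]
which dispatches the inequality in one line without any coordinate passage or diagonalization. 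Your route is conceptually transparent (it explains \emph{why} the inequality reverses), but the paper's identity is shorter and sidesteps the mild technical issue you flag of making the ``real coordinates mod $\bast\R_\upvarepsilon$'' precise. Both approaches use the same-direction hypothesis in the same way: to guarantee $\boldsymbol[\bast m_1,\bast m_2\boldsymbol]_\uptheta>0$ so that one can remove absolute values after squaring.
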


\begin{proof}  The first statement is immediate.  To prove the second it will suffice to prove the following reverse Cauchy inequality:
\[ \boldsymbol[ \bast m,  \bast n\boldsymbol]_{\uptheta}\geq \boldsymbol[ \bast m_{1}\boldsymbol]_{\uptheta}\boldsymbol[ \bast m_{2}\boldsymbol]_{\uptheta}.\]
Indeed, suppose the latter is true, and  $\bast m_{1},\bast m_{2}$ are time-like of the same sign, then immediately:
\begin{align*}
  \boldsymbol[ \bast m_{1}+\bast m_{2}\boldsymbol]_{\uptheta}^{2} =(\bast m_{1}+\bast m_{2})(\upvarepsilon(\bast m_{1})+\upvarepsilon(\bast m_{2})) & =
  \boldsymbol[ \bast m_{1}\boldsymbol]_{\uptheta}^{2} + \boldsymbol[ \bast m_{2}\boldsymbol]_{\uptheta}^{2} +2 \boldsymbol[ \bast m_{1},  \bast m_{2}\boldsymbol]_{\uptheta} \\
  & \geq ( \boldsymbol[ \bast m_{1}\boldsymbol]_{\uptheta} +\boldsymbol[ \bast m_{2}\boldsymbol]_{\uptheta})^{2}.
\end{align*}
But the reverse Cauchy inequality follows from:
\begin{align*} 
\boldsymbol[ \bast m_{1},  \bast m_{2}\boldsymbol]_{\uptheta}^{2} -\boldsymbol[\bast m_{1}\boldsymbol]_{\uptheta}^{2}\boldsymbol[ \bast m_{2}\boldsymbol]_{\uptheta}^{2} &= 
\frac{1}{4}(   \bast m_{1}\upvarepsilon (\bast m_{2})-   \bast m_{2}\upvarepsilon (\bast m_{1}) )^{2}\geq 0.
\end{align*}
\end{proof}

We call $\bast\Z^{\rm sym} (\uptheta )=\{ \bast\Z_{\upnu}^{\rm sym} (\uptheta )\}$ equipped with the Minkowskian pairing
$\boldsymbol[ \cdot, \cdot\boldsymbol]_{\uptheta}$ a {\bf  Lorentzian approximate group}.  
The Minkowskian norm has the following compatibility with the symmetric approximate ideal product:

\begin{prop}\label{innerprodprod}   Suppose that $\bast m\in  \bast\Z_{\upnu}^{\rm sym} (\uptheta ),
\bast n\in  \bast\Z_{\upnu}^{\rm sym} (\upeta )$ are either both time-like or both space-like.  
%the pairs $(\bast m^{\perp}, \bast m )$, 
%$(\bast n^{\perp}, \bast n )$ can be multiplied as fractions according to Theorem \ref{symgda}.  
Then $ |\boldsymbol[\bast m\bast n\boldsymbol]_{\uptheta\upeta}|=\infty$ so that
\[   |\boldsymbol[\bast m\bast n\boldsymbol]_{\uptheta\upeta}| >    \boldsymbol[\bast m\boldsymbol]_{\uptheta}  \boldsymbol[\bast n\boldsymbol]_{\upeta}. \]
\end{prop}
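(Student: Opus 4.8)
The plan is to compute $\boldsymbol[\bast m\bast n\boldsymbol]_{\uptheta\upeta}$ directly from the symmetric ideological product. By Theorem~\ref{symgda} the fractional product $(\bast m^{\perp_\uptheta}\bast n^{\perp_\upeta},\,\bast m\bast n)$ represents an element of $(\bast\overline{\Z}^{1,1})_{1}^{\upnu^{2}}(\uptheta\upeta)$, so the numerator attached to $\bast m\bast n$ for $\uptheta\upeta$ is $\bast m^{\perp_\uptheta}\bast n^{\perp_\upeta}$, and expanding $(\uptheta\bast m)(\upeta\bast n)=(\bast m^{\perp_\uptheta}+\upvarepsilon(\bast m))(\bast n^{\perp_\upeta}+\upvarepsilon(\bast n))$ gives
\[
\upvarepsilon_{\uptheta\upeta}(\bast m\bast n)=\bast m\bast n\,\uptheta\upeta-\bast m^{\perp_\uptheta}\bast n^{\perp_\upeta}=\bast m^{\perp_\uptheta}\upvarepsilon(\bast n)+\bast n^{\perp_\upeta}\upvarepsilon(\bast m)+\upvarepsilon(\bast m)\upvarepsilon(\bast n).
\]

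Next I would multiply through by $\bast m\bast n$ and reduce modulo $\bast\R_{\upvarepsilon}$, using the identities $\bast m\,\bast m^{\perp_\uptheta}=\uptheta\bast m^{2}-\boldsymbol[\bast m\boldsymbol]_\uptheta^{2}$, $\bast n\,\bast n^{\perp_\upeta}=\upeta\bast n^{2}-\boldsymbol[\bast n\boldsymbol]_\upeta^{2}$ together with $\bast m\upvarepsilon(\bast m)\equiv\boldsymbol[\bast m\boldsymbol]_\uptheta^{2}$, $\bast n\upvarepsilon(\bast n)\equiv\boldsymbol[\bast n\boldsymbol]_\upeta^{2}$, and the fact that $\langle|\bast m^{-1}|\rangle=\langle|\bast n^{-1}|\rangle=\upnu$ because $\bast m,\bast n$ are symmetric of decay $\upnu$. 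This yields
\[
\boldsymbol[\bast m\bast n\boldsymbol]_{\uptheta\upeta}^{2}\;\equiv\;\uptheta\,\bast m^{2}\,\boldsymbol[\bast n\boldsymbol]_\upeta^{2}+\upeta\,\bast n^{2}\,\boldsymbol[\bast m\boldsymbol]_\uptheta^{2}-\boldsymbol[\bast m\boldsymbol]_\uptheta^{2}\,\boldsymbol[\bast n\boldsymbol]_\upeta^{2}\pmod{\bast\R_{\upvarepsilon}},
\]
in which the last term lies in $\bast\R_{\rm fin}$ while the first two are infinite, each of growth-decay valuation $\upnu^{-2}$. So the statement reduces to showing that the sum of the two infinite terms is again infinite, i.e.\ that they do not cancel; the strict inequality $|\boldsymbol[\bast m\bast n\boldsymbol]_{\uptheta\upeta}|>\boldsymbol[\bast m\boldsymbol]_\uptheta\,\boldsymbol[\bast n\boldsymbol]_\upeta$ is then automatic, since $\boldsymbol[\bast m\boldsymbol]_\uptheta$ and $\boldsymbol[\bast n\boldsymbol]_\upeta$ are finite reals.

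The non-cancellation step is where the time-like/space-like hypothesis is used. Being both time-like forces $\boldsymbol[\bast m\boldsymbol]_\uptheta^{2},\boldsymbol[\bast n\boldsymbol]_\upeta^{2}>0$, being both space-like forces them both negative; in either case $\boldsymbol[\bast m\boldsymbol]_\uptheta^{2}/\boldsymbol[\bast n\boldsymbol]_\upeta^{2}$ is a positive real. If $\uptheta\bast m^{2}\boldsymbol[\bast n\boldsymbol]_\upeta^{2}+\upeta\bast n^{2}\boldsymbol[\bast m\boldsymbol]_\uptheta^{2}$ were finite, then dividing by $\bast m^{2}$ and discarding the resulting infinitesimal remainder would give $(\bast m/\bast n)^{2}\simeq-(\upeta/\uptheta)\bigl(\boldsymbol[\bast m\boldsymbol]_\uptheta^{2}/\boldsymbol[\bast n\boldsymbol]_\upeta^{2}\bigr)$; the left-hand side is a positive nonstandard real, so this is impossible whenever $\uptheta$ and $\upeta$ have the same sign, which settles the claim in that case and forces $\boldsymbol[\bast m\bast n\boldsymbol]_{\uptheta\upeta}^{2}$ to be infinite in $\bbull\R$.

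The main obstacle I anticipate is the case where $\uptheta$ and $\upeta$ have opposite signs, which cannot be disposed of by signs alone: one must rule out the coincidence $|\uptheta|\,\bast m^{2}\,\boldsymbol[\bast n\boldsymbol]_\upeta^{2}-|\upeta|\,\bast n^{2}\,\boldsymbol[\bast m\boldsymbol]_\uptheta^{2}\in\bast\R_{\rm fin}$, i.e.\ that the squares of integers $\bast m^{2}$ and $\bast n^{2}$ cannot be in the forced near-proportion up to a bounded error. I would handle this either by appealing to the normalization $\uptheta,\upeta>0$ used elsewhere in this section (which forces $\uptheta\upeta>0$ and reduces to the solved case), or by a finer arithmetic estimate showing that, for $\bast m$ symmetric of decay $\upnu$ for $\uptheta$ and $\bast n$ symmetric of decay $\upnu$ for $\upeta$, the ratio $\bast m^{2}/\bast n^{2}$ cannot approximate the relevant real constant to within $O(\upnu^{2})$.
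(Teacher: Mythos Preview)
Your argument is correct and rests on the same idea as the paper's, but you take a longer route. The paper works directly with the error term: since $\upmu(\bast m)=\upnu(\bast m)=\upmu(\bast n)=\upnu(\bast n)=\upnu$, each of the two cross-terms $\bast m^{\perp}\upvarepsilon(\bast n)$ and $\bast n^{\perp}\upvarepsilon(\bast m)$ in
\[
\upvarepsilon_{\uptheta\upeta}(\bast m\bast n)=\bast m^{\perp}\upvarepsilon(\bast n)+\bast n^{\perp}\upvarepsilon(\bast m)+\upvarepsilon(\bast m)\upvarepsilon(\bast n)
\]
is bounded and non-infinitesimal, hence $\simeq r,\,s\in\R^{\times}$. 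The time-like/space-like hypothesis forces $r$ and $s$ to have the same sign, so $|\upvarepsilon_{\uptheta\upeta}(\bast m\bast n)|\simeq|r|+|s|>0$; multiplying by the infinite $|\bast m\bast n|$ then gives $|\boldsymbol[\bast m\bast n\boldsymbol]_{\uptheta\upeta}|=\infty$ immediately. By contrast, you first multiply through by $\bast m\bast n$ and then compare two infinite terms of equal valuation, which reproduces the same sign analysis in a less transparent form. (A minor caution: your displayed identity should be read as an exact equation in $\bast\R$, with $\boldsymbol[\bast m\boldsymbol]_{\uptheta}^{2}$ standing for the literal product $\bast m\,\upvarepsilon(\bast m)$; reducing $\pmod{\bast\R_{\upvarepsilon}}$ before multiplying by the infinite factor $\uptheta\bast m^{2}$ is not well-defined, though your conclusion from it is unaffected.)

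Your worry about the case $\uptheta\upeta<0$ is well taken and applies equally to the paper's argument: the sign check on $r,s$ tacitly uses $\mathrm{sign}(\uptheta)=\mathrm{sign}(\upeta)$ (via $\mathrm{sign}(\bast m^{\perp})=\mathrm{sign}(\uptheta)\,\mathrm{sign}(\bast m)$). The paper implicitly works under the standard normalization $\uptheta,\upeta>0$, which is exactly the first resolution you propose.
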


\begin{proof}  First note that $\upvarepsilon (\bast m\bast n)=\bast n\upvarepsilon (\bast m) + \bast m\upvarepsilon (\bast n) 
+\upvarepsilon (\bast m)\upvarepsilon (\bast n)$ and by hypothesis $\bast n\upvarepsilon (\bast m)\simeq r$, $\bast n\upvarepsilon (\bast m)\simeq s$ with $r,s$ non-zero reals.  These non-zero reals will be of the same sign if $\bast m,\bast n$ are either both time-like
or both space-like: in this event, $|\upvarepsilon (\bast m\bast n)|\simeq |r|+|s|>0$, hence
$|\boldsymbol[\bast m\bast n\boldsymbol]_{\uptheta\upeta}|$ is infinite and the result is trivially true.
%If exactly one of $\bast m$ and $\bast n$ is time-like, then the signs of $r$ and $s$ are differ, so that 
%$|\upvarepsilon (\bast m\bast n)|\simeq |r+s|\geq 0$.  In this case
% we have
%\[    |\boldsymbol[\bast m\bast n \boldsymbol]_{\uptheta\upeta}| = |\bast m\bast n\upvarepsilon (\bast m\bast n)|\geq
%\bast m\bast n\upvarepsilon (\bast m)\upvarepsilon (\bast n)  = \boldsymbol[\bast m\boldsymbol]_{\uptheta}  \boldsymbol[\bast n\boldsymbol]_{\upeta} .\]
\end{proof}

\begin{theo}\label{LorIsom}  The group ${\rm PGL}_{2}(\Z )$ acts by Lorentzian isometries: that is, for all $\bast m,\bast n\in\bast\Z_{\upnu}^{\rm sym} (\uptheta )$,
\[  \boldsymbol[ \bast m, \bast n\boldsymbol]_{\uptheta} = \boldsymbol[ A(\bast m),A(\bast n)\boldsymbol]_{A(\uptheta )}. \] 
In particular, if $\uptheta\Bumpeq \upeta$ then $\bast\Z^{\rm sym}(\uptheta)\cong\bast\Z^{\rm sym}(\upeta)$
as Lorentzian approximate groups.
\end{theo}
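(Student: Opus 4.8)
The plan is to verify the pointwise identity $\boldsymbol[\bast m,\bast n\boldsymbol]_{\uptheta}=\boldsymbol[A(\bast m),A(\bast n)\boldsymbol]_{A(\uptheta)}$ directly, using the explicit matrix action on numerator–denominator pairs already introduced in the proof of Theorem \ref{triisomorphism}, and then deduce the groupology isomorphism statement as a formal consequence. First I would fix a linear representative $A=\left(\begin{smallmatrix} a & b\\ c & d\end{smallmatrix}\right)\in{\rm GL}_2(\Z)$ of the given class in ${\rm PGL}_2(\Z)$, recall that $A(\bast n)=c\bast n^{\perp}+d\bast n$ and $A(\bast n^{\perp})=a\bast n^{\perp}+b\bast n$, write $\upeta=A(\uptheta)=\frac{a\uptheta+b}{c\uptheta+d}$, and recall from that same proof the transformation rule for error terms,
\[
\upvarepsilon_{\upeta}(A(\bast n)) \;=\; \frac{\upvarepsilon_{\uptheta}(\bast n)}{c\uptheta+d}.
\]
Here $\upvarepsilon_{\upeta}(A(\bast n)):=\upeta\cdot A(\bast n)-A(\bast n^{\perp})$, so this is exactly the computation carried out in Theorem \ref{triisomorphism}, now applied with $\bast n$ replaced by each of $\bast m,\bast n$.

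Next I would compute $\boldsymbol[A(\bast m),A(\bast n)\boldsymbol]_{\upeta}$ by substituting $A(\bast m)=c\bast m^{\perp}+d\bast m$, $A(\bast n)=c\bast n^{\perp}+d\bast n$ and the error formulas above into the defining expression
\[
\boldsymbol[A(\bast m),A(\bast n)\boldsymbol]_{\upeta}=\tfrac12\big(A(\bast m)\,\upvarepsilon_{\upeta}(A(\bast n))+A(\bast n)\,\upvarepsilon_{\upeta}(A(\bast m))\big)\bmod\bast\R_{\upvarepsilon}.
\]
The factor $(c\uptheta+d)^{-1}$ pulls out of both terms, and the numerator becomes $\tfrac12\big((c\bast m^{\perp}+d\bast m)\upvarepsilon_{\uptheta}(\bast n)+(c\bast n^{\perp}+d\bast n)\upvarepsilon_{\uptheta}(\bast m)\big)$. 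Now I use $\bast m^{\perp}=\uptheta\bast m-\upvarepsilon_{\uptheta}(\bast m)$ and $\bast n^{\perp}=\uptheta\bast n-\upvarepsilon_{\uptheta}(\bast n)$ to rewrite the cross terms; the products $\upvarepsilon_{\uptheta}(\bast m)\upvarepsilon_{\uptheta}(\bast n)$ are infinitesimal and die mod $\bast\R_{\upvarepsilon}$, so the numerator collapses to $(c\uptheta+d)\cdot\tfrac12\big(\bast m\,\upvarepsilon_{\uptheta}(\bast n)+\bast n\,\upvarepsilon_{\uptheta}(\bast m)\big)$, and the stray $(c\uptheta+d)$ cancels the denominator, leaving $\boldsymbol[\bast m,\bast n\boldsymbol]_{\uptheta}$. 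One should also note that $A$ restricts to a bijection $\bast\Z_{\upnu}^{\rm sym}(\uptheta)\to\bast\Z_{\upnu}^{\rm sym}(\upeta)$: since the $\uptheta$-norm satisfies $|\bast m|_{\uptheta}=|\boldsymbol[\bast m\boldsymbol]_{\uptheta}|$, the isometry of the pairing immediately forces $|A(\bast m)|_{\upeta}=|\bast m|_{\uptheta}$, so time-like/space-like status and the index $\upnu$ (which is preserved by Theorem \ref{triisomorphism}) are all preserved, and $A^{-1}$ provides the inverse.

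For the second sentence, the isomorphism $\bast\Z^{\rm sym}(\uptheta)\cong\bast\Z^{\rm sym}(\upeta)$ of Lorentzian groupologies when $\uptheta\Bumpeq\upeta$ follows by assembling: $A$ respects the decay filtration $\{\bast\Z_{\upnu}^{\rm sym}(\cdot)\}$ (Theorem \ref{triisomorphism}), it is additive on each $\bast\Z_{\upnu}^{\rm sym}$ hence respects the partially defined groupological sum of Theorem \ref{symmgroupologystruct}, and it intertwines the Minkowskian pairings by the computation just sketched. I expect the only real bookkeeping obstacle to be keeping the $\bmod\bast\R_{\upvarepsilon}$ reductions honest — specifically, confirming that every term discarded (the $\upvarepsilon\cdot\upvarepsilon$ products, and any term of the form $\bast m\cdot\upvarepsilon(\bast n)$ or $\bast n\cdot\upvarepsilon(\bast m)$ that appears transiently) is genuinely infinitesimal under the symmetry hypothesis $\upmu(\bast m)=\upnu(\bast m)$, $\upmu(\bast n)=\upnu(\bast n)$; but since for symmetric approximations $\bast m\cdot\upvarepsilon(\bast m)$ is a noninfinitesimal bounded real, the cross products $\bast m\upvarepsilon(\bast n)$ need not be infinitesimal, so care is needed to keep those terms (they are exactly what survives) rather than drop them. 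This is the step I would write out most carefully.
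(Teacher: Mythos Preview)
Your proposal is correct and follows essentially the same route as the paper: both invoke the formulas from Theorem \ref{triisomorphism} for $A(\bast m)=c\bast m^{\perp}+d\bast m$ and $\upvarepsilon_{A(\uptheta)}(A(\bast m))=(c\uptheta+d)^{-1}\upvarepsilon_{\uptheta}(\bast m)$, and then verify directly that each cross term $A(\bast m)\,\upvarepsilon(A(\bast n))$ agrees with $\bast m\,\upvarepsilon(\bast n)$ modulo $\bast\R_{\upvarepsilon}$. The paper organizes the algebra by factoring out $\bast m$ and using $\bast m^{\perp}/\bast m\simeq\uptheta$ to see that the scalar $\frac{c(\bast m^{\perp}/\bast m)+d}{c\uptheta+d}\simeq 1$ multiplies the bounded quantity $\bast m\,\upvarepsilon(\bast n)$; your substitution $\bast m^{\perp}=\uptheta\bast m-\upvarepsilon(\bast m)$ followed by exact cancellation of $(c\uptheta+d)$ and dropping the infinitesimal $c\,\upvarepsilon(\bast m)\upvarepsilon(\bast n)$ is an equivalent rearrangement of the same computation.
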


\begin{proof}  If $A=\left(\begin{array}{cc}
a & b \\
c& d
\end{array}      \right)$ then as was seen in Theorem \ref{triisomorphism}, $A(\bast m)=c\bast m^{\perp}+d\bast m$ and $\upvarepsilon (A(\bast m))=(c\uptheta+d)^{-1}\upvarepsilon (\bast m)$, with similar formulas for $A(\bast n)$ and $\upvarepsilon (A(\bast n))$.  Thus
\[  A (\bast m)\upvarepsilon (A(\bast m))  = \frac{c\bast m^{\perp}+d\bast m}{c\uptheta+d}\upvarepsilon (\bast m) = 
\frac{c(\bast m^{\perp}/\bast m)+d}{c\uptheta+d}  \bast m\upvarepsilon (\bast m) \simeq \bast m\upvarepsilon (\bast m)\]
and the result follows.
\end{proof}

We now define a family of norms indexed by general exponents.
For each $\upkappa >1$ consider the function $x^{-\upkappa}$ and the set $\bast\Z (\uptheta |x^{-\upkappa})\subset \bast\Z (\uptheta |x^{-1})$ with
its associated ``norm'' function
\[ |\bast m|_{\uptheta,\upkappa} = |\bast m^{\upkappa}\upvarepsilon(\bast n)|\mod \bast\R_{\upvarepsilon} \in\R .\]
Much of the discussion developed above for the case $\upkappa =1$ extends to $\upkappa >1$.   We summarize the situation for
$\upkappa >1$ leaving the straightforward verifications to the reader.

\begin{itemize}
\item[a.] The set of {\small $\boldsymbol\upkappa$ {\bf -symmetric diophantine approximations}}, defined 
\[ \bast\Z_{\upkappa}^{\rm sym} (\uptheta )=\{ \bast\Z^{\rm sym}_{\upnu,\upkappa} (\uptheta )\}:=|\cdot |_{\uptheta,\upkappa}^{-1}(0,\infty)
\subset\bast\Z (\uptheta |x^{-1}) ,\]  
 satisfies the obvious analogue of Theorem \ref{symmgroupologystruct}.   
 \item[b.]  If $\upkappa\not=\upkappa'$ then $\bast\Z_{\upkappa}^{\rm sym} (\uptheta )\cap\bast\Z_{\upkappa'}^{\rm sym} (\uptheta )=\{ 0\}$.
\item[c.] If $\uptheta$ is
$\upkappa$-bad then $\bast\Z_{\upkappa}^{\rm sym} (\uptheta ) = \bast\Z (\uptheta |x^{-\upkappa})$ is a family of groups, and $\bast\Z_{\upkappa'}^{\rm sym} (\uptheta )=0$
for all $\upkappa'>\upkappa$.
\item[d.]  If $\uptheta\Bumpeq \upeta$ by $A\in{\rm PGL}_{2}(\Z)$ then we have the following analogue of Theorem \ref{LorIsom}: for all $\bast m\in \bast\Z (\uptheta |x^{-\upkappa})$,  $|A(\bast m)|_{A(\uptheta),\upkappa}=|\bast m|_{\uptheta,\upkappa}$.  
\end{itemize}

We may also consider nonstandard exponents: for any $\bast\upkappa\in\bast\R_{+}$,  $\bast\upkappa>1$, we may define in the obvious way
\[ \bast\Z (\uptheta |x^{-\bast \upkappa})\;\subset\;
\bigcap_{\bast \upkappa>\upkappa>1} \bast\Z (\uptheta |x^{-\upkappa}),\] equipped with its associated norm function $|\cdot |_{\uptheta,\bast\upkappa}$ with which we may define the $\bast\upkappa$ symmetric diophantine
approximations $\bast\Z_{\bast \upkappa}^{\rm sym} (\uptheta )=\{ \bast\Z^{\rm sym}_{\upnu,\bast\upkappa} (\uptheta )\}$.
Note that if $\uptheta\not\in\mathfrak{W}_{\infty}$ then  $\bast\Z_{\bast \upkappa}^{\rm sym} (\uptheta )=0$ for all $\bast\upkappa$ infinite nonstandard.
In general we have
\[ \bast\Z (\uptheta |x^{-1}) = \bigcup_{\bast\upkappa>1} \bast\Z_{\bast \upkappa}^{\rm sym} (\uptheta ).  \]

We refer to $\bast\Z (\uptheta )$, with the family 
$\{(\bast\Z (\uptheta |x^{-\bast \upkappa}), |\cdot |_{\uptheta,\bast\upkappa})\}$
as a {\bf  Frechet Lorentzian approximate group}: for any $\bast m\in\bast\Z (\uptheta )$, $|\bast m |_{\uptheta,\bast\upkappa} =0$ for all $\bast\upkappa$ $\Leftrightarrow $
$\bast m=0$.  We call a approximate module homomorphism
\[f:\bast \Z(\uptheta )\rightarrow \bast \Z(\upeta )\]
 a {\bf  Frechet Lorentzian isometry} or simply an {\bf  isometry} if it preserves the Frechet Lorentzian norms.

\begin{theo} $\uptheta\Bumpeq \upeta$ $\Rightarrow $ $\bast\Z (\uptheta )\cong\bast\Z (\upeta )$ by isometric approximate module isomorphism.
\end{theo}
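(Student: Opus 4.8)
\emph{Proof strategy.} The plan is to show that the isomorphism of diophantine approximation groups already produced by the projective linear action \emph{is} the asserted isometric modulogical isomorphism, so that the statement is obtained by assembling Theorem~\ref{triisomorphism}, the Proposition above stating that the induced isomorphism is fine modulogical, and the nonstandard-exponent version of Theorem~\ref{LorIsom}. Concretely, I would fix $A\in{\rm PGL}_{2}(\Z)$ with $A(\uptheta)=\upeta$, choose a linear representative with entries $a,b,c,d$, and recall that the matrix action $\bast n\mapsto A(\bast n)=c\bast n^{\perp}+d\bast n$, $A(\bast n^{\perp})=a\bast n^{\perp}+b\bast n$, is by Theorem~\ref{triisomorphism} an isomorphism $\bast\Z(\uptheta)\cong\bast\Z(\upeta)$ respecting the fine growth-decay tri-filtration, and by the Proposition above is fine modulogical. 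Hence the only remaining point is that $A$ preserves the whole Frechet Lorentzian norm family: $|A(\bast m)|_{\upeta,\bast\upkappa}=|\bast m|_{\uptheta,\bast\upkappa}$ for every nonstandard exponent $\bast\upkappa\geq 1$, which in particular forces $A$ to restrict to isomorphisms $\bast\Z(\uptheta|x^{-\bast\upkappa})\cong\bast\Z(\upeta|x^{-\bast\upkappa})$ and $\bast\Z^{\rm sym}_{\bast\upkappa}(\uptheta)\cong\bast\Z^{\rm sym}_{\bast\upkappa}(\upeta)$.

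For this norm identity I would run the computation of Theorem~\ref{LorIsom} with $\bast\upkappa$ in place of $1$. From $\bast m^{\perp}=\bast m\uptheta-\upvarepsilon(\bast m)$ one gets $A(\bast m)=(c\uptheta+d)\bast m-c\,\upvarepsilon(\bast m)=(c\uptheta+d)\bast m\,(1+\bast\updelta)$ with $\bast\updelta\in\bast\R_{\upvarepsilon}$ (using that $\bast m$ is infinite, so $c\uptheta+d\neq 0$ and the correction term is infinitesimal), while Theorem~\ref{triisomorphism} gives $\upvarepsilon(A(\bast m))=\upvarepsilon(\bast m)/(c\uptheta+d)$. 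Substituting these into $|A(\bast m)|^{\bast\upkappa}\,|\upvarepsilon(A(\bast m))|$ and reducing modulo $\bast\R_{\upvarepsilon}$ reproduces the cancellation of Theorem~\ref{LorIsom} (the case $\bast\upkappa=1$) and of item~d.\ of the list preceding the statement (the case of standard $\upkappa$), yielding $|A(\bast m)|_{\upeta,\bast\upkappa}=|\bast m|_{\uptheta,\bast\upkappa}$. Since this holds for every $\bast\upkappa$, and since $|\bast m|_{\uptheta,\bast\upkappa}=0$ for all $\bast\upkappa$ exactly when $\bast m=0$, the map $A$ is a Frechet Lorentzian isometry, which completes the proof.

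The step I expect to be the main obstacle is the upgrade of the standard-exponent cancellation (item~d.) to a \emph{nonstandard} exponent $\bast\upkappa$: one must know that raising to the power $\bast\upkappa$ carries the monad of $1$ into itself, i.e.\ $(1+\bast\updelta)^{\bast\upkappa}\simeq 1$ whenever $\bast\updelta\in\bast\R_{\upvarepsilon}$ and $\bast\upkappa$ is finite, so that the error factor $(1+\bast\updelta)^{\bast\upkappa}$ vanishes mod $\bast\R_{\upvarepsilon}$. This is a routine transfer statement, already implicit in the discussion of finite and infinitesimal powers in \S\ref{tropical} (Note~\ref{infinitesimalpower}, Proposition~\ref{FrobConHomeo}), but it is the point where the bilinear computation genuinely needs the hypothesis that $\bast\upkappa$ be bounded.

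A secondary issue is the degenerate case in which $\bast\upkappa$ is infinite: there $\bast\Z(\uptheta|x^{-\bast\upkappa})=0$ unless $\uptheta\in\mathfrak{W}_{\infty}$, and being Liouville is a projective linear invariant (Proposition~\ref{isospectral}), so either both norm families are identically zero on the two groups or the very same computation applies verbatim to $\uptheta$ and to $\upeta=A(\uptheta)$. With these two points dispatched, the $\bast\upkappa$-graded analogue of Theorem~\ref{LorIsom} holds for all $\bast\upkappa$, and the theorem follows.
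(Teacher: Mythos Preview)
Your plan is precisely the paper's: its one-line proof just says ``the proof of Theorem~\ref{LorIsom} follows through identically,'' and you have spelled out what that means, correctly invoking the already-established modulogical isomorphism and isolating the $(1+\bast\updelta)^{\bast\upkappa}\simeq 1$ step for bounded $\bast\upkappa$.

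There is, however, a genuine slip in the cancellation that both you and item~d.\ preceding the theorem gloss over. In Theorem~\ref{LorIsom} the single factor $A(\bast m)\approx(c\uptheta+d)\bast m$ meets the single factor $\upvarepsilon(A(\bast m))=\upvarepsilon(\bast m)/(c\uptheta+d)$, and the $(c\uptheta+d)$ cancels. For the $\bast\upkappa$-norm one has $\bast\upkappa$ copies of the former against one of the latter, so
\[
|A(\bast m)|^{\bast\upkappa}\,|\upvarepsilon(A(\bast m))|
\;=\;
|c\uptheta+d|^{\,\bast\upkappa-1}\,(1+\bast\updelta)^{\bast\upkappa}\cdot|\bast m|^{\bast\upkappa}\,|\upvarepsilon(\bast m)|,
\]
and the surviving factor $|c\uptheta+d|^{\,\bast\upkappa-1}$ is a fixed real number, in general $\neq 1$. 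Thus the literal equality $|A(\bast m)|_{\upeta,\bast\upkappa}=|\bast m|_{\uptheta,\bast\upkappa}$ fails for $\bast\upkappa>1$; the only exponent for which the Theorem~\ref{LorIsom} cancellation is exact is $\bast\upkappa=1$. What the computation \emph{does} show is that $A$ scales each $\bast\upkappa$-norm by a fixed positive constant depending only on $A,\uptheta,\bast\upkappa$, which is enough to carry $\bast\Z(\uptheta|x^{-\bast\upkappa})$ to $\bast\Z(\upeta|x^{-\bast\upkappa})$ and $\bast\Z^{\rm sym}_{\bast\upkappa}(\uptheta)$ to $\bast\Z^{\rm sym}_{\bast\upkappa}(\upeta)$ --- presumably the intended content --- but is strictly weaker than ``preserves the Frechet Lorentzian norms'' as defined just above the theorem.
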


\begin{proof}  The proof of Theorem \ref{LorIsom} follows through identically to show that $A\in{\rm PGL}_{2}\Z$ acts
by Frechet Lorentzian isometries.   
\end{proof}

\begin{conj} $\uptheta\Bumpeq \upeta$ $\Leftrightarrow $ $\bast\Z (\uptheta )\cong\bast\Z (\upeta )$ by an isometric approximate module isomorphism.
\end{conj}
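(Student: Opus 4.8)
The forward implication is precisely the theorem immediately above (that ${\rm PGL}_2(\Z)$ acts by isometric modulogical isomorphisms), so the plan concerns only the converse: from an isometric modulogical isomorphism $f:\bast\Z(\uptheta)\to\bast\Z(\upeta)$ one must manufacture $A\in{\rm PGL}_2(\Z)$ with $A(\uptheta)=\upeta$. First I would dispose of the rational case: a modulogical isomorphism detects rationality through the nontriviality of $\bast\Z_{-\infty}$ (the Proposition following Theorem \ref{triisomorphism}), so $\uptheta\in\Q\Leftrightarrow\upeta\in\Q$, and all nonzero rationals are mutually $\Bumpeq$; hence assume $\uptheta,\upeta\notin\Q$. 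Next I would lift $f$ to numerator--denominator pairs: since every $\bast n\in\bast\Z(\uptheta)$ has a unique dual, the rule $F(\bast n^\perp,\bast n):=(f(\bast n)^{\perp_\upeta},f(\bast n))$ defines a map $F:\bast\Z^{1,1}(\uptheta)\to\bast\Z^{1,1}(\upeta)\subset\bast\Z^2$. Additivity of $f$ and of $\perp$ make $F$ additive, and modulogicality together with $(N\bast n)^\perp=N\bast n^\perp$ — noting every standard $N$ lies in $\bast\Z^\upnu$ for all $\upnu\in\bstar\PR\R_\upvarepsilon$ since $\upmu(N)=1>\upnu$ — makes $F$ homogeneous over $\Z$.

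The crucial step is to upgrade $F$ to (the restriction of) a single matrix $A\in{\rm GL}_2(\bast\Z)$. The plan is to fix a best denominator class $\bast\wideparen{q}$ and its successor together with the corresponding numerator classes: the vectors $(\bast\wideparen{p},\bast\wideparen{q})$ and $(\bast\wideparen{p}^+,\bast\wideparen{q}^+)$ lie in $\bast\Z^{1,1}(\uptheta)$ by \eqref{bestineq} and, since $\bast\wideparen{p}\,\bast\wideparen{q}^+-\bast\wideparen{p}^+\bast\wideparen{q}=\pm\bast1$, form a unimodular $\bast\Z$-basis of $\bast\Z^2$ (here one uses that $f$ carries best classes to best classes, which follows from preservation of ${\rm Spec}$ and Theorem \ref{bestfilt}). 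Let $A\in M_2(\bast\Z)$ be the matrix agreeing with $F$ on this basis; running the same construction for $f^{-1}$ produces an inverse matrix over $\bast\Z$, so $\det A=\pm1$. To see $F=A$ on all of $\bast\Z^{1,1}(\uptheta)$ I would exploit the antisymmetric form $\omega_\uptheta(\bast m,\bast n):=\bast m\bast n^\perp-\bast m^\perp\bast n=\bast n\upvarepsilon(\bast m)-\bast m\upvarepsilon(\bast n)$, which is the standard symplectic form on $\bast\Z^2$ and on the symmetric locus is governed by the reverse Cauchy identity
\[
\omega_\uptheta(\bast m,\bast n)^2=4\bigl(\boldsymbol[\bast m,\bast n\boldsymbol]_\uptheta^2-\boldsymbol[\bast m\boldsymbol]_\uptheta^2\boldsymbol[\bast n\boldsymbol]_\uptheta^2\bigr);
\]
since $f$ preserves the right-hand side (the sign of $\omega$ being recoverable from the sign type $\upsigma$ and the oddness of $f$), $F$ preserves $\omega_\uptheta$ there, and an additive map preserving the symplectic form and fixing a unimodular basis must equal the associated linear map; one then propagates from symmetric classes to all of $\bast\Z^{1,1}(\uptheta)$ by additivity and density.

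Once $F=A=\left(\begin{smallmatrix}\bast a&\bast b\\\bast c&\bast d\end{smallmatrix}\right)\in{\rm GL}_2(\bast\Z)$, I would extract the Möbius relation. From $f(\bast n)=\bast c\bast n^\perp+\bast d\bast n$, $f(\bast n)^{\perp_\upeta}=\bast a\bast n^\perp+\bast b\bast n$ and $\bast n^\perp=\uptheta\bast n-\upvarepsilon(\bast n)$ one computes
\[
\upvarepsilon(f(\bast n))=\bast n\bigl(\bast c\uptheta\upeta+\bast d\upeta-\bast a\uptheta-\bast b\bigr)-\upvarepsilon(\bast n)\bigl(\bast c\upeta-\bast a\bigr).
\]
As $f(\bast n)\in\bast\Z(\upeta)$ the left side is infinitesimal for every $\bast n$; letting $\bast n$ run through best denominator classes, where $\bast n$ is arbitrarily large while $\upvarepsilon(\bast n)$ is arbitrarily small, forces both $\bast c\uptheta\upeta+\bast d\upeta-\bast a\uptheta-\bast b=0$ and $\bast c\upeta-\bast a$ finite, whence $\upeta=(\bast a\uptheta+\bast b)/(\bast c\uptheta+\bast d)$ with $\bast a\bast d-\bast b\bast c=\pm1$. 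Finally I would descend by transfer: the assertion ``there exist $a,b,c,d\in\Z$ with $ad-bc=\pm1$ and $\upeta(c\uptheta+d)=a\uptheta+b$'' is a first-order formula in the real field with the standard parameters $\uptheta,\upeta$; it holds in $\bast\R$ (witnessed by $\bast a,\bast b,\bast c,\bast d\in\bast\Z$), hence it holds in $\R$, which is exactly $\uptheta\Bumpeq\upeta$.

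The main obstacle is the crucial step — proving $f$ is rigid enough to be a single matrix over $\bast\Z$. The difficulty is that $\bast\Z(\uptheta)$ contains generic diophantine approximations that are neither best nor multibest, hence not reached from a fixed class by the modulogical $\bast\Z$-action; pinning $f$ on such elements seems to require extracting rigidity from the Frechet--Lorentzian norms $|\cdot|_{\uptheta,\bast\upkappa}$ for all nonstandard exponents $\bast\upkappa>1$, and the symplectic argument above must be shown to survive the passage from the $\bbull\R$-valued pairing $\boldsymbol[\cdot,\cdot\boldsymbol]$ back to the possibly infinite integer $\omega_\uptheta(\bast m,\bast n)$ off the symmetric locus. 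A secondary point is the sign ambiguity in recovering $\omega$ from $|\cdot|$ (only the norm, not the signature, is a priori preserved), which I expect is resolved by tracking $\upsigma$ and the groupological sum structure of $\bast\Z^{\rm sym}_\upnu(\uptheta)$; if the conjecture fails, a counterexample would have to live precisely in the gap between ${\rm GL}_2(\bast\Z)$ — generated by elementary matrices with arbitrary, possibly infinite, entries — and ${\rm PGL}_2(\Z)$.
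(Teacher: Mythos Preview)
This statement is labelled a \emph{Conjecture} in the paper and is not proved there; the preceding theorem supplies only the forward implication. So there is no ``paper's own proof'' to compare against --- you are proposing an attack on an open problem.

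Your outline has the right architecture. The forward direction, the rational reduction, and the final transfer step are all sound: once you have $\bast a,\bast b,\bast c,\bast d\in\bast\Z$ with $\bast a\bast d-\bast b\bast c=\pm1$ and $\upeta(\bast c\uptheta+\bast d)=\bast a\uptheta+\bast b$ \emph{exactly}, elementary equivalence of $(\R,\Z)$ and $(\bast\R,\bast\Z)$ pulls this down to standard integers.

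The gap is precisely the one you flag, and it is substantial. Two specific difficulties deserve emphasis. First, modulogicality gives $f(\bast m\cdot\bast n)=\bast m\cdot f(\bast n)$ only for $\bast m\in\bast\Z^{\upnu}$ and $\bast n\in\bast\Z^{\upmu}_{\upnu}(\uptheta)$; this is a \emph{filtered} action, not full $\bast\Z$-linearity, so agreement of $F$ with a matrix $A$ on a single unimodular pair does not propagate to all of $\bast\Z^{1,1}(\uptheta)$ by linearity alone. Second, the Lorentzian data are $\bbull\R$-valued: your symplectic identity recovers $\omega_\uptheta(\bast m,\bast n)^2$ only modulo infinitesimals, not the exact integer $\omega_\uptheta(\bast m,\bast n)=\bast m\,\bast n^\perp-\bast m^\perp\bast n$, and off the symmetric locus the norms $|\cdot|_{\uptheta,\bast\upkappa}$ see even less. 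The assertion that $f$ carries best classes to best classes is also not immediate from preservation of ${\rm Spec}$ --- the vanishing strips of Theorem~\ref{bestfilt} constrain growth--decay pairs, but do not by themselves distinguish a best denominator from other elements sharing the same $(\upmu,\upnu)$.

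In short: a coherent program, honestly labelled as incomplete, with the hard step still genuinely open.
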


%$\Leftarrow$.  Let $\bast \widehat{q}\in\bast\Z (\uptheta )$ be a best class.  Then
%$\bast \widehat{q}^{+}= \bast a\bast \widehat{q}+\bast \widehat{q}^{-}$ where $\bast a$ is a class made up from the partial fraction
%sequence.  Since $f$ is ideological $f(\bast a\bast \widehat{q})=\bast  af(\bast\widehat{q})$.  Since $f$ is an ideological isometry,
%$f(\bast\widehat{q})$ defines a delimiting point in ${\rm Spec}(\upeta)$ and therefore must also be best.  Recall that there is an order %preserving map
%$\upvarphi_{\uptheta}:\bast \N-\N\rightarrow \bast\Z (\uptheta )$ given by $\bast n\mapsto \bast \widehat{q}:=q_{\bast n}$.  We want
%to show first that $f\circ \upvarphi_{\uptheta}=\upvarphi_{\upeta}$.

\section{Matrix Approximate Ideal Arithmetic}\label{matideoarith}

Let $\Uptheta$ be a real matrix of size $r\times s$.  In \cite{Ge4} we defined the 
{\bf inhomogeneous diophantine approximation group} of $\Uptheta$ to be
\[ \bast\Z^{s}(\Uptheta) = \{ \bast {\boldsymbol n}\in \bast\Z^{s} |\; \exists  \bast {\boldsymbol n}^{\perp}\in\bast\Z^{r}\text{ s.t. }  
\upvarepsilon (\bast {\boldsymbol n}):=\Uptheta\bast {\boldsymbol n}-\bast {\boldsymbol n}^{\perp}\ \in\bast\R^{r}_{\upvarepsilon} \}.\] 
The corresponding {\bf homogeneous diophantine approximation group} was defined by
\[ \bast\widetilde{\Z}^{s}(\Uptheta) = {\rm Ker}(\perp ) = \{  \bast {\boldsymbol n}\in \bast\Z^{s}(\Uptheta) |\;  \Uptheta \bast{\boldsymbol n}
\in\bast\R^{r}_{\upvarepsilon} \} < \bast\Z^{s}(\Uptheta).\]  Thus, if $\bast\Z^{r}(\Uptheta)^{\perp}$ denotes the group of duals then
$\bast\Z^{s}(\Uptheta)/\bast\widetilde{\Z}^{s}(\Uptheta)\cong \bast\Z^{r}(\Uptheta)^{\perp}$.  Note that if $\Uptheta$ is square
invertible then $\bast\widetilde{\Z}^{s}(\Uptheta) =0$.

In this section we will develop approximate ideal arithmetic for the groups $\bast\Z^{s}(\Uptheta)$.  
%There are actually
%two versions, defined according to how one measures vector growth and decay: we refer to these as the {\it house} and the
%{\it multiplicative ideologies}, respectively.
 First, for $\bast \boldsymbol n=(\bast n_{1},\dots ,\bast n_{s})\in\bast\Z^{s}$, the {\bf house norm} is defined 
\begin{align}\label{hatnorm}  \overline{|\bast\boldsymbol n|} := \max_{j=1,\dots ,s} |\bast n_{j}| 
\end{align}
and for $\bast\boldsymbol n\not=\boldsymbol 0$ write $\upmu (\bast\boldsymbol n ) :=\langle \overline{|\bast\boldsymbol n|}\rangle^{-1}\in\bstar\PR\R$.
Then for $\upmu\in\bstar\PR\R_{\upvarepsilon}$ the set 
\[ (\bast\Z^{s})^{\upmu}=\big\{\bast\boldsymbol n\not=\boldsymbol 0\big|\; \upmu^{1/s}<\upmu (\bast\boldsymbol n )\big\}\cup \{\boldsymbol 0\}=
\big\{\bast\boldsymbol n\big|\;  \overline{|\bast\boldsymbol n|}\cdot\upmu^{1/s}\in\bstar\PR\R_{\upvarepsilon}\big\}
\]
forms a group: for $\bast\boldsymbol m,\bast\boldsymbol n\in(\bast\Z^{s})^{\upmu}$,
\[  \overline{|\bast\boldsymbol m+\bast\boldsymbol n|}\cdot\upmu^{1/s}\leq
\big( \overline{|\bast\boldsymbol m|}+ \overline{|\bast\boldsymbol n|}\big)\cdot \upmu^{1/s} \in \bstar\PR\R_{\upvarepsilon}.\]
The set $(\bast\Z^{s})^{\upmu[\upiota]}$ of elements of $(\bast\Z^{s})^{\upmu}$ which in addition satisfy $ \overline{|\bast\boldsymbol n|}\cdot\upmu^{1/s}<\upiota$ forms a
subgroup.

For $\bast\boldsymbol \upvarepsilon =(\bast\upvarepsilon_{1},\dots ,\bast\upvarepsilon_{r})\in\bast\R^{r}_{\upvarepsilon}$, define
 $\overline{|\bast\boldsymbol  \upvarepsilon|}$ as in (\ref{hatnorm}). Given $\bast\boldsymbol n\in\bast\Z^{s}(\Uptheta)$ denote
by $\upnu (\bast\boldsymbol n)= \langle\overline{|\upvarepsilon(\bast\boldsymbol n )  |}\rangle$: then
\[ (\bast\Z^{s})_{\upnu}(\Uptheta)=\{\bast\boldsymbol n\in \bast\Z^{s}(\Uptheta)|\; \upnu (\bast\boldsymbol n)\leq\upnu^{1/r}\}\] is also a group.  
We have thus defined the {\bf matrix approximate ideal}
\[ \bast\Z^{s}(\Uptheta) = \{(\bast\Z^{s})_{\upnu}^{\upmu}(\Uptheta)\}=\{(\bast\Z^{s})_{\upnu}^{\upmu[\upiota]}(\Uptheta)\}.\]

In general, the dual group $(\bast\Z^{r})(\Uptheta)^{\perp}$ admits a filtration by growth only, $\{(\bast\Z^{r})^{\upmu}(\Uptheta)^{\perp}\}$,
except when $\bast\widetilde{\Z}^{s}(\Uptheta)=0$ e.g.\ when $\Uptheta$ is square invertible.  Nevertheless, we have

\begin{lemm}\label{matrixdualgrowth}  Let $\bast \boldsymbol n\in (\bast\Z^{s})^{\upmu[\uplambda]}(\Uptheta)$.  Then 
\[ \overline{|\bast\boldsymbol n^{\perp}|}\cdot\upmu^{1/s}<\uplambda,\]
that is, $\bast \boldsymbol n^{\perp}\in (\bast\Z^{r})^{\upmu^{r/s}[\uplambda]}$. 
If $\Uptheta$ is invertible of dimension $r\times r$ then $\bast \boldsymbol n\in (\bast\Z^{r})^{\upmu[\uplambda]}(\Uptheta)\Leftrightarrow \bast \boldsymbol n^{\perp}\in (\bast\Z^{r})^{\upmu[\uplambda]}(\Uptheta)^{\perp}$.
\end{lemm}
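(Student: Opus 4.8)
The plan is to unwind the definitions of the house norm and the growth indices, and to get control of $\overline{|\bast\boldsymbol n^\perp|}$ from the defining relation $\upvarepsilon(\bast\boldsymbol n) = \Uptheta\bast\boldsymbol n - \bast\boldsymbol n^\perp \in \bast\R^r_\upvarepsilon$. The key observation is that $\bast\boldsymbol n^\perp = \Uptheta\bast\boldsymbol n - \upvarepsilon(\bast\boldsymbol n)$, and since $\upvarepsilon(\bast\boldsymbol n)$ is infinitesimal while $\Uptheta$ is a fixed real matrix, we have $\overline{|\bast\boldsymbol n^\perp|} \leq \|\Uptheta\|_\infty \cdot \overline{|\bast\boldsymbol n|} + \overline{|\upvarepsilon(\bast\boldsymbol n)|}$ where $\|\Uptheta\|_\infty$ is the (standard, finite) maximum absolute row sum. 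Applying the growth-decay valuation $\langle\cdot\rangle$, the term coming from $\upvarepsilon$ contributes strictly less than $\langle\overline{|\bast\boldsymbol n|}\rangle$ (it is infinitesimal times bounded relative to $\overline{|\bast\boldsymbol n|}$, actually simply infinitesimal, hence its class is $\leq$ that of $\overline{|\bast\boldsymbol n|}$ up to the tropical max), so $\langle\overline{|\bast\boldsymbol n^\perp|}\rangle \leq \langle\overline{|\bast\boldsymbol n|}\rangle$ in $\bstar\PR\R$. Equivalently $\upmu(\bast\boldsymbol n^\perp) \geq \upmu(\bast\boldsymbol n)$, i.e. the reciprocal relation; what we need is the upper bound $\overline{|\bast\boldsymbol n^\perp|}\cdot\upmu^{1/s} < \uplambda$.

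First I would carry out that estimate carefully: since $\bast\boldsymbol n \in (\bast\Z^s)^{\upmu[\uplambda]}(\Uptheta)$ means $\overline{|\bast\boldsymbol n|}\cdot\upmu^{1/s} < \uplambda$, and $\langle\overline{|\bast\boldsymbol n^\perp|}\rangle \leq \langle\overline{|\bast\boldsymbol n|}\rangle$, multiplying both sides of the latter by $\upmu^{1/s}$ (the product preserves order in $\bstar\PR\R$ by Proposition \ref{denselinord}) gives $\overline{|\bast\boldsymbol n^\perp|}\cdot\upmu^{1/s} \leq \overline{|\bast\boldsymbol n|}\cdot\upmu^{1/s} < \uplambda$. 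This establishes the displayed inequality and, reinterpreting, $\bast\boldsymbol n^\perp \in (\bast\Z^r)^{\upmu^{r/s}[\uplambda]}$: indeed the growth index of an element $\bast\boldsymbol m \in \bast\Z^r$ living in the ``$\upmu^{r/s}$'' filtration level is exactly the condition $\overline{|\bast\boldsymbol m|}\cdot(\upmu^{r/s})^{1/r} = \overline{|\bast\boldsymbol m|}\cdot\upmu^{1/s} \in \bstar\PR\R_\upvarepsilon$, matching what we derived, with the fine bound $< \uplambda$.

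For the converse statement when $\Uptheta$ is square invertible of size $r\times r$: here $\bast\widetilde\Z^r(\Uptheta) = 0$, so $\perp$ is injective and the roles of $\bast\boldsymbol n$ and $\bast\boldsymbol n^\perp$ become symmetric via $\bast\boldsymbol n = \Uptheta^{-1}\bast\boldsymbol n^\perp + \Uptheta^{-1}\upvarepsilon(\bast\boldsymbol n)$, with $\Uptheta^{-1}$ again a fixed real matrix. Running the same estimate with $\Uptheta^{-1}$ in place of $\Uptheta$ yields $\langle\overline{|\bast\boldsymbol n|}\rangle \leq \langle\overline{|\bast\boldsymbol n^\perp|}\rangle$, so combined with the forward direction (where $r=s$, so $\upmu^{r/s} = \upmu$) we get equality $\langle\overline{|\bast\boldsymbol n|}\rangle = \langle\overline{|\bast\boldsymbol n^\perp|}\rangle$, hence the claimed equivalence $\bast\boldsymbol n \in (\bast\Z^r)^{\upmu[\uplambda]}(\Uptheta) \Leftrightarrow \bast\boldsymbol n^\perp \in (\bast\Z^r)^{\upmu[\uplambda]}(\Uptheta)^\perp$. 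The main (mild) obstacle I anticipate is bookkeeping the exponents $1/s$, $1/r$, $r/s$ correctly across the house-norm normalization conventions, and being careful that the inequality $\overline{|\bast\boldsymbol n^\perp|} \lesssim \overline{|\bast\boldsymbol n|}$ really does descend to a $\leq$ (not a strict $<$) at the level of $\bstar\PR\R$-classes — which it does, since adding a bounded multiple plus an infinitesimal cannot increase the growth-decay class.
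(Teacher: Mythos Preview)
Your proof is correct and follows essentially the same approach as the paper: bound $\overline{|\bast\boldsymbol n^{\perp}|}$ by a finite constant times $\overline{|\bast\boldsymbol n|}$ via the relation $\bast\boldsymbol n^{\perp}=\Uptheta\bast\boldsymbol n-\upvarepsilon(\bast\boldsymbol n)$, pass to $\bstar\PR\R$, and then invoke symmetry through $\Uptheta^{-1}$ for the invertible square case. The paper is terser (writing $\overline{|\bast\boldsymbol n^{\perp}|}=\overline{|\Uptheta\bast\boldsymbol n|}$ and using the bound $s\,\overline{|\Uptheta|}\cdot\overline{|\bast\boldsymbol n|}$), but your added care with the infinitesimal term and the exponent bookkeeping is fine and does not diverge from the intended argument.
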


\begin{proof}  %Note first that $\upmu (\bast \boldsymbol n)^{s}>\upmu$ $\Leftrightarrow$ $\langle \overline{|\bast\boldsymbol n|}\rangle\cdot\upmu^{1/s}\in\bstar\PR\R_{\upvarepsilon}$.  
We must show that $\langle \overline{|\bast\boldsymbol n^{\perp}|}\rangle\cdot(\upmu^{r/s})^{1/r} =
\langle \overline{|\bast\boldsymbol n^{\perp}|}\rangle\cdot\upmu^{1/s}
\in\bstar\PR\R_{\upvarepsilon}$.  But this follows immediately from:
\[   \overline{|\bast\boldsymbol n^{\perp}|} = \overline{|\Uptheta\bast\boldsymbol n |} =\max_{i=1,\dots, r}|\sum_{j=1}^{s} \Uptheta_{ij}\bast n_{j}|
\leq s\overline{|\Uptheta|}\cdot\overline{|\bast\boldsymbol n |} \]
where $\overline{|\Uptheta|} = \max_{ij}|\Uptheta_{ij}|$.  
%In particular, $\overline{|\bast\boldsymbol n^{\perp}|}\cdot \upmu<\uplambda$.  
The last statement follows from symmetry of argument.
\end{proof}

The relevant arithmetic operations for matrix approximate ideal arithmetic
derive from the Kronecker product and sum \cite{HJ}.
 Let $\Uptheta$, $\Uptheta'$ be real matrices of dimensions $r\times s$, $r'\times s'$.     Denote by $\Uptheta\otimes\Uptheta'$ 
 the {\bf Kronecker (or tensor) product} i.e.\ the $rr'\times ss'$ block matrix
 \[  \Uptheta\otimes\Uptheta' = \left(\begin{array}{ccc}
 \uptheta_{11}\Uptheta' & \cdots & \uptheta_{1s}\Uptheta' \\
 \vdots & \ddots &\vdots \\
  \uptheta_{r1}\Uptheta' & \cdots & \uptheta_{rs}\Uptheta' 
  \end{array}
 \right)  \]
 where $\Uptheta = (\uptheta_{ij})$.
If $r=s$ and $r'=s'$ the {\bf Kronecker sum} and {\bf Kronecker difference}
are the $rr'\times rr'$ matrices 
\[   \Uptheta\oplus\Uptheta' = \Uptheta\otimes I_{r'} + I_{r}\otimes \Uptheta',\quad  \Uptheta\ominus\Uptheta' = \Uptheta\otimes I_{r'} - I_{r}\otimes \Uptheta'. \] 
Neither the Kronecker product nor the Kronecker sum/difference are commutative, nor do they satisfy the distributive law.   If we assume $\Uptheta,\Uptheta'$
are square and denote by $\upsigma (\Uptheta )$, $\upsigma (\Uptheta' )$ their spectra 
then $\upsigma (\Uptheta \otimes\Uptheta')= \upsigma (\Uptheta )\cdot \upsigma (\Uptheta' )$,
$\upsigma (\Uptheta \oplus\Uptheta')= \upsigma (\Uptheta )+ \upsigma (\Uptheta' )$ and $\upsigma (\Uptheta \ominus\Uptheta')= \upsigma (\Uptheta )- \upsigma (\Uptheta' )$.

Denote by $\tilde{\mathcal{M}}(\R)$
the monoid of all real matrices with respect to the Kronecker product, and by $\mathcal{M}(\R)$ the submonoid of square matrices, equipped
further with the Kronecker sum/difference.  Note that there is a monomorphism $(\R,+,\times)\hookrightarrow (\mathcal{M}(\R),\oplus,\otimes)$.
Observe that if $\boldsymbol m$, $\boldsymbol n$ are vectors
of size $s$ resp. $s'$ then 
\begin{align}\label{matrixdenominatorlaw}  (\Uptheta\otimes\Uptheta')(\boldsymbol m\otimes\boldsymbol n ) = 
 \Uptheta(\boldsymbol m)\otimes\Uptheta'(\boldsymbol n);
 \end{align}
if $r=s$ and $r'=s'$ and $\boldsymbol m$, $\boldsymbol n$ are vectors of size $r$ resp. $r'$ then
\begin{align}\label{matrixnumeratorlaw}  (\Uptheta\oplus\Uptheta')(\boldsymbol m\otimes\boldsymbol n ) = 
 \Uptheta(\boldsymbol m)\otimes \boldsymbol n +  \boldsymbol m\otimes\Uptheta'(\boldsymbol n),
 \end{align}
 with a similar formula for the Kronecker difference.
 If we think of $(\Uptheta\boldsymbol m,\boldsymbol m)$ as a ``vector numerator denominator pair'', then 
 (\ref{matrixdenominatorlaw}) is the formula for the numerator of the product and
 (\ref{matrixnumeratorlaw}) is the formula for the numerator of the sum.

\begin{theo}[Matrix Approximate Ideal Arithmetic]\label{matrixideoarith}   Let $\Uptheta$, $\Uptheta'$ be real matrices of dimensions $r\times s$, $r'\times s'$. 
Then there is a well-defined bilinear pairing:
\begin{align}\label{KParith}
\otimes: (\bast \Z^{s})^{\upmu^{s}[\upiota]}_{\upnu^{r}} (\Uptheta)\times(\bast \Z^{s'})^{\upnu^{s'}[\uplambda]}_{\upmu^{r'}} (\Uptheta')
\longrightarrow  (\bast \Z^{ss'})^{\upmu^{ss'}\cdot\upnu^{ss'}[\upiota\cdot\uplambda]}_{\upiota+\uplambda} (\Uptheta\otimes\Uptheta') 
 \end{align}
 defined by the Kronecker product of vectors.  If $r=s$ and $r'=s'$ then the Kronecker product also defines a pairing
  \begin{align} \label{KSarith}
 \otimes: (\bast \Z^{r})^{\upmu^{r}[\upiota]}_{\upnu^{r}} (\Uptheta)\times (\bast \Z^{r'})^{\upnu^{r'}[\uplambda]}_{\upmu^{r'}} (\Uptheta')
\longrightarrow  (\bast \Z^{rr'})^{\upmu^{rr'}\cdot\upnu^{rr'}[\upiota\cdot\uplambda]}_{\upiota+\uplambda} (\Uptheta\oplus\Uptheta') 
\cap (\bast \Z^{rr'})^{\upmu^{rr'}\cdot\upnu^{rr'}[\upiota\cdot\uplambda]}_{\upiota+\uplambda} (\Uptheta\ominus\Uptheta').
\end{align}
In particular, when $r=r'=s=s'=1$ we recover Theorem \ref{productformula}.
\end{theo}

\begin{proof}  We will prove (\ref{KParith}); the proof of (\ref{KSarith}) is similar and is left to the reader.  The argument amounts
to replacing scalar product by tensor product in the
calculations found in Theorem \ref{productformula}, taking into account the dimensional normalizations used to define the filtrations.  Let
$\bast \boldsymbol m\in (\bast \Z^{s})^{\upmu^{s}[\upiota]}_{\upnu^{r}}(\Uptheta)$, $\bast \boldsymbol n\in(\bast \Z^{s'})^{\upnu^{s'}[\uplambda]}_{\upmu^{r'}} (\Uptheta')$. 
%and $\bast \boldsymbol m\otimes\bast \boldsymbol m'\in\bast\Z^{ss'}$.  
First observe that 
\[ \upmu (\bast \boldsymbol m\otimes  \bast \boldsymbol n) = \upmu( \bast \boldsymbol m)\cdot \upmu( \bast \boldsymbol n)>\upmu\cdot\upnu = 
(\upmu^{ss'}\cdot\upnu^{ss'})^{1/ss'}
\]
which implies $\bast \boldsymbol m\otimes  \bast \boldsymbol n\in (\bast \Z^{ss'})^{\upmu^{ss'}\cdot\upnu^{ss'}[\upiota\cdot\uplambda]}$.
Denote
by $\Uptheta_{i}$ the $i$th row of $\Uptheta$, $\Uptheta'_{j}$ the $j$th row of $\Uptheta'$ and by $\odot$ the dot product.
The $rr'$ vector $\Uptheta\otimes\Uptheta' (\bast \boldsymbol m\otimes\bast \boldsymbol n)$ has coordinates indexed by $(i,j)\in I\times J$
where the latter is given the linear dictionary order.  The $(i,j)$ coordinate is given by 
\begin{align*} (\Uptheta_{i}\odot \bast \boldsymbol m)\cdot (\Uptheta_{j}\odot \bast \boldsymbol n) & = 
(\bast\boldsymbol m^{\perp}_{i}+ \upvarepsilon (\bast \boldsymbol m)_{i})\cdot (\bast\boldsymbol n^{\perp}_{j}+ \upvarepsilon (\bast \boldsymbol n)_{j}) \\
& =
\bast\boldsymbol m^{\perp}_{i}  \bast\boldsymbol n^{\perp}_{j}+\upvarepsilon (\bast \boldsymbol m)_{i}\bast\boldsymbol n^{\perp}_{j} +
\bast\boldsymbol m^{\perp}_{i}\upvarepsilon (\bast \boldsymbol n)_{j}+\upvarepsilon (\bast \boldsymbol m)_{i}\upvarepsilon (\bast \boldsymbol n)_{j}
\end{align*}
i.e.
\begin{align}\label{matgderror}  \Uptheta\otimes\Uptheta' (\bast \boldsymbol m\otimes \bast\boldsymbol n) -  \bast\boldsymbol m^{\perp} \otimes \bast\boldsymbol n^{\perp} & =
\upvarepsilon (\bast \boldsymbol m)\otimes\bast \boldsymbol n^{\perp}+\bast\boldsymbol m^{\perp}\otimes\upvarepsilon (\bast \boldsymbol n)+\upvarepsilon (\bast \boldsymbol m)\otimes\upvarepsilon (\bast \boldsymbol n).
   \end{align}
The term $\upvarepsilon (\bast \boldsymbol m)\otimes\upvarepsilon (\bast \boldsymbol n)$ can be disregarded because upon passage
to $\bstar\PR\R_{\upvarepsilon}$ it is strictly less than the absolute values of the images of the other two terms on the right hand side of (\ref{matgderror}). 
Now
\[ \langle \overline{|\upvarepsilon (\bast \boldsymbol m)\otimes \bast\boldsymbol n^{\perp}|} \rangle =
\overline{|\bast \boldsymbol n^{\perp}|}\cdot\upnu (\bast \boldsymbol m)\leq \overline{| \bast\boldsymbol n^{\perp}|}\cdot\upnu <\uplambda,
 \]
where we have used Lemma \ref{matrixdualgrowth}.  Similarly one shows that
$ \langle \overline{| \bast\boldsymbol m^{\perp}\otimes\upvarepsilon (\bast \boldsymbol n)|} \rangle<\upiota$
so that $\bast \boldsymbol m\otimes \bast\boldsymbol n\in
(\bast \Z^{ss'})_{\upiota+\uplambda} (\Uptheta\otimes\Uptheta') $ as claimed.
      %By hypothesis, $\bast \boldsymbol m\otimes \bast\boldsymbol m'\in(\bast\Z^{s})^{\upmu\upnu[\upiota\cdot\uplambda]}$ and 
   %the image of $\upvarepsilon (\bast \boldsymbol m)\otimes \boldsymbol n^{\perp}+\upvarepsilon (\bast \boldsymbol n)\otimes\boldsymbol m^{\perp}
   %+\upvarepsilon (\bast \boldsymbol m)\otimes\upvarepsilon (\bast \boldsymbol n)$ in $\bstar\PR\R_{\upvarepsilon}$
  %is bounded by $\upiota+\uplambda$.
   \end{proof}

 The characterization of classes of real numbers in terms of the nonvanishing spectrum found in \S \ref{nonvanspec} can be generalized to real matrices using the nonvanishing spectrum
 
 \[ {\rm Spec}(\Uptheta )= \{(\upmu,\upnu)|\;(\bast\Z^{s})_{\upnu}^{\upmu}(\Uptheta)\not=0 \}.\]  By viewing $\Uptheta$ as the family of linear forms $\{\Uptheta_{i}\}_{i=1}^{r}$ in $s$ variables, we will obtain the familiar
 correspondence between the geometry of  ${\rm Spec}(\Uptheta )$ and approximation classes.  
 
 We say that $\Uptheta$ is {\bf rational} if there exists a vector $\boldsymbol m\in\Z^{s}$ such that 
 $\Uptheta(\boldsymbol m)\in\Z^{r}$; otherwise we say that $\Uptheta$ is {\bf irrational}.   Denote the set of rational real matrices
 by $\tilde{\mathcal{Q}}(\R)$ and by $\mathcal{Q}(\R)$ the subset of square rational real matrices.  Then both $\tilde{\mathcal{Q}}(\R)$ and by 
 $\mathcal{Q}(\R)$
 are closed with respect to $\otimes$, and $\mathcal{Q}(\R)$ is closed with respect to $\oplus,\ominus$ as well.  Of course, the restriction of
 the monomorphism $\R\hookrightarrow \mathcal{M}(\R)$ to $\Q$ lies in $\mathcal{Q}(\R)$.
 
 \begin{prop}  For all $\Uptheta\in\tilde{\mathcal{M}}(\R )$, ${\rm Spec}(\Uptheta )\supset \{(\upmu,\upnu)|\upmu<\upnu\}$.
 If  $\Uptheta\in\tilde{\mathcal{Q}}(\R )$ then ${\rm Spec}(\Uptheta )=\bstar\PR\R_{\upvarepsilon}^{2}$.
 \end{prop}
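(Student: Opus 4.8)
The plan is to prove the two statements separately, the first being a direct matrix analogue of Theorem~\ref{gennonvan} and the second of Proposition~\ref{rationalspec}. For the inclusion $\{(\upmu,\upnu)\mid \upmu<\upnu\}\subset{\rm Spec}(\Uptheta)$, I would fix $\upmu<\upnu$ in $\bstar\PR\R_{\upvarepsilon}$ and use density (Proposition~\ref{denselinord}) to interpolate some $\uprho$ with $\upmu<\uprho<\upnu$, which by Proposition~\ref{integerrepre} is represented as $\langle\bast N^{-1}\rangle$ for $\bast N\in\bast\Z_{+}-\Z_{+}$. The key input is the matrix (Minkowski-type) version of Dirichlet's theorem: for any real $r\times s$ matrix $\Uptheta$ and any $N>1$ there exist $\boldsymbol n\in\Z^{s}$, $\boldsymbol n^{\perp}\in\Z^{r}$ with $0<\overline{|\boldsymbol n|}<N^{r/s}$ and $\overline{|\Uptheta\boldsymbol n-\boldsymbol n^{\perp}|}<N^{-1}$ — this is the standard pigeonhole / geometry-of-numbers statement for systems of linear forms. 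Applying this coordinatewise along a sequence $\{N_{i}\}$ representing $\uprho^{-1}$, the resulting class $\bast\boldsymbol n$ lies in $(\bast\Z^{s})(\Uptheta)$ with $\upnu(\bast\boldsymbol n)\le\langle N^{-1}\rangle\le\upnu^{1/r}$ after reconciling the dimensional normalization $\upnu^{1/r}$ in the definition of $(\bast\Z^{s})_{\upnu}(\Uptheta)$, and with $\overline{|\bast\boldsymbol n|}\cdot\upmu^{1/s}<\overline{|\bast\boldsymbol n|}\cdot\uprho^{1/s}\le\langle N^{r/s}\rangle\cdot\uprho^{1/s}\in\bstar\PR\R_{\upvarepsilon}$, so $\bast\boldsymbol n\in(\bast\Z^{s})^{\upmu}_{\upnu}(\Uptheta)$. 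The exact bookkeeping with the exponents $1/r$, $1/s$ in the matrix ideology (as in Theorem~\ref{matrixideoarith} and Lemma~\ref{matrixdualgrowth}) is where I expect to have to be most careful; the underlying idea is routine.

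For the second statement, suppose $\Uptheta\in\tilde{\mathcal{Q}}(\R)$, so there exists $\boldsymbol m\in\Z^{s}$ with $\Uptheta(\boldsymbol m)\in\Z^{r}$, i.e.\ $\upvarepsilon(\boldsymbol m)=0$. Then for the constant sequence class $\bast\boldsymbol m$ we have $\upnu(\bast\boldsymbol m)=-\infty$, hence $\bast\boldsymbol m\in(\bast\Z^{s})_{\upnu}(\Uptheta)$ for every $\upnu\in\bstar\PR\R_{\upvarepsilon}$, and more generally any $\Z$-multiple $N\bast\boldsymbol m$ with an arbitrarily prescribed growth still has vanishing error. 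Given any target pair $(\upmu,\upnu)$ I would build a class $\bast\boldsymbol n$ out of sequences $\{N_{i}\boldsymbol m\}$ with $N_{i}\to\infty$ chosen so that $\overline{|N_{i}\boldsymbol m|}\cdot s_{i}^{1/s}\to 0$, where $\{s_{i}\}$ represents $\upmu$; this places $\bast\boldsymbol n$ in $(\bast\Z^{s})^{\upmu}(\Uptheta)$, while its decay is $-\infty\le\upnu^{1/r}$ automatically. Thus $(\bast\Z^{s})^{\upmu}_{\upnu}(\Uptheta)\ne 0$ for all $(\upmu,\upnu)$, giving ${\rm Spec}(\Uptheta)=\bstar\PR\R_{\upvarepsilon}^{2}$. (Conversely, one should note for completeness that an irrational $\Uptheta$ will fail to fill the whole square, exactly as in the one-variable vanishing-strip argument, though the Proposition as stated only asserts the two displayed facts.)

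The main obstacle, such as it is, is purely organizational: getting the normalization exponents $\upmu^{1/s}$ and $\upnu^{1/r}$ to line up with the correct form of Dirichlet's theorem for systems of linear forms (the exponent $s/r$ in the denominator/error trade-off), and making sure the sequence-class constructions respect the ultrafilter. No genuinely new idea beyond the scalar case is required; the proof is a transcription of Theorem~\ref{gennonvan} and Proposition~\ref{rationalspec} into the matrix setting, implemented along coordinates as in the proof of Theorem~\ref{matrixideoarith}.
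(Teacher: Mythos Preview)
Your approach is exactly the paper's: adapt Theorem~\ref{gennonvan} via the matrix Dirichlet theorem (the paper cites Schmidt's Theorem~3A) for the slow component, and use the existence of a nonzero element with $-\infty$ decay for the rational case. The rational half is fine as you wrote it.

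The exponent bookkeeping you flag does in fact go wrong in one place. With your choice $\upmu<\uprho<\upnu$ and $\uprho=\langle\bast N^{-1}\rangle$, the bound $\overline{|\bast\boldsymbol n|}<\bast N^{r/s}$ gives $\langle\overline{|\bast\boldsymbol n|}\rangle\cdot\uprho^{1/s}\leq\uprho^{-r/s}\cdot\uprho^{1/s}=\uprho^{(1-r)/s}$, which is \emph{not} in $\bstar\PR\R_{\upvarepsilon}$ for $r\geq 1$. The repair is to interpolate at the level of $r$th powers: choose $\uprho$ with $\upmu^{1/r}<\uprho<\upnu^{1/r}$ (possible since $\upmu<\upnu$), so that $\upmu<\uprho^{r}<\upnu$. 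Then the decay bound $\upnu(\bast\boldsymbol n)\leq\uprho<\upnu^{1/r}$ holds, and for growth one has $\langle\overline{|\bast\boldsymbol n|}\rangle\cdot\upmu^{1/s}\leq\uprho^{-r/s}\cdot\upmu^{1/s}=(\upmu\cdot\uprho^{-r})^{1/s}\in\bstar\PR\R_{\upvarepsilon}$ since $\upmu<\uprho^{r}$ strictly in $\bstar\PR\R$. With that single adjustment your argument goes through and matches the paper's (terse) proof.
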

 
 \begin{proof}  For $\Uptheta\in\tilde{\mathcal{M}}-\tilde{\mathcal{Q}}(\R)$, this is an straightforward adaptation of the proof of Theorem \ref{gennonvan} using Schmidt's ``Dirichlet Theorem'' for families of linear forms (Theorem 3A on page 36 of \cite{Sch}).
 If $\Uptheta\in\tilde{\mathcal{Q}}(\R )$ then $(\bast\Z^{s})_{-\infty}(\Uptheta)\not=0$ from which we get ${\rm Spec}(\Uptheta )=\bstar\PR\R_{\upvarepsilon}^{2}$.
 \end{proof}
 
A matrix 
 $\Uptheta$ is {\bf badly approximable} if there exists a constant $C>0$ such that for all $\bast \boldsymbol n\in
  \bast\Z^{s}(\Uptheta)$,
 \[ \overline{|\bast\boldsymbol n|}^{s} \cdot \overline{|\upvarepsilon (\bast\boldsymbol n)|}^{r}>C.  \]
 This definition is equivalent to the definition given in \cite{Sch} in terms of the family of linear forms $\{\Uptheta_{i}\}_{i=1}^{r}$.
In what follows, ${\rm Spec}_{\rm flat}(\Uptheta)$ = the intersection of ${\rm Spec}(\Uptheta)$ with the diagonal.
 
 \begin{theo}  $\Uptheta\in\tilde{\mathcal{M}}(\R)-\tilde{\mathcal{Q}}(\R)$ is badly approximable $\Leftrightarrow$ ${\rm Spec}(\Uptheta)=\{(\upmu,\upnu)|\upmu<\upnu\}$
 $\Leftrightarrow$
 ${\rm Spec}_{\rm flat}(\Uptheta)=\emptyset$.
 \end{theo}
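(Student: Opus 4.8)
The plan is to establish the two equivalences as a matrix-theoretic transcription of Theorem \ref{badapproxchar}, with the house-norm playing the role of absolute value and the exponents $s$ (denominators) and $r$ (errors) absorbed into the normalizations built into the matrix ideology. First I would record the trivial implication: if $\mathrm{Spec}(\Uptheta)=\{(\upmu,\upnu)\,|\,\upmu<\upnu\}$ then certainly $\mathrm{Spec}_{\rm flat}(\Uptheta)=\emptyset$, since the diagonal $\upmu=\upnu$ meets that set in nothing. So the content is in the cycle ``$\Uptheta$ badly approximable $\Rightarrow$ $\mathrm{Spec}(\Uptheta)$ is exactly the slow component $\Rightarrow$ $\mathrm{Spec}_{\rm flat}(\Uptheta)=\emptyset$ $\Rightarrow$ $\Uptheta$ badly approximable.''

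For the first implication, suppose $\overline{|\bast\boldsymbol n|}^{s}\cdot\overline{|\upvarepsilon(\bast\boldsymbol n)|}^{r}>C$ for all nonzero $\bast\boldsymbol n\in\bast\Z^{s}(\Uptheta)$, and suppose for contradiction there is a nonzero $\bast\boldsymbol n\in(\bast\Z^{s})^{\upmu}_{\upnu}(\Uptheta)$ with $\upmu\geq\upnu$. By definition $\overline{|\bast\boldsymbol n|}\cdot\upmu^{1/s}\in\bstar\PR\R_{\upvarepsilon}$ and $\upnu(\bast\boldsymbol n)=\langle\overline{|\upvarepsilon(\bast\boldsymbol n)|}\rangle\leq\upnu^{1/r}$. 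Then in $\bstar\PR\R_{\upvarepsilon}$ one computes
\[
\overline{|\bast\boldsymbol n|}^{s}\cdot\upmu \;\geq\; \overline{|\bast\boldsymbol n|}^{s}\cdot\upnu \;\geq\; \overline{|\bast\boldsymbol n|}^{s}\cdot\upnu(\bast\boldsymbol n)^{r} \;\geq\; 1,
\]
the last inequality being the $\bstar\PR\R$-image of the badly-approximable constant $C$. Hence $\overline{|\bast\boldsymbol n|}^{s}\cdot\upmu\notin\bstar\PR\R_{\upvarepsilon}$, contradicting $\bast\boldsymbol n\in(\bast\Z^{s})^{\upmu}$. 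So $\mathrm{Spec}(\Uptheta)\cap\{\upmu\geq\upnu\}=\emptyset$; combined with the previous Proposition's $\mathrm{Spec}(\Uptheta)\supset\{\upmu<\upnu\}$ this gives $\mathrm{Spec}(\Uptheta)=\{(\upmu,\upnu)\,|\,\upmu<\upnu\}$.

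For the converse, I would argue contrapositively: if $\Uptheta$ is not badly approximable then for every $k$ there is $\bast\boldsymbol n_k\in\bast\Z^{s}(\Uptheta)$ with $\overline{|\bast\boldsymbol n_k|}^{s}\cdot\overline{|\upvarepsilon(\bast\boldsymbol n_k)|}^{r}<k^{-1}$; a diagonal-sequence construction across the ultrafilter (exactly as in the ii.$\Rightarrow$i.\ step of Theorem \ref{badapproxchar}) produces a single nonzero $\bast\boldsymbol n\in\bast\Z^{s}(\Uptheta)$ for which $\overline{|\bast\boldsymbol n|}^{s}\cdot\overline{|\upvarepsilon(\bast\boldsymbol n)|}^{r}$ is infinitesimal, i.e.\ $\upmu(\bast\boldsymbol n)^{s}$ as a $\bstar\PR\R$-class exceeds $\langle\overline{|\upvarepsilon(\bast\boldsymbol n)|}\rangle^{r}$; setting $\upnu=\langle\overline{|\upvarepsilon(\bast\boldsymbol n)|}\rangle^{r}$ one checks $\bast\boldsymbol n\in(\bast\Z^{s})^{\upnu}_{\upnu}(\Uptheta)\neq 0$, so $\upnu\in\mathrm{Spec}_{\rm flat}(\Uptheta)$. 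This closes the cycle. The main obstacle I anticipate is the diagonal-sequence step: one must choose, for $\mathfrak{u}$-almost all indices $i$, an integer vector $n_{i}$ realizing near-simultaneously a prescribed denominator size and a suitably small house-norm of the error, and then verify that the resulting tail class lands in the correctly normalized group $(\bast\Z^{s})^{\upnu}_{\upnu}(\Uptheta)$ — the bookkeeping of the exponents $1/s$ and $1/r$ in the definitions of $(\bast\Z^{s})^{\upmu}$ and $(\bast\Z^{s})_{\upnu}(\Uptheta)$ is where a careless argument would slip. Everything else is a direct analogue of the $r=s=1$ case already proved.
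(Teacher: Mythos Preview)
Your proposal is correct and follows essentially the same approach as the paper: the paper's proof is the single sentence ``This follows directly from the definition of being badly approximable,'' and what you have written is precisely the unpacking of that sentence along the lines of the scalar case in Theorem~\ref{badapproxchar}. Your bookkeeping of the $1/s$ and $1/r$ normalizations is accurate, and the diagonal-sequence step is exactly the maneuver implicit in the paper's one-line dismissal.
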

 
 \begin{proof}  This follows directly from the definition of being badly approximable.
 \end{proof}
 
 Denote by $\tilde{\mathcal{B}}(\R)\supset\mathfrak{B}$ the set of badly approximable real matrices, by 
 $\tilde{\mathcal{W}}(\R)  =\tilde{\mathcal{M}}(\R)-(\tilde{\mathcal{B}}(\R)\cup \tilde{\mathcal{Q}}(\R)) \supset\mathfrak{W}$
 the set of well approximable matrices.  In addition, we have the notion of {\bf very well approximable matrices} $\tilde{\mathcal{W}}_{\upkappa}(\R)_{r,s}$ of exponent $\upkappa>s$: those for which
 \begin{enumerate}
\item there exists $\bast\boldsymbol n$ such that for any $\uplambda\in (s,\upkappa)$
  \[ \overline{|\bast\boldsymbol n|}^{\uplambda} \cdot \overline{|\upvarepsilon (\bast\boldsymbol n)|}^{r}\simeq 0.  \] 
  \item The infinitesimal equation 
  \[ \overline{|\bast\boldsymbol n|}^{\upkappa'} \cdot \overline{|\upvarepsilon (\bast\boldsymbol n)|}^{r}\simeq 0  \]
  has no solution.
  \end{enumerate}
  Write $\tilde{\mathcal{W}}_{>s}(\R)_{r,s}$ for the set of very well approximable matrices of size $r\times s$.
  When $\upkappa=\infty$ we get the {\bf Liouville matrices} $\tilde{\mathcal{W}}_{\infty}(\R)_{r,s}$.  In addition, let $\tilde{\mathcal{B}}(\R)_{r,s}$
 denote the space of badly approximable matrices of size $r\times s$.  
 
 \begin{theo}  Let $\tilde{\mathcal{C}}(\R )_{r,s}$ be any of the classes $\tilde{\mathcal{B}}(\R)_{r,s}$, 
 %$\tilde{\mathcal{W}}(\R)_{r,s}$,
 $\tilde{\mathcal{W}}_{>s}(\R)_{r,s}$, $\tilde{\mathcal{W}}_{\infty}(\R)_{r,s}$ described in the previous paragraph.  Then 
 \[ \tilde{\mathcal{C}}(\R)_{r,s}^{T}:=\big\{\Uptheta^{T}\big|\; \Uptheta\in \tilde{\mathcal{C}}(\R )_{r,s}\big\}=\tilde{\mathcal{C}}(\R)_{s,r}.\]
 \end{theo}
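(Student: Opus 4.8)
The plan is to exhibit a natural bijection between the diophantine approximation data of $\Uptheta$ and that of $\Uptheta^{T}$, and observe that it interchanges the roles of $r$ and $s$ while preserving, up to the transposition of exponents, the growth and decay invariants that define each class $\tilde{\mathcal{C}}(\R)$. The starting point is the classical transference principle: the ``linear forms'' inequality governing $\bast\Z^{s}(\Uptheta)$ and the ``simultaneous approximation'' inequality governing $\bast\Z^{r}(\Uptheta^{T})$ are dual to one another. Concretely, for a nonzero $\bast\boldsymbol n\in\bast\Z^{s}$ with dual $\bast\boldsymbol n^{\perp}=\Uptheta\bast\boldsymbol n-\upvarepsilon(\bast\boldsymbol n)\in\bast\Z^{r}$, I would use Khintchine's transference theorem (see \cite{Sch}, Chapter IV, or \cite{Ca}) which converts a solution of $\overline{|\bast\boldsymbol n|}^{s}\cdot\overline{|\upvarepsilon(\bast\boldsymbol n)|}^{r}<\delta$ into a solution of the transposed system for $\Uptheta^{T}$ with a comparable bound, and vice versa. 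Since the exponents $(r,s)$ simply swap under transposition, the product $\overline{|\bast\boldsymbol m|}^{s}\cdot\overline{|\upvarepsilon(\bast\boldsymbol m)|}^{r}$ for $\Uptheta$ corresponds to $\overline{|\bast\boldsymbol m'|}^{r}\cdot\overline{|\upvarepsilon(\bast\boldsymbol m')|}^{s}$ for $\Uptheta^{T}$.

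First I would establish the statement for $\tilde{\mathcal{B}}(\R)$: $\Uptheta$ is badly approximable iff there is $C>0$ with $\overline{|\bast\boldsymbol n|}^{s}\overline{|\upvarepsilon(\bast\boldsymbol n)|}^{r}>C$ for all nonzero $\bast\boldsymbol n\in\bast\Z^{s}(\Uptheta)$. By the transference inequality, the existence of such a uniform lower bound for $\Uptheta$ is equivalent to the existence of one for $\Uptheta^{T}$ with the roles of $r$ and $s$ reversed, which is precisely the defining condition for $\Uptheta^{T}\in\tilde{\mathcal{B}}(\R)_{s,r}$. This gives $\tilde{\mathcal{B}}(\R)_{r,s}^{T}\subset\tilde{\mathcal{B}}(\R)_{s,r}$; applying the same argument to $\Uptheta^{T}$ and using $(\Uptheta^{T})^{T}=\Uptheta$ gives the reverse inclusion, hence equality.

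Next I would handle the very well approximable and Liouville cases, which require tracking the exponent. The key point is that the asymptotic exponent $\upkappa$ appearing in the definition of $\tilde{\mathcal{W}}_{\upkappa}(\R)_{r,s}$ — the critical value for which $\overline{|\bast\boldsymbol n|}^{\uplambda}\overline{|\upvarepsilon(\bast\boldsymbol n)|}^{r}\simeq0$ is solvable for $\uplambda<\upkappa$ but not for $\uplambda=\upkappa'>\upkappa$ — transforms under transposition into the analogous critical exponent for $\Uptheta^{T}$. This is exactly the content of the sharp transference theorems (e.g. the Jarník–type or German–type transference inequalities, see \cite{Sch} and the refinements collected in \cite{Bu}), which state that the optimal exponents for the direct and transposed systems determine one another via a linear relation that, because of the symmetry $r\leftrightarrow s$, maps ``$\upkappa>s$ for $\Uptheta$'' exactly to ``$\upkappa>r$ for $\Uptheta^{T}$'', and likewise $\upkappa=\infty$ to $\upkappa=\infty$. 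Thus the three conditions (1)–(2) in the definition of $\tilde{\mathcal{W}}_{>s}(\R)_{r,s}$ translate into conditions (1)–(2) for $\tilde{\mathcal{W}}_{>r}(\R)_{s,r}$, and similarly $\tilde{\mathcal{W}}_{\infty}(\R)_{r,s}^{T}=\tilde{\mathcal{W}}_{\infty}(\R)_{s,r}$. As before one gets equality of sets by applying the argument in both directions.

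\textbf{Main obstacle.} The routine part is the badly approximable case, where only the crude two-sided transference estimate is needed. The delicate step is the very well approximable case: one must verify that the critical exponents really do match up \emph{exactly} under transposition, not merely up to a bounded perturbation — i.e. that the \emph{supremum} defining $\upkappa(\Uptheta)$ and the excellent/bad dichotomy are genuinely transference-invariant. This hinges on using the \emph{sharp} form of the transference inequalities (with the correct constants in the exponents) rather than the classical Khintchine version, and on checking that the nonstandard reformulation in terms of $\overline{|\bast\boldsymbol n|},\overline{|\upvarepsilon(\bast\boldsymbol n)|}$ faithfully records the limit-infimum behavior that the classical exponents encode. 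I would isolate this as a lemma stating that, for $\Uptheta$ of size $r\times s$, the nonvanishing spectrum ${\rm Spec}(\Uptheta)$ and ${\rm Spec}(\Uptheta^{T})$ determine each other by the coordinate change $(\upmu,\upnu)\mapsto(\upmu^{s/r},\upnu^{s/r})$ composed with the reflection swapping the dimensional normalizations, and deduce all three cases uniformly from the shape of the spectrum.
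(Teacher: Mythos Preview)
Your approach is essentially the same as the paper's: the paper's entire proof is the single sentence ``This follows from the Khintchine Transference Principle (Theorem 5C of \S 4.5 of \cite{Sch}),'' and you correctly identify transference as the mechanism and unpack how it handles each class. Your ``main obstacle'' is overstated, however: since $\tilde{\mathcal{W}}_{>s}(\R)_{r,s}$ and $\tilde{\mathcal{W}}_{\infty}(\R)_{r,s}$ are defined by whether the exponent exceeds the Dirichlet threshold or is infinite (not by its exact value), the classical Khintchine transference inequalities---which are already tight at the Dirichlet exponent and preserve $\upkappa=\infty$---suffice, and no appeal to sharper German--type bounds or an exact spectral correspondence is needed.
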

 
 \begin{proof}  This follows from the Khintchine Transference Principle (Theorem 5C of \S 4.5 of \cite{Sch}).
 \end{proof}
 
 We now introduce a generalized notion of projective linear equivalence appropriate to the matrix setting.     Let
 ${\rm GL}_{r,s}(\Z ):={\rm GL}_{r+s}(\Z)$ be the group of $(r+s)\times (r+s)$ integral invertible matrices, partitioned in the following way
 \begin{align}\label{blockformofGL} 
 M= 
  \left(\begin{array}{cc}
   A & B \\
   C & D \\
 \end{array}
  \right)=
 % \left(\begin{array}{ccc}
   %A_{r\times r} & | & B_{r\times s} \\
   %----- & & ----- \\
   % & | & \\
  % C_{s\times r} & | & D_{s\times s} \\
   % & | &
 %\end{array}
  %\right)=
  \left(\begin{array}{cc}
   A_{r\times r} & B_{r\times s} \\
   C_{s\times r} &  D_{s\times s} \\
 \end{array}
  \right) 
  \end{align}
 (where the block subindices indicate the dimension). Note that the product of matrices $M,M'\in{\rm GL}_{r,s}(\Z)$ viewed in block form follows the familiar formula for
 $2\times 2$ matrix multiplication  e.g. 
  \begin{align}\label{blockprodform}  M'M =\left(\begin{array}{cc}
   A' & B' \\
   C' & D' \\
 \end{array}
  \right)\left(\begin{array}{cc}
   A & B \\
   C & D \\
 \end{array}
  \right)=\left(\begin{array}{cc}
   A'A+B'C & A'B +B'D\\
   C'A+D'C & C'B+D'D \\
 \end{array}
  \right) .
  \end{align}
  If $M'=M^{-1}$ then
 % \[  M^{-1}=  \left(\begin{array}{ccc}
   %A_{r\times r}^{(-1)} & | & B_{r\times s}^{(-1)} \\
  % ----- & & ----- \\
   % & | & \\
  % C_{s\times r}^{(-1)} & | & D_{s\times s}^{(-1)} \\
   % & | &
 %\end{array}
  %\right) \]
  %(here $A_{r\times r}^{(-1)}$ means the corresponding upper left hand block of $M^{-1}$, {\it not} the inverse of $A_{r\times r}$) 
  %then $MM^{-1}=I_{r+s}$ implies that
  $ A'A+B'C=I_{r}$, $C'B+D'D=I_{s}$, $A'B +B'D=O_{r,s}$ and $C'A+D'C =O_{s,r}$, where
 % and
%$   A_{r\times r} B_{r\times s}^{(-1)} +  B_{r\times s} D_{s\times s}^{(-1)} = O_{r,s},\quad
% C_{s\times r} A_{r\times r}^{(-1)} +D_{s\times s}C_{s\times r}^{(-1)}=O_{s,r}\]
$ I_{r}$ is the $r\times r$ identity matrix, $O_{r,s}$ is the $r\times s$ zero matrix, etc.  
%The product $M'M$ produces a similar set of relations
%by ``shifting'' the primes.  
%There is a similar series of matrix equations associated to $M^{-1}M=I_{r+s}$.
  %for which $M$ and $M^{-1}$ satisfy $C_{s\times r} \Uptheta + D_{s\times s}$ is invertible for all $\Uptheta\not\in\tilde{\mathcal{Q}}(\R)$.
  
   % \begin{prop} ${\rm GL}_{r,s}(\Z )$ is a group.
   % \end{prop}
  %In general, for $M,M'\in {\rm GL}_{r,s}(\Z )$ parametrized in block form as in (\ref{blockformofGL}) (the blocks of $M'$ receiving primes)
    %the corresponding block form of the product is:
   % \[
  % MM'=  \left(\begin{array}{ccc}
  % AA' + BC' & | & AB' + BD' \\
   %----- & & ----- \\
   % & | & \\
  % CA' + DC' & | &CB'+DD' \\
   % & | &
% \end{array}
  %\right) .
 %\]
  %For any matrix $r\times s$ matrix $\Uptheta$ we have
  %\begin{align}\label{compofdenomformatrixprod}  
  %(CA' + DC')\Uptheta +(CB'+DD') = (C'\Uptheta + D')^{-1}\cdot ( C(A'\Uptheta + B')(C'\Uptheta + D')^{-1} + D ) . 
  %\end{align}
  %The right hand side of (\ref{compofdenomformatrixprod}) is invertible by hypothesis, so the left hand side is invertible.  Thus 
  %$ {\rm GL}_{r,s}(\Z )$ is product closed.
    %\end{proof}
 Let $\Uptheta,\Uptheta'$ be $r\times s$ real matrices. 
 We write 
 \[ \Uptheta\Bumpeq_{r,s} \Uptheta'\] if there exists $M\in {\rm GL}_{r,s}(\Z )$ with 
 \[    (A\Uptheta  + B )= \Uptheta' (C \Uptheta + D), \]
or  equivalently, if 
  \begin{align}\label{matrixversionofequiv}  M \left(\begin{array}{c}
   \Uptheta \\
   I_{s} \\
 \end{array}\right)  = \left(\begin{array}{c}
   \Uptheta' \\
   I_{s} \\
 \end{array}\right) \big(C\Uptheta + D\big). 
 \end{align}
 Note that $\Bumpeq_{1,1}$ is just $\Bumpeq$, the usual relation of projective linear equivalence for scalars.

 \begin{theo} $\Bumpeq_{r,s}$ is an equivalence relation.
 \end{theo}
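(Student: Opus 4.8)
The plan is to verify the three axioms of an equivalence relation for $\Bumpeq_{r,s}$, using the block-matrix calculus already set up in \eqref{blockformofGL}--\eqref{matrixversionofequiv}, and in particular the fact recorded above that block multiplication of elements of ${\rm GL}_{r,s}(\Z)$ follows the $2\times 2$ formula \eqref{blockprodform}. The key observation is that the defining condition \eqref{matrixversionofequiv} is exactly the statement that the projective action of $M\in{\rm GL}_{r,s}(\Z)$ (in block form) carries the ``point'' $\left(\begin{smallmatrix}\Uptheta\\ I_{s}\end{smallmatrix}\right)$ to a scalar-matrix multiple of $\left(\begin{smallmatrix}\Uptheta'\\ I_{s}\end{smallmatrix}\right)$; so the proof is really the standard verification that a group acting on projective-type objects induces an equivalence relation, with the only extra care being that the ``normalizing'' right factor $C\Uptheta+D$ must be invertible whenever the relation holds.

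First I would prove reflexivity: taking $M=I_{r+s}$, i.e.\ $A=I_r$, $D=I_s$, $B=O_{r,s}$, $C=O_{s,r}$, the condition \eqref{matrixversionofequiv} reads $\left(\begin{smallmatrix}\Uptheta\\ I_s\end{smallmatrix}\right)=\left(\begin{smallmatrix}\Uptheta\\ I_s\end{smallmatrix}\right)\cdot I_s$, which holds trivially; so $\Uptheta\Bumpeq_{r,s}\Uptheta$. Next, transitivity: suppose $\Uptheta\Bumpeq_{r,s}\Uptheta'$ via $M$ and $\Uptheta'\Bumpeq_{r,s}\Uptheta''$ via $M'$, so $M\left(\begin{smallmatrix}\Uptheta\\ I_s\end{smallmatrix}\right)=\left(\begin{smallmatrix}\Uptheta'\\ I_s\end{smallmatrix}\right)(C\Uptheta+D)$ and $M'\left(\begin{smallmatrix}\Uptheta'\\ I_s\end{smallmatrix}\right)=\left(\begin{smallmatrix}\Uptheta''\\ I_s\end{smallmatrix}\right)(C'\Uptheta'+D')$. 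Then I would compute $M'M\left(\begin{smallmatrix}\Uptheta\\ I_s\end{smallmatrix}\right)=M'\left(\begin{smallmatrix}\Uptheta'\\ I_s\end{smallmatrix}\right)(C\Uptheta+D)=\left(\begin{smallmatrix}\Uptheta''\\ I_s\end{smallmatrix}\right)(C'\Uptheta'+D')(C\Uptheta+D)$, and since $M'M\in{\rm GL}_{r,s}(\Z)$ with lower block row $(C'A+D'C,\ C'B+D'D)$ by \eqref{blockprodform}, one checks that $(C'\Uptheta'+D')(C\Uptheta+D)$ equals the corresponding normalizing factor $(C'A+D'C)\Uptheta+(C'B+D'D)$ for $M'M$; substituting $\Uptheta'(C\Uptheta+D)=A\Uptheta+B$ makes this a routine algebraic identity. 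Hence $\Uptheta\Bumpeq_{r,s}\Uptheta''$ via $M'M$.

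For symmetry: if $\Uptheta\Bumpeq_{r,s}\Uptheta'$ via $M$, I would show $\Uptheta'\Bumpeq_{r,s}\Uptheta$ via $M^{-1}$. Writing $M^{-1}=\left(\begin{smallmatrix}A'&B'\\ C'&D'\end{smallmatrix}\right)$ with the identities $A'A+B'C=I_r$, $C'B+D'D=I_s$, $A'B+B'D=O_{r,s}$, $C'A+D'C=O_{s,r}$ recorded above, I would apply $M^{-1}$ to both sides of $M\left(\begin{smallmatrix}\Uptheta\\ I_s\end{smallmatrix}\right)=\left(\begin{smallmatrix}\Uptheta'\\ I_s\end{smallmatrix}\right)(C\Uptheta+D)$ to get $\left(\begin{smallmatrix}\Uptheta\\ I_s\end{smallmatrix}\right)=M^{-1}\left(\begin{smallmatrix}\Uptheta'\\ I_s\end{smallmatrix}\right)(C\Uptheta+D)$; the main obstacle here is that to rearrange this into the form \eqref{matrixversionofequiv} for $M^{-1}$ and $\Uptheta'$ one must know $C\Uptheta+D$ is invertible. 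I would handle this by arguing that $M\left(\begin{smallmatrix}\Uptheta\\ I_s\end{smallmatrix}\right)$ has full column rank $s$ (since $M$ is invertible and $\left(\begin{smallmatrix}\Uptheta\\ I_s\end{smallmatrix}\right)$ does), while $\left(\begin{smallmatrix}\Uptheta'\\ I_s\end{smallmatrix}\right)(C\Uptheta+D)$ can have rank $s$ only if $C\Uptheta+D$ is invertible; this is the one point requiring genuine (though short) justification, and it is exactly the matrix analogue of the nonvanishing of $c\uptheta+d$ used in Theorem \ref{triisomorphism}. Granting invertibility of $C\Uptheta+D$, and using the inverse-block identities to simplify $M^{-1}\left(\begin{smallmatrix}\Uptheta'\\ I_s\end{smallmatrix}\right)$, one reads off that $\left(\begin{smallmatrix}\Uptheta\\ I_s\end{smallmatrix}\right)(C\Uptheta+D)^{-1}=M^{-1}\left(\begin{smallmatrix}\Uptheta'\\ I_s\end{smallmatrix}\right)$, i.e.\ $M^{-1}\left(\begin{smallmatrix}\Uptheta'\\ I_s\end{smallmatrix}\right)=\left(\begin{smallmatrix}\Uptheta\\ I_s\end{smallmatrix}\right)(C'\Uptheta'+D')$ with $(C'\Uptheta'+D')=(C\Uptheta+D)^{-1}$, establishing $\Uptheta'\Bumpeq_{r,s}\Uptheta$. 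This completes the three axioms and hence the theorem.
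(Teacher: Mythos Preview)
Your proposal is correct and follows essentially the same approach as the paper: verify reflexivity, transitivity, and symmetry by direct block-matrix computation using \eqref{blockprodform} and the inverse-block identities. The only minor difference is in the symmetry step: you establish invertibility of $C\Uptheta+D$ via a rank argument and then read off $(C'\Uptheta'+D')=(C\Uptheta+D)^{-1}$ from the bottom block, whereas the paper computes directly that $(C'\Uptheta'+D')(C\Uptheta+D)=I_{s}$ using $\Uptheta'(C\Uptheta+D)=A\Uptheta+B$ together with the identities $C'A+D'C=O_{s,r}$ and $C'B+D'D=I_{s}$, obtaining invertibility and the inverse simultaneously. Both routes are short and valid; the paper's is slightly more economical since it avoids the separate rank discussion.
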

 
 \begin{proof} We begin with transitivity.  Suppose that $\Uptheta\Bumpeq_{r,s} \Uptheta'$ and $\Uptheta'\Bumpeq_{r,s} \Uptheta''$
 by matrices $M, M'$.  Then $\Uptheta' (C \Uptheta + D)= (A\Uptheta  + B )$ and 
 $\Uptheta'' (C' \Uptheta' + D')= (A'\Uptheta'  + B' )$.  We want to show that $\Uptheta\Bumpeq_{r,s} \Uptheta''$ via $M'M$.
 By the product formula (\ref{blockprodform}), this amounts to showing
 \[   \Uptheta'' \bigg(  \big(C'A+D'C\big)\Uptheta +  \big(C'B+D'D\big) \bigg) =   \big(A'A+B'C\big)\Uptheta +  \big(A'B+B'D\big).\]
 However this follows from:
 \begin{align*}
  \Uptheta'' \bigg(  \big(C'A+D'C\big)\Uptheta +  \big(C'B+D'D\big) \bigg)  &=  \Uptheta'' \bigg(  C'\big(A\Uptheta+B\big) +  D'\big(C\Uptheta+D\big) \bigg)  \\
  & = \Uptheta''\big(C'\Uptheta' +D'\big)\big(C\Uptheta +D\big) \\
  &= \big(A'\Uptheta' + B'\big)\big(C\Uptheta + D\big) \\
  & = A'\big(A\Uptheta +B\big) + B'\big(C\Uptheta +D\big) \\
  & = \big(A'A+B'C\big)\Uptheta +  \big(A'B+B'D\big).
 \end{align*}
 As for symmetry: suppose that $\Uptheta\Bumpeq_{r,s} \Uptheta'$ by $M$, understood in the sense of
  (\ref{matrixversionofequiv}).  Let $M'=M^{-1}$; we want to show that 
  \[ M' \left(\begin{array}{c}
   \Uptheta' \\
   I_{s} \\
 \end{array}\right)  =
 \left(\begin{array}{c}
   \Uptheta \\
   I_{s} \\
 \end{array}\right) \big(C'\Uptheta' + D'\big).  \]
 Multiplying both sides of (\ref{matrixversionofequiv}) by $M'$ gives 
 \[  \left(\begin{array}{c}
   \Uptheta \\
   I_{s} \\
 \end{array}\right) = M'  \left(\begin{array}{c}
   \Uptheta' \\
   I_{s} \\
 \end{array}\right) \big(C\Uptheta + D\big).\]
 Therefore we must show that $(C\Uptheta + D)(C'\Uptheta' + D')=I_{s}$: in fact, it suffices
 to show that $(C'\Uptheta' + D')(C\Uptheta + D)=I_{s}$.  This follows from:
 \begin{align*}
 (C'\Uptheta' + D')(C\Uptheta + D) = & C'(A\Uptheta+B)+D'(C\Uptheta +D) \\
 =& (C'A+D'C)\Uptheta + (C'B+D'D) \\
 = & O_{s,r}\Uptheta  + I_{s} = I_{s}.
 \end{align*}
 \end{proof}
 
  The matrix $M$ implying $\Uptheta\Bumpeq_{r,s}\Uptheta'$ is clearly a projective invariant;
 by abuse of notation
 we will sometimes write $M(\Uptheta )=\Uptheta'$.  We have therefore produced a partially defined action of ${\rm PGL}_{r,s}(\Z)$ on
 $\tilde{\mathcal{M}}(\R)_{r,s}$ generalizing the projective linear action of ${\rm PGL}_{2}(\Z)$ on $\R$: $M$ acts on $\Uptheta$ provided
 there exists $\Uptheta'$ for which $\Uptheta\Bumpeq_{r,s}\Uptheta'$ via $M$.

 %\begin{lemm}  If $\Uptheta\not\in\tilde{\mathcal{Q}}(\R )$ then $C_{s\times r} \Uptheta + D_{s\times s}$ is invertible.
 %\end{lemm}
 %By the lemma we may write 
 %\[ \Uptheta'=M(\Uptheta ):=   (A_{r\times r}\Uptheta  + B_{r\times s}) (C_{s\times r} \Uptheta + D_{s\times s})^{-1}.\]
 Recall from \cite{Ge4} the group of vector numerator denominator pairs 
 \[  \bast\Z^{r,s}(\Uptheta)=\{ (\bast\boldsymbol m^{\perp}, \bast\boldsymbol m)| \;  \bast\boldsymbol m\in \bast\Z^{s}(\Uptheta) \}.\]
 
 \begin{theo}  Suppose that $M\in {\rm PGL}_{r,s}(\Z )$ acts on $\Uptheta\in\tilde{\mathcal{M}}(\R)_{r,s}$. Then the map
 \begin{align}\label{theinducedmap}  (\bast\boldsymbol m^{\perp}, \bast\boldsymbol m)\longmapsto  (\bast\boldsymbol n^{\perp}, \bast\boldsymbol n):=
 (A\bast\boldsymbol m^{\perp} + B\bast\boldsymbol m,
 C\bast\boldsymbol m^{\perp} + D\bast\boldsymbol m)
   \end{align}
   defines an isomorphism
   of groups of numerator denominator pairs
    $ \bast\Z^{r,s}(\Uptheta)\cong \bast\Z^{r,s}(M(\Uptheta))$.  In particular, 
   \[  (\bast\Z^{s})^{\upmu[\upiota]}_{\upnu}(\Uptheta)\cong
   (\bast\Z^{s})^{\upmu[\upiota]}_{\upnu}(M(\Uptheta)) \] for all $\upmu,\upnu,\upiota\in\bstar\PR\R_{\upvarepsilon}$.
 \end{theo}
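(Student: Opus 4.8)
The statement asserts that the affine-linear change of variables (\ref{theinducedmap}) induced by $M\in{\rm PGL}_{r,s}(\Z)$ is an isomorphism $\bast\Z^{r,s}(\Uptheta)\cong\bast\Z^{r,s}(M(\Uptheta))$ respecting the growth-decay tri-filtration. This is the matrix analogue of Theorem \ref{triisomorphism}, and the proof should follow that template, carried out with block matrices instead of $2\times 2$ scalar matrices. First I would verify that the map (\ref{theinducedmap}) is a well-defined group homomorphism into $\bast\Z^{r,s}(M(\Uptheta))$: linearity is immediate since $A,B,C,D$ have integer entries and the map is given by matrix multiplication on the pair $\left(\begin{smallmatrix}\bast\boldsymbol m^{\perp}\\ \bast\boldsymbol m\end{smallmatrix}\right)$; the content is that $(\bast\boldsymbol n^{\perp},\bast\boldsymbol n)$ really is a numerator-denominator pair for $\Uptheta':=M(\Uptheta)$. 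For this I would compute the error term $\upvarepsilon_{\Uptheta'}(\bast\boldsymbol n)=\Uptheta'\bast\boldsymbol n-\bast\boldsymbol n^{\perp}$ and, using the defining relation $(A\Uptheta+B)=\Uptheta'(C\Uptheta+D)$ from (\ref{matrixversionofequiv}), show it equals $\Uptheta'(C\Uptheta+D)\bast\boldsymbol m - (A\Uptheta+B)\bast\boldsymbol m + (\text{error contributions})$, which collapses — exactly as in the scalar computation in Theorem \ref{triisomorphism} — to $(C\Uptheta+D)^{-1}$ (or rather the appropriate matrix factor) applied to $\upvarepsilon_{\Uptheta}(\bast\boldsymbol m)$. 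Since $(C\Uptheta+D)$ is a fixed invertible standard real matrix, multiplication by its inverse preserves $\bast\R^{r}_{\upvarepsilon}$, so $\bast\boldsymbol n$ is indeed a diophantine approximation of $\Uptheta'$.

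Next I would establish bijectivity. This is cheap: symmetry of $\Bumpeq_{r,s}$ (already proved in the excerpt) means $M^{-1}$ implements $\Uptheta'\Bumpeq_{r,s}\Uptheta$, so $M^{-1}$ induces a map $\bast\Z^{r,s}(\Uptheta')\to\bast\Z^{r,s}(\Uptheta)$ by the analogous formula with $A',B',C',D'$; the block-product identities $A'A+B'C=I_r$, etc.\ (listed in the excerpt after (\ref{blockformofGL})) show the two maps are mutually inverse. Then comes the filtration-preservation claim. For the growth indices I would argue as in Lemma \ref{matrixdualgrowth}: since $\bast\boldsymbol n = C\bast\boldsymbol m^{\perp}+D\bast\boldsymbol m$ is an integer-matrix combination of $\bast\boldsymbol m$ and $\bast\boldsymbol m^{\perp}$, and conversely, and since by Lemma \ref{matrixdualgrowth} the house norm of $\bast\boldsymbol m^{\perp}$ is bounded by $s\,\overline{|\Uptheta|}\cdot\overline{|\bast\boldsymbol m|}$ (and vice versa via $M^{-1}$), we get $\langle\overline{|\bast\boldsymbol n|}\rangle=\langle\overline{|\bast\boldsymbol m|}\rangle$ in $\bstar\PR\R$ — more precisely the relevant inequalities $\overline{|\bast\boldsymbol m|}\cdot\upmu^{1/s}<\upiota \Leftrightarrow \overline{|\bast\boldsymbol n|}\cdot\upmu^{1/s}<\upiota$ hold up to a bounded factor, which is invisible in $\bstar\PR\R$. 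Hence $\bast\boldsymbol m\in(\bast\Z^s)^{\upmu[\upiota]}$ iff $\bast\boldsymbol n\in(\bast\Z^s)^{\upmu[\upiota]}$. For the decay index, the error-term identity $\upvarepsilon_{\Uptheta'}(\bast\boldsymbol n)=(C\Uptheta+D)^{-1}\upvarepsilon_{\Uptheta}(\bast\boldsymbol m)$ (obtained in the first step) shows $\langle\overline{|\upvarepsilon_{\Uptheta'}(\bast\boldsymbol n)|}\rangle=\langle\overline{|\upvarepsilon_{\Uptheta}(\bast\boldsymbol m)|}\rangle$ since $(C\Uptheta+D)^{-1}$ is a fixed invertible standard matrix, so $\upnu(\bast\boldsymbol n)\le\upnu^{1/r}\Leftrightarrow\upnu(\bast\boldsymbol m)\le\upnu^{1/r}$, giving $\bast\boldsymbol m\in(\bast\Z^s)_\upnu(\Uptheta)\Leftrightarrow\bast\boldsymbol n\in(\bast\Z^s)_\upnu(\Uptheta')$. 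Intersecting the growth and decay statements yields the claimed isomorphism $(\bast\Z^s)^{\upmu[\upiota]}_\upnu(\Uptheta)\cong(\bast\Z^s)^{\upmu[\upiota]}_\upnu(M(\Uptheta))$.

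\textbf{Main obstacle.} The one genuinely non-routine point is the error-term computation: in the scalar case the cofactor $c\uptheta+d$ is a number and commutes freely, whereas here $C\Uptheta+D$ is an $s\times s$ matrix acting on the right and $\Uptheta,\Uptheta'$ have incompatible dimensions $r\times s$, so one must be careful about the order of multiplications and about which of the four block identities ($A'A+B'C=I_r$, $C'B+D'D=I_s$, $A'B+B'D=O_{r,s}$, $C'A+D'C=O_{s,r}$) gets used where. I expect the cleanest route is to work throughout with the column vector $\left(\begin{smallmatrix}\Uptheta\\ I_s\end{smallmatrix}\right)$ and the pair $\left(\begin{smallmatrix}\bast\boldsymbol m^{\perp}\\ \bast\boldsymbol m\end{smallmatrix}\right)$, noting that $\left(\begin{smallmatrix}\bast\boldsymbol n^{\perp}\\ \bast\boldsymbol n\end{smallmatrix}\right)=M\left(\begin{smallmatrix}\bast\boldsymbol m^{\perp}\\ \bast\boldsymbol m\end{smallmatrix}\right)$ and that $\left(\begin{smallmatrix}\Uptheta'\\ I_s\end{smallmatrix}\right)(C\Uptheta+D)=M\left(\begin{smallmatrix}\Uptheta\\ I_s\end{smallmatrix}\right)$, and reading off both the approximation property and the error scaling from a single identity — this is essentially the remark that the proof is ``the same idea as the proof of Theorem \ref{triisomorphism}, implemented along place coordinates'' but now along block rows; everything else is bookkeeping with the house norm and the already-established Lemma \ref{matrixdualgrowth}.
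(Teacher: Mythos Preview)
Your plan is the paper's approach, only spelled out: the paper substitutes $\bast\boldsymbol m^{\perp}=\Uptheta\bast\boldsymbol m-\upvarepsilon(\bast\boldsymbol m)$, notes $\Uptheta'\bast\boldsymbol n\simeq\Uptheta'(C\Uptheta+D)\bast\boldsymbol m=(A\Uptheta+B)\bast\boldsymbol m\simeq\bast\boldsymbol n^{\perp}$, and explicitly leaves invertibility and the filtration statement to the reader; your outline supplies those pieces correctly via $M^{-1}$ and Lemma~\ref{matrixdualgrowth}.

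One correction, which your ``main obstacle'' paragraph half-anticipates: the error-term identity $\upvarepsilon_{\Uptheta'}(\bast\boldsymbol n)=(C\Uptheta+D)^{-1}\upvarepsilon_{\Uptheta}(\bast\boldsymbol m)$ is dimensionally impossible when $r\neq s$, since $C\Uptheta+D$ is $s\times s$ while $\upvarepsilon_{\Uptheta}(\bast\boldsymbol m)\in\bast\R^{r}$. Carrying out the substitution actually gives
\[
\Uptheta'\bast\boldsymbol n-\bast\boldsymbol n^{\perp}
=\Uptheta'\bigl((C\Uptheta+D)\bast\boldsymbol m-C\upvarepsilon(\bast\boldsymbol m)\bigr)-\bigl((A\Uptheta+B)\bast\boldsymbol m-A\upvarepsilon(\bast\boldsymbol m)\bigr)
=(A-\Uptheta'C)\,\upvarepsilon_{\Uptheta}(\bast\boldsymbol m),
\]
with $A-\Uptheta'C$ an $r\times r$ standard real matrix (in the scalar case $r=s=1$ this collapses to $\pm(c\uptheta+d)^{-1}$, which explains your guess). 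This does no damage to your argument: the factor is still a fixed standard matrix, so $\overline{|\upvarepsilon_{\Uptheta'}(\bast\boldsymbol n)|}$ and $\overline{|\upvarepsilon_{\Uptheta}(\bast\boldsymbol m)|}$ differ by a bounded multiple, and the reverse bound follows from the analogous identity for $M^{-1}$ with factor $A'-\Uptheta C'$. Hence $\upnu(\bast\boldsymbol n)=\upnu(\bast\boldsymbol m)$ in $\bstar\PR\R$ and the decay filtration is preserved exactly as you claim.
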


\begin{proof}  Suppose that $\Uptheta (\bast \boldsymbol m ) =\bast\boldsymbol m^{\perp} + \upvarepsilon (\bast\boldsymbol m )$.  Then
we may write
\[ (\bast\boldsymbol n^{\perp}, \bast\boldsymbol n) = 
(A(\Uptheta (\bast \boldsymbol m )- \upvarepsilon (\bast\boldsymbol m )) + B\bast\boldsymbol m,
 C(\Uptheta (\bast \boldsymbol m )- \upvarepsilon (\bast\boldsymbol m )) + D \bast\boldsymbol m) .\]
 Therefore, 
 \begin{align*} 
 \Uptheta ' \bast\boldsymbol n  & \simeq \Uptheta' (C \Uptheta + D)\bast\boldsymbol m \\
  & =  (A\Uptheta  + B)\bast\boldsymbol m \\
  & \simeq \bast\boldsymbol n^{\perp}.
 \end{align*} 
 We leave the proof that (\ref{theinducedmap}) is invertible and respects growth-decay parameters to the reader.
\end{proof}

Define the relation 
\[ \Uptheta {}_{\upmu}\!\owedge_{\upnu}\Uptheta'\] when the groups occurring in (\ref{KParith}) (with their dimensional normalizations
 of $\upmu$, $\upnu$) are nontrivial.  The extent to which elements of $\tilde{\mathcal{M}}(\R)$ are involved in approximate ideal factorizations can be delineated according to class e.g.\
 the elements of $\tilde{\mathcal{B}}(\R)$ are anti prime, the elements of $\tilde{\mathcal{W}}_{>s}(\R)_{r,s}$ are the omnidivisors etc.  
 We leave it to the reader to formulate the matrix analogue of the divisibility discussion found at the end of \S \ref{gdarith}. 
 
 \begin{coro}  Let $\Uptheta, \Uptheta'\in\tilde{\mathcal{M}}(\R)-\tilde{\mathcal{Q}}(\R)$ be real matrices.   If $M\in {\rm PGL}_{r,s}(\Z ), N\in  {\rm PGL}_{r',s'}(\Z )$ act on $\Uptheta, \Uptheta'$ then $\Uptheta {}_{\upmu}\!\!\owedge_{\upnu}\Uptheta'$ $\Leftrightarrow$
$M(\Uptheta) {}_{\upmu}\!\!\owedge_{\upnu}N(\Uptheta')$.
%\item  $\Uptheta {}_{\upmu}\!\!\owedge_{\upnu}\Uptheta'$ $\Leftrightarrow$ $\Uptheta^{T} {}_{\upmu}\!\!\owedge_{\upnu}\Uptheta'^{T}$.
 %\end{itemize}
 \end{coro}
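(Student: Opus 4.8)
The plan is to deduce this directly from the isomorphism theorem for the partial ${\rm PGL}_{r,s}(\Z)$-action proved immediately above, applied once to $M$ acting on $\Uptheta$ and once to $N$ acting on $\Uptheta'$; there is no genuinely new content, only some bookkeeping of the dimensional normalizations.

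First I would unwind the definition of the relation. By construction $\Uptheta\,{}_{\upmu}\!\owedge_{\upnu}\Uptheta'$ holds precisely when both source groups of the Kronecker-product pairing (\ref{KParith}) are nontrivial, i.e.\ when $(\bast\Z^{s})^{\upmu^{s}}_{\upnu^{r}}(\Uptheta)\neq 0$ and $(\bast\Z^{s'})^{\upnu^{s'}}_{\upmu^{r'}}(\Uptheta')\neq 0$, where $\Uptheta$ has size $r\times s$ and $\Uptheta'$ has size $r'\times s'$. Likewise $M(\Uptheta)\,{}_{\upmu}\!\owedge_{\upnu}N(\Uptheta')$ holds precisely when $(\bast\Z^{s})^{\upmu^{s}}_{\upnu^{r}}(M(\Uptheta))\neq 0$ and $(\bast\Z^{s'})^{\upnu^{s'}}_{\upmu^{r'}}(N(\Uptheta'))\neq 0$. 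The key observation is that $M(\Uptheta)$ again has size $r\times s$ and $N(\Uptheta')$ again has size $r'\times s'$ (the block dimensions of $M$, $N$ are not changed by the action), so the powers $\upmu^{s},\upnu^{r}$ resp.\ $\upnu^{s'},\upmu^{r'}$ attached to the ideological indices are literally the same on the two sides of the asserted biconditional.

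Second, since by hypothesis $M$ acts on $\Uptheta$, the isomorphism theorem above applies and gives $(\bast\Z^{s})^{\upmu'[\upiota]}_{\upnu'}(\Uptheta)\cong(\bast\Z^{s})^{\upmu'[\upiota]}_{\upnu'}(M(\Uptheta))$ for every choice of $\upmu',\upnu',\upiota\in\bstar\PR\R_{\upvarepsilon}$. Specializing to $\upmu'=\upmu^{s}$, $\upnu'=\upnu^{r}$ and forgetting the fine index, we get that $(\bast\Z^{s})^{\upmu^{s}}_{\upnu^{r}}(\Uptheta)$ is trivial if and only if $(\bast\Z^{s})^{\upmu^{s}}_{\upnu^{r}}(M(\Uptheta))$ is. Running the identical argument with $N$ acting on $\Uptheta'$ (indices $\upmu'=\upnu^{s'}$, $\upnu'=\upmu^{r'}$) yields the corresponding statement for $\Uptheta'$. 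Conjoining the two equivalences produces $\Uptheta\,{}_{\upmu}\!\owedge_{\upnu}\Uptheta'\iff M(\Uptheta)\,{}_{\upmu}\!\owedge_{\upnu}N(\Uptheta')$.

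I do not expect a real obstacle here: the only points requiring care are (i) confirming that $\upmu,\upnu$ enter the two approximation groups with exactly the exponents $s,r$ resp.\ $s',r'$ displayed in (\ref{KParith}), and that these are invariant under replacing $\Uptheta$ by $M(\Uptheta)$ since the matrix size is preserved, and (ii) noting that the hypothesis ``$M$, $N$ act on $\Uptheta$, $\Uptheta'$'' is precisely the partial-action condition needed to invoke the isomorphism theorem. One could additionally remark that $M(\Uptheta)$ remains in $\tilde{\mathcal{M}}(\R)-\tilde{\mathcal{Q}}(\R)$ because the same isomorphism also identifies $(\bast\Z^{s})_{-\infty}(\Uptheta)$ with $(\bast\Z^{s})_{-\infty}(M(\Uptheta))$, but this is not actually required for the biconditional, since both of its sides are expressed purely through nonvanishing of groups that the theorem shows to be isomorphic.
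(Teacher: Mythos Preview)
Your proposal is correct and is exactly the argument the paper has in mind: the corollary is stated without proof, as an immediate consequence of the preceding isomorphism theorem for the partial ${\rm PGL}_{r,s}(\Z)$-action, and your bookkeeping of the dimensional normalizations (that $M(\Uptheta)$ retains size $r\times s$, so the exponents $\upmu^{s},\upnu^{r}$ are unchanged) together with two applications of that theorem is precisely how one unpacks it.
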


 We end this section with a few notes regarding the possible further development of the matrix theory along the lines of the scalar theory:
 
 \begin{enumerate}
 \item A theory of flat matrix arithmetic would appear to be, for the moment, out of reach since we do not have yet at our disposable a
 matrix continued fraction algorithm
yielding a good notion of best approximations (in the sense of satisfying the matrix analogue of Dirichlet's Theorem), see for example \cite{Schw}. 
%In view of the pleasing way in which ${\rm PGL}_{r,s}(\Z)$ assumes the role of ${\rm PGL}_{2}(\Z)$ in the matrix theory, one may
%speculate that there ought to be a theory of matrix continued fractions in which the elements of ${\rm PGL}_{r,s}(\Z)$ provide
%the transition matrices in the calculation of higher dimensional best approximations.  We will not, however, pursue this important issue in the present work. 
\item One may define on $\bast \Z^{s}(\Uptheta )$ the Minkowskian norm $\boldsymbol[ \bast \boldsymbol m\boldsymbol]_{\Uptheta}:= \overline{|\bast \boldsymbol m |}^{s}\cdot \overline{|\upvarepsilon(\bast \boldsymbol m ) |}^{r}$
as well as the set of symmetric diophantine approximations
\[ (\bast\Z^{s})^{\rm sym}(\Uptheta )=\{\bast\boldsymbol m \in\bast \Z^{s}(\Uptheta )|\;  0<\boldsymbol[ \bast \boldsymbol m\boldsymbol]_{\Uptheta}< \infty\}\cup \{ 0\}.\] 
%equipped with the ideology structure $(\bast\Z^{s})^{\rm sym}(\Uptheta )=\{ (\bast\Z^{s})^{\rm sym}_{\upnu}(\Uptheta )\}$.
For $\Uptheta\in\tilde{\mathcal{B}}(\R)$, $(\bast\Z^{s})^{\rm sym}(\Uptheta )$ is non trivial;  again, due to the lack of a good notion of
matrix best approximations, we can only conjecture that
for general $\Uptheta\in\tilde{\mathcal{M}}(\R)$, $(\bast\Z^{s})^{\rm sym}(\Uptheta )\not=0$.
\end{enumerate}

%Now define on $\bast \Z^{s}(\Uptheta )$ 
%\[\boldsymbol[ \bast \boldsymbol m, \bast \boldsymbol n\boldsymbol]_{\Uptheta} := 
%\frac{1}{2}\left( \big(\sum \bast \boldsymbol m_{i}\big)\cdot\big(\sum \upvarepsilon(\bast \boldsymbol n)_{j}\big) + 
%\big(\sum \bast \boldsymbol n_{i}\big)\cdot\big(\sum \upvarepsilon(\bast \boldsymbol m)_{j}\big)\right).\]
%Notice that $|\boldsymbol[ \bast \boldsymbol m, \bast \boldsymbol m\boldsymbol]_{\Uptheta} |\leq 
%\[\boldsymbol[ \bast \boldsymbol m\boldsymbol]_{\Uptheta}:= \overline{|\bast \boldsymbol m |}^{s}\cdot \overline{|\upvarepsilon(\bast \boldsymbol m ) |}^{r}.\]
%Define 
%\[ (\bast\Z^{s})^{\rm sym}(\Uptheta )=\{\bast\boldsymbol m \in\bast \Z^{s}(\Uptheta )|\;  0<\boldsymbol[ \bast \boldsymbol m\boldsymbol]_{\Uptheta}< \infty\}%\cup \{ 0\},\] equipped with the ideology structure $(\bast\Z^{s})^{\rm sym}(\Uptheta )=\{ (\bast\Z^{s})^{\rm sym}_{\upnu}(\Uptheta )\}$.
%For $\Uptheta\in\tilde{\mathcal{B}}(\R)$, $(\bast\Z^{s})^{\rm sym}(\Uptheta )$ is non trivial.

\section{Approximate Ideal Arithmetic of $\mathcal{O}$-Approximation Groups}\label{KIdeologicalArith}

 Let $K/\Q$ be a finite extension of degree $d$ with ring of integers $\mathcal{O}$.  Since $K$ possesses $d$ places $\uptau:K\hookrightarrow \C$
 we index the coordinates of $ \boldsymbol z =(z_{\uptau})\in\C^{d}$ using the places of $K$.  Let
\[ \K:=\{ \boldsymbol z =(z_{\uptau})\in\C^{d}|\; \bar{z}_{\uptau} = z_{\bar{\uptau}} \} \cong\R^{r}\times\C^{s}\cong\R^{d}\] be the Minkowski space: the archimedean part of the $K$-adeles, a finite-dimensional $\R$-algebra. 
 $\K\subset\C^{d}$ receives the restriction of the hermitian metric on $\C^{d}$, and
we regard $\R\subset\K$ via the diagonal embedding.  If $K/\Q$ is Galois then the Galois group ${\rm Gal}(K/\Q)$ acts on $\K$ via
$\upsigma (\boldsymbol z )=(z_{\upsigma\cdot \uptau})$, where $ \upsigma\cdot \uptau := \uptau\circ \upsigma^{-1}$ for $\upsigma\in {\rm Gal}(K/\Q)$.  In particular, ${\rm Gal}(K/\Q)$ acts via hermitian isometries on $\K$ since its action is by coordinate permutation; moreover, it acts trivially on $\R=$ the Minkowski space of $\Q$.  Therefore, the induced action on $\bast\K$ is bicontinuous.
Denote by ${\rm N}:\bast \K\rightarrow\bast\R$ the norm map: $\bast \boldsymbol z = (\bast z_{i})\mapsto {\rm N}(\bast \boldsymbol z)=\bast z_{1}\cdots \bast z_{d}$.

\vspace{3mm}

\noindent  {\em $K$-Tropical Semi-ring}

\vspace{3mm}

\noindent Consider the ring $\bast\K_{\rm fin}\subset\bast\K$ of elements all of whose coordinates are bounded.  The group of units is the subgroup $\bast\K^{\times}_{\rm fin}$ of elements all of whose coordinates are
non-infinitesimal and non-infinite.  The multiplicative quotient
\[     \bstar\PR\K  =\bast\K/ \bast\K^{\times}_{\rm fin} \] is partially ordered
and directed along the coordinates.  We will denote elements of $ \bstar\PR\K$ by $\boldsymbol \upmu$.
There is a diagonal inclusion $\bstar\PR\R\hookrightarrow \bstar\PR\K$; the image of $\upmu$ will be denoted $\upmu$ (not bold).

As in the case of $K=\Q$, $\bstar\PR\K$ has the structure
of a tropical semi-ring with respect to the induced product, and the sum defined
\[  \boldsymbol \upmu + \boldsymbol \upmu' = (\upmu_{1}+\upmu'_{1},\dots ,\upmu_{d}+\upmu_{d}' ). \] 
Note that $ \boldsymbol \upmu + \boldsymbol \upmu'$ is the least element greater than or equal to $ \boldsymbol \upmu, \boldsymbol \upmu'$.
The neutral element for $+$ is $-\boldsymbol \infty = (-\infty,\dots ,-\infty )$.
Elements of $\bast K$ act by multiplication on the left of $\bstar\PR\K$, respecting in the style
of Proposition \ref{troprop}  the tropical structure.  The norm (product of coordinates) map induces 
a multiplicative map ${\rm N}:  \bstar\PR\K\rightarrow \bstar\PR\R$.

Note that $\bstar\PR\C := \bast\C /\bast\C_{\rm fin}^{\times}$ is isomorphic to $\bstar\PR\R$ since 
$\bast\C_{\rm fin}^{\times}\supset \SI^{1}$.  Thus the non archimedean valuation $\langle\cdot\rangle:\bast\R \rightarrow \bstar\PR\R$,
extended in the obvious way to $\bast\C$, takes values in $\bstar\PR\R$.  We have
\[ \bstar\PR\K\cong (\bstar\PR\R )^{r}\times   (\bstar\PR\C )^{s}\cong \bstar\PR\R^{r+s},\]
so that every element $\boldsymbol \upmu$ may be written in the reduced coordinate form $(\upmu_{1},\dots ,\upmu_{r+s})$.

Let $\langle \cdot \rangle : \bast\K\rightarrow \bstar\PR\K$ be the canonical projection.  Let $\bast\K_{\upvarepsilon}\subset \bast\K_{\rm fin}$ be the (neither maximal nor prime) ideal of elements all of whose coordinates are infinitesimal and
denote $\bstar\PR\K_{\upvarepsilon}  =\bast\K_{\upvarepsilon}/ \bast\K^{\times}_{\rm fin}$.
Define also
%We will also denote by $\bstar\PR\K_{\infty}$ the multiplicatively closed set which is the image of the the set of elements of $\bast\K$ all %of whose coordinates
%are unbounded.  
\[ \bstar\PR\K_{\rm N(\upvarepsilon)}=\{ \boldsymbol \upmu \in  \bstar\PR\K |\; {\rm N}( \boldsymbol \upmu ) \in \bstar\PR\R_{\upvarepsilon}\}\supset \bstar\PR\K_{\upvarepsilon} .\]  While $\bstar\PR\K_{\rm N(\upvarepsilon)}$ is not closed with respect to $+$, 
it is downwardly closed with respect to the partial order ($\boldsymbol \upmu\in \bstar\PR\K_{\rm N(\upvarepsilon)}$ and $\boldsymbol \upmu'<\boldsymbol \upmu$ implies $\boldsymbol \upmu'\in  \bstar\PR\K_{{\rm N}(\upvarepsilon)}$) and moreover, each interval
$[-\boldsymbol\infty,\boldsymbol \upmu ] = \{ \boldsymbol \upmu'|\;  \boldsymbol \upmu'\leq\boldsymbol\upmu\}\subset \bstar\PR\K_{\rm N(\upvarepsilon)}$ is a sub tropical semi ring (w/o unit).
Note that if we define 
\[\bstar\PR\K_{{\rm Tr}(\boldsymbol\upvarepsilon)}=\{ \boldsymbol\upmu \in  \bstar\PR\K  |\;
{\rm Tr}(\boldsymbol\upmu ):= \upmu_{1}+ \cdots +\upmu_{r+s}=\max ( \upmu_{i})\in\bstar\PR\R_{\upvarepsilon}\},\] then 
$ \bstar\PR\K_{{\rm Tr}(\boldsymbol\upvarepsilon)}= \bstar\PR\K_{\upvarepsilon}$.
 
Let $\langle\langle\cdot \rangle\rangle:\bstar\PR\K\rightarrow \bstar\PR\R$ be the map
induced by composing the euclidean norm $\|\cdot \|$ on $\bast\K$ (valued in $\bast\R_{+}$)
with $\langle\cdot\rangle$: in other words,
$\langle\langle\cdot\rangle\rangle = \langle \|\cdot \| \rangle$.  In particular, If $\boldsymbol\upmu$ is represented
by $(\bast x_{1},\dots ,\bast x_{r+s})$, we have
\begin{align*} 
\langle\langle\boldsymbol\upmu\rangle\rangle & =\left\langle \sqrt{ \bast x^{2}_{1}+\dots + \bast x^{2}_{r+s} }\right\rangle 
  = \sqrt{\langle\bast x^{2}_{1}+\dots + \bast x^{2}_{r+s} \rangle} \\
 & = \sqrt{ \max (\upmu_{i}^{2}) } 
 =  \upmu_{1}+ \cdots + \upmu_{r+s} =  
{\rm Tr}(\boldsymbol\upmu ).
\end{align*}
%In this sense, the $L^{2}$ norm becomes an $L^{1}$ norm on $\bstar\PR\K$. 

\vspace{3mm}

\noindent  {\em $K$-approximate ideals}

\vspace{3mm}

Let $\boldsymbol z\in\K$.  Define the group of {\bf  $\tv{O}$-diophantine approximations} as
\[ \bast\mathcal{O}(\boldsymbol z) = \{\bast\upalpha\in\bast\mathcal{O} |\; \exists \bast\upalpha^{\perp} \text{ s.t. } \bast\upalpha\boldsymbol z-\bast\upalpha^{\perp}
\in  \bstar\PR\K_{\upvarepsilon}
\}.\]  This group was first introduced in \cite{Ge4}.

 \noindent For each ${\boldsymbol \upmu}\in \bstar\PR\K_{{\rm N}(\upvarepsilon)} $ define 
 \[ \bast\mathcal{O}^{\boldsymbol \upmu } = \{\bast\upalpha\in\bast\mathcal{O}|\; \bast\upalpha\cdot \boldsymbol \upmu \in \bstar\PR\K_{\upvarepsilon}\} \subset \bast\mathcal{O} .\]
If
$\boldsymbol \upmu <\boldsymbol \upmu'$ then $\bast\mathcal{O}^{\boldsymbol \upmu }\supset \bast\mathcal{O}^{\boldsymbol \upmu '}$ however if $\boldsymbol \upmu$ and $\boldsymbol \upmu'$ are unrelated by the order, the associated groups are unrelated by inclusion.
The fine growth subfiltration $\bast\mathcal{O}^{\boldsymbol \upmu [\boldsymbol \upiota] }$ is defined by 
$\bast\upalpha\cdot\boldsymbol\upmu <\boldsymbol\upiota$ where $\boldsymbol\upiota\in\bstar\PR\K_{\upvarepsilon}$, a subgroup of $\bast\mathcal{O}^{\boldsymbol \upmu }$.  
%The filtered group 

The use of elements ${\boldsymbol \upmu}\in  \bstar\PR\K_{\rm N(\upvarepsilon)}$ to index growth is required as there
exist nonstandard integers $\bast \upalpha\in\bast\mathcal{O}$ having $\K$-coordinates $(\bast\upalpha_{1},\dots ,\bast\upalpha_{d})$
exhibiting inhomogeneous growth.  For example, if $\upalpha$ is a Salem number, 
%(see \cite{Me} for definitions), 
the class of the sequence $\{ \upalpha^{i}\}$
has a bounded coordinate lying on $\bast \SI^{1}$, an infinitesimal coordinate and an infinite coordinate.

Now given $\boldsymbol z\in\K$ and $\boldsymbol \upnu\in \bstar\PR\K_{\upvarepsilon}$  
we define $\bast\mathcal{O}_{\boldsymbol \upnu }(\boldsymbol z )\subset \bast\mathcal{O}(\boldsymbol z )$ as the subgroup of 
$\bast\upalpha$ for which 
$\langle\boldsymbol\upvarepsilon (\bast\upalpha )\rangle \leq \boldsymbol\upnu$, where
$\boldsymbol\upvarepsilon (\bast\upalpha )  = \bast\upalpha\boldsymbol z -\bast\upalpha^{\perp}$.  We have
\[ \langle\boldsymbol\upvarepsilon (\bast\upalpha + \bast\upalpha' )\rangle \leq 
\langle\boldsymbol\upvarepsilon (\bast\upalpha )\rangle + \langle\boldsymbol\upvarepsilon (\bast\upalpha ' )\rangle \leq \boldsymbol \upnu
  \]
so $\bast\mathcal{O}_{\boldsymbol \upnu }(\boldsymbol z)$ is a group.
Here we note that we can extend the definition of $\bast\mathcal{O}_{\boldsymbol \upnu }(\boldsymbol z)$ 
to indices $\boldsymbol \upnu\in \bstar\PR\K_{{\rm N}(\upvarepsilon )}$, also obtaining a group, since $[-\boldsymbol\infty, \boldsymbol\upnu )$ is a sub tropical 
semi-ring. 
%by definition of $\mathcal{O}$-diophantine approximation (see the remarks contained in \S \ref{daringint}).  
%However, the definition of $\bast\mathcal{O}_{\boldsymbol \upnu }(\boldsymbol z )$ still makes sense using a bound $\boldsymbol\upnu$ %having inhomogeneous decay, which we preserve in order to define the growth-decay
%product.
We have thus defined the $K$-approximate ideal structure 
\[ \bast\mathcal{O}({\boldsymbol z})=\Big\{ \bast\mathcal{O}^{\boldsymbol \upmu[\boldsymbol\upiota]}_{\boldsymbol \upnu}({\boldsymbol z})\Big\},
\quad  \bast\mathcal{O}^{\boldsymbol \upmu[\boldsymbol\upiota]}_{\boldsymbol \upnu}({\boldsymbol z})=
\bast\mathcal{O}^{\boldsymbol \upmu[\boldsymbol\upiota]}\cap\bast\mathcal{O}_{\boldsymbol \upnu}({\boldsymbol z})\]
for $\boldsymbol z\in\K$ and $\boldsymbol \upmu,\boldsymbol \upnu,\boldsymbol \upiota\in \bstar\PR\K_{{\rm N}(\upvarepsilon )}$. 
In particular, $\bast\mathcal{O}({\boldsymbol z})$ is a $\bast\mathcal{O}$-approximate module, and we may speak of approximate module homomorphisms
between such $K$-approximate ideals.
 Note the compatibility 
$\bast\Z_{\upnu}^{\upmu}(\uptheta)\subset\bast\mathcal{O}_{ \upnu}^{\upmu}(\uptheta)$.
%where $\boldsymbol \upmu = (\upmu,\dots ,\upmu )$ and $\boldsymbol \upnu = (\upnu,\dots ,\upnu )$.  
%The filtered group $\bast\mathcal{O}({\boldsymbol z}) = \{\bast\mathcal{O}^{\boldsymbol \upmu[\boldsymbol\upiota]}_{\boldsymbol \upnu}({\boldsymbol z})\}$
%has the structure of a $K$-ideology in a sense analogous to the definition at the beginning of \S \ref{gdarith}.

\vspace{3mm}

\noindent  {\em $K$-Nonvanishing Spectra}

\vspace{3mm}

\noindent The $K$-nonvanishing spectrum is 
\[ {\rm Spec}_{K}(\boldsymbol z )=\{ (\boldsymbol\upmu,\boldsymbol\upnu)|\; \bast\mathcal{O}^{\boldsymbol \upmu}_{\boldsymbol \upnu}({\boldsymbol z})\not=\boldsymbol 0  \}\subset \bstar\PR\K_{{\rm N}(\upvarepsilon)}^{2}.\] 
%In this setting we have the possibility of diophantine approximations with inhomogeneous growth indices e.g. belonging
% to $ \bstar\PR\K_{\rm N(\upvarepsilon)} - \bstar\PR\K_{\upvarepsilon}$. 
 %We say a few words about the nature of ${\rm Spec}_{K}(\boldsymbol z )$. 
 Note first that trivially
\begin{prop}\label{triviallynonvacO}   Let $\uptheta\in\R$.  If $\boldsymbol \upmu <\boldsymbol \upnu\in\bstar\PR\K_{\upvarepsilon}$  then 
$\bast\mathcal{O}_{\boldsymbol \upnu}^{\boldsymbol \upmu}(\uptheta)\not=0$.  If $\boldsymbol z = \upgamma\in K$ (diagonally embedded
in $\K$) then $ {\rm Spec}_{K}(\upgamma )=\bstar\PR\K_{{\rm N}(\upvarepsilon)}^{2}$.
\end{prop}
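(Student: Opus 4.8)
The plan is to prove the two assertions separately, both essentially by reduction to the already-established rational-case results together with the product (trace) structure of $\bstar\PR\K$. For the first assertion, I would start from the diagonal inclusion $\bstar\PR\R\hookrightarrow\bstar\PR\K$ and the observation recorded at the end of the section that $\bast\Z^{\upmu}_{\upnu}(\uptheta)\subset\bast\mathcal{O}^{\upmu}_{\upnu}(\uptheta)$. Given $\boldsymbol\upmu<\boldsymbol\upnu$ in $\bstar\PR\K_{\upvarepsilon}$, the point is that I do not need a diophantine approximation whose growth index matches $\boldsymbol\upmu$ coordinate-by-coordinate; I only need an honest element of $\bast\mathcal{O}(\uptheta)$ — equivalently, by the compatibility above, I can take one of the form $\bast n\cdot 1$ with $\bast n\in\bast\Z(\uptheta)$, whose $\K$-coordinates all grow at the same rate. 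First I would choose a rational index $\upmu_0\in\bstar\PR\R_{\upvarepsilon}$ small enough that its diagonal image in $\bstar\PR\K$ satisfies $\upmu_0<\boldsymbol\upmu$ coordinatewise (possible since $\boldsymbol\upmu\in\bstar\PR\K_{\upvarepsilon}$, so every coordinate is an infinitesimal class, and by density one can undercut the minimum of the coordinates). Then apply Theorem \ref{gennonvan} or simply the Uniform Dirichlet Theorem in $\Q$ to produce $0\neq\bast n\in\bast\Z^{\upmu_0}_{\upnu_0}(\uptheta)$ for a suitable small $\upnu_0$; since all $\K$-coordinates of $\bast n$ are equal to $\bast n$, we get $\bast n\cdot\boldsymbol\upmu\in\bstar\PR\K_{\upvarepsilon}$ and $\langle\boldsymbol\upvarepsilon(\bast n)\rangle\leq\boldsymbol\upnu$ provided $\upnu_0$ is chosen $<\min_i(\nu_i)$, which again uses density of $\bstar\PR\R_{\upvarepsilon}$ and the hypothesis $\boldsymbol\upnu\in\bstar\PR\K_{\upvarepsilon}$. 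Hence $\bast\mathcal{O}^{\boldsymbol\upmu}_{\boldsymbol\upnu}(\uptheta)\neq 0$.

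For the second assertion, let $\boldsymbol z=\upgamma\in K$ be diagonally embedded, say $\upgamma=\upalpha/\upbeta$ with $\upalpha,\upbeta\in\mathcal{O}$, $\upbeta\neq 0$. The key structural fact is the analogue of the rational statement ``$\bast\Z(\uptheta)=\bast(b)$ when $\uptheta=a/b$'': here $\bast\mathcal{O}(\upgamma)$ contains the principal ideal $\bast(\upbeta)=\upbeta\bast\mathcal{O}$, since for $\bast\upalpha'=\upbeta\bast\xi$ one has $\bast\upalpha'\upgamma=\upalpha\bast\xi\in\bast\mathcal{O}$, so $\boldsymbol\upvarepsilon(\bast\upalpha')=0$. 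Thus for any $\boldsymbol\upnu\in\bstar\PR\K_{{\rm N}(\upvarepsilon)}$ we have $\bast\mathcal{O}_{\boldsymbol\upnu}(\upgamma)\supset\bast(\upbeta)$, because the decay constraint $\langle\boldsymbol\upvarepsilon(\cdot)\rangle\leq\boldsymbol\upnu$ is vacuous on elements with zero error (recall $\boldsymbol\upnu$ need not be $-\boldsymbol\infty$, so $-\boldsymbol\infty\leq\boldsymbol\upnu$ holds). It then remains to see that $\bast(\upbeta)\cap\bast\mathcal{O}^{\boldsymbol\upmu}\neq 0$ for every $\boldsymbol\upmu\in\bstar\PR\K_{{\rm N}(\upvarepsilon)}$, i.e.\ that one can find $\bast\xi\in\bast\mathcal{O}$, nonzero, with $\upbeta\bast\xi\cdot\boldsymbol\upmu\in\bstar\PR\K_{\upvarepsilon}$. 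Here I would take $\bast\xi$ of the form $\bast n\cdot 1$ with $\bast n\in\bast\Z_+-\Z_+$ growing fast enough: since $\boldsymbol\upmu\in\bstar\PR\K_{{\rm N}(\upvarepsilon)}$ means ${\rm N}(\boldsymbol\upmu)\in\bstar\PR\R_{\upvarepsilon}$, and the coordinates of $\upbeta$ are fixed nonzero reals (so contribute a bounded factor), one picks $\bast n$ with $\langle\bast n^{-1}\rangle$ small enough that every coordinate of $\bast n\upbeta\cdot\boldsymbol\upmu$ is infinitesimal; such $\bast n$ exists because each coordinate $\mu_i$ of $\boldsymbol\upmu$ lies in $\bstar\PR\R_{\upvarepsilon}\cup\{1\}$ and, even if some $\mu_i=1$, multiplying by a sufficiently large $\bast n$ keeps that coordinate... no — wait: if $\mu_i=1$ then $\bast n\upbeta_i\cdot 1$ is infinite, not infinitesimal. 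So I must instead use that $\boldsymbol\upmu\in\bstar\PR\K_{{\rm N}(\upvarepsilon)}$ only constrains the product of coordinates, and handle the genuinely inhomogeneous case by choosing $\bast\xi\in\bast\mathcal{O}$ — not a diagonal integer — whose coordinates grow inhomogeneously to match, e.g.\ using a power of a unit or a Salem-type element as flagged in the text; alternatively one observes that it suffices to exhibit \emph{one} nonzero element and the ideal $\bast(\upbeta)$ already contains elements of arbitrarily inhomogeneous growth because $\bast\mathcal{O}$ does.

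Assembling these, $\bast\mathcal{O}^{\boldsymbol\upmu}_{\boldsymbol\upnu}(\upgamma)\supset\bast(\upbeta)\cap\bast\mathcal{O}^{\boldsymbol\upmu}\neq 0$ for all $(\boldsymbol\upmu,\boldsymbol\upnu)\in\bstar\PR\K_{{\rm N}(\upvarepsilon)}^2$, which is exactly ${\rm Spec}_K(\upgamma)=\bstar\PR\K_{{\rm N}(\upvarepsilon)}^2$. The main obstacle, and the step deserving the most care, is the last one: producing a nonzero element of $\bast\mathcal{O}$ with prescribed inhomogeneous growth index $\boldsymbol\upmu$ lying in $\bstar\PR\K_{{\rm N}(\upvarepsilon)}$ but not necessarily in $\bstar\PR\K_{\upvarepsilon}$. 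Diagonal integers only realize indices on the diagonal copy of $\bstar\PR\R$, so one genuinely needs the richer supply of nonstandard integers in $\bast\mathcal{O}$ — the point being that $\bast\mathcal{O}$ is an order in $\K\cong\R^{r+s}$ of full rank, hence its nonstandard elements realize a cofinal-in-all-directions family of growth indices; I expect this to follow from an ``$\mathcal{O}$-version of Dirichlet'' or from a direct coordinatewise construction using a $\Z$-basis of $\mathcal{O}$, but it is the nontrivial input and should be isolated as a lemma. (In the rational case $K=\Q$ the issue evaporates, consistent with the stated compatibility $\bast\Z^{\upmu}_{\upnu}(\uptheta)\subset\bast\mathcal{O}^{\upmu}_{\upnu}(\uptheta)$.)
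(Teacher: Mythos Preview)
Your overall strategy for the first assertion --- reduce to the rational case via the diagonal inclusion $\bast\Z(\uptheta)\subset\bast\mathcal{O}(\uptheta)$ --- is exactly the paper's approach. But you have the inequality for the growth index backwards. You choose $\upmu_{0}$ with $\upmu_{0}<\boldsymbol\upmu$ coordinatewise; this gives $\bast\Z^{\upmu_{0}}\supset\bast\mathcal{O}^{\boldsymbol\upmu}$ (the growth filtration is order-\emph{reversing}, cf.\ (\ref{growthincl})), so $\bast n\in\bast\Z^{\upmu_{0}}$ does \emph{not} force $\bast n\cdot\boldsymbol\upmu\in\bstar\PR\K_{\upvarepsilon}$. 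Concretely, $|\bast n|\cdot\upmu_{0}$ infinitesimal with $\upmu_{0}<\mu_{i}$ says nothing about $|\bast n|\cdot\mu_{i}$. The paper instead picks diagonal $\upmu,\upnu\in\bstar\PR\R_{\upvarepsilon}$ with $\boldsymbol\upmu<\upmu<\upnu<\boldsymbol\upnu$ and uses the chain
\[
0\not=\bast\Z^{\upmu}_{\upnu}(\uptheta)\subset\bast\mathcal{O}^{\upmu}_{\upnu}(\uptheta)\subset\bast\mathcal{O}^{\boldsymbol\upmu}_{\boldsymbol\upnu}(\uptheta),
\]
the last inclusion coming from monotonicity in both indices. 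Once you flip the direction your argument becomes the paper's.

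For the second assertion the paper simply declares it ``obvious''. Your treatment is more careful and essentially correct: from $\upgamma=\upalpha/\upbeta$ one has $\bast(\upbeta)\subset\bast\mathcal{O}_{-\boldsymbol\infty}(\upgamma)\subset\bast\mathcal{O}_{\boldsymbol\upnu}(\upgamma)$ for every $\boldsymbol\upnu$, so the question reduces to $\bast(\upbeta)\cap\bast\mathcal{O}^{\boldsymbol\upmu}\not=0$ for every $\boldsymbol\upmu\in\bstar\PR\K_{{\rm N}(\upvarepsilon)}$. You are right that diagonal integers only realize diagonal growth indices, and that to hit a genuinely inhomogeneous $\boldsymbol\upmu$ one must use the full lattice $\mathcal{O}\subset\K$ (powers of units, Salem-type elements, or more generally a $\Z$-basis argument). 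This is indeed the substantive point behind the paper's ``obvious'', and isolating it as a lemma is the honest thing to do; the paper does not spell it out.
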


\begin{proof}Let   $\upmu <\upnu\in \bstar\PR\R_{\upvarepsilon}$ be such that 
$ \boldsymbol \upmu <\upmu<\upnu< \boldsymbol \upnu$ . 
Then 
$ 0\not=\bast\Z_{ \upnu}^{\upmu}(\uptheta)
\subset\bast\mathcal{O}_{\upnu}^{ \upmu}(\uptheta)\subset \bast\mathcal{O}_{\boldsymbol \upnu}^{\boldsymbol \upmu}(\uptheta)$.
The second statement is obvious.
\end{proof}

 Recall that $\uptheta\in\R$ is called a Pisot-Vijayaraghavan number
 if it is a real algebraic integer greater than $1$ for which all of its conjugates have absolute value $<1$.  In what follows,  we denote $P_{K,\uptheta}(X)=\prod (X-\uptheta_{\uptau})\in\Z[X]$ where the product is over the archimedean places $\uptau$ of the field $K$ and $\uptheta_{\uptau}:=\uptau(\uptheta )$.

 \begin{theo}\label{PVnumber}  Let $\uptheta\in\R$ be a Pisot-Vijayaraghavan number 
with $\uptheta\in \uptau (\mathcal{O})$ for some place $\uptau$ and for which $P_{K,\uptheta}(X)$ is the minimal polynomial of $\uptheta$.  Then there exists $\boldsymbol \upmu\in \bstar\PR\K_{\upvarepsilon}$ such that
$\bast\mathcal{O}_{\boldsymbol \upmu}^{\boldsymbol \upmu}(\uptheta )\not=0$.
\end{theo}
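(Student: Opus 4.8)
The plan is to produce an explicit diophantine approximation $\bast\upalpha\in\bast\mathcal O(\uptheta)$ built from powers of $\uptheta$, and then to compute its growth and decay coordinate-by-coordinate in $\bstar\PR\K$, showing both equal the same index $\boldsymbol\upmu\in\bstar\PR\K_\upvarepsilon$. Concretely, since $\uptheta\in\uptau(\mathcal O)$ and $P_{K,\uptheta}(X)$ is the minimal polynomial of $\uptheta$, the place $\uptau$ identifies $\uptheta$ with an algebraic integer generating $\mathcal O$ (up to the usual index considerations; I would phrase things so that $\Z[\uptheta]$ has finite index in $\mathcal O$ and this index contributes only bounded factors, hence is invisible after passing to $\bstar\PR\K$). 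Let $n=\deg P_{K,\uptheta}$ and write $P_{K,\uptheta}(X)=X^n - c_{n-1}X^{n-1}-\dots-c_0$. The idea is to take the sequence of integers $\{N_i\}$ where $N_i$ is (a denominator extracted from) the integer nearest to $\uptheta^{i}$ in the place $\uptau$, or more robustly to work directly with the sequence class $\bast\upalpha$ of the traces $\{\mathrm{Tr}_{K/\Q}(\uptheta^i)\}$ or of a companion-matrix power, so that $\bast\upalpha$ lies in $\bast\mathcal O$ and $\bast\upalpha^\perp$ is the corresponding algebraic integer sequence. The key point is the defining feature of a PV number: in the place $\uptau$, $\uptheta^i\to\infty$ like $\uptheta^i$, while in every other archimedean place $\uptau'$, $\uptheta_{\uptau'}^i\to 0$ geometrically since $|\uptheta_{\uptau'}|<1$.

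First I would fix the construction: let $\bast\upalpha$ be the class of the integer sequence $\{a_i\}$ where $a_i$ is the nearest rational integer to $\uptheta_\uptau^i$ when one clears the bounded denominator coming from $[\mathcal O:\Z[\uptheta]]$; equivalently use the recursion $a_{i+1}=c_{n-1}a_i+\dots+c_0 a_{i-n+1}$ with initial data the power sums, guaranteeing $a_i\in\Z$. Then the $\uptheta$-dual $\bast\upalpha^\perp$ (in the sense of the $\mathcal O$-approximation group) is built from the algebraic integers $\uptheta^i$ themselves, so that $\boldsymbol\upvarepsilon(\bast\upalpha)=\bast\upalpha\boldsymbol\uptheta-\bast\upalpha^\perp$ has as its $\uptau'$-coordinate (for $\uptau'\ne\uptau$) a quantity of size comparable to $|\uptheta_{\uptau'}|^i$ — infinitesimal — and as its $\uptau$-coordinate the rounding error, which is bounded (indeed $O(1)$), hence after dividing appropriately contributes the trivial class. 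The second step is the coordinate computation: $\upmu(\bast\upalpha)$ has $\uptau$-coordinate $\langle|\uptheta_\uptau|^{-i}\rangle$ and $\uptau'$-coordinate $1$ (since $a_i$ is bounded in those embeddings — wait, $a_i$ is a rational integer so it is the same in all embeddings; the correct statement is that the growth index $\boldsymbol\upmu=\langle|\bast\upalpha^{-1}|\rangle$ is the homogeneous class $\upmu=\langle\bast\upalpha^{-1}\rangle$ diagonally, of size $\uptheta_\uptau^{-i}$), whereas the decay index $\boldsymbol\upnu=\langle\boldsymbol\upvarepsilon(\bast\upalpha)\rangle$ has $\uptau'$-coordinates $\langle|\uptheta_{\uptau'}|^{i}\rangle$ and $\uptau$-coordinate bounded. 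I would then choose $\bast\upalpha$ (i.e.\ choose the subsequence $\{i\}$) and adjust which coordinates of $\boldsymbol z=\uptheta$ diagonally embedded we are approximating, so that the comparison works out to an \emph{equality} $\boldsymbol\upmu=\boldsymbol\upnu$ of the full $\bstar\PR\K_\upvarepsilon$-indices; this is where the precise bookkeeping of which coordinate of $\bstar\PR\K$ records growth of $a_i$ versus decay of $\uptheta_{\uptau'}^i$ matters, and it is the reason the hypothesis "$\uptheta$ is PV" and "$P_{K,\uptheta}$ is the minimal polynomial" are both used (the latter ensures the conjugate places genuinely appear among the embeddings of $K$, so the decay is spread over all of $\bstar\PR\K$ in a way matching the growth).

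\emph{The main obstacle} I anticipate is exactly this index-matching: a priori the growth of $\bast\upalpha$ in the $\uptau$-place is $|\uptheta_\uptau|^i$ and the decay in a conjugate place is $|\uptheta_{\uptau'}|^i$, and these are different real exponential rates unless $|\uptheta_\uptau||\uptheta_{\uptau'}|\ne 1$ — so to force $\boldsymbol\upmu=\boldsymbol\upnu$ as \emph{elements of} $\bstar\PR\K_\upvarepsilon$ (which only remembers order-of-magnitude up to bounded factors, not exact exponential rate) I must use the non-archimedean nature of $\langle\cdot\rangle$: the growth-decay valuation collapses all sequences tending to infinity (resp.\ $0$) geometrically at \emph{comparable} rates into the same class, and I will need a lemma (in the spirit of Proposition~\ref{integerrepre} and the Frobenius discussion of \S\ref{tropical}) saying that after applying a suitable Frobenius-type reparametrization, or simply after passing to an appropriate sequence $\bast\upalpha$ whose growth rate in the $\uptau$-coordinate and decay rates in the conjugate coordinates are all engineered to land in a single $\bstar\PR\K_\upvarepsilon$-class, nontriviality of $\bast\mathcal O^{\boldsymbol\upmu}_{\boldsymbol\upmu}(\uptheta)$ follows. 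The cleanest route is probably: pick $\boldsymbol\upmu$ to be the class of $|\uptheta_\uptau|^{-i}$ in the $\uptau$-slot and the class of $|\uptheta_{\uptau'}|^{i}$ in the $\uptau'$-slots (this \emph{is} a single element of $\bstar\PR\K_\upvarepsilon$ since each coordinate is an infinitesimal class), verify directly from the definitions that $\bast\upalpha\in\bast\mathcal O^{\boldsymbol\upmu}(\uptheta)$ because $\bast\upalpha\cdot\boldsymbol\upmu$ is infinitesimal in every coordinate (in the $\uptau$-slot it is $|\uptheta_\uptau|^i\cdot|\uptheta_\uptau|^{-i}\sim 1$ — I must instead use a \emph{strict} comparison, choosing $\boldsymbol\upmu$ slightly larger, as the paper's Aside about strict inequalities warns), and that $\bast\upalpha\in\bast\mathcal O_{\boldsymbol\upmu}(\uptheta)$ because $\langle\boldsymbol\upvarepsilon(\bast\upalpha)\rangle\le\boldsymbol\upmu$ holds coordinatewise (decay in $\uptau'$ is exactly order $|\uptheta_{\uptau'}|^i$, decay in $\uptau$ is bounded hence $\le$ anything in $\bstar\PR\K_\upvarepsilon$ after the convention). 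Assembling these gives $\bast\mathcal O^{\boldsymbol\upmu}_{\boldsymbol\upmu}(\uptheta)\ne 0$, which is the assertion; I would close by remarking that this is the "antiprime splitting" phenomenon advertised in the introduction, since the same $\uptheta$, viewed in $\bast\Z(\uptheta)$, satisfies $\bast\Z^\upmu_\upmu(\uptheta)=0$ by Theorem~\ref{badapproxchar} as PV numbers are badly approximable.
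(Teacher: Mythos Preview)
Your construction has a genuine gap. You take $\bast\upalpha\in\bast\Z$ (traces, or the integer nearest $\uptheta^i$) and want the error $\boldsymbol\upvarepsilon(\bast\upalpha)$ to behave differently in different places. But with $\bast\upalpha$ and $\uptheta$ both diagonal, any dual $\bast\upalpha^{\perp}$ that keeps the error bounded in the $\uptau'$-slots must itself be diagonal (your suggestion of $\bast\upalpha^{\perp}\in\bast\mathcal{O}$ ``built from the algebraic integers $\uptheta^i$'' makes the $\uptau'$-coordinate $a_i\uptheta-\vartheta_{\uptau'}^{\,i+1}$ blow up, since $a_i\uptheta\to\infty$ there). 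With $\bast\upalpha^{\perp}=\{a_{i+1}\}\in\bast\Z$ the error is the \emph{same} real number in every coordinate and decays like $\rho^i$ where $\rho=\max_{j\ge 2}|\vartheta_{\uptau_j}|$. Membership in $\bast\mathcal{O}^{\boldsymbol\upmu}_{\boldsymbol\upmu}(\uptheta)$ then requires, in every slot $j$, both $\langle\rho^i\rangle\le\upmu_j$ and $\langle\uptheta^i\rangle\cdot\upmu_j\in\bstar\PR\R_{\upvarepsilon}$, forcing $\langle\rho^i\rangle<\langle\uptheta^{-i}\rangle$, i.e.\ $\rho\,\uptheta<1$. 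For a quadratic PV number one has $\rho\,\uptheta=|N_{K/\Q}(\vartheta)|\ge 1$, so the argument fails precisely in the case the theorem is advertised for. Your fallback---that $\langle\cdot\rangle$ collapses all geometric rates---is incorrect: $\langle r^i\rangle=\langle s^i\rangle$ in $\bstar\PR\R$ iff $(r/s)^i$ is bounded above and below by positive reals, i.e.\ iff $r=s$.

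The idea you are missing is to let $\bast\upalpha$ itself lie in $\bast\mathcal{O}\setminus\bast\Z$. The paper takes $\bast\upalpha=\{\vartheta^n\}$ and $\bast\upalpha^{\perp}=\vartheta\,\bast\upalpha=\{\vartheta^{n+1}\}$, where $\vartheta\in\mathcal{O}$ satisfies $\vartheta_\uptau=\uptheta$. The $\uptau$-coordinate of the error is then $\uptheta^n\cdot\uptheta-\uptheta^{n+1}=0$ exactly, so one may take the $\uptau$-coordinate of $\boldsymbol\upmu$ to be $-\infty$; the tropical law $-\infty\cdot\upxi=-\infty$ absorbs the unbounded growth of $\bast\upalpha$ in that slot. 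In each $\uptau'$-slot, $\bast\upalpha_{\uptau'}=\vartheta_{\uptau'}^{\,n}$ is itself infinitesimal by the PV hypothesis, so $\bast\upalpha_{\uptau'}\cdot\upmu_{\uptau'}$ is automatically infinitesimal. Choosing $\boldsymbol\upmu=\langle\boldsymbol\upvarepsilon(\bast\upalpha)\rangle$ gives $\bast\upalpha\in\bast\mathcal{O}^{\boldsymbol\upmu}_{\boldsymbol\upmu}(\uptheta)$ with no rate-matching needed.
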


\begin{proof}  Let $\vartheta \in\mathcal{O}$ be such that $\vartheta_{\uptau_{1}}=\uptheta$, so that
$|\vartheta_{\uptau_{i}}|<1$ for $i=2,\dots d$.  Let $\bast\upalpha\in\bast\mathcal{O}$ be the class
associated to the sequence $\vartheta, \vartheta^{2},\vartheta^{3},\dots $ and let $\bast\upalpha^{\perp}=\vartheta\bast\upalpha\in\bast\mathcal{O}$.
  Then $\bast\upalpha \in \bast\mathcal{O}(\uptheta )$ with dual $\bast\upalpha^{\perp}$:
indeed $\bast\upalpha\uptheta -\bast\upalpha^{\perp} $ is the class of the vector sequence
\[\left\{  \left(0,\vartheta_{\uptau_{2}}^{n}(\uptheta-\vartheta_{\uptau_{2}}),\dots ,  \vartheta_{\uptau_{d}}^{n}(\uptheta-\vartheta_{\uptau_{d}}) \right ) \right\}_{n=1}^{\infty} \]
which is infinitesimal since $|\vartheta_{\uptau_{i}}|<1$ for $i=2,\dots d$.  Let $\boldsymbol\upmu\in \bstar\PR\K_{\upvarepsilon}$
be the class of $\bast\upalpha\uptheta -\bast\upalpha^{\perp}$.  Then
\[   \bast\upalpha\cdot \boldsymbol\upmu = (0,\bast\upalpha_{\uptau_{2}}\cdot \upmu_{2},\dots , \bast\upalpha_{\uptau_{d}}\cdot \upmu_{d} )
\in \bstar\PR\K_{\upvarepsilon}\]
since the components $\bast\upalpha_{\uptau_{2}},\dots , \bast\upalpha_{\uptau_{d}}$ are themselves infinitesimal.  Thus
$\bast\mathcal{O}_{\boldsymbol \upmu}^{\boldsymbol \upmu}(\uptheta )\not=0$.
\end{proof}

Theorem \ref{PVnumber} reveals that there are infinitely many antiprimes $\uptheta$ (quadratic Pisot Vijayaraghavan numbers) which possess a non-trivial flat spectra provided that we expand the field of approximants to one minimally containing $\uptheta$.  In particular, 
such a $\uptheta$ ceases to be antiprime, a phenomenon which may be described as
the ``splitting'' of the nonvanishing spectrum c.f.\ the {\it $K$-Classification} subsection below.
%This is the first example
%It is natural to ask if the spectral characterization of badly approximable numbers holds in this context
%provided that $\uptheta\not\in K$.  Here we can only offer the following partial result

\vspace{3mm}

\noindent  {\em $K$-Approximate Ideal Arithmetic}

\vspace{3mm}

We have the following exact analogue of Theorem \ref{productformula}:

\begin{theo}[$K$-Approximate Ideal Arithmetic]\label{Ogrowthdecayprod} Let $\boldsymbol z ,\boldsymbol w \in\K$.  Then
\begin{align}\label{grdecprod}
 \bast\mathcal{O}_{\boldsymbol \upnu}^{\boldsymbol \upmu[\boldsymbol\upiota]}(\boldsymbol z)\cdot  \bast\mathcal{O}_{\boldsymbol \upmu}^{\boldsymbol \upnu[\boldsymbol \uplambda]}(\boldsymbol w) & \subset  
\bast\mathcal{O}^{\boldsymbol\upmu\cdot\boldsymbol\upnu [\boldsymbol\upiota\cdot\boldsymbol \uplambda]}_{\boldsymbol\upiota+\boldsymbol \uplambda}(\boldsymbol z\boldsymbol w ) \cap 
 \bast\mathcal{O}^{\boldsymbol\upmu\cdot\boldsymbol\upnu [\boldsymbol\upiota\cdot\boldsymbol \uplambda]}_{\boldsymbol\upiota+\boldsymbol \uplambda}(\boldsymbol z+\boldsymbol w ) \cap  \bast\mathcal{O}^{\boldsymbol\upmu\cdot\boldsymbol\upnu [\boldsymbol\upiota\cdot\boldsymbol \uplambda]}_{\boldsymbol\upiota+\boldsymbol \uplambda}(\boldsymbol z-\boldsymbol w ).
\end{align}
\end{theo}

\begin{proof}  If $\bast \upalpha\in  \bast\mathcal{O}_{\boldsymbol \upnu}^{\boldsymbol \upmu[\boldsymbol\upiota]}(\boldsymbol z)$
and $\bast \upbeta\in\bast\mathcal{O}_{\boldsymbol \upmu}^{\boldsymbol \upnu[\boldsymbol \uplambda]}(\boldsymbol w)$
then
%\[    (\bast\upalpha+\bast \upbeta)\cdot\boldsymbol z\boldsymbol w = 
%((\bast\upalpha_{1}+\bast \upbeta_{1})z_{1}w_{1}, \dots ,(\bast\upalpha_{d}+\bast \upbeta_{d})z_{d}w_{d})  \]
%and
\[    \bast\upalpha\bast \upbeta\cdot\boldsymbol z\boldsymbol w = (\bast\upalpha_{1}\bast \upbeta_{1}z_{1}w_{1}, \dots ,\bast\upalpha_{d}\bast \upbeta_{d}z_{d}w_{d}) . \]
 The proof proceeds as in that of Theorem \ref{productformula}, implemented along the coordinates
of $\bast\K$. 
\end{proof}

The remarks following Theorem \ref{productformula} apply just as well to $\mathcal{O}$-Approximate Ideal arithmetic.
Here however non principal ideals are absent, not surprising since the definition of $\mathcal{O}$-diophantine approximation
groups is made with regard to single elements of $\K$.  In \S \ref{ideologicalclasssection} we
will produce the analogues of (classes) of two generator ideals by ``decoupling'' numerator denominator pairs.
 Non principal ideals also appear naturally as dual diophantine approximations of vectors, see \S \ref{matideoarith}.

%We write $\uptheta{}_{\boldsymbol\upmu}\owedge_{\boldsymbol\upnu}[\mathcal{O}]\upeta$
For $\boldsymbol z,  \boldsymbol w\in\K$, we write
\begin{align}\label{prodnotation2} \boldsymbol z {}_{\boldsymbol\upmu}\text{\circled{$\mathcal{O}$}}{}_{\boldsymbol\upnu} \boldsymbol w
\end{align} when the groups $\bast\mathcal{O}_{\boldsymbol \upnu}^{\boldsymbol \upmu}(\boldsymbol z)$, 
$\bast\mathcal{O}_{\boldsymbol \upmu}^{\boldsymbol \upnu}(\boldsymbol w)$
are nontrivial. 
Thus when $\mathcal{O}=\Z$, $\text{\circled{$\Z$}}=\owedge$.
 The symbol
\[ \bast \upalpha \;{}_{\boldsymbol\upmu}\circled{\small $\boldsymbol z\boldsymbol w$}{}_{\,\boldsymbol\upnu}\bast  \upbeta \]
will indicate that the product $\bast\upalpha\bast \upbeta$ is defined as one of diophantine approximations, subject to the condition that
$\bast\upalpha\in\bast\mathcal{O}_{\boldsymbol \upnu}^{\boldsymbol \upmu}(\boldsymbol z)$ resp.\ $\bast\upbeta\in \bast\mathcal{O}_{\boldsymbol \upmu}^{\boldsymbol \upnu}(\boldsymbol w)$.   The notions of $\mathcal{O}$-fast, $\mathcal{O}$-slow and $\mathcal{O}$-flat divisors are defined as in \S \ref{gdarith}.  
 When
  $\boldsymbol \upmu$ and $\boldsymbol \upnu$ are not related by the order, we say that the factors are
{\bf  oscillatory divisors}.

Let ${\rm PGL}_{2}(\mathcal{O})$ be the projective linear group with entries in $\mathcal{O}$.  Then ${\rm PGL}_{2}(\mathcal{O})$
partially\footnote{Or rather acts fully on a suitable compactification of $\K$.  In any case, ${\rm PGL}_{2}(\mathcal{O})$ acts fully on $\K-K$.} acts on $\K$ and we write
\[  \boldsymbol z \Bumpeq_{K}\boldsymbol z'  \]
if there exists $A\in {\rm PGL}_{2}(\mathcal{O})$ such that $A( \boldsymbol z)= \boldsymbol z'$.

\begin{theo}  If $ \boldsymbol z \Bumpeq_{K}\boldsymbol z' $ by $A\in {\rm PGL}_{2}(\mathcal{O})$ then $A$ induces an approximate module isomorphism
\[  A:\bast\mathcal{O}(\boldsymbol z)\longrightarrow \bast\mathcal{O}(\boldsymbol z').\]
If in addition we have $\boldsymbol w\in\K$ and $B\in {\rm PGL}_{2}(\mathcal{O})$ then
\[  \boldsymbol z {}_{\boldsymbol\upmu}\text{\circled{$\mathcal{O}$}}{}_{\boldsymbol\upnu} \boldsymbol w
\quad \Longleftrightarrow \quad A(\boldsymbol z) {}_{\boldsymbol\upmu}\text{\circled{$\mathcal{O}$}}{}_{\boldsymbol\upnu} B(\boldsymbol w).
 \]
\end{theo}

\begin{proof}  Same idea as the proof of Theorem \ref{triisomorphism}, implemented along place coordinates.
\end{proof}

%It is not our intention in this section to replicate the finer analysis of nonvanishing
%spectra and growth decay product found in \S\S \ref{nonvanspec} --\ref{flatharith}.  Instead, we will explore briefly
%the behavior of spectra and product with respect to the trace and norm maps as well as the Galois group.

\begin{theo}\label{galrespects}  Suppose that $K/\Q$ is Galois and $\upsigma\in{\rm Gal}(K/\Q)$.   Then
\[\upsigma \bigg(\bast\mathcal{O}_{\boldsymbol \upnu}^{\boldsymbol \upmu[\boldsymbol\upiota]}(\boldsymbol z)\bigg) 
= \bast\mathcal{O}_{\upsigma(\boldsymbol \upnu)}^{\upsigma(\boldsymbol \upmu)[\upsigma(\boldsymbol\upiota)]}\big(\upsigma(\boldsymbol z)\big).
\]
In particular,
\[\boldsymbol z {}_{\boldsymbol\upmu}\text{\circled{$\mathcal{O}$}}{}_{\boldsymbol\upnu} \boldsymbol w \quad \Longleftrightarrow\quad
\upsigma(\boldsymbol z) {}_{\upsigma(\boldsymbol\upmu)}\text{\circled{$\mathcal{O}$}}{}_{\upsigma(\boldsymbol\upnu)} \boldsymbol \upsigma(w)
  \]
  and
 \[ \bast \upalpha\, {}_{\boldsymbol\upmu}\circled{\small $\boldsymbol z\boldsymbol w$}{}_{\boldsymbol\upnu}\bast  \upbeta \quad \Longleftrightarrow \quad
 \upsigma\big(\bast \upalpha \big) {}_{ \upsigma(\boldsymbol\upmu)}\circled{\tiny $\upsigma(\boldsymbol z)\upsigma(\boldsymbol w)$}{}_{ \upsigma(\boldsymbol\upnu)}\upsigma \big(\bast  \upbeta \big).  \]
Therefore an elements status as an $\mathcal{O}$-fast, $\mathcal{O}$-slow, $\mathcal{O}$-flat or $\mathcal{O}$-oscillatory divisor is preserved
 by the action of $\upsigma\in{\rm Gal}(K/\Q)$.
\end{theo}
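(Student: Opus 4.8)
The plan is to exploit the fact that, for $K/\Q$ Galois, the Galois action $\upsigma(\boldsymbol z)=(z_{\upsigma\cdot\uptau})$ is implemented on $\bast\K$ by permutation of the place-coordinates, hence is a hermitian isometry of $\bast\K$ fixing the diagonal copy of $\bast\R$ pointwise. First I would record the consequences of this on the auxiliary structures: since $\upsigma$ is a coordinate permutation it restricts to a ring automorphism of $\bast\K_{\rm fin}$ carrying the group of units $\bast\K_{\rm fin}^{\times}$ onto itself and the ideal $\bast\K_{\upvarepsilon}$ onto itself, and it commutes with the norm map ${\rm N}$ (a product over all coordinates) up to the same permutation, so it descends to a tropical automorphism $\upsigma:\bstar\PR\K\to\bstar\PR\K$ preserving $\bstar\PR\K_{\upvarepsilon}$ and $\bstar\PR\K_{{\rm N}(\upvarepsilon)}$ and respecting the partial order and the operations $\cdot,+$. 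In particular $\langle\upsigma(\bast x)\rangle=\upsigma(\langle\bast x\rangle)$ for $\bast x\in\bast\K$.

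Next I would check that $\upsigma$ carries $\mathcal{O}$-diophantine approximations to $\mathcal{O}$-diophantine approximations with the expected dual. If $\bast\upalpha\in\bast\mathcal{O}(\boldsymbol z)$ with dual $\bast\upalpha^{\perp}$, then since $\upsigma$ is a ring automorphism of $\bast\mathcal{O}$,
\[
\upsigma(\bast\upalpha)\,\upsigma(\boldsymbol z)-\upsigma(\bast\upalpha^{\perp})=\upsigma\big(\bast\upalpha\boldsymbol z-\bast\upalpha^{\perp}\big)=\upsigma\big(\boldsymbol\upvarepsilon(\bast\upalpha)\big)\in\bstar\PR\K_{\upvarepsilon},
\]
so $\upsigma(\bast\upalpha)\in\bast\mathcal{O}(\upsigma(\boldsymbol z))$ with $\upsigma(\bast\upalpha)^{\perp}=\upsigma(\bast\upalpha^{\perp})$ and $\boldsymbol\upvarepsilon(\upsigma(\bast\upalpha))=\upsigma(\boldsymbol\upvarepsilon(\bast\upalpha))$. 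Applying $\upsigma^{-1}$ gives the reverse inclusion, so $\upsigma:\bast\mathcal{O}(\boldsymbol z)\cong\bast\mathcal{O}(\upsigma(\boldsymbol z))$ is a group isomorphism. Now I would track the three filtration conditions through this isomorphism: $\bast\upalpha\cdot\boldsymbol\upmu\in\bstar\PR\K_{\upvarepsilon}$ iff $\upsigma(\bast\upalpha\cdot\boldsymbol\upmu)=\upsigma(\bast\upalpha)\cdot\upsigma(\boldsymbol\upmu)\in\bstar\PR\K_{\upvarepsilon}$, giving $\upsigma(\bast\mathcal{O}^{\boldsymbol\upmu})=\bast\mathcal{O}^{\upsigma(\boldsymbol\upmu)}$; the strict inequality $\bast\upalpha\cdot\boldsymbol\upmu<\boldsymbol\upiota$ is preserved because $\upsigma$ preserves the order, giving $\upsigma(\bast\mathcal{O}^{\boldsymbol\upmu[\boldsymbol\upiota]})=\bast\mathcal{O}^{\upsigma(\boldsymbol\upmu)[\upsigma(\boldsymbol\upiota)]}$; and $\langle\boldsymbol\upvarepsilon(\bast\upalpha)\rangle\le\boldsymbol\upnu$ iff $\langle\boldsymbol\upvarepsilon(\upsigma(\bast\upalpha))\rangle=\upsigma(\langle\boldsymbol\upvarepsilon(\bast\upalpha)\rangle)\le\upsigma(\boldsymbol\upnu)$, giving $\upsigma(\bast\mathcal{O}_{\boldsymbol\upnu}(\boldsymbol z))=\bast\mathcal{O}_{\upsigma(\boldsymbol\upnu)}(\upsigma(\boldsymbol z))$. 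Intersecting the three yields the displayed equality $\upsigma(\bast\mathcal{O}_{\boldsymbol\upnu}^{\boldsymbol\upmu[\boldsymbol\upiota]}(\boldsymbol z))=\bast\mathcal{O}_{\upsigma(\boldsymbol\upnu)}^{\upsigma(\boldsymbol\upmu)[\upsigma(\boldsymbol\upiota)]}(\upsigma(\boldsymbol z))$.

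Finally the corollaries are immediate bookkeeping. Nontriviality of $\bast\mathcal{O}_{\boldsymbol\upnu}^{\boldsymbol\upmu}(\boldsymbol z)$ is equivalent, via the bijection $\upsigma$, to nontriviality of $\bast\mathcal{O}_{\upsigma(\boldsymbol\upnu)}^{\upsigma(\boldsymbol\upmu)}(\upsigma(\boldsymbol z))$, which with the analogous statement for $\boldsymbol w$ yields the equivalence $\boldsymbol z\,{}_{\boldsymbol\upmu}\text{\circled{$\mathcal{O}$}}{}_{\boldsymbol\upnu}\,\boldsymbol w\Leftrightarrow\upsigma(\boldsymbol z)\,{}_{\upsigma(\boldsymbol\upmu)}\text{\circled{$\mathcal{O}$}}{}_{\upsigma(\boldsymbol\upnu)}\,\upsigma(\boldsymbol w)$; the statement about $\bast\upalpha\,{}_{\boldsymbol\upmu}\circled{\small $\boldsymbol z\boldsymbol w$}{}_{\boldsymbol\upnu}\,\bast\upbeta$ follows the same way, using in addition that $\upsigma$ is multiplicative so $\upsigma(\bast\upalpha\bast\upbeta)=\upsigma(\bast\upalpha)\upsigma(\bast\upbeta)$ and that $\upsigma(\boldsymbol z\boldsymbol w)=\upsigma(\boldsymbol z)\upsigma(\boldsymbol w)$, so the $K$-ideological product from Theorem \ref{Ogrowthdecayprod} is carried to the corresponding one. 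Preservation of an element's status as $\mathcal{O}$-fast, $\mathcal{O}$-slow, $\mathcal{O}$-flat, or $\mathcal{O}$-oscillatory divisor then falls out because those notions are defined purely in terms of the order relation between the index pair $(\boldsymbol\upmu,\boldsymbol\upnu)$, and $\upsigma$ preserves the order on $\bstar\PR\K$. The only point requiring any care — the ``main obstacle'' such as it is — is verifying at the outset that $\upsigma$ genuinely descends to the quotient $\bstar\PR\K=\bast\K/\bast\K_{\rm fin}^{\times}$ and respects $\cdot,+$ and the partial order; once the coordinate-permutation description is in hand this is routine, and everything else is transport of structure along a bijection.
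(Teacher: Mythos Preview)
Your proposal is correct and follows essentially the same approach as the paper: the paper's proof simply restates the displayed equality and remarks that the remaining claims follow immediately, leaving the verification that $\upsigma$ acts by coordinate permutation (hence preserves $\bast\K_{\upvarepsilon}$, the order, and the tropical operations on $\bstar\PR\K$) entirely implicit. You have supplied exactly those routine checks in detail, so your argument is a fleshed-out version of the paper's one-line proof rather than a different route.
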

\begin{proof} Let $\upsigma\in{\rm Gal}(K/\Q)$, then 
\[\upsigma \bigg(\bast\mathcal{O}_{\boldsymbol \upnu}^{\boldsymbol \upmu[\boldsymbol\upiota]}(\boldsymbol z)\bigg) 
= \bast\mathcal{O}_{\upsigma(\boldsymbol \upnu)}^{\upsigma(\boldsymbol \upmu)[\upsigma(\boldsymbol\upiota)]}\big(\upsigma(\boldsymbol z)\big).
\]
%(and acting identically on $\bast\Z_{\upnu}^{\upmu}(\uptheta)$).
 It follows immediately that $\upsigma$ respects the $K$-approximate ideal product as indicated in the statement of the Proposition.
% \[  \upsigma \left(\bast \upalpha {}_{\boldsymbol\upmu}\circled{\small $\uptheta\upeta$}{}_{\boldsymbol\upnu}\bast  \upbeta \right) = 
 %\upsigma(\bast \upalpha ) {}_{\boldsymbol\upmu}\circled{\small $\uptheta\upeta$}{}_{\boldsymbol\upnu}\upsigma (\bast  \upbeta )  \]
\end{proof}

%\begin{lemm}  Suppose that $ \bast\mathcal{O}_{\boldsymbol \upnu}^{\boldsymbol \upmu}(\uptheta)$ and
%let $\upmu = \upmu_{1}+ \cdots + \upmu_{d}$ if $\boldsymbol \upmu = (\upmu_{1}, \dots , \upmu_{d})$. 
%Also, let $\upnu = {\rm min}\{ \upnu_{1},\dots ,\upnu_{d} \}$.  Then 
%\[    \bast\mathcal{O}_{\boldsymbol \upnu}^{\boldsymbol \upmu}(\uptheta) =   \bast\mathcal{O}_{ \upnu}^{ \upmu}(\uptheta) . \]
%\end{lemm}

%\begin{proof}  The idea of the proof is that the conjugates of a given algebraic number should
%share
%\end{proof}

    If we take  if $\boldsymbol z = \uptheta=(\uptheta ,\dots ,\uptheta )$ then the trace map ${\rm Tr}:\K\rightarrow \R$
defines a well-defined homomorphism of groups ${\rm Tr}:\bast\mathcal{O}(\uptheta )\rightarrow
\bast\Z(\uptheta )$.  In addition, we have a well-defined map
of projective classes
$  {\rm Tr}: \bstar \PR\K_{\upvarepsilon}\longrightarrow \bstar\PR\R_{\upvarepsilon}
 $
so that
\[   {\rm Tr}\bigg(\bast\mathcal{O}_{\boldsymbol \upnu}(\uptheta )\bigg)\subset
\bast\Z_{ {\rm Tr}(\boldsymbol \upnu)}(\uptheta ).
\]
Note that the trace map does {\it not} map 
$\bast\mathcal{O}^{\boldsymbol \upmu}(\uptheta )$ to $\bast\Z^{ {\rm Tr}(\boldsymbol \upmu)}(\uptheta )$.
%and therefore if $\boldsymbol z=\uptheta$, a
 %respecting the ordering but not the product structure.
%We then get an induced homomorphism in which 
%\[   {\rm Tr}\bigg(\bast\mathcal{O}_{\boldsymbol \upnu}^{\boldsymbol \upmu}(\boldsymbol z )\bigg)\subset
%\bast\Z^{ \upmu}_{ \upnu}(\upzeta )
%\]
%if $\upmu ={\rm Tr}(\boldsymbol \upmu)$, $\upnu ={\rm Tr}(\boldsymbol \upnu )$ and ${\rm Tr}(\boldsymbol z)=\upzeta$.

On the other hand, the norm map, as we have seen, induces 
$   {\rm N}: \bstar \PR\K_{{\rm N}(\upvarepsilon )}\longrightarrow \bstar\PR\R_{\upvarepsilon}$, however it does not define a map
from $\bast\mathcal{O}_{\boldsymbol \upnu}(\uptheta )$ to $\bast\Z_{ {\rm N}(\boldsymbol \upnu)}(\uptheta )$.  Instead
it yields a map of fine growth filtrations
\[{\rm N}:\bast \mathcal{O}^{\boldsymbol \upmu[\boldsymbol \upiota]}\longrightarrow
\bast\Z^{{\rm N}(\boldsymbol \upmu)[{\rm N}(\boldsymbol \upiota )]}. \]
As an immediate corollary, we may deduce that for $\boldsymbol \upmu$ to be a growth index for a nonstandard integer $\bast\upalpha\in\bast\mathcal{O}$,
the product of the infinitesimal coordinates of $\boldsymbol \upmu$ must dominate the product of the non infinitesimal coordinates of $\boldsymbol \upmu$:

\begin{prop}  If $\boldsymbol \upmu\in\bstar\PR\K$ and $\bast\mathcal{O}^{\boldsymbol \upmu}\not=0$
then $\boldsymbol \upmu\in\bstar\PR\R_{{\rm N}(\upvarepsilon)}$.
\end{prop}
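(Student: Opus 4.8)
The claim is that if $\bast\mathcal{O}^{\boldsymbol\upmu}\neq 0$ then $\boldsymbol\upmu\in\bstar\PR\R_{{\rm N}(\upvarepsilon)}$, i.e. ${\rm N}(\boldsymbol\upmu)\in\bstar\PR\R_{\upvarepsilon}$. My plan is to unwind the definitions and read off the statement directly from the norm-multiplicativity already recorded just above the Proposition. First I would take a nonzero $\bast\upalpha\in\bast\mathcal{O}^{\boldsymbol\upmu}$; by definition this means $\bast\upalpha\cdot\boldsymbol\upmu\in\bstar\PR\K_{\upvarepsilon}$, that is, every coordinate of $\bast\upalpha\cdot\boldsymbol\upmu$ is infinitesimal. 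Since $\bast\upalpha\in\bast\mathcal{O}$ is a nonstandard algebraic integer, I would invoke the standard fact that $|{\rm N}(\bast\upalpha)|\geq 1$ (the absolute value of the norm of a nonzero algebraic integer is a nonzero rational integer, hence $\geq 1$), so that $\langle{\rm N}(\bast\upalpha)\rangle\geq 1$ in $\bstar\PR\R$; in particular $\langle{\rm N}(\bast\upalpha)\rangle$ is not infinitesimal.

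Next I would apply the norm map, which is multiplicative both on $\bast\K$ and on the induced projective structures: ${\rm N}(\bast\upalpha\cdot\boldsymbol\upmu) = \langle{\rm N}(\bast\upalpha)\rangle\cdot{\rm N}(\boldsymbol\upmu)$ in $\bstar\PR\R$. Since $\bast\upalpha\cdot\boldsymbol\upmu\in\bstar\PR\K_{\upvarepsilon}$ and ${\rm N}$ maps $\bstar\PR\K_{\upvarepsilon}$ (indeed $\bstar\PR\K_{{\rm N}(\upvarepsilon)}$) into $\bstar\PR\R_{\upvarepsilon}$ — this is exactly the observation recorded earlier that ${\rm N}:\bstar\PR\K_{{\rm N}(\upvarepsilon)}\to\bstar\PR\R_{\upvarepsilon}$ — we get $\langle{\rm N}(\bast\upalpha)\rangle\cdot{\rm N}(\boldsymbol\upmu)\in\bstar\PR\R_{\upvarepsilon}$. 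Now I would use that $\bstar\PR\R_{\upvarepsilon}$ is a multiplicatively closed, downwardly closed subset of $\bstar\PR\R$: from $\langle{\rm N}(\bast\upalpha)\rangle\geq 1$ we obtain ${\rm N}(\boldsymbol\upmu)\leq \langle{\rm N}(\bast\upalpha)\rangle\cdot{\rm N}(\boldsymbol\upmu)$, and since the right-hand side is infinitesimal, so is ${\rm N}(\boldsymbol\upmu)$. Hence ${\rm N}(\boldsymbol\upmu)\in\bstar\PR\R_{\upvarepsilon}$, which is precisely the defining condition for $\boldsymbol\upmu\in\bstar\PR\K_{{\rm N}(\upvarepsilon)}$.

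The only point requiring a little care is the legitimacy of writing ${\rm N}(\bast\upalpha\cdot\boldsymbol\upmu)=\langle{\rm N}(\bast\upalpha)\rangle\cdot{\rm N}(\boldsymbol\upmu)$: one must check the left-multiplication action of $\bast\K$ on $\bstar\PR\K$ is compatible with the norm, which follows coordinatewise from multiplicativity of the product of coordinates in $\bast\K$ and the fact that ${\rm N}:\bstar\PR\K\to\bstar\PR\R$ was defined as the quotient of the coordinate-product map. I expect this to be the "main obstacle" only in the bookkeeping sense — it is the standard-parts/projective-class manipulation that has to be spelled out — but it is entirely routine given the tropical formalism of \S\ref{tropical} and the $K$-tropical setup above. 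No new ideas are needed beyond $|{\rm N}(\bast\upalpha)|\geq 1$ for nonzero $\bast\upalpha\in\bast\mathcal{O}$ and downward closure of $\bstar\PR\R_{\upvarepsilon}$.
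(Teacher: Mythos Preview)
Your proof is correct and follows essentially the same approach as the paper: the paper's one-line argument observes that ${\rm N}(\bast\upalpha)\in\bast\Z$ is never infinitesimal (being a nonzero integer), so ${\rm N}(\bast\upalpha)\cdot{\rm N}(\boldsymbol\upmu)\in\bstar\PR\R_{\upvarepsilon}$ forces ${\rm N}(\boldsymbol\upmu)\in\bstar\PR\R_{\upvarepsilon}$. You have simply unpacked this with more care about the multiplicativity of the norm and downward closure, which is fine.
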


\begin{proof}  No element of $\bast\Z$ is infinitesimal, so 
${\rm N}(\bast\upalpha )\cdot {\rm N}(\boldsymbol \upmu )\in \bstar\PR\R_{\upvarepsilon}$ can only occur if ${\rm N}(\boldsymbol \upmu )$
is infinitesimal.
\end{proof}

From the above paragraphs, we deduce the following broad principle: 
\begin{quotation}
Growth is multiplicative but not additive. Decay is additive but not multiplicative.
\end{quotation}
In general, if we seek to return to the ground ring $\bast\Z$ using either the norm or the trace, one of the growth-decay parameters must be sacrificed.
Nevertheless, there exist specific situations when the anomalous parameter can be controlled.

 \begin{prop}\label{normgrowthdecay} Let $K/\Q$ be of degree $2$ and let $\sigma$ be the nontrivial element of its Galois group.
  Then for any $\boldsymbol z \in\K$,
  \[  {\rm N}\bigg(  \bast\mathcal{O}^{\boldsymbol\upmu[\boldsymbol \upiota]}_{\sigma(\boldsymbol\upmu)}(\boldsymbol z )    \bigg)
  \subset \bast\Z^{{\rm N}(\boldsymbol\upmu )[{\rm N}(\boldsymbol\upiota )]}_{{\rm Tr}(\boldsymbol\upiota )}\big({\rm N}(\boldsymbol z)\big).
  \]
  %respects the growth-decay product:
 %\[   {\rm N}\left(\bast \upalpha {}_{\boldsymbol\upmu}
 %\circled{\small $\boldsymbol z\boldsymbol w$}{}_{\sigma(\boldsymbol\upmu)}\bast \upbeta \right) = 
 %{\rm N}( \bast \upalpha){}_{{\rm N}(\boldsymbol\upmu )}
 %\circled{\small ${\rm N}(\boldsymbol z\boldsymbol w)$}{}_{{\rm N}(\boldsymbol\upmu )}{\rm N}(\bast \upbeta) \]
 \end{prop}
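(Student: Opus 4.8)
The plan is to pass to place coordinates and then verify, one at a time, the three conditions defining membership in $\bast\Z^{{\rm N}(\boldsymbol\upmu)[{\rm N}(\boldsymbol\upiota)]}_{{\rm Tr}(\boldsymbol\upiota)}({\rm N}(\boldsymbol z))$. Let $\uptau_{1},\uptau_{2}$ be the two archimedean places of $K$, so that $\sigma$ interchanges them, and in these coordinates write $\bast\upalpha=(\bast\upalpha_{1},\bast\upalpha_{2})$, $\bast\upalpha^{\perp}=(\bast\upalpha^{\perp}_{1},\bast\upalpha^{\perp}_{2})$, $\boldsymbol z=(z_{1},z_{2})$, $\boldsymbol\upmu=(\upmu_{1},\upmu_{2})$, $\boldsymbol\upiota=(\upiota_{1},\upiota_{2})$, and set $\upvarepsilon_{j}:=\bast\upalpha_{j}z_{j}-\bast\upalpha^{\perp}_{j}$. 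We may assume $\bast\upalpha\neq 0$ (otherwise $N:={\rm N}(\bast\upalpha)=0$ lies in the target group trivially) and $z_{1}z_{2}\neq 0$ (otherwise ${\rm N}(\boldsymbol z)=0$ and the claim is empty). Since $\sigma$ swaps $\uptau_{1},\uptau_{2}$, the hypothesis $\bast\upalpha\in\bast\mathcal{O}^{\boldsymbol\upmu[\boldsymbol\upiota]}_{\sigma(\boldsymbol\upmu)}(\boldsymbol z)$ unpacks coordinate-wise into: (a) $\upvarepsilon_{1},\upvarepsilon_{2}$ are infinitesimal; (b) $\langle\bast\upalpha_{1}\rangle\cdot\upmu_{1}<\upiota_{1}$ and $\langle\bast\upalpha_{2}\rangle\cdot\upmu_{2}<\upiota_{2}$; (c) $\langle\upvarepsilon_{1}\rangle\leq\upmu_{2}$ and $\langle\upvarepsilon_{2}\rangle\leq\upmu_{1}$. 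Set $N={\rm N}(\bast\upalpha)=\bast\upalpha_{1}\bast\upalpha_{2}$ and $N^{\perp}:={\rm N}(\bast\upalpha^{\perp})=\bast\upalpha^{\perp}_{1}\bast\upalpha^{\perp}_{2}$; both are classes of norms of algebraic integers, hence lie in $\bast\Z$.

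The computation at the heart of the proof is the identity obtained by substituting $\bast\upalpha_{j}z_{j}=\bast\upalpha^{\perp}_{j}+\upvarepsilon_{j}$:
\begin{align*}
N\cdot{\rm N}(\boldsymbol z)-N^{\perp}&=(\bast\upalpha^{\perp}_{1}+\upvarepsilon_{1})(\bast\upalpha^{\perp}_{2}+\upvarepsilon_{2})-\bast\upalpha^{\perp}_{1}\bast\upalpha^{\perp}_{2}\\
&=\bast\upalpha^{\perp}_{1}\upvarepsilon_{2}+\bast\upalpha^{\perp}_{2}\upvarepsilon_{1}+\upvarepsilon_{1}\upvarepsilon_{2}.
\end{align*}
Because $z_{j}$ is a nonzero standard scalar, $\langle z_{j}\rangle=1$, whence $\langle\bast\upalpha^{\perp}_{j}\rangle\leq\langle\bast\upalpha_{j}\rangle+\langle\upvarepsilon_{j}\rangle$ by the nonarchimedean property of $\langle\cdot\rangle$ (here $+$ is the tropical sum $\max$ on $\bstar\PR\R$). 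Using multiplicativity of $\langle\cdot\rangle$ with (b) and (c) one obtains $\langle\bast\upalpha^{\perp}_{1}\upvarepsilon_{2}\rangle\leq\langle\bast\upalpha_{1}\rangle\cdot\upmu_{1}+\upmu_{1}\upmu_{2}$ --- the point being that $\langle\upvarepsilon_{2}\rangle\leq\upmu_{1}$ lets the factor $\langle\bast\upalpha_{1}\rangle$ be absorbed against its own growth bound at the place $\uptau_{1}$ --- and symmetrically $\langle\bast\upalpha^{\perp}_{2}\upvarepsilon_{1}\rangle\leq\langle\bast\upalpha_{2}\rangle\cdot\upmu_{2}+\upmu_{1}\upmu_{2}$, while $\langle\upvarepsilon_{1}\upvarepsilon_{2}\rangle\leq\upmu_{1}\upmu_{2}$. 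Multiplying the two inequalities of (b) (legitimate since the order on $\bstar\PR\R$ is compatible with the product, Proposition \ref{denselinord}) gives $\langle N\rangle\cdot\upmu_{1}\upmu_{2}=\langle\bast\upalpha_{1}\rangle\langle\bast\upalpha_{2}\rangle\upmu_{1}\upmu_{2}<\upiota_{1}\upiota_{2}$; since $N$ is a nonzero rational integer, $\langle N\rangle\geq 1$, forcing $\upmu_{1}\upmu_{2}<\upiota_{1}\upiota_{2}\leq\upiota_{1}+\upiota_{2}={\rm Tr}(\boldsymbol\upiota)$. Combining, each of the three summands above is infinitesimal with growth-decay valuation $\leq{\rm Tr}(\boldsymbol\upiota)$, and so, by the nonarchimedean inequality, is $N\cdot{\rm N}(\boldsymbol z)-N^{\perp}$. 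This shows $N\in\bast\Z({\rm N}(\boldsymbol z))$, that its dual is $N^{\perp}$ (the dual being unique), and that $\upnu(N)\leq{\rm Tr}(\boldsymbol\upiota)$. Finally, the very inequality $\langle N\rangle\cdot{\rm N}(\boldsymbol\upmu)=\langle N\rangle\cdot\upmu_{1}\upmu_{2}<\upiota_{1}\upiota_{2}={\rm N}(\boldsymbol\upiota)$ says exactly that $N\in\bast\Z^{{\rm N}(\boldsymbol\upmu)[{\rm N}(\boldsymbol\upiota)]}$, completing the proof.

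The one genuinely delicate point is the estimation of the cross terms $\bast\upalpha^{\perp}_{1}\upvarepsilon_{2}$ and $\bast\upalpha^{\perp}_{2}\upvarepsilon_{1}$: the coordinates $\bast\upalpha_{j}$ can grow at very different rates --- as remarked before the proposition, one of them may even be infinitesimal, e.g.\ for a class built from powers of a Salem number --- so $\langle\bast\upalpha^{\perp}_{j}\rangle$ cannot usefully be bounded on its own. The resolution, and the reason the decay index is taken to be $\sigma(\boldsymbol\upmu)$ rather than anything else, is that one only ever needs the product $\langle\bast\upalpha_{j}\rangle\langle\upvarepsilon_{j'}\rangle$ with $\{j,j'\}=\{1,2\}$, and condition (c) says precisely $\langle\upvarepsilon_{j'}\rangle\leq\upmu_{j}$, so this product is dominated by $\langle\bast\upalpha_{j}\rangle\cdot\upmu_{j}<\upiota_{j}$, which is controlled by the growth bound at the same place $\uptau_{j}$. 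Everything else is bookkeeping with the multiplicative and tropical ($\max$) structures on $\bstar\PR\R$ and the facts, already recorded in the excerpt, that ${\rm N}$ acts coordinate-wise as a product and ${\rm Tr}$ as a maximum.
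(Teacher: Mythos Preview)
Your proof is correct and follows essentially the same route as the paper: the same place-coordinate decomposition, the same key identity $N\cdot{\rm N}(\boldsymbol z)-N^{\perp}=\bast\upalpha^{\perp}_{1}\upvarepsilon_{2}+\bast\upalpha^{\perp}_{2}\upvarepsilon_{1}+\upvarepsilon_{1}\upvarepsilon_{2}$, and the same use of the decay hypothesis $\langle\upvarepsilon_{j'}\rangle\leq\upmu_{j}$ (with $\{j,j'\}=\{1,2\}$) to kill the cross terms against the growth bound at the matching place. If anything, your bookkeeping is more explicit than the paper's, which simply asserts that the cross terms are infinitesimal and bounded by ${\rm Tr}(\boldsymbol\upiota)$; your intermediate step bounding $\langle\bast\upalpha^{\perp}_{j}\rangle$ via $\langle\bast\upalpha_{j}\rangle+\langle\upvarepsilon_{j}\rangle$ and your closing paragraph explaining why the decay index must be $\sigma(\boldsymbol\upmu)$ make the mechanism clearer.
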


\begin{proof}  Let $\bast\upalpha\in \bast\mathcal{O}^{\boldsymbol\upmu[\boldsymbol \upiota]}_{\sigma(\boldsymbol\upmu)}(\boldsymbol z )$.  Write 
$\bast\upalpha = (\bast\upalpha_{1},\bast\upalpha_{2}), \bast\upalpha^{\perp} = (\bast\upalpha_{1}^{\perp},\bast\upalpha_{2}^{\perp}) $
as well as $\upvarepsilon (\bast\upalpha ) =$ $(\upvarepsilon (\bast\upalpha_{1} ), \upvarepsilon (\bast\upalpha_{2} ))$.
It is immediate that ${\rm N}(\bast\upalpha ) =\bast\upalpha_{1}\bast\upalpha_{2}\in\bast\Z^{{\rm N}(\boldsymbol\upmu )[{\rm N}(\boldsymbol\upiota )]}$.
%we claim that 
%\[ {\rm N}(\bast \upalpha )\in \bast\Z^{{\rm N}(\upmu )}_{{\rm N}(\upmu )}({\rm N}(\boldsymbol z)).\]
On the other hand,
\begin{align}\label{normerrorterm} {\rm N}(\bast\upalpha )\cdot {\rm N}(\boldsymbol z)-{\rm N}(\bast\upalpha^{\perp} ) & =
{\rm N}(\upvarepsilon (\bast\upalpha ) )+ \bast\upalpha_{1}^{\perp}(\bast\upalpha_{2}z_{2}-\bast\upalpha_{2}^{\perp})+
\bast\upalpha_{2}^{\perp}(\bast\upalpha_{1}z_{1}-\bast\upalpha_{1}^{\perp}) \nonumber \\
& = {\rm N}(\upvarepsilon(\bast\upalpha ) ) + \bast\upalpha_{1}^{\perp}\cdot\upvarepsilon (\bast\upalpha_{2} )+ \bast\upalpha_{2}^{\perp}\cdot \upvarepsilon (\bast\upalpha_{1} )  .
\end{align}
Since $\bast\upalpha\in \bast\mathcal{O}_{\sigma(\boldsymbol\upmu)}(\boldsymbol z ) $, the image of (\ref{normerrorterm}) by $\langle\cdot\rangle$
belongs to $\bstar\PR\R_{\upvarepsilon}$.
Since 
\[ \langle{\rm N}(\upvarepsilon (\bast\upalpha ) ) \rangle < \langle\bast\upalpha_{1}^{\perp}\cdot \upvarepsilon (\bast\upalpha_{2} )\rangle,
\langle \bast\upalpha_{2}^{\perp}\cdot\upvarepsilon (\bast\upalpha_{1} )\rangle\] we may disregard $\langle{\rm N}(\upvarepsilon (\bast\upalpha ) ) \rangle$,
and therefore the image of (\ref{normerrorterm}) by $\langle\cdot \rangle$ is bounded by ${\rm Tr}(\boldsymbol \upiota )$.
%Suppose that $\bast\upalpha\boldsymbol z -\bast\upalpha^{\perp}\in\bstar\PR\K_{\upvarepsilon}$
%Then 
%\[ {\rm N}(\bast\upalpha\boldsymbol z -\bast\upalpha^{\perp}   ) 
%= {\rm N}(\bast\upalpha){\rm N}(\boldsymbol z)-{\rm N}(\bast \upalpha^{\perp})\in
%\bstar\PR\R_{\upvarepsilon}. \]
%Thus ${\rm N}(\bast \mathcal{O}_{\boldsymbol \upnu}(\uptheta )) 
%\subset \bast\Z (\uptheta^{d})_{{\rm N}(\boldsymbol \upnu)}$. In addition, 
%the relation $\bast\upalpha\cdot \boldsymbol \upmu\in\bstar\PR\K_{\upvarepsilon}$
% implies that ${\rm N}(\bast\upalpha )\cdot {\rm N}(\boldsymbol \upmu)\in \bstar\PR\R$.  The result follows.
\end{proof}

\begin{note}  In view of the nature of the image growth-decay indices occurring in Proposition \ref{normgrowthdecay},
we cannot use the norm to push products down of the form presented in Theorem \ref{Ogrowthdecayprod}.
\end{note}

There is a similar sort of result for the trace.  Given $\boldsymbol\upmu\in\bstar\PR\K$, define the {\bf  lower trace} to be
\[   {\rm tr}( \boldsymbol\upmu) := \min \upmu_{i} . \] Note that if 
$\upmu\in\bstar\PR\R$ then ${\rm tr}(\upmu) ={\rm Tr}( \upmu)=\upmu $.

\begin{prop}\label{TraceKDA} Let $K/\Q$ be of degree $d$.
  Then for any $\uptheta\in\R$,
  \[  {\rm Tr}\bigg(  \bast\mathcal{O}^{\boldsymbol\upmu[\boldsymbol \upiota]}_{\boldsymbol\upnu}( \uptheta )    \bigg)
  \subset \bast\Z^{{\rm tr}(\boldsymbol\upmu )[{\rm Tr}(\boldsymbol\upiota )]}_{{\rm Tr}(\boldsymbol\upnu )}(\uptheta).
  \]
  If $\boldsymbol\upnu=-\boldsymbol\infty$, the result is valid for $\uptheta$ replaced by $\upgamma\in K\subset\K$.
  %respects the growth-decay product:
 %\[   {\rm N}\left(\bast \upalpha {}_{\boldsymbol\upmu}
 %\circled{\small $\boldsymbol z\boldsymbol w$}{}_{\sigma(\boldsymbol\upmu)}\bast \upbeta \right) = 
 %{\rm N}( \bast \upalpha){}_{{\rm N}(\boldsymbol\upmu )}
 %\circled{\small ${\rm N}(\boldsymbol z\boldsymbol w)$}{}_{{\rm N}(\boldsymbol\upmu )}{\rm N}(\bast \upbeta) \]
 \end{prop}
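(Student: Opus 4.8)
The plan is to mimic the proof of the growth-decay filtration statements from \S\ref{gdfiltsection}, now carried out coordinate-by-coordinate in $\bast\K$ and then pushed down via the trace. First I would take $\bast\upalpha\in\bast\mathcal{O}^{\boldsymbol\upmu[\boldsymbol\upiota]}_{\boldsymbol\upnu}(\uptheta)$, write it in place coordinates $\bast\upalpha=(\bast\upalpha_{\uptau})$ with dual $\bast\upalpha^{\perp}=(\bast\upalpha^{\perp}_{\uptau})$, and record the two hypotheses: $\bast\upalpha\cdot\boldsymbol\upmu<\boldsymbol\upiota$ (fine growth) and $\langle\boldsymbol\upvarepsilon(\bast\upalpha)\rangle\leq\boldsymbol\upnu$ (decay), where $\boldsymbol\upvarepsilon(\bast\upalpha)=\bast\upalpha\uptheta-\bast\upalpha^{\perp}$ has $\uptau$-coordinate $\bast\upalpha_{\uptau}\uptheta-\bast\upalpha^{\perp}_{\uptau}$ (recall $\uptheta$ is diagonally embedded, so every coordinate of $\uptheta$ equals $\uptheta$).

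Next I would handle the three assertions of membership in $\bast\Z^{{\rm tr}(\boldsymbol\upmu)[{\rm Tr}(\boldsymbol\upiota)]}_{{\rm Tr}(\boldsymbol\upnu)}(\uptheta)$ separately. For the dual: since ${\rm Tr}(\bast\upalpha^{\perp})=\sum_{\uptau}\bast\upalpha^{\perp}_{\uptau}$ and ${\rm Tr}(\bast\upalpha)=\sum_{\uptau}\bast\upalpha_{\uptau}$, linearity of the trace gives ${\rm Tr}(\bast\upalpha)\uptheta-{\rm Tr}(\bast\upalpha^{\perp})=\sum_{\uptau}(\bast\upalpha_{\uptau}\uptheta-\bast\upalpha^{\perp}_{\uptau})={\rm Tr}(\boldsymbol\upvarepsilon(\bast\upalpha))$, which is an infinitesimal in $\bast\R$; hence ${\rm Tr}(\bast\upalpha)\in\bast\Z(\uptheta)$ with $({\rm Tr}(\bast\upalpha))^{\perp}={\rm Tr}(\bast\upalpha^{\perp})$. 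For the growth: $\langle|{\rm Tr}(\bast\upalpha)^{-1}|\rangle$ equals $\langle|\bast\upalpha_{\uptau_0}|^{-1}\rangle$ for whichever place $\uptau_0$ realizes the maximum modulus, so $\upmu({\rm Tr}(\bast\upalpha))\geq\min_{\uptau}\upmu(\bast\upalpha_{\uptau})={\rm tr}(\boldsymbol\upmu)$ once I know $\bast\upalpha\cdot\boldsymbol\upmu<\boldsymbol\upiota$ forces $\upmu(\bast\upalpha_\uptau)>\boldsymbol\upmu$ in each coordinate; the fine bound ${\rm Tr}(\bast\upalpha)\cdot{\rm tr}(\boldsymbol\upmu)<{\rm Tr}(\boldsymbol\upiota)$ follows because $|{\rm Tr}(\bast\upalpha)|\leq\sum_\uptau|\bast\upalpha_\uptau|$ and Proposition~\ref{troprop}(3) turns the coordinatewise bounds $|\bast\upalpha_\uptau|\cdot{\rm tr}(\boldsymbol\upmu)\le|\bast\upalpha_\uptau|\cdot\upmu_\uptau<\upiota_\uptau$ into a $+$ (=max) bound dominated by ${\rm Tr}(\boldsymbol\upiota)=\max_\uptau\upiota_\uptau$. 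For the decay: $|{\rm Tr}(\boldsymbol\upvarepsilon(\bast\upalpha))|\leq\sum_\uptau|\bast\upalpha_\uptau\uptheta-\bast\upalpha^\perp_\uptau|$, so $\upnu({\rm Tr}(\bast\upalpha))=\langle|{\rm Tr}(\boldsymbol\upvarepsilon(\bast\upalpha))|\rangle\leq\sum_\uptau\langle|\boldsymbol\upvarepsilon(\bast\upalpha)_\uptau|\rangle={\rm Tr}(\langle\boldsymbol\upvarepsilon(\bast\upalpha)\rangle)\leq{\rm Tr}(\boldsymbol\upnu)$, using that ${\rm Tr}$ on projective classes is the max and is order-preserving. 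Finally, for the case $\boldsymbol\upnu=-\boldsymbol\infty$: then each $\boldsymbol\upvarepsilon(\bast\upalpha)_\uptau=0$, i.e.\ $\bast\upalpha_\uptau\upgamma_\uptau=\bast\upalpha^\perp_\uptau$ exactly, so replacing $\uptheta$ by $\upgamma\in K$ (no longer diagonal) still gives ${\rm Tr}(\bast\upalpha\upgamma)={\rm Tr}(\bast\upalpha^\perp)\in\bast\Z$, and the same growth estimate applies with the decay class being $-\infty$.

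The main obstacle I anticipate is bookkeeping the interaction between the trace (which is a \emph{sum} of coordinates, hence governed by max on the projective side) and the \emph{product} structure hidden in the growth index $\boldsymbol\upmu$: I must be careful that ${\rm tr}(\boldsymbol\upmu)=\min_\uptau\upmu_\uptau$ is exactly the right quantity — too-small a growth index survives the sum, too-large does not, and the asymmetry (lower trace for growth, upper trace for decay and for the fine parameter) is the content of the ``growth is multiplicative but not additive, decay is additive but not multiplicative'' slogan stated just above. Once the coordinatewise inequalities are written out, each individual step is a one-line application of Proposition~\ref{troprop} and the definitions in the $K$-Ideology subsection, so I do not expect genuine difficulty beyond getting the traces and lower traces matched to the correct filtration indices.
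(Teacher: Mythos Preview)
Your plan is correct and matches the paper's proof in both structure and content: decay is handled by linearity of the trace (the paper simply cites this as ``already observed''), and the fine growth bound is obtained exactly as you describe, from $|\bast\upalpha_{\uptau}|\cdot{\rm tr}(\boldsymbol\upmu)\leq|\bast\upalpha_{\uptau}|\cdot\upmu_{\uptau}<\upiota_{\uptau}$ together with Proposition~\ref{troprop}(3). One small cleanup: your sentence ``$\upmu({\rm Tr}(\bast\upalpha))\geq\min_{\uptau}\upmu(\bast\upalpha_{\uptau})={\rm tr}(\boldsymbol\upmu)$'' conflates the growth of the coordinate $\upmu(\bast\upalpha_{\uptau})$ with the coordinate $\upmu_{\uptau}$ of the index $\boldsymbol\upmu$, and in any case is unnecessary --- the fine bound ${\rm Tr}(\bast\upalpha)\cdot{\rm tr}(\boldsymbol\upmu)<{\rm Tr}(\boldsymbol\upiota)$ already places ${\rm Tr}(\bast\upalpha)$ in $\bast\Z^{{\rm tr}(\boldsymbol\upmu)}$, so you can drop that clause.
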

 
 \begin{proof}  We have already observed that the trace map preserves decay for $\uptheta\in\R$.  If
 the decay is $-\boldsymbol\infty$ then the trace map preserves the decay for $\upgamma\in K$.  On the other hand, the inequality
 $\bast\upalpha \cdot\boldsymbol\upmu <\boldsymbol\upiota$ may be rewritten
$ (\bast\upalpha_{1}\cdot \upmu_{1},\cdots ,\bast\upalpha_{d}\cdot \upmu_{d})< (\upiota_{1},\dots ,\upiota_{d})$.  
 It follows that
 \[{\rm Tr}(\bast\upalpha)\cdot {\rm tr}(\upmu ) =(\upalpha_{1}+\cdots +\upalpha_{d}) \cdot \min (\upmu_{i})
 %\cdot\cdots\cdot \upmu_{d}) + \cdots +\upalpha_{d}\cdot ( \upmu_{1}\cdot\cdots\cdot \upmu_{d})
 <\upiota_{1}+\cdots + \upiota_{d} = {\rm Tr}(\boldsymbol\upiota ).\]
 \end{proof}
 
  %There is a diagonal map $\bstar\PR\R\hookrightarrow \bstar\PR\K$, with regard to which 
  %we identify $\upmu$ with $(\upmu,\dots, \upmu )$.
 
 \begin{coro}  If $\upmu,\upnu\in \bstar\PR\R_{\upvarepsilon}$, $\uptheta,\upeta\in\R$ and
$ \bast \upalpha \;{}_{\upmu}\circled{\small $\uptheta\upeta$}{}_{\upnu}\bast  \upbeta $
then ${\rm Tr}( \bast \upalpha ) \;{}_{\upmu}\circled{\small $\uptheta\upeta$}{}_{\upnu}{\rm Tr}(\bast  \upbeta )$.
If $\upnu=-\infty$ the result holds for $\upgamma,\updelta\in K$.
 \end{coro}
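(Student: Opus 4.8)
The plan is to combine Proposition~\ref{TraceKDA} with the $K$-ideological arithmetic of Theorem~\ref{Ogrowthdecayprod}, taking advantage of the fact that for indices coming from $\R$ (diagonally embedded in $\K$) the several trace operators $\mathrm{Tr}$ and $\mathrm{tr}$ all collapse to the identity. First I would unwind the meaning of the hypothesis $\bast\upalpha\;{}_{\upmu}\circled{\small $\uptheta\upeta$}{}_{\upnu}\bast\upbeta$: by definition this says that $\bast\upalpha\in\bast\mathcal{O}^{\upmu}_{\upnu}(\uptheta)$ and $\bast\upbeta\in\bast\mathcal{O}^{\upnu}_{\upmu}(\upeta)$ (with $\upmu,\upnu\in\bstar\PR\R_{\upvarepsilon}$ regarded as diagonal elements of $\bstar\PR\K_{\upvarepsilon}$), and that the product $\bast\upalpha\bast\upbeta$ is meaningful as a diophantine approximation of $\uptheta\upeta$. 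In particular $\bast\upalpha\bast\upbeta\in\bast\mathcal{O}^{\upmu\cdot\upnu}(\uptheta\upeta)$ by Theorem~\ref{Ogrowthdecayprod}.

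Next I would apply the trace. Since $\upnu$ is a diagonal index, ${\rm tr}(\upnu)={\rm Tr}(\upnu)=\upnu$, so Proposition~\ref{TraceKDA} (with $\bast\upmu$ there set to our $\upmu$, decay index $\upnu$, and the fine parameter $\boldsymbol\upiota$ taken large enough that the strict inequalities hold vacuously, e.g. $\boldsymbol\upiota=-\boldsymbol\infty$ is not allowed but any $\boldsymbol\upiota$ with $\bast\upalpha\cdot\upmu<\boldsymbol\upiota$ works since $\bast\upalpha\cdot\upmu\in\bstar\PR\R_{\upvarepsilon}$ already) gives ${\rm Tr}(\bast\upalpha)\in\bast\Z^{\upmu}_{\upnu}(\uptheta)$. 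Symmetrically, ${\rm Tr}(\bast\upbeta)\in\bast\Z^{\upnu}_{\upmu}(\upeta)$. Hence both groups $\bast\Z^{\upmu}_{\upnu}(\uptheta)$ and $\bast\Z^{\upnu}_{\upmu}(\upeta)$ are nonzero, which is exactly the content of ${\rm Tr}(\bast\upalpha)\;{}_{\upmu}\owedge_{\upnu}{\rm Tr}(\bast\upbeta)$. To get the full statement with the circled-$\uptheta\upeta$ decoration one must also check that the ordinary product ${\rm Tr}(\bast\upalpha)\cdot{\rm Tr}(\bast\upbeta)$ is a diophantine approximation of $\uptheta\upeta$; but this is immediate from Theorem~\ref{productformula} applied to ${\rm Tr}(\bast\upalpha)\in\bast\Z^{\upmu}_{\upnu}(\uptheta)$ and ${\rm Tr}(\bast\upbeta)\in\bast\Z^{\upnu}_{\upmu}(\upeta)$.

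For the case $\upnu=-\infty$ the only change is that one must work over $K$ rather than $\R$: the hypothesis of Proposition~\ref{TraceKDA} requires $\upgamma,\updelta\in K$ precisely because only then does the trace preserve the vanishing decay index $-\boldsymbol\infty$. So I would restate everything with $\uptheta,\upeta$ replaced by $\upgamma,\updelta\in K\subset\K$, invoke the $\upnu=-\boldsymbol\infty$ clause of Proposition~\ref{TraceKDA} to land in $\bast\Z^{\upmu}_{-\infty}(\upgamma)$ and $\bast\Z^{\upnu}_{-\infty}(\updelta)$ (with $\upnu=-\infty$), and conclude as before.

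The only genuine subtlety — and the step I expect to need the most care — is bookkeeping around the fine-growth parameter $\boldsymbol\upiota$ in Proposition~\ref{TraceKDA}: that proposition is stated for the fine tri-filtered groups $\bast\mathcal{O}^{\boldsymbol\upmu[\boldsymbol\upiota]}_{\boldsymbol\upnu}$, whereas the relation $\circled{\small $\uptheta\upeta$}$ is phrased with the coarse bi-filtration. I would resolve this by noting that $\bast\mathcal{O}^{\upmu}_{\upnu}(\uptheta)=\bigcup_{\boldsymbol\upiota}\bast\mathcal{O}^{\upmu[\boldsymbol\upiota]}_{\upnu}(\uptheta)$, so any given $\bast\upalpha\in\bast\mathcal{O}^{\upmu}_{\upnu}(\uptheta)$ lies in some fine piece, apply Proposition~\ref{TraceKDA} there, and then forget $\boldsymbol\upiota$ again; since ${\rm tr}(\upmu)=\upmu$ when $\upmu$ is diagonal, the image growth index is the expected $\upmu$ and not some strictly smaller quantity. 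Everything else is a routine transcription of the linear-case arguments along coordinates, already carried out in the proofs cited above.
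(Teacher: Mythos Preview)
Your proposal is correct and follows precisely the route the paper intends: the Corollary is stated immediately after Proposition~\ref{TraceKDA} with no separate proof, so the paper is treating it as an immediate consequence of that Proposition together with the fact that for diagonal indices $\upmu,\upnu\in\bstar\PR\R_{\upvarepsilon}$ one has ${\rm tr}(\upmu)={\rm Tr}(\upmu)=\upmu$. Your handling of the fine parameter $\boldsymbol\upiota$ (choose one, apply the Proposition, forget it) is the right way to bridge the tri-filtered statement of Proposition~\ref{TraceKDA} with the bi-filtered relation, and your invocation of Theorem~\ref{productformula} to witness the product as a diophantine approximation of $\uptheta\upeta$ is exactly what the circled notation demands.
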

 
 Thus the trace map respects the {\it approximate ideal arithmetic} of classical (principal) ideals.

\vspace{3mm}

\noindent  {\em $K$-Classification}

\vspace{3mm}

Diophantine approximation by the $K$-integers $\mathcal{O}$, as formulated in \cite{Ge4}, is {\it global},
since it is performed with respect to all the archimedean places at once --  in contrast to the classical notion of 
Diophantine approximation
by elements of $K$ \cite{Bu}, which is {\it local}, framed with respect to a fixed archimedean place.
%In particular, the classification theory of Koksma is one which is not field specific, so it is not reasonable to try to
%consider it within the framework of this chapter.  
It is therefore reasonable to define $K$-versions of the usual linear classification of the reals
as described in \S \ref{nonvanspec}.  Note that the classification theory of Koksma, though based on
approximations by algebraic numbers, 
is not field specific, and is therefore not relevant to the considerations of this section.
%consider it within the framework of this chapter

We say that $\boldsymbol z\in\K-K$ is 
\begin{itemize}
\item[-] {\bf $\boldsymbol K$-badly approximable} if $\bast\mathcal{O}^{\boldsymbol \upmu}_{\boldsymbol \upmu}(\boldsymbol z)=0$
for all $\boldsymbol \upmu\in \bstar\PR\K_{\upvarepsilon }$.
\item[-] {\bf  $\boldsymbol K$-well approximable} if it is not $K$-badly approximable.
\item[-] {\bf  $\boldsymbol K$-very well approximable of exponent $\boldsymbol\upkappa$} if there exists $\boldsymbol \upmu\in  \bstar\PR\K_{\upvarepsilon }$ such that $\bast\mathcal{O}^{\boldsymbol \upmu}_{\boldsymbol \upmu^{\upkappa'}}(\boldsymbol z)=0$ for all
$\upkappa'>\upkappa$ and
\[ \bigcap_{\uplambda\in [1,\upkappa)} \bast\mathcal{O}^{\boldsymbol \upmu}_{\boldsymbol \upmu^{\uplambda}}(\boldsymbol z)\not=0.\]
\item[-] {\bf  $\boldsymbol K$-Liouville} if it is $K$-very well approximable of exponent $\upkappa$ for all $\upkappa>1$. \end{itemize}

Denote these classes by $\mathfrak{B}(K)$, $\mathfrak{W}(K)$, $\mathfrak{W}_{\upkappa}(K)$ and $\mathfrak{W}_{\infty}(K)$, 
any one of which is denoted $\mathfrak{C}(K)$; the $K=\Q$ counterpart
is simply denoted $\mathfrak{C}$.  Note that 
for any class $\mathfrak{C}$ there exists $\uptheta\in\mathfrak{C}$ such that $\uptheta\not\in\upsigma (K)$
for every archimedean place $\upsigma$ of $K$.

\begin{theo}  Let $\uptheta\in\mathfrak{C}$ be such that $\uptheta\not\in\upsigma (K)$
for every archimedean place $\upsigma$ of $K$.
Then $\uptheta\in\mathfrak{C}(K)$.  In particular, the class $\mathfrak{C}(K)$ is nontrivial for all $K/\Q$ of finite degree.
\end{theo}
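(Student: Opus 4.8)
The plan is to show that the diagonal inclusion $\R\hookrightarrow\K$ carries the $\Q$-classification of $\uptheta$ faithfully over to the $K$-classification, provided $\uptheta$ does not come from a place of $K$. The key point is the compatibility already noted in the excerpt, $\bast\Z^{\upmu}_{\upnu}(\uptheta)\subset\bast\mathcal{O}^{\upmu}_{\upnu}(\uptheta)$, together with a converse which uses the hypothesis that $\uptheta\not\in\upsigma(K)$. First I would fix $\uptheta\in\mathfrak C$ with $\uptheta\not\in\upsigma(K)$ for every archimedean place $\upsigma$, so that for no $\bast\upalpha\in\bast\mathcal{O}$ can we have $\bast\upalpha\uptheta-\bast\upalpha^{\perp}=0$ exactly on any coordinate; indeed a $\K$-coordinate identity $\upalpha_{\upsigma}\uptheta=\upalpha^{\perp}_{\upsigma}$ would force $\uptheta\in\upsigma(K)$. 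Thus $\bast\mathcal{O}(\uptheta)=\bast\mathcal{O}(\uptheta)$ and the error $\boldsymbol\upvarepsilon(\bast\upalpha)$ is \emph{coordinatewise} genuinely infinitesimal and nonzero for all infinite $\bast\upalpha$.

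The main structural step is to identify $\bast\mathcal{O}^{\boldsymbol\upmu}_{\boldsymbol\upmu}(\uptheta)$ with a ``diagonal'' piece. I would argue that if $\bast\upalpha\in\bast\mathcal{O}(\uptheta)$ is a nonzero diophantine approximation of the diagonal $\uptheta$, then the integrality of $\bast\upalpha\in\bast\mathcal{O}$ and the fact that $\uptheta$ is the same in every coordinate force $\langle\bast\upalpha_{\upsigma}\rangle$ and $\langle\upvarepsilon(\bast\upalpha_{\upsigma})\rangle$ to be essentially independent of $\upsigma$ up to the finitely many unit-ambiguities; more precisely, since $\bast\upalpha_{\upsigma}\uptheta-\bast\upalpha^{\perp}_{\upsigma}=\upvarepsilon(\bast\upalpha_{\upsigma})$ with $\upvarepsilon(\bast\upalpha_{\upsigma})$ infinitesimal, each coordinate $\bast\upalpha_{\upsigma}$ is itself a real-approximation class in the sense of $\bast\Z(\uptheta)$ after passing to the norm/trace, and the growth-decay index $\boldsymbol\upmu$ cannot be genuinely inhomogeneous for an approximation of a \emph{diagonal} target. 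Combined with the trace estimate of Proposition \ref{TraceKDA} (which sends $\bast\mathcal{O}^{\boldsymbol\upmu[\boldsymbol\upiota]}_{\boldsymbol\upnu}(\uptheta)$ to $\bast\Z^{\mathrm{tr}(\boldsymbol\upmu)[\mathrm{Tr}(\boldsymbol\upiota)]}_{\mathrm{Tr}(\boldsymbol\upnu)}(\uptheta)$), and the inclusion $\bast\Z^{\upmu}_{\upnu}(\uptheta)\subset\bast\mathcal{O}^{\upmu}_{\upnu}(\uptheta)$ in the reverse direction, I would deduce $\bast\mathcal{O}^{\boldsymbol\upmu}_{\boldsymbol\upnu}(\uptheta)\neq 0$ for a diagonal pair $(\boldsymbol\upmu,\boldsymbol\upnu)=(\upmu,\upnu)$ if and only if $\bast\Z^{\upmu}_{\upnu}(\uptheta)\neq 0$. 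This identifies the ``diagonal slice'' of ${\rm Spec}_K(\uptheta)$ with ${\rm Spec}(\uptheta)$.

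Given that identification, the classification transfers class by class. If $\uptheta\in\mathfrak B$, then $\bast\Z^{\upmu}_{\upmu}(\uptheta)=0$ for all $\upmu$ by Theorem \ref{badapproxchar}, hence the diagonal flat slice of $\bast\mathcal{O}$ vanishes; but one must also rule out \emph{non-diagonal} $\boldsymbol\upmu$ with $\bast\mathcal{O}^{\boldsymbol\upmu}_{\boldsymbol\upmu}(\uptheta)\neq 0$, and this is exactly where $\uptheta\not\in\upsigma(K)$ is used: an inhomogeneous growth vector $\boldsymbol\upmu$ would require some coordinate $\bast\upalpha_{\upsigma}$ to be much larger or smaller than others while still approximating the \emph{same} real $\uptheta$ with an error of matching decay, which the product/trace relations forbid when no coordinate identity $\bast\upalpha_{\upsigma}\uptheta=\bast\upalpha^{\perp}_{\upsigma}$ holds. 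For $\mathfrak W$, $\mathfrak W_{\upkappa}$, $\mathfrak W_{\infty}$ I would run the corresponding spectral criteria (Theorems \ref{wellapprox}, \ref{verywellapprox}, Corollary \ref{LiouvilleSpec}) through the same diagonal identification, using that the power operation $\upmu\mapsto\upmu^{\upkappa}$ on $\bstar\PR\R_{\upvarepsilon}$ matches its diagonal image $\boldsymbol\upmu\mapsto\boldsymbol\upmu^{\upkappa}$ in $\bstar\PR\K_{\upvarepsilon}$. Finally, nontriviality of $\mathfrak C(K)$ follows from the remark just before the statement, that each class $\mathfrak C$ contains some $\uptheta\not\in\upsigma(K)$ for all archimedean $\upsigma$, which is a cardinality argument since $\bigcup_{\upsigma}\upsigma(K)$ is countable while each $\mathfrak C$ has cardinality the continuum.

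\textbf{Expected main obstacle.} The delicate part is the ``no inhomogeneous growth'' claim: ruling out non-diagonal $\boldsymbol\upmu$ for which $\bast\mathcal{O}^{\boldsymbol\upmu}_{\boldsymbol\upmu}(\uptheta)$ could be nonzero when $\uptheta$ is a diagonal element. The Salem-number and Pisot–Vijayaraghavan examples in the excerpt (Theorem \ref{PVnumber}) show that inhomogeneous growth genuinely occurs for $\bast\upalpha\in\bast\mathcal{O}$ approximating a \emph{non-diagonal} target; the content of the present theorem is that this cannot happen for a diagonal target when $\uptheta$ avoids all $\upsigma(K)$, and pinning down exactly why — presumably because any coordinate with anomalous growth would, via the relation $\bast\upalpha_{\upsigma}\uptheta-\bast\upalpha_{\upsigma}^{\perp}=\upvarepsilon(\bast\upalpha_{\upsigma})$, force $\uptheta$ to be extremely well (or badly) approximated in a way incompatible with the other coordinates unless an exact identity holds — is where the real work lies.
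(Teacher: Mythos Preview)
Your proposal has the right core ingredients—the inclusion $\bast\Z^{\upmu}_{\upnu}(\uptheta)\subset\bast\mathcal{O}^{\upmu}_{\upnu}(\uptheta)$ and the trace map of Proposition~\ref{TraceKDA}—but it is considerably more elaborate than the paper's argument and never actually resolves the obstacle you yourself flag. The paper's first simplification, which you miss, is that only the case $\mathfrak{C}=\mathfrak{B}$ requires any work: for $\mathfrak{W}$, $\mathfrak{W}_{\infty}$ (and the nonvanishing half of $\mathfrak{W}_{\upkappa}$) the defining conditions are existential, so the diagonal inclusion already promotes a $\Q$-witness to a $K$-witness and there is nothing further to do. Your plan to establish an if-and-only-if identification of the diagonal slice of ${\rm Spec}_{K}(\uptheta)$ with ${\rm Spec}(\uptheta)$ is therefore more than what is needed.

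For $\mathfrak{B}$ the paper argues by contradiction: assume $0\neq\bast\upalpha\in\bast\mathcal{O}^{\boldsymbol\upmu}_{\boldsymbol\upmu}(\uptheta)$ for some $\boldsymbol\upmu\in\bstar\PR\K_{\upvarepsilon}$. The hypothesis $\uptheta\notin\upsigma(K)$ is invoked for exactly one purpose—to exclude any coordinate of $\boldsymbol\upmu$ being $-\infty$, since $\upmu_{\upsigma}=-\infty$ on the decay side would force $\bast\upalpha_{\upsigma}\uptheta=\bast\upalpha^{\perp}_{\upsigma}$ exactly and hence $\uptheta\in\upsigma(K)$. With all coordinates proper infinitesimals, the paper passes to a dominating \emph{diagonal} index $\upmu\in\bstar\PR\R_{\upvarepsilon}$ with $\boldsymbol\upmu<\upmu<1$ for which $\bast\upalpha\in\bast\mathcal{O}^{\upmu}_{\upmu}(\uptheta)$, and then applies Proposition~\ref{TraceKDA} at that diagonal index (where ${\rm tr}(\upmu)={\rm Tr}(\upmu)=\upmu$) to obtain $0\neq{\rm Tr}(\bast\upalpha)\in\bast\Z^{\upmu}_{\upmu}(\uptheta)$, contradicting $\uptheta\in\mathfrak{B}$. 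So the paper does \emph{not} try to prove that the coordinate growths of an $\mathcal{O}$-approximation of a diagonal target are forced to be homogeneous, which is the route your ``expected main obstacle'' paragraph sketches; it simply replaces the arbitrary $\boldsymbol\upmu$ by a scalar one once the $-\infty$ degeneracy is ruled out. Your heuristic that ``product/trace relations forbid'' inhomogeneous growth is left entirely unjustified in your outline, and even if it could be made precise it is a substantially harder statement than the one the paper actually uses.
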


\begin{proof} The only nontrivial case is when $\mathfrak{C}(K)=\mathfrak{B}(K)$. Suppose that there exists $\boldsymbol\upmu\in\bstar\PR\K_{\upvarepsilon}$
with $\bast\mathcal{O}^{\boldsymbol \upmu}_{\boldsymbol \upmu}(\boldsymbol \uptheta)\not=0$.  Since $\uptheta\not\in\upsigma (K)$
for every archimedean place $\upsigma$, none of the coordinates of $\boldsymbol\upmu$ are equal to $-\infty$.  Therefore,
we may find a diagonal element $\upmu = (\upmu,\dots ,\upmu )$, where $\upmu\in\bstar\PR\R_{\upvarepsilon}$, such that
$\boldsymbol\upmu<\upmu<1$ in which $\bast\upalpha\cdot\upmu\in \PR\K_{\upvarepsilon}$.  For such and element
we have {\it a fortiori} that $\boldsymbol\upnu (\bast \upalpha )\leq \upmu$ so that $\bast\upalpha\in
\bast\mathcal{O}^{ \upmu}_{ \upmu}(\boldsymbol \uptheta)\not=0$ as well.  Applying
Proposition \ref{TraceKDA}, and using the fact that ${\rm Tr}(\upmu)={\rm tr}(\upmu )=\upmu$ we obtain
$0\not= {\rm Tr}(\bast\upalpha )\in\bast\Z^{\upmu}_{\upmu}(\uptheta )$, which is a contradiction.
\end{proof}

The $K$-classes defined above are Galois natural: 
\begin{theo}  If $K/\Q$ is a Galois extension, then for any $K$-class $\mathfrak{C}(K)$ and $\upsigma\in {\rm Gal}(K/\Q)$, $\upsigma (\mathfrak{C}(K))=\mathfrak{C}(K)$, acting as the identity on $\mathfrak{C}\subset\mathfrak{C}(K)$.
\end{theo}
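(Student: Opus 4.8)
The plan is to reduce the statement to the corresponding fact about the $K$-ideology groups $\bast\mathcal{O}^{\boldsymbol\upmu}_{\boldsymbol\upnu}(\boldsymbol z)$ and then invoke Theorem \ref{galrespects}. The key observation is that each of the four $K$-classes is \emph{defined} by a condition that quantifies over the nonvanishing of such groups at various growth-decay indices, and Theorem \ref{galrespects} tells us that $\upsigma$ carries $\bast\mathcal{O}^{\boldsymbol\upmu[\boldsymbol\upiota]}_{\boldsymbol\upnu}(\boldsymbol z)$ isomorphically onto $\bast\mathcal{O}^{\upsigma(\boldsymbol\upmu)[\upsigma(\boldsymbol\upiota)]}_{\upsigma(\boldsymbol\upnu)}(\upsigma(\boldsymbol z))$. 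In particular $\bast\mathcal{O}^{\boldsymbol\upmu}_{\boldsymbol\upnu}(\boldsymbol z)\neq 0$ if and only if $\bast\mathcal{O}^{\upsigma(\boldsymbol\upmu)}_{\upsigma(\boldsymbol\upnu)}(\upsigma(\boldsymbol z))\neq 0$; equivalently $(\boldsymbol\upmu,\boldsymbol\upnu)\in{\rm Spec}_K(\boldsymbol z)$ iff $(\upsigma(\boldsymbol\upmu),\upsigma(\boldsymbol\upnu))\in{\rm Spec}_K(\upsigma(\boldsymbol z))$. So the first step is simply to record that $\upsigma$ induces an order-preserving bijection of $\bstar\PR\K_{{\rm N}(\upvarepsilon)}$ onto itself (coordinate permutation preserves the partial order, the norm, and the subsets $\bstar\PR\K_{\upvarepsilon}$, $\bstar\PR\K_{{\rm N}(\upvarepsilon)}$), and that it commutes with the Frobenius-type power operations $\boldsymbol\upmu\mapsto\boldsymbol\upmu^{\upkappa}$ since those act coordinatewise.

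With that bijection in hand, the second step is to run through the four cases. For $\mathfrak{B}(K)$: $\boldsymbol z$ is $K$-badly approximable iff $\bast\mathcal{O}^{\boldsymbol\upmu}_{\boldsymbol\upmu}(\boldsymbol z)=0$ for all $\boldsymbol\upmu\in\bstar\PR\K_{\upvarepsilon}$; applying the bijection $\boldsymbol\upmu\mapsto\upsigma^{-1}(\boldsymbol\upmu)$ and Theorem \ref{galrespects} shows this holds for $\boldsymbol z$ iff it holds for $\upsigma(\boldsymbol z)$. The $K$-well-approximable case is the logical negation, hence also preserved. For $\mathfrak{W}_{\upkappa}(K)$: the defining condition asks for existence of $\boldsymbol\upmu$ with $\bast\mathcal{O}^{\boldsymbol\upmu}_{\boldsymbol\upmu^{\upkappa'}}(\boldsymbol z)=0$ for all $\upkappa'>\upkappa$ and $\bigcap_{\uplambda\in[1,\upkappa)}\bast\mathcal{O}^{\boldsymbol\upmu}_{\boldsymbol\upmu^{\uplambda}}(\boldsymbol z)\neq 0$; since $\upsigma$ commutes with the power maps and with intersections (being an isomorphism), the witness $\boldsymbol\upmu$ for $\boldsymbol z$ transports to the witness $\upsigma(\boldsymbol\upmu)$ for $\upsigma(\boldsymbol z)$, and conversely. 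The $K$-Liouville case is the conjunction over all $\upkappa>1$ of the $\mathfrak{W}_{\upkappa}(K)$ condition, so it follows. Finally, the claim that $\upsigma$ acts as the identity on $\mathfrak{C}\subset\mathfrak{C}(K)$ uses that for $\uptheta\in\R$ diagonally embedded, $\upsigma(\uptheta)=\uptheta$ (the Galois action is trivial on the diagonal copy of $\R$, as noted at the start of \S\ref{KIdeologicalArith}), so the diagonal reals inside $\mathfrak{C}(K)$ are literally fixed pointwise.

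I expect the proof to be almost entirely bookkeeping; the genuine content is already packaged in Theorem \ref{galrespects}. The one point that deserves a sentence of care is verifying that $\upsigma$ really does permute $\bstar\PR\K_{\upvarepsilon}$ (and not merely $\bstar\PR\K_{{\rm N}(\upvarepsilon)}$), so that the quantifiers in the definitions of $\mathfrak{B}(K)$ and $\mathfrak{W}_{\upkappa}(K)$ range over a $\upsigma$-invariant set; this is immediate because $\bstar\PR\K_{\upvarepsilon}=\bstar\PR\K_{{\rm Tr}(\boldsymbol\upvarepsilon)}$ is cut out by a condition on $\max(\upmu_i)$, which is permutation-invariant. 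A second mild point: one should note that the power operations $\boldsymbol\upmu^{\upkappa}$ appearing in the $K$-classification are defined coordinatewise via the Frobenius action of $\R^{\times}_+$ (cf.\ \S\ref{tropical}), and a coordinate permutation commutes with any coordinatewise map; hence no subtlety arises there. Thus the "hard part" is not hard — it is simply making sure every quantified object (indices, powers, intersections) is transported consistently by $\upsigma$, which Theorem \ref{galrespects} guarantees. I would present the argument as a single short paragraph handling all four classes uniformly, followed by the one-line remark about the fixed diagonal reals.

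\begin{proof}
Since $K/\Q$ is Galois, each $\upsigma\in{\rm Gal}(K/\Q)$ acts on $\bast\K$ by permuting the place-coordinates, hence induces an order-preserving bijection of $\bstar\PR\K$ onto itself which preserves the norm map ${\rm N}$, the trace-type operations, and therefore the subsets $\bstar\PR\K_{\upvarepsilon}=\bstar\PR\K_{{\rm Tr}(\boldsymbol\upvarepsilon)}$ and $\bstar\PR\K_{{\rm N}(\upvarepsilon)}$; it also commutes with the coordinatewise power operations $\boldsymbol\upmu\mapsto\boldsymbol\upmu^{\upkappa}$. By Theorem \ref{galrespects}, $\upsigma\big(\bast\mathcal{O}^{\boldsymbol\upmu}_{\boldsymbol\upnu}(\boldsymbol z)\big)=\bast\mathcal{O}^{\upsigma(\boldsymbol\upmu)}_{\upsigma(\boldsymbol\upnu)}(\upsigma(\boldsymbol z))$, so $\bast\mathcal{O}^{\boldsymbol\upmu}_{\boldsymbol\upnu}(\boldsymbol z)\neq 0$ if and only if $\bast\mathcal{O}^{\upsigma(\boldsymbol\upmu)}_{\upsigma(\boldsymbol\upnu)}(\upsigma(\boldsymbol z))\neq 0$, and $\upsigma$ carries intersections to intersections.

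Now let $\mathfrak{C}(K)$ be one of the four classes. If $\boldsymbol z\in\mathfrak{B}(K)$, then $\bast\mathcal{O}^{\boldsymbol\upmu}_{\boldsymbol\upmu}(\boldsymbol z)=0$ for all $\boldsymbol\upmu\in\bstar\PR\K_{\upvarepsilon}$; replacing $\boldsymbol\upmu$ by $\upsigma^{-1}(\boldsymbol\upmu)$ (which again ranges over all of $\bstar\PR\K_{\upvarepsilon}$) and applying Theorem \ref{galrespects} gives $\bast\mathcal{O}^{\boldsymbol\upmu}_{\boldsymbol\upmu}(\upsigma(\boldsymbol z))=0$ for all $\boldsymbol\upmu$, so $\upsigma(\boldsymbol z)\in\mathfrak{B}(K)$; the reverse implication follows by applying $\upsigma^{-1}$. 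Hence $\mathfrak{W}(K)$, being the complement of $\mathfrak{B}(K)$ in $\K-K$, is likewise $\upsigma$-invariant. If $\boldsymbol z\in\mathfrak{W}_{\upkappa}(K)$, pick a witness $\boldsymbol\upmu$ with $\bast\mathcal{O}^{\boldsymbol\upmu}_{\boldsymbol\upmu^{\upkappa'}}(\boldsymbol z)=0$ for all $\upkappa'>\upkappa$ and $\bigcap_{\uplambda\in[1,\upkappa)}\bast\mathcal{O}^{\boldsymbol\upmu}_{\boldsymbol\upmu^{\uplambda}}(\boldsymbol z)\neq 0$; since $\upsigma$ commutes with the power maps and with intersections, $\upsigma(\boldsymbol\upmu)$ is a witness of the same kind for $\upsigma(\boldsymbol z)$, so $\upsigma(\boldsymbol z)\in\mathfrak{W}_{\upkappa}(K)$, and again the converse follows from $\upsigma^{-1}$. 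Finally $\mathfrak{W}_{\infty}(K)=\bigcap_{\upkappa>1}\mathfrak{W}_{\upkappa}(K)$ is an intersection of $\upsigma$-invariant sets, hence $\upsigma$-invariant. In every case $\upsigma(\mathfrak{C}(K))=\mathfrak{C}(K)$.

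For the last assertion, recall that ${\rm Gal}(K/\Q)$ acts trivially on the diagonal copy of $\R$ in $\K$. Thus for $\uptheta\in\mathfrak{C}\subset\mathfrak{C}(K)$ we have $\upsigma(\uptheta)=\uptheta$, so $\upsigma$ restricts to the identity on $\mathfrak{C}$.
\end{proof}
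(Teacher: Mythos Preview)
Your proof is correct and follows exactly the same approach as the paper, which simply writes ``This follows from Theorem \ref{galrespects} and the continuity of the Galois action on $\bstar\PR\K$.'' Your version is just a careful unpacking of that one-liner: the ``continuity'' remark in the paper is precisely your observation that $\upsigma$ preserves $\bstar\PR\K_{\upvarepsilon}$ and commutes with the coordinatewise power maps, after which each class definition transports verbatim.
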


\begin{proof}  This follows from Theorem \ref{galrespects} and the continuity of the Galois action on $\bstar\PR\K$.
\end{proof}

If $\uptheta\in\mathfrak{C}$ but $\uptheta\not\in \mathfrak{C}(K)$ then we say that $\uptheta$ {\bf  splits} in $K$.  For example,Theorem \ref{PVnumber} says that
any quadratic Pisot-Vijayaraghavan number $\uptheta$ with a conjugate in $K$ splits in $K$.
The factorization symbols $\Uparrow_{K}$, $\Downarrow_{K}$, $\Updownarrow_{K}$ and $\talloblong_{K}$ have the expected meanings
in the $K$-context, are Galois natural, and give rise to notions of $K$-antiprimes $K$-omnidivisors, etc.  The treatment of these and more advanced topics
(such as $K$-flat arithmetic) will be deferred to another study. 
%When a real number $\uptheta$ splits, one looks for an improved divisibility.  

 The $\mathcal{O}$-approximation theory may be merged with the matrix theory in the more or less expected way.  Given $K/\Q$ a finite extension of degree $d$, 
consider the sets $\tilde{\mathcal{M}}(\K)\supset\mathcal{M}(\K)$ of matrices and square matrices with entries
in $\K$, equipped with the Kronecker product as well as the Kronecker sum in the square case.   For $\boldsymbol\Uptheta\in \tilde{\mathcal{M}}(\K)$ of dimension $r\times s$, the diophantine approximation group is denoted $\bast\mathcal{O}^{s}(\boldsymbol\Uptheta)$ (see \cite{Ge4}).

 If 
$\bast\boldsymbol\upalpha = (\bast\upalpha_{1},\dots ,\bast\upalpha_{s})\in\mathcal{O}^{s}$, let
$\bast\upalpha_{i}\in\K$ have place-indexed coordinates 
\[ \bast\upalpha_{i}=(\bast\upalpha_{i,\tau_{1}},\dots ,\bast\upalpha_{i,\tau_{d}})\]
and define
\[ \overline{|\bast\boldsymbol\upalpha|} = (\max_{i=1,\dots ,s} |\bast\upalpha_{i,\tau_{1}}  |,\dots ,\max_{i=1,\dots ,s} |\bast\upalpha_{i,\tau_{d}}  |  ) 
\in\bast\R_{+}^{d}. \]
Write $\boldsymbol\upmu (  \bast\boldsymbol\upalpha )=\langle \overline{|\bast\boldsymbol\upalpha|} \rangle^{-1}$ (well-defined provided
$\bast\boldsymbol\upalpha$ is not the zero vector) and similarly
 $\boldsymbol\upnu (  \bast\boldsymbol\upalpha ) =  \overline{|\upvarepsilon(\bast\boldsymbol\upalpha)|}$.
With these definitions we obtain the approximate ideal 
\[ \bast\mathcal{O}^{s}(\boldsymbol\Uptheta)=\{(\bast\mathcal{O}^{s})^{\boldsymbol\upmu[\boldsymbol\upiota]}_{\boldsymbol\upnu}(\boldsymbol\Uptheta)\}.  \]
Approximate ideal arithmetic in this setting takes the anticipated form upon combining Theorems \ref{Ogrowthdecayprod} and \ref{matrixideoarith}.

Restrict to the case of row vectors:
let $\boldsymbol\uptheta = (\uptheta_{1},\dots, \uptheta_{s})\in\K^{s}$, so that 
\[\bast\mathcal{O}^{s} (\boldsymbol\uptheta) = \{\bast\boldsymbol \upalpha\in\mathcal{O}^{s}|\; \exists\bast \upalpha^{\perp}\in
\bast\mathcal{O}\text{ s.t. }\boldsymbol\uptheta\odot\bast\boldsymbol \upalpha\simeq \bast \upalpha^{\perp}\in\bast\mathcal{O}\}\] 
where $\odot$ is the dot product.   The dual group 
\[ \bast \mathcal{O} (\boldsymbol\uptheta)^{\perp}=\{ \bast \upalpha^{\perp}|\;\exists  \bast\boldsymbol \upalpha\in\bast\mathcal{O}^{s} \text{ s.t. }\boldsymbol\uptheta\odot\bast\boldsymbol \upalpha\simeq \bast \upalpha^{\perp}   \}\subset\bast\mathcal{O} \] 
is  the {\bf nonprincipal diophantine approximation group} generated by the coordinates of $\boldsymbol\uptheta$.   Note
that if $\boldsymbol\upgamma = (\upgamma_{1},\upgamma_{2} )$ for $\upgamma_{1},\upgamma_{2}\in\mathcal{O}$ 
then $\bast \mathcal{O} (\boldsymbol\upgamma)^{\perp}$ is the ideal generated
by $\upgamma_{1},\upgamma_{2}$ in $\bast\mathcal{O}$.   If the Kronecker
product $\bast\boldsymbol\upalpha\otimes\bast\boldsymbol\upbeta$ for $\bast\boldsymbol\upalpha\in \bast\mathcal{O}^{s} (\boldsymbol\uptheta)$, $\bast\boldsymbol\upbeta\in \bast\mathcal{O}^{s'} (\boldsymbol\upeta)$ defines via approximate ideal arithmetic
an element of $\bast\mathcal{O}^{ss'} (\boldsymbol\uptheta\otimes\boldsymbol\upeta)$ then the corresponding
duals multiply as elements of $\bast\mathcal{O}$:
\[   (\bast\boldsymbol\upalpha\otimes\bast\boldsymbol\upbeta)^{\perp_{\boldsymbol\uptheta\otimes\boldsymbol\upeta}} 
=\bast\upalpha^{\perp_{\boldsymbol\uptheta}}\bast\upbeta^{\perp_{\boldsymbol\upeta}}.  \]
In particular, if we restrict to vectors whose entries belong to $K$ we recover the multiplicative arithmetic of ideals:
\begin{itemize}
\item[-] If $\boldsymbol\upgamma\in K^{s}$ then $\bast\mathcal{O}(\boldsymbol\upgamma)^{\perp}$ is the ideal generated by
the coordinates of $\boldsymbol\upgamma$.  Thus there exists an element $\boldsymbol\upgamma_{0}\in K^{2}$ such
that $\bast\mathcal{O}(\boldsymbol\upgamma)^{\perp}=\bast\mathcal{O}(\boldsymbol\upgamma_{0})^{\perp}$.
\item[-] The approximate ideal arithmetic of the Kronecker product, on the level of dual groups, is the pairing
\[   \bast\mathcal{O}(\boldsymbol\upgamma)^{\perp}\times  \bast\mathcal{O}(\boldsymbol\upgamma')^{\perp} 
\overset{\cdot}{\longrightarrow}  \bast\mathcal{O}((\boldsymbol\upgamma\otimes \boldsymbol\upgamma')_{0})^{\perp}.
 \]
 defined by the ordinary product in $\bast\mathcal{O}$.
\end{itemize}

\section{Decoupled Approximate Ideals and the Approximate Ideal Class Monoid}\label{ideologicalclasssection}

In this section we introduce the decoupled approximate ideals, the set of which extends the classical 
ideal class group.  The development offered is prefatory, intended primarily to provide structural harmony in these closing pages, relating 
approximate ideal arithmetic with the classical product of (nonprincipal) ideals.
A more evolved version of the ideas in this section will appear in the third installment in this series of papers \cite{Ge5}, in the context of Galois 
and Class Field Theory.

Let $K/\Q$ be of finite degree, $\K$ the associated Minkowski space. 
%In this section we describe a $K$-ideological counterpart of the classical theory of $K$-ideal classes.
%This is not entirely straightforward due to the fact that the diophantine approximation groups would appear to generalize only
%principal ideals.  The act of decoupling the numerator and denominator ideologies will allow us to reproduce the classical non principal ideals.
%We begin by defining the concept of {\bf {\small ideological class}}.  
For $\boldsymbol z\in\K$ define the {\bf  decoupled diophantine approximation group}
by
\[ \bast[\mathcal{O}](\boldsymbol z) :=  \bast\mathcal{O}(\boldsymbol z)+  \bast\mathcal{O}(\boldsymbol z)^{\perp}\subset \bast \mathcal{O}.
% \{ \bast\upalpha + \bast\upbeta^{\perp}|\;  \bast\upalpha \in  \bast[\mathcal{O}](\boldsymbol z),\;
%\bast\upbeta \in  \bast\mathcal{O}(\boldsymbol z)^{\perp}
 %\} 
 \]
If $\boldsymbol z\in\K$ is invertible then  
  \[ \bast[\mathcal{O}](\boldsymbol z) =   \bast[\mathcal{O}](\boldsymbol z^{-1})=\bast[\mathcal{O}](\boldsymbol z) + \bast[\mathcal{O}](\boldsymbol z^{-1})\subset \bast\mathcal{O}(\boldsymbol z ,\boldsymbol z^{-1})^{\perp}.\]  One can regard
 $ \bast[\mathcal{O}](\boldsymbol z)$ as the outcome of ``decoupling'' (making independent) numerator denominator pairs associated to $\boldsymbol z$.   Note that $\bast[\mathcal{O}](\boldsymbol z)$ has a natural approximate ideal structure defined by the subgroups
 \begin{align}\label{decoupledideology}
 \bast[\mathcal{O}]^{\boldsymbol \upmu[\boldsymbol \upiota]}_{\boldsymbol \upnu}(\boldsymbol z) = \{ \bast \upalpha + \bast\upbeta^{\perp}|
\bast \upalpha,\bast \upbeta\in \bast\mathcal{O}^{\boldsymbol \upmu[\boldsymbol \upiota]}_{\boldsymbol \upnu}(\boldsymbol z)
%\bast \upbeta^{\perp}\in \bast\mathcal{O}^{\boldsymbol \upmu}_{\boldsymbol \upnu}(\boldsymbol z)
 \}.
 \end{align}
 
For $\upgamma = \upalpha/\upbeta\in K$, $\upalpha,\upbeta\in\mathcal{O}$, $\bast[\mathcal{O}](\upgamma)$
is an ideal and
\begin{align}\label{specialcase} 
\bast K\supset \bast (\upgamma ,1)\supset\bast[\mathcal{O}](\upgamma) \supset \bast (\upalpha,\upbeta )=\bast\mathcal{O}(\upalpha,\upbeta)^{\perp} .
\end{align}
(In the above, $\bast (x,y)$ is the ultrapower of the fractional ideal $(x,y)$: the $\bast\mathcal{O}$-module generated by $x,y$.)
Note that for $\upalpha\in\mathcal{O}$,
\[ \bast[\mathcal{O}](\upalpha)=\bast\mathcal{O},\] so that the construction
$\bast[\mathcal{O}](\cdot )$ already identifies
the ultrapowers of classical principal ideals with the identity ideal $\bast (1)=\bast \mathcal{O}$.    
Moreover,  the association $\bast (\upalpha,\upbeta )\mapsto \bast[\mathcal{O}](\upalpha/\upbeta)$ is a projective invariant:
if $x\in\mathcal{O}$ then $\bast (x\upalpha,x\upbeta )$ is assigned the ideal $\bast[\mathcal{O}](\upalpha/\upbeta)$ as well.

The Theorem which follows emphasizes the importance of the approximate ideal structure in distinguishing decoupled diophantine approximation groups.
Its proof requires Kronecker's criterion (\cite{Ca}, pg. 53) for the existence of inhomogeneous simultaneous approximations, which we recast
in the language of matrix diophantine approximations.
Consider an $r\times s$ real matrix $\Uptheta$ and a vector $\boldsymbol w = (w_{1},\dots ,w_{r})^{T}\in\R^{r}$.
Kronecker's criterion declares that the following statements are equivalent:

\begin{enumerate}  
\item[K1.] There exists $\bast\boldsymbol n^{\perp}\in\bast\Z^{r}$, $\bast\boldsymbol n\in\bast\Z^{s}$ such that 
\[ \Uptheta \bast\boldsymbol n -\bast\boldsymbol n^{\perp} \simeq \boldsymbol w.\]
\item[K2.] If $\boldsymbol m = (m_{1},\dots ,m_{r})\in\Z^{r}$ satisfies $\boldsymbol m \cdot\Uptheta\in \Z^{s}$ then $\boldsymbol m\cdot \boldsymbol w\in\Z$.
\end{enumerate}

\begin{theo}\label{cornerstructure}  For  $\uptheta\in\R-\Q$, $\bast\Z (\uptheta ,\uptheta^{-1})^{\perp}=\bast\Z$.  Moreover, $\bast[\Z]( \uptheta) \subsetneqq\bast\Z$ as groups
$\Leftrightarrow$ $\uptheta$ is a quadratic irrationality.  
\end{theo}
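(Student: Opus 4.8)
The plan is to prove the two assertions separately, both by appeal to Kronecker's criterion in the form K1 $\Leftrightarrow$ K2 stated just above. For the first assertion, $\bast\Z(\uptheta,\uptheta^{-1})^{\perp}=\bast\Z$, I would recall that $\bast\Z(\uptheta,\uptheta^{-1})^{\perp}$ is the group of duals arising from simultaneous diophantine approximations of the pair $(\uptheta,\uptheta^{-1})$, i.e.\ it records the possible values of $\bast n^{\perp}$ for which there exist $\bast n$ with $\uptheta\bast n\simeq\bast n^{\perp}$ and $\uptheta^{-1}\bast n\simeq (\bast n^{\perp})'$ for some second dual. The claim that this dual group is all of $\bast\Z$ amounts to: for every $\bast N\in\bast\Z$ there is a simultaneous approximation whose first dual coordinate is $\bast N$. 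I would set this up as a matrix problem with $\Uptheta = (\uptheta,\uptheta^{-1})^T$ viewed as a $2\times 1$ family of linear forms and a suitable inhomogeneous target, then verify condition K2: if $\boldsymbol m=(m_1,m_2)\in\Z^2$ satisfies $m_1\uptheta+m_2\uptheta^{-1}\in\Z$, then since $\uptheta\not\in\Q$ is not forced to be quadratic here, the only way $m_1\uptheta+m_2\uptheta^{-1}=m_1\uptheta+m_2\uptheta^{-1}$ is a rational integer for $(m_1,m_2)\ne(0,0)$ is if $\uptheta$ satisfies $m_1\uptheta^2 - k\uptheta + m_2=0$ for some $k\in\Z$, i.e.\ $\uptheta$ is quadratic. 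So for $\uptheta$ non-quadratic irrational, K2 holds vacuously (only $\boldsymbol m = 0$ works) and every residue is attainable; for $\uptheta$ quadratic irrational one must check the single nontrivial relation is compatible with the chosen target, which it will be when the target dual is an integer. Combining, the dual group is $\bast\Z$ in all irrational cases.

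For the second assertion, I would argue both directions. ($\Leftarrow$) Suppose $\uptheta$ is a quadratic irrationality, so $a\uptheta^2+b\uptheta+c=0$ with $a,b,c\in\Z$, $a\ne 0$, $\gcd(a,b,c)=1$. I would show $\bast[\Z](\uptheta) = \bast\Z(\uptheta)+\bast\Z(\uptheta)^{\perp}$ is a proper subgroup of $\bast\Z$ by producing a fixed modulus $\delta>1$ such that every element of $\bast[\Z](\uptheta)$ lies in a proper union of residue classes, or more directly by exhibiting an integer (such as $1$, or a well-chosen small integer depending on $a,b,c$) that is not of the form $\bast n + \bast m^{\perp}$ for $\bast n,\bast m\in\bast\Z(\uptheta)$. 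The mechanism is that quadraticity forces a $\Z$-linear relation $\boldsymbol m\cdot(\uptheta,\ast)\in\Z$ with $\boldsymbol m\ne 0$, and via K2 this obstructs certain inhomogeneous targets, pinning the attainable duals to a proper sublattice; the decoupled sum inherits this proper containment. ($\Rightarrow$) Conversely, if $\uptheta$ is not a quadratic irrationality (and not rational), then by the first part (applied with $\uptheta$ in place of the pair, and exploiting that $\bast\Z(\uptheta)^{\perp}=\bast\Z(\uptheta^{-1})$ via the duality isomorphism \eqref{dualityiso}) the numerator duals already fill $\bast\Z$: more precisely, $\bast\Z(\uptheta)^{\perp} \supset$ enough residues, and adding $\bast\Z(\uptheta)$ (which contains arbitrarily large denominators realizing every residue mod any fixed modulus, since $\uptheta\not\in\Q$) we get $\bast[\Z](\uptheta)=\bast\Z$. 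Here I would invoke K2: the absence of a nontrivial integer relation among $1,\uptheta,\uptheta^2$ means no congruence obstruction survives, so every target is approximable.

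The main obstacle I anticipate is the ($\Rightarrow$) direction of the second statement: showing that non-quadraticity is \emph{sufficient} to force $\bast[\Z](\uptheta)=\bast\Z$. One must rule out, for a general non-quadratic irrational, any subtle obstruction to realizing an arbitrary integer dual as $\bast n+\bast m^{\perp}$; the clean way is to check that K2 imposes no constraint whenever $1,\uptheta$ are the only relevant $\Q$-linearly related quantities, but one has to be careful that the group $\bast\Z(\uptheta)^{\perp}$ — which is the \emph{denominator} group of $\uptheta^{-1}$ — genuinely contains representatives of every residue class modulo every integer. That reduces to the elementary fact that for any irrational $\upeta$ and any $N$, the set $\{\bast m^{\perp} : \bast m\in\bast\Z(\upeta)\}$ surjects onto $\Z/N\Z$, which follows from equidistribution of $\{m\upeta\}$ together with the freedom to choose $\bast m$ nonstandard; I would establish this as a preliminary lemma. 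Once that lemma is in hand, both directions close quickly, with K2 serving as the precise pivot between "quadratic" (one nontrivial relation, hence a proper obstruction) and "non-quadratic" (no relation, hence no obstruction).
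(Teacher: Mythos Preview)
You have the right engine (Kronecker's criterion) but two of the translations are off, and one of them hides the whole content of the second assertion.

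For the first assertion you have misread the object. In the paper's conventions (the nonprincipal groups at the end of \S\ref{matideoarith}), $\bast\Z(\uptheta,\uptheta^{-1})^{\perp}$ is the dual of the \emph{row} vector: it is $\{\bast n^{\perp}\in\bast\Z:\exists(\bast n_1,\bast n_2)\in\bast\Z^{2},\ \uptheta\bast n_1+\uptheta^{-1}\bast n_2\simeq\bast n^{\perp}\}$, not the set of first duals in a simultaneous approximation of the column $(\uptheta,\uptheta^{-1})^{T}$. The relevant K2 test is therefore ``$m\in\Z$ with $m\uptheta\in\Z$ and $m\uptheta^{-1}\in\Z$'' (vacuous for every irrational $\uptheta$), not the condition $m_1\uptheta+m_2\uptheta^{-1}\in\Z$ you wrote down. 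Your column setup is actually the one needed for the \emph{second} assertion, not the first. For the second assertion, the step your sketch never reaches is the precise translation the paper uses: writing $N=\bast m+\bast n^{\perp}$ with $\bast m\in\bast\Z(\uptheta)$ and $\bast n^{\perp}\in\bast\Z(\uptheta)^{\perp}=\bast\Z(\uptheta^{-1})$ is \emph{equivalent} to K1 for the system $(\uptheta,\uptheta^{-1})^{T}$ acting on the single variable $\bast m$, with inhomogeneous target $\boldsymbol w=(0,N\uptheta^{-1})^{T}$ (the first row says $\uptheta\bast m\simeq$ integer, the second says $\uptheta^{-1}\bast m\simeq N\uptheta^{-1}+$ integer). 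Now K2 reads ``$p_1\uptheta+p_2\uptheta^{-1}\in\Z$ for some nonzero $(p_1,p_2)\in\Z^{2}$'', which holds exactly when $\uptheta$ is quadratic (divide $a\uptheta^{2}+b\uptheta+c=0$ by $\uptheta$). When it holds, $\boldsymbol p\cdot\boldsymbol w=p_2N\uptheta^{-1}\notin\Z$ for $N\neq 0$, so K1 fails and $\bast[\Z](\uptheta)\cap\Z=\{0\}$; when it fails, K2 is vacuous and K1 holds for every $N$ (and then for every $\bast N$ by a diagonal argument). Both directions come from this one translation --- your $(\Leftarrow)$ sketch was groping toward it without naming the target $\boldsymbol w$.

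Your proposed preliminary lemma for $(\Rightarrow)$ --- that $\{\bast m^{\perp}:\bast m\in\bast\Z(\upeta)\}$ surjects onto every $\Z/N\Z$ --- is a red herring and would not close the argument. Surjecting onto every finite quotient does not force a subgroup of $\bast\Z$ to equal $\bast\Z$: already $\bast\Z(\uptheta)$ itself hits every residue class modulo every $N$ (a one-variable inhomogeneous Kronecker), yet $\bast\Z(\uptheta)\cap\Z=\{0\}$ for irrational $\uptheta$, so $\bast\Z(\uptheta)\subsetneqq\bast\Z$. The content is genuinely the two-dimensional inhomogeneous problem with the specific target $(0,N\uptheta^{-1})$, not a congruence argument.
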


\begin{proof} 

That $\bast\Z (\uptheta ,\uptheta^{-1})^{\perp}=\bast\Z$ is a consequence of Kronecker's criterion.  Indeed, for $\Uptheta =(\uptheta ,\uptheta^{-1})$
there is no $m\in\Z$ with $ m\cdot \Uptheta\in \Z^{2}$, so condition K2 is vacuously true: hence for any $w\in\Z$ we may find $\bast\boldsymbol n\in\bast\Z^{2}$, $\bast\boldsymbol n^{\perp}\in\bast\Z$ satisfying K1.  Therefore, by a diagonal argument, 
for any $\bast  w =\bast\{ w_{i}\}\in\bast\Z$ we may solve K1 with $\bast w$ in place of $w$.
%We now turn to showing that $ \bast[\Z](\uptheta)\subsetneqq\bast\Z$.
Suppose now that $\uptheta$ is a quadratic irrationality.  In this case we will show that
$ \bast[\Z]( \uptheta)\cap\Z =\{ 0\}$: in other words, we will show that for all $0\not=N\in\Z$, the equation
\begin{align}\label{quadirreqn} \bast m +\bast n^{\perp} = N ,\quad \bast m\in\bast\Z (\uptheta ), \bast n^{\perp}\in\bast\Z (\uptheta^{-1} )\end{align}
has no solution.  To this end we consider the $2\times 2$ diagonal matrix $\Uptheta = {\rm diag} (\uptheta, \uptheta^{-1})$.  If there were a solution
$\bast n^{\perp} = N-\bast m$ to (\ref{quadirreqn}) then the vectors $\bast \boldsymbol m = (\bast m,\bast m)$, $\bast \boldsymbol m^{\perp} = (\bast m^{\perp},-\bast n)$ would provide a solution to the inhomogeneous equation
\[ \Uptheta \bast\boldsymbol m -\bast\boldsymbol m^{\perp} \simeq \boldsymbol w=(0,N\uptheta^{-1} )^{T}.\]
On the other hand, $\uptheta$ is quadratic $\Leftrightarrow$ there exists a vector $\boldsymbol p = (p_{1},p_{2})\in\Z^{2}$ with $\boldsymbol p\cdot\Uptheta \in\Z^{2}$.
But such a vector could never satisfy $\boldsymbol p\cdot \boldsymbol w = p_{2}N\uptheta^{-1}\in\Z$.  Therefore there can be no solutions to K1
for the choice $\boldsymbol w=(0,N\uptheta^{-1} )^{T}$.  If $\uptheta$ is not quadratic then condition K2 is vacuously satisfied since there exist
no vectors  $\boldsymbol p = (p_{1},p_{2})\in\Z^{2}$ with $\boldsymbol p\cdot\Uptheta \in\Z^{2}$.  Thus $\Z\subset \bast[\Z]( \uptheta)$.  By a diagonal
argument we can then solve $\Uptheta \bast\boldsymbol n -\bast\boldsymbol n^{\perp} \simeq \bast w$ for any $\bast w\in\bast\Z$.
%For any $N\in\Z$ consider the homogeneous forms
%\[  L_{1}(X) = \uptheta X,\quad  L_{2}(X) = \uptheta^{-1} X .\]
%Let $\upalpha_{1}=0$ and $\upalpha_{2} = N\uptheta^{-1}$ for $N\in\N$.  
%Then By Kronecker's Theorem (\cite{Ca}, pg. 53) the system $\| L_{i}(n)-\upalpha_{i}\|<\upvarepsilon$ has a solution for $n\in\Z$ for any $\upvarepsilon >0$
%$\Leftrightarrow$ whenever $(u_{1},u_{2})\in\Z^{2}$ is such that $u_{1}L_{1} + u_{2}L_{2}$ is a form with integral coefficients, then %$u_{1}\upalpha_{1}+u_{2}\upalpha_{2} = u_{2}\upalpha_{2}\in\Z$.  But the latter is clearly false since $\upalpha$ is irrational.  It follows that we cannot
%find a pair $\bast m +\bast n^{\perp} = N$ i.e.\ $\Z\cap \bast[\Z](\uptheta)=0$.
\end{proof}

In sum: Theorem \ref{cornerstructure} does not imply that for $\uptheta\in\R-\Q$ not a quadratic irrationality, $\bast[\Z]( \uptheta) =\bast\Z$
as approximate ideals i.e.\ it is not the case that $\bast[\Z]^{ \upmu}_{ \upnu}(\uptheta)=\bast\Z^{\upmu}$
for all $\upmu,\upnu$.  Indeed, if $(\upmu,\upnu)\not\in {\rm Spec}(\uptheta)$ then $\bast[\Z]^{ \upmu}_{ \upnu}(\uptheta)=0$.  In general,
for $\boldsymbol z,\boldsymbol z'\in\K$,
if ${\rm Spec}(\boldsymbol z)\not= {\rm Spec}(\boldsymbol z')$ then   $ \bast[\mathcal{O}](\boldsymbol z)\not=  \bast[\mathcal{O}](\boldsymbol z')$
as approximate ideals.

 Denote by $ \mathcal{C}l (K )$ the ideal class group of $K$.  Recall \cite{Fre} that 
ideals $\mathfrak{a}=(\upalpha,\upbeta ),\mathfrak{a}' =(\upalpha ',\upbeta')\subset\mathcal{O}$ define the same ideal class
%$ [\mathfrak{a}]= [\mathfrak{a}']$
 $\Leftrightarrow$  there exists
$A\in {\rm PGL}_{2}(\mathcal{O})$ with $A(\upgamma )=\upgamma'$ where $\upgamma = \upalpha/\upbeta$ and $\upgamma' = \upalpha'/\upbeta'$. 
That is, if we denote $\PR K= K\cup\{\infty \}$ then there is a bijection 
\[  \mathcal{C}l (K ) \longleftrightarrow {\rm PGL}_{2}(\mathcal{O})\backslash \PR K\]
where $[1]\leftrightarrow \infty$.

\begin{prop}\label{EquivImpDecoupEqual}  For $\boldsymbol z,\boldsymbol z'\in\K$, $\boldsymbol z \Bumpeq_{K}\boldsymbol z'$ $\Rightarrow$ $\bast[\mathcal{O}](\boldsymbol z )=\bast[\mathcal{O}](\boldsymbol z ')$ as approximate ideals.
For $\upgamma, \upgamma'\in K$, $ \upgamma \Bumpeq_{K}\upgamma'$
 $\Leftrightarrow$ $\bast[\mathcal{O}](\upgamma)=\bast[\mathcal{O}](\upgamma ')$ as ideals.
\end{prop}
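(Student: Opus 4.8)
\textbf{Proof proposal for Proposition \ref{EquivImpDecoupEqual}.}

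The plan is to treat the two assertions separately, though they share a common mechanism: the action of an element $A\in{\rm PGL}_2(\mathcal O)$ on a pair $(\bast\upalpha^{\perp},\bast\upalpha)$ of $\boldsymbol z$-numerator-denominator data, exactly as in Theorem \ref{triisomorphism} and its $K$-analogue. First I would fix $A=\left(\begin{smallmatrix} a & b\\ c & d\end{smallmatrix}\right)$ with $A(\boldsymbol z)=\boldsymbol z'$, meaning $\boldsymbol z'=(a\boldsymbol z+b)(c\boldsymbol z+d)^{-1}$ in $\K$. Recall that the induced map on $\bast\mathcal O(\boldsymbol z)$ sends $\bast\upalpha\mapsto c\bast\upalpha^{\perp}+d\bast\upalpha$ and $\bast\upalpha^{\perp}\mapsto a\bast\upalpha^{\perp}+b\bast\upalpha$, and that (by the $K$-version of Theorem \ref{triisomorphism}, already invoked in \S\ref{KIdeologicalArith}) this is a modulogical isomorphism $\bast\mathcal O(\boldsymbol z)\cong\bast\mathcal O(\boldsymbol z')$ respecting the fine growth-decay tri-filtration. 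The key point for the first assertion is that the \emph{decoupled} group $\bast[\mathcal O](\boldsymbol z)=\bast\mathcal O(\boldsymbol z)+\bast\mathcal O(\boldsymbol z)^{\perp}$, with its ideology (\ref{decoupledideology}), is built from \emph{both} coordinates of the pair. Since $A\in{\rm PGL}_2(\mathcal O)$ has entries in $\mathcal O$ and $A^{-1}$ does too, both $\bast\upalpha+\bast\upbeta^{\perp}\mapsto (c\bast\upalpha^{\perp}+d\bast\upalpha)+(a\bast\upbeta^{\perp}+b\bast\upbeta)$ and the reverse substitution stay inside $\bast\mathcal O$; and because duality and the induced map both preserve the tri-filtration index $(\boldsymbol\upmu[\boldsymbol\upiota],\boldsymbol\upnu)$ (by (\ref{dualordmag})-type bounds: multiplication by $\mathcal O$-entries is noninfinitesimal, and $\boldsymbol\upvarepsilon(\bast\upalpha^{\perp})=-\boldsymbol z^{-1}\boldsymbol\upvarepsilon(\bast\upalpha)$ controls decay up to a bounded factor), one gets $A\big(\bast[\mathcal O]^{\boldsymbol\upmu[\boldsymbol\upiota]}_{\boldsymbol\upnu}(\boldsymbol z)\big)\subset \bast[\mathcal O]^{\boldsymbol\upmu[\boldsymbol\upiota]}_{\boldsymbol\upnu}(\boldsymbol z')$, and symmetrically via $A^{-1}$. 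Equality of the two ideologies follows.

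For the second assertion I would use the bijection $\mathcal C\ell(K)\leftrightarrow{\rm PGL}_2(\mathcal O)\backslash K$ recalled just before the statement: for $\upgamma=\upalpha/\upbeta,\ \upgamma'=\upalpha'/\upbeta'\in K$, the ideals $(\upalpha,\upbeta)$ and $(\upalpha',\upbeta')$ are in the same ideal class iff $\upgamma\Bumpeq_K\upgamma'$. The ``$\Rightarrow$'' direction of the ``$\Leftrightarrow$'' is the specialization of the first assertion (the group being an honest ideal in this case, by (\ref{specialcase})). The substance is the converse: $\bast[\mathcal O](\upgamma)=\bast[\mathcal O](\upgamma')$ as ideals $\Rightarrow \upgamma\Bumpeq_K\upgamma'$. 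Here I would argue that $\bast[\mathcal O](\upgamma)$ recovers the ideal class of $(\upalpha,\upbeta)$: from (\ref{specialcase}), $\bast(\upalpha,\upbeta)=\bast\mathcal O(\upalpha,\upbeta)^{\perp}\subset\bast[\mathcal O](\upgamma)\subset\bast(\upgamma,1)$, and the standard part / descent back to $\mathcal O$ of the $\bast\mathcal O$-module $\bast[\mathcal O](\upgamma)$ is, up to the principal ambiguity $\upalpha/\upbeta\sim x\upalpha/x\upbeta$ already noted after (\ref{specialcase}), the fractional ideal $(\upgamma,1)$, whose class equals that of $(\upalpha,\upbeta)$. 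Thus equality of the decoupled ideals forces equality of ideal classes, hence $\upgamma\Bumpeq_K\upgamma'$ by the bijection above.

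The step I expect to be the main obstacle is precisely this last recovery: showing rigorously that the $\bast\mathcal O$-module $\bast[\mathcal O](\upgamma)\subset\bast K$ determines the ideal class of $(\upgamma,1)$ and nothing finer — i.e.\ that no extra arithmetic data (the growth-decay indices aside) survives in the group structure of $\bast[\mathcal O](\upgamma)$ beyond the class. One must invoke Kronecker's criterion exactly as in the proof of Theorem \ref{cornerstructure} to pin down which elements of $\bast\mathcal O$ actually appear: for $\upgamma\in K$ the relevant linear forms have a nontrivial integral relation precisely coming from the minimal polynomial of $\upgamma$, and tracking this shows $\bast[\mathcal O](\upgamma)$ is the ultrapower of the fractional ideal generated by $1$ and $\upgamma$. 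I would flag that this is where the ``ideal'' (as opposed to ``ideology'') hypothesis is essential — the analogous statement with ``ideologies'' in place of ``ideals'' is false by the discussion following Theorem \ref{cornerstructure} — and that one should be careful to state the converse only at the level of ideals, exactly as the proposition does.
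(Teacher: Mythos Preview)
Your plan for the first assertion has the right ingredients but misses the essential step that gives \emph{equality} rather than mere isomorphism. The map you describe, $\bast\upalpha+\bast\upbeta^{\perp}\mapsto (c\bast\upalpha^{\perp}+d\bast\upalpha)+(a\bast\upbeta^{\perp}+b\bast\upbeta)$, is a bijection between $\bast[\mathcal O](\boldsymbol z)$ and $\bast[\mathcal O](\boldsymbol z')$, but its source and target are generally \emph{different} elements of $\bast\mathcal O$, so the conclusion ``$A(\ldots)\subset\ldots$ and symmetrically via $A^{-1}$'' yields only an isomorphism of filtered groups, not equality of subsets. The proposition asserts equality. The paper's proof supplies exactly the missing regrouping: a generic element of $\bast[\mathcal O](\boldsymbol z')$, written via the $A$-isomorphism as $(c\bast\upalpha^{\perp}+d\bast\upalpha)+(a\bast\upbeta^{\perp}+b\bast\upbeta)$ for $\bast\upalpha,\bast\upbeta\in\bast\mathcal O(\boldsymbol z)$, is \emph{the same element} of $\bast\mathcal O$ as $(d\bast\upalpha+b\bast\upbeta)+(c\bast\upalpha+a\bast\upbeta)^{\perp_{\boldsymbol z}}$, which is visibly in $\bast[\mathcal O](\boldsymbol z)$. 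This algebraic identity---mixing the ``denominator'' and ``numerator'' pieces across the two summands---is what makes decoupling invariant, and it is absent from your write-up. (A secondary issue: your map is not obviously well-defined, since the decomposition $\bast\upalpha+\bast\upbeta^{\perp}$ need not be unique.)

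For the second assertion your outline is essentially the paper's, though you make it harder than necessary. The paper simply observes that $\bast[\mathcal O](\upgamma)$ is the ultrapower of a standard ideal $\mathfrak a$, and that by (\ref{specialcase}) both $(\upalpha,\upbeta)$ and $(\upalpha',\upbeta')$ sit inside $\mathfrak a$ with the inclusions differing by principal factors; hence they have the same ideal class, and the ${\rm PGL}_2(\mathcal O)$-equivalence follows from the bijection $\mathcal C\ell(K)\leftrightarrow{\rm PGL}_2(\mathcal O)\backslash K$. Your invocation of Kronecker's criterion is unnecessary here: for $\upgamma\in K$ the error terms vanish identically, so $\bast[\mathcal O](\upgamma)$ is already a genuine $\bast\mathcal O$-ideal generated by standard elements, with no need to analyze which nonstandard elements appear.
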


\begin{proof}  Suppose that $\boldsymbol z'=A (\boldsymbol z)$ with $A=\left(\begin{array}{cc}
a & b \\
c& d
\end{array}      \right)$ for $a,b,c,d\in\mathcal{O}$. Then every element of 
$\bast[\mathcal{O}](\boldsymbol z ')$ is of the form
\[  (c\bast\upalpha^{\perp} + d\bast\upalpha) +   (a\bast\upbeta^{\perp} + b\bast\upbeta) = (d\bast\upalpha+
b\bast\upbeta)+(c\bast\upalpha+a\bast\upbeta)^{\perp} \in \bast[\mathcal{O}](\boldsymbol z )
\]
for some $\bast\upalpha,\bast\upbeta\in \bast[\mathcal{O}](\boldsymbol z )$.  
Thus $\bast[\mathcal{O}](\boldsymbol z )=\bast[\mathcal{O}](\boldsymbol z ')$ as groups.
If moreover $(c\bast\upalpha^{\perp} + d\bast\upalpha)\in \mathcal{O}^{\boldsymbol\upmu}_{\boldsymbol\upnu}(\boldsymbol z')$
then this implies that $\bast\upalpha\in  \mathcal{O}^{\boldsymbol\upmu}_{\boldsymbol\upnu}(\boldsymbol z)$.  Similarly,
$\bast\upbeta\in  \mathcal{O}^{\boldsymbol\upmu}_{\boldsymbol\upnu}(\boldsymbol z)$ from which it follows
that $(d\bast\upalpha+
b\bast\upbeta)\in  \mathcal{O}^{\boldsymbol\upmu}_{\boldsymbol\upnu}(\boldsymbol z)$.  By a similar argument
$(c\bast\upalpha+a\bast\upbeta)^{\perp}\in  \mathcal{O}^{\boldsymbol\upmu}_{\boldsymbol\upnu}(\boldsymbol z^{-1})$
and the first claim follows.  For $\upgamma,\upgamma'\in K$ with $\bast[\mathcal{O}]( \upgamma )=\bast[\mathcal{O}]( \upgamma')$, the latter 
is an ultrapower of a standard ideal $\mathfrak{a}\subset\mathcal{O}$.  If $\upgamma=\upalpha/\upbeta, \upgamma'=\upalpha'/\upbeta'$ 
then the ideals $\mathfrak{a}\supset (\upalpha,\upbeta ),  (\upalpha',\upbeta' )$ differ by multiples of elements of $\mathcal{O}$ and hence define the same
ideal class.  From this it follows that $\upgamma, \upgamma'$ differ by the action of an element of
${\rm PGL}_{2}(\mathcal{O})$.
\end{proof}

We define the {\bf  decoupled approximate ideal class set} as 
\[ \mathcal{C}l (\K ) = \left\{  [\mathcal{O}](\boldsymbol z)|\; \boldsymbol z\in \K \right\}.  \]
By Proposition \ref{EquivImpDecoupEqual}, there is a surjective function ${\rm PGL}_{2}(\mathcal{O})\backslash \PR\K\rightarrow  \mathcal{C}l (\K )$
where $\PR\K = \K\cup \{\infty\}$.

\begin{conj}  If $\bast[\mathcal{O}](\boldsymbol z )=\bast[\mathcal{O}](\boldsymbol z ')$ as approximate ideals then $\boldsymbol z \Bumpeq_{K}\boldsymbol z'$.
In particular, ${\rm PGL}_{2}(\mathcal{O})\backslash \PR\K\leftrightarrow  \mathcal{C}l (\K )$.
\end{conj}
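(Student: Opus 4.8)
The plan is to prove the first assertion; the displayed bijection then follows formally, since Proposition~\ref{EquivImpDecoupEqual} already furnishes a surjection ${\rm PGL}_{2}(\mathcal{O})\backslash\K\to\mathcal{C}\ell(\K)$ and the first assertion is exactly its injectivity. One first disposes of the $K$-rational case: if $\bast[\mathcal{O}](\boldsymbol z)$ is the ultrapower of a standard ideal, then running the matrix form of Kronecker's criterion as in the proof of Theorem~\ref{cornerstructure} forces $\boldsymbol z\in K$, and for $\upgamma,\upgamma'\in K$ the equivalence $\upgamma\Bumpeq_{K}\upgamma'\Leftrightarrow\bast[\mathcal{O}](\upgamma)=\bast[\mathcal{O}](\upgamma')$ is the ideal-theoretic half of Proposition~\ref{EquivImpDecoupEqual} combined with the classical identification $\mathcal{C}\ell(K)\leftrightarrow{\rm PGL}_{2}(\mathcal{O})\backslash K$. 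So one may assume $\boldsymbol z,\boldsymbol z'\in\K-K$.

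The core of the argument is to reconstruct, from the ideology $\bast[\mathcal{O}](\boldsymbol z)=\{\bast[\mathcal{O}]^{\boldsymbol\upmu[\boldsymbol\upiota]}_{\boldsymbol\upnu}(\boldsymbol z)\}$ alone, the \emph{unordered pair} of sub-ideologies $\{\bast\mathcal{O}(\boldsymbol z),\bast\mathcal{O}(\boldsymbol z)^{\perp}\}$ together with their inclusions into $\bast\mathcal{O}$. Both embed into $\bast[\mathcal{O}](\boldsymbol z)$ as filtered subgroups via the two extreme decompositions $x=x+0$ and $x=0+x$ in~(\ref{decoupledideology}). I would characterise them intrinsically as the \emph{tight locus}: call $x\in\bast[\mathcal{O}](\boldsymbol z)$ tight if, writing $\boldsymbol\upnu(x)$ for the least decay index with $x\in\bast[\mathcal{O}]^{\boldsymbol\upmu(x)}_{\boldsymbol\upnu(x)}(\boldsymbol z)$, the pair $(\boldsymbol\upmu(x),\boldsymbol\upnu(x))$ is minimal in ${\rm Spec}_{K}(\boldsymbol z)$. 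The key computation is that for a genuinely mixed decomposition $x=\bast\upalpha+\bast\upbeta^{\perp}$ with $\bast\upalpha\in\bast\mathcal{O}(\boldsymbol z)$, $\bast\upbeta^{\perp}\in\bast\mathcal{O}(\boldsymbol z)^{\perp}$ independent, the growth of $x$ is $\min(\boldsymbol\upmu(\bast\upalpha),\boldsymbol\upmu(\bast\upbeta))$ while its intrinsic decay index is governed by the larger of the two summand decays, so $(\boldsymbol\upmu(x),\boldsymbol\upnu(x))$ falls strictly into the interior of ${\rm Spec}_{K}(\boldsymbol z)$ and $x$ is not tight. Thus, modulo ideological multiples, the tight locus is precisely $\bast\mathcal{O}(\boldsymbol z)\cup\bast\mathcal{O}(\boldsymbol z)^{\perp}$, from which one reads off the two coupled ideologies as its two irreducible components.

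Granting this, suppose $\bast[\mathcal{O}](\boldsymbol z)=\bast[\mathcal{O}](\boldsymbol z')$ as ideologies; then the pairs $\{\bast\mathcal{O}(\boldsymbol z),\bast\mathcal{O}(\boldsymbol z)^{\perp}\}$ and $\{\bast\mathcal{O}(\boldsymbol z'),\bast\mathcal{O}(\boldsymbol z')^{\perp}\}$ coincide as pairs of sub-ideologies of $\bast\mathcal{O}$. Choose a \emph{re-coupling}: a pair of modulogical isomorphisms $\bast\mathcal{O}(\boldsymbol z)\to\bast\mathcal{O}(\boldsymbol z')$ and $\bast\mathcal{O}(\boldsymbol z)^{\perp}\to\bast\mathcal{O}(\boldsymbol z')^{\perp}$ realizing the identification $\bast[\mathcal{O}](\boldsymbol z)=\bast[\mathcal{O}](\boldsymbol z')$. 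Using the $\mathcal{O}$-analogue of the observation of \S\ref{KIdeologicalArith} that $\bast\upalpha^{\perp}/\bast\upalpha\simeq\boldsymbol z$ in $\bast\K$ for infinite $\bast\upalpha$, one sees that $\boldsymbol z$ is determined by the coupling between the two pieces, and that the freedom in the choice of re-coupling is exactly the action of ${\rm PGL}_{2}(\mathcal{O})$ on numerator--denominator pairs, $(\bast\upalpha^{\perp},\bast\upalpha)\mapsto(a\bast\upalpha^{\perp}+b\bast\upalpha,\,c\bast\upalpha^{\perp}+d\bast\upalpha)$, as in Theorem~\ref{triisomorphism} and its $K$-analogue. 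Reading off the matrix $A$ from the chosen re-coupling then gives $A(\boldsymbol z)=\boldsymbol z'$, hence $\boldsymbol z\Bumpeq_{K}\boldsymbol z'$.

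The hard part is the middle stage: proving that the tight locus really splits into exactly two coupled components and that no further collapse occurs. The obstruction is already visible in Theorem~\ref{cornerstructure}: for a quadratic $\uptheta$ the decoupled group is \emph{strictly smaller} than $\bast\Z$, so decouplings can degenerate, and this degeneration — which Kronecker's criterion controls in that special case — must be tracked uniformly for every $\boldsymbol z\in\K$, in particular for $\boldsymbol z$ with coordinates of mixed archimedean size, where growth indices live in $\bstar\PR\K_{{\rm N}(\upvarepsilon)}$ rather than $\bstar\PR\K_{\upvarepsilon}$. Establishing this uniform rigidity is precisely what is currently missing, which is why the statement is left as a conjecture.
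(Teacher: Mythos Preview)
The statement you are addressing is labelled a \emph{Conjecture} in the paper and carries no proof there; it is presented immediately after the surjection ${\rm PGL}_{2}(\mathcal{O})\backslash\K\to\mathcal{C}\ell(\K)$ as an open injectivity statement, and the paper makes no attempt to establish it. So there is no ``paper's own proof'' to compare your proposal against.

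Your write-up is really a strategy sketch rather than a proof, and you say as much in your final paragraph: the ``tight locus'' step---showing that the extremal growth--decay elements of $\bast[\mathcal{O}](\boldsymbol z)$ split cleanly into the two components $\bast\mathcal{O}(\boldsymbol z)$ and $\bast\mathcal{O}(\boldsymbol z)^{\perp}$---is asserted but not proved, and you correctly identify it as the crux. Two concrete difficulties with that step as stated: first, your claim that a ``genuinely mixed'' element $\bast\upalpha+\bast\upbeta^{\perp}$ always has strictly non-extremal growth--decay profile is not argued, and there is no obvious mechanism preventing cancellation effects that would push such a sum back onto the boundary of ${\rm Spec}_{K}(\boldsymbol z)$; second, even granting the component decomposition, the passage from an identification of unordered pairs $\{\bast\mathcal{O}(\boldsymbol z),\bast\mathcal{O}(\boldsymbol z)^{\perp}\}=\{\bast\mathcal{O}(\boldsymbol z'),\bast\mathcal{O}(\boldsymbol z')^{\perp}\}$ to an element of ${\rm PGL}_{2}(\mathcal{O})$ requires showing that the coupling data is rigid enough to force integrality of the resulting matrix entries, which your sketch does not address. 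In short, your outline is a reasonable heuristic for why the conjecture should hold, but it does not close the gap the paper leaves open, and you acknowledge this yourself.
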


By Proposition \ref{EquivImpDecoupEqual} we have
\[ \mathcal{C}l (K )\hookrightarrow \mathcal{C}l (\K ), \quad [\mathfrak{a}]\mapsto   \bast [\mathcal{O}](\upgamma)\]
where $\mathfrak{a}=(\upalpha,\upbeta )$ and $\upgamma =\upalpha/\upbeta$.  The image $ \bast [\mathcal{O}](\upgamma)$ is
of course a class with slightly more structure: the trivial ``growth only'' approximate ideal 
$\{\bast [\mathcal{O}]^{\boldsymbol \upmu[\boldsymbol \upiota]}(\upgamma)\}$.

We now introduce the approximate ideal product of decoupled approximate ideals associated to invertible elements of $\K$:
 
 \begin{theo}[Decoupled Approximate Ideal Product]\label{DeCoupProd}  Let $\boldsymbol z,\boldsymbol w\in\K^{\times}$.  There exists a bilinear map
 \[  \bast[\mathcal{O}]^{\boldsymbol \upmu[\boldsymbol \upiota]}_{\boldsymbol \upnu}(\boldsymbol z)\times
  \bast[\mathcal{O}]^{\boldsymbol \upnu[\boldsymbol \uplambda]}_{\boldsymbol \upmu}(\boldsymbol w)\stackrel{\cdot}{\longrightarrow}
   \bast[\mathcal{O}]^{\boldsymbol \upmu\cdot\boldsymbol\upnu[\boldsymbol \upiota\cdot\boldsymbol\uplambda]}_{\boldsymbol \upiota+\boldsymbol\uplambda}(\boldsymbol z\cdot\boldsymbol w)+
    \bast[\mathcal{O}]^{\boldsymbol \upmu\boldsymbol\cdot\upnu[\boldsymbol \upiota\cdot\boldsymbol\uplambda]}_{\boldsymbol \upiota+\boldsymbol\uplambda}(\boldsymbol z\cdot\boldsymbol w^{-1})   \]
    given by the ordinary product in $\bast\mathcal{O}$, whose restriction to $\mathcal{C}l (K)$ with the choices
    $\boldsymbol\upmu=\boldsymbol\upnu=-\boldsymbol\infty$ coincides with the ideal (class) product.
 \end{theo}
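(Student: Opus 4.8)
The plan is to build the bilinear map coordinate-by-coordinate out of the ordinary product in $\bast\mathcal{O}$, exactly mimicking the proof of Theorem \ref{Ogrowthdecayprod} ($K$-Ideological Arithmetic), but applied to the ``decoupled'' elements. First I would unwind the definition (\ref{decoupledideology}): a typical element of $\bast[\mathcal{O}]^{\boldsymbol\upmu[\boldsymbol\upiota]}_{\boldsymbol\upnu}(\boldsymbol z)$ has the form $\bast\upalpha+\bast\upbeta^{\perp}$ with $\bast\upalpha,\bast\upbeta\in\bast\mathcal{O}^{\boldsymbol\upmu[\boldsymbol\upiota]}_{\boldsymbol\upnu}(\boldsymbol z)$, and likewise an element of $\bast[\mathcal{O}]^{\boldsymbol\upnu[\boldsymbol\uplambda]}_{\boldsymbol\upmu}(\boldsymbol w)$ is $\bast\upgamma+\bast\updelta^{\perp}$ with $\bast\upgamma,\bast\updelta\in\bast\mathcal{O}^{\boldsymbol\upnu[\boldsymbol\uplambda]}_{\boldsymbol\upmu}(\boldsymbol w)$. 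Expanding the product $(\bast\upalpha+\bast\upbeta^{\perp})(\bast\upgamma+\bast\updelta^{\perp})=\bast\upalpha\bast\upgamma+\bast\upbeta^{\perp}\bast\updelta^{\perp}+\bast\upalpha\bast\updelta^{\perp}+\bast\upbeta^{\perp}\bast\upgamma$. Using the product–dual formulas of Theorem \ref{Ogrowthdecayprod} and its $K$-analogue (specifically (\ref{addmultdual}) adapted place-by-place): $\bast\upalpha\bast\upgamma\in\bast\mathcal{O}(\boldsymbol z\boldsymbol w)$ with dual $\bast\upalpha^{\perp}\bast\upgamma^{\perp}$, while $\bast\upbeta^{\perp}\bast\updelta^{\perp}$ is the dual of $\bast\upbeta\bast\updelta\in\bast\mathcal{O}(\boldsymbol z\boldsymbol w)$ since $(\bast\upbeta\bast\updelta)^{\perp_{\boldsymbol z\boldsymbol w}}=\bast\upbeta^{\perp}\bast\updelta^{\perp}$ (here invertibility of $\boldsymbol z,\boldsymbol w$ is used so that $\boldsymbol z\boldsymbol w$ is again invertible and the duality isomorphism applies). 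The two ``cross'' terms $\bast\upalpha\bast\updelta^{\perp}$ and $\bast\upbeta^{\perp}\bast\upgamma$ are handled by the mixed product: $\bast\upalpha\cdot\bast\updelta^{\perp}$ is (by the $\boldsymbol z\cdot\boldsymbol w^{-1}$ version of the argument, since $\bast\updelta^{\perp}$ is a denominator for $\boldsymbol w^{-1}$) an element of $\bast\mathcal{O}(\boldsymbol z\boldsymbol w^{-1})$, and symmetrically $\bast\upbeta^{\perp}\bast\upgamma\in\bast\mathcal{O}(\boldsymbol z\boldsymbol w^{-1})$. Thus the product lands in $\bast[\mathcal{O}](\boldsymbol z\boldsymbol w)+\bast[\mathcal{O}](\boldsymbol z\boldsymbol w^{-1})$, and I would verify the growth-decay indices exactly as in Theorem \ref{Ogrowthdecayprod}: growth is multiplicative so $\boldsymbol\upmu\cdot\boldsymbol\upnu$ (with fine index $\boldsymbol\upiota\cdot\boldsymbol\uplambda$) is correct, and the decay bound $[\boldsymbol\upiota+\boldsymbol\uplambda]$ comes from the cross-term estimate $\langle\boldsymbol\upvarepsilon\rangle\le[-\boldsymbol\infty,\boldsymbol\upiota+\boldsymbol\uplambda]$ using (\ref{dualordmag}) place-by-place, the bracket recording that we are now in the decoupled ideology where decay indices belong to $\bstar\PR\K_{\leq 1}$ rather than $\bstar\PR\K_{\upvarepsilon}$.

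The second half — that the restriction to $\mathcal{C}\ell(K)$ is the ideal class product — I would prove as follows. By Proposition \ref{EquivImpDecoupEqual} the embedding $\mathcal{C}\ell(K)\hookrightarrow\mathcal{C}\ell(\K)$ sends $[\mathfrak{a}]=[(\upalpha,\upbeta)]$ to $\bast[\mathcal{O}](\upgamma)$ with $\upgamma=\upalpha/\upbeta$. For $\upgamma,\upgamma'\in K$, recall from (\ref{specialcase}) that $\bast[\mathcal{O}](\upgamma)$ is the ultrapower of the standard fractional ideal $\mathfrak{a}$ generated by $\upalpha,\upbeta$ (up to the principal-ideal normalization built into $\bast[\mathcal{O}](\cdot)$), and similarly $\bast[\mathcal{O}](\upgamma')$ is the ultrapower of $\mathfrak{a}'=(\upalpha',\upbeta')$. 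The product $\bast[\mathcal{O}](\upgamma)\cdot\bast[\mathcal{O}](\upgamma')$ in $\bast\mathcal{O}$ — taken with no growth-decay restriction, i.e.\ with indices small enough that the decoupled ideologies equal their ambient ultrapower ideals, which is possible since for $\boldsymbol z\in K$ one has ${\rm Spec}_K(\boldsymbol z)=\bstar\PR\K^2_{{\rm N}(\upvarepsilon)}$ by Proposition \ref{triviallynonvacO} — is the ultrapower of the ideal product $\mathfrak{a}\mathfrak{a}'$, which is generated by $\upalpha\upalpha',\upalpha\upbeta',\upbeta\upalpha',\upbeta\upbeta'$, i.e.\ by the numerator and denominator of $\upgamma\upgamma'$ together with those of $\upgamma/\upgamma'$. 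Under the identification $\bast[\mathcal{O}](\cdot)$ this is precisely $\bast[\mathcal{O}](\upgamma\upgamma')+\bast[\mathcal{O}](\upgamma\upgamma'^{-1})$, but when we pass to ideal classes $\bast[\mathcal{O}](\upgamma\upgamma'^{-1})$ and $\bast[\mathcal{O}](\upgamma\upgamma')$ represent $[\mathfrak{a}][\mathfrak{a}']^{-1}$ and $[\mathfrak{a}][\mathfrak{a}']$ respectively, and their sum (as a single ideal class, the class of the sum of ideal representatives) is $[\mathfrak{a}\mathfrak{a}']=[\mathfrak{a}][\mathfrak{a}']$ since $\mathfrak{a}\mathfrak{a}'\subset\mathfrak{a}\mathfrak{a}'+\mathfrak{a}\mathfrak{a}'^{-1}\cdot(\text{principal})$ — more cleanly: on the level of classical ideals the decoupling operation $\bast[\mathcal{O}](\cdot)$ collapses $\mathfrak{a}$ and $x\mathfrak{a}$ to the same object for $x\in\mathcal{O}$, so the product map descends to the usual composition on $\mathcal{C}\ell(K)$.

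The step I expect to be the main obstacle is the bookkeeping of the decay index in the first part: the cross-terms $\bast\upalpha\bast\updelta^{\perp}$ and $\bast\upbeta^{\perp}\bast\upgamma$ are denominators for $\boldsymbol z\boldsymbol w^{-1}$, and I must check that their errors $\boldsymbol\upvarepsilon$ (which genuinely involve $\boldsymbol z\boldsymbol w^{-1}$, not $\boldsymbol z\boldsymbol w$) have the claimed growth-decay magnitude relative to $\boldsymbol z\boldsymbol w^{-1}$ — in particular that $\upnu(\bast\upalpha\bast\updelta^{\perp})$ and $\upnu(\bast\upbeta^{\perp}\bast\upgamma)$ are bounded by $[\boldsymbol\upiota+\boldsymbol\uplambda]$ — and that when these errors are $\not\in\bstar\PR\K_{\upvarepsilon}$ but merely $\leq\bar 1$ (which happens precisely because $\boldsymbol z\boldsymbol w^{-1}$ need not have the same approximation quality as $\boldsymbol z\boldsymbol w$) the bracket notation $[\cdots]$ and the index set $\bstar\PR\K_{\leq 1}$ correctly accommodate this. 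This is exactly the phenomenon flagged in the discussion after Theorem \ref{productformula} (the ideological product has image in $\bast\Z(\uptheta\upeta)$ but generally not in $\bast\Z(\uptheta\upeta^{-1})$) — the decoupling is designed to repair this by allowing the looser index — so the proof will consist largely in carefully transcribing the relevant $K$-version of that discussion; there is no new idea, but the index arithmetic must be done place-by-place and with the correct (non-strict) inequalities. The remaining verifications — bilinearity, the product-dual identities, the growth estimate via multiplicativity of $\langle\cdot\rangle$ — are routine adaptations of Theorems \ref{productformula} and \ref{Ogrowthdecayprod} and I would not belabor them.
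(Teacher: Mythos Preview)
Your proposal is correct and follows essentially the same route as the paper: expand $(\bast\upalpha+\bast\upbeta^{\perp})(\bast\upgamma+\bast\updelta^{\perp})$ into four terms, place the diagonal pair $\bast\upalpha\bast\upgamma+\bast\upbeta^{\perp}\bast\updelta^{\perp}$ in $\bast[\mathcal{O}](\boldsymbol z\boldsymbol w)$ and the cross pair in $\bast[\mathcal{O}](\boldsymbol z\boldsymbol w^{-1})$ via the product--dual identities of Theorem~\ref{Ogrowthdecayprod}, then observe that on $K$ the four products $\upalpha\upalpha',\upalpha\upbeta',\upbeta\upalpha',\upbeta\upbeta'$ generate $\mathfrak{a}\mathfrak{a}'$. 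One small correction: your cross term $\bast\upbeta^{\perp}\bast\upgamma$ lies in $\bast\mathcal{O}(\boldsymbol z^{-1}\boldsymbol w)=\bast\mathcal{O}(\boldsymbol z\boldsymbol w^{-1})^{\perp}$ rather than in $\bast\mathcal{O}(\boldsymbol z\boldsymbol w^{-1})$ itself (the paper writes it as $(\bast\upbeta\bast\upgamma^{\perp})^{\perp}$), but since both summands of the decoupled group are admitted this does not affect the conclusion.
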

 
 \begin{proof}  Let $\bast\upalpha_{1} + \bast\upbeta_{1}^{\perp}\in  \bast[\mathcal{O}]^{\boldsymbol \upmu[\boldsymbol \upiota]}_{\boldsymbol \upnu}(\boldsymbol z)$, $\bast\upalpha_{2} + \bast\upbeta_{2}^{\perp}\in  \bast[\mathcal{O}]_{\boldsymbol \upmu}^{\boldsymbol \upnu[\boldsymbol \uplambda]}(\boldsymbol w)$.  Then 
 \[ \bast\upalpha_{1}\bast\upalpha_{2}+ \bast\upbeta_{1}^{\perp}\bast\upbeta_{2}^{\perp} = 
 \bast\upalpha_{1}\bast\upalpha_{2}+ (\bast\upbeta_{1}\bast\upbeta_{2})^{\perp} \in  \bast[\mathcal{O}]_{[\boldsymbol \upiota+\boldsymbol\uplambda]}^{\boldsymbol \upmu\cdot\boldsymbol\upnu[\boldsymbol \upiota\cdot\boldsymbol\uplambda]}(\boldsymbol z\cdot\boldsymbol w) . \]
 On the other hand, since $  \bast[\mathcal{O}]^{\boldsymbol \upnu[\boldsymbol \upiota]}_{\boldsymbol \upmu}(\boldsymbol z)^{\perp} =   \bast[\mathcal{O}]^{\boldsymbol \upnu[\boldsymbol \upiota]}_{\boldsymbol \upmu}(\boldsymbol z^{-1})$,
 $  \bast[\mathcal{O}]^{\boldsymbol \upnu[\boldsymbol \uplambda]}_{\boldsymbol \upmu}(\boldsymbol w)^{\perp} =   \bast[\mathcal{O}]^{\boldsymbol \upnu[\boldsymbol \uplambda]}_{\boldsymbol \upmu}(\boldsymbol w^{-1})$, we have as well
 \[  \bast\upalpha_{1}\bast\upbeta^{\perp}_{2} +\bast\upbeta^{\perp}_{1} \bast\upalpha_{2} = 
  \bast\upalpha_{1}\bast\upbeta^{\perp}_{2} +(\bast\upbeta_{1} \bast\upalpha_{2}^{\perp})^{\perp} \in
   \bast[\mathcal{O}]^{\boldsymbol \upmu\cdot\boldsymbol\upnu[\boldsymbol \upiota\cdot\boldsymbol\uplambda]}_{[\boldsymbol \upiota+\boldsymbol\uplambda]}(\boldsymbol z\cdot\boldsymbol w^{-1}) .\]
   Now if $\upgamma_{1}
   %=\upalpha_{1}/\upbeta_{1}
   $, $\upgamma_{2}
   %=\upalpha_{2}/\upbeta_{2}
   \in K$ then 
   $\bast[\mathcal{O}]^{-\boldsymbol\infty[\boldsymbol \upiota_{i}]}_{-\boldsymbol\infty}(\upgamma_{i})=\bast\mathfrak{a}_{i}$ for any 
   $\boldsymbol\upiota_{i}> -\boldsymbol\infty$ where $\mathfrak{a}_{i}=(\upalpha_{i},\upbeta_{i})$
   and  $\upgamma_{1}
   =\upalpha_{1}/\upbeta_{1}
   $, $\upgamma_{2}
   =\upalpha_{2}/\upbeta_{2}$.
   The image of the product $\bast[\mathcal{O}]^{-\boldsymbol\infty[\boldsymbol \upiota_{1}]}_{-\boldsymbol\infty}(\upgamma_{1})\cdot 
   \bast[\mathcal{O}]^{-\boldsymbol\infty[\boldsymbol \upiota_{2}]}_{-\boldsymbol\infty}(\upgamma_{2})$ is an ideal, generated by
   $\upalpha_{1}\upalpha_{2}, \upbeta_{1}\upbeta_{2},\upalpha_{1}\upbeta_{2},\upbeta_{1}\upalpha_{2}$, and so is
   equal to $\bast\mathfrak{a}_{1}\cdot \bast\mathfrak{a}_{2}$.
 \end{proof}

 In fact, we will consider approximate ideal products of decoupled groups corresponding to the approximate ideal index pair
$ \left((\boldsymbol\upmu_{1} [\boldsymbol\upiota_{1}],\boldsymbol\upnu_{1}),(\boldsymbol\upmu_{2} [\boldsymbol\upiota_{2}],\boldsymbol\upnu_{2})\right)  $
 in the same sense of {\it Note} \ref{genlprodnote}.  Note in this case that the approximate ideal product is defined when
$ \boldsymbol\upmu_{2}\geq \boldsymbol\upnu_{1},\quad \boldsymbol\upnu_{2}\leq \boldsymbol\upmu_{1}$, 
 in which case the product belongs to the group(s) with index 
$  (\boldsymbol\upmu_{1}\boldsymbol\upmu_{2} [\boldsymbol\upiota_{1}\boldsymbol\upiota_{2}],\boldsymbol\upiota_{1}+\boldsymbol\upiota_{2})$: that is
\begin{align}\label{genldecoupprod}
 \bast[\mathcal{O}]^{\boldsymbol \upmu_{2}[\boldsymbol \upiota_{2}]}_{\boldsymbol \upnu_{2}}(\boldsymbol z)\times
  \bast[\mathcal{O}]^{\boldsymbol \upmu_{1}[\boldsymbol \upiota_{1}]}_{\boldsymbol \upnu_{1}}(\boldsymbol w)\stackrel{\cdot}{\longrightarrow}
   \bast[\mathcal{O}]^{\boldsymbol \upmu_{2}\cdot\boldsymbol\upmu_{1}[\boldsymbol \upiota_{2}\cdot\boldsymbol\upiota_{1}]}_{[\boldsymbol \upiota_{2}+\boldsymbol\upiota_{1}]}(\boldsymbol z\cdot\boldsymbol w)+
    \bast[\mathcal{O}]^{\boldsymbol \upmu_{2}\cdot\boldsymbol\upmu_{1}[\boldsymbol \upiota_{2}\cdot\boldsymbol\upiota_{1}]}_{[\boldsymbol \upiota_{2}+\boldsymbol\upiota_{1}]}(\boldsymbol z\cdot\boldsymbol w^{-1}).
\end{align}
If $\uptheta, \upeta\in\mathfrak{B}$ are badly approximable then any product of the form (\ref{genldecoupprod}) has at least
one factor $0$, giving a trivial product.

%The ordering of the indices reflects are

Although it would be obviously desirable to assert that the decoupled product gives rise to a product in
$\mathcal{C}l (\K)$, we will see shortly that it is not true that for any $\boldsymbol z,\boldsymbol w\in\K^{\times}$ there exists
% product described in Theorem \ref{DeCoupProd} yields another decoupled group e.g.\
some $\boldsymbol x\in\K$ satisfying $\bast[\mathcal{O}]^{\boldsymbol \upmu}_{\boldsymbol \upnu}(\boldsymbol z)\cdot
  \bast[\mathcal{O}]^{\boldsymbol \upnu}_{\boldsymbol \upmu}(\boldsymbol w) 
  \subset
  \bast[\mathcal{O}]^{\boldsymbol \upmu\cdot\boldsymbol \upnu}(\boldsymbol x)$.
  %\item  Even if the problem in the previous item were to be answered favorably, we cannot guarantee that the ideological product
  %of decoupled groups would induce a product
  %of {\it ideological classes} and in particular, a product on the space ${\rm PGL}_{2}(\mathcal{O})\backslash\K$. 
  %\end{itemize}
In order to sidestep this complication, we make the following definition.  Let
  \[ \bast[\mathcal{O}](\boldsymbol z|\boldsymbol w)\subset \bast[\mathcal{O}](\boldsymbol z\cdot\boldsymbol w) +
  \bast[\mathcal{O}](\boldsymbol z\cdot\boldsymbol w^{-1})\]
  be the group generated by the images of the maps in (\ref{genldecoupprod}) as one ranges over all growth-decay parameters: the {\bf  2-correlator decoupled diophantine approximation group} associated to 
 $\boldsymbol z,\boldsymbol w$.
  %Thus the elements $ \bast[\mathcal{O}](\boldsymbol z|\boldsymbol w)$
  %are sums of elements in the ideological product $\bast[\mathcal{O}]^{\boldsymbol \upmu}_{\boldsymbol \upnu}(\boldsymbol z)\cdot
  %\bast[\mathcal{O}]^{\boldsymbol \upnu}_{\boldsymbol \upmu}(\boldsymbol w)$ as one varies indices.  
  We endow 
  $ \bast[\mathcal{O}]  (\boldsymbol z|\boldsymbol w)$ with the approximate ideal structure coming from its parts: that is, 
 $ \bast[\mathcal{O}]^{\boldsymbol \upmu [\boldsymbol\upiota ]}_{\boldsymbol\upnu}(\boldsymbol z|\boldsymbol w)$
 is the group generated by the images of the maps in (\ref{genldecoupprod}) which belong to 
$ \bast[\mathcal{O}]^{\boldsymbol \upmu [\boldsymbol\upiota ]}_{\boldsymbol\upnu}(\boldsymbol z\cdot\boldsymbol w) +
  \bast[\mathcal{O}]^{\boldsymbol \upmu [\boldsymbol\upiota ]}_{\boldsymbol\upnu}(\boldsymbol z\cdot\boldsymbol w^{-1})$
  
 % that ``remembers'' the ideology of factors of  the generating
 %products: that is, the product $\bast\upalpha\bast\upbeta$ arising from the map of  (\ref{genldecoupprod})  would be assigned the {\it pair}
 %\[ (\vec{\boldsymbol\upmu} [\vec{\boldsymbol\upiota}],\vec{\boldsymbol\upnu}):= 
 %\left((\boldsymbol\upmu_{2} [\boldsymbol\upiota_{2}],\boldsymbol\upnu_{2}),(\boldsymbol\upmu_{1} 
 %[\boldsymbol\upiota_{1}],\boldsymbol\upmu_{1})\right)   \]
 %and any sum of such products with this pair of ideological indices receives this parameter pair as well.
 
 %, induced from that of $\bast[\mathcal{O}](\boldsymbol x)$ 
 % we write  $\bast[\mathcal{O}]^{\boldsymbol \upmu[\boldsymbol\upiota]}_{\boldsymbol \upnu}(\boldsymbol z|\boldsymbol w)$ for those %elements which are sums of elements whose growth is indexed by $\boldsymbol \upmu$, 
  %decay by $\boldsymbol \upnu$, etc. 
  \begin{prop} \begin{enumerate}  
  \item For all $\boldsymbol z,\boldsymbol w\in\K^{\times}$, $ \bast[\mathcal{O}](\boldsymbol z|\boldsymbol w)= \bast[\mathcal{O}](\boldsymbol w|\boldsymbol z)$.
\item  For all $\boldsymbol z\in\K^{\times}$
\[ \bast[\mathcal{O}]^{\boldsymbol\upmu [\boldsymbol\upiota]}_{\boldsymbol\upnu}(\boldsymbol z|1) =
 \bast[\mathcal{O}]^{\boldsymbol\upmu [\boldsymbol\upiota]}_{\boldsymbol\upnu}(1|\boldsymbol z)= \bast[\mathcal{O}]^{\boldsymbol\upmu[\boldsymbol\upiota]}_{\boldsymbol\upnu}(\boldsymbol z) . \]
 % \[ \bast[\mathcal{O}]^{\vec{\boldsymbol\upmu} [\vec{\boldsymbol\upiota}]}_{\vec{\boldsymbol\upnu}}(\boldsymbol z|1) =
 %\bast[\mathcal{O}]^{\vec{\boldsymbol\upmu}^{\dag} [\vec{\boldsymbol\upiota}^{\dag} ]}_{\vec{\boldsymbol\upnu}^{\dag} }(1|\boldsymbol z)= 
 %\bast[\mathcal{O}]^{\boldsymbol\upmu_{2} [\boldsymbol\upiota_{2}]}_{\boldsymbol\upnu_{2}}(\boldsymbol z)  \]
 %where the dagger symbol corresponds to reversing the order of the vector entries.
\item If $\uptheta,\upeta\in\mathfrak{B}$ are badly approximable then there exists no $\upomega\in\R$ with $\bast[\Z](\uptheta|\upeta)= 
 \bast[\Z](\upomega) $. 
 \item $\bast[\mathcal{O}](\boldsymbol z|\boldsymbol w)$ is invariant under the action of ${\rm PGL}_{2}(\mathcal{O})$ in each
  of its arguments: 
  \[ \bast[\mathcal{O}]( A(\boldsymbol z)| B(\boldsymbol w)) =\bast[\mathcal{O}](\boldsymbol z|\boldsymbol w)\]
  for all $A,B\in {\rm PGL}_{2}(\mathcal{O})$.

 \end{enumerate}
 \end{prop}

\begin{proof} Item (1) follows immediately from the definitions. (N.B. The relation $ {}_{\upmu}\!\owedge_{\upnu}$ is not commutative because of the insistence that its first argument satisfies the condition
 $\upmu\geq\upnu$.  The definition of the 2-correlator group 
does not impose this condition.) Item (2) follows from the fact that $1\in [\mathcal{O}]^{\boldsymbol\upmu[\boldsymbol\upiota]}_{\boldsymbol\upnu}(1)$ for any choice of growth-decay indices.   For $\uptheta,\upeta\in\mathfrak{B}$, $\bast[\Z](\uptheta|\upeta) =0$ = the zero approximate ideal.  Note that the zero approximate ideal
  is {\em not} equal to $\bast[\Z]( 0) = \bast\Z$: this gives an example of a 2-corrrelator approximate ideal which is not equal to the decoupled approximate ideal
  of some element in $\R$.  Item (4) follows essentially the same proof as Proposition \ref{EquivImpDecoupEqual}.
\end{proof}

  The definition of the 2-correlator decoupled diophantine approximation group can be extended to give meaning to an {\bf  $\boldsymbol n$-correlator decoupled diophantine approximation group}.  
  %following the rules for forming iterated products.
  % the relation $\text{\circled{$\mathcal{O}$}}$.  
 Fix $(\boldsymbol z_{n},\dots ,\boldsymbol z_{1})\in\K^{n}$ and
 %Given a sign tuple $\boldsymbol\upsigma =(\upsigma_{1},\dots ,\upsigma_{n}) \in \{\pm 1\}^{n}$ define
  consider a sequence of fine growth-decay parameters
  \[ (\vec{\boldsymbol \upmu}[\vec{\boldsymbol\upiota}],\vec{\boldsymbol\upnu}[\vec{\boldsymbol\uplambda}] ):= 
  (\boldsymbol\upmu_{n} [\boldsymbol\upiota_{n}],\boldsymbol\upnu_{n}[\boldsymbol\uplambda_{n}]),\dots,(\boldsymbol\upmu_{1} [\boldsymbol\upiota_{1}],\boldsymbol\upnu_{1}[\boldsymbol\uplambda_{1}])\]
   for which some iterated derived product (association of products) of the corresponding ordinary diophantine approximation groups can be performed.  
  %\[\cdots {}_{\boldsymbol\upmu_{2}[\boldsymbol\upiota_{2}]}\text{\circled{$\mathcal{O}$}}{}_{\boldsymbol\upnu_{2}[\boldsymbol\uplambda_{2}]} %\left( \boldsymbol z_{2}\; {}_{\boldsymbol\upmu_{1}[\boldsymbol\upiota_{1}]}
%\text{\circled{$\mathcal{O}$}}{}_{\boldsymbol\upnu_{1}[\boldsymbol\uplambda_{1}]} \boldsymbol z_{1}\right)\]
%holds.  
 Forming the corresponding iterated product of decoupled groups produces a subset of the sum of decoupled approximate ideals 
\[\sum_{\upsigma_{n},\dots ,\upsigma_{1}\in \{\pm 1\}}\bast[\mathcal{O}](\boldsymbol z_{n}^{\upsigma_{n}}\cdots\boldsymbol z_{1}^{\upsigma_{1}}).\]
The group generated by the images of all such iterated products is denoted 
\[ \bast[\mathcal{O}](\boldsymbol z_{n}|\cdots| \boldsymbol z_{1}),\]
endowed with an approximate ideal structure coming from its parts.
%We define a ``correlator ideology'' on $\bast[\mathcal{O}](\boldsymbol z_{n}|\cdots| \boldsymbol z_{1})$,
%\[ \bast[\mathcal{O}](\boldsymbol z_{n}|\cdots| \boldsymbol z_{1}) =\left\{ 
 %\bast[\mathcal{O}]^{\vec{\boldsymbol \upmu}[\vec{\boldsymbol\uplambda}]}_{\vec{\boldsymbol \upnu}}(\boldsymbol z_{n}|\cdots| \boldsymbol z_{1})\right\},\] 
%by associating to each pair of sequential parameters $(\vec{\boldsymbol \upmu}[\vec{\boldsymbol\uplambda}],\vec{\boldsymbol\upnu} )$
%the subgroup generated by images of the corresponding iterated product.   
%Notice that with this definition associativity is automatic, since the product in $\bast\mathcal{O}$ is associative and we have not
%actually ``performed'' the product of the ideological indices of the factors. 
As in the case of the 2-correlator approximate ideal, the effect of an insertion of $1$ anywhere in $(\boldsymbol z_{n},\dots ,\boldsymbol z_{1})$ 
produces the same approximate ideal. 
The $n$-correlator decoupled approximate ideal 
 is invariant by the action of ${\rm PGL}_{2}(\mathcal{O})$ in each of its arguments; 
  we denote by $ \mathcal{C}l_{n}  (\K)$ the set of $n$-correlator decoupled approximate ideals.
%, only the ideological vector has been augmented by a single essentially trivial argument. Thus, 
%if we denote by $\mathcal{C}l _{n}(\K )$ the set of $n$-correlator ideologies, there are inclusions
%\[  i_{k}:\mathcal{C}l _{n}(\K )\hookrightarrow \mathcal{C}l _{n+1}(\K )\]
%for $k=1,\dots,n+1$ induced by each of the possible $1$ insertions.
 
 The approximate ideal product between such general correlator decoupled groups 
 \begin{align}\label{correlatorideoprod}\bast[\mathcal{O}]^{\boldsymbol \upmu[\boldsymbol\uplambda]}_{\boldsymbol \upnu}(\boldsymbol z_{n}|\cdots| \boldsymbol z_{1})\cdot \bast[\mathcal{O}]^{\boldsymbol \upmu'[\boldsymbol\uplambda']}_{\boldsymbol \upnu'}(\boldsymbol w_{m}|\cdots| \boldsymbol w_{1})
\subset \bast[\mathcal{O}](\boldsymbol z_{n}|\cdots| \boldsymbol z_{1}| \boldsymbol w_{m}|\cdots| \boldsymbol w_{1})
\end{align}
is defined, approximate ideal structures permitting.
 %\begin{align}\label{correlatorideoprod}\bast[\mathcal{O}]^{\vec{\boldsymbol \upmu}[\vec{\boldsymbol\uplambda}]}_{\vec{\boldsymbol \upnu}}(\boldsymbol %z_{n}|\cdots| \boldsymbol z_{1})\cdot \bast[\mathcal{O}]^{\vec{\boldsymbol \upmu}'[\vec{\boldsymbol\uplambda}']}_{\vec{\boldsymbol \upnu}'}(\boldsymbol %w_{m}|\cdots| \boldsymbol w_{1})
%\subset \bast[\mathcal{O}](\boldsymbol z_{n}|\cdots| \boldsymbol z_{1}| \boldsymbol w_{m}|\cdots| \boldsymbol w_{1})
%\end{align}
%is defined provided that the sequence $(\vec{\boldsymbol \upmu}[\vec{\boldsymbol\uplambda}],\vec{\boldsymbol\upnu} ),
%(\vec{\boldsymbol \upmu}'[\vec{\boldsymbol\uplambda}'],\vec{\boldsymbol\upnu}' )$ defines a well-defined
%iteration of the relation $\text{\circled{$\mathcal{O}$}}$ for $\boldsymbol z_{n},\cdots, \boldsymbol z_{1},\boldsymbol w_{m}, \cdots, \boldsymbol w_{1}$. 
%derived product for some association of terms.
It is clear that as one varies over all possible sequential parameters 
$(\vec{\boldsymbol \upmu}[\vec{\boldsymbol\uplambda}],\vec{\boldsymbol\upnu} ),
(\vec{\boldsymbol \upmu}'[\vec{\boldsymbol\uplambda}'],\vec{\boldsymbol\upnu}' )$ the set of images of the product (\ref{correlatorideoprod})
generates the group $ \bast[\mathcal{O}](\boldsymbol z_{n}|\cdots| \boldsymbol z_{1}| \boldsymbol w_{m}|\cdots| \boldsymbol w_{1})$.
We symbolize this state of affairs by writing
\[  \bast[\mathcal{O}](\boldsymbol z_{n}|\cdots| \boldsymbol z_{1})
\varoast
\bast[\mathcal{O}](\boldsymbol w_{m}|\cdots| \boldsymbol w_{1})
=
 \bast[\mathcal{O}](\boldsymbol z_{n}|\cdots| \boldsymbol z_{1}| \boldsymbol w_{m}|\cdots| \boldsymbol w_{1}).\]
If we write $ \bast[\mathcal{O}](\emptyset ):= (0)$ = the zero approximate ideal and
\[ \mathcal{C}l_{\infty} (\K) =
%\lim_{\longrightarrow}\mathcal{C}l _{n}(\K )
\bigcup_{n=0}^{\infty}\bigcup_{\boldsymbol z_{n},\dots , \boldsymbol z_{1}\in\PR\K}  \bast[\mathcal{O}](\boldsymbol z_{n}|\cdots| \boldsymbol z_{1}) \] then the approximate ideal product as defined above gives rise to 
 an associative binary operation 
 \[\varoast : \mathcal{C}l_{\infty}  (\K)\times\mathcal{C}l_{\infty}  (\K)\longrightarrow \mathcal{C}l_{\infty}  (\K), \]
 making of $ \mathcal{C}l_{\infty}  (\K)$ a monoid with unit $\bast[\mathcal{O}](1)$ {\it and} nullity $ \bast[\mathcal{O}](\emptyset )$
 : the {\bf  correlator approximate ideal class monoid}.

\begin{note} There is a surjective
 function 
 \[ \widetilde{\mathcal{C}l}_{n}(\K):= \K^{n}/{\rm PGL}_{2}(\mathcal{O})^{n}\longrightarrow \mathcal{C}l _{n}(\K )\]
 for each $n\geq 0$ (when $n=0$, $\widetilde{\mathcal{C}l}_{0}(\K):= \ast $ maps to $ \bast[\mathcal{O}](\emptyset )$).
 % The set of such group $ \mathcal{C}l _{\infty}(\K )$ has the structure of a monoid.
 In addition, each insertion of $1$ gives an embedding $\widetilde{\mathcal{C}l}_{n}(\K)\hookrightarrow \widetilde{\mathcal{C}l}_{n+1}(\K)$
 so that taking limits gives a surjection
 \begin{align*}\label{classsurj} \widetilde{\mathcal{C}l}_{\infty}(\K):= \lim_{\longrightarrow}\widetilde{\mathcal{C}l} _{n}(\K )\longrightarrow
  \mathcal{C}l_{\infty}  (\K). 
 \end{align*}
 \end{note}
 We end with a few remarks on nilpotency and annihilation in $ \mathcal{C}l_{\infty}  (\K)$.
 
 \begin{theo}  Let $\uptheta, \uptheta'\in\mathfrak{B}$, $\upeta\in\mathfrak{W}_{1^{+}}$. 
 %and $\upomega\in\mathfrak{W}_{\infty}$.  
 Then
 %\begin{enumerate}
 %\item 
 \[ \bast[\mathcal{O}](\uptheta)\varoast \bast[\mathcal{O}](\uptheta')= \bast[\mathcal{O}](\emptyset)
 %$.\item $
 =\bast[\mathcal{O}](\uptheta)\varoast \bast[\mathcal{O}](\upeta).
 %= \bast[\mathcal{O}](\emptyset)$.
 \]
 % \item $\bast[\mathcal{O}](\upeta)\varoast \bast[\mathcal{O}](\upeta')\varoast \bast[\mathcal{O}](\upeta'')= \bast[\mathcal{O}](\emptyset)$.
% \item $\bast[\mathcal{O}](\upomega)$ is not nilpotent: $\bast[\mathcal{O}](\uptheta)^{n}\not= \bast[\mathcal{O}](\emptyset)$ for all $n$.
 %\end{enumerate}
 \end{theo}
 
 \begin{proof}  Immediate since all possible approximate ideal products have a zero factor.  
 %In (3), we use the fact that the only nontrivial
 %products between elements of $\mathfrak{W}_{1}$ are necessarily flat, and any growth decay product has as result an element having
 %ideological indices in the slow sector.
 \end{proof}

 \begin{conj}  Let $\uptheta\in\R$ be of exponent $\upkappa$.  Then $\bast[\mathcal{O}](\uptheta)$ is $\lfloor\upkappa+2\rfloor$-step nilpotent. 
 If $\uptheta\in\mathfrak{W}_{\infty}$, $\bast[\mathcal{O}](\uptheta)$ is not nilpotent and is of infinite order.
 \end{conj}
 
 To prove the nilpotency statements in the conjecture, we need to have a better understanding of the {\it efficiency} of the growth decay filtrations.  
 In particular, the following questions must be addressed:
\begin{ques}[Decay Efficiency]\label{decayeff}   Let $\uptheta\in\R-\Q$, and suppose that we have $\upmu,\upnu$ and $\upnu'<\upnu$ with $\bast\Z^{\upmu}_{\upnu'}(\uptheta )\not=0$.
Is it the case that
$\bast\Z^{\upmu}_{\upnu}(\uptheta )-\bast\Z^{\upmu}_{\upnu'}(\uptheta )\not=\emptyset$?
\end{ques}

If we switch the roles of growth and decay in the above question, we obtain a corresponding question for the efficiency of growth indices.
Notice that this growth efficiency question has a positive response for the choice $\upmu'=\upnu$, by
%Proposition \ref{diffandsym} and 
Theorem \ref{symnotempty} of \S \ref{metrical} and the definition of symmetric diophantine approximations:
\[\bast\Z^{\rm sym}_{\upnu}(\uptheta) \subset
 \bast\Z^{\upmu}_{\upnu} (\uptheta ) - 
 \bast\Z^{\upnu}_{\upnu} (\uptheta).\]

\begin{ques}[Fine Growth Efficiency]\label{fineeff} Given $\upmu,\upnu$
 with $\bast\Z^{\upmu}_{\upnu}(\uptheta )\not=0$, do all values of $\upiota>\upmu$ yield $\bast\Z^{\upmu[\upiota]}_{\upnu}(\uptheta )\not=0$?
For $\upiota>\upiota'>\upmu$, is it the case that $\bast\Z^{\upmu[\upiota]}_{\upnu}(\uptheta )-\bast\Z^{\upmu[\upiota']}_{\upnu}(\uptheta )\not=\emptyset$?
\end{ques}

Question \ref{fineeff} is of interest in that it may be relevant to the following 

\begin{ques}[Product Efficiency]\label{prodeff} Given a nontrivial growth decay product, 
 \[ \bast\Z^{\upmu_{1}[\upiota_{1}]}_{\upnu_{1}}(\uptheta )\cdot \bast\Z^{\upmu_{2}[\upiota_{2}]}_{\upnu_{2}}(\upeta )\subset 
 \bast\Z^{\upmu_{1}\upmu_{2}[\upiota_{1}\upiota_{2}]}_{\upiota_{1}+\upiota_{2}}(\uptheta\upeta )\]
is it the case that for all $\upnu<\upiota_{1}+\upiota_{2}$,
 \[\bast\Z^{\upmu_{1}[\upiota_{1}]}_{\upnu_{1}}(\uptheta )\cdot \bast\Z^{\upmu_{2}[\upiota_{2}]}_{\upnu_{2}}(\upeta )\not\subset
\bast\Z^{\upmu_{1}\upmu_{2}[\upiota_{1}\upiota_{2}]}_{\upnu}(\uptheta\upeta )? \]
 \end{ques}
 
 For example, consider $\uptheta\in\mathfrak{W}_{1^{+}}$, which has exponent $\upkappa=1$.  Let us suppose that the answer to Question \ref{decayeff} is positive.
 Then the only nontrivial approximate ideal products that we may form are flat.  If the answer to Question \ref{prodeff} is also positive, then the image of any non zero 
 (flat) product belongs strictly to a growth-decay group with slow indices, so no further non zero products can be performed. This implies that 
 $\bast[\Z] (\uptheta )$ is 3-step nilpotent.


\begin{thebibliography}{00}

%\bibitem [1]{Ar} Aroca, F.

%\bibitem [1]{BaSt} Babai, L\'{a}szl\'{o} \&  \v{S}tefankovi\v{c}, Daniel, Simultaneous Diophantine approximation
%with excluded primes.  in ``Proc. 15th Ann. Symp. on Discrete Algorithms (SODA'04)'', ACM--IEEE 2004, pp. 1123--1129.

%\bibitem [1]{Ak} Akritas, A. G., Sylvester's forgotten form of the resultant. {\it Fib. Quart.} {\bf 31} (1993), 325--332. 

%\bibitem [2]{Ba} Baker, Alan, {\it Transcendental Number Theory}. Second edition. Cambridge Mathematical Library. Cambridge University Press, Cambridge, 1990.

%\bibitem [3]{Bea} Beardon, Alan, {\it Iteration of Rational Functions.} Graduate Texts in Mathematics {\bf 132}, Springer-Verlag, New York, 1991.

%\bibitem [3]{BaGe} Baldwin, Gendron.

%\bibitem [4]{BS} Bell, John Lane \& Slomson, Alan B, {\it Models and Ultraproducts: An Introduction}. Reprint of 1974 edition ed. Dover Publications, 2006. 

\bibitem [1]{BD} Bernik, V.I. \& Dodson, M.M., {\it Metric Diophantine Approximation on Manifolds}.  Cambridge Tracts in Mathematics {\bf 137}, Cambridge University Press, Cambridge, UK, 1999.

%\bibitem [5]{BC}  Brawley, J. V. \& Carlitz, L., Irreducibles and the composed product for polynomials over a finite field. 
%{\it Discrete Math.} {\bf 65} (1987), no. 2, 115--139. 

\bibitem [2]{Bu} Bugeaud, Yann, {\it Approximation by Algebraic Numbers.}  Cambridge Tracts in Mathematics {\bf 160}, Cambridge University Press, Cambridge, 2004.

\bibitem [3]{Ca} Cassels, J.W.S., {\it An Introduction to Diophantine Approximation.} Cambridge Tracts in Mathematics and Mathematical Physics {\bf 45}. Cambridge University Press, New York, 1957.  

\bibitem [4]{CaGe} Casta\~{n}o Bernard, C. \& Gendron, T.M., Modular invariant of quantum tori.   {\it Proc. Lond. Math. Soc.} {\bf 109} (2014), Issue 4, 1014--1049. 

\bibitem[5]{ChKei} Chang, C.C. \& Keisler, H. Jerome, {\it Model Theory}.  North Holland, Amsterdam,1973.
%\bibitem [8]{CaZi} Caycedo Zilber

%\bibitem [9]{Ch} Cherlin, G. 

%\bibitem [10]{DS} Davenport, Schmidt

\bibitem [6]{Er} Erd\"{o}s, P., Representations of real numbers as sums and products of Liouville numbers.
{\it Michigan Math. J.} {\bf 9}, Issue 1 (1962), 59-60. 

\bibitem [7]{Fre} Freitag, Eberhard., {\it Hilbert Modular Forms}.  Springer-Verlag, New York, NY, 1990.


\bibitem [8]{Ge1} Gendron, T.M., The algebraic theory of the fundamental germ.  {\it Bull. Braz. Math. Soc. (N.S.)} {\bf 37} (2006), no. 1, 49--87. 

\bibitem [9]{Ge2} Gendron, T.M., The geometric theory of the fundamental germ.  Nagoya Math. J.  {\bf 190}  (2008), 1--34.  

\bibitem [10]{Ge4} Gendron, T.M., Diophantine approximation groups, Kronecker foliations and independence. arXiv:1201.2708

\bibitem [11]{GeArith2}  Gendron, T.M., The arithmetic of Diophantine approximation groups II: Nonlinear theory. 

%\bibitem [14]{Ge5} Gendron, T.M., Transcendental Galois Theory.  {\it in preparation}.

\bibitem [12]{GeVe} Gendron, T.M. \& Verjovsky, A., Geometric Galois theory, nonlinear number fields and a Galois group interpretation of the idele class group. Internat. J. Math. {\bf 16} (2005), no. 6, 567--593.  {\it Errata} Internat. J. Math. {\bf 22} (2011), no. 2, 307--309.

%\bibitem [16]{Gl} Glasby, S. P.,
%On the tensor product of polynomials over a ring. 
%5{\it J. Aust. Math. Soc.} {\bf 71} (2001), no. 3, 307Ð324. 

\bibitem [13]{Go} Goldblatt, {\it Lectures on the Hyperreals}.  Graduate Texts in Mathematics {\bf 188}, Springer-Verlag, 1998.

%\bibitem [17]{GHJ} Goodman, Frederick M.; de la Harpe, Pierre \& Jones, Vaughan F.R., {\it Coxeter Graphs and Towers of Algebras}. MSRI Publications
%{\bf 14}, Springer-Verlag, New York, NY, 1989.

%\bibite m [13]{Ho} Hodges, Wilfrid, {\it Model Theory}.  Encyclopedia of Mathematics and its Applications
%{\bf 42}, Cambridge U. Press, 1993.

\bibitem [14]{Ha} Hall, Marshall, On the sum and product of continued fractions.  {\it Ann. of Math.} {\bf 48} (4), (1947), 966--993.

\bibitem [19]{HJ} Horn, Roger A. \& Johnson, Charles R., {\it Topics in Matrix Analysis.} Cambridge University Press, 1994.

%\bibitem [16]{Ja} Jarn\'{\i}k, V., Zur metrischen Theorie der diophantischen Approximationen.  {\it Proc. Math. Fyz.} {\bf 36}
%(1928), 91 -- 106.

%\bibitem [14]{Ke} Keisler, H. Jerome, On cardinalities of ultraproducts.

%\bibitem [17]{La} Lang, S. {\it Algebra.}

\bibitem [15]{La1} Lang, Serge, {\it Introduction to Diophantine Approximations.} Second edition. Springer-Verlag, New York, 1995.

%\bibitem [22]{Lei} Leinster, Tom, {\it Higher Operads, Higher Categories.} London Mathematical Society Lecture Note Series {\bf 298}, %Cambridge U. Press, Cambridge, U.K., 2004.
 
%\bibitem [17]{Mac} MacIntyre, A., Nonstandard analysis and cohomology.  Nonstandard methods and applications in mathematics,  174--191, %Lect. Notes Log., 25, Assoc. Symbol. Logic, La Jolla, CA, 2006.

%\bibitem [19]{Mar} Marker, D., {\it Model Theory: An Introduction}

%\bibitem [19]{Me-TB}  Medvedev, Alice \& Takloo-Bighash, Ramin, An invitation to model-theoretic
%Galois theory.

%\bibitem [20]{Me} Meyer, Y., {\it Algebraic Numbers and Harmonic Analysis.} North Holland, 1972

%\bibitem [21]{Ne} Neukirch

%\bibitem [21]{Lei} Leinster, Tom, {\it Higher Operads, Higher Categories}, London Mathematical Society Lecture Series {\bf 298},
%Cambridge U. Press, Cambridge, UK, 2004.

%\bibitem [22]{LoVa} Loday, Jean-Louis \& Vallette, Bruno, {\it Algebraic Operads}. Grundlehren der mathematischen Wissenschaften {\bf 346}.
%Springer-Verlag, Berlin, 2012.

%\bibitem [23]{Lu} Lurie, Jacob, {\it Higher Algebra}. 2011.

\bibitem [16]{NZM} Niven, Ivan; Zuckerman, Herbert S. \& Montgomery, Hugh L., {\em An Introduction to the Theory of Numbers.}, 5th
edition, John Wiley \& Sons, Inc., New York, 1991.



\bibitem [17]{Qu} Queff\'{e}lec, Martine, An introduction to Littlewood's conjecture. {\it S\'{e}minaires \& Congr\`{e}s} {\bf 20} (2009), 129--152.

%\bibitem [22]{Po} Poizat

%\bibitem [23]{Ro} Robinson, A., {\it Non-standard Analysis}.  Revised edition. Princeton U. Press, 1996.


%\bibitem [24]{RoWa} Roy and Waldschmidt, Diophantine approximation by conjugate algebraic integers.
%{\it Compositio Math.} {\bf 140} (2004) 593--612.

%\bibitem [25]{Roy}  Roy, D., Simultaneous approximation by conjugate algebraic numbers in fields of transcendence
%degree one. {\it Int. J. Number Theory} {\bf 1} (2005), no.3, 357--382.

\bibitem [18]{Sch} Schmidt, Wolfgang M., {\it Diophantine Approximation.} Lecture Notes in Mathematics {\bf 785}. Springer, Berlin, 1980.

%\bibitem [27]{Schw} Schweiger, Fritz, {\it Multidimensional Continued Fractions.}  Oxford Science Publications.  Oxford University Press, Oxford, UK, 2000.

\bibitem [19]{Sh} Shalitt, J.O., Simple continued fractions for some irrational numbers, II, {\it J. Num. Theory}  {\bf 14} (1982), 228--231.

%\bibitem [26]{Ser} Serre, J.-P. {\it Galois Cohomology}.  Springer.

%\bibitem [26]{Ser1} Serre, J.-P. {\it Corps Locaux}.  Herman

%\bibitem [29]{ShHa}  Shai Haran, M.J., {\it The Mysteries of the Real Prime}, Oxford U. Press, Oxford, UK, 2001.

%\bibitem [30]{Va} Vallette, Bruno, Algebra + Homotopy = Operad. arXiv1202.3245.

\bibitem [20]{Wa} Waldschmidt, Michel, Report on some recent advances in Diophantine approximation. 
www.math.jussieu.fr/~miw/articles/pdf/miwLangMemorialVolume.pdf.

\bibitem [21]{Wa1} Waldschmidt, Michel, {\it Diophantine Approximation on Linear Algebraic Groups: Transcendence Properties
of the Exponential Function in Several Variables.} Grundlehren der mathematischen Wissenschaften {\bf 326}, Springer-Verlag, Berlin, 2000.

%\bibitem [33]{Wi} Wirsing, Eduard., Approximation mit algebraischen Zahlen beschr\"{a}nkten Grades. {\it J. reine ang. Math.}
%{\bf 206}, 67--77.

\bibitem [22]{ZS} Zariski, Oscar \& Samuel, Pierre, {\it Commutative Algebra, Volume II.} The University Series in Higher Mathematics. D. Van Nostrand Co., Inc., Princeton, N. J.-Toronto-London-New York, 1960.
\bibitem [23]{Ze}  Zeckendorf, E.,  Repr\'{e}sentation des nombres naturels par une somme de nombres de Fibonacci ou de nombres de Lucas.  {\it Bull. Soc. R. Sci. Li\`{e}ge} {\bf 41} (1972), 179--182.

%\bibitem [32]{Zi1}  Zilber, Green fields.

%\bibitem [33]{Zi2} Zilber, NCG and new stable structures.






\end{thebibliography}
\end{document}